%% file: manuscript_arxiv.tex
\documentclass[aos]{imsart}

\RequirePackage{amsthm,amsmath,amsfonts,amssymb}
\RequirePackage[numbers]{natbib}
\RequirePackage[colorlinks,citecolor=blue,urlcolor=blue]{hyperref}
\RequirePackage{graphicx}
\usepackage{subcaption}

\startlocaldefs
\usepackage{amsmath,amsfonts,amssymb,amsthm,mathrsfs}
\usepackage{paralist}
\usepackage{enumitem}
\usepackage{algorithm}
\usepackage{algpseudocode}
\usepackage{tikz}
\usetikzlibrary{arrows}
\usetikzlibrary{decorations.pathreplacing}
\tikzset{mybrace/.style={decoration={brace,raise=1.8mm},decorate}}
\usepackage{titlesec} % Used to redefine format of section heading
\usepackage{etoolbox} % Use '\AfterEndEnvironment' to avoid indent after theorems
\usepackage{float}
\usepackage{graphics}
\usepackage{booktabs}
\usepackage{multirow}
\usepackage{xcolor}
\usepackage{sectsty}
\sectionfont{\bfseries\Large\raggedright}

\makeatletter

\usepackage{color}

\numberwithin{equation}{section}
\allowdisplaybreaks

\def\@noindentfalse{\global\let\if@noindent\iffalse}
\def\@noindenttrue {\global\let\if@noindent\iftrue}
\def\@aftertheorem{%
	\@noindenttrue
	\everypar{%
		\if@noindent%
		\@noindentfalse\clubpenalty\@M\setbox\z@\lastbox%
		\else%
		\clubpenalty \@clubpenalty\everypar{}%
		\fi}}

\theoremstyle{plain}
\newtheorem{theorem}{Theorem}[section]
\AfterEndEnvironment{theorem}{\@aftertheorem}
\newtheorem{definition}[theorem]{Definition}
\AfterEndEnvironment{definition}{\@aftertheorem}
\newtheorem{lemma}[theorem]{Lemma}
\AfterEndEnvironment{lemma}{\@aftertheorem}
\newtheorem{corollary}[theorem]{Corollary}
\AfterEndEnvironment{corollary}{\@aftertheorem}
\newtheorem{proposition}[theorem]{Proposition}
\AfterEndEnvironment{proposition}{\@aftertheorem}

\theoremstyle{definition}
\newtheorem{remark}[theorem]{Remark}
\AfterEndEnvironment{remark}{\@aftertheorem}

\AfterEndEnvironment{example}{\@aftertheorem}

\AfterEndEnvironment{proof}{\@aftertheorem}

\newtheorem{assumption}[theorem]{Assumption}
\AfterEndEnvironment{assumption}{\@aftertheorem}

\titleformat{name=\section}
{\center\small\sc}{\thesection\kern1em}{0pt}{\MakeTextUppercase}
\titleformat{name=\section, numberless}
{\center\small\sc}{}{0pt}{\MakeTextUppercase}

\titleformat{name=\subsection}
{\bf\mathversion{bold}}{\thesubsection\kern1em}{0pt}{}
\titleformat{name=\subsection, numberless}
{\bf\mathversion{bold}}{}{0pt}{}

\titleformat{name=\subsubsection}
{\normalsize\itshape}{\thesubsubsection\kern1em}{0pt}{}
\titleformat{name=\subsubsection, numberless}
{\normalsize\itshape}{}{0pt}{}

\def\note#1{\par\smallskip%
	\noindent\kern-0.01\hsize%
	{\setlength\fboxrule{0pt}\fbox{\setlength\fboxrule{0.5pt}\fbox{%
				\llap{$\boldsymbol\Longrightarrow$ }%
				\vtop{\hsize=0.98\hsize\parindent=0cm\small\rm #1}%
				\rlap{$\enskip\,\boldsymbol\Longleftarrow$}
	}}}%
}

\usepackage{delimset}
\def\given{\mskip 0.5mu plus 0.25mu\vert\mskip 0.5mu plus 0.15mu}
\newcounter{bracketlevel}%
\def\@bracketfactory#1#2#3#4#5#6{%
	\expandafter\def\csname#1\endcsname##1{%
		\global\advance\c@bracketlevel 1\relax%
		\global\expandafter\let\csname @middummy\alph{bracketlevel}\endcsname\given%
		\global\def\given{\mskip#5\csname#4\endcsname\vert\mskip#6}\csname#4l\endcsname#2##1\csname#4r\endcsname#3%
		\global\expandafter\let\expandafter\given\csname @middummy\alph{bracketlevel}\endcsname%
		\global\advance\c@bracketlevel -1\relax%
	}%
}
\def\bracketfactory#1#2#3{%
	\@bracketfactory{#1}{#2}{#3}{relax}{0.5mu plus 0.25mu}{0.5mu plus 0.15mu}
	\@bracketfactory{b#1}{#2}{#3}{big}{1mu plus 0.25mu minus 0.25mu}{0.6mu plus 0.15mu minus 0.15mu}
	\@bracketfactory{bb#1}{#2}{#3}{Big}{2.4mu plus 0.8mu minus 0.8mu}{1.8mu plus 0.6mu minus 0.6mu}
	\@bracketfactory{bbb#1}{#2}{#3}{bigg}{3.2mu plus 1mu minus 1mu}{2.4mu plus 0.75mu minus 0.75mu}
	\@bracketfactory{bbbb#1}{#2}{#3}{Bigg}{4mu plus 1mu minus 1mu}{3mu plus 0.75mu minus 0.75mu}
}
\bracketfactory{clc}{\lbrace}{\rbrace}
\bracketfactory{clr}{(}{)}
\bracketfactory{cls}{[}{]}
\bracketfactory{abs}{\lvert}{\rvert}
\bracketfactory{norm}{\Vert}{\Vert}
\bracketfactory{floor}{\lfloor}{\rfloor}
\bracketfactory{ceil}{\lceil}{\rceil}
\bracketfactory{angle}{\langle}{\rangle}

\newcounter{ctr}\loop\stepcounter{ctr}\edef\X{\@Alph\c@ctr}%
\expandafter\edef\csname s\X\endcsname{\noexpand\mathscr{\X}}
\expandafter\edef\csname c\X\endcsname{\noexpand\mathcal{\X}}
\expandafter\edef\csname b\X\endcsname{\noexpand\boldsymbol{\X}}
\expandafter\edef\csname I\X\endcsname{\noexpand\mathbb{\X}}
\ifnum\thectr<26\repeat

\let\@IE\IE\let\IE\undefined
\newcommand{\IE}{\mathop{{}\@IE}\mathopen{}}
\let\@IP\IP\let\IP\undefined
\newcommand{\IP}{\mathop{{}\@IP}}

\def\^#1{\relax\ifmmode {\mathaccent"705E #1} \else {\accent94 #1}\fi}
\def\~#1{\relax\ifmmode {\mathaccent"707E #1} \else {\accent"7E #1}\fi}
\def\*#1{\relax#1^\ast}
\edef\-#1{\relax\noexpand\ifmmode {\noexpand\bar{#1}} \noexpand\else \-#1\noexpand\fi}
\def\>#1{\vec{#1}}
\def\.#1{\dot{#1}}

\def\atop{\@@atop}

\renewcommand{\leq}{\leqslant}
\renewcommand{\geq}{\geqslant}

\newcommand{\ub}{\mathbf{u}}

\newcommand{\ro}{\mathrm{o}}
\newcommand{\rO}{\mathrm{O}}

\newcommand{\ri}{\mathrm{i}}

\newcommand\indep{\protect\mathpalette{\protect\@indep}{\perp}}
\def\@indep#1#2{\mathrel{\rlap{$#1#2$}\mkern2mu{#1#2}}}

\def\parsetime#1#2#3#4#5#6{#1#2:#3#4}
\def\parsedate#1:20#2#3#4#5#6#7#8+#9\empty{20#2#3-#4#5-#6#7 \parsetime #8}
\def\moddate{\expandafter\parsedate\pdffilemoddate{\jobname.tex}\empty}

\theoremstyle{definition}

\theoremstyle{remark}

\theoremstyle{definition}

\theoremstyle{plain}

\theoremstyle{plain}

\theoremstyle{plain}

\theoremstyle{plain}

\allowdisplaybreaks[4]

\makeatother

\providecommand{\conditionname}{Condition}
\providecommand{\definitionname}{Definition}
\providecommand{\lemmaname}{Lemma}
\providecommand{\propositionname}{Proposition}
\providecommand{\remarkname}{Remark}
\providecommand{\corollaryname}{Corollary}
\providecommand{\theoremname}{Theorem}

\endlocaldefs

\begin{document}

\begin{frontmatter}
\title{Multiplier Bootstrap and Edge Phase Transitions of High-Dimensional Covariance Matrices}
%\title{A sample article title with some additional note\thanksref{t1}}
\runtitle{Spectral-Edge Fluctuations under Multiplier Bootstrap}
%\thankstext{T1}{A sample additional note to the title.}

\begin{aug}
%%%%%%%%%%%%%%%%%%%%%%%%%%%%%%%%%%%%%%%%%%%%%%%
%% Only one address is permitted per author. %%
%% Only division, organization and e-mail is %%
%% included in the address.                  %%
%% Additional information can be included in %%
%% the Acknowledgments section if necessary. %%
%% ORCID can be inserted by command:         %%
%% \orcid{0000-0000-0000-0000}               %%
%%%%%%%%%%%%%%%%%%%%%%%%%%%%%%%%%%%%%%%%%%%%%%%
\author[A]{\fnms{Jiahui}~\snm{Xie}\ead[label=e1]{ jihxie@ucdavis.edu}}
%%%%%%%%%%%%%%%%%%%%%%%%%%%%%%%%%%%%%%%%%%%%%%
%% Addresses                                %%
%%%%%%%%%%%%%%%%%%%%%%%%%%%%%%%%%%%%%%%%%%%%%%
\address[A]{Department of Statistics, University of California, Davis\printead[presep={,\ }]{e1}}

\end{aug}

\begin{abstract}
In this paper, we study the effects of employing multiplier bootstrap to analyze the asymptotic distributions of the largest eigenvalues of high-dimensional sample covariance matrices in both spiked and non-spiked models. Our findings demonstrate that the multiplier bootstrap establishes several phase transitions in the limiting edge distributions of both unconditional and conditional bootstrapped covariance matrices, provided the different classes of multipliers. In the nonspiked setting, unbounded multipliers lead to Fréchet or Gumbel limits for the largest eigenvalue of the bootstrapped covariance matrix, both conditionally on the observed data and unconditionally. For bounded multipliers, the unconditional model exhibits transitions among Tracy–Widom, Gaussian, or Weibull limits, determined jointly by the aspect ratio $p/n$, the upper-endpoint behavior of the multipliers, and the population covariance matrix. The conditional model displays analogous Gaussian and Weibull regimes; in contrast, the conditional counterpart of the unconditional Tracy–Widom regime collapses to a point mass. In the spiked setting, under suitable signal-strength conditions, the leading eigenvalues of both the unconditional and conditional bootstrapped sample covariance matrices are asymptotically Gaussian for bounded as well as unbounded multipliers, under some mild assumptions. Our theoretical results also clarify the feasibility and adaptability of the multiplier bootstrap for spectral inference in high-dimensional sample covariance models. Numerical simulations confirm the accuracy of our results and the effectiveness of the proposed spectral inference procedures, which may be of independent interest.
\end{abstract}

\begin{keyword}[class=MSC]
\kwd[Primary ]{60B20}
\kwd{62G10}
\kwd[; secondary ]{15B52}
\kwd{62H15}
\end{keyword}

\begin{keyword}
\kwd{Sample covariance matrix}
\kwd{Edge eigenvalues}
\kwd{Multiplier bootstrap}
\kwd{Spike detection}
\end{keyword}

\end{frontmatter}
%%%%%%%%%%%%%%%%%%%%%%%%%%%%%%%%%%%%%%%%%%%%%%
%% Please use \tableofcontents for articles %%
%% with 50 pages and more                   %%
%%%%%%%%%%%%%%%%%%%%%%%%%%%%%%%%%%%%%%%%%%%%%%

\setcounter{tocdepth}{2}
\begin{center}
{\small\scshape Contents}
\end{center}
\makeatletter
\@starttoc{toc}
\makeatother

\section{Introduction}\label{sec:intro}
The bootstrap method \cite{efron1979bootstrap} is a powerful and widely used tool in multivariate statistics and machine learning. By resampling a single dataset to generate numerous simulated samples, it facilitates inference even when little is known about the properties of the data-generating distribution. This approach is particularly appealing when theoretical derivations based on asymptotic analysis are complex or rely on restrictive assumptions. On a related note, the covariance matrix plays a central role in virtually every aspect of multivariate data analysis. Over the past few decades, technological advancements have spurred growing interest in developing methodologies and tools to address the challenges posed by high-dimensionality and complexity \cite{yao2015sample}. In particular, extreme eigenvalues of sample covariance matrices are critical in principal component analysis (PCA) \cite{abdi2010principal,johnstone2018pca}. However, current theoretical findings on these extreme eigenvalues often depend on unknown parameters of the population covariance matrix and are typically intricate \cite{bao2022statistical,yao2015sample}, rendering conventional methods impractical in many cases.

An intuitive approach to overcome the above challenges is to apply bootstrap methodology to covariance matrices in the high-dimensional regime.
Then, a natural question arises: 
\vspace*{2pt}

\textit{Could the asymptotics of extreme sample eigenvalues be effectively approximated using the multiplier bootstrap method? If not, additionally, how does a random multiplier change the edge limits of a high-dimensional sample covariance matrix?} 
\vspace*{2pt}

Addressing this question is highly non-trivial, as the extreme sample eigenvalues of large covariance matrices exhibit complex limiting distributions that depend on the structure of the population covariance matrix. For example, non-spiked covariance matrices often follow the Tracy-Widom distribution \cite{Bao2015,Ding&Yang2018,9779233,fan2022tracy,el2007tracy,John2001,knowles2017anisotropic,lee2016tracy,PillaiandYin2014}, whereas spiked covariance matrices tend to follow some Gaussian distribution \cite{BaiandYao2008,bao2022statistical,CHP,johnstone2018pca,paul2007asymptotics}. 
In all these distributions, the asymptotic results often involve unknown and complex quantities, making it challenging to apply these results in practice. Moreover, as highlighted in \cite{el2019non,karoui2016bootstrap}, directly applying standard bootstrap methods in high-dimensional regimes can sometimes yield erratic results for statistical inference involving extreme eigenvalues. This issue becomes particularly pronounced in scenarios where population spikes are weak or entirely absent; see Section \ref{subsec_existingresult} below for further discussion of these challenges.

Motivated by these difficulties, this paper provides a comprehensive analysis of the effects of multiplier bootstrap procedures on the asymptotic behavior of the top eigenvalues of sample covariance matrices in high-dimensional settings. It also proposes practical and effective methods for specific statistical tasks using multiplier bootstrap techniques. We demonstrate that preserving information from the original sample covariance matrix through multiplier bootstrap procedures poses significant challenges. These challenges often necessitate careful selection of multipliers, a large number of bootstrap replications, and, in some cases, additional bias corrections to improve statistical estimates.

Before going to the details, we first generate our bootstrap procedure for high-dimensional sample covariance matrices as follow. Consider a sequence of data $\mathbf{s}_i \sim \mathbf{s} \in \mathbb{R}^p,  1 \leq i \leq n,$ which are i.i.d. observations of a random vector $\mathbf{s} $ such that
\begin{equation}\label{eq_generatingmodel}
\mathbf{s}= \Sigma^{1/2} \mathbf{x} \in \mathbb{R}^p, 
\end{equation}
where $\Sigma \in\mathbb{R}^{p\times p}$ is deterministic representing the covariance structures in the dataset and $\mathbf{x} \in \mathbb{R}^p$ is a random vector containing i.i.d. centered random variables with variance $n^{-1}$. For high-dimensionality, we mean that $p$ and $n$ are comparably large.  Now, given a sequence of data $\mathbf{s}_i=\Sigma^{1/2}\mathbf{x}_i,1 \leq i\leq n$, we resample $\mathbf{s}_i$'s via a sequence of random multipliers $\xi_i\sim\xi\in\mathbb{R}$ that are independent of $\{\mathbf{x}_i\}$. The resampled data can be described as 
\begin{equation}\label{eq_bootstrapdata}
\mathbf{y}_i=\xi_i \Sigma^{1/2} \mathbf{x}_i \in \mathbb{R}^p, \quad 1\leq i\leq n.
\end{equation}
 We may write the resampled data matrix as $Y=\Sigma^{1/2} XD,$ where $X=(\mathbf{x}_i)$ and $D$ is a diagonal matrix containing $\{\xi_i\}.$ Then the bootstrapped sample covariance matrix can be constructed as follows 
\begin{equation}\label{eq_samplecov}
Q:=YY^* \equiv \Sigma^{1/2} XD^2X^* \Sigma^{1/2}.
\end{equation}
Throughout the paper, we study both the unconditional and conditional bootstrap for analyzing the largest eigenvalues of $Q$, with the latter being the commonly used form of the multiplier bootstrap. The terms unconditional and conditional refer to whether we condition on $X$.

Technically, understanding the asymptotic behavior of the largest eigenvalues of $Q$ is crucial for analyzing the performance of the multiplier bootstrap for the edge eigenvalues of the sample covariance matrix. In what follows, we first provide a summary of some related results in Section \ref{subsec_existingresult}. Then we offer an overview of our contributions in Section \ref{sec_overviewofresults}. 

%This is an example of a new parapgraph with a numbered footnote\footnote{\url{https://data.gov.uk/}} and a second footnote marker.\footnote{Example of footnote text.}

\subsection{Summary of some existing related results}\label{subsec_existingresult}

In this section, we summarize results related to the bootstrap methodology. While the bootstrap is extensively studied in the literature \cite{alemayehu1988bootstrapping,bickel1981some,davison1997bootstrap,efron1979bootstrap,efron1981nonparametric,politis1999subsampling}, our focus is specifically on aspects relevant to sample covariance matrices. The use of the bootstrap for studying sample covariance matrices dates back to \cite{beran1985bootstrap,diaconis1983computer,eaton1991wielandt} and has since evolved into a powerful tool in multivariate analysis \cite{abdi2010principal,davison1997bootstrap,olive2017robust}. Key findings from these studies show that nonparametric bootstrap methods can effectively approximate the eigenvalue distribution of sample covariance matrices in low-dimensional settings, where the sample size approaches infinity while the data dimension remains fixed or grows slowly.

More recently, the research has focused on evaluating whether the bootstrap methods can reliably capture the asymptotic properties of sample covariance matrices in high-dimensional settings. We summarize some closely related literature as follows. For the global behavior of the spectrum of sample covariance matrices, \cite{lopes2019bootstrapping} investigated the problem of bootstrapping linear spectral statistics for datasets with the structure (\ref{eq_generatingmodel}). Subsequently, \cite{wang2023bootstrap} extended this study to the bootstrap of linear spectral statistics in the high-dimensional elliptical model. Moreover, \cite{lopes2023bootstrapping} explored the efficiency of bootstrapping the operator norm under various population decay profiles, while \cite{zhang2024covariance} developed a universal bootstrap statistic based on the covariance operator norm for testing covariance matrices.  Additionally, \cite{dette2024nonparametric} proposed a nonparametric sampling-with-replacement bootstrap for eigenvalue statistics of high-dimensional sample covariance matrices.

Regarding individual eigenvalues, much less attention has been given to high-dimensional settings, except for a few cases under certain structural assumptions. These assumptions generally ensure that the individual eigenvalues of sample covariance matrices exhibit Gaussian behavior. For instance, \cite{han2018gaussian} studied the multiplier bootstrap for the largest eigenvalue in a moderately diverging dimension setting, assuming $p = \mathrm{o}(n^{1/9})$, while \cite{yao2023rates} investigated the standard bootstrap for the largest eigenvalue, assuming that the eigenvalues of $\Sigma$ decay exponentially. Similar assumptions and results for eigenvectors were established in \cite{naumov2019bootstrap}. More recently, \cite{yu2024testing} examined the standard bootstrap under a factor model, assuming strong factor strength as in \cite{karoui2016bootstrap}. However, it remains unclear and challenging to determine whether the multiplier bootstrap can perform effectively in high-dimensional settings without relying on such strong structural assumptions, as questioned in \cite{el2019non,karoui2016bootstrap}.

Finally, we note that (\ref{eq_samplecov}) is often referred to as a \emph{separable sample covariance matrix} in the context of random matrix theory (RMT). In the literature, such models have been studied primarily under scenarios where both $\Sigma$ and $D$ are bounded and deterministic; see, for instance, \cite{couillet2014analysis,9779233,Karoui2009,paul2009no,yang2019edge,Zhanggeneral}. However, our focus on the random matrix model in \eqref{eq_samplecov} differs from the aforementioned studies, as we treat $D^2$ as random multipliers, whose range may also be unbounded. Existing results addressing the case of random $D$ are limited \cite{Karoui2009,Zhanggeneral}, and these are confined to analyzing the limiting spectral distribution under specific conditions. In this regard, our findings contribute to the RMT literature by providing the limiting distributions of the edge eigenvalues of a novel class of separable sample covariance matrices with random structures, which may be of independent interest.

\subsection{An overview of our results and contributions}\label{sec_overviewofresults}

In this section, we provide an informal overview of our results and highlight the main contributions and novelties of our paper. At a high level, our findings and proposed algorithms demonstrate that the conditional multiplier bootstrap can analyze the asymptotics of the largest few eigenvalues of sample covariance matrices, whether spiked or not, provided that the multipliers are appropriately chosen and the bootstrap procedures are properly modified. We elaborate on this in more detail below.

In Section \ref{sec_boostrapeffect_nonspike}, we examine the feasibility of the multiplier bootstrap for non-spiked sample covariance matrices in high dimensions. For clarity, we present both the unconditional and conditional versions of the bootstrap. In the unconditional bootstrap, we study the distribution of the largest eigenvalues of $Q$ without conditioning, while in the conditional bootstrap we study these eigenvalues conditional on $X$, which corresponds to the commonly used form of the multiplier bootstrap. First, Theorem \ref{thm_main_unbounded} shows that the unconditional multiplier bootstrap fails to replicate the asymptotic distribution for the largest eigenvalues of non-spiked sample covariance matrices when the multipliers are unbounded. Instead, in this setting, the largest eigenvalues of the bootstrapped sample covariance matrices follow either a Fréchet or a Gumbel distribution, and therefore do not capture the Tracy–Widom behavior. Moreover, the counterpart for the conditional bootstrap under this setup is given in Theorem \ref{thm_main_unbounded_conditional}, which shows that the unconditional and conditional bootstrap share the same asymptotic behavior when the multipliers are unbounded and therefore cannot be applied directly to study the Tracy–Widom distribution either.

Second, for bounded multipliers, Theorem \ref{thm_main_bounded} shows that the distribution of the largest eigenvalues of the unconditional bootstrapped sample covariance matrices can undergo several phase transitions and follow Tracy–Widom, Gaussian, or Weibull distributions, depending on the aspect ratio $p/n$, the edge behavior of the multipliers, and the population covariance matrix $\Sigma$. Even though the unconditional bootstrap is not directly applicable, it makes it possible to study the asymptotic distribution of the largest eigenvalues of sample covariance matrices with appropriately chosen bounded multipliers. We then establish the conditional multiplier bootstrap in Theorem \ref{thm_main_bounded_conditional}. The distribution of the largest eigenvalues also exhibits several phase transitions. In particular, the Weibull and Gaussian distributions arise in patterns similar to those in the unconditional bootstrap, although the counterpart of the Tracy–Widom (TW) law degenerates to a point mass. Although the conditional bootstrap is not directly consistent, part (3) of Theorem \ref{thm_main_bounded_conditional} still motivates modified inference for the right edge in \eqref{eq_twresultoriginal}. Remark \ref{remark_important} explains how bounded multipliers with suitable concentration and decaying variance can be used to construct confidence intervals for $E_+$, while Corollary \ref{cor_joint_gaussian_edge} identifies the common multiplier-induced shift of the leading eigenvalues. The numerical implementation uses the multipliers in \eqref{eq_xiconstruction}.

%, provided that the parameters of the Tracy-Widom law are accurately estimated.
%Third, to achieve this, we propose a new modified procedure, Algorithm \ref{algtw}, which generates a large number of bootstrapped sample covariance matrices using carefully designed bounded multipliers. Theoretically, Corollary \ref{col_recoverTW} establishes that our modified multiplier bootstrap procedure can effectively capture the Tracy-Widom law for non-spiked sample covariance matrices—

In Section \ref{sec_boostrapeffect_spike}, we examine the effectiveness of the multiplier bootstrap for spiked sample covariance matrices. In summary, the largest eigenvalues of both the unconditional and conditional bootstrapped sample covariance matrices under the spiked model are asymptotically Gaussian, regardless of whether the multipliers are bounded, under mild assumptions.  In Theorem \ref{thm_main_spike_unconditional}, we establish the result for the unconditional bootstrap. Specifically, for given multipliers, once the spikes exceed a certain threshold—possibly depending on the multipliers—the largest eigenvalues are asymptotically Gaussian. However, since it is impossible to find multipliers that equalize the asymptotic means of the largest eigenvalues of the sample covariance matrix and the unconditional bootstrapped covariance matrix (see Remark \ref{rem_asymGaussian_spike}). Therefore, the unconditional bootstrap is not only practically inapplicable but also fundamentally ineffective in this setting.

We then develop two versions of the conditional multiplier bootstrap in Theorem \ref{thm_main_spike_conditional}. A direct counterpart to the unconditional bootstrap (and to the limiting CLT for the sample covariance matrix) is established in part (1). However, this version is practically inapplicable, as the asymptotic mean involves the unobservable population covariance matrix. To address this issue, part (2) provides a modified version. Once the multipliers satisfy a suitable mean--variance constraint (cf. (\ref{eq_xicon})), the multiplier bootstrap can be applied directly and is able to recover the asymptotic mean and variance of the Gaussian limit for the largest eigenvalues of the sample covariance matrices (cf. Remark \ref{rem_conditionalspike}). Our results further show that the conditional bootstrap becomes valid—after an appropriate choice of multipliers and a suitable modification of the bootstrap—once the spike size is much larger than $\mathrm{O}(n^{1/4})$. This threshold is substantially smaller than the order $\mathrm{O}(n^{1/2})$ predicted in \cite{el2019non}.

We note that, technically, this problem reduces to studying the largest eigenvalues of both the unconditional and conditional bootstrapped sample covariance matrices (\ref{eq_samplecov}) under various assumptions on the multipliers, for both spiked and non-spiked models. Our analysis reveals that the asymptotic distribution of these eigenvalues, both unconditional and conditional, can take various forms, including the three extreme value distributions for sequences of i.i.d. random variables—Gumbel, Fréchet, and Weibull \cite{beirlant2004statistics}—as well as the Tracy-Widom (TW) law, Gaussian, or a mixture of TW and Gaussian distributions. The specific distribution depends on $D^2$ (i.e., the multipliers), $\Sigma$, the presence of spikes, and the aspect ratio $p/n$.
These theoretical findings are of independent interest, offering insights into how bootstrap mechanisms influence the spectral limits of covariance matrices. Consequently, they provide guidance for designing appropriate multiplier mechanisms to effectively bootstrap the largest eigenvalues in both spiked and non-spiked models for various statistical applications.

The rest of this article is organized as follows. In Section \ref{sec_modelsetup}, we give the details of our model and some basic assumptions. In Section \ref{sec_boostrapeffect_nonspike}, we present the main results of multiplier bootstrap for the non-spiked covariance matrix model. In Section \ref{sec_boostrapeffect_spike}, we study the multiplier bootstrap for the spiked covariance matrix model.  
%In Section \ref{sec_application_factorselection}, we consider application of multiplier bootstrap methodologies in common factor selection.
Numerical simulations are provided in Section \ref{sec_simulations} to show the accuracy of our results. The proof strategies are summarized in Section \ref{sec_proofstrategy}. Technique proof and details are deferred to our supplementary material. 
 %\cite{suppl}.

\vspace{10pt}
\noindent {\bf Conventions.} Let $\mathbb{C}_+$ be the complex upper half plane. We denote $C>0$ as a generic constant whose value may change from line to line. For two sequences of deterministic positive values $\{a_n\}$ and $\{b_n\},$ we write $a_n=\rO(b_n)$ if $a_n \leq C b_n$ for some positive constant $C>0.$ In addition, if both $a_n=\rO(b_n)$ and $b_n=\rO(a_n),$ we write $a_n \asymp b_n.$ Moreover, we write $a_n=\ro(b_n)$ if $a_n \leq c_n b_n$ for some positive sequence $c_n \downarrow 0.$ In addition, for a sequence of random variables $\{x_n\}$ and positive real values $\{a_n\},$ we use $x_n=\rO_{\mathbb{P}}(a_n)$ to state that $x_n/a_n$ is stochastically bounded. Similarly, we use $x_n=\ro_{\mathbb{P}}(a_n)$ to say that $x_n/a_n$ converges to zero in probability. For a sequence of positive random variables $\{y_n\},$ we use $y_{(k)}, 1 \leq k \leq n,$ for its order statistics with $y_{(1)} \geq y_{(2)} \geq \cdots \geq y_{(n)}>0.$

\section{The model and basic assumptions}\label{sec_modelsetup}
 
In this section, we introduce our model and some assumptions. As discussed around (\ref{eq_samplecov}),  we consider bootstrapped data matrix of the following form
\begin{equation}\label{eq_datamatrix}
Y=\Sigma^{1/2} XD,
\end{equation}
where $\Sigma$ is a $p \times p$ deterministic positive definite matrix, $D$ is an $n \times n$ diagonal random matrix containing i.i.d. multipliers, and $X$ is a $p \times n$ random matrix independent of $D$ whose entries satisfying the following assumption.  
%Motivated by statistical applications and for the purpose of definiteness, we consider the following separable i.i.d. data matrix in the form of (\ref{eq_datamatrix}).
\begin{assumption}\label{assum_model}
Throughout the paper, we assume that the entries of $X=(x_{ij})$ are centered  i.i.d. random variables satisfying that for $1 \leq i \leq p, 1 \leq j \leq n,$
\begin{equation}\label{eq_standard1n}
\mathbb{E} x_{ij}=0, \ \mathbb{E} x_{ij}^2=\frac{1}{n}. 
\end{equation} 
Moreover, we assume that for all $k \in \mathbb{N},$ there exists some constant $C_k>0$ so that $\mathbb{E}|\sqrt{n} x_{ij}|^k \leq C_k. $ 
%Finally, for $T,$ we assume that $p_1=p$ and $T=\Sigma^{1/2}$ for some positive definite matrix $\Sigma.$
\end{assumption}

%To illustrate the generality and usefulness of the concerned models in Assumption \ref{assum_model}, we provide a few examples and discuss their concrete applications in the statistical literature, beyond multiplier bootstrap. 
%
%\begin{example}\label{exam_boostrapping}
%The model of Assumption \ref{assum_model} has been used in many different contexts. We can write the columns of $Y, \{\mathbf{y}_i\}, 1 \leq i \leq n,$ as follows
%\begin{equation*}
%\mathbf{y}_i=\xi_i \Sigma^{1/2} \mathbf{x}_i. 
%\end{equation*}
%First, when $\mathbf{x}_i$'s are Gaussian, the data has been used in \cite{el2018impact,el2013robust} to study the performance of high dimensional robust regressions and used in \cite{hu2019central} to study the large MIMO systems in wireless communications. Second, when the entries $\mathbf{x}_i$ have more general distributions,  in \cite{li2018structure,yang2021testing}, the model has been utilized to study the covariance structures in various settings. Third, when $\xi_i$'s are chosen as the random sampling weights, the model has been used to study the high dimensional bootstrap in \cite{el2019non,naumov2019bootstrap,yu2024testing}. Finally, model (\ref{eq_datamatrix}) appears frequently in deep neural networks and are closely related to the  input-output Jacobian matrices \cite{PLone, PLthree, PLtwo}.       
%\end{example}

\quad \quad For the multipliers, we impose the following mild assumptions.  
%In the rest of this subsection, we introduce the  two main technical assumptions. The first assumption (cf. Assumption \ref{assum_D}) is imposed on our bootstrap multipliers, say the random diagonal matrix $D.$
\begin{assumption}\label{assum_D}
Let $D^2=\operatorname{diag} \left\{ \xi_1^2, \cdots, \xi_n^2 \right\}.$ Moreover, for its entries, we assume $$\xi_i^2 \sim \xi^2, \ 1 \leq i \leq n,$$ are i.i.d. generated from a nonnegative and non-degenerated continuous random variable $\xi^2$ satisfying the following assumptions.
\begin{enumerate}
\item[(i)] {\bf Unbounded support case.} We assume that $\xi^2$ has an unbounded support and  satisfies either of the following two conditions:  \\
 (a). $\xi^2$ is a  regularly varying random variable  \cite{resnick2008extreme}  that {
	\begin{equation}\label{ass3.1}
	    \mathbb{P}(\xi^2>x)=\frac{L}{x^{\alpha}},\quad x\uparrow\infty,
	\end{equation} }
	for some $\alpha\in(2,+\infty)$, where $L>0$ is some universal constant.
	%$L(x)$ is a slowly varying function in the sense that for all $t>0,$ $\lim_{x \rightarrow \infty} L(tx)/L(x)=1.$

\vspace{3pt}
	
\noindent(b). $\xi^2$ has an exponentially decaying tail in the sense that there exist some universal constants $L>0, t>0$ and $0<\beta<2$  so that {
\begin{align}\label{ass3.2}
\mathbb{P}(\xi^2 > x) = L e^{-t x^{\beta}}, \ x \uparrow \infty. 
\end{align} }
%
%
% for some constant $\beta>0$ and any fixed constant $t>0$
%\begin{equation}\label{ass3.2}
%\mathbb{E}e^{t\xi^{2\beta}}<\infty. 
%\end{equation}
%{\color{red} [need to add more assumption here, in some cases, the distribution is Tracy-Widom]}
\item[(ii)] {\bf Bounded support case}.  We assume that
 $\xi^2$ has a bounded support on $(0,l]$ for some fixed constant
$l>0.$ Moreover, for some constant $d>-1,$ we assume that  
%the tail probability around the hard edge $l$ satisfies
\begin{equation}\label{ass3.4}
\mathbb{P}( l-\xi^2 \leq x) \asymp x^{d+1}.
\end{equation}
Finally, let $F(x)$ be the cumulative distribution function (CDF) of $\xi^2,$ we assume that  
\begin{equation}\label{eq_defnbfrak}
0< \mathfrak{b}:= \lim_{x \uparrow l} \frac{1-F(x)}{(l-x)^{d+1}}<\infty. 
\end{equation}
\end{enumerate}

%Without loss of generality, we assume that 

\end{assumption}

\begin{remark}\label{rmk_multiplier}
Several remarks are in order. First, for the unbounded  multipliers, (\ref{ass3.1}) indicates that the tails of $\xi^2$ decay polynomially. Many commonly used distributions are included in this category. To name but a few, Pareto distribution, $F$ distribution and student-$t$ distribution. Moreover, according to extreme value theory (see Lemma \ref{lem_summaryevt} of our supplement), when (\ref{ass3.1}) is satisfied, $\xi_{(1)}^2$ follows a Fr{\' e}chet distribution asymptotically. Second, for the unbounded setting, (\ref{ass3.2}) implies that the tails of $\xi^2$ decay exponentially.  In fact, with some additional efforts, one can generalize (\ref{ass3.2}) to 
%, it is necessarily that the CDF of $\xi^2$ admits 
\begin{equation}\label{eq_defng}
\mathbb{P}(\xi^2>x)=\exp(-\mathsf{g}(x)),
\end{equation}
for some positive function $\mathsf{g}(x)>0$ exhibits polynomial growth which allows the constant $L$ to be slightly generalized to a broader class of functions. Furthermore, if \begin{equation}\label{assum_gg}
\mathsf{g} \in C^{\infty}([0, \infty)), \ \ \lim_{x \uparrow \infty} (1/\mathsf{g}'(x))'=0,
\end{equation}     
we see from Lemma \ref{lem_summaryevt} of our supplement that $\xi_{(1)}^2$ follows a Gumbel distribution asymptotically. In fact, many commonly used distributions, for instance, Chi-squared distribution, exponential distribution and Gamma distribution, satisfy these conditions. 

Third, for the bounded  multipliers, (\ref{ass3.4}) indicates that $\xi^2$ has a possible polynomial decay behavior near the edge. Under the assumption of (\ref{eq_defnbfrak}), we see from Lemma \ref{lem_summaryevt} of our supplement that $\xi_{(1)}^2$ obeys a Weibull distribution asymptotically. The conditions allow for many distributions like (shifted) Beta distribution, uniform distribution and U-quadratic distribution. In summary,  Assumption \ref{assum_D} is mild and covers many commonly used multipliers. 

%In contrast, as mentioned in Section \ref{subsec_existingresult}, existing literature only handles deterministic or nearly deterministic $\xi_i^2, 1 \leq i \leq n.$           
\end{remark}

\quad The following assumption introduces some mild conditions on the aspect ratio $p/n$ and the population covariance matrix $\Sigma.$

\begin{assumption}\label{assumption_techincial}
We assume the following conditions hold true for some small constant $0<\tau<1$. 
\begin{enumerate}
\item[(i)] {\bf On dimensionality}. Throughout the paper, we consider the high dimensional regime that 
\begin{equation}\label{ass1}
 \tau \leq \phi:=\frac{p}{n} \leq \tau^{-1}. 
\end{equation}
\item[(ii)] {\bf On $\Sigma.$} For the population covariance matrix $\Sigma,$ we assume that it admits the following spectral decomposition 
\begin{equation}\label{eq_sigmaspectraldecomposition}
\Sigma=\sum_{j=1}^p \sigma_j \mathbf{v}_j \mathbf{v}_j^*,
\end{equation}
where 
\begin{equation}\label{ass2}
\tau \leq \sigma_p \leq \sigma_{p-1} \leq \cdots \leq \sigma_2 \leq \sigma_1 \leq \tau^{-1}, 
\end{equation}
are the eigenvalues and $\{\mathbf{v}_j\}$ are the associated eigenvectors. 
\end{enumerate}
\end{assumption}

\quad We remark that (\ref{ass1}) is commonly used in random matrix theory and high dimensional statistics literature for quantifying the high dimensionality. (\ref{ass2}) states the eigenvalues of the population covariance matrix are bounded from above and below. On the one hand, when $\xi^2$ has unbounded support as in Case (i) of Assumption \ref{assum_D}, (\ref{ass2}) is the only assumption imposed on $\Sigma.$  On the other hand, when $\xi^2$ has bounded support as in Case (ii) of Assumption \ref{assum_D}, we will require an additional mild assumption, Assumption \ref{assum_additional_techinical}, to exclude potential spikes.   

In the statistical literature, motivated by real applications, one often adds some spikes to $\Sigma$ which result in the famous spiked covariance matrix model \cite{ding2021spiked11,John2001}. To construct such a model, one can introduce a perturbed version of $\Sigma,$ denoted as $\widetilde{\Sigma}$ whose spectral decomposition follows 
\begin{equation}\label{eq_truemodelspiked}
\widetilde{\Sigma}=\sum_{j=1}^p \widetilde{\sigma}_j \mathbf{v}_j \mathbf{v}_j^*, 
\end{equation}
where for some fixed constant $r>0,$ {$\widetilde{\sigma}_1 \geq \widetilde{\sigma}_2 \geq \cdots \geq \widetilde{\sigma}_r>\widetilde{\sigma}_{r+1}$ are $r$  values representing the larger spikes while the rest $\widetilde{\sigma}_j=\sigma_j, j\geq r+1$'s are relatively small and bounded. For simplicity and definiteness, we assume that for some constant $\tilde{\tau}>0$
\begin{equation}\label{eq_separation}
\frac{\widetilde{\sigma}_i}{\widetilde{\sigma}_{i+1}} \geq 1+ \tilde{\tau},  \  1 \leq i \leq r. 
\end{equation} }
Based on $\widetilde{\Sigma},$ the counterpart of the bootstrapped data matrix (\ref{eq_datamatrix}) can be written as 
\begin{equation*}
\widetilde{Y}=\widetilde{\Sigma}^{1/2}XD.
\end{equation*}
Consequently, the bootstrapped sample covariance matrix (i.e., the counterpart of (\ref{eq_samplecov})) can be written as 
\begin{equation}\label{eq_samplecov_spike}
\widetilde{Q}:=\widetilde{Y} \widetilde{Y}^* \equiv \widetilde{\Sigma}^{1/2}XD^2 X^*  \widetilde{\Sigma}^{1/2}. 
\end{equation}

As explained earlier, the understanding of the multiplier boostrap on the edge eigenvalues of sample covariance matrices boils down to  the study of 
%the extreme singular values of $Y$ in  (\ref{eq_datamatrix}) i.e., 
the first few largest eigenvalues of the $p \times p$ bootstrapped sample covariance matrices $Q$ in (\ref{eq_samplecov}) or $\widetilde{Q}$ in (\ref{eq_samplecov_spike}).  To clarify the notations used for the various matrices, we summarize them in Table \ref{table_notations}.

\begin{table}[!ht]
\begin{tabular}{llll}
\toprule
      Model           & Quantity  &Sample version & Bootstrapped version\\ \midrule
\multirow{2}{*}{Non-spiked} & Matrix & $S:=\Sigma^{1/2}XX^{*}\Sigma^{1/2}$ & $Q:=\Sigma^{1/2}XD^2X^{*}\Sigma^{1/2}$\\ \cmidrule(l){3-4} 
                  & Eigenvalues & $\{\widehat{\lambda}_i\}$ &  $\{\lambda_i\}$ \\ \midrule
\multirow{2}{*}{Spiked} &Matrix & $\widetilde{S}:=\widetilde{\Sigma}^{1/2}XX^{*}\widetilde{\Sigma}^{1/2}$ & $\widetilde{Q}:=\widetilde{\Sigma}^{1/2}XD^2X^{*}\widetilde{\Sigma}^{1/2}$  \\ \cmidrule(l){3-4}
                  & Eigenvalues & $\{\widehat{\mu}_i\}$ & $\{\mu_i\}$\\\bottomrule
\end{tabular}
\caption{Summary of some important notations.}\label{table_notations}
\end{table}

%\begin{remark}

%\end{remark}

%\section{Main results}\label{sec_mainresults}
%\subsection{Extreme eigenvalues}

%We emphasize that the theoretical foundation offers valuable insights into the performance of the multiplier bootstrap across various classes of multipliers. Additionally, we explore scenarios where the bootstrap method performs exceptionally well, instances where it falls short, and strategies to improve its accuracy in addressing testing problems within the spiked covariance matrix model framework. }
\section{Multiplier bootstrap meets the non-spiked covariance matrix model}\label{sec_boostrapeffect_nonspike}
%The first part of the results are summarized in Theorems \ref{thm_main_unbounded} and \ref{thm_main_bounded} below, which demonstrate the asymptotic distributions for top eigenvalues of bootstrapped sample covariance matrix through unbounded or bounded multipliers, respectively.

In this section, we present the first part of the main results by evaluating the effectiveness of the bootstrap method for different classes of multipliers. Specifically, we establish the asymptotic distributions of the largest eigenvalues of both the conditional and unconditional bootstrapped sample covariance matrix (\ref{eq_samplecov}) when the population covariance matrix $\Sigma$ does not contain  large spikes. In this scenario, the extreme eigenvalues of the sample covariance matrix $S$ in Table \ref{table_notations} follow the Tracy-Widom distribution \cite{lee2016tracy}. More explicitly, there exist some constants $\gamma_0, E_+$ so that when $n$ is sufficiently large
\begin{equation}\label{eq_twresultoriginal}
\left|\mathbb{P} \left( \gamma_0 n^{2/3}(\widehat{\lambda}_1-E_+) \leq x  \right) -\mathrm{TW}(x) \right|=\mathrm{o}(1),
\end{equation}
where $\mathrm{TW}(x)$ denotes the CDF of the type-1 Tracy--Widom distribution. For concrete definitions of $\gamma_0$ and $E_+,$ we refer the readers to (\ref{eq_def_Eplus}) and (\ref{eq_def_gamma0}) of the supplement.

In Section \ref{sec_nonspike_thebad}, we show that when the multipliers are unbounded, the multiplier bootstrap will destroy the Tracy-Widom shape in terms of  (\ref{eq_twresultoriginal}), whether considered unconditionally or conditionally. Subsequently, in Section \ref{sec_nonspike_thegood}, we study the impact of bounded multipliers with polynomial-type edge behavior on the bootstrapped sample covariance matrices, for both the unconditional and conditional bootstrap. We identify phase transitions among the limiting spectral-edge laws and briefly discuss their implications for further downstream statistical inference in terms of understanding (\ref{eq_twresultoriginal}). 

For notational simplicity, throughout the remainder of the paper, and following conventions in the bootstrap literature (see \cite{lopes2025improved}), we use $\mathbb{P}(\cdot \mid X)$ to denote probabilities conditional on $\mathbf{x}_1, \ldots, \mathbf{x}_n$.

%{\color{red}[introduce the notations for conditional boostrapping]}

\subsection{The impact of unbounded multipliers}\label{sec_nonspike_thebad}
We first provide the results for the extreme eigenvalues when the multiplier $\xi^2$ has unbounded support in the sense that (i) of Assumption \ref{assum_D} holds. Denote
\begin{equation}\label{eq_somenotations}
\bar{\sigma}_1=\frac{1}{p} \sum_{i=1}^p \sigma_i,\quad {\bar{\sigma}_2=\frac{1}{p}\sum_{i=1}^p\sigma_i^2}, \quad \varphi:=\phi\bar{\sigma}_1. 
\end{equation}
Recall $F(x)$ is the CDF of $\xi^2_i, 1 \leq i \leq n.$ Denote 
\begin{equation}\label{eq_defnbn}
b_n:=\inf \left\{ x: 1-F(x) \leq \frac{1}{n} \right\}.
\end{equation}

Recall from Table \ref{table_notations} that $\lambda_1$ is the largest eigenvalue of $Q$. The first result concerns the unconditional version of multiplier bootstrap. 
\begin{theorem}[Unconditional bootstrap]\label{thm_main_unbounded} Suppose Assumptions \ref{assum_model}, \ref{assumption_techincial} and (i) of Assumption \ref{assum_D} hold. Recall (\ref{eq_somenotations}).  When (\ref{ass3.1}) holds and $n$ is sufficiently large, we have that for all $x \geq 0$
 %when $n$ is sufficiently large,
%where for $\phi$ in (\ref{ass1}),  Consequently, when (\ref{ass3.1}) holds, $\varphi^{-1}\lambda_1 $ follows the Fr{\'e}chet distribution asymptotically in the sense that 
%for $x \geq 0$
{
\begin{equation}\label{eq_mainresultunboundedfrechet}
\left| \mathbb{P} \left( \frac{\lambda_1}{ \varphi b_n} \leq x \right)-\exp\left(-x^{-\alpha} \right) \right|=\mathrm{o}(1).
\end{equation} }
Moreover, when (\ref{ass3.2}) holds and $n$ is sufficiently large, we have that for all $x \in \mathbb{R}$  {
%$ \varphi^{-1}\lambda_1 $ follows the Gumbel distribution asymptotically in the sense that 
%for $x \in \mathbb{R}$
 \begin{equation}\label{eq_mainresultunboundedgumbel}
% \lim_{n \rightarrow \infty}
 \left| \mathbb{P}\left( \mathsf{g}'(b_n) \left[\varphi^{-1}\lambda_1-(b_n+c_0) \right] \leq x \right)-\exp \left(-e^{-x} \right) \right|=\mathrm{o}(1),
 \end{equation}}
 where we recall $\mathsf{g}(x)=e^{\log L} t x^\beta$ and {$c_0:=\varphi^{-1}\times\mathbb{E}\xi^2\times \bar{\sigma}_2/\bar{\sigma}_1$.} 
\end{theorem}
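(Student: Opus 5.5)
The largest eigenvalue of $Q$ is driven by the few columns with the largest multipliers. Order the multipliers $\xi_{(1)}^2\ge \xi_{(2)}^2\ge\cdots$ and let $\mathbf{y}_{(k)}=\xi_{(k)}\Sigma^{1/2}\mathbf{x}_{(k)}$ be the corresponding columns. Write
\begin{equation*}
Q=\xi_{(1)}^2\Sigma^{1/2}\mathbf{x}_{(1)}\mathbf{x}_{(1)}^*\Sigma^{1/2}+Q^{(1)},
\end{equation*}
where $Q^{(1)}$ is built from the remaining $n-1$ columns. Under (i) of Assumption \ref{assum_D}, extreme value theory (Lemma \ref{lem_summaryevt}) gives $\xi_{(1)}^2\asymp b_n$, which diverges like $n^{1/\alpha}$ or $(\log n)^{1/\beta}$. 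The rank-one part has size $\xi_{(1)}^2\,\mathbf{x}_{(1)}^*\Sigma\mathbf{x}_{(1)}$. Since $\mathbf{x}_{(1)}$ is independent of $D$ with entries of variance $n^{-1}$, the quadratic form concentrates:
\begin{equation*}
\mathbf{x}_{(1)}^*\Sigma\mathbf{x}_{(1)}=\phi\bar\sigma_1+\mathrm{O}_{\mathbb{P}}(n^{-1/2})=\varphi+\mathrm{O}_{\mathbb{P}}(n^{-1/2}).
\end{equation*}
This is why $\varphi b_n$ is the natural normalization.

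The plan is to treat $Q$ as a spiked model whose spike comes from the multiplier, and to solve the secular equation. Condition on $D$. The eigenvalue $\lambda$ of $Q$ outside the spectrum of $Q^{(1)}$ satisfies
\begin{equation*}
1=\xi_{(1)}^2\,\mathbf{x}_{(1)}^*\Sigma^{1/2}\bigl(\lambda-Q^{(1)}\bigr)^{-1}\Sigma^{1/2}\mathbf{x}_{(1)}.
\end{equation*}
The first step is an a priori bound $\|Q^{(1)}\|\le C\xi_{(2)}^2+C$ with high probability. I would get this by splitting the columns into those with $\xi_i^2\le n^{\epsilon}$, whose norm is $\mathrm{O}(n^{\epsilon})$ by standard separable covariance bounds, and the few large ones, handled by a crude rank plus norm estimate. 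Because $\xi_{(1)}^2-\xi_{(2)}^2$ is of order $b_n$ in the Fréchet case, this makes $\lambda_1$ a well-separated outlier. For $\lambda\asymp b_n$, I expand the resolvent as a Neumann series:
\begin{equation*}
\lambda=\xi_{(1)}^2\Bigl(\mathbf{x}_{(1)}^*\Sigma\mathbf{x}_{(1)}+\lambda^{-1}\mathbf{x}_{(1)}^*\Sigma^{1/2}Q^{(1)}\Sigma^{1/2}\mathbf{x}_{(1)}+\cdots\Bigr).
\end{equation*}
Since $\mathbf{x}_{(1)}$ is independent of $Q^{(1)}$, the large deviation estimate gives
\begin{equation*}
\mathbf{x}_{(1)}^*\Sigma^{1/2}Q^{(1)}\Sigma^{1/2}\mathbf{x}_{(1)}\approx n^{-1}\operatorname{tr}(\Sigma Q^{(1)})\approx \mathbb{E}\xi^2\,\phi\bar\sigma_2.
\end{equation*}
Taking $\lambda\approx\varphi\xi_{(1)}^2$, the correction term is $\xi_{(1)}^2\cdot \mathbb{E}\xi^2\phi\bar\sigma_2/(\varphi\xi_{(1)}^2)$, which is the constant shift $\varphi c_0$ with $c_0$ as defined in the statement. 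This yields
\begin{equation*}
\varphi^{-1}\lambda_1=\xi_{(1)}^2+c_0+\mathrm{o}_{\mathbb{P}}(\text{scale}),
\end{equation*}
where the scale is $b_n$ in the Fréchet case and $1/\mathsf{g}'(b_n)$ in the Gumbel case. To finish, I apply Lemma \ref{lem_summaryevt}: $\xi_{(1)}^2/b_n$ converges to Fréchet($\alpha$), and $\mathsf{g}'(b_n)(\xi_{(1)}^2-b_n)$ converges to Gumbel. Slutsky's lemma then gives \eqref{eq_mainresultunboundedfrechet} and \eqref{eq_mainresultunboundedgumbel}. In the Fréchet case the shift $c_0$ and the $n^{-1/2}b_n$ fluctuation are negligible relative to $b_n$, so only the leading term matters there.

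The main obstacle is the Gumbel case. The scale $1/\mathsf{g}'(b_n)\asymp b_n^{1-\beta}$ may be small or even vanish when $\beta>1$, so every error must be $\mathrm{o}(b_n^{1-\beta})$. There are three difficulties.
\begin{enumerate}
\item The fluctuation $\xi_{(1)}^2(\mathbf{x}_{(1)}^*\Sigma\mathbf{x}_{(1)}-\varphi)=\mathrm{O}_{\mathbb{P}}\bigl(b_n n^{-1/2}\bigr)$ must be small. This is fine because $b_n$ is polylogarithmic.
\item The gap $\xi_{(1)}^2-\xi_{(2)}^2$ is only of order $b_n^{1-\beta}$, so $\lambda_1$ is not an isolated outlier. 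The top several columns interact, and there are many large multipliers of comparable size $\approx b_n$.
\item The bulk of $Q$ has edge of order $\mathrm{O}(1)$ or at most logarithmic, which is much smaller than $b_n$, and this is what keeps the perturbation analysis in item 2 tractable.
\end{enumerate}
To handle item 2, I would take the $K=\mathrm{O}(\log n)$ largest multipliers (or all with $\xi_i^2\ge b_n/2$) as a finite-rank perturbation. I would then use the $K\times K$ secular matrix
\begin{equation*}
\bigl(\xi_{(k)}\xi_{(l)}\,\mathbf{x}_{(k)}^*\Sigma^{1/2}G(\lambda)\Sigma^{1/2}\mathbf{x}_{(l)}\bigr)_{k,l}.
\end{equation*}
Its off-diagonal entries are $\mathrm{O}(n^{-1/2})$ after normalization by $\lambda$, so its top eigenvalue equals the top diagonal entry up to $\mathrm{O}(b_n K n^{-1/2})$. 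This error is negligible, and the argument reduces to the single-column computation above.
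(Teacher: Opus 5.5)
Your skeleton matches the paper's: isolate the column carrying $\xi_{(1)}^2$, solve the rank-one secular equation (compare $M(\lambda)$ in \eqref{eq_defnmlambda}), and expand to second order to obtain the shift $\varphi c_0=\mathbb{E}\xi^2\,\bar\sigma_2/\bar\sigma_1$ (compare \eqref{eq_mu1linear}). What is missing is the ingredient that makes the secular equation usable: a sharp a priori bound placing the spectrum of the leave-out matrix below the candidate root.

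Your bound $\|Q^{(1)}\|\le C\xi_{(2)}^2+C$ does not provide this. The natural constant, from $\|Q^{(1)}\|\le\sigma_1\xi_{(2)}^2\|XX^*\|$, is $\sigma_1(1+\sqrt{\phi})^2>\varphi$. In the Fr\'echet case $\xi_{(1)}^2/\xi_{(2)}^2$ falls below any fixed $r>1$ with probability bounded away from zero, and in the Gumbel case it tends to $1$. So ``well-separated outlier'' does not follow.

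In the Fr\'echet case this is repairable. Take the $m=\mathrm{o}(n)$ columns with $\xi_i^2>n^{\epsilon}$, $\epsilon<1/\alpha$. Their Gram matrix (column $(1)$ excluded) is its diagonal, with largest entry $\varphi\xi_{(2)}^2(1+\mathrm{o}(1))$, plus an off-diagonal part of norm $\mathrm{O}(\xi_{(2)}^2\sqrt{m/n})$. Hence $\|Q^{(1)}\|\le\varphi\xi_{(2)}^2(1+\mathrm{o}(1))+\mathrm{O}(n^{\epsilon})$. The same Weyl argument applied to $DX^*\Sigma XD$ even gives $\lambda_1=\varphi\xi_{(1)}^2(1+\mathrm{o}_{\mathbb{P}}(1))$ directly.

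The Gumbel case is where the gap is real. The margin between $\lambda_1\approx\varphi(\xi_{(1)}^2+c_0)$ and $\lambda_1(Q^{(1)})\approx\varphi(\xi_{(2)}^2+c_0)$ is only $\varphi(\xi_{(1)}^2-\xi_{(2)}^2)\asymp b_n^{1-\beta}$, possibly tending to zero. You would therefore need $\lambda_1(Q^{(1)})\le\varphi(\xi_{(2)}^2+c_0)+\mathrm{o}(b_n^{1-\beta})$. That is the theorem itself for $Q^{(1)}$, shift included, and no crude estimate gives it.

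Your item 3 is also inaccurate. Roughly $n^{1-2^{-\beta}}$ multipliers exceed $b_n/2$, so $Q$ has polynomially many eigenvalues of order $b_n$ and no bulk edge far below $\varphi b_n$.

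The $K\times K$ device does not close the gap as written. With $K=\mathrm{O}(\log n)$, $Q^{(\mathcal K)}$ still has eigenvalues near $\varphi\xi_{(K+1)}^2$. These sit only $\mathrm{O}(b_n^{1-\beta}\log\log n)$ below $\lambda$, and you have no bound at that precision. Removing all columns with $\xi_i^2\ge b_n/2$ gives $K\asymp n^{1-2^{-\beta}}$, so your error $\mathrm{O}(b_nKn^{-1/2})$ diverges for $\beta\ge1$. Moreover $\sigma_1(1+\sqrt\phi)^2b_n/2$ need not lie below $\varphi b_n$.

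The paper supplies the missing localization through the averaged local law outside the spectrum: Proposition \ref{eq_propooursidebulk}, Lemma \ref{lem: upper bound for eigenvalues}, and its analogue for $Q^{(1)}$. These give $\vartheta_1>\lambda_1>\vartheta_2>\lambda_1^{(1)}$ in \eqref{eq_keyused}, for the deterministic equivalents defined by \eqref{eq: def of vartheta_1}. The gap events \eqref{def1}--\eqref{def3} then yield the sign change of $M$ at $\vartheta_1^{\pm}$ in \eqref{eq_keylocationdefinition}.

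Your route can plausibly be repaired without local laws. Remove all columns with $\xi_i^2\ge\theta b_n$ for a small fixed $\theta$, so that $\|Q^{(\mathcal K)}\|<\lambda/2$ and the resolvent expansion converges geometrically. Then bound the off-diagonal part of the $K\times K$ matrix in operator norm rather than entrywise; after normalization this is $\mathrm{O}(\sqrt{K/n})=\mathrm{O}(n^{-\theta^{\beta}/2})$. Weyl's inequality then reduces the top eigenvalue to the largest diagonal entry, with no gap condition needed.

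Finally, the first term you drop, $\xi_{(1)}^2\lambda^{-2}\mathbf{x}_{(1)}^*\Sigma^{1/2}(Q^{(1)})^2\Sigma^{1/2}\mathbf{x}_{(1)}$, is of exact order $b_n^{-1}$. This is $\mathrm{o}(b_n^{1-\beta})$ only because $\beta<2$, a hypothesis your proposal never invokes.
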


\begin{remark}\label{rmk_mainresults_unbounded}
%{\color{blue}
%
%explain the role of $c_0,$ both technically and empirically.  
%}
Two remarks are in order. First, Theorem \ref{thm_main_unbounded} states that when $\xi^2$ has unbounded support, $\lambda_1$ will be divergent. In fact, as can be seen in the proof of Proposition \ref{lem: eigenvalue rigidity} of the supplement, after being properly centered and scaled, $\lambda_1$ will have a similar behavior to $\xi_{(1)}^2$ in the sense that
\begin{equation}\label{eq_defnvarphi}
\frac{\lambda_1}{\xi_{(1)}^2}=\varphi+\ro_{\mathbb{P}}(1).
\end{equation}
Especially, when $\xi^2$ has a polynomial decay tail as in (\ref{ass3.1}), we can obtain the Fr{\' e}chet limit and when $\xi^2$ has an exponential decay tail as in (\ref{ass3.2}), we can get the Gumbel limit. 

{Second, Theorem \ref{thm_main_unbounded} also indicates that if we select unbounded multipliers, the typical Tracy-Widom (TW) limit of largest eigenvalues from sample covariance matrices has no chance to be reproduced.} Third, the above results can be generalized to the joint distribution of $k$ largest eigenvalues for any fixed integer $k.$ That is, for all $s_i \in \mathbb{R}, 1 \leq i \leq k,$ (\ref{eq_mainresultunboundedfrechet}) can be generalized to 
\begin{equation*}
\lim_{n \rightarrow \infty} \mathbb{P}  \left( \left( \frac{\lambda_i}{\varphi b_n} \leq s_i \right)_{1 \leq i \leq k} \right)=\lim_{n \rightarrow \infty}\mathbb{P} \left( \left( \frac{\xi^2_{(i)}}{b_n} \leq s_i \right)_{1 \leq i \leq k} \right),
\end{equation*}
and (\ref{eq_mainresultunboundedgumbel}) can be generalized to 
\begin{equation*}
\lim_{n \rightarrow \infty} \mathbb{P}  \left( \mathsf{g}'(b_n)\left(\varphi^{-1} \lambda_i-(b_n+c_0) \leq s_i \right)_{1 \leq i \leq k} \right)=\lim_{n \rightarrow \infty} \mathbb{P}  \left( \mathsf{g}'(b_n)\left(\xi_{(i)}^2-(b_n+c_0) \leq s_i \right)_{1 \leq i \leq k} \right).
\end{equation*}
Since the joint distribution of the order statistics of $\{\xi_i^2\}$ can be computed explicitly \cite{coles2001introduction}, the above formulas give a complete description of the finite-dimensional correlation
functions of the extremal eigenvalues. Finally, we mention that the Fr{\'e}chet distribution and Gumbel distribution also appear in the literature in heavy-tailed sample covariance matrices, see \cite{auffinger2009poisson,heiny2017eigenvalues,heiny2021large,heiny2021point} for example.

%Third, Theorem \ref{thm_main_unbounded} shows that even when $\Sigma$ has no spikes, due to the effect of multiplier bootstrap of $\{\xi_i^2\},$ the first few eigenvalues of $Q$ can also be divergent. Consequently, in order the spikes to be properly detected, if exist, the true spikes of $\Sigma$ have to be divergent; see Theorem \ref{thm_main_spike} for more detail.       

\end{remark}

 The second result provides the conditional counterpart to Theorem \ref{thm_main_unbounded}. 
%Now, we consider the largest eigenvalue of $Q$ conditional on $X$. To this end, let $\mathcal{F}_X=\sigma(X)$, we work on a high probability event $\Omega_X\in\mathcal{F}_X$ depending only on $X$ with $\mathbb{P}(\Omega_X)\rightarrow1$ on which $X$ satisfies the regularity conditions as in Assumption \ref{ass1}. Conditional on $\mathcal{F}_X$ (equivalently, for $X$ restricted to $\Omega_X$), we study $\lambda_1(Q)$ which is a function of $D$, under the conditional probability $\mathbb{P}(\cdot|X)\equiv\mathbb{P}(\cdot|\mathcal{F}_X)$. 

\begin{theorem}[Conditional bootstrap]\label{thm_main_unbounded_conditional}
Suppose Assumptions \ref{assum_model}, \ref{assumption_techincial} and (i) of Assumption \ref{assum_D} hold. Then, when (\ref{ass3.1}) holds and $n$ is sufficiently large,  with probability at least $1-\ro(1),$ 
for all $x \geq 0$
\begin{equation}\label{eq_mainresultunboundedfrechet_conditional}
\left|\mathbb{P} \left( \frac{\lambda_1}{ \varphi b_n} \leq x \Big|X\right)-\exp\left(-x^{-\alpha} \right) \right|=\mathrm{o}(1).
\end{equation}
%holds with high probability
%, where $\varphi:=\phi\bar{\sigma}_1$ for $\phi$ in (\ref{ass1}).

Moreover, when (\ref{ass3.2}) holds and $n$ is sufficiently large, we have that the following statement holds with probability at least $1-\ro(1),$  for all $x \in \mathbb{R}$
 \begin{equation}\label{eq_mainresultunboundedgumbel_conditional}
 \left|\mathbb{P}\left( \mathsf{g}'(b_n) \left[\varphi^{-1}\lambda_1-(b_n+c_0) \right] \leq x \Big|X\right)-\exp \left(-e^{-x} \right) \right|=\mathrm{o}(1),
 \end{equation}
 where $\mathsf{g}$ and $c_0$ are defined in (\ref{eq_mainresultunboundedgumbel}). 
% holds with high probability,  where $\mathsf{g}$ is defined in (\ref{eq_defng}) and {$c_0:=1+\varphi^{-1}\times\mathbb{E}\xi^2\times \bar{\sigma}_2/\bar{\sigma}_1$.} 
\end{theorem}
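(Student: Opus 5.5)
The plan is to reduce the conditional statement to an extreme-value statement about the multipliers $\{\xi_i^2\}$ alone, by conditioning on a high-probability event for $X$ on which $\lambda_1$ is a deterministic functional of $D$ that is well approximated by a simple expression in the top order statistics of $\xi^2$. Since $D$ is independent of $X$, the law of $\{\xi_i^2\}$ under $\mathbb{P}(\cdot\mid X)$ is the unconditional one. Lemma \ref{lem_summaryevt} then gives the Fréchet and Gumbel limits for $\xi_{(1)}^2/b_n$ and $\mathsf{g}'(b_n)(\xi_{(1)}^2-b_n)$ without any conditioning. The remaining task is to show that the eigenvalue-to-multiplier approximation holds with conditional probability $1-\ro(1)$ on an $X$-event $\Omega_X$ with $\mathbb{P}(\Omega_X)=1-\ro(1)$.

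\textbf{Step 1: rank-one separation.} Let $i^\ast$ be the index of $\xi_{(1)}^2$. Write
\[
Q=\xi_{(1)}^2\,\Sigma^{1/2}\mathbf{x}_{i^\ast}\mathbf{x}_{i^\ast}^*\Sigma^{1/2}+Q^{(i^\ast)},
\]
where $Q^{(i^\ast)}$ is the matrix with column $i^\ast$ removed. The top eigenvalue $\lambda_1$ is the largest root $z$ of the secular equation
\[
1=\xi_{(1)}^2\,\mathbf{x}_{i^\ast}^*\Sigma^{1/2}\bigl(z-Q^{(i^\ast)}\bigr)^{-1}\Sigma^{1/2}\mathbf{x}_{i^\ast}.
\]
Expanding at $z\asymp b_n\to\infty$ gives
\[
\lambda_1=\xi_{(1)}^2\,\mathbf{x}^*\Sigma\mathbf{x}+\frac{\mathbf{x}^*\Sigma^{1/2}Q^{(i^\ast)}\Sigma^{1/2}\mathbf{x}}{\mathbf{x}^*\Sigma\mathbf{x}}+\rO(b_n^{-1}),
\qquad \mathbf{x}=\mathbf{x}_{i^\ast}.
\]
Here $\mathbf{x}^*\Sigma\mathbf{x}=\varphi+\rO(n^{-1/2+\epsilon})$. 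The second term concentrates around $\mathbb{E}\xi^2\,\phi\bar\sigma_2/\bar\sigma_1$, which after dividing by $\varphi$ produces $c_0$. For this we need $\|Q^{(i^\ast)}\|$ to be small compared with $b_n$.

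\textbf{Step 2: uniform-in-$i$ controls on $X$.} The obstacle in the conditional setting is that $i^\ast$ is random through $D$. I would therefore define $\Omega_X$ as the event on which all of the following hold for every $i\le n$ simultaneously:
\[
\bigl|\mathbf{x}_i^*\Sigma\mathbf{x}_i-\varphi\bigr|\le n^{-1/2+\epsilon},
\qquad
\|\mathbf{x}_i\|\le C,
\qquad
\|X\|\le C,
\]
together with the quadratic-form concentration
\[
\mathbf{x}_i^*\Sigma^{1/2}\Bigl(\sum_{j\ne i}\mathbf{x}_j\mathbf{x}_j^*\Bigr)\Sigma^{1/2}\mathbf{x}_i\ \text{ close to its mean for all } i .
\]
By large deviation bounds and a union bound over $n$ indices (using Assumption \ref{assum_model}), $\mathbb{P}(\Omega_X)=1-\ro(1)$. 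On $\Omega_X$, everything in Step 1 is deterministic except $D$. Step 1 then holds for whichever index attains the maximum, with errors controlled by $\max_{j\ne i^\ast}\xi_j^2$ and by $n^{-1}\sum_j\xi_j^2$.

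\textbf{Step 3: the Gumbel case.} In the Fréchet case, $\lambda_1/(\varphi b_n)=\xi_{(1)}^2/b_n+\ro(1)$ suffices. This is because $\|Q^{(i^\ast)}\|\le C\,\xi_{(2)}^2+\rO(1)$ and $\xi_{(2)}^2/b_n$ is tight with no atom issue; more precisely, one only needs the bound $\ro(b_n)$ with conditional probability $1-\ro(1)$, since $\lambda_1\le\|X\|^2\|\Sigma\|\xi_{(1)}^2$ and the gap between $\xi_{(1)}^2$ and $\xi_{(2)}^2$ is of order $b_n$.

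The Gumbel case is the main difficulty, since the scale is only $1/\mathsf{g}'(b_n)\asymp b_n^{1-\beta}$ and $\ro(1/\mathsf{g}'(b_n))$ precision is needed. Two things must be controlled in the second term of Step 1:
\begin{itemize}
\item the contribution of the other large multipliers, which is $\sum_{j\ne i^\ast}\xi_j^2(\mathbf{x}^*\Sigma\mathbf{x}_j)^2/\mathbf{x}^*\Sigma\mathbf{x}$;
\item the fluctuations of $n^{-1}\sum_j\xi_j^2$.
\end{itemize}
Each must be shown to differ from its mean by $\ro(b_n^{\beta-1})$. Using $(\mathbf{x}^*\Sigma\mathbf{x}_j)^2=\rO(n^{-1+\epsilon})$ uniformly on $\Omega_X$, together with $b_n\asymp(\log n)^{1/\beta}$, both terms are $\rO(n^{-1/2+\epsilon}\operatorname{polylog} n)$. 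This is negligible compared with $b_n^{\beta-1}$ because $\beta<2$ keeps that scale polylogarithmic.

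Combining the three steps with Lemma \ref{lem_summaryevt} and the monotonicity of the limiting CDFs (to absorb the $\ro(1)$ shifts uniformly in $x$) yields both \eqref{eq_mainresultunboundedfrechet_conditional} and \eqref{eq_mainresultunboundedgumbel_conditional} on $\Omega_X$.
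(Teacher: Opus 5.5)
Your outer reduction (the law of $D$ is unchanged under $\mathbb{P}(\cdot\mid X)$, Lemma~\ref{lem_summaryevt}, then a sandwich in $x$) matches the paper's last step. There is, however, a genuine gap in Step~1, and it is exactly where the difficulty lies.

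\textbf{The expansion is not justified.} You justify the $\rO(b_n^{-1})$ remainder by requiring $\|Q^{(i^\ast)}\|\ll b_n$. This fails, since $\|Q^{(i^\ast)}\|\geq\xi_{(2)}^2\,\mathbf{x}_{(2)}^*\Sigma\mathbf{x}_{(2)}\approx\varphi\xi_{(2)}^2$. The ratio $\xi_{(2)}^2/\xi_{(1)}^2$ has a nondegenerate limit in the Fr\'echet case and tends to $1$ in the Gumbel case, where $\xi_{(1)}^2-\xi_{(2)}^2\asymp 1/\mathsf{g}'(b_n)\asymp b_n^{1-\beta}$ (this even tends to $0$ for $\beta>1$). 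Your own Step~3 bound $\|Q^{(i^\ast)}\|\leq C\xi_{(2)}^2+\rO(1)$ contradicts the requirement, and tightness of $\xi_{(2)}^2/b_n$ does not make that ratio $\ro(1)$.

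\textbf{What the remainder actually needs.} With $\mathbf{u}=\Sigma^{1/2}\mathbf{x}_{i^\ast}$, the remainder is $\xi_{(1)}^2z^{-2}\,\mathbf{u}^*(Q^{(i^\ast)})^2(z-Q^{(i^\ast)})^{-1}\mathbf{u}$. It can only be controlled through the top of the spectrum of $Q^{(i^\ast)}$. You need (a) the test points to lie above $\lambda_1(Q^{(i^\ast)})$, and (b) $\mathbf{u}$ to be delocalized with respect to the top eigenvectors of $Q^{(i^\ast)}$.

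Point (a) cannot be avoided for the upper bound. Indeed $\lambda_1\geq\lambda_1(Q^{(i^\ast)})$ always, and for $z>\lambda_1(Q^{(i^\ast)})$ one has $\lambda_1<z$ if and only if $\xi_{(1)}^2\mathbf{u}^*(z-Q^{(i^\ast)})^{-1}\mathbf{u}<1$. In the Gumbel case this essentially forces $\lambda_1(Q^{(i^\ast)})\leq\varphi(\xi_{(2)}^2+c_0)+\ro_{\mathbb{P}}(b_n^{1-\beta})$, including the $\rO(1)$ shift $\varphi c_0$. That is an edge statement of the same depth as the theorem itself, for the matrix with the top column removed.

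Nothing in the proposal provides (a) or (b). Your $\Omega_X$ contains norms, pairwise inner products and one unweighted quadratic form, and both (a) and (b) involve $D$-dependent resolvents. (That quadratic form should also carry $\Sigma$ rather than $\Sigma^{1/2}$, so that it produces $\frac1n\operatorname{tr}\Sigma^2$ and hence $c_0$.)

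\textbf{How the paper supplies these inputs.} It uses the averaged local law for the minor, $m_1^{(1)}=m_{1n}^{(1)}+\rO_\prec(n^{-1/2-2/\alpha})$ on $\mathbf{D}_u$. This rules out eigenvalues of $Q^{(1)}$ above $\vartheta_2$, giving $\lambda_1^{(1)}<\vartheta_2<\vartheta_1^-$ with $\vartheta_1-\vartheta_2$ comparable to $\varphi(\xi_{(1)}^2-\xi_{(2)}^2)$. It then replaces $\mathbf{y}_{(1)}^*G_1^{(1)}\mathbf{y}_{(1)}$ by $\xi_{(1)}^2m_{1n}^{(1)}$ and locates $\lambda_1$ by the sign change of $M$ at $\vartheta_1^\pm$. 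The deterministic equation for $\vartheta_1$ produces $c_0$ through \eqref{eq_mu1linear}, with a second-order error of order $b_n^{-1}$ up to logarithms.

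In Step~3 you also have the Gumbel scale inverted. The required precision is $\ro(b_n^{1-\beta})$, not $\ro(b_n^{\beta-1})$. The condition $\beta<2$ is what makes an $\rO(b_n^{-1})$ error negligible; it plays no role for the $n^{-1/2+\epsilon}$ fluctuation terms.

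\textbf{Two parts of your plan are cheap to repair.}
\begin{itemize}
\item In the Fr\'echet case, precision $\ro(b_n)$ needs no expansion. The lower bound $\lambda_1\geq\xi_{(1)}^2\mathbf{u}^*\mathbf{u}$ follows from the Rayleigh quotient. For the upper bound, split off the $\rO_{\mathbb{P}}(1)$ columns with $\xi_j^2\geq\tau b_n$; by Gershgorin their nearly orthogonal Gram matrix has top eigenvalue at most $\varphi\xi_{(1)}^2+\ro_{\mathbb{P}}(b_n)$. Bound the rest by Weyl's inequality with norm at most $C\tau b_n$, then let $\tau\downarrow0$. Your stated bound $\lambda_1\leq\|X\|^2\|\Sigma\|\xi_{(1)}^2$ has the wrong constant.
\item Conditioning on $X$ is not the obstacle. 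Since $Z_n$ depends only on $D$, an unconditional coupling $|Y_n-Z_n|\to0$ in probability already gives $\mathbb{P}(|Y_n-Z_n|>\delta\mid X)\to0$ in probability by Markov's inequality. The sandwich argument then yields the conditional limit, with no uniformity needed over the $D$-dependent index $i^\ast$.
\end{itemize}
Neither repair reaches the Gumbel precision, which genuinely requires local-law-type control of the top of the spectrum of $Q^{(i^\ast)}$.
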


\begin{remark}
Theorem \ref{thm_main_unbounded_conditional} shows that, under a high-probability event, the conditional bootstrap results align with the unconditional bootstrap results stated in Theorem \ref{thm_main_unbounded}. The construction of this high-probability event is provided in Section \ref{sec_characterization_OmegaX} of the supplement. Similar to the discussion in Remark \ref{rmk_mainresults_unbounded}, Theorem \ref{thm_main_unbounded_conditional} also indicates that, when unbounded multipliers are used, the Tracy--Widom (TW) limit for the largest eigenvalues of sample covariance matrices in (\ref{eq_twresultoriginal}) cannot be recovered.
\end{remark}

\subsection{The impact of bounded multipliers}\label{sec_nonspike_thegood}
\quad Next, we state the results when the multiplier $\xi^2$ has bounded support in the sense that (ii) of Assumption \ref{assum_D} holds. We first prepare some notations. Recall $F(x)$ is the CDF of $\xi^2.$ Let  $(m_{1n,c}(z), m_{2n,c}(z), m_{n,c}(z))$ be the unique solutions of the following system of equations
\begin{align}
    m_{1n,c}(z)=\frac{1}{n}\sum_{i=1}^p & \frac{\sigma_i}{-z(1+\sigma_im_{2n,c}(z))},\quad m_{2n,c}(z)=\int_0^l \frac{s}{-z(1+s m_{1n,c}(z))} \mathrm{d} F(s), \nonumber\\
   & m_{n,c}(z)=\frac{1}{p}\sum_{i=1}^p\frac{1}{-z(1+\sigma_im_{2n,c}(z))}. \label{eq_systemequationsm1m2intergrate}
\end{align}
As will be shown in Theorem \ref{lem_solutionsystem1} below, $m_{n,c}(z)$ is the Stieltjes transform of a probability density function, denoted by $\widetilde{\rho}$. Moreover, we denote
\begin{equation}\label{edge_edge_edge}
\operatorname{supp}(\widetilde{\rho}) = [L_-, L_+].
\end{equation}
Recall $l$ from (\ref{ass3.4}). For notional convenience, we denote
%
%
%and $l$ from
%
%and using the definitions of $m_{1n,c}$ and $L_+$ from (\ref{eq_systemequationsm1m2intergrate}) and (\ref{edge_edge_edge}) below, we denote 
\begin{equation}\label{eq_phasetransition}
\begin{split}
    &\mathsf{s}_1:=\int_0^l\frac{l^2s^2}{(l-s)^2}\mathrm{d}F(s),\quad \quad \mathsf{s}_2:= \int_0^l\frac{ls}{l-s}\mathrm{d}F(s),\\
    &\mathsf{s}_3:=\frac{1}{p} \sum_{i=1}^p \frac{\sigma_i^2 \mathsf{s}_1}{(L_+-\sigma_i \mathsf{s}_2)^2}, \quad \quad \mathsf{s}_4:=\frac{1}{n}\sum_{i=1}^p \frac{\sigma_i}{(L_+-\sigma_i \mathsf{s}_2)^2},
\end{split}
\end{equation} 
\begin{equation}\label{eq_defnvariance}
\mathsf{v}:= \left( \frac{\varsigma(2)}{\varsigma(1)} \right)^2 \operatorname{Var} \left( \frac{\xi^2}{1+\xi^2 m_{1n,c}(L_+)} \right),
%%\int \left( \frac{s}{1+s m_{1n,c}(L_+)} \right)^2 \mathrm{d}F(s)-\left( \int \frac{s}{1+s m_{1n,c}(L_+)}  \mathrm{d}F(s) \right)^2.
\end{equation}
where for $k \in \mathbb{N}$
\begin{equation*}
\varsigma(k):=\sum_{i=1}^p\sigma_i^k \left(L_+-\sigma_i \int_0^l \frac{s}{1+sm_{1n,c}(L_{+})} \mathrm{d} F(s)\right)^{-2}.
\end{equation*}

With the above preparation, we are now ready to state the results. Our analysis relies on the following assumption, which avoids certain singular behaviors of $\widetilde{\rho}$ and is commonly used in the random matrix theory literature; see, for example, \cite{Bao2015,Ding&Yang2018,ding2021spiked,9779233,el2007tracy,knowles2017anisotropic,lee2016tracy}.
\begin{assumption}\label{assum_additional_techinical} When (ii) of Assumption \ref{assum_D} holds, for $\Sigma$ satisfying Assumption \ref{assumption_techincial}, we assume that for some universal constant $\mathfrak{a}>0$ 
\begin{equation*}
\min_{1 \leq i \leq p} |1+\sigma_i m_{2n,c}(L_+)| \geq \mathfrak{a}. 
\end{equation*}  
\end{assumption}

The first result concerns the unconditional version of multiplier bootstrap. Recall that for two sequences $a_n, b_n>0,$ we write $a_n \gtrsim b_n$ if $b_n \asymp a_n$ or $b_n=\mathrm{o}(a_n)$ holds.

\begin{theorem}[Unconditional bootstrap]\label{thm_main_bounded} Suppose Assumptions \ref{assum_model}, \ref{assumption_techincial}, \ref{assum_additional_techinical} and (ii) of Assumption \ref{assum_D} hold. Recall the exponent $d$ in (\ref{ass3.4}), $\phi$ in (\ref{ass1}) and the quantities defined in (\ref{eq_phasetransition}). When $n$ is sufficiently large: 
\begin{enumerate}
\item[(1).]  When $d>1$ and $\phi^{-1}>\mathsf{s}_3$, we have that for all
%
%we have that $L_+$ satisfies
%\begin{equation}\label{eq_onlyoneequationdecide}
%    1=\frac{1}{n}\sum_{i=1}^p \frac{-l\sigma_i}{-L_{+}+\sigma_i \mathsf{s}_2}.
%\end{equation}
%Moreover, we have that 
%\begin{equation}\label{eq_boundednnnnn}
%n^{\frac{1}{d+1}}\left| \left( \frac{\lambda_1-L_+}{\mathsf{s}_4^{-1}(1-\phi \mathsf{s}_3)} \right)-\left( \xi_{(1)}^2-l \right) \right|=\ro_{\mathbb{P}}(1). 
%\end{equation} 
%Consequently, we have that $\lambda_1-L_+$ follows Weibull distribution with parameter $d+1$ asymptotically in the sense that for 
$x \leq 0$ {
\begin{equation}\label{eq_distributionresultweibull}
\left| \mathbb{P} \left(l^2 \frac{(\mathfrak{b}n)^{1/(d+1)}}{\mathsf{s}_4^{-1}(1-\phi \mathsf{s}_3)}(\lambda_1-L_+) \leq x \right)-\exp\left(-|x|^{d+1} \right) \right|=\mathrm{o}(1),
\end{equation}}
where $\mathfrak{b}$ is defined in (\ref{eq_defnbfrak}). 
\item[(2).] When $d>1$ and $\phi^{-1}<\mathsf{s}_3,$ we have that for $\mathsf{v}$ defined in (\ref{eq_defnvariance}) and $x \in \mathbb{R}$
%$\lambda_1$ is asymptotically Gaussian in the sense that 
{
\begin{equation}\label{eq_cltresult}
\left| \mathbb{P} \left( \sqrt{n \mathsf{v}^{-1}} (\lambda_1-L_+) \leq x \right)-\Phi(x)\right|=\mathrm{o}(1),
\end{equation}}
where $\Phi(x)$ is the CDF of a real standard Gaussian random variable.  
\item[(3).] When $-1<d \leq 1$, {on the one hand, if $\mathsf{v} \gtrsim n^{-1/3+\iota},$  for some sufficiently small constant $\iota>0$, then (\ref{eq_cltresult}) holds. On the other hand, if $\mathsf{v}= \mathrm{O}(n^{-1/3-\iota})$, then for some $\gamma$ defined in (\ref{eq_gammadefinition}) of the supplement, for all $x \in \mathbb{R}$
\begin{equation}\label{eq_twllllaaalllaaa}
\left|\mathbb{P}\left(n^{2/3}\gamma (\lambda_1-L_+) \leq x \right)-\mathrm{TW}(x)\right|=\mathrm{o}(1),
\end{equation}
where $\mathrm{TW}(x)$ denotes the CDF of the type-1 Tracy--Widom distribution. }
 %Moreover, 
\end{enumerate}

\end{theorem}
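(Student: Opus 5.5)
Throughout, we work conditionally on $D$ and treat $Q=\Sigma^{1/2}XD^2X^*\Sigma^{1/2}$ as a separable covariance matrix with a random but (given $D$) deterministic diagonal factor. This is a setting where anisotropic local laws and edge universality are available. The basic object is the random-$D$ analogue of \eqref{eq_systemequationsm1m2intergrate}, in which $\mathrm{d}F$ is replaced by the empirical measure $\widehat F_n=n^{-1}\sum_i\delta_{\xi_i^2}$. This gives random Stieltjes transforms $(m_{1n},m_{2n},m_n)$ and a random right edge $\widehat L_+$. The proof splits into two layers:
\begin{enumerate}
\item[(a)] identify the fluctuation of $\widehat L_+$ around the deterministic $L_+$ of \eqref{edge_edge_edge};
\item[(b)] identify the fluctuation of $\lambda_1$ around $\widehat L_+$ given $D$.
\end{enumerate}
The three regimes of the theorem then correspond to which of these two fluctuations dominates, or to whether $\lambda_1$ detaches from the bulk.

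\emph{Step 1 (edge characterization).} We first characterize $\widehat L_+$ as the critical point of the inverse function $z(m_1)=\ldots$ built from the empirical system. Near $m_1\approx -1/l$, the term $\int s/(1+sm_1)\,\mathrm{d}\widehat F_n$ is singular. Whether the critical point lies in the regular region or is pinned to the singular boundary $m_1=-1/\xi^2_{(1)}$ is governed by a sign condition. This condition is exactly the comparison of $\phi^{-1}$ with $\mathsf{s}_3$: the quantities $\mathsf{s}_1,\mathsf{s}_2$ are the limits of the relevant integrals evaluated at the singular point, and for $d>1$ these integrals are finite.

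\emph{Step 2 (case $d>1$, $\phi^{-1}>\mathsf{s}_3$).} Here the edge is determined by the extreme multiplier. We perform a first-order expansion of the edge equation in $\xi_{(1)}^2-l$ and aim to show
\begin{equation*}
\lambda_1-L_+=\mathsf{s}_4^{-1}(1-\phi\mathsf{s}_3)\,l^{-2}\bigl(\xi_{(1)}^2-l\bigr)+\mathrm{o}_{\mathbb{P}}\bigl(n^{-1/(d+1)}\bigr).
\end{equation*}
Concretely, $\lambda_1$ behaves like an outlier generated by the largest $\xi_i^2$ acting as a rank-one perturbation. We would locate it via a master equation $\det(1+\cdots)=0$ combined with isotropic local laws for the resolvent with the $i$-th column removed. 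The Weibull law then follows from Lemma \ref{lem_summaryevt}, because $n^{1/(d+1)}(\xi^2_{(1)}-l)$ is asymptotically Weibull with parameter $\mathfrak{b}$. The local-law error is of order $n^{-1/2+\epsilon}$, which is $\mathrm{o}(n^{-1/(d+1)})$ precisely because $d>1$.

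\emph{Step 3 (case $d>1$, $\phi^{-1}<\mathsf{s}_3$, and case (3) with large $\mathsf{v}$).} Here the critical point is regular. We write $\widehat L_+-L_+$ as a smooth functional of $\widehat F_n-F$ and linearize. The leading term is the average $n^{-1}\sum_i\bigl[\xi_i^2/(1+\xi_i^2m_{1n,c}(L_+))-\mathbb{E}\bigr]$, multiplied by the derivative factor $\varsigma(2)/\varsigma(1)$ obtained from implicit differentiation. The classical CLT then gives a Gaussian limit with variance $\mathsf{v}/n$. Given $D$, $\lambda_1-\widehat L_+=\mathrm{O}_{\mathbb{P}}(n^{-2/3})$ by edge rigidity, which is negligible whenever $\mathsf{v}\gtrsim n^{-1/3+\iota}$.

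\emph{Step 4 (case (3) with small $\mathsf{v}$).} When $\mathsf{v}=\mathrm{O}(n^{-1/3-\iota})$, the Gaussian shift is $\mathrm{o}(n^{-2/3})$. The conditional edge is then square-root and regular, since for $d\le1$ the singular integral diverges and forces the critical point into the regular region. We invoke Tracy--Widom universality for separable covariance matrices with deterministic diagonal factor, for instance via Green function comparison or by adapting \cite{Ding&Yang2018,yang2019edge}, applied conditionally on $D$ in a high-probability set. The scaling $\widehat\gamma$ is then replaced by $\gamma$ at negligible cost.

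\emph{Main obstacle.} We expect the main difficulty to be Step 2, together with the boundary analysis near the hard edge $l$ in Step 1. There we must prove local laws and outlier location uniformly, despite $\widehat F_n$ having atoms close to the singularity, and control the second-largest multipliers so that they do not interfere. We would first handle this by removing the top $\log n$ multipliers, applying the local law to the remaining regular part, and then treating the removed indices as a finite-rank perturbation.
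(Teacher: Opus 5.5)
Your decomposition (condition on $D$, write $\lambda_1-L_+=(\lambda_1-\widehat L_+)+(\widehat L_+-L_+)$, linearize the edge equations and apply the CLT to $\mathcal X$, use conditional Tracy--Widom universality, and read off the edge type from $\phi^{-1}$ versus $\mathsf s_3$ and from $d\le1$) is the paper's, and Steps 1, 3 and 4 are fine as a proposal. The gap is in Step 2, at the obstacle you flag, and your proposed fix does not close it.

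The master equation $M(\lambda)=1+\mathbf y_{(1)}^*G^{(1)}(\lambda)\mathbf y_{(1)}=0$ at real $\lambda$ locates $\lambda_1$ only if you already know that the minor has no eigenvalues in a window of width $\gg n^{-1/2}$ around the predicted location. However, the top eigenvalues of $Q^{(1)}$ are themselves outlier-like, generated by $\xi^2_{(2)},\xi^2_{(3)},\dots$. They sit only $\asymp n^{-1/(d+1)}$ below $\lambda_1$, and on $\Omega_D$ possibly only $n^{-1/(d+1)-\epsilon_{\mathsf d}}$ below it.

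Removing the top $\log n$ multipliers does not leave a regular part. The largest remaining multiplier is still at distance $\asymp(\log n/n)^{1/(d+1)}$ from $l$, so the reduced edge is again pinned near a pole and is again of convex $\kappa^d$ type. The same exclusion problem therefore reappears for $Q^{(\mathcal T)}$.

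A standard averaged local law cannot supply this exclusion either. Near such an edge the bulk gives $\operatorname{Im}m\asymp\eta$, while a single eigenvalue contributes $\asymp(n\eta)^{-1}$. Individual eigenvalues are thus visible only for $\eta\lesssim n^{-1/2}$, and there the local-law error $(n\eta)^{-1}$ is as large as the signal you want to detect.

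The missing ingredient is the non-perturbative Lee--Schnelli argument that the paper uses. Take $\eta_0=n^{-1/2-\epsilon_{\mathsf d}}$ and define $E_0$ by $\operatorname{Re}m_{1n}(E_0+\mathrm i\eta_0)=-1/\xi^2_{(1)}$. Using fluctuation averaging (Lemma \ref{lem_fa}), Lemma \ref{lem_keycomponentend} shows three things:
\begin{enumerate}
\item $\operatorname{Im}m_Q(E+\mathrm i\eta_0)\asymp\eta_0$ whenever $|E-E_0|\geq n^{-1/2+3\epsilon_{\mathsf d}}$;
\item $\operatorname{Im}m_1^{(1)}\asymp\eta_0$ on all of $\mathbf D_b^\prime$;
\item $\operatorname{Im}m_Q\gg\eta_0$ at some point of the window, via $1/(\xi_1^2\mathcal G_{11})=-z(1+\xi_1^2m_1^{(1)}+Z_1)$.
\end{enumerate}
Since $\operatorname{Im}m_Q(\lambda_1+\mathrm i\eta_0)\gtrsim(n\eta_0)^{-1}\gg\eta_0$, and $E\mapsto\operatorname{Im}m_Q(E+\mathrm i\eta_0)$ is decreasing to the right of $\lambda_1$, this pins $\lambda_1=E_0+\mathrm O(n^{-1/2+3\epsilon_{\mathsf d}})$ (Lemma \ref{lem_connection}). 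The linear expansion (\ref{eq_expansionlinear}), together with $m_{1n}(\widehat L_+)=-l^{-1}$, then gives your displayed Weibull representation (Proposition \ref{prop_boundedsetting}).

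The Schur-complement identity above is your master equation, evaluated at height $\eta_0$ rather than on the real axis. The minor bound $\operatorname{Im}m_1^{(1)}\asymp\eta_0$ in item 2 is exactly the input your Step 2 lacks. A minor correction: $(\mathfrak b n)^{1/(d+1)}(\xi^2_{(1)}-l)$ is asymptotically Weibull with shape $d+1$; $\mathfrak b$ enters only through the scaling.
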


\begin{remark}\label{rem_nonspike_bounded}
%{\color{blue} \begin{equation*}
%\lambda_1-L_+=\nu_1+\nu_2+\rO_{\mathbb{P}}(n^{-1}),
%\end{equation*}}
Theorem \ref{thm_main_bounded} shows that, for the unconditional multiplier bootstrap, when $\xi^2$ has bounded support as in (\ref{ass3.4}), $\lambda_1$ will be bounded and can have several phase transitions depending on the exponent $d,$ aspect ratio $\phi$ and the threshold $\mathsf{s}_3$ which encodes the information of $\Sigma$ and the distribution of $\xi^2.$

First, in the setting when $d>1,$ on the one hand, when $\phi^{-1}>\mathsf{s}_3,$ after being properly centered and scaled, $\lambda_1$ will have similar asymptotics as $\xi_{(1)}^2$ and Weibull limit will be obtained. On the other hand when $\phi^{-1}<\mathsf{s}_3,$ $\lambda_1$ will be influenced by all $\{\xi_i^2\}$ and hence asymptotically Gaussian. For the critical case $\phi^{-1}=\mathsf{s}_3,$ we believe there will be a phase transition connecting Gaussian and Weibull. Since this is out of the scope of the paper which focuses on statistical applications, we will pursue this direction in the future works. 

Second, when $-1<d \leq 1,$ the limiting ESD of $Q$  will have a square root decay behavior.  In this setting, $\lambda_1$ will be influenced by two components, the TW part $\nu_1$ and the Gaussian part $\nu_2$ that $$
\lambda_1-L_+=\nu_1+\nu_2+\rO_{\mathbb{P}}(n^{-1}).$$
The TW part is due to the square root behavior and the Gaussian is due to the fact that $\lambda_1$ will be potentially influenced by all $\{\xi_i^2\};$ see Section \ref{sec_prf_nonspike_thegood} of our supplement  for more details. We mention that the variance of the Gaussian part can potentially decay and $\nu_1$ and $\nu_2$ are in generally dependent. Moreover, by appropriately choosing the multipliers to ensure the variance of the Gaussian component diminishes, the fluctuation of $\lambda_1 - L_{+}$ is predominantly governed by the Tracy-Widom distribution. This approach demonstrates that the Tracy-Widom distribution can be recovered using the unconditional bootstrap method with carefully selected bounded multipliers.

Finally, as discussed in Remark \ref{rmk_mainresults_unbounded}, we can generalize the results of Theorem \ref{thm_main_bounded} to the joint distribution of $k$ largest eigenvalues for any fixed $k.$ We omit the details.   
\end{remark}

The second result provides the conditional counterpart to Theorem \ref{thm_main_bounded}. Recall that for two sequences $a_n, b_n>0,$ we write $a_n \gtrsim b_n$ if $b_n \asymp a_n$ or $b_n=\mathrm{o}(a_n)$ holds.

\begin{theorem}[Conditional bootstrap]\label{thm_main_bounded_conditional}
 Suppose Assumptions \ref{assum_model}, \ref{assumption_techincial}, \ref{assum_additional_techinical} and (ii) of Assumption \ref{assum_D} hold. We have that for $n$ is sufficiently large, the following statements hold with probability at least $1-\ro(1):$ 
    \begin{enumerate}
\item[(1).]  When $d>1$ and $\phi^{-1}>\mathsf{s}_3$, we have that for all $x \leq 0$ 
%$L_+$ satisfies
%\begin{equation}\label{eq_location_L+}
%    1=\frac{1}{n}\sum_{i=1}^p \frac{-l\sigma_i}{-L_{+}+\sigma_i \mathsf{s}_2}.
%\end{equation}
%And 
\begin{equation*}
\left| \mathbb{P} \left( \frac{l^2(\mathfrak{b}n)^{1/(d+1)}}{\mathsf{s}_4^{-1}(1-\phi \mathsf{s}_3)}(\lambda_{1}-L_+) \leq x\Big|X\right)-\exp\left(-|x|^{d+1} \right)\right|=\mathrm{o}(1),
\end{equation*}
where $\mathfrak{b}$ is defined in (\ref{eq_defnbfrak}). 
\item[(2).] When $d>1$ and $\phi^{-1}<\mathsf{s}_3,$ we have that for all $x \in \mathbb{R}$
%there exists some constant $\mathsf{c}_2>0$, the following holds with probability at least $1-\mathrm{O}(n^{-\mathsf{c}_2})$
\begin{equation}\label{eq_cltresult1}
\left|\mathbb{P} \left( \sqrt{n \mathsf{v}^{-1}} (\lambda_{1}-L_+) \leq x \big|X \right)-\Phi(x)\right|=\mathrm{o}(1),
\end{equation}
 where $\Phi(x)$ is the CDF of a real standard Gaussian random variable. 
\item[(3).]  When $-1<d \leq 1$, on the one hand, if $\mathsf{v} \gtrsim n^{-1/3+\iota}$, for some sufficiently small constant $\iota>0,$ then (\ref{eq_cltresult1}) holds. On the other hand, if $\mathsf{v}=\mathrm{O}(n^{-2/3-\iota}),$ we have that for all $x \in \mathbb{R}$ 
\begin{equation}\label{eq_cltresult12222}
\left|\mathbb{P} \left( \gamma n^{2/3}  (\lambda_{1}-L_+) \leq x \big|X \right)-\mathbf{1}(\gamma_0 n^{2/3}(\widehat{\lambda}_1-E_+) \leq x)\right|=\mathrm{o}(1),
\end{equation}
where $\gamma_0, E_+$ are defined in (\ref{eq_twresultoriginal}), $\gamma$ is defined in (\ref{eq_twllllaaalllaaa}) and $\mathbf{1}(\cdot)$ is the indicator function.   
\end{enumerate}
\end{theorem}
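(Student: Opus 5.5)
The plan is to condition on $X$ and treat $\lambda_1$ as a function of the independent multipliers $\{\xi_i^2\}$ only. First, we construct a high-probability event $\Omega_X$, measurable with respect to $X$ and with $\mathbb{P}(\Omega_X)=1-\ro(1)$. On $\Omega_X$ the following hold:
\begin{itemize}
\item isotropic/anisotropic local laws for resolvents of $X^*\Sigma X$-type matrices;
\item delocalization of the columns, in the sense that $\mathbf{x}_i^*\Sigma^{1/2}G\Sigma^{1/2}\mathbf{x}_i$ is close to its deterministic equivalent uniformly in $i$ and in $z$ near $L_+$;
\item rigidity and edge estimates for the sample eigenvalues $\widehat{\lambda}_i$.
\end{itemize}
On $\Omega_X$, we rerun the unconditional proof of Theorem \ref{thm_main_bounded} with $\mathbb{P}(\cdot\mid X)$ in place of $\mathbb{P}$. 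Since the arguments there split $\lambda_1-L_+$ into a part driven by the multipliers and a part driven by $X$, we only have to check which part survives once $X$ is frozen.

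For parts (1) and (2) ($d>1$), we use the characterization from the proof of Theorem \ref{thm_main_bounded}: $\lambda_1$ solves a secular-type equation $1=\frac{1}{n}\sum_i \xi_i^2\,\mathbf{x}_i^*\Sigma^{1/2}(\lambda-\ldots)^{-1}\Sigma^{1/2}\mathbf{x}_i$-style. On $\Omega_X$ the quadratic forms are replaced by their deterministic equivalents up to errors $\ro(n^{-1/2})$, and also $\ro(n^{-1/(d+1)})$, uniformly in $i$.

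In the Weibull regime $\phi^{-1}>\mathsf{s}_3$, the outlier equation gives
\begin{equation*}
\lambda_1-L_+=\mathsf{s}_4^{-1}(1-\phi\mathsf{s}_3)\, l^{-2}(\xi_{(1)}^2-l)+\ro_{\mathbb{P}}(n^{-1/(d+1)}),
\end{equation*}
conditionally on $X$. Since $\xi_{(1)}^2$ is independent of $X$, Lemma \ref{lem_summaryevt} yields the Weibull limit.

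In the Gaussian regime, we linearize the equation around $L_+$. This gives $\lambda_1-L_+\approx \frac{\varsigma(2)}{\varsigma(1)}\frac1n\sum_i\big(f(\xi_i^2)-\mathbb{E}f(\xi^2)\big)$ with $f(s)=s/(1+s m_{1n,c}(L_+))$. The $X$-dependent corrections have to be $\ro(n^{-1/2})$ on $\Omega_X$. Given that, a Lindeberg CLT for i.i.d. bounded summands, conditional on $X$, gives \eqref{eq_cltresult1}.

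Part (3) is the main obstacle. Here we decompose $\lambda_1-L_+=\nu_1+\nu_2+\rO_{\mathbb{P}}(n^{-1})$ as in Remark \ref{rem_nonspike_bounded}, with $\nu_2$ a linear statistic in $\{\xi_i^2\}$ of variance $\mathsf{v}/n$. If $\mathsf{v}\gtrsim n^{-1/3+\iota}$, then $\nu_2$ dominates the $n^{-2/3}$-scale term $\nu_1$, and the conditional CLT follows exactly as in part (2).

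If $\mathsf{v}=\rO(n^{-2/3-\iota})$, the multipliers concentrate so strongly that $\mathrm{sd}(\nu_2)=\rO(n^{-5/6-\iota/2})=\ro(n^{-2/3})$. We then aim to show that, conditional on $X$, $\lambda_1$ is within $\ro_{\mathbb{P}}(n^{-2/3})$ of a deterministic function of $X$. Concretely, we write $D^2=\bar{\xi}^2 I+\Delta$ with $\|\Delta\|$ small in the sense forced by the variance, and compare $Q$ with $\bar{\xi}^2 S$ through a resolvent or perturbation expansion on $\Omega_X$. This should give $\gamma n^{2/3}(\lambda_1-L_+)=\gamma_0 n^{2/3}(\widehat\lambda_1-E_+)+\ro_{\mathbb{P}}(1)$, where the scaling relations between $(\gamma,L_+)$ and $(\gamma_0,E_+)$ have to be verified from \eqref{eq_systemequationsm1m2intergrate} in the near-degenerate multiplier limit.

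The delicate point is controlling $\Delta$. Since the support edge $l$ need not be close to the mean, smallness must come from variance, not sup-norm. We would therefore bound the first-order term $\sum_i \Delta_{ii}\,|\mathbf{u}_1^*\Sigma^{1/2}\mathbf{x}_i|^2$ by using delocalization of the top sample eigenvector, which shows it is of size $\rO(\sqrt{\mathsf{v}/n})$. We would handle higher-order terms via the resolvent expansion at $z=L_++\mathrm{i}n^{-2/3}$. Since the left-hand side is then essentially $X$-measurable, its conditional law is a point mass, which gives \eqref{eq_cltresult12222} uniformly in $x$ away from a vanishing set. The final step is to verify that the $\ro(1)$ in \eqref{eq_cltresult12222} holds on an event of probability $1-\ro(1)$.
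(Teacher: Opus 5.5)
Your architecture matches the paper's. You work on $\Omega_X\cap\Omega_D$, rerun the unconditional arguments there, and observe that the surviving random terms are $\sigma(D)$-measurable, hence have the same law conditionally on $X$. Those terms are $\xi_{(1)}^2$ in part (1), and the edge shift $\widehat{L}_+-L_+$ in part (2) and the first half of part (3).

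In the small-variance half of part (3) you also follow the paper. It expands $\lambda_1/\mathbb{E}\xi^2-\widehat{\lambda}_1$ to second order in $\Sigma^{1/2}X(D^2/\mathbb{E}\xi^2-I)X^*\Sigma^{1/2}$. It controls the first-order term $\sum_i(\xi_i^2/\mathbb{E}\xi^2-1)(\nu_1^*\Sigma^{1/2}\mathbf{x}_i)^2$ through delocalization of the companion eigenvector $X^*\Sigma^{1/2}\nu_1/\|X^*\Sigma^{1/2}\nu_1\|$. It bounds the second-order term by $\mathsf{v}n^{-1+\epsilon}\sum_{k\geq2}(\widehat{\lambda}_1-\widehat{\lambda}_k)^{-1}=\rO(\mathsf{v}n^{2\epsilon})$ using rigidity. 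Finally it shows $|L_+/\mathbb{E}\xi^2-E_+|=\rO(\mathsf{v})$ and $\gamma=(\gamma_0/\mathbb{E}\xi^2)(1+\ro(1))$.

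It is these two $\rO(\mathsf{v})$ contributions, not the size $\sqrt{\mathsf{v}/n}$ of the first-order term, that force the threshold $\mathsf{v}=\rO(n^{-2/3-\iota})$. Your first-order estimate alone would only require $\mathsf{v}\ll n^{-1/3}$. Your worry that $\max_i|\xi_i^2-\mathbb{E}\xi^2|$ need not be small is legitimate. The paper simply truncates the Rayleigh--Schr\"odinger expansion at second order, so the extra resolvent-expansion control you propose goes beyond what is written there.

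The step that fails as written is your justification of part (2), and with it the first half of part (3), which you reduce to part (2). When $\phi^{-1}<\mathsf{s}_3$ the deterministic density has a square-root edge at $\widehat{L}_+$. So $\lambda_1$ is not an outlier, and there is no outlier equation for $\lambda_1$ to linearize around $L_+$. Your formula for the leading term is correct, but it is the expansion of $\widehat{L}_+-L_+$, obtained by linearizing the two edge equations \eqref{eq_edgeequationstwodecide}--\eqref{eq_edgeequationstwodecide2} (Lemma \ref{lem_edgeconvergencestructure_bounded_input4}). It is not an expansion of $\lambda_1$.

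The estimate you rely on is also false: replacing each quadratic form $\mathbf{x}_i^*A\mathbf{x}_i$ by $n^{-1}\operatorname{tr}A$ up to $\ro(n^{-1/2})$ uniformly in $i$ cannot be done. For resolvent-type $A$ near the edge the fluctuation is of order $n^{-1}\|A\|_F\gtrsim n^{-1/2}$ (compare $Z_i\prec(n\eta_0)^{-1}$ in Remark \ref{remk_zibound}). That is the same order as the Gaussian term $\sqrt{\mathsf{v}/n}$ you want to isolate.

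The missing input is the split $\lambda_1-L_+=(\lambda_1-\widehat{L}_+)+(\widehat{L}_+-L_+)$. The first piece is $\rO(n^{-2/3+\epsilon})$ on $\Omega_X\cap\Omega_D$ by edge rigidity for the separable covariance matrix with the multipliers frozen; this rests on fluctuation averaging, not on a uniform per-column bound. The second piece equals $\frac{\mathsf{C}_2}{\mathsf{C}_1}\mathcal{X}+\rO(n^{-1})$ and depends on $D$ alone. You already use exactly this $\nu_1+\nu_2$ split in part (3); use it in part (2) as well.

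Because the rigidity event $B$ depends on both $X$ and $D$, transfer it to the conditional law by Markov's inequality, $\mathbb{P}\bigl(\mathbb{P}(B^c\mid X)>\delta\bigr)\leq\delta^{-1}\mathbb{P}(B^c)$. Then apply the conditional Lindeberg CLT to $\mathcal{X}$. Part (1) is fine in substance, since only an $\ro(n^{-1/(d+1)})$ error is needed there. The paper gets it by locating $\lambda_1$ near $E_0$ at the smoothed scale $\eta_0=n^{-1/2-\epsilon_{\mathsf{d}}}$, rather than through a real-axis secular equation.
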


\begin{remark}\label{remark_important}
Several remarks are in order.  First, Theorem \ref{thm_main_bounded_conditional} shows that, for the conditional bootstrap, 
the largest eigenvalue also exhibits several phase transitions, similar to those in the 
unconditional bootstrap. Moreover, compared with the unconditional bootstrap results in 
Theorem \ref{thm_main_bounded}, we find that in parts (1) and (2), as well as in part of 
part (3) when $\mathsf{v} \gtrsim n^{-1/3+\iota}$, the unconditional and conditional 
bootstrap yield similar results.

Second, as summarized from Theorem \ref{thm_main_bounded_conditional}, in general the conditional 
bootstrap cannot be directly and fully applied to recover the TW law in 
(\ref{eq_twresultoriginal}). However, the results of Theorem 
\ref{thm_main_bounded_conditional}, especially part (3), can still be used to conduct certain 
inference tasks related to (\ref{eq_twresultoriginal}) under slightly stronger assumptions. 

More specifically, in addition to Assumption \ref{assumption_techincial}, we further suppose that the empirical spectral distribution of $\Sigma$ converges to a nonrandom 
probability distribution $H$ and 
\begin{equation}\label{eq_ratioassumption}
\lim_{n \rightarrow \infty}\frac{p}{n}=c \in (0, \infty). 
\end{equation}
Then the limiting system of equations in 
\eqref{eq_systemequationsm1m2intergrate} reduces to
\begin{align}\label{eq_systemequations_limiting}
    m_{1,c}(z)=c\int & \frac{t}{-z(1+tm_{2,c}(z))}\mathrm{d}H(t),\quad 
    m_{2,c}(z)=\int_0^l \frac{s}{-z(1+s m_{1,c}(z))} \mathrm{d} F(s), \nonumber\\
   & m_{0,c}(z)=\int\frac{1}{-z(1+tm_{2,c}(z))}\mathrm{d}H(t).
\end{align}
Furthermore, if we choose $\xi^2$ so that $\mathbb{E}\xi^2=1$ and $\operatorname{Var}(\xi^2)=\mathrm{o}(1),$ the conditions of Theorem 1.1 in \cite{bai2008large} will be satisfied so that (\ref{eq_systemequations_limiting})  collapses to a single equation, as in 
\eqref{eq_systemequation_S_integralform} of the supplement. In particular, we have $L_+=E_+.$

Based on the above discussion, we see that Theorem \ref{thm_main_bounded_conditional} opens the door to using 
the conditional bootstrap to construct confidence intervals for $E_{+}$ in 
(\ref{eq_twresultoriginal}), as (\ref{eq_cltresult1}) reduces to
\begin{equation*}
\left|\mathbb{P} \left( \sqrt{n \mathsf{v}^{-1}} (\lambda_{1}-E_+) \leq x 
\mid X \right)-\Phi(x)\right|=\mathrm{o}(1).
\end{equation*}
Moreover, according to the discussion in 
Remark \ref{rmk_varaincecontrol} of the supplement, $\mathsf{v}\asymp \operatorname{Var}(\xi^2).$ This motivates us to choose a bounded multiplier satisfying condition (3) of Theorem \ref{thm_main_bounded_conditional} with $n^{-1/3} \ll \mathsf{v} \ll 1$, which can be directly translated into the design of the multipliers. With such a choice, one can avoid directly estimating the population covariance 
matrix $\Sigma$ while still constructing a confidence interval for $E_{+}$. 
The details are provided in the next section.

% Then, for  and each $k=1,\dots,n$,
%\begin{align*}
%\mathbb{E}\big|(\sqrt{n}\Sigma^{1/2}\mathbf{x}_k\xi_k)^*B(\sqrt{n}\Sigma^{1/2}\mathbf{x}_k\xi_k)
%-\operatorname{tr}(B\Sigma)\big|^2
%=\operatorname{Var}(\xi^2)\big(\operatorname{tr}(B\Sigma)\big)^2+\mathrm{o}(n^2),
%\end{align*}
%for any deterministic matrix $B$ with bounded operator norm, 
%provided that ., 
%and hence the condition holds if we have $\mathsf{v}=\mathrm{o}(1)$.
%
%
%
%
%
%then  whenever 
%$\mathsf{v}=\mathrm{o}(1)$. Consequently, 
%which further.

%the corresponding edge solutions $L_{+}$ and $E_{+}$ of \eqref{eq_systemequations_limiting} and \eqref{eq_systemequation_S}, respectively, coincide.

Finally, as discussed in Remark \ref{rmk_mainresults_unbounded}, we can generalize the results of Theorem \ref{thm_main_bounded_conditional} to the joint distribution of $k$ largest eigenvalues for any fixed $k.$ In particular, Theorem \ref{thm_main_bounded_conditional} also implies that any fixed number of leading eigenvalues share the same
multiplier-induced Gaussian edge shift in the Gaussian regimes, while
their mutual gaps remain Tracy-Widom scale, which is summarized in the following corollary.
%{\color{blue} 1. $\operatorname{Var}(\xi^2) \asymp \mathsf{v}.$ 2. discussions that whenever $\mathsf{v}$ vanishes. Two equations collapses to one. We can essentially write 
%\begin{equation*}
%L_+=E_+
%\end{equation*}
%under relatively strongly assumption that \begin{equation}\label{eq_ratioassumption}
%\lim_{n \rightarrow \infty}\frac{p}{n}=c \in (0, \infty).
%\end{equation}
%}
\end{remark}

\begin{corollary}\label{cor_joint_gaussian_edge}
Suppose that either part
$(2)$ or the Gaussian regime in part $(3)$ of Theorem \ref{thm_main_bounded_conditional} applies. For any fixed
integer $k\geq1$, there exists a standard Gaussian random variable $\mathtt{Z}_n:=\sqrt{n\mathsf{v}^{-1}}(\widehat{L}_+-L_+)$ where $\widehat{L}_+$ is the random right edge associated with the empirical multiplier distribution, defined in \eqref{eq_conditionaledgedefinition} of the supplement. Then, for any sufficiently small constant $\epsilon>0$,
\begin{equation}\label{eq_joint_gaussian_representation}
\max_{1\leq i\leq k}
\left|
\lambda_i-L_+
-\sqrt{\frac{\mathsf{v}}{n}}\mathtt{Z}_n
\right|
=
\rO_{\mathbb{P}(\cdot|X)}(n^{-2/3+\epsilon}).
\end{equation}

Consequently, for any fixed $(x_1,\ldots,x_k)\in\mathbb{R}^k$, it holds with probability at least $1-\mathrm{o}(1)$ that
\begin{equation}\label{eq_joint_gaussian_conditional}
\left|
\mathbb{P}\left(
\sqrt{n\mathsf{v}^{-1}}(\lambda_i-L_+)\leq x_i,\ 
1\leq i\leq k
\Big|X
\right)
-
\Phi\left(\min_{1\leq i\leq k}x_i\right)
\right|
\rightarrow0.
\end{equation}
In particular, when $k\geq2$, the leading eigenvalue gaps satisfy
\begin{equation}\label{eq_eigengap_rate_gaussian}
\max_{1\leq i\leq k-1}
(\lambda_i-\lambda_{i+1})
=
\rO_{\mathbb{P}(\cdot|X)}(n^{-2/3+\epsilon}).
\end{equation}
\end{corollary}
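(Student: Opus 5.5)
The argument compares $Q$ with a matrix built from the \emph{empirical} multiplier law. Conditionally on $X$ and on the realized $D$, let $\widehat{F}_n:=n^{-1}\sum_{j}\delta_{\xi_j^2}$. Define $\widehat{L}_+$ as the right edge of the deterministic-equivalent density obtained from \eqref{eq_systemequationsm1m2intergrate} with $F$ replaced by $\widehat{F}_n$. This is the quantity referenced in \eqref{eq_conditionaledgedefinition} of the supplement.

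The proof has two parts.
\begin{enumerate}
\item[(a)] A Gaussian fluctuation statement for $\widehat{L}_+$.
\item[(b)] An edge rigidity statement for $Q$ around $\widehat{L}_+$, uniformly in the top $k$ eigenvalues.
\end{enumerate}

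\textbf{Step (a).} I write the edge equations for $\widehat{F}_n$. Since $F$ and $\widehat F_n$ enter only through $\int s/(1+s m_1)\,\mathrm{d}F(s)$ and its derivative in $m_1$, I linearize the edge system around $(L_+, m_{1n,c}(L_+))$. This uses the stability guaranteed by Assumption \ref{assum_additional_techinical}. In the Gaussian regimes, the $\xi$-average stays away from the singularity $s=l$ that produces the Weibull behavior in part (1). Linearizing gives
\[
\widehat{L}_+-L_+=\frac{\varsigma(2)}{\varsigma(1)}\cdot\frac1n\sum_{j=1}^n\Big(\frac{\xi_j^2}{1+\xi_j^2m_{1n,c}(L_+)}-\mathbb{E}\frac{\xi^2}{1+\xi^2m_{1n,c}(L_+)}\Big)+\mathrm{O}_{\mathbb{P}}(n^{-1+\epsilon}).
\]
The quadratic remainder is $\mathrm{O}(\mathsf{v}/n)$, hence negligible. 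The classical CLT (Lindeberg, since the summands are bounded) then shows that $\mathtt{Z}_n=\sqrt{n\mathsf{v}^{-1}}(\widehat{L}_+-L_+)$ is asymptotically standard Gaussian. The normalization matches $\mathsf{v}$ in \eqref{eq_defnvariance} by construction. This is exactly the mechanism behind \eqref{eq_cltresult1}, so I would quote it from the proof of Theorem \ref{thm_main_bounded_conditional}.

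\textbf{Step (b).} Conditionally on $D$, $Q$ is a separable covariance matrix with deterministic bounded $\Sigma$ and $D^2$. With high probability over $D$, the empirical law $\widehat{F}_n$ inherits the square-root edge regularity. This holds because $d\le 1$ or $\phi^{-1}<\mathsf{s}_3$ means the edge is not created by the largest multiplier. I then invoke the anisotropic local law and edge rigidity for separable covariance matrices (as in \cite{yang2019edge}, used in the proof of Theorem \ref{thm_main_bounded_conditional}). This gives $|\lambda_i-\widehat L_+|\le n^{-2/3+\epsilon}$ for $1\le i\le k$ with high probability. Since $X$ and $D$ are independent, the statement also holds under $\mathbb{P}(\cdot\mid X)$ on an $X$-event of probability $1-\mathrm{o}(1)$. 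Combining (a) and (b) yields \eqref{eq_joint_gaussian_representation}.

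\textbf{Consequences.} In the Gaussian regimes we have $\mathsf{v}\gtrsim n^{-1/3+\iota}$. Hence $\sqrt{n\mathsf{v}^{-1}}\cdot n^{-2/3+\epsilon}\le n^{-\iota/2+\epsilon}\to0$. Therefore each $\sqrt{n\mathsf{v}^{-1}}(\lambda_i-L_+)$ equals $\mathtt{Z}_n+\mathrm{o}_{\mathbb{P}}(1)$. The joint CDF then converges to $\mathbb{P}(\mathtt{Z}\le\min_i x_i)=\Phi(\min_i x_i)$; continuity of $\Phi$ handles the $\mathrm{o}_{\mathbb P}(1)$ perturbation. This proves \eqref{eq_joint_gaussian_conditional}. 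The gap bound \eqref{eq_eigengap_rate_gaussian} follows from the triangle inequality in \eqref{eq_joint_gaussian_representation}.

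\textbf{Main obstacle.} The hard step is (b): rigidity at scale $n^{-2/3+\epsilon}$ around the random edge $\widehat{L}_+$ must hold uniformly over typical realizations of $D$. The difficulty is verifying that $\widehat F_n$ yields a regular square-root edge, with constants uniform in $n$, despite the empirical atoms near $l$. I would first show that $\widehat{m}_{1}(\widehat{L}_+)$ stays at distance $\asymp 1$ from $-1/\xi_{(1)}^2$. This separation keeps all summands bounded and lets the deterministic stability argument go through.
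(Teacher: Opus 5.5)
Your proposal is correct and follows essentially the paper's route. Since $\lambda_i-L_+-\sqrt{\mathsf{v}/n}\,\mathtt{Z}_n=\lambda_i-\widehat{L}_+$, the representation is just edge rigidity around $\widehat{L}_+$ on $\Omega_X\cap\Omega_D$. The joint CDF and gap statements then follow from the $\sigma(D)$-measurable edge-shift CLT (Lemma \ref{lem_edgeconvergencestructure_bounded_input4}), the bound $\sqrt{n/\mathsf{v}}\,n^{-2/3+\epsilon}=\mathrm{o}(1)$, and the same sandwich argument. The only difference is that the paper simply quotes rigidity and the linearization $\mathsf{C}_1(\widehat{L}_+-L_+)=\mathsf{C}_2\mathcal{X}+\mathrm{O}(n^{-1})$, whereas you re-derive them and correctly flag the uniform square-root regularity of the empirical edge (cf.\ \eqref{eq_boundedfrombelowimportant}) as what justifies the rigidity step.
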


\section{Multiplier bootstrap meets the spiked covariance matrix model}\label{sec_boostrapeffect_spike}

In this section, we derive the asymptotic distributions of the largest eigenvalues of both the unconditional and conditional bootstrapped sample covariance matrices (\ref{eq_samplecov}) when the population covariance matrix $\Sigma$ exhibits a prominent spiked structure. Under this setting, the extreme eigenvalues of the sample covariance matrices $\widetilde{S}$ in Table \ref{table_notations} follow a Gaussian distribution which can be summarized as follows. Recall that the eigenvalues of $\widetilde{S}$ are denoted as $\widehat{\mu}_1 \geq \widehat{\mu}_2 \geq \cdots.$ Denote
\begin{align}\label{eq_keynotations}
\mathsf{M}^\mathtt{S}_i=1+\frac{1}{n}\sum_{k=r+1}^p \frac{\sigma_k}{\widetilde{\sigma}_i}, \ \     \mathsf{V}_i^{\mathtt{S}}=2+\sum_{k=1}^pv_{ki}^4(\mathfrak{m}_4-3),
\end{align}
where $\mathbf{v}_i=(v_{1i},\dots,v_{pi})^* \in \mathbb{R}^p$ is the $i$-th eigenvector of $\widetilde{\Sigma}$ and $\mathfrak{m}_4=\mathbb{E}(\sqrt{n}x_{11})^4.$

\begin{theorem}\label{thm_sample} Suppose  Assumptions \ref{assum_model},  \ref{assumption_techincial} and \ref{assum_additional_techinical} with $D=I$ hold. For the spiked population covariance matrix $\widetilde{\Sigma}$ in (\ref{eq_truemodelspiked}),  assume that $\widetilde{\sigma}_i \gg n^{1/4}$ for all $1 \leq i \leq r,$ when $n$ is sufficiently large, we have that for all $x \in \mathbb{R}$
\begin{align*}
\left|\mathbb{P}\left(\sqrt{\frac{n}{\mathsf{V}_i^{\mathtt{S}}}}
\left(\frac{\widehat{\mu}_i}{\widetilde{\sigma}_i}-\mathsf{M}_i^{\mathtt{S}}\right)\leq x\right)-\Phi(x) \right|=\mathrm{o}(1),
\end{align*}
where $\Phi(x)$ is the CDF of a real standard Gaussian random variable. 
\end{theorem}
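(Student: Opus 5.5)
The plan is to prove Theorem \ref{thm_sample} by a low-rank perturbation argument. We reduce the outlier equation to an $r\times r$ determinant and then linearize it around $\widetilde{\sigma}_i$. Write $\widetilde{\Sigma}=\Sigma^{1/2}(I+V\Delta V^*)\Sigma^{1/2}$, or more directly decompose along the spike directions: let $V_r=(\mathbf{v}_1,\ldots,\mathbf{v}_r)$ and $V_\perp$ be its orthogonal complement. Split the data $\widetilde{\Sigma}^{1/2}X$ into the block $\widetilde{\Sigma}_r^{1/2}V_r^*X$, which is $r\times n$, and the non-spiked block $W:=\Sigma_\perp^{1/2}V_\perp^*X$. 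Then any eigenvalue $\widehat{\mu}$ of $\widetilde{S}$ outside $\operatorname{spec}(WW^*)$ satisfies the Schur-complement equation
\begin{equation*}
\det\Big(\widehat{\mu}\,\widetilde{\Sigma}_r^{-1}-V_r^*X\big(I-W^*(WW^*-\widehat{\mu})^{-1}W\big)X^*V_r\Big)=0,
\end{equation*}
equivalently $\det(\widetilde{\Sigma}_r^{-1}-V_r^*X(X^*\ldots)X^*V_r\cdot\ldots)=0$, with the resolvent $G(z)=(W^*W-z)^{-1}$ appearing as $-zV_r^*XG(z)X^*V_r$. Since $\|WW^*\|=\rO_{\mathbb{P}}(1)$ while $\widetilde{\sigma}_i\gg n^{1/4}\to\infty$, the relevant $z\asymp\widetilde{\sigma}_i$ lies far from the bulk. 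We may therefore expand $G(z)=-z^{-1}\sum_{k\ge0}(W^*W/z)^k$.

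The second step computes the entries of the $r\times r$ matrix. Using independence of $V_r^*X$ from $W$ (it holds exactly in the Gaussian case; in general we use the isotropic local law for quadratic forms $\mathbf{u}^*X A X^*\mathbf{w}$ with $A$ depending weakly on the same $X$, handled by the rank-one decoupling of the supplement), the diagonal term satisfies
\begin{equation*}
\frac{\widehat{\mu}_i}{\widetilde{\sigma}_i}=\mathbf{v}_i^*XX^*\mathbf{v}_i+\frac{1}{\widehat{\mu}_i}\mathbf{v}_i^*XW^*WX^*\mathbf{v}_i+\rO_{\mathbb{P}}(\widetilde{\sigma}_i^{-2}).
\end{equation*}
Here the second term concentrates at $\frac{1}{n}\operatorname{tr}(WW^*)/\widetilde{\sigma}_i\approx\frac1n\sum_{k>r}\sigma_k/\widetilde{\sigma}_i$, which together with $\mathbb{E}\mathbf{v}_i^*XX^*\mathbf{v}_i=1$ produces $\mathsf{M}^{\mathtt{S}}_i$. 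Its fluctuation is $\rO_{\mathbb{P}}(n^{-1/2}\widetilde{\sigma}_i^{-1})$, which is negligible. Off-diagonal entries $\mathbf{v}_i^*XX^*\mathbf{v}_j=\rO_{\mathbb{P}}(n^{-1/2})$. By the separation (\ref{eq_separation}) and a second-order perturbation, their contribution to the $i$-th root is $\rO_{\mathbb{P}}(n^{-1})$, also negligible relative to $n^{-1/2}$.

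The third step is the CLT. The leading random term is $\mathbf{v}_i^*XX^*\mathbf{v}_i-1=\frac1n\sum_{j=1}^n\big((\sqrt n\,\mathbf{v}_i^*\mathbf{x}_j)^2-1\big)$, a sum of i.i.d. terms. Its variance is $n^{-1}\big(2+\sum_k v_{ki}^4(\mathfrak{m}_4-3)\big)=\mathsf{V}^{\mathtt{S}}_i/n$, and Lyapunov's CLT applies by the moment bounds of Assumption \ref{assum_model}. Hence $\sqrt{n/\mathsf{V}_i^{\mathtt{S}}}(\widehat{\mu}_i/\widetilde{\sigma}_i-\mathsf{M}^{\mathtt{S}}_i)\Rightarrow N(0,1)$, provided all remainders are $\ro_{\mathbb{P}}(n^{-1/2})$.

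The main obstacle is exactly this remainder control, and it is where $\widetilde{\sigma}_i\gg n^{1/4}$ enters. The mean correction in $\mathsf{M}^{\mathtt{S}}_i$ is of order $\widetilde{\sigma}_i^{-1}$, but the next term in the resolvent expansion, $\widetilde{\sigma}_i^{-2}\mathbf{v}_i^*X(W^*W)^2X^*\mathbf{v}_i\asymp\widetilde{\sigma}_i^{-2}$, must be $\ro(n^{-1/2})$; this requires $\widetilde{\sigma}_i\gg n^{1/4}$. Similarly, replacing $\widehat{\mu}_i$ by $\widetilde{\sigma}_i$ inside the correction term costs $\widetilde{\sigma}_i^{-1}\cdot|\widehat{\mu}_i/\widetilde{\sigma}_i-1|\cdot\widetilde{\sigma}_i^{-0}$, which must also be checked as $\ro(n^{-1/2})$. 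I would make these bounds rigorous via a fixed-point (Rouché or contraction) argument on the scalar equation in a window of size $n^{-1/2+\epsilon}\widetilde{\sigma}_i$ around $\widetilde{\sigma}_i\mathsf{M}^{\mathtt{S}}_i$. The argument also requires a precise treatment of the dependence between $V_r^*X$ and $W$ when $\widetilde{\Sigma}$ is not diagonal and $X$ is non-Gaussian, using large-deviation bounds for quadratic forms with $W$-measurable matrices after conditioning.
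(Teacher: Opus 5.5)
Your argument is correct and runs on the same mechanism as the paper's proof, which defers to \cite{CHP,yu2024testing}: the $r\times r$ secular equation $\det\big(\mu\widetilde{\Sigma}_r^{-1}-V_r^*X(I-W^*W/\mu)^{-1}X^*V_r\big)=0$, in which $\mathbf{v}_i^*XX^*\mathbf{v}_i-1$ is the only term fluctuating at scale $n^{-1/2}$. What differs is the centering.

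The paper centers at the implicit deterministic $\theta_i$ of \eqref{eq_def_theta}, which absorbs the interaction with the non-spiked bulk to all orders. It then quotes $\widehat{\mu}_i/\theta_i=\mathbf{v}_i^*XX^*\mathbf{v}_i+\mathrm{o}_{\mathbb{P}}(n^{-1/2})$, and uses $\widetilde{\sigma}_i\gg n^{1/4}$ only afterwards, in \eqref{eq_thetaipriorbound}--\eqref{eq_keyrepresentation}, to replace $\theta_i$ by $\widetilde{\sigma}_i\mathsf{M}_i^{\mathtt{S}}$. You center at $\widetilde{\sigma}_i$ directly and truncate the Neumann series after the linear term. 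You spend the same hypothesis on $\mu^{-2}\mathbf{v}_i^*X(W^*W)^2X^*\mathbf{v}_i$, which is the same $\widetilde{\sigma}_i^{-2}$-order bias that separates $\theta_i/\widetilde{\sigma}_i$ from $\mathsf{M}_i^{\mathtt{S}}$. The paper's route gives the more general $\theta_i$-centred CLT but is essentially a citation; yours is self-contained.

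Under $\widetilde{\sigma}_i\gg n^{1/4}$, your route is also more elementary than you suggest. You need neither an isotropic local law nor the supplement's column-removal decoupling; that decoupling concerns dependence across the sample index, not between $V_r^*X$ and $V_\perp^*X$.

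\begin{itemize}
\item The Neumann terms of order $m\geq 2$ are bounded by $\mu^{-m}\|W\|^{2m}\|X^*\mathbf{v}_i\|^2$, with no independence needed, so they contribute $\mathrm{O}_{\mathbb{P}}(\widetilde{\sigma}_i^{-2})=\mathrm{o}_{\mathbb{P}}(n^{-1/2})$.
\item The $\mu^{-1}$ parts of the off-diagonal entries should also be included in your second-order perturbation step. They are bounded by $\mu^{-1}\|W\|^{2}\|X^*\mathbf{v}_i\|\,\|X^*\mathbf{v}_j\|=\mathrm{O}_{\mathbb{P}}(\widetilde{\sigma}_i^{-1})$ and enter squared, so they also contribute $\mathrm{O}_{\mathbb{P}}(\widetilde{\sigma}_i^{-2})$.
\item For the linear term, set $\Sigma_o=V_\perp\Sigma_\perp V_\perp^*$, so that $W^*W=X^*\Sigma_oX$. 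You only need $\mathbf{v}_i^*XX^*\Sigma_oXX^*\mathbf{v}_i-\frac{1}{n}\sum_{k>r}\sigma_k=\mathrm{o}_{\mathbb{P}}(\widetilde{\sigma}_i n^{-1/2})$. A direct first- and second-moment computation gives this, using only independence of the columns of $X$ and $\Sigma_o\mathbf{v}_i=0$, for non-Gaussian entries and non-diagonal $\widetilde{\Sigma}$ alike.
\end{itemize}
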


In what follows, we demonstrate that the bootstrap method—particularly the conditional bootstrap—is generally valid for recovering \eqref{eq_keynotations} in Theorem \ref{thm_sample}, provided that the multipliers are properly chosen. Specifically, this validity holds when the population spikes exceed a certain threshold, which can be as low as $\mathrm{O}(n^{1/4+\epsilon})$, for some sufficiently small constant $\epsilon>0,$ substantially smaller than the $\mathrm{O}(n^{1/2+\epsilon})$ threshold established in \cite{karoui2016bootstrap}. 

% Building on this, in Section \ref{sec_spike_themodification}, we propose a novel bias correction procedure to enhance the efficiency and accuracy of the multiplier bootstrap.}

%In Section \ref{sec_spike_thegood},

%\begin{remark}
%{\color{blue} here}
%\end{remark}
%
%
%\subsection{The good: multiplier bootstrap can be useful for the spikes}\label{sec_spike_thegood}
%In this section, we demonstrate that the multiplier bootstrap can be a valuable tool for analyzing the asymptotics of large eigenvalues in sample covariance matrices with spiked structures. Specifically, when suitably chosen multipliers are applied, the leading eigenvalues of the bootstrapped sample covariance matrix corresponding to the population spikes retain Gaussian distributions, making it possible to study the asymptotics of the largest eigenvalues of the sample covariance matrices. 

To formalize the results, we introduce the following threshold for various multipliers according to Assumption \ref{assum_D}
\begin{equation}\label{eq_bootstrap_threshold}
\mathsf{T}:= 
\begin{cases}
n^{1/\alpha} \log n, & \text{if (\ref{ass3.1}) holds}; \\
\log ^{1/\beta} n,  & \text{if (\ref{ass3.2}) holds}; \\
l, &\text{if (\ref{ass3.4}) holds}.
\end{cases}    
\end{equation}
In fact, $\mathsf{T}$ serves as a reference point for identifying suitable multipliers, determined by the spike strength of $\widetilde{\Sigma}$ as in (\ref{eq_truemodelspiked}). 
%{\color{blue}(Remove to supp) For each $1\le i\le r$, we denote a deterministic quantity $\theta_i$ which is the unique solution of the equation,
%\begin{equation*}
%    \frac{\theta_i}{\widetilde{\sigma}_i}=\left(1-\frac{1}{n\theta_i}\sum_{j=r+1}^p\frac{\sigma_j}{1-\widetilde{\sigma}_i^{-1}\sigma_j}\right)^{-1},
%\end{equation*}
%with restriction $\theta_i\in[\widetilde{\sigma}_i,2\widetilde{\sigma}_i]$.} 
Recall (\ref{eq_keynotations}).  We denote
\begin{equation}
    \mathsf{M}_i^\mathtt{Con2}:=1+\frac{\operatorname{Var}(\xi^2)}{(\mathbb{E} \xi^2)^2}\left( \mathsf{M}_i^\mathtt{S}-1 \right),\quad \mathsf{M}_i^\mathtt{Unc}:=\mathsf{M}_i^\mathtt{Con2}+\mathsf{M}_i^\mathtt{S}-1,\label{eq_biasedmean_spike1}
\end{equation}
 and
 \begin{equation}   
    \mathsf{V}_i^\mathtt{Unc}=\frac{\mathbb{E}\xi^4}{(\mathbb{E}\xi^2)^2}\left(\mathsf{V}_i^{\mathtt{S}}+1\right)-1. \label{eq_biasedmean_spike2}
\end{equation}

Recall from Table \ref{table_notations} that $\{\mu_i\}$ are the eigenvalues of the bootstrapped sample covariance matrix $\widetilde{Q}.$ Our first result establishes the theoretical guarantee for the unconditional version of the multiplier bootstrap.
{
\begin{theorem}[Unconditional bootstrap]\label{thm_main_spike_unconditional}Suppose  Assumptions \ref{assum_model}, \ref{assum_D} and  \ref{assumption_techincial} hold.  For the spikes in (\ref{eq_truemodelspiked}) and (\ref{eq_separation}) and $\mathsf{T}$ in (\ref{eq_bootstrap_threshold}), we assume that for all $1 \leq i \leq r$
\begin{equation}\label{spiked_assumption}
\widetilde{\sigma}_i \gg \max\{\mathsf{T}, n^{1/4}\}.
\end{equation}
Then we have for $1 \leq i \leq r$ and $x \in \mathbb{R},$ when $n$ is sufficiently large
\begin{equation}\label{eq_resultuncondtional}
\left|\mathbb{P} \left( \sqrt{\frac{n}{\mathsf{V}_i^\mathtt{Unc}}}\left( \frac{1}{\mathbb{E}\xi^2}\frac{\mu_i}{\widetilde{\sigma}_i}-\mathsf{M}_i^\mathtt{Unc}\right) \leq x \right)-\Phi(x)\right|=\mathrm{o}(1), 
\end{equation}
where $\mathsf{M}^\mathtt{Unc}$ and $\mathsf{V}^\mathtt{Unc}$ are defined in (\ref{eq_biasedmean_spike1}) and (\ref{eq_biasedmean_spike2}), respectively, and $\Phi(x)$ is the CDF of the standard Gaussian random variable. 
\end{theorem}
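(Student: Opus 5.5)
The plan is to treat each $\mu_i$, $1\le i\le r$, as an outlier of a finite-rank perturbation. I would reduce the eigenvalue equation to an $r\times r$ determinant, expand it to second order in $\widetilde{\sigma}_i^{-1}$, and then apply a CLT for a weighted quadratic form in $(X,D)$.

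\emph{Reduction to a determinant equation.} Split $\widetilde{\Sigma}=\Sigma_{\mathrm{s}}+\Sigma_{\mathrm{b}}$, where $\Sigma_{\mathrm{s}}=\sum_{j\le r}\widetilde{\sigma}_j\mathbf{v}_j\mathbf{v}_j^*$ and $\Sigma_{\mathrm{b}}=\sum_{j>r}\sigma_j\mathbf{v}_j\mathbf{v}_j^*$ is bounded. Write $V_r=(\mathbf{v}_1,\dots,\mathbf{v}_r)$ and $W:=V_r^*X\in\mathbb{R}^{r\times n}$, and let $\mathcal{G}(z)$ be the resolvent of the $n\times n$ matrix $D X^*\Sigma_{\mathrm{b}}X D$. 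By the Schur complement / Woodbury identity, $\mu\notin\operatorname{spec}(DX^*\Sigma_{\mathrm{b}}XD)$ is an eigenvalue of $\widetilde{Q}$ iff
\begin{equation*}
\det\Big(\operatorname{diag}(\widetilde{\sigma}_j^{-1})_{j\le r}+ W D\,\mathcal{G}(\mu)\,D W^*\Big)=0 .
\end{equation*}
First, a priori localization. The non-spiked part has norm at most $C\mathsf{T}$ with high probability, by the proof of Theorem \ref{thm_main_unbounded} (cf. (\ref{eq_defnvarphi})) or by boundedness of $\xi^2$. Together with (\ref{spiked_assumption}) and (\ref{eq_separation}), this gives $\mu_i\asymp\widetilde{\sigma}_i\gg\|DX^*\Sigma_{\mathrm{b}}XD\|$. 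Hence we may expand $\mathcal{G}(\mu)=-\mu^{-1}\sum_{k\ge0}\mu^{-k}(DX^*\Sigma_{\mathrm{b}}XD)^k$ as a convergent Neumann series. The off-diagonal entries $\mathbf{w}_i^*D^2\mathbf{w}_j$ with $i\ne j$ are $\mathrm{O}_{\mathbb{P}}(n^{-1/2})$, and the spikes are separated, so a standard perturbation argument reduces the $i$-th root to a scalar fixed-point equation, with errors of order $n^{-1}$ in $\mu_i/\widetilde{\sigma}_i$.

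\emph{The expansion.} With $\mathbf{w}_i=X^*\mathbf{v}_i$, this yields
\begin{equation*}
\frac{\mu_i}{\widetilde{\sigma}_i}=\mathbf{w}_i^*D^2\mathbf{w}_i+\frac{1}{\mu_i}\,\mathbf{w}_i^*D^2X^*\Sigma_{\mathrm{b}}XD^2\mathbf{w}_i+\mathcal{E}_i .
\end{equation*}
The two terms are handled as follows.
\begin{itemize}
\item The leading term is $\sum_j\xi_j^2(\mathbf{w}_i)_j^2$, a sum of i.i.d. variables. Its mean is $\mathbb{E}\xi^2$. Its variance is $n^{-1}\big(\mathbb{E}\xi^4\,(\mathsf{V}_i^{\mathtt{S}}+1)-(\mathbb{E}\xi^2)^2\big)$, using $\mathbb{E}(\sqrt n(\mathbf{w}_i)_j)^4=3+\sum_k v_{ki}^4(\mathfrak{m}_4-3)=\mathsf{V}_i^{\mathtt{S}}+1$. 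After dividing by $\mathbb{E}\xi^2$ this is exactly $\mathsf{V}_i^{\mathtt{Unc}}/n$.
\item The second term has $\mathbf{w}_i$ independent of $X^*\Sigma_{\mathrm{b}}X$ up to the rank-$r$ projection. Its conditional expectation is $\frac{1}{\mu_i}\sum_j\xi_j^4\,n^{-1}\cdot n^{-1}\sum_{k>r}\sigma_k(1+\mathrm{o}(1))$. Replacing $\mu_i$ by $\widetilde{\sigma}_i\mathbb{E}\xi^2$ and dividing by $\mathbb{E}\xi^2$ gives $\frac{\mathbb{E}\xi^4}{(\mathbb{E}\xi^2)^2}(\mathsf{M}_i^{\mathtt{S}}-1)=\mathsf{M}_i^{\mathtt{Unc}}-1$, by (\ref{eq_biasedmean_spike1}).
\end{itemize}
The fluctuation of the second term is of relative size $\mathrm{O}(\widetilde{\sigma}_i^{-1}n^{-1/2}\mathsf{T})=\mathrm{o}(n^{-1/2})$ because $\widetilde{\sigma}_i\gg\mathsf{T}$. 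Replacing $\mu_i$ by its leading value inside this term costs $\mathrm{O}(\widetilde{\sigma}_i^{-1}n^{-1/2})$, which is also negligible.

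\emph{Main obstacle: the remainder $\mathcal{E}_i$.} The main obstacle is showing that $\sqrt n\,\mathcal{E}_i=\mathrm{o}_{\mathbb{P}}(1)$ uniformly over all multiplier classes. The $k$-th Neumann term is of order $\widetilde{\sigma}_i^{-k}\|D^2\|^{k}$ in operator norm, but crude norm bounds are too weak. I would instead control the $k=2$ term by its mean, which is of order $\widetilde{\sigma}_i^{-2}$ times bounded moments of $\xi^2$, plus a fluctuation of order $\widetilde{\sigma}_i^{-2}n^{-1/2}\mathsf{T}^2$. The requirement $\sqrt n\,\widetilde{\sigma}_i^{-2}\to0$ is exactly where $\widetilde{\sigma}_i\gg n^{1/4}$ enters. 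For unbounded multipliers I would truncate at $\xi_{(1)}^2\lesssim\mathsf{T}$ and use the moment bounds in Assumption \ref{assum_model} together with large-deviation estimates for quadratic forms to bound the $k\ge3$ terms by a geometric series in $\mathsf{T}/\widetilde{\sigma}_i$. Finally, the Lindeberg CLT applies to the leading term since $\mathbb{E}\xi^4<\infty$ (recall $\alpha>2$). Combining it with Slutsky's lemma yields (\ref{eq_resultuncondtional}).
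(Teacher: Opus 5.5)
Your proposal is correct. It reaches the paper's linearization
$\frac{1}{\mathbb{E}\xi^2}\frac{\mu_i}{\widetilde\sigma_i}=\frac{\mathbf{v}_i^*XD^2X^*\mathbf{v}_i}{\mathbb{E}\xi^2}+\mathsf{M}_i^{\mathtt{Unc}}-1+\mathrm{o}_{\mathbb{P}}(n^{-1/2})$
by a different route, and then finishes with the same CLT and Slutsky step.

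\textbf{How the routes differ.} The paper also starts from the $r\times r$ determinant equation \eqref{eq_generalldeterminant}, but it centres it at the deterministic outlier location $\theta_i$ of the un-bootstrapped model, \eqref{eq_def_theta}. It then evaluates $\theta_i V_1^*X\check{D}\mathbf{B}^{-1}(\theta_i)\check{D}X^*V_1$ through the random self-consistent quantity $\pi_i$ of \eqref{eq_def_zeta}, importing resolvent estimates from \cite{CHP,yu2024testing}. In that bookkeeping the bias splits into two pieces.
\begin{itemize}
\item The multiplier part $\frac{\operatorname{Var}(\xi^2)}{(\mathbb{E}\xi^2)^2}(\mathsf{M}_i^{\mathtt{S}}-1)$ comes from expanding $\pi_i-\theta_i/\widetilde\sigma_i$.
\item The remaining $\mathsf{M}_i^{\mathtt{S}}-1$ comes from the factor $\theta_i/\widetilde\sigma_i$.
\end{itemize}
You instead expand the bulk resolvent in powers of $H/\mu$, with $H:=DX^*\Sigma_{\mathrm{b}}XD$. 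The whole bias $\frac{\mathbb{E}\xi^4}{(\mathbb{E}\xi^2)^2}(\mathsf{M}_i^{\mathtt{S}}-1)$ then comes out of a single moment computation on the first-order term. The identity $\mathbb{E}\xi^4=\operatorname{Var}(\xi^2)+(\mathbb{E}\xi^2)^2$ reconciles the two bookkeepings.

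\textbf{What each route buys.} Your route is self-contained and shows exactly where each half of \eqref{spiked_assumption} is used.
\begin{itemize}
\item $\widetilde\sigma_i\gg\mathsf{T}$ gives convergence of the series and makes the first-order fluctuations negligible.
\item $\widetilde\sigma_i\gg n^{1/4}$ makes the second-order term negligible.
\end{itemize}
The paper's $\theta_i$-normalization plugs into existing machinery. It also makes the cancellation against $\widehat{\mu}_i$ in part (2) of Theorem~\ref{thm_main_spike_conditional} immediate.

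\textbf{Making the remainder control precise.}
\begin{itemize}
\item Anchor the tail at the second-order term rather than summing a bare geometric series in $\mathsf{T}/\widetilde\sigma_i$ from $k=3$. A bare series is of order $(\mathsf{T}/\widetilde\sigma_i)^3$, which need not be $\mathrm{o}(n^{-1/2})$ under \eqref{spiked_assumption}; for example, take $\alpha=3$ and $\widetilde\sigma_i=\mathsf{T}\log n$. Since $H\succeq0$, with $\mathbf{u}:=D\mathbf{w}_i$ you have $\mathbf{u}^*H^k\mathbf{u}\le\|H\|^{k-2}\mathbf{u}^*H^2\mathbf{u}$. Hence $\sum_{k\ge2}\mu^{-k}\mathbf{u}^*H^k\mathbf{u}\le\mu^{-2}\|H\mathbf{u}\|^2(1-\|H\|/\mu)^{-1}$, so everything reduces to the single nonnegative quantity $\mu^{-2}\|H\mathbf{u}\|^2$. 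A first-moment bound on it (after truncation) suffices; no separate fluctuation analysis is needed.
\item $\|H\mathbf{u}\|^2$ is not bounded by fixed moments of $\xi^2$ when $\alpha\le3$. The self-interaction of the largest multipliers contributes about $n^{-1}\sum_j\xi_j^6$, which is of order $n^{3/\alpha-1}$ up to logarithms. It is still negligible: $\widetilde\sigma_i\gg\mathsf{T}$ gives $\sqrt{n}\,n^{3/\alpha-1}\widetilde\sigma_i^{-2}\lesssim n^{1/\alpha-1/2}$ up to logarithmic factors, and this tends to zero because $\alpha>2$.
\item The off-diagonal entries of your $r\times r$ matrix also carry $\mu^{-1}\mathbf{w}_a^*DHD\mathbf{w}_b$. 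So the Schur-complement reduction error is $\mathrm{O}(n^{-1}+\widetilde\sigma_i^{-2})$ rather than $\mathrm{O}(n^{-1})$. This is still $\mathrm{o}(n^{-1/2})$.
\end{itemize}
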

}

\begin{remark}\label{rem_asymGaussian_spike}
Two remarks are in order. First, the condition \eqref{spiked_assumption} can be verified in practice. Recall from Table \ref{table_notations} that $\{\widehat{\mu}_i\}$ are the eigenvalues of the spiked sample covariance matrices $\widetilde{S}.$ Specifically, according to \cite{CHP}, for $1 \leq i \leq r$, it holds that $$\frac{\widehat{\mu}_i}{\widetilde{\sigma}_i} = 1 + \ro_{\mathbb{P}}(1).$$ Consequently, $\widehat{\mu}_i$ can serve as a proxy for $\widetilde{\sigma}_i$, offering valuable guidance for selecting suitable multipliers based on $\mathsf{T}$. 

Second, the results in Theorem \ref{thm_main_spike_unconditional} suggest that, even though one can obtain multiple copies of $\widetilde{Q}$ to apply (\ref{eq_resultuncondtional}), this approach is not directly useful for understanding Theorem \ref{thm_sample} in terms of estimating (\ref{eq_keynotations}). In fact,
% in order to have $\mathsf{V}_i^{\mathtt{Unc}}=\mathsf{V}_i^{\mathtt{S}},$ the multipliers would need to satisfy $\mathbb{E}\xi^4=(\mathbb{E}\xi^2)^2,$ which is impossible for continuous random variables. Similarly, 
 if we want $\mathsf{M}_i^{\mathtt{Unc}}=\mathsf{M}_i^{\mathtt{S}},$ a direct calculation shows that this would require $\operatorname{Var}(\xi^2)/(\mathbb{E}\xi^2)^2=0,$ which is impossible for continuous random variables. 

%Second, as noted in Table \ref{table_notations}, the non-spiked eigenvalues of $\widetilde{Q}$ closely follow those of the non-spiked matrix $Q$. Specifically, for any fixed integer $k$, we can show that 
%\begin{equation}\label{eq_sticking} \left| \mu_{r+i} - \lambda_i \right| = \rO_{\mathbb{P}} \left( n^{-1/2 + 2\epsilon} d_1 \right), \quad 1 \leq i \leq k, \end{equation}
%where $d_1$ is a slowly divergent constant defined in \eqref{eq_firstddefinition} of our supplement. By combining \eqref{eq_sticking} with Remark \ref{rmk_mainresults_unbounded}, we conclude that $\mu_{r+i}$ $(1 \leq i \leq k)$ follows either a Fréchet or Gumbel distribution, depending on the tail behavior of the multiplier. In contrast, Theorem \ref{thm_main_spike} shows that the spiked eigenvalues are always Gaussian. This distinction highlights the differing distributions of the spiked and non-spiked eigenvalues in the bootstrapped sample covariance matrix, providing a theoretical foundation for detecting spikes. This aspect will be explored further in Section \ref{sec_application_factorselection}.
\end{remark}

Our second result establishes the theoretical guarantee for the
conditional version of the multiplier bootstrap. For the purpose of applicable modification, we provide two versions of conditional bootstrap as follows. Recall (\ref{eq_biasedmean_spike1}). Denote
\begin{equation}
   \mathsf{M}_i^{\mathtt{Con1}}:=\left(\mathbf{v}_i^{*}XX^{*}\mathbf{v}_i-1\right)+\mathsf{M}^{\mathtt{Unc}}_i, \label{eq_biasedmean_conditional} 
    \end{equation}
 and
 \begin{equation}   
     \mathsf{V}^{\mathtt{Con}}_i:=\frac{\operatorname{Var}(\xi^2)}{(\mathbb{E}\xi^2)^2} \left(\mathsf{V}_i^\mathtt{S}+1\right) \label{eq_biasedvar_conditional}.
\end{equation}
To streamline the presentation of the results below, we consider multipliers satisfying $\operatorname{Var}(\xi^2) \asymp 1$.
%Then, we have the following results. {\color{purple} merge Theorem 4.5 and 4.7 into one theorem}
\begin{theorem}[Conditional bootstrap]\label{thm_main_spike_conditional}
Suppose the assumptions of Theorem \ref{thm_main_spike_unconditional} hold. 
Then we have for $1 \leq i \leq r$ and $x \in \mathbb{R},$  the following statements hold with probability at least $1-\ro(1):$ 
\begin{enumerate}
\item For $\mathsf{M}^\mathtt{Con1}$ and $\mathsf{V}^\mathtt{Con}$ defined in \eqref{eq_biasedmean_conditional} and \eqref{eq_biasedvar_conditional},
\begin{equation}\label{eq_main_spike_conditional_theta}
\left| \mathbb{P} \left( \sqrt{\frac{n}{\mathsf{V}^\mathtt{Con}_i}}\left( \frac{1}{\mathbb{E}\xi^2 }\frac{\mu_i}{ \widetilde{\sigma}_i}-\mathsf{M}_i^\mathtt{Con1}\right) \leq x |X \right)-\Phi(x)\right|=\mathrm{o}(1),
\end{equation}
where $\Phi(x)$ is the CDF of the standard Gaussian random variable. 
\item For $\mathsf{M}^\mathtt{Con2}$ and $\mathsf{V}^\mathtt{Con}$ defined in \eqref{eq_biasedmean_spike1} and \eqref{eq_biasedvar_conditional},
\begin{equation}\label{eq_main_spike_conditional_hatmu}
      \left|\mathbb{P}\Big(\sqrt{\frac{n}{\mathsf{V}^\mathtt{Con}_i}}\Big( \frac{1}{\mathbb{E}\xi^2 }\frac{\mu_i}{\widehat{\mu}_i}-\mathsf{M}_i^\mathtt{Con2}\Big)\leq x|X\Big)-\Phi(x)\right|=\mathrm{o}(1),
    \end{equation}
where $\Phi(x)$ is the CDF of the standard Gaussian random variable. 
\end{enumerate}
\end{theorem}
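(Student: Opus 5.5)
Conditionally on $X$, the only randomness in $\widetilde{Q}$ comes from the i.i.d. multipliers $\{\xi_k^2\}$. The plan is to reduce each spiked eigenvalue $\mu_i$ to a linear statistic in these multipliers, with coefficients determined by $X$, and then apply a Lindeberg/Lyapunov CLT under $\mathbb{P}(\cdot|X)$ on a high-probability event $\Omega_X$.

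\textbf{Step 1: the eigenvalue equation.} Write $\widetilde{\Sigma}^{1/2}=\Sigma_r^{1/2}+\Sigma_0^{1/2}$, where $\Sigma_r^{1/2}$ collects the $r$ spiked directions. By the standard Schur-complement argument, each $\mu_i$ with $1\leq i\leq r$ is a root of the $r\times r$ determinant equation
\begin{equation*}
\det\big(\widetilde{\sigma}^{-1}-\mathbf{V}_r^*X D(\mu-D X^*\Sigma_0 X D)^{-1}D X^*\mathbf{V}_r\big)=0 .
\end{equation*}
Because $\widetilde{\sigma}_i\gg\max\{\mathsf{T},n^{1/4}\}$, we have $\mu\asymp\widetilde{\sigma}_i\,\mathbb{E}\xi^2$. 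This lies far above $\|D X^*\Sigma_0XD\|\lesssim\xi_{(1)}^2\lesssim\mathsf{T}$, where the bound on $\xi_{(1)}^2$ comes from the extreme-value estimates (Lemma \ref{lem_summaryevt}). We can therefore expand the resolvent in a Neumann series in $\mu^{-1}$.

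Keeping terms up to order $\widetilde{\sigma}_i^{-1}$ relative, and using the separation (\ref{eq_separation}) to decouple the $r\times r$ problem, this gives
\begin{equation*}
\frac{\mu_i}{\widetilde{\sigma}_i}=\sum_{k=1}^n\xi_k^2(\mathbf{v}_i^*\mathbf{x}_k)^2+\frac{1}{\widetilde{\sigma}_i}\sum_{k}\xi_k^4\,\mathbf{x}_k^*\Sigma_0\mathbf{x}_k(\mathbf{v}_i^*\mathbf{x}_k)^2\cdot n+\text{(cross terms)}+\ro_{\mathbb{P}}(n^{-1/2}).
\end{equation*}
The condition $\widetilde{\sigma}_i\gg n^{1/4}$ is exactly what makes the second-order terms, of size $\widetilde{\sigma}_i^{-2}$, negligible against $n^{-1/2}$.

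\textbf{Step 2: centering and conditional CLT.} Split the leading term as
\begin{equation*}
\sum_k\xi_k^2(\mathbf{v}_i^*\mathbf{x}_k)^2=\mathbb{E}\xi^2\,\mathbf{v}_i^*XX^*\mathbf{v}_i+\sum_k(\xi_k^2-\mathbb{E}\xi^2)(\mathbf{v}_i^*\mathbf{x}_k)^2 .
\end{equation*}
The first piece is $X$-measurable and produces the term $(\mathbf{v}_i^*XX^*\mathbf{v}_i-1)$ in $\mathsf{M}_i^{\mathtt{Con1}}$.

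For the second piece, the conditional variance is $\operatorname{Var}(\xi^2)\sum_k(\mathbf{v}_i^*\mathbf{x}_k)^4$. By the law of large numbers this equals $n^{-1}\operatorname{Var}(\xi^2)(\mathsf{V}_i^{\mathtt{S}}+1)(1+\ro(1))$ on $\Omega_X$, since $\mathbb{E}(\sqrt n\mathbf{v}_i^*\mathbf{x})^4=3+\sum_k v_{ki}^4(\mathfrak{m}_4-3)$. This is exactly $\mathsf{V}_i^{\mathtt{Con}}$ after dividing by $(\mathbb{E}\xi^2)^2$. The Lyapunov condition holds on $\Omega_X$ because $\max_k(\mathbf{v}_i^*\mathbf{x}_k)^2\leq n^{-1+\epsilon}$. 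For unbounded $\xi$, use a truncation at $\mathsf{T}$, which holds with probability $1-\ro(1)$, together with $\alpha>2$ for integrability of $\xi^4$.

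The second-order term averages, conditionally, to $\widetilde{\sigma}_i^{-1}n^{-1}\mathrm{tr}\Sigma_0\,\mathbb{E}\xi^4/\mathbb{E}\xi^2$, up to $\ro(n^{-1/2})$. Its conditional fluctuation is of lower order. After dividing by $\mathbb{E}\xi^2$, it yields $(\mathsf{M}_i^{\mathtt{S}}-1)\mathbb{E}\xi^4/(\mathbb{E}\xi^2)^2=\mathsf{M}_i^{\mathtt{Con2}}+\mathsf{M}_i^{\mathtt{S}}-2$, which together with the above reproduces $\mathsf{M}_i^{\mathtt{Unc}}$. This proves part (1).

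\textbf{Step 3: part (2).} Replace $\widetilde{\sigma}_i$ by $\widehat{\mu}_i$. Applying the same expansion with $D=I$ (the proof behind Theorem \ref{thm_sample}) gives
\begin{equation*}
\widehat{\mu}_i/\widetilde{\sigma}_i=\mathbf{v}_i^*XX^*\mathbf{v}_i+(\mathsf{M}_i^{\mathtt{S}}-1)+\ro_{\mathbb{P}}(n^{-1/2}).
\end{equation*}
Dividing, $\mu_i/\widehat{\mu}_i$ loses both the $X$-measurable term and one copy of $\mathsf{M}^{\mathtt{S}}_i-1$, to first order. The cross products are $\rO(n^{-1})$, so the centering becomes $\mathsf{M}_i^{\mathtt{Con2}}$ and the conditional variance is unchanged.

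\textbf{Main obstacle.} The delicate step is controlling the Neumann remainder and the cross terms between spiked and non-spiked parts uniformly in $D$ for unbounded multipliers. Here I would rely on anisotropic local-law bounds for $X D(\mu-DX^*\Sigma_0XD)^{-1}DX^*$ on $\Omega_X$, conditioning on $\xi_{(1)}^2\leq\mathsf{T}$. I would also need to check that the conditional fluctuation of the second-order term is $\ro(n^{-1/2})$, which again uses $\widetilde{\sigma}_i\gg n^{1/4}$.
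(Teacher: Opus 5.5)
Your proof has the same overall structure as the paper's. Both arguments do the following:
\begin{itemize}
\item reduce $\mu_i/(\mathbb{E}\xi^2\,\widetilde\sigma_i)$ to the $X$-measurable centering $\mathsf{M}_i^{\mathtt{Con1}}$ plus the linear statistic $(\mathbb{E}\xi^2)^{-1}\sum_j\omega_j^2(\xi_j^2-\mathbb{E}\xi^2)$, where $\omega_j=\mathbf{v}_i^*\mathbf{x}_j$;
\item run a conditional Lindeberg CLT on $\Omega_X$, using $\sum_j\omega_j^4=n^{-1}(\mathsf{V}_i^{\mathtt{S}}+1)(1+\ro(1))$ and delocalization of the $\omega_j$;
\item truncate the multipliers on a $\sigma(D)$-measurable event;
\item obtain part (2) by dividing by the $D=I$ representation of $\widehat\mu_i$.
\end{itemize}

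The genuine difference is how the representation is derived. The paper linearizes the determinant equation around the deterministic $\theta_i$ of \eqref{eq_def_theta}. It introduces the self-consistent random quantity $\pi_i$ of \eqref{eq_def_zeta} and reuses the resolvent estimates of \cite{yu2024testing}. The bias is then assembled from $\theta_i/\widetilde\sigma_i-1\approx\mathsf{M}_i^{\mathtt{S}}-1$ and $\pi_i-\theta_i/\widetilde\sigma_i$, the latter carrying the $\operatorname{Var}(\xi^2)$ part.

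You instead expand $(\mu-DX^*\Sigma_0XD)^{-1}$ directly in powers of $\mu^{-1}$. The full bias $\frac{\mathbb{E}\xi^4}{(\mathbb{E}\xi^2)^2}(\mathsf{M}_i^{\mathtt{S}}-1)$ then appears at once from the diagonal ($k=l$) terms of $\mu^{-1}\mathbf{v}_i^*XD^2X^*\Sigma_0XD^2X^*\mathbf{v}_i$; the off-diagonal part is of lower order. Your route is more elementary and makes the origin of $\mathbb{E}\xi^4$ transparent. The $\theta_i$ route resums $\frac1n\sum_k\frac{\sigma_k/\widetilde\sigma_i}{1-\sigma_k/\widetilde\sigma_i}$ exactly and lets the paper import ready-made lemmas. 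The price on your side is that you must control the Neumann tail yourself, with possibly infinite high moments of $\xi^2$.

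When you do that, attribute each step to the hypothesis actually doing the work:
\begin{itemize}
\item The $m=2$ term $\mu^{-2}\|DX^*\Sigma_0XD^2X^*\mathbf{v}_i\|^2$ contains $\sum_k\xi_k^6\omega_k^2$. For $\alpha<3$ this is of size up to $n^{3/\alpha-1+\epsilon}$.
\item The conditional fluctuation of your second-order term involves $\xi^8$, whose moment is infinite for $\alpha<4$.
\item Both are $\ro(n^{-1/2})$ because $\widetilde\sigma_i\gg\mathsf{T}$. For $\alpha$ close to $2$, the condition $\widetilde\sigma_i\gg n^{1/4}$ alone would not suffice, contrary to what your ``main obstacle'' remark suggests.
\end{itemize}

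Two minor slips to fix:
\begin{itemize}
\item In your Step 1 display, the second-order prefactor should be $\mu_i^{-1}\approx(\widetilde\sigma_i\mathbb{E}\xi^2)^{-1}$, with no factor $n$. Your later stated conditional average is the correct one.
\item In Step 3, the cross products are $\rO(\widetilde\sigma_i^{-2}+n^{-1/2+\epsilon}\widetilde\sigma_i^{-1})$, not $\rO(n^{-1})$. They are negligible against $n^{-1/2}$ precisely because $\widetilde\sigma_i\gg n^{1/4}$.
\end{itemize}
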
 

\begin{remark}\label{rem_conditionalspike}
Theorem \ref{thm_main_spike_conditional} provides two versions of the conditional bootstrap. 
However, since $\mathsf{M}_i^\mathtt{Con1}$ in (\ref{eq_main_spike_conditional_theta}) generally involves the unobserved quantity $\mathbf{v}_i^*XX^*\mathbf{v}_i$, it is not directly applicable in practice.

The more interesting and practically useful result is given in (\ref{eq_main_spike_conditional_hatmu}). 
Indeed, a straightforward calculation shows that
\begin{equation*}
\mathsf{M}_i^\mathtt{Con2}=\mathsf{M}_i^\mathtt{S}, 
\qquad 
\mathsf{V}_i^{\mathtt{Con}}=\mathsf{V}_i^{\mathtt{S}}+1,
\end{equation*}
hold simultaneously as long as the multipliers satisfy
\begin{equation}\label{eq_xicon}
\operatorname{Var}(\xi^2)=\mathbb{E}(\xi^2)^2.
\end{equation}
Furthermore, if we additionally impose $\mathbb{E}\xi^2=1$, then (\ref{eq_main_spike_conditional_hatmu}) becomes
\begin{equation}\label{eq_main_spike_conditional_hatmu11}
\left|
\mathbb{P}\left(
\sqrt{\frac{n}{\mathsf{V}_i^\mathtt{S}+1}}
\left(
\frac{\mu_i}{\widehat{\mu}_i}-\mathsf{M}_i^\mathtt{S}
\right)
\leq x \,\middle|\, X
\right)
-\Phi(x)
\right|
=\mathrm{o}(1),
\end{equation}
which is directly applicable together with Theorem \ref{thm_sample}, for example, for bootstrapping a confidence interval for the true spikes $\widetilde{\sigma}_i$. More specifically, one can conduct Monte Carlo simulations using (\ref{eq_main_spike_conditional_hatmu11}) with multipliers satisfying (\ref{eq_xicon}) to obtain sufficiently efficient estimators of $\mathsf{M}_i^\mathtt{S}$ and $\mathsf{V}_i^\mathtt{S}$.

%The CLT result in Theorem \ref{thm_main_spike_conditional} highlights that while the multiplier bootstrap methodology is possibly useful for spiked sample covariance matrices, it generally introduces bias parts, as reflected in  $\mathsf{M}_i$ and $\mathsf{V}_i$. This sheds light on why the bootstrap technique may exhibit instability in approximating the distribution of the top eigenvalues across almost all types of multipliers, as observed in \cite{karoui2016bootstrap}.
%
%In practice,  both $\mathsf{M}_i$ and  $\mathsf{V}_i$ are often hard to be estimated. In the literature, several technical assumptions are commonly employed to ease this difficulty, such as assuming the population covariance is isotropic \cite{bai2002determining,bai2008large,fan2022estimating,stock2016dynamic} or requiring the spike strength to exceed the threshold of $n^{1/2}$ \cite{yao2023rates,yu2024testing}. However, leveraging the intrinsic nature of the bootstrap methodology, one can approximate $\mathsf{M}_i$ and $\mathsf{V}_i$ directly through asymptotic normality by performing the multiplier bootstrap procedure multiple times.  The details of this idea will be presented in the next subsection. 
\end{remark}

\section{Numerical simulations}\label{sec_simulations}
In this section, we conduct several numerical simulations to examine the accuracy of our theoretical results and to illustrate their practical usefulness. For simplicity and due to space constraints, we present only the results for $\Sigma = I$. We have also performed additional simulations for other forms of $\Sigma$, where the eigenvalues are uniformly sampled from a compact interval. The conclusions are similar: all simulations support the accuracy of our theoretical findings. Moreover, the simulations illustrate the usefulness of the conditional bootstrap results in Remark \ref{remark_important} and part (2) of Theorem~\ref{thm_main_spike_conditional} for practical statistical inference tasks.

First, Table \ref{table_section31} presents the simulation results related to Theorems \ref{thm_main_unbounded} and \ref{thm_main_unbounded_conditional} concerning the non-spiked model with unbounded multipliers, based on a comparison between the percentiles of the limiting CDF and those of the empirical CDF. To save space, we report only the results for multipliers satisfying (\ref{ass3.1}), which serve to validate (\ref{eq_mainresultunboundedfrechet}) and (\ref{eq_mainresultunboundedfrechet_conditional}). The results indicate that our theoretical findings are reasonably accurate when $p$ is reasonably large, with accuracy improving as $p$ increases. Similar conclusions can be drawn when the multipliers satisfy \eqref{ass3.2}. 

%\begin{table}[ht]
%\centering
%
%\begin{minipage}{0.48\textwidth}
%\centering
%\begin{tabular}{cccc}
%\hline
%Quantiles & Percentiles & $100\times50$ & $400\times50$ \\
%\hline
%$-3.73$ & $0.01$ & $0.004$ & $0.007$ \\
%$-3.20$ & $0.05$ & $0.033$ & $0.041$ \\
%$-2.90$ & $0.10$ & $0.072$ & $0.089$  \\
%$-2.27$ & $0.30$ & $0.269$ & $0.292$  \\
%$-1.81$ & $0.50$ & $0.479$ & $0.497$  \\
%$-1.33$ & $0.70$ & $0.691$ & $0.702$  \\
%$-0.60$ & $0.90$ & $0.901$ & $0.908$  \\
%$-0.23$ & $0.95$ & $0.953$ & $0.956$  \\
%$0.48$  & $0.99$ & $0.991$ & $0.992$  \\
%\hline
%\end{tabular}
%\caption*{(a) Case 1}
%\end{minipage}
%\hfill
%\begin{minipage}{0.48\textwidth}
%\centering
%\begin{tabular}{cccc}
%\hline
%Quantiles & Percentiles & $100\times50$ & $400\times50$ \\
%\hline
%$-3.73$ & $0.01$ & $0.004$ & $0.007$ \\
%$-3.20$ & $0.05$ & $0.033$ & $0.041$ \\
%$-2.90$ & $0.10$ & $0.072$ & $0.089$  \\
%$-2.27$ & $0.30$ & $0.269$ & $0.292$  \\
%$-1.81$ & $0.50$ & $0.479$ & $0.497$  \\
%$-1.33$ & $0.70$ & $0.691$ & $0.702$  \\
%$-0.60$ & $0.90$ & $0.901$ & $0.908$  \\
%$-0.23$ & $0.95$ & $0.953$ & $0.956$  \\
%$0.48$  & $0.99$ & $0.991$ & $0.992$  \\
%\hline
%\end{tabular}
%\caption*{(b) Case 2}
%\end{minipage}
%
%\caption{Simulation results.}
%\end{table}

\begin{table}[ht]
\centering

\begin{subtable}{0.48\textwidth}
\centering
\begin{tabular}{cccc}
\hline
Quantiles & Percentiles & $p=800$ & $p=1500$ \\
\hline
$0.659$ & $0.10$ & 0.03 & 0.06 \\
$0.911$ & $0.30$ & 0.21 & 0.24 \\
$1.201$ & $0.50$ & 0.43 & 0.46 \\
$1.674$ & $0.70$ & 0.66 & 0.67 \\
$2.117$ & $0.80$ & 0.78 & 0.78  \\
$2.481$ & $0.85$ & 0.84 & 0.85  \\
$3.081$ & $0.90$ & 0.89 & 0.90 \\
$4.415$ & $0.95$ & 0.94 & 0.95 \\
$9.975$ & $0.99$ & 0.99 & 0.99 \\
\hline
\end{tabular}
\caption{Unconditional bootstrap: Theorem \ref{thm_main_unbounded}.}
\end{subtable}
\hfill
\begin{subtable}{0.48\textwidth}
\centering
\begin{tabular}{cccc}
\hline
Quantiles & Percentiles & $p=800$ & $p=1500$ \\
\hline
$0.659$ & $0.10$ & 0.04 & 0.05 \\
$0.911$ & $0.30$ & 0.20 & 0.24 \\
$1.201$ & $0.50$ & 0.43 & 0.45 \\
$1.674$ & $0.70$ & 0.67 & 0.68 \\
$2.117$ & $0.80$ & 0.78 & 0.79 \\
$2.481$ & $0.85$ & 0.84 & 0.85 \\
$3.081$ & $0.90$ & 0.89 & 0.90 \\
$4.415$ & $0.95$ & 0.94 & 0.95 \\
$9.975$ & $0.99$ & 0.99 & 0.99 \\
\hline
\end{tabular}
\caption{Conditional bootstrap: Theorem \ref{thm_main_unbounded_conditional}.}
\end{subtable}

\caption{Simulation results for Section \ref{sec_nonspike_thebad}: non-spiked model with unbounded multipliers. The theoretical quantiles correspond to the distribution with CDF $F(x)=\exp(-x^{-2})$ with various percentiles. The empirical percentiles are calculated using these theoretical quantitles with the simulated data. In the simulations, we set $p=0.5n$, $\Sigma=I$, and the entries of $X$ are i.i.d.\ $\mathcal{N}(0,n^{-1})$. The multiplier satisfies $\xi^2\sim Y/2$, where $Y$ is a standard Pareto random variable with tail index $2$. We compare the percentiles of the empirical distribution of $\lambda_1/(\varphi b_n)$ with those of $F(x)$ for the unconditional and conditional bootstrap procedures described in Theorems \ref{thm_main_unbounded} and \ref{thm_main_unbounded_conditional}, respectively. The results are based on 2,000 simulation repetitions.}\label{table_section31}
\end{table}

Second, Table \ref{table_section32} presents the simulation results related to Theorems \ref{thm_main_bounded} and \ref{thm_main_bounded_conditional} concerning the non-spiked model with bounded multipliers. To save space, we report only the Gaussian case within part (3) of these results, as it is most relevant for subsequent statistical applications. For this purpose, we consider multipliers constructed as 
\begin{align}\label{eq_xiconstruction}
\xi^2=1+\mathbb{E}U-U,\quad U\sim\mathrm{Beta}(d+1,T^{-1/2}),
\end{align}
with $d=0$ and $T=20$, so that numerically $n^{-1/3} \ll \mathsf{v} \ll 1$. The results indicate that our theoretical findings are reasonably accurate when $p$ is reasonably large, with the accuracy improving as $p$ increases. Similar conclusions can be drawn for the other settings covered by Theorems \ref{thm_main_bounded} and \ref{thm_main_bounded_conditional}.

\begin{table}[!ht] 
\centering

\begin{subtable}{0.48\textwidth}
\centering
\begin{tabular}{cccc}
\hline
Quantiles & Percentiles & $p=800$ & $p=1500$ \\
\hline
$-2.326$ & $0.01$ &0.012  &0.012  \\
$-1.645$ & $0.05$ &0.059  &0.053  \\
$-1.282$ & $0.10$ &0.101  &0.111  \\
$-0.674$ & $0.25$ &0.238  &0.266  \\
$0$ & $0.50$ &0.486  &0.511  \\
$0.674$ & $0.75$ &0.750  &0.762  \\
$1.282$ & $0.90$ &0.894  &0.905  \\
$1.645$ & $0.95$ &0.949  &0.956  \\
$2.326$ & $0.99$ &0.982  &0.993  \\
\hline
\end{tabular}
\caption{Unconditional bootstrap: Theorem \ref{thm_main_bounded}.}
\end{subtable}
\hfill
\begin{subtable}{0.48\textwidth}
\centering
\begin{tabular}{cccc}
\hline
Quantiles & Percentiles & $p=800$ & $p=1500$ \\
\hline
$-2.326$ & $0.01$ &0.014  &0.012  \\
$-1.645$ & $0.05$ &0.057  &0.052  \\
$-1.282$ & $0.10$ &0.104  &0.102  \\
$-0.674$ & $0.25$ &0.252  &0.245  \\
$0$ & $0.50$ &0.510  &0.506  \\
$0.674$ & $0.75$ &0.756  &0.749  \\
$1.282$ & $0.90$ &0.905  &0.899  \\
$1.645$ & $0.95$ &0.954  &0.949  \\
$2.326$ & $0.99$ &0.991  &0.987  \\
\hline
\end{tabular}
\caption{Conditional bootstrap: Theorem \ref{thm_main_bounded_conditional}.}
\end{subtable}

\caption{Simulation results for Section \ref{sec_nonspike_thegood}:  non-spiked model with bounded multipliers. The theoretical quantiles correspond to the standard Gaussian distribution. We set $p=0.5n$, $\Sigma=I$, and the entries of $X$ are i.i.d.\ $\mathcal{N}(0,n^{-1})$. The multiplier satisfies \eqref{eq_xiconstruction} with $T=20$ and $d=0$. We compare the percentiles of the empirical distribution of $\sqrt{n\mathsf{v}^{-1}}(\lambda_1-L_{+})$  with those of standard Gaussian distribution for the unconditional and conditional bootstrap procedures described Theorem \ref{thm_main_bounded} and Theorem \ref{thm_main_bounded_conditional}, respectively. The results are based on 2,000 simulation repetitions.
}
\label{table_section32}
\end{table}

Third, Figure \ref{fig_11} illustrates Corollary \ref{cor_joint_gaussian_edge} for the joint leading-eigenvalue behavior in the non-spiked model. We take $\Sigma=I$ and multipliers satisfying \eqref{eq_xiconstruction}. For replicate $b$, define the multiplier-induced edge shift $\mathcal{X}_b=\frac{1}{n}\sum_{j=1}^n(h(\xi_{b,j}^2)-\mathbb{E}h(\xi^2))$ with $h(s)=s/(1+s m_{1,c}(L_+))$. The joint edge expansion 
\begin{align*}
    \lambda_{b,i}-L_{+}=\mathcal X_b+\mathrm{o}_{\mathbb P}(n^{-1/2}),\quad 1\leq i\leq5.
\end{align*}
predicts that $\mathcal X_b$ is the common shift of the leading eigenvalues.  We order 2000 repetitions by $\mathcal X_b$ and divide them into 25 bins of size 80. Within each bin, we average $\mathcal X_b$ and $\Delta_{b,i}:=\lambda_{b,i}-(2000)^{-1}\sum_{b=1}^{2000}\lambda_{b,i}$.
and plot the averages on a common empirical scale. In both the unconditional and conditional settings, all five curves closely follow the identity line, confirming the common multiplier-induced shift.

%Similar conclusions can be drawn for the other settings covered by Theorems \ref{thm_main_spike_unconditional} and \ref{thm_main_spike_conditional}.

\begin{table}[ht] 
\centering

\begin{subtable}{0.48\textwidth}
\centering
\begin{tabular}{cccc}
\hline
Quantiles & Percentiles & $p=300$ & $p=600$ \\
\hline
$-2.326$ & $0.01$ &0.008  &0.007  \\
$-1.645$ & $0.05$ &0.051  &0.043  \\
$-1.282$ & $0.10$ &0.091  &0.099  \\
$-0.674$ & $0.25$ &0.253  &0.254  \\
$0$ & $0.50$ &0.496  &0.486  \\
$0.674$ & $0.75$ &0.736  &0.740  \\
$1.282$ & $0.90$ &0.880  &0.881  \\
$1.645$ & $0.95$ &0.934  &0.931  \\
$2.326$ & $0.99$ &0.973  &0.977  \\
\hline
\end{tabular}
\caption{Unconditional bootstrap: Theorem \ref{thm_main_spike_unconditional}.}
\end{subtable}
\hfill
\begin{subtable}{0.48\textwidth}
\centering
\begin{tabular}{cccc}
\hline
Quantiles & Percentiles & $p=300$ & $p=600$ \\
\hline
$-2.326$ & $0.01$ &0.007  &0.007  \\
$-1.645$ & $0.05$ &0.045  &0.044  \\
$-1.282$ & $0.10$ &0.096  &0.094  \\
$-0.674$ & $0.25$ &0.254  &0.246  \\
$0$ & $0.50$ &0.508  &0.494  \\
$0.674$ & $0.75$ &0.746  &0.740  \\
$1.282$ & $0.90$ &0.886  &0.887  \\
$1.645$ & $0.95$ &0.937  &0.938  \\
$2.326$ & $0.99$ &0.982  &0.984  \\
\hline
\end{tabular}
\caption{Conditional bootstrap: Theorem \ref{thm_main_spike_conditional}.}
\end{subtable}

\caption{Simulation results for Section \ref{sec_boostrapeffect_spike}:  spiked model with unbounded multiplier. The theoretical quantiles correspond to the standard Gaussian distribution. We set $p=0.5n$, $\widetilde{\Sigma}=\operatorname{diag}(20,1,\dots,1)$, and the entries of $X$ are i.i.d.\ $\mathcal{N}(0,n^{-1})$. The multiplier is chosen as $\xi^2\sim \exp(1)$.  For the unconditional bootstrap, we compare the percentiles of the empirical distribution of $\sqrt{n(\mathsf{V}_1^{\mathtt{Unc}})^{-1}}(\mu_1/\widetilde{\sigma}_1-\mathsf{M}_1^{\mathtt{Unc}})$  with those of standard Gaussian distribution as in Theorem \ref{thm_main_spike_unconditional}, while for the  conditional bootstrap, we compare $\sqrt{n(\mathsf{V}_1^{\mathtt{Con}})^{-1}}(\mu_1/\widehat{\mu}_1-\mathsf{M}_1^{\mathtt{Con2}})$ with those of standard Gaussian distribution as in Theorem \ref{thm_main_spike_conditional}. The results are based on 2,000 simulation repetitions.
}
\label{table_section4}
\end{table}

\begin{figure}[ht]
    \centering
    \includegraphics[width=0.9\textwidth]{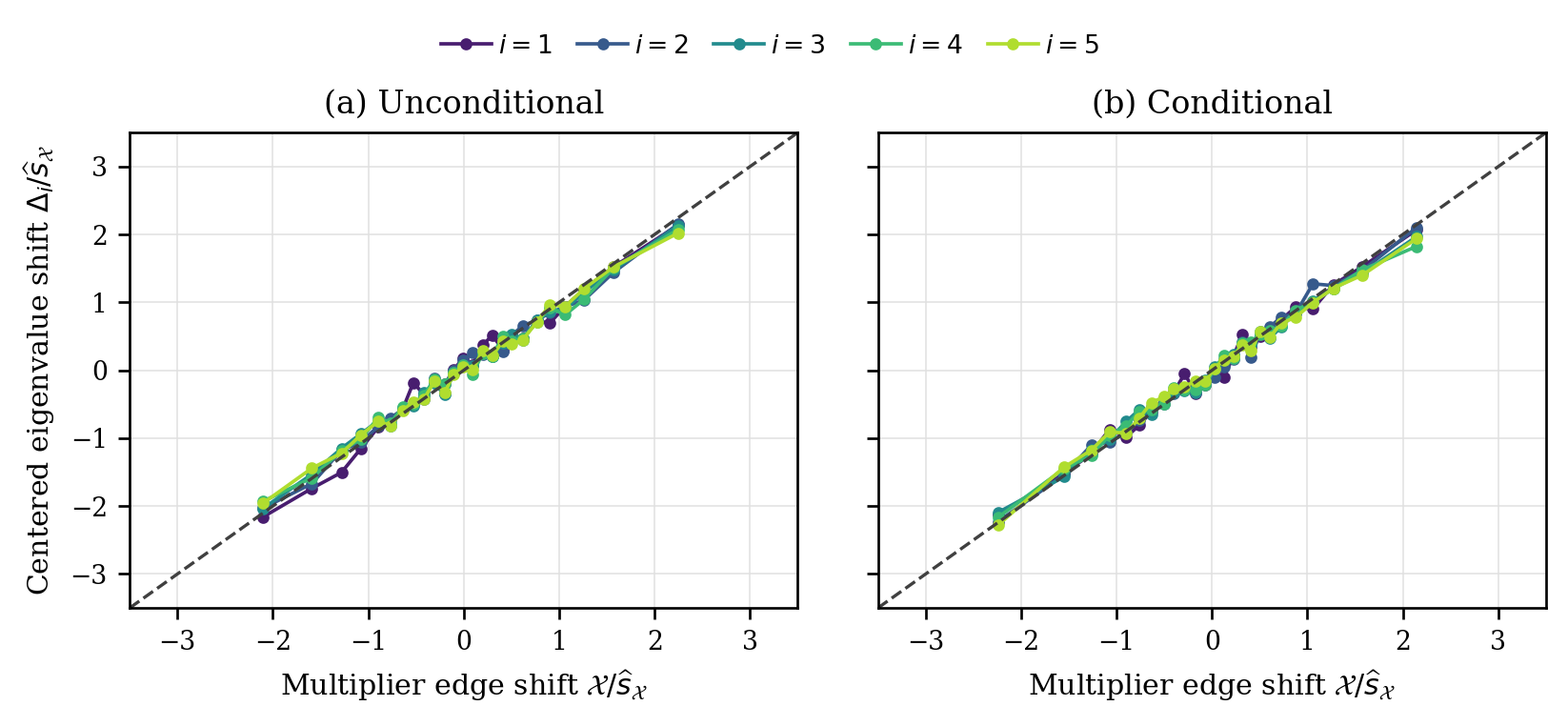}
    \caption{Simulation results for Corollary \ref{cor_joint_gaussian_edge} with $p=400$, $n=800$, $\Sigma=I$, and multipliers satisfying \eqref{eq_xiconstruction} with $d=0$ and $T=20$. For the five leading eigenvalues, the standardized centred shifts $\Delta_{b,i}/\widehat{s}_{\mathcal X}$ are plotted against the standardized multiplier-induced edge shift $\mathcal X_b/\widehat{s}_{\mathcal X}$ under the unconditional and conditional laws, where $\widehat{s}_{\mathcal X}$ is the empirical standard deviation of $\mathcal{X}_b$. Connected points are within-bin averages, and the dashed line is the identity line. Their close alignment illustrates the common edge shift. Results are based on 2,000 repetitions.}\label{fig_11}
    \par\smallskip\noindent Alt text: Two panels show unconditional and conditional results. In each panel, five coloured curves for the five leading eigenvalues lie close to the dashed identity line over standardized shifts from about minus two to two, indicating a common multiplier-induced edge displacement.
\end{figure}

Finally, we present simulation results related to the applications discussed in Remark \ref{remark_important} for the non-spiked model and in Remark \ref{rem_conditionalspike} for the spiked model, both concerning the use of the conditional bootstrap. In Figure \ref{fig_1}, we conduct numerical simulations to illustrate the accuracy and practical usefulness for some properly chosen multipliers. In particular, we use the multipliers constructed in \eqref{eq_xiconstruction}, to build confidence intervals for the edge $E_+$ in \eqref{eq_twresultoriginal} under the non-spiked model. Then, Figure \ref{fig_2} illustrates the accuracy and usefulness of the results discussed in Remark \ref{rem_conditionalspike} for constructing confidence intervals for the population spikes for the spiked model.

\begin{figure}[ht]
    \centering
    \includegraphics[width=0.48\textwidth]{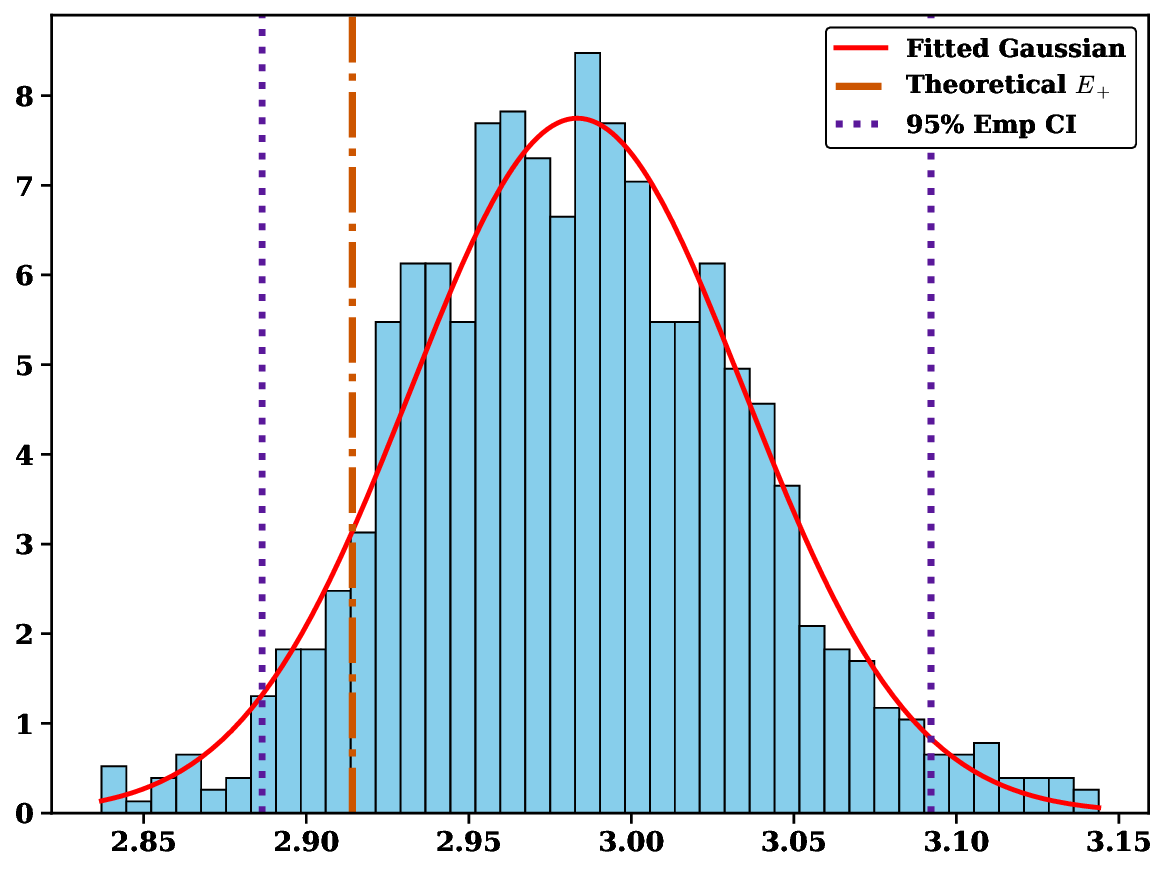}
    \includegraphics[width=0.48\textwidth]{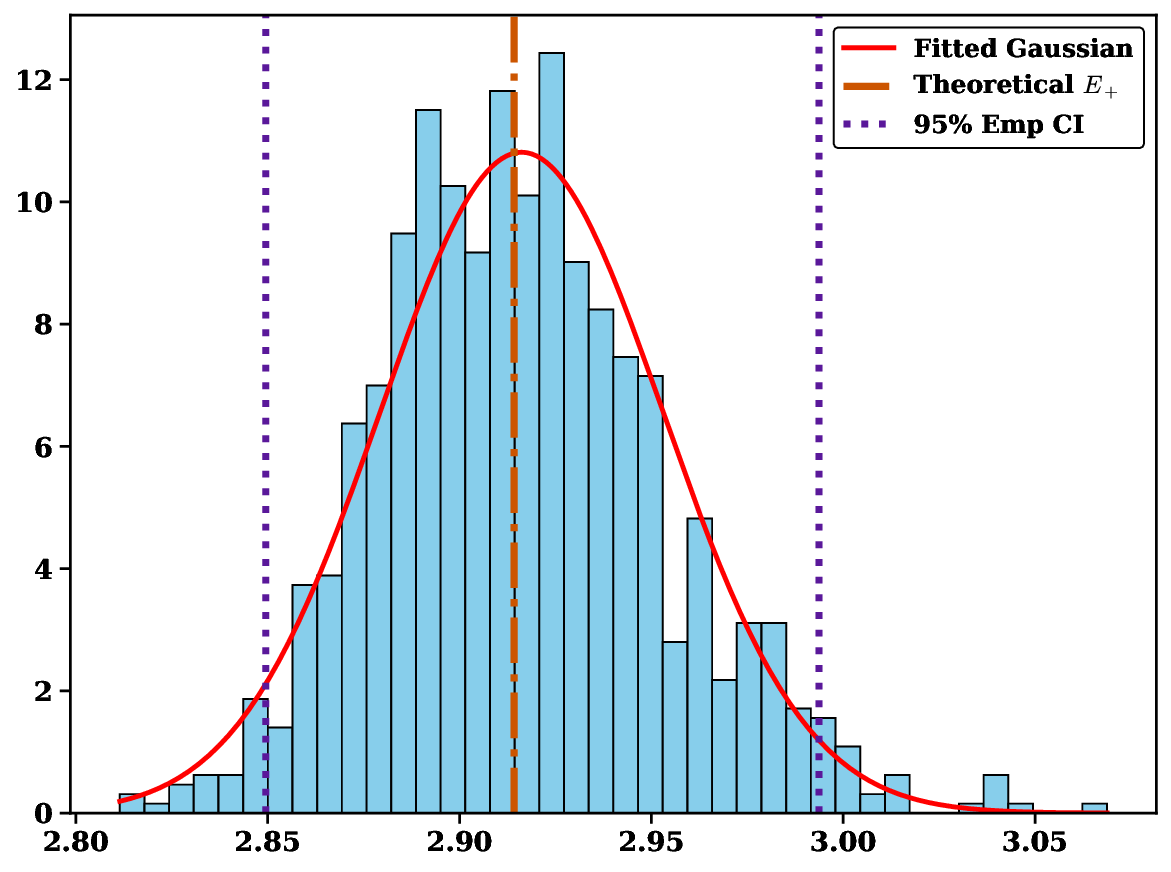}
    \caption{Construction of  confidence interval for $E_{+}$. We set $p=300$, $n=600$, $\Sigma=I$ and the entries of $X$ are i.i.d.\ $\mathcal{N}(0,n^{-1})$. The multiplier satisfies \eqref{eq_xiconstruction} with $T=50$ and the simulation is conducted under $d=0$ (left panel) and $d=1$ (right panel), respectively. In the figures, we plot the $95\%$ quantiles and the theoretical $E_{+}$. The results are based on 2,000 simulation repetitions. }\label{fig_1}
\end{figure}

\begin{figure}[!ht]
    \centering
    \includegraphics[width=0.48\textwidth]{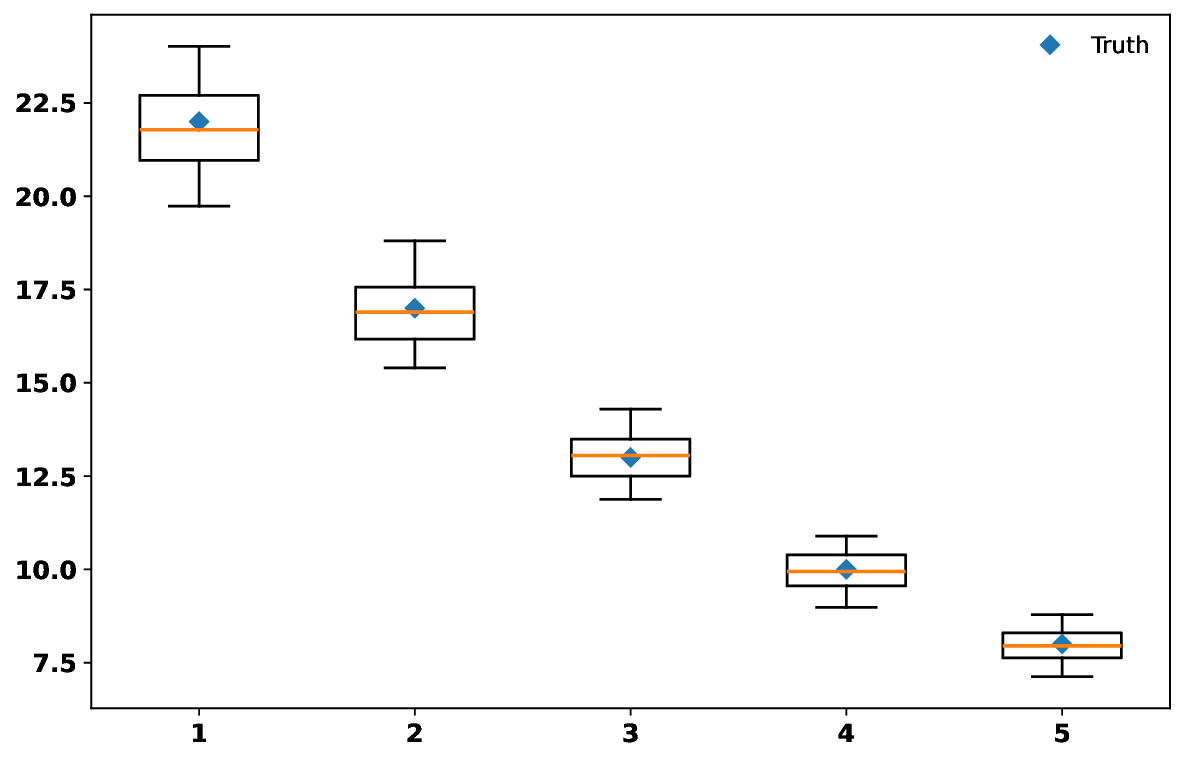}
    \includegraphics[width=0.48\textwidth]{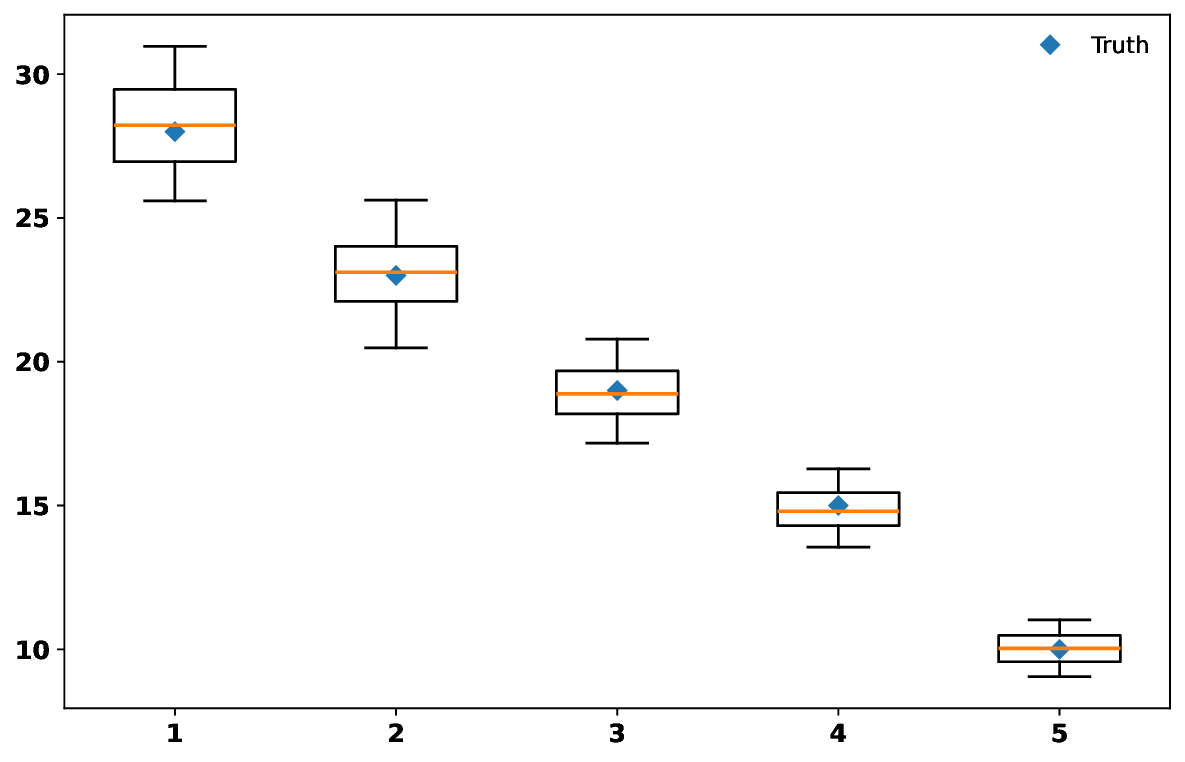}
    \caption{Construction of confidence intervals for the spikes. We set $p=300$, $n=600$ and the entries of $X$ are i.i.d. $\mathcal{N}(0,n^{-1})$. We consider two settings of population spikes and multiplies. In the first setting, we set $\widetilde{\Sigma}=\{22,17,13,10,8,1,\dots,1\}$ with unbounded multiplier $\xi^2\sim\exp(1)$ (left panel), while we use $\widetilde{\Sigma}=\{28,23,19,15,10,1,\dots,1\}$ with bounded multiplier $\xi^2\sim \operatorname{U}(4/3,28/3)$  in the second setting (right panel) to adapt \eqref{spiked_assumption}. Both of the multipliers are chosen to satisfy \eqref{eq_xicon}. The results are based on $2,000$ simulation repetitions.}\label{fig_2}
\end{figure}

%\begin{figure}[!ht]
%\subfigure{\includegraphics[width=4.5cm,height=5.1cm]{sigmaI_d0_bootstrap_lambda1.eps}}
%\hspace*{0.2cm}
%\subfigure{\includegraphics[width=4.5cm,height=5.1cm]{sigmaI_d1_bootstrap_lambda1.eps}}
%\caption{}
%\label{fig_app1}
%\end{figure}

%Third, Table \ref{} records the simulation results concerning Corollary \ref{coro_conditionalboostrap} and the applications discussed in Remark \ref{rmk_confidenceinterval}. For better illustration, we use Figure...
%
%
%. Moreover, for better illustration of the application for conditional bootstrap as discussed in Remark \ref{rem_conditionalspike}, in Figure... 

\section{Strategies of the proof}\label{sec_proofstrategy}
In this section, we present a concise overview of the proof strategies for the main results in Sections \ref{sec_boostrapeffect_nonspike} and \ref{sec_boostrapeffect_spike}. To precisely analyze the bootstrap effect on the fluctuation of the largest eigenvalue of the sample covariance matrix, we leverage and further develop techniques from random matrix theory to quantify the interaction between the randomness introduced by the multipliers and the observed data samples. At a high level, guided by concepts from free probability theory \cite{mingo2017free}, this interaction can be interpreted as the convolution between the multiplier matrix $D$ and the data matrix $X$. To clarify the technical details, we begin by introducing several fundamental notations and concepts from the random matrix theory literature. 

%in Section \ref{sec_resolvent&asymptoticlaws}, followed by an explanation of our proof strategies in Section \ref{sec_sketchofproof}.

%\subsection{Asymptotic laws}\label{sec_resolvent&asymptoticlaws}

Recall the bootstrapped sample covariance matrices $Q$ in (\ref{eq_samplecov}) with its $n \times n$ companion $\mathcal Q:=DX^*\Sigma XD$. The empirical spectral distributions (ESD) of $Q$ and $\mathcal{Q}$ are denoted as 
\begin{equation}\label{eq_measures}
\mu_{Q}:=\frac{1}{p}\sum_{i=1}^p \delta_{\lambda_i(Q)},\quad \mu_{\mathcal{Q}}:=\frac{1}{n}\sum_{j=1}^n \delta_{\lambda_j(\mathcal{Q})}.
\end{equation}
It is well-known that the ESDs can be best described via its the Stieltjes transforms as follows %{\color{red}[we may need to change this a a little bit]}
\begin{equation}\label{eq_mq}
m_{Q}:=\int\frac{1}{x-z}\mu_{Q},\quad m_{\mathcal{Q}}:=\int\frac{1}{x-z}\mu_{\mathcal{Q}}, \ z \in \mathbb{C}_+.
\end{equation}

%
%
% Define the Green functions for sample covariance matrices $Q,\mathcal{Q}$ as,

%Since $Q$ and $\mathcal{Q}$ share the same non-trivial eigenvalues, it suffices to study $\mu_Q$ and $m_Q.$ The limit of $\mu_Q$ can be described by a  system of equations \cite{couillet2014analysis, ding2021spiked,Karoui2009, hu2019central,paul2009no, Zhanggeneral}. To avoid repetitions, we summarize these equations in the following definition. 
%\begin{definition}[Systems of consistent equations]\label{defn_couplesystem} For $z \in \mathbb{C}_+,$ we define the triplets $(m_{1n}, m_{2n}, m_n) \in \mathbb{C}^3_+,$  via the following systems of equations.
%\begin{align}\label{eq_systemequationsm1m2}
%    m_{1n}(z)=\frac{1}{n}\sum_{i=1}^p & \frac{\sigma_i}{-z(1+\sigma_im_{2n}(z))},\quad m_{2n}(z)=\frac{1}{n}\sum_{i=1}^n\frac{\xi_i^2}{-z(1+\xi^2_im_{1n}(z))},\\
%   & m_{n}(z)=\frac{1}{p}\sum_{i=1}^p\frac{1}{-z(1+\sigma_im_{2n}(z))}. \nonumber
%\end{align}
%\end{definition}

\quad For sufficiently large $n,$ we find that $\mu_Q$ has a nonrandom deterministic equivalent and can be uniquely characterized by the following systems of equations 
\begin{align}\label{eq_syssyssys}
    m_{1n,c}(z)=\frac{1}{n}\sum_{i=1}^p & \frac{\sigma_i}{-z(1+\sigma_im_{2n,c}(z))},\quad m_{2n,c}(z)=\int \frac{s}{-z(1+s m_{1n,c}(z))} \mathrm{d} F(s), \nonumber\\
   & m_{n,c}(z)=\frac{1}{p}\sum_{i=1}^p\frac{1}{-z(1+\sigma_im_{2n,c}(z))}. 
\end{align} 
The result can be formally summarized by the following theorem. Its proof can be obtained by following lines of the arguments  of \cite[Theorem 2]{Karoui2009} and \cite[Theorem 1]{paul2009no} verbatim and we omit it.

\begin{theorem}\label{lem_solutionsystem1}
Suppose Assumptions \ref{assum_model}, \ref{assum_D} and \ref{assumption_techincial} hold. Then 
%conditional on some event $\Omega \equiv \Omega_n$ that $\mathbb{P}(\Omega)=1-\ro(1),$ 
for any $z \in \mathbb{C}_+,$ when $n$ is sufficiently large, there exists a unique solution $(m_{1n,c}(z), m_{2n,c}(z), m_{n,c}(z)) \in \mathbb{C}_+^3$ to the systems of equations in  (\ref{eq_syssyssys}). Moreover, $m_{n,c}(z)$ is the Stieltjes transform of some probability density function $\widetilde{\rho} \equiv \widetilde{\rho}_n$ defined on $\mathbb{R}$ which can be obtained using the inversion formula.
\end{theorem}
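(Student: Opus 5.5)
The plan is to prove existence and uniqueness of the solution of (\ref{eq_syssyssys}) by reducing the system to a single fixed-point equation. I then identify $m_{n,c}$ as a Stieltjes transform by realizing it as a limit of Stieltjes transforms of genuine spectral measures. The deterministic matrices play the roles of $\Sigma$ and $\operatorname{diag}(\xi_i^2)$, with the empirical multiplier law replaced by $F$, as in \cite[Theorem 2]{Karoui2009} and \cite[Theorem 1]{paul2009no}.

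\textbf{Reduction to one unknown.} Substitute the second equation into the first. This gives a single equation $m_{1}=\Psi_z(m_1)$, where
\begin{equation*}
\Psi_z(m_1)=\frac{1}{n}\sum_{i=1}^p \frac{\sigma_i}{-z\bigl(1+\sigma_i m_2(m_1)\bigr)},\qquad m_2(m_1)=\int\frac{s}{-z(1+s m_1)}\,\mathrm{d}F(s).
\end{equation*}
Once $m_1$ is known, $m_{2n,c}$ and $m_{n,c}$ are explicit.

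\textbf{Existence and uniqueness.} I would first check the sign conventions: for $z\in\mathbb{C}_+$ and $m_1$ in the appropriate half plane (with $\operatorname{Im}(z m_1)\ge 0$), $m_2$ has the correct imaginary part, and so does $\Psi_z(m_1)$. Hence $\Psi_z$ maps a suitable domain into itself. Assumption \ref{assumption_techincial} bounds $\sigma_i$ above and below. Either boundedness of $F$ or the moment assumption $\mathbb{E}\xi^2<\infty$ from Assumption \ref{assum_D} (automatic for $\alpha>2$ or exponential tails) controls the integral. Together these confine $\Psi_z$ to a bounded region when $\operatorname{Im} z$ is large.

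For $\operatorname{Im}z$ large, I would show $\Psi_z$ is a strict contraction by bounding its derivative by $C/(\operatorname{Im} z)^2$. This gives a unique solution there. The solution extends to all of $\mathbb{C}_+$ by analytic continuation, and uniqueness on all of $\mathbb{C}_+$ follows from the Earle–Hamilton fixed point theorem, since $\Psi_z$ maps a bounded domain strictly inside itself. This is the route of \cite{paul2009no}.

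\textbf{Stieltjes transform property and density.} Here I would use the random matrix realization, the key step. Take deterministic $d_j^2$ whose empirical law approximates $F$ (for example quantiles of $F$). By \cite{Karoui2009}, the Stieltjes transform of $\Sigma^{1/2}X\operatorname{diag}(d_j^2)X^*\Sigma^{1/2}$ converges almost surely to the solution of the discretized system. By continuity of the fixed point in $F$, the limit is $m_{n,c}$. A pointwise limit of Stieltjes transforms of probability measures satisfying $-\mathrm{i}\eta\, m(\mathrm{i}\eta)\to 1$ is itself the Stieltjes transform of a probability measure; this limit behaviour follows directly from the equation, since $m_{n,c}\sim -1/z$ as $|z|\to\infty$. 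Absolute continuity then follows from the standard argument of \cite{paul2009no}: $m_{n,c}$ extends continuously to $\mathbb{R}\setminus\{0\}$, so the inversion formula yields a density $\widetilde\rho$.

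\textbf{Main obstacle.} The main difficulty is handling unbounded $F$ in the contraction and continuity estimates. For bounded support everything is uniform. For (\ref{ass3.1}) and (\ref{ass3.2}), I would truncate $\xi^2$ at a level $K$, solve the truncated system, and let $K\to\infty$. This uses the bound $|s/(1+sm_1)|\le |z|/\operatorname{Im} z$ on the relevant domain, together with dominated convergence, to pass to the limit and to retain uniqueness.
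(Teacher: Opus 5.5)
Your route is the one the paper defers to (\cite[Theorem 2]{Karoui2009}, \cite[Theorem 1]{paul2009no}), but the step you call key does not work as written. System \eqref{eq_syssyssys} is a fixed-$(n,p)$ object, built from the specific $\sigma_1,\dots,\sigma_p$ and from $\phi=p/n$. So your claim that the Stieltjes transform of $\Sigma^{1/2}X\operatorname{diag}(d_j^2)X^*\Sigma^{1/2}$ ``converges almost surely'' has nothing tending to infinity. Running a limit theorem along the original sequence would at best identify a \emph{limiting} system, built from the limit of the spectral law of $\Sigma$ and $\lim p/n$, not \eqref{eq_syssyssys}.

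You must freeze the parameters in an auxiliary growing model. Take $\Sigma\otimes I_N$, a $pN\times nN$ data matrix with entries of variance $(nN)^{-1}$, and $nN$ i.i.d.\ multipliers drawn from $F$. As $N\to\infty$ the population law stays $p^{-1}\sum_i\delta_{\sigma_i}$ and the aspect ratio stays $p/n$. By the cited limit theorem, the limiting ESD then has a Stieltjes transform solving exactly \eqref{eq_syssyssys}, and uniqueness identifies it with $m_{n,c}$. No quantile discretisation or continuity in $F$ is needed.

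You can also bypass random matrices altogether. The map $z\mapsto m_{n,c}(z)$ is holomorphic by the implicit function theorem, because $|\partial_m\Psi_z|<1$ at the fixed point by Schwarz--Pick. It maps $\mathbb{C}_+$ into $\mathbb{C}_+$, and $-\mathrm{i}\eta\,m_{n,c}(\mathrm{i}\eta)\to1$. By the Nevanlinna characterisation it is therefore the Stieltjes transform of a probability measure.

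Absolute continuity is a separate issue. It does not follow from the limit theorems you invoke: continuity of $m_{n,c}$ up to $\mathbb{R}\setminus\{0\}$ for the separable system needs its own argument (cf.\ \cite{couillet2014analysis}). Moreover, for $\phi>1$ the measure carries an atom of mass at least $1-\phi^{-1}$ at $0$, since the $pN\times pN$ matrices above have rank at most $nN$. So ``density'' can only hold on $\mathbb{R}\setminus\{0\}$; the statement itself already has this imprecision.

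Two smaller points. First, the bound $|s/(1+sm_1)|\le|z|/\operatorname{Im}z$ that your truncation rests on is false. Take $E=\operatorname{Re}z>0$ and $m_1=-1/s_0+2\mathrm{i}\operatorname{Im}z/(Es_0)$. Then $\operatorname{Im}m_1>0$ and $\operatorname{Im}(zm_1)=\operatorname{Im}z/s_0>0$, yet $|s_0/(1+s_0m_1)|=Es_0/(2\operatorname{Im}z)$, which is unbounded in $s_0$. What is true is that $\operatorname{Im}(zm_1)\ge0$ implies $|1+sm_1|\ge\operatorname{Im}z/|z|$, hence $|s/(1+sm_1)|\le s|z|/\operatorname{Im}z$. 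Since Assumption \ref{assum_D} gives $\int s^2\,\mathrm{d}F(s)<\infty$ in every case (recall $\alpha>2$), all your estimates close directly and no truncation is needed.

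Second, the large-$\operatorname{Im}z$ contraction followed by an unjustified ``analytic continuation'' is unnecessary. For each fixed $z$, apply Earle--Hamilton on $\mathcal{D}_z=\{m:\operatorname{Im}m>0,\ \operatorname{Im}(zm)>0,\ |m|<R\}$ with $R>\phi\bar\sigma_1/\operatorname{Im}z$. This gives existence and uniqueness at once, because the image of $\Psi_z$ stays a positive distance from $\partial\mathcal{D}_z$ using only $\int s\,\mathrm{d}F<\infty$. You should also note that every solution in $\mathbb{C}_+^3$ lies in $\mathcal{D}_z$. Indeed, $m_{2n,c}\in\mathbb{C}_+$ gives $\operatorname{Im}(zm_{1n,c})>0$. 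Also $m_{1n,c}\in\mathbb{C}_+$ gives $\operatorname{Im}(zm_{2n,c})>0$, hence $|m_{1n,c}|\le\phi\bar\sigma_1/\operatorname{Im}z$.
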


We point out that,  when $\xi^2$ has bounded support as in Case (ii) of Assumption \ref{assum_D}, we can actually obtain stronger results as in \cite{paul2009no} that the support of the associated probability density function  is bounded and denoted as in (\ref{edge_edge_edge}). 

%{\color{blue}
%\begin{remark}\label{rmk_systemequationsremark}
%
%
%
%
%%Third, the proofs of Theorem \ref{lem_solutionsystem} . We omit the details. 
%
%%{\color{red} remarks: 1. the probability event. and can be moved when it is bounded.  2. some key issues in the proof and literature. }
%\end{remark}
%}

%\subsection{Sketch of the proof strategies}\label{sec_sketchofproof}

With the above preparation, we now proceed to outline our proof strategies.  We start with the non-spiked model (Section \ref{sec_boostrapeffect_nonspike}). In this setting, the arguments differ for bounded and unbounded multipliers $D$. On the one hand, when $D$ has unbounded support (Theorems \ref{thm_main_unbounded} and \ref{thm_main_unbounded_conditional}), regardless of unconditional or conditional, the eigenvalues will be divergent. In this setting, we utilize a perturbation argument. However, as in this case, the associated $\widetilde{\rho}$ in Theorem \ref{lem_solutionsystem1} may also be unbounded, the perturbation approach developed in \cite{bloemendal2016principal,ding2021spiked,knowles2013isotropic} cannot be applied directly. Instead, we modify the perturbation arguments by isolating $\mathbf{y}_i$ corresponding to the largest multiplier $\xi_{(1)}^2$ from the bootstrapped matrix $Y$ as in (\ref{eq_samplecov}). More explicitly, for the proof of the unconditional bootstrap in Theorem \ref{thm_main_unbounded}, with $m_{1n}(z)$ in (\ref{eq_systemequationsm1m2}), the key is to introduce a real auxiliary quantity $\vartheta_1>0$ to be the largest solution of 
\begin{equation}\label{eq: def of vartheta_1}
    1+(\xi^2_{(1)}+d_1)m_{1n}(\vartheta_{1})=0,
\end{equation}
where $d_1=\ro_{\mathbb{P}}(\xi_{(1)}^2)$ (cf. (\ref{eq_firstddefinition}) of our supplement) is introduced for some technical reasons. First, as will be seen in our proofs (cf. (\ref{eq:F(m,z)}) and (\ref{eq_mu1part}) of our supplement), (\ref{eq: def of vartheta_1}) provides a natural way to connect $\vartheta_1$ and $\xi_{(1)}^2$ in the sense that $\vartheta_1/\xi_{(1)}^2=\varphi+\ro_{\mathbb{P}}(1).$ Secondly, it establishes a connection between $\vartheta_1$ and $\lambda_1$ through a modified perturbation argument based on \cite{bloemendal2016principal, ding2021spiked11, ding2021spiked}. Specifically, $\lambda_1$ can be uniquely characterized by the equation $M(\lambda_1) = 0$ (cf. (\ref{eq_masterequation}) of our supplement), where $M(\cdot)$ (cf. (\ref{eq_defnmlambda}) of our supplement) is a random quantity isolating the column in (\ref{eq_datamatrix}) associated with $\xi_{(1)}^2$. Using our newly established local laws (cf. Theorem \ref{thm_unboundedcaselocallaw} of our supplement), we can further demonstrate that $1 + (\xi_{(1)}^2 + d_1)m_{1n}(\lambda_1) \approx 0$. Subsequently, a detailed continuity and stability analysis shows that $\lambda_1 / \vartheta_1 = 1 + \ro_{\mathbb{P}}(1)$, thereby completing the proof of Theorem \ref{thm_main_unbounded}. For the conditional bootstrap result in Theorem \ref{thm_main_unbounded_conditional}, the proof follows similar strategies to those used in the unconditional case in Theorem \ref{thm_main_unbounded}. The main difference is that the argument is carried out on a high-probability event $\Omega_X$ determined solely by $X$ (cf. Definition \ref{def_OmegaX} in the supplement). Once we restrict to this event, the analysis proceeds in essentially the same way as in the unconditional setting.

On the other hand, when the multiplier $D$ has bounded support (Theorems \ref{thm_main_bounded} and \ref{thm_main_bounded_conditional}), our arguments are non-perturbative and extend the approach introduced in \cite{Kwak2021,lee2016extremal}. This generalization is non-trivial and requires a dedicated analysis of the relationship between multipliers and the largest eigenvalues. We start with the discussions for the unconditional bootstrap. In this case, we need a sophisticated understanding of the systems of equations, such as those in (\ref{eq_systemequationsm1m2}), on local scales. A key input is the distinct local behaviors of the asymptotic law (cf. $\widetilde{\rho}(x)$ in (\ref{edge_edge_edge})) near the edge under varying settings (cf. Lemma \ref{localestimate2} of our supplement). More precisely, we find that  $\widetilde{\rho}(x) \sim \sqrt{L_+ - x}$ under the setup of (2) and (3) of Theorem \ref{thm_main_bounded}, and $\widetilde{\rho}(x) \sim (L_+ - x)^d$ under (1) of Theorem \ref{thm_main_bounded}. In the actual proof for Theorem \ref{thm_main_bounded}, we decompose $\lambda_1$ into two components: $\lambda_1 - \widehat{L}_{+}$ and $\widehat{L}_{+} - L_+$, where the quantity $\widehat{L}_+$ is defined as the edge of the conditional density function $\rho$ in Theorem \ref{lem_solutionsystem} by fixing a realization of the multipliers $\{\xi_i^2 \}$ on some high-probability event {$\Omega_D$} (cf. Definition \ref{defn_probset} of the supplement). For cases (2) and (3) of Theorem \ref{thm_main_bounded}, where the square root behavior emerges, $n^{2/3}(\lambda_1 - \widehat{L}_+)$ asymptotically follows the Tracy–Widom (TW) law with constant-order variance. Regarding the unconditional distribution, the fluctuation of $\widehat{L}_+$, due to the i.i.d. assumption of $D^2$, is asymptotically Gaussian by the Central Limit Theorem (CLT), with a variance that may decay to zero. Consequently, the overall distribution can be expressed as a sum of the TW law and a Gaussian component (potentially with vanishing variance). For case (1) of Theorem \ref{thm_main_bounded}, the key observation is that $\widehat{L}_+$  can be represented as the solution of %the equation that (cf. (\ref{eq_conditionaledgedefinition}))
\begin{equation}\label{eq_masterequationtwo}
m_{1n}(\widehat{L}_+)=-l^{-1}. 
\end{equation}
Moreover, for $z \in \mathbb{C}_+$ in a small neighborhood of $\widehat{L}_+,$ $m_{1n}(z)$ can be expanded linearly as $m_{1n}(z)-m_{1n}(\widehat{L}_+)=\beta (z-\widehat{L}_+)+\ro(n^{-1/(d+1)})$ for some constant $\beta$ (cf. Lemma \ref{localestimate2} of our supplement). Then, it allows us to locate $\lambda_1$ neighboring around $\widehat{L}_{+}$ as
\begin{gather}\label{eq: def of gamma}
    \operatorname{Re}m_{1n}(\lambda_1+\ri\eta_0)\approx-\xi^{-2}_{(1)}, \quad \eta_0=n^{-1/2-\epsilon_{\mathsf{d}}},
\end{gather}
where $\epsilon_{\mathsf{d}}$ is introduced in  \eqref{eq_spectraldomaintwo} of the supplement. Connecting \eqref{eq: def of gamma} with $l-\xi^2_{(1)}$ and \eqref{eq_masterequationtwo}, we can establish a representation for $\widehat{L}_{+}$. Moreover, it follows from Lemma \ref{localestimate2} of our supplement that $L_+=\widehat{L}_++\rO_{\mathbb{P}}(n^{-1/2+\delta}),$ for some small constant $\delta>0$. Since $d>1$, we can conclude that $\lambda_1$ is only influenced by $\xi_{(1)}^2$ and asymptotically Weibull.

For the conditional bootstrap result in Theorem \ref{thm_main_bounded_conditional}, the proofs of statements (1) and (2) follow from ideas similar to those used for their unconditional counterparts in parts (1) and (2) of Theorem \ref{thm_main_bounded}, except that the argument is carried out on the high-probability event $\Omega_X$ determined by $X$. More specifically, on this event, the local behavior of $\widetilde{\rho}$ remains unchanged. Consequently, the asymptotic distribution of $\lambda_1$ is still governed by $\xi^2_{(1)}$ and by the average of ${\xi_i^2}$, respectively, exactly as in the unconditional setting.

The main difference arises in the proof of part (3). When $\mathsf{v}$ (equivalently $\operatorname{Var}(\xi^2)$) remains large, we employ the same decomposition as before, $\lambda_1-\widehat{L}_+ + \widehat{L}_+ - L_+ . $ The first term fluctuates at order $\mathrm{O}(n^{-2/3})$, whereas the second term, conditional on $X$, is asymptotically Gaussian by the central limit theorem. Hence, whenever $\sqrt{\mathsf{v}/n} \gg n^{-2/3}$, the Gaussian fluctuation dominates, leading to asymptotic normality. This agrees with the corresponding result for the unconditional bootstrap.

However, the conditional bootstrap behaves very differently from the unconditional bootstrap when $\operatorname{Var}(\xi^2)$ vanishes too rapidly. To capture this regime, we further decompose $\lambda_1-L_{+}$ into three terms: $\widehat{\lambda}_1-E_{+}$, $\lambda_1-\widehat{\lambda}_1$, and $E_{+}-L_{+}$, where $\widehat{\lambda}_1$ denotes the largest eigenvalue of the original sample covariance matrix $S$, and $E_{+}$ is the right edge of its limiting spectral distribution. Conditional on $X$, the leading term $\widehat{\lambda}_1-E_{+}$ fluctuates on the Tracy--Widom scale, which is at least of order $n^{-2/3-\epsilon}$ for some small $\epsilon>0$. By contrast, when $\mathsf{v}\ll n^{-2/3-\epsilon}$, the remaining terms $\lambda_1-\widehat{\lambda}_1$ and $E_{+}-L_{+}$ are of smaller order than $\widehat{\lambda}_1-E_{+}$. Consequently, the fluctuation of $\lambda_1-L_{+}$ is dominated by $\widehat{\lambda}_1-E_{+}$ at the scale of $n^{-2/3}$. As a result, after normalization by the usual Tracy--Widom scaling, the conditional distribution of $\lambda_1-L_{+}$ collapses to a point mass given $X$.
 
%  For this part, we consider the decomposition of $\lambda_1-L_{+}$ into the two terms $\lambda_1-\widehat{L}_{+}$ and $\widehat{L}_{+}-L_{+}$. Conditional on $X$, the fluctuation $\lambda_1-\widehat{L}_{+}$ has deterministic scale at most $n^{-2/3}$, whereas $\widehat{L}_{+}-L_{+}$ i
% This proves statement (3) of Theorem \ref{thm_main_bounded_conditional}.

We then discuss the proof of the spiked model (Section \ref{sec_boostrapeffect_spike}). Given the spiked structure in (\ref{eq_truemodelspiked}), our proof primarily relies on a perturbative argument tailored for divergent spikes, as developed in \cite{CHP}. However, the inclusion of multipliers introduces additional complexity, requiring a  generalization of the above perturbative techniques. We start with the unconditional bootstrap in Theorem \ref{thm_main_spike_unconditional}. A key element of our proof is the introduction of a random term, $\pi_1$ (cf. \eqref{eq_def_zeta} of our supplement), which captures the randomness of $\mu_1/\theta_1$ solely through the randomness of $\{\xi_i\}$ with $\theta_1$ being some intermediate term. The asymptotic Gaussianity arises from the asymptotic normality of $\pi_1$, which can be established via a standard central limit theorem (CLT) argument. To quantify the bias term  in (\ref{eq_biasedmean_spike1}), we carefully analyze $\pi_1 - \theta_1 / \widetilde{\sigma}_1$. This requires a more refined analysis of the local scales of the systems defined in Definition \ref{defn_couplesystem}, leveraging our newly established local laws in Theorems \ref{thm_unboundedcaselocallaw} and \ref{thm_boundedcaselocallaw} of our supplement.

 For the conditional bootstrap result in Theorem \ref{thm_main_spike_conditional}, by restricting to the high-probability event $\Omega_X$ determined by $X$, we obtain the same limiting representation for $\pi_i-\theta_i/\widetilde{\sigma}_i$ as in the unconditional case. However, as in part (1) of the result, when conditional on $X$, an additional deterministic term $\mathbf{v}_i^*XX^*\mathbf{v}_i-1$ arises. As a consequence, compared with the unconditional setting, the first statement contains an additional bias term in the asymptotic mean of $\mu_i/\widetilde{\sigma}_i$. To remove this bias, in part (2) of the result,  we instead consider the statistic $\mu_i/\widehat{\mu}_i$. Combining this with the limiting representation of $\widehat{\mu}_i$ as in the proof of Theorem \ref{thm_sample}, we find that the unobservable quantity $\mathbf{v}_i^*XX^*\mathbf{v}_i-1$ cancels out, and the asymptotic mean takes a form closer to that in Theorem \ref{thm_sample}.

\begin{acks}[Acknowledgments]
The author is particularly grateful to Xiucai Ding, Long Yu and Wang Zhou for their valuable guidance, careful suggestions and helpful feedback throughout the development of this work. The author also thanks Zhigang Bao, Miles Lopes and Fan Yang for many helpful discussions.
\end{acks}

\clearpage
\appendix
\setcounter{figure}{0}
\renewcommand{\thefigure}{\Alph{figure}}

\phantomsection
\addcontentsline{toc}{section}{Supplementary Material}
\begin{center}
{\large\bfseries Supplement to ``Multiplier Bootstrap and Edge Phase Transitions of High-Dimensional Covariance Matrices''}
\end{center}
\medskip

\input{supplement_body_arxiv}

\bibliographystyle{imsart-number}
\bibliography{references_arxiv}

\end{document}

%% file: supplement_body_arxiv.tex
In this supplement, we provide the proofs for the main results and some auxiliary lemmas. 
%{As indicated in Section \ref{sec_proofstrategy} that our actual proof relies on two technical inputs. One is the detailed analysis of the Stieltjes transforms of the limiting ESD on local scales in some carefully chosen spectral domains. The other one is a finer control of the randomness of the quantities associated with ESD.
In Section \ref{appendix_preliminary}, we provide some preliminary results and some technique results including averaged local laws.  In Section \ref{appendxi_locallawproof}, we prove the averaged local laws near the edges. In Section \ref{sec_locationofeigenvalues}, we provide the asymptotic locations of the edge eigenvalues and prove the main results. Finally, building on the above preliminary results, the proofs of the main results in Sections 3 and 4 of the main paper are given in Sections \ref{sec_proof_nonspike} and \ref{sec_proof_spike}, while some auxiliary lemmas are collected in Section \ref{appendix_last}.

\section{Some preliminary results}\label{appendix_preliminary}

In this section, we introduce some preliminary results. First, in Section \ref{sec_append_notations}, we summarize some notations and definitions. Second, in Sections \ref{sec_asymptoticlocalaveragedlocal} and \ref{secaveragelocalllca}, we provide the properties of the asymptotic local laws and establish the averaged local laws. Third, in Section \ref{appendix_goodconfiguration}, we examine the properties of the entries of $D^2$ and construct some probability events to which our arguments will be restricted. Then, in Section \ref{sec_summaryofextremevalue}, we provide some useful lemmas and a short review of the extreme value theory. Finally, in Section \ref{sec_characterization_OmegaX}, we construct some high-probability events in terms of $X$, which will be used in the study of the conditional bootstrap.

\subsection{Notations and definitions}\label{sec_append_notations}

For technical convenience, in addition to the limiting system of equations in (3.9), we introduce the following random discretized system of equations.
\begin{definition}[Systems of consistent equations]\label{defn_couplesystem} For $z \in \mathbb{C}_+,$ we define the triplets $(m_{1n}, m_{2n}, m_n) \in \mathbb{C}^3_+,$  via the following systems of equations.
\begin{align}\label{eq_systemequationsm1m2}
    m_{1n}(z)=\frac{1}{n}\sum_{i=1}^p & \frac{\sigma_i}{-z(1+\sigma_im_{2n}(z))},\quad m_{2n}(z)=\frac{1}{n}\sum_{i=1}^n\frac{\xi_i^2}{-z(1+\xi^2_im_{1n}(z))},\\
   & m_{n}(z)=\frac{1}{p}\sum_{i=1}^p\frac{1}{-z(1+\sigma_im_{2n}(z))}. \nonumber
\end{align}
\end{definition}

\quad Note that in general the quantities in (\ref{eq_systemequationsm1m2}) are random. The following theorem is a counterpart of Theorem 6.1 whose proof can be also obtained by following lines of the arguments  of \cite[Theorem 2]{Karoui2009} and \cite[Theorem 1]{paul2009no} verbatim.

\begin{theorem}[Asymptotic laws]\label{lem_solutionsystem}
Suppose Assumptions 2.1, 2.2 and 2.4 hold. Then conditional on some event {$\Omega_D \equiv \Omega_{n,D}$ that $\mathbb{P}(\Omega_D)=1-\ro(1),$} for any $z \in \mathbb{C}_+,$ when $n$ is sufficiently large, there exists a unique solution $(m_{1n}(z), m_{2n}(z), m_{n}(z)) \in \mathbb{C}_+^3$ to the systems of equations in  (\ref{eq_systemequationsm1m2}). Moreover, $m_n(z)$ is the Stieltjes transform of some probability density function $\rho \equiv \rho_n$ defined on $\mathbb{R}$ which can be obtained using the inversion formula.
\end{theorem}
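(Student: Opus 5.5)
Conditional on $D$, the system (\ref{eq_systemequationsm1m2}) is the Marchenko--Pastur-type system for a separable covariance matrix $\Sigma^{1/2}XBX^*\Sigma^{1/2}$ with a \emph{deterministic} diagonal $B=D^2$. The plan is therefore to freeze the multipliers and invoke the deterministic-$B$ theory of \cite[Theorem 2]{Karoui2009} and \cite[Theorem 1]{paul2009no}. That theory needs two inputs: the empirical distribution of $B$ stays tight, and it is not degenerate at zero.

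I would define $\Omega_D$ as the intersection of the following events.
\begin{itemize}
\item[(a)] $n^{-1}\sum_i \xi_i^2 \le 2\mathbb{E}\xi^2$ and $n^{-1}\sum_i \xi_i^2 \ge \tfrac12\mathbb{E}\xi^2$.
\item[(b)] At least a fixed fraction $c>0$ of the $\xi_i^2$ exceed some fixed $\delta>0$.
\item[(c)] In the unbounded case, $\xi_{(1)}^2\le n^{C}$. By (\ref{ass3.1}) with $\alpha>2$, or by (\ref{ass3.2}), this holds with probability $1-\ro(1)$.
\end{itemize}
The law of large numbers, applicable because $\mathbb{E}\xi^2<\infty$ in every case of Assumption 2.2, together with non-degeneracy of $\xi^2$, gives $\mathbb{P}(\Omega_D)=1-\ro(1)$. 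On $\Omega_D$ the empirical measure $\pi_n=n^{-1}\sum_i\delta_{\xi_i^2}$ is a probability measure on $[0,\infty)$ with bounded first moment and mass bounded away from zero. Together with (\ref{ass1}) and (\ref{ass2}), these are exactly the hypotheses of the cited deterministic results.

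For existence and uniqueness, I would fix $z\in\mathbb{C}_+$ and eliminate $m_{2n}$ by substituting it into the first equation. This gives a single fixed-point equation $m_{1n}=\mathcal{F}_z(m_{1n})$, where
\[
\mathcal{F}_z(w)=\frac1n\sum_{i=1}^p\frac{\sigma_i}{-z\bigl(1+\sigma_i\,\frac1n\sum_j\frac{\xi_j^2}{-z(1+\xi_j^2w)}\bigr)} .
\]
Following \cite{paul2009no}, I would show three things:
\begin{itemize}
\item $\mathcal{F}_z$ maps the set $\{w\in\mathbb{C}_+:\ \operatorname{Im}(zw)\ge 0\}$ into itself.
\item It is a strict contraction in the hyperbolic metric; equivalently, the standard bound $|\mathcal{F}_z(w)-\mathcal{F}_z(w')|\le \kappa|w-w'|$ holds with $\kappa<1$, obtained from the Cauchy--Schwarz argument on the imaginary parts.
\item Existence then follows as a limit of the Stieltjes transforms of $\Sigma^{1/2}XD^2X^*\Sigma^{1/2}$ along subsequences, and uniqueness follows from the contraction.
\end{itemize}
Then $m_{2n}$ and $m_n$ are defined explicitly from $m_{1n}$, and they lie in $\mathbb{C}_+$.

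To identify $m_n$ as a Stieltjes transform, I would check three properties:
\begin{itemize}
\item $m_n$ is analytic on $\mathbb{C}_+$, which follows from the implicit function theorem and the contraction bound.
\item $\operatorname{Im} m_n>0$.
\item $-\ri\eta\, m_n(\ri\eta)\to1$ as $\eta\to\infty$, since $m_{2n}(\ri\eta)=\rO(\eta^{-1})$ on $\Omega_D$ by (a).
\end{itemize}
These imply that $m_n$ is the Stieltjes transform of a probability measure. Absolute continuity, and hence the density $\rho$, comes from continuity of $m_n$ up to the real axis away from $0$, as in \cite{paul2009no}; the inversion formula then gives $\rho$.

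The main obstacle is the unbounded-multiplier case, since \cite{paul2009no} assumes a bounded $B$. I would handle it by using only the first moment of $\pi_n$ in the contraction estimate. The term $\xi_j^2/(1+\xi_j^2 w)$ is bounded by $1/\operatorname{Im}(w)$-type quantities uniformly in $\xi_j^2$, so large multipliers do not break the contraction. Event (c) is used only to keep the tightness and moment bounds quantitative.
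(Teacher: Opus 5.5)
Your route is the paper's, which simply cites \cite{Karoui2009,paul2009no}: freeze $D$ and read (\ref{eq_systemequationsm1m2}) as the deterministic separable-covariance system with population measures $p^{-1}\sum_i\delta_{\sigma_i}$ and $\pi_n$. Once $D$ is frozen at a fixed $n$, $D^2$ is a bounded matrix of norm $\xi_{(1)}^2<\infty$. So the unbounded case is not an obstacle for this statement, and neither tightness of $\pi_n$ nor your event (c) plays any role in solvability.

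The gap is in your existence step. Subsequential limits, as $n\to\infty$, of the Stieltjes transform of $\Sigma^{1/2}XD^2X^*\Sigma^{1/2}$ solve a \emph{limiting} system, built from limit points of the ESD of $\Sigma$, of $\pi_n$ and of $p/n$. They do not produce a solution of (\ref{eq_systemequationsm1m2}) at a given $n$ with the actual $\sigma_i,\xi_j^2$, which is what the statement asserts. To get a finite-$n$ solution from random matrices, you must keep both measures and the ratio fixed while only the size grows. For example, replace $\Sigma,D^2$ by $\Sigma\otimes I_k$, $D^2\otimes I_k$, use a $pk\times nk$ matrix with i.i.d.\ entries of variance $(nk)^{-1}$, and let $k\to\infty$.

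Your uniqueness step is also misstated. A bound $|\mathcal F_z(w)-\mathcal F_z(w')|\le\kappa|w-w'|$ with uniform $\kappa<1$ is false on $\{w\in\mathbb C_+:\operatorname{Im}(zw)\ge0\}$. For $\Sigma=I$ and $E=n^{-1}\sum_j\xi_j^2$, one has $|\mathcal F_z'(w)|\to\phi\,n^{-1}\sum_j\xi_j^4/\eta^2$ as $w\to0$ within that set, and this exceeds $1$ for small $\eta$. Such a bound is also not equivalent to hyperbolic contraction.

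What Cauchy--Schwarz actually gives is $\mathcal F_z(w)-\mathcal F_z(w')=(w-w')T(w,w')$ with $|T(w,w')|\le\sqrt{T(w)T(w')}$, where $T(w)=\frac1n\sum_i\sigma_i^2|{-z}+\sigma_ig(w)|^{-2}\,\frac1n\sum_j\xi_j^4|1+\xi_j^2w|^{-2}$ and $g(w)=\frac1n\sum_j\xi_j^2/(1+\xi_j^2w)$. The bound $T(w)<1$ is available \emph{only at fixed points}: there $\operatorname{Im}w=\operatorname{Im}\mathcal F_z(w)$ yields $T(w)=1-\frac{\eta}{\operatorname{Im}w}\frac1n\sum_i\sigma_i|{-z}+\sigma_ig(w)|^{-2}$, which is \eqref{eq_expansionusefullessorequaltoone}. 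That suffices for uniqueness, and for $1-\mathcal F_z'(m_{1n})\neq0$ in your implicit-function step, but not for existence.

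If you want existence without random matrices, use Earle--Hamilton. Since $|{-z}+\sigma_ig(w)|\ge\eta$, the map $\mathcal F_z$ sends $U_R=\{w:\operatorname{Im}w>0,\ \operatorname{Im}(zw)>0,\ |w|<R\}$, with $R>\phi\sigma_1/\eta$, into a compact subset of $U_R$. The needed lower bounds on $\operatorname{Im}\mathcal F_z$ and $\operatorname{Im}(z\mathcal F_z)$ hold at fixed $n$ because all $\xi_j^2\in(0,\infty)$. Hence $\mathcal F_z$ has a unique fixed point in $U_R$, and every solution in $\mathbb C_+^3$ has $m_{1n}\in U_R$.

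Finally, continuity of $m_n$ up to $\mathbb R\setminus\{0\}$ gives a density only off the origin. From $-zm_n=1-\frac np+\frac np(-z\underline{m}_n)$, where $\underline{m}_n=\frac1n\sum_j[-z(1+\xi_j^2m_{1n})]^{-1}$ is itself a Stieltjes transform, the measure has an atom at $0$ of mass at least $1-n/p$ when $p>n$. The statement itself glosses over this point.
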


\begin{remark}
Several remarks on Theorem \ref{lem_solutionsystem} are in order. First, the probability event {$\Omega_D$} can be constructed explicitly as in Definition \ref{defn_probset} and Lemma \ref{lem_probabilitycontrol}. Second, Theorem 6.1 is  the counterpart for Theorem \ref{lem_solutionsystem} by integrating out the randomness of multiplier $\xi^2.$ 
%Recall $F(x)$ is the CDF of $\xi^2.$ We can define the counterpart for (\ref{eq_systemequationsm1m2}) as follows 
\end{remark}

%We start with introducing the properties of the asymptotic local laws as in Definition \ref{defn_couplesystem}. 
Recall the definitions of the Stieltjes transforms of ESDs in 6.2. In practice, it is convenient to define the Green functions of $Q$ and $\mathcal{Q}$ for $z=E+\ri \eta \in \mathbb{C}_+,$ 
\begin{equation}\label{eq_resolvents}
G(z)=(Q-zI)^{-1} \in \mathbb{R}^{p \times p},\quad \mathcal{G}(z)=(\mathcal{Q}-zI)^{-1} \in \mathbb{R}^{n \times n}.
\end{equation}
Then (6.2) can be rewritten as 
 $   m_Q=\frac{1}{p}\operatorname{tr}G(z),\ m_{\mathcal{Q}}=\frac{1}{n}\operatorname{tr}\mathcal{G}(z). $

Before we proceed ahead, we revisit the definition of the systems of equations in Definition \ref{defn_couplesystem}. Thanks to Theorem \ref{lem_solutionsystem}, it is easy to see that the study of the systems of equations in (\ref{eq_systemequationsm1m2}) can be reduced to the analysis of 
\begin{equation}\label{eq_functionFequal}
F_n(m_{1n}(z),z)=0, \quad z\in\mathbb{C}_{+},
\end{equation}
where $F_n(\cdot, \cdot)$ is defined as follows
\begin{equation}\label{eq:F(m,z)}
    F_n(m_{1n}(z),z)
    =\frac{1}{n}\sum_{i=1}^p\frac{\sigma_i}{-z+\frac{\sigma_i}{n}\sum_{j=1}^n\frac{\xi^2_j}{1+\xi^2_j m_{1n}(z)}}-m_{1n}(z).
\end{equation}
The conditional and unconditional version in \eqref{eq_systemequationsm1m2}, (3.9) are both useful in their own aspects. To be more specific, the conditional version is more powerful when $\xi^2$ has unbounded support whereas the unconditional version is more convenient when $\xi^2$ has bounded support.

%Moreover, to avoid the singularity in the definitions of the systems of equations, we introduce some additional assumption on $\Sigma$ which will be used when the multiplier $\xi^2$ has bounded support in the sense of (ii) of Assumption \ref{assum_D}. Such an assumption has been frequently used in the random matrix theory literature, for example, see \cite{Bao2015,Ding&Yang2018,ding2021spiked,9779233,el2007tracy,knowles2017anisotropic,lee2016tracy}. Recall the notations $m_{2n,c}$ and $L_+$ in Remark \ref{rmk_systemequationsremark}. 

%\begin{assumption}\label{assum_additional_techinical} When (ii) of Assumption \ref{assum_D} holds, for $\Sigma$ satisfying Assumption \ref{assumption_techincial}, we assume that for some constant $\tau>0$ 
%\begin{equation*}
%\min_{1 \leq i \leq p} |1+\sigma_i m_{2n,c}(L_+)| \geq \tau. 
%\end{equation*}  
%\end{assumption}

We now define the sets of spectral parameters as follows.  For $\xi^2$ with unbounded support, as in Case (i) of Assumption (2.2), {let $\vartheta_1$ be defined by (\ref{eq: def of vartheta_1}), and define $d_1$ by}
\begin{equation}\label{eq_firstddefinition}
d_1:=
\begin{cases}
n^{1/\alpha}\log^{-\epsilon_{\mathsf{p}}}n, & \text{if (\ref{ass3.1}) holds}; \\
\log^{1/\beta-1-\epsilon_{\mathsf{e}}}n, & \text{if (\ref{ass3.2}) holds},
\end{cases}
\end{equation} 
for some sufficiently small constants $\epsilon_{\mathsf{p}}>c_{\mathsf{p},0}$ and $\epsilon_{\mathsf{e}}>c_{\mathsf{e},0}$ where $c_{\mathsf{p},0},c_{\mathsf{e},0}$ are some constants introduced in Definition \ref{defn_probset} below.
We denote for sufficiently large constant $\mathtt C>0$ that
\begin{equation}\label{eq_spectraldomainone}
\mathbf{D}_{u} \equiv \mathbf{D}_{u}(\mathtt C):=\left\{z=E+\ri\eta \in \mathbb{C}_+: |E-\vartheta_1| \leq \mathtt C d_{1}, \ n^{-2/3}\leq\eta\leq \mathtt C \vartheta_1\right\}.
\end{equation}
For $\xi^2$ with bounded support as in Case (ii) of Assumption \ref{assum_D}, for some sufficiently small constants $\mathtt{c}>0$ and $0<\epsilon_{\mathsf{d}}<(1/2-1/(d+1))/\mathtt{C}_d,$ with $\mathtt{C}_d>0$ being a large constant, we denote (recall $L_+$ in (\ref{edge_edge_edge}))
\begin{equation}\label{eq_spectraldomaintwo}
\mathbf{D}_{b} \equiv \mathbf{D}_{b}(\mathtt c):=\left\{z=E+\ri\eta \in \mathbb{C}_+: L_{+}-\mathtt c \leq E\leq L_{+}+\mathtt c, \ n^{-1/2-\epsilon_{\mathsf{d}}}\leq\eta\leq n^{-1/(d+1)+\epsilon_{\mathsf{d}}} \right\}.
\end{equation}

Throughout the paper, we will frequently use the minors of a matrix.  For the data matrix $Y$ in (\ref{eq_datamatrix}), denote the index set ${\mathcal I}=\{1,\dots,n\}$. Given an index set $\mathcal{T}\subset{\mathcal I}$, we introduce
	the notation $Y^{(\mathcal{T})}$ to denote the $p \times(n-|\mathcal{T}|)$ minor of $Y$
	obtained from removing all the $i$th columns of $Y$ for $i\in \mathcal{T}$ and keep the original indices of $Y$. In particular, $Y^{(\emptyset)}=Y$. For convenience, we briefly write $(\{i\})$, $(\{i,j\})$ and {$(\{i,j\}\cup \mathcal{T})$} as $(i)$, $(i,j)$
	and $(ij\mathcal{T})$ respectively. Correspondingly, we denote their sample covariance matrices and resolvents as 
	\begin{equation}\label{eq_defnminor}
	Q^{(\mathcal{T})}=(Y^{(\mathcal{T})}) (Y^{(\mathcal{T})})^*,\ {\mathcal Q}^{(\mathcal{T})}=(Y^{(\mathcal{T})})^* (Y^{(\mathcal{T})}), 
	\end{equation}
	\begin{equation}\label{eq_defnminorG}
	G^{(\mathcal{T})}(z)=(Q^{(\mathcal{T})}-zI)^{-1}, \quad{\mathcal G}^{(\mathcal{T})}(z)=({\mathcal Q}^{(\mathcal{T})}-zI)^{-1}.
	\end{equation}
Similar to (\ref{eq_mq}) and Definition \ref{defn_couplesystem}, we can define 	
$m_Q^{(\mathcal{T})}(z)$, $m_{\mathcal{Q}}^{(\mathcal{T})}(z),$ $m_{1n}^{(\mathcal{T})}(z)$, $m_{2n}^{(\mathcal{T})}(z)$ and  $m_n^{(\mathcal{T})}(z)$ by removing $\mathbf{y}_i, i \in \mathcal{T}$ or $\xi_i^2, i \in \mathcal{T}.$

\subsection{Properties of asymptotic laws}\label{sec_asymptoticlocalaveragedlocal}
\quad We begin with the summary of the results when $\xi^2$ has unbounded support as in Case (i) of Assumption \ref{assum_D}. The proofs will be deferred to Section \ref{sec_proofpreliminarystieltjes}. Conditional on some probability event, we provide some useful deterministic estimates for $m_{1n}, m_{2n}$ and $m_n(z)$ on the above concerned spectral domain (\ref{eq_spectraldomainone}).  Denote the control parameter $e$ as follows
\begin{equation}\label{e2_definition}
e:=
\begin{cases}
\frac{\log^{\epsilon_{\mathsf{p}}} n}{n^{1/\alpha}}, & \ \text{if (\ref{ass3.1}) holds}; \\
\frac{1}{\log^{1/\beta} n}, & \ \text{if (\ref{ass3.2}) holds}.  
\end{cases}
\end{equation} 
\begin{lemma}\label{lem: basic bounds}
Suppose Assumptions \ref{assum_model}, \ref{assumption_techincial} and (i) of Assumption \ref{assum_D} hold. For any fixed realization $\{\xi_i^2\} \in \Omega_D$ where $\Omega_D \equiv \Omega_{n,D}$ is some  probability event that $\mathbb{P}(\Omega_D)=1-\ro(1)$, we have  
\begin{enumerate}
\item For $z \in \mathbf{D}_{u},$ we have that for some constants $C_1, C_2>0$ 
\begin{equation*}
\operatorname{Re} m_{1n}(z) \asymp -E^{-1}, \ C_1 \eta E^{-2} \leq \operatorname{Im} m_{1n}(z) \leq C_2 \eta E^{-1}.  
\end{equation*}
\item When $|E-\vartheta_1| \leq \mathtt{C} d_1$ for  some sufficiently large constant $\mathtt{C}>0,$ let $m_{1n}(E)=\lim_{\eta \downarrow 0} m_{1n}(E+\ri \eta),$ then we have that 
\begin{equation*}
    m_{1n}(E) \asymp -E^{-1}.
\end{equation*}
\item For $z \in \mathbf{D}_{u}$ and $e$ defined in (\ref{e2_definition}), we have that 
\begin{gather*}
    |m_{2n}(z)|=\rO(e),\quad |m_n(z)|=\rO(E^{-1}),\\
    \operatorname{Im}m_{2n}(z)=\rO(\eta E^{-1}),\quad \operatorname{Im}m_n(z)=\rO(\eta E^{-2}).
\end{gather*}
\end{enumerate} 
%{\color{red}[NEED TO SEPERATE THE CASE WHEN IT IS EXPONENTIAL DECAY]}
\end{lemma}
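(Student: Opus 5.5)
We will work on a good configuration $\Omega_D$ of the multipliers, on which the top order statistics are well separated and the bulk of $\{\xi_j^2\}$ is under control. Concretely, we require $\xi_{(1)}^2\asymp\vartheta_1\asymp b_n$ and $\xi_{(2)}^2\le \xi_{(1)}^2-C d_1$, i.e.\ a gap of order at least $d_1$, which holds with probability $1-\ro(1)$ by the extreme value limits in Lemma~\ref{lem_summaryevt}. We also require $\frac1n\sum_j \xi_j^{2k}=\rO(1)$ for $k=1,2$, using $\alpha>2$, and $\max_j\xi_j^2\lesssim e^{-1}$, where $e$ is as in (\ref{e2_definition}). The plan is then to analyze (\ref{eq:F(m,z)}) through a self-consistent expansion in the small parameter $m_{1n}$. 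The key heuristic is that for $|z|\asymp E\asymp \vartheta_1$ one expects $|m_{1n}|\asymp E^{-1}$, so $\xi_j^2 m_{1n}$ is small for all $j$ except possibly the maximal index. Since $E\gg 1$ and $\sigma_i\asymp1$, the defining equation of $m_{1n}$ is then essentially $m_{1n}\approx -\frac{1}{nz}\sum_i\sigma_i(1+\rO(|m_{2n}|))$.

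The first step is part~(3), which we treat as the a priori input. We show $|m_{2n}|=\rO(e)$ by splitting the sum in $m_{2n}$. For $j\ne(1)$ we have $|1+\xi_j^2m_{1n}|\ge c$, because $\xi_j^2|m_{1n}|\le \xi_{(2)}^2/(cE)<1-c$ by the gap condition. These terms therefore contribute $\rO(E^{-1}\frac1n\sum\xi_j^2)=\rO(E^{-1})$. The single term for $j=(1)$ contributes $\frac{1}{n}\xi_{(1)}^2/|z(1+\xi_{(1)}^2m_{1n})|$. Here we use $\eta\ge n^{-2/3}$ and $\operatorname{Im}(\xi_{(1)}^2m_{1n})\gtrsim \xi_{(1)}^2\eta E^{-2}$ to get a lower bound on the denominator, and combine it with the real-part separation coming from $|E-\vartheta_1|\le \mathtt C d_1$. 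The conclusion is a bound of order $e$, and this is exactly where the choice of $d_1$ and $e$ enters. Once $|m_{2n}|$ is small, the formula for $m_n$ gives $|m_n|=\rO(E^{-1})$. Taking imaginary parts gives $\operatorname{Im} m_n\lesssim \eta E^{-2}+E^{-1}\operatorname{Im}m_{2n}$, and $\operatorname{Im} m_{2n}\lesssim \eta E^{-1}$ follows from $\operatorname{Im}m_{2n}(z)=\int \eta\,|x-z|^{-2}\,d\mu(x)$-type representations, since $m_{2n}$ is a Stieltjes transform of a measure of mass $\rO(1)$.

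To avoid circularity between the bounds on $m_{1n}$ and $m_{2n}$, we run a continuity (bootstrap) argument in $\eta$. At $\eta=\mathtt C\vartheta_1$, the crude bound $|m_{1n}|\le \varphi/\eta$ gives everything. We then decrease $\eta$ on a grid of spacing $n^{-10}$ and use Lipschitz continuity of $m_{1n},m_{2n}$ in $z$ (derivatives bounded by $\eta^{-2}$) together with the stability of $F_n(\cdot,z)$ near the solution. At each step, if $|m_{1n}|\le 2C/E$ holds, then the expansion yields the sharper bound $C/E$, so the weaker bound is never violated.

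Parts~(1) and~(2) then follow. Using the first-order identity $m_{1n}=-\frac{1}{z}\phi\bar\sigma_1(1+\rO(e))$, we get $\operatorname{Re}m_{1n}\asymp -E^{-1}$ since $\eta\le\mathtt C\vartheta_1\asymp E$. For the imaginary part we write $\operatorname{Im}m_{1n}=\frac1n\sum_i\sigma_i \operatorname{Im}(z+\ldots)/|\ldots|^2$; the contribution of $\operatorname{Im} z=\eta$ gives the lower bound $\eta E^{-2}$, and the $m_{2n}$ corrections give at most $\eta E^{-1}$. Part~(2) follows by letting $\eta\downarrow0$, since all bounds are uniform in $\eta$ and the limit exists because $E$ lies outside the support of $\rho$ except near $\vartheta_1$, where (2) only asserts the order of the real part. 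The main obstacle will be controlling the single $\xi_{(1)}^2$ term near $\vartheta_1$, where $1+\xi^2_{(1)}m_{1n}$ can nearly vanish. That is handled by the $d_1$ offset in (\ref{eq: def of vartheta_1}) together with the gap assumption built into $\Omega_D$.
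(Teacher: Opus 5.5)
You take a different route from the paper: you prove an a priori bound on $m_{2n}$, run a continuity argument in $\eta$, and then read off (1)--(2). The paper instead fixes $z$, traps $\operatorname{Re}m_{1n}$ and $\operatorname{Im}m_{1n}$ between sign changes of $\operatorname{Re}F_n$ and $\operatorname{Im}F_n$, and derives (3) last. The genuine gap is the step you flag yourself: the ``real-part separation coming from $|E-\vartheta_1|\le\mathtt C d_1$'' does not exist on that window.

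The $d_1$ offset only gives $1+\xi^2_{(1)}m_{1n}(\vartheta_1)=d_1/(\xi^2_{(1)}+d_1)\approx\varphi d_1/E$, and only at the single point $E=\vartheta_1$. Since $m_{1n}$ increases along the real axis with slope $\approx\varphi E^{-2}$, the quantity $\operatorname{Re}(1+\xi^2_{(1)}m_{1n})$ drops by $\approx(\vartheta_1-E)/E$ as $E$ moves left. It therefore vanishes near $z_0:=\vartheta_1-\varphi d_1(1+\ro(1))$, which is the ``$d_1=0$'' analogue of (\ref{eq_mu1linear}). This point lies in $\mathbf D_u$ as soon as $\mathtt C>\varphi$. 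What you have left there is $|1+\xi^2_{(1)}m_{1n}|\ge\xi^2_{(1)}\operatorname{Im}m_{1n}\gtrsim\eta/E$. At $\eta=n^{-2/3}$ this bounds the $(1)$-term of $m_{2n}$ only by $E/(n\eta)\asymp n^{1/\alpha-1/3}$, which is not $\rO(e)$ for $\alpha<6$ under (\ref{ass3.1}).

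The gap cannot be patched, because the imaginary-part claims are actually false near $z_0$. Keep the $(1)$-term in your expansion of (\ref{eq:F(m,z)}) and set $w:=1+\xi^2_{(1)}m_{1n}$. You get $w^2-\delta w+\kappa/n\approx0$, with $\delta\approx(z-z_0)/z_0$ and $\kappa=\phi\bar\sigma_2/\varphi^2\asymp1$. The two roots have product $\kappa/n$ and sum $\delta$ with $\operatorname{Im}\delta>0$. Hence the physical root, the one with $\operatorname{Im}w>0$, satisfies $|w|>\sqrt{\kappa/n}$.

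This is the idea you are missing for the modulus bounds. It gives $|m_{2n}|=\rO(n^{-1/2}+e)=\rO(e)$, using $\alpha>2$, and $|m_n|=\rO(E^{-1})$. However, for $|E-z_0|\le c\,En^{-1/2}$ and $n^{-2/3}\le\eta\ll n^{-1/2}$ the same root has $\operatorname{Im}w\asymp n^{-1/2}$. The reason is that $\rho$ carries a component of mass $\asymp1/n$ there, the deterministic shadow of the outlier. Consequently $\operatorname{Im}m_{1n}\asymp n^{-1/2}E^{-1}$, $\operatorname{Im}m_{2n}\asymp n^{-1/2}$ and $\operatorname{Im}m_n\asymp n^{-1/2}E^{-1}$. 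These contradict the stated bounds $\rO(\eta E^{-1})$, $\rO(\eta E^{-1})$ and $\rO(\eta E^{-2})$ respectively. Your Stieltjes-representation argument for $\operatorname{Im}m_{2n}$ fails for the same reason: that measure has mass $\asymp\xi^2_{(1)}/n$ near $z$.

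The paper's proof has the same blind spot. Through (\ref{eq: 2.2 beta=2}) it bounds the $(1)$-term by $\xi^2_{(1)}/(nd_1)$, which presumes $|1+\xi^2_{(1)}m_{1n}|\gtrsim d_1/\xi^2_{(1)}$ on the whole window.

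Your argument does go through in two settings:
\begin{itemize}
\item on the sub-window $E\ge\vartheta_1-c\,d_1$ with $c<\varphi$, which contains $\widetilde{\mathbf D}_u$ and on which $\operatorname{Re}w\gtrsim d_1/E$;
\item for the minors $m^{(1)}_{1n},m^{(1)}_{2n},m^{(1)}_n$ on all of $\mathbf D_u$, since their nearest band sits $\asymp\varphi(\xi^2_{(1)}-\xi^2_{(2)})\gg d_1$ to the left. These are the quantities Theorem \ref{thm_unboundedcaselocallaw} actually uses via Remark \ref{rem_keyrem}.
\end{itemize}
A minor slip: $\max_j\xi^2_j\lesssim e^{-1}$ is false under (\ref{ass3.1}), since $e^{-1}=d_1=\ro(\xi^2_{(1)})$.
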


\begin{remark}\label{rem_keyrem}
The above lemma provides some controls for the Stieltjes transforms.  First, the construction of the probability event $\Omega_D$ will be given in Section \ref{appendix_goodconfiguration}. Second, by a discussion similar to (\ref{eq_mu1part}), conditional on $\Omega_D$, we can replace $\mu_1$ with $\varphi \xi^2_{(1)}.$  Third, The above results hold when we replace $m_{1n}, m_{2n}$ and $m_n$ with $m_{1n}^{(\mathcal{T})}, m_{2n}^{\mathcal{(T)}}$ and $m_n^{\mathcal{(T)}}$ for any finite $\mathcal{T}.$ Fourth, Lemma \ref{lem: basic bounds} also implies the existence of $\vartheta_1$ defined in \eqref{eq_defnvarphi}.
\end{remark}

\quad Then we state the results when $\xi^2$ has bounded support as in Case (ii) of Assumption \ref{assum_D}. 
%As mentioned in Remark \ref{rmk_systemequationsremark}, 
For the bounded support case, it will be more convenient to use both the conditional and unconditional systems. For the conditional setting, when restricted to $\Omega_D,$ we denote the rightmost edge of $\rho$ as $\widehat{L}_+.$ Moreover, parallel to (\ref{eq_phasetransition}), we introduce the following quantities
\begin{align}\label{eq_finitesample123}
 &  \widehat{\mathsf{s}}_1:=
   \frac{1}{n}\sum_{j=1}^n\frac{l^2\xi^4_j}{(l-\xi^2_j)^2},
   \quad 
   \widehat{\mathsf{s}}_2:=
   \frac{1}{n}\sum_{j=1}^n\frac{l\xi^2_j}{l-\xi^2_j},\quad 
   \widehat{\mathsf{s}}_3:=
   \frac{1}{p}\sum_{i=1}^p \frac{\sigma_i^2\widehat{\mathsf{s}}_1}{(\widehat{L}_+-\sigma_i\widehat{\mathsf{s}}_2)^2},\\
 &  {\widehat{\mathsf{s}}_4:=
   \frac{1}{n}\sum_{i=1}^p \frac{\sigma_i}{(\widehat{L}_+-\sigma_i \widehat{\mathsf{s}}_2)^2}.}
\end{align}
Furthermore, we need the following spectral parameter set 
\begin{equation}\label{eq_spectralparameterprime}
\mathbf{D}_b^\prime=\Big\{z\in\mathbf{D}_b:|1+\xi^2_jm_{1n,c}(z)|>\frac{1}{2}n^{-1/(d+1)-\epsilon_{\mathsf{d}}},\; \text{for all} \ 2 \leq j \leq n\Big\}.
\end{equation}

We point out that for $\eta_0$ defined in (\ref{eq: def of gamma}), as will be seen from (\ref{eq_secondconnect}) and (\ref{def4}) that, with probability $1-\ro_{\mathbb{P}}(1),$ $\lambda_1+\ri \eta_0 \in \mathbf{D}_b'.$ Next, the properties of the asymptotic laws in Theorem \ref{lem_solutionsystem} can be summarized as follows.

\begin{lemma}\label{localestimate2}
Suppose Assumptions \ref{assum_model}, \ref{assumption_techincial}, \ref{assum_additional_techinical} and (ii) of Assumption \ref{assum_D} hold. Then for any fixed realization $\{\xi_i^2\} \in \Omega_D$ where $\Omega_D \equiv \Omega_{n,D}$ is some  probability event that $\mathbb{P}(\Omega_D)=1-\ro(1),$  for sufficiently large $n,$ we have that 
\begin{enumerate}
\item[(a).] If $d>1$ and $\phi^{-1}>\widehat{\mathsf{s}}_3,$ $\widehat{L}_+$ can be expressed explicitly by the following equation 
\begin{equation}\label{eq_conditionaledgedefinition}
    1=\frac{1}{n}\sum_i\frac{-l\sigma_i}{(-\widehat{L}_{+}+\sigma_i\widehat{\mathsf{s}}_2)}.
\end{equation}
Moreover, for any $0\leq\kappa\leq \widehat{L}_{+}$,
\begin{gather}\label{eq: concave decay of rho_Q}
    \rho(\widehat{L}_{+}-\kappa) \asymp \kappa^d,
\end{gather}
Moreover, for some sufficiently small constant $\epsilon>0$
  \begin{equation}\label{eq_closenessequation}
 \mathsf{s}_k=\widehat{\mathsf{s}}_k+\rO(n^{-1/2+\epsilon_{\mathsf{b}}}), k=1,2,3,4;\ L_+=\widehat{L}_++\rO(n^{-1/2+\epsilon_{\mathsf{b}}}),   
\end{equation}
where $\epsilon_{\mathsf{b}}$ is defined in Definition \ref{defn_probset} below.
%{\color{red} something is missing here, how the assumption from unconditional can be transferred to conditional setting.}
 In addition, let $z=\widehat{L}_{+}-\kappa+\ri\eta\in\mathbf{D}_b$, then 
 \begin{gather}\label{eq_expansionlinear}
     m_{1n}(\widehat{L}_{+})-m_{1n}(z)=\frac{\widehat{\mathsf{s}}_4}{(1-\phi\widehat{\mathsf{s}}_3)}\left(\widehat{L}_{+}-z\right)+\rO((\log n)(\kappa+\eta)^{\min\{d,2\}}).
 \end{gather}
 Similarly, for any $z,z^{\prime}\in\mathbf{D}_b$, we have
\begin{gather}\label{eq_a11coro}
    m_{1n}(z)-m_{1n}(z^{\prime})=\frac{\widehat{\mathsf{s}}_4}{(1-\phi \widehat{\mathsf{s}}_3)}(z-z^{\prime})+\rO((\log n)(n^{-1/(d+1)})^{\min\{d-1,1\}}|z-z^{\prime}|).
\end{gather}
Finally, for $z \in \mathbf{D}_b^{\prime}$ in (\ref{eq_spectralparameterprime}), we have that 
\begin{equation}\label{eq_zopointrate11}
\operatorname{Im} m_{1n} (z) =\rO\left( \max \left\{ \eta, \frac{1}{n \eta}\right\} \right), \ \ \operatorname{Im} m_{n} (z) =\rO\left( \max \left\{ \eta, \frac{1}{n \eta}\right\} \right).
\end{equation}
Moreover, for $z_0=E_0+\mathrm{i}\eta_0$ defined in (\ref{eq: def of E0}), we have that 
\begin{equation}\label{eq_zopointrate}
\operatorname{Im} m_{1n}(z_0) \asymp n^{-1/2}, \ \operatorname{Im} m_n(z_0) \asymp n^{-1/2}.
\end{equation}
and for $z=E+\ri \eta_0 \in \mathbf{D}_b^\prime$ in (\ref{eq_spectralparameterprime}), we have that 
\begin{equation}\label{eq_oneregimeedgecontrol}
\operatorname{Im} m_{1n}(z) \asymp \eta_0, \ \operatorname{Im} m_n(z) \asymp \eta_0,
\end{equation}
if $|z-z_0| \geq C n^{-1/2+3\epsilon_{\mathsf{d}}}$ for some constant $C>0.$ 
%{\color{red}[add other results on the control of the Stieltjes transforms.]}
\item[(b).] If $-1<d \leq 1$ or $d>1$ and $\phi^{-1}<\widehat{\mathsf{s}}_3,$  we have that for some fixed constant $\tau>0$
\begin{equation}\label{eq_boundedfrombelowimportant}
\left|1+\xi_{i}^2 m_{1n}(\widehat{L}_+) \right| \geq \tau, \ \ 1 \leq i \leq n.
\end{equation}
Moreover, we have that  for any $0\leq\kappa\leq \widehat{L}_{+}$,
\begin{gather}\label{thm_main_squared_root_bounded}
    \rho(\widehat{L}_{+}-\kappa) \asymp \kappa^{1/2}.
\end{gather}
Equivalently, for some constant $\gamma>0,$ we have for $\kappa \downarrow 0$ 
\begin{equation}\label{eq_gammadefinition}
\rho(\widehat{L}_+-\kappa)=\frac{1}{\pi} \gamma^{3/2} \sqrt{\kappa}+\rO(\kappa). 
\end{equation}

\end{enumerate}

Finally,  the results of (a) and (b) still hold unconditionally when $m_{1n}$ is replaced by $m_{1n,c},$ $\rho$ is replaced by $\widetilde{\rho}$ and $\widehat{L}_+$ is replaced by $L_+$ as in (\ref{edge_edge_edge})  where $\widehat{\mathsf{s}}_3$ and $\widehat{\mathsf{s}}_4$ should be replaced by $\mathsf{s}_3$ and $\mathsf{s}_4$ as in (\ref{eq_phasetransition}). 

\end{lemma}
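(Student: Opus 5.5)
The plan is to reduce everything to the scalar equation $F_n(m_{1n}(z),z)=0$ in (\ref{eq:F(m,z)}). The system (\ref{eq_systemequationsm1m2}) collapses to this equation once $m_{2n}$ is eliminated, and the edge $\widehat{L}_+$ of $\rho$ is the point where the real branch $m\mapsto z(m)$ (defined by solving $F_n=0$ for real $z$) stops being invertible. Solving for $z$ gives
$$z(m)=\frac{1}{p}\sum_{j}\frac{\xi_j^2\,\phi}{1+\xi_j^2 m}\cdot(\cdot)$$
in the standard separable form. More usefully, we write $f(m):=\frac1n\sum_i \sigma_i/(-z+\sigma_i h(m))-m$ with $h(m)=\frac1n\sum_j\xi_j^2/(1+\xi_j^2m)$. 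The edge is then characterized either as a critical point $\partial_m F_n=0$ with $m\in(-\xi_{(1)}^{-2},0)$, or, if no such critical point exists before the pole, as the boundary value $m_{1n}(\widehat{L}_+)=-\xi_{(1)}^{-2}\approx -l^{-1}$. We first restrict to $\Omega_D$, on which $\xi^2_{(1)}=l-\rO(n^{-1/(d+1)+\epsilon})$ and the empirical averages satisfy $\widehat{\mathsf{s}}_k=\mathsf{s}_k+\rO(n^{-1/2+\epsilon_{\mathsf{b}}})$ by the CLT. For $d>1$ these $\mathsf{s}_k$ are finite because $\int (l-s)^{-2}\,\mathrm{d}F<\infty$ exactly when $d>1$.

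For part (a), evaluate $\partial_m F_n$ at $m=-l^{-1}$. A direct computation shows that $\partial_m F_n(-l^{-1},\widehat{L}_+)$ has the sign of $\phi\widehat{\mathsf{s}}_3-1$, since $\partial_m h(-l^{-1})=-\widehat{\mathsf{s}}_1/l^2$ and $h(-l^{-1})=\widehat{\mathsf{s}}_2/l$. Hence, when $\phi^{-1}>\widehat{\mathsf{s}}_3$, the map $m\mapsto z$ is still strictly increasing up to the pole, with no interior critical point. The edge is therefore attained at $m_{1n}=-l^{-1}$ (up to the $\xi_{(1)}$ correction), and plugging this into $F_n=0$ gives (\ref{eq_conditionaledgedefinition}). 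Perturbing $\widehat{\mathsf{s}}_k\to\mathsf{s}_k$ in this equation via the implicit function theorem, with a nonzero derivative in $\widehat{L}_+$ by Assumption \ref{assum_additional_techinical}, yields (\ref{eq_closenessequation}). The linear expansion (\ref{eq_expansionlinear}) comes from a Taylor expansion of $F_n$ around $(-l^{-1},\widehat{L}_+)$. The coefficient $\widehat{\mathsf{s}}_4/(1-\phi\widehat{\mathsf{s}}_3)$ is $-\partial_zF_n/\partial_mF_n$. The remainder $(\kappa+\eta)^{\min\{d,2\}}$ comes from the second-order term of $h$: near the pole, $\frac1n\sum_j \xi_j^4/(1+\xi_j^2 m)^3$ behaves like $\int (l-s+\delta)^{-3}\,\mathrm{d}F\sim\delta^{\min\{d-1,0\}}$ up to a log. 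The same estimate gives (\ref{eq_a11coro}). The density bound $\rho\asymp\kappa^d$ follows from $\operatorname{Im} m_{1n}$, which picks up $\operatorname{Im} h$, essentially the $\xi^2$ density near $l$ convolved with a Poisson kernel, $\asymp\kappa^d$. The bounds (\ref{eq_zopointrate11})--(\ref{eq_oneregimeedgecontrol}) follow from the same computation on $\mathbf{D}_b'$. There $|1+\xi_j^2m_{1n}|$ is bounded below, so the discrete sum is controlled by $\eta$ plus one term of size $1/(n\eta)$ from the nearest atom. Near $z_0$ we balance the two contributions, giving $n^{-1/2}$.

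For part (b), when $\phi^{-1}<\widehat{\mathsf{s}}_3$ or $d\le1$, $\partial_m F_n$ changes sign strictly before the pole. When $d\le 1$, $\widehat{\mathsf{s}}_1\to\infty$, so this always happens. There is then a critical point $m_*$ with $1+l m_*\ge c$, which gives (\ref{eq_boundedfrombelowimportant}). At this point $\partial^2_mF_n\neq0$ is bounded away from zero, by the standard stability argument combined with Assumption \ref{assum_additional_techinical}. This gives a square-root singularity $m_{1n}(z)-m_*\sim\sqrt{\widehat{L}_+-z}$, hence (\ref{thm_main_squared_root_bounded}) and (\ref{eq_gammadefinition}). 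The unconditional statements follow verbatim with $F$ in place of the empirical measure.

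The main obstacle I expect is part (a), where the non-analytic behaviour at the pole must be handled uniformly over $\mathbf{D}_b$. The discrete sum $\frac1n\sum_j (1+\xi_j^2m)^{-k}$ is only comparable to its integral version when $|1+\xi_j^2 m|\gg n^{-1/(d+1)}$ for all $j$, which is exactly the reason for $\mathbf{D}_b'$ and the lower cutoff $\eta\ge n^{-1/2-\epsilon_{\mathsf{d}}}$. The first thing I would try is to split the sum into $j$ with $l-\xi_j^2\le n^{-1/(d+1)+\epsilon}$, of which there are only $\rO(n^{\epsilon})$ on $\Omega_D$ and which are bounded termwise, and the remaining $j$, handled by a Riemann-sum comparison with $F$.
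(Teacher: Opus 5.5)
Your skeleton matches the paper's. You reduce to $F_n(m,z)=0$ and read off the regime from the stability quantity $\partial_mF_n$ (the paper's $g-1$), which at $(-l^{-1},\widehat{L}_+)$ equals $\phi\widehat{\mathsf s}_3-1$; note $h(-l^{-1})=\widehat{\mathsf s}_2$ and $h'(-l^{-1})=-\widehat{\mathsf s}_1$, so your extra factors of $l$ are a slip. You then perturb for \eqref{eq_closenessequation}, balance the top atom for the imaginary-part bounds, and use a nondegenerate critical point in part (b).

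The genuine gap is the derivation of \eqref{eq_conditionaledgedefinition}. For the empirical system the pole sits at $-\xi_{(1)}^{-2}$. Your claim that the real branch is monotone with no critical point ``up to the pole'' is false. Write $\delta=1+\xi_{(1)}^2m$. The top atom alone adds about $\frac{\xi_{(1)}^4}{n\delta^2}\cdot\frac1n\sum_i\sigma_i^2(\widehat{L}_+-\sigma_i\widehat{\mathsf s}_2)^{-2}$ to $\partial_mF_n$, so $\partial_mF_n$ vanishes once $\delta\asymp n^{-1/2}$. The branch cannot reach the pole at bounded $z$ anyway, since $h\to+\infty$ there.

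The ``$\xi_{(1)}$ correction'' you set aside therefore moves $z$ by about $\frac{1-\phi\widehat{\mathsf s}_3}{\widehat{\mathsf s}_4}(\xi_{(1)}^{-2}-l^{-1})$. This is of order $n^{-1/(d+1)}$, which is exactly the Weibull displacement in Proposition~\ref{prop_boundedsetting}, so it cannot be absorbed. If $\widehat{L}_+$ were that turning point, it would be random at scale $n^{-1/(d+1)}\gg n^{-1/2}$, and \eqref{eq_closenessequation} would fail.

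The paper instead pins $\widehat{L}_+$ by $m_{1n}(\widehat{L}_+)=-l^{-1}$. It runs the monotonicity of $g$ only over $\operatorname{Re}m_{1n}>-l^{-1}$. There, on $\Omega_D$, the first line of \eqref{def4} makes the top atom's contribution $\rO(n^{-1+2/(d+1)+2\epsilon_{\mathsf d}})=\ro(1)$. You must stop your branch at $m=-l^{-1}$ in the same way. This $l$-pinned point is the one the later arguments use; $\lambda_1\approx E_0$ sits at distance of order $n^{-1/(d+1)}$ below it.

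Several further steps need more than you supply.

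For \eqref{eq_expansionlinear}--\eqref{eq_a11coro}, a Taylor expansion of $F_n$ at $(-l^{-1},\widehat{L}_+)$ presupposes $|m_{1n}(z)+l^{-1}|\lesssim|\widehat{L}_+-z|$ uniformly on $\mathbf{D}_b$, where $\kappa$ ranges up to $\mathtt{c}$. It also needs an integral remainder along a path that may come close to the atoms $-\xi_j^{-2}$. The paper avoids both with the exact two-point identity \eqref{eq_decompositionmmmmmm}. Its divided differences $R_1=\widehat{\mathsf s}_4+\cdots$ and $R_2=\phi\widehat{\mathsf s}_3+\cdots$ involve only the two endpoints. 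The bound $|R_2|\le(\phi\widehat{\mathsf s}_3)^{1/2}<1$, from Cauchy--Schwarz and \eqref{eq_expansionusefullessorequaltoone}, gives $m_{1n}(\widehat{L}_+)-m_{1n}(z)\asymp\widehat{L}_+-z$ directly.

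Your singular moment is also off by one. In fact $\int(l-s+\delta)^{-3}\,\mathrm{d}F(s)\asymp\delta^{\min\{d-2,0\}}$, with a logarithm at $d=2$, and only this exponent yields the remainder $(\kappa+\eta)^{\min\{d,2\}}$.

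For the imaginary-part bounds, balancing $\eta$ against the top-atom term is not enough on its own. You also need the contraction of the remaining atoms, $\mathsf{L}_3\le\mathfrak{c}\operatorname{Im}m_{1n}$ with $\mathfrak{c}<1$, which comes from \eqref{eq_defnmathsfW}.

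Finally, the CLT gives the $n^{-1/2+\epsilon_{\mathsf b}}$ rate for $\widehat{\mathsf s}_2$, and hence for $\widehat{L}_+$ and $\widehat{\mathsf s}_4$: the summand $(l-\xi^2)^{-1}$ has finite variance iff $d>1$. But $(l-\xi^2)^{-2}$ has finite variance only for $d>3$. For $1<d<3$, $\widehat{\mathsf s}_1$ fluctuates on the stable scale $n^{-1+2/(d+1)}\gg n^{-1/2}$; already its top term is of that size on $\Omega_D$. So the cases $k=1,3$ of \eqref{eq_closenessequation} do not follow this way. The paper's appeal to \eqref{def4} has the same weakness.
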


\begin{remark}\label{rmk_boundedatleastonesolution}
Using discussions similar to the paragraphs around equation (5.1) of \cite{Kwak2021}, by continuity of $m_{1n}(z)$, together with (\ref{eq_expansionlinear}), (\ref{def4}), and the fact $m_{1n}(\widehat{L}_+)=-l^{-1},$ equation (\ref{eq: def of gamma}) admits at least one solution. 
\end{remark}

\subsection{Averaged local laws}\label{secaveragelocalllca}

\quad In this section, we provide the results of the averaged local laws. 
Throughout the paper, we will consistently use the notion of \emph{stochastic domination} to systematize the statements of the form ``$\xi$ is bounded by $\zeta$ with high probability up to a small power of $n$."
\begin{definition}[Stochastic domination]

(i) Let
\[\xi=\left(\xi^{(n)}(u):n\in\mathbb{N}, u\in U^{(n)}\right),\hskip 10pt \zeta=\left(\zeta^{(n)}(u):n\in\mathbb{N}, u\in U^{(n)}\right),\]
be two families of nonnegative random variables, where $U^{(n)}$ is a possibly $n$-dependent parameter set. We say $\xi$ is stochastically dominated by $\zeta$, uniformly in $u$, if for any fixed (small) $\epsilon>0$ and (large) $D>0$, 
\[\sup_{u\in U^{(n)}}\mathbb{P}\left(\xi^{(n)}(u)>n^\epsilon\zeta^{(n)}(u)\right)\leq n^{-D}\]
for large enough $n \geq n_0(\epsilon, D)$, and we shall use the notation $\xi\prec\zeta$. Throughout this paper, the stochastic domination will always be uniform in all parameters that are not explicitly fixed, such as the matrix indices and the spectral parameter $z$.  If for some complex family $\xi$ we have $|\xi|\prec\zeta$, then we will also write $\xi \prec \zeta$ or $\xi=\mathrm{O}_\prec(\zeta)$.
%\item[(ii)] 

\vspace{5pt}

\noindent (ii) We say an event $\Xi$ holds with high probability if for any constant $D>0$, $\mathbb P(\Xi)\geq 1- n^{-D}$ for large enough $n$.
\end{definition} 
\quad Similar to  \cite{couillet2014analysis,ding2021spiked, paul2009no, yang2019edge}, instead of working directly with $m_Q$ and $m_{\mathcal{Q}}$ in (\ref{eq_mq}), it is more convenient to study the following quantities
\begin{equation}\label{eq_m1m2}
m_1(z)=\frac{1}{n}{\rm tr}\left(G(z)\Sigma\right) ,
\quad  m_2(z)=
\frac{1}{n}\sum_{i=1}^n\xi^2_i\mathcal G_{ii}(z).
\end{equation}
Analogously, using the minors in (\ref{eq_defnminor}), we can define $m_1^{(\mathcal{T})}(z)$ and $m_2^{(\mathcal{T})}(z).$ In what follows, we summarize the averaged local laws that play a central role in the proofs, both in the unconditional setting and in the conditional setting on $X$. In each setting, we treat separately the cases where the multiplier $\xi^2$ has unbounded support and bounded support. The next two results, Theorems \ref{thm_unboundedcaselocallaw} and \ref{thm_boundedcaselocallaw}, concern the local laws in the unconditional setting.

%{\color{green} explain here}

\begin{theorem}[Averaged local laws for unbounded support $\xi^2:$ unconditional on $X$] \label{thm_unboundedcaselocallaw} Suppose Assumptions \ref{assum_model}, \ref{assumption_techincial} and (i) of Assumption \ref{assum_D} hold. For any fixed realization $\{\xi_i^2\} \in \Omega_D$ where $\Omega_D \equiv \Omega_{n,D}$ is introduced in Lemma \ref{lem: basic bounds}, let $m_1^{(1)}(z)$ and $m_{1n}^{(1)} (z)$ be defined by removing the column or entries associated with $\xi_{(1)}^2.$ We have that the following statements hold true uniformly for $z \in \mathbf{D}_{u}$ in (\ref{eq_spectraldomainone})
\begin{enumerate}
\item If {\normalfont Case (a)} of (i) of  Assumption \ref{assum_D} holds, we have that 
\begin{equation*}
%m_{1}(z)=m_{1n}(z)+\rO_{\prec}\left(n^{-1/2-2/\alpha}\right), \ 
m_{1}^{(1)}(z)=m_{1n}^{(1)}(z)+\rO_{\prec}\left(n^{-1/2-2/\alpha}\right). 
\end{equation*}
\item If {\normalfont Case (b)} of (i) of Assumption \ref{assum_D} holds, we have that 
\begin{equation*}
%m_{1}(z)=m_{1n}(z)+\rO_{\prec}\left(n^{-1/2}\right), \ 
m_{1}^{(1)}(z)=m_{1n}^{(1)}(z)+\rO_{\prec}\left(n^{-1/2}\right).
\end{equation*}
\end{enumerate}  
\end{theorem}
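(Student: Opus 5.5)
The proof will be carried out conditionally on a fixed realization $\{\xi_i^2\}\in\Omega_D$, so that $D$ is deterministic and only $X$ is random. The first step removes the column $\mathbf{y}_{(1)}$ carrying $\xi_{(1)}^2$ and works with $Q^{(1)}$, $G^{(1)}$ and $\mathcal{G}^{(1)}$. On $\Omega_D$ the remaining multipliers satisfy $\max_{j\neq(1)}\xi_j^2\leq \xi_{(1)}^2-c\,d_1$. For $z\in\mathbf{D}_u$ we have $E\approx\vartheta_1\asymp\xi_{(1)}^2$, so $E$ lies strictly above the bulk of $Q^{(1)}$ at distance of order $\vartheta_1$. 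This is the key point: the spectral parameter is far outside the spectrum of $Q^{(1)}$. As a result, $\|G^{(1)}\|\lesssim E^{-1}$ and $\|\mathcal{G}^{(1)}\|\lesssim E^{-1}$ hold with high probability, uniformly in $\eta\geq n^{-2/3}$. The local law is therefore an ``outside the spectrum'' law, and no bootstrapping in $\eta$ down to small scales is required.

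The second step derives the approximate self-consistent equation via Schur complements. For each $i\neq(1)$,
\[
\mathcal{G}^{(1)}_{ii}=\frac{-1}{z+z\,\xi_i^2\,\mathbf{x}_i^*\Sigma^{1/2}G^{(1i)}\Sigma^{1/2}\mathbf{x}_i}.
\]
Large deviation bounds for quadratic forms give $\mathbf{x}_i^*\Sigma^{1/2}G^{(1i)}\Sigma^{1/2}\mathbf{x}_i=m_1^{(1i)}+\rO_\prec(n^{-1/2}\|G^{(1i)}\|)$. Since $\|G^{(1i)}\|\lesssim E^{-1}$ and $\xi_i^2\leq \xi_{(1)}^2\asymp E$, the error inside the denominator is $\rO_\prec(n^{-1/2})$. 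The denominator $1+\xi_i^2 m_1$ stays bounded below, because $\xi_i^2 m_{1n}\approx -\xi_i^2/(\varphi\,\xi^2_{(1)}\cdot\varphi^{-1}\ldots)$ remains away from $-1$; this uses Lemma \ref{lem: basic bounds}(1) together with the gap $d_1$. Averaging $\xi_i^2\mathcal{G}_{ii}^{(1)}$ yields
\[
m_2^{(1)}=\frac1n\sum_{i\neq(1)}\frac{\xi_i^2}{-z(1+\xi_i^2m_1^{(1)})}+\rO_\prec(n^{-1/2}E^{-1}).
\]
Similarly, the resolvent identity for $G^{(1)}$ in terms of $m_2^{(1)}$ gives
\[
m_1^{(1)}=\frac1n\sum_k\frac{\sigma_k}{-z(1+\sigma_k m_2^{(1)})}+\text{error}.
\]
Together these show that $F_n^{(1)}(m_1^{(1)},z)=\text{error}$. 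Replacing the naive error bound by the fluctuation-averaging mechanism, that is, the averaged rather than entrywise bound, improves the error by a further factor $n^{-1/2}$. In Case (a) the extra smallness $E^{-2}\asymp n^{-2/\alpha}$ (since $\vartheta_1\asymp n^{1/\alpha}$ and $m_1$ itself has size $E^{-1}$) produces the stated $n^{-1/2-2/\alpha}$. In Case (b) the corresponding factor is only polylogarithmic and is absorbed into $\prec$, leaving $n^{-1/2}$.

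The third step is stability. I would show that $\partial_m F_n^{(1)}(m,z)$ is bounded below in absolute value by roughly a constant at $m=m_{1n}^{(1)}(z)$, uniformly on $\mathbf{D}_u$. This uses Lemma \ref{lem: basic bounds}: here $\operatorname{Re}m_{1n}\asymp -E^{-1}$, the inner sum $\frac1n\sum\xi_j^4/(1+\xi_j^2m)^2$ is controlled on $\Omega_D$, and $|m_{2n}|=\rO(e)$ is small. Inverting the linearization then transfers the bound on $F_n^{(1)}(m_1^{(1)},z)$ into the bound on $|m_1^{(1)}-m_{1n}^{(1)}|$. A continuity argument in $\eta$, together with a lattice and union bound over $z\in\mathbf{D}_u$ and Lipschitz continuity of both sides, makes the estimate uniform. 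I expect the main obstacle to be this stability step combined with obtaining the sharp $n^{-2/\alpha}$ gain. The multipliers are unbounded, so the terms $\xi_j^2/(1+\xi_j^2m)$ for $\xi_j^2$ near $\xi^2_{(2)}$ could nearly resonate. My plan is to use the $d_1$-gap from $\Omega_D$ and tail counts of large $\xi_j^2$ to bound these terms, and to perform fluctuation averaging with weights $\xi_i^2$ while tracking explicitly the powers of $E^{-1}$.
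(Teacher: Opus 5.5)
There is a genuine gap. Your skeleton is exactly the last step of the paper's proof (Section~\ref{sec_proofa2}): delete $\mathbf{y}_{(1)}$, derive the $m_2^{(1)}$ and $m_1^{(1)}$ self-consistent equations from Schur complements and large deviation bounds, and close with a contraction whose coefficient is $\ro(1)$. What is missing is the input that makes every error estimate there work. That input is the high-probability bound $\|G^{(1\mathcal{T})}(z)\|\lesssim n^{-1/\alpha}\log n$, uniformly on $\mathbf{D}_u$, which you obtain in one sentence by saying that $E$ lies above the bulk of $Q^{(1)}$.

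The bulk is not the obstruction. $Q^{(1)}$ still contains the rank-one pieces $\xi^2_{(2)}\Sigma^{1/2}\mathbf{x}_{(2)}\mathbf{x}_{(2)}^*\Sigma^{1/2}$, $\xi^2_{(3)}\Sigma^{1/2}\mathbf{x}_{(3)}\mathbf{x}_{(3)}^*\Sigma^{1/2},\dots$. These create outliers near $\varphi\xi^2_{(2)},\varphi\xi^2_{(3)},\dots$ on the same scale $n^{1/\alpha}$ as $E$. Meanwhile, $\Omega_D$ only guarantees $\xi^2_{(1)}-\xi^2_{(2)}\gtrsim n^{1/\alpha}\log^{-c_{\mathsf{p},0}}n$ in Case (a). You therefore need a first-order location result of the type $\lambda_1(Q^{(1)})\leq\varphi\xi^2_{(2)}+\ro(n^{1/\alpha}\log^{-c_{\mathsf{p},0}}n)$ with high probability.

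Norm bounds cannot give this. The crude bound is $\lambda_1(Q^{(1)})\leq\sigma_1\xi^2_{(2)}\|XX^*\|\approx\sigma_1(1+\sqrt{\phi})^2\xi^2_{(2)}$. Since $\sigma_1(1+\sqrt{\phi})^2>\phi\bar{\sigma}_1=\varphi$, and $\xi^2_{(2)}/\xi^2_{(1)}$ need not be small on $\Omega_D$, this generally exceeds $E\approx\varphi\xi^2_{(1)}$.

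You also silently use this missing bound to keep $|1+\xi_i^2m_1^{(1)}|$ away from zero for the random $m_1^{(1)}$. Lemma~\ref{lem: basic bounds} only controls the deterministic $m_{1n}$. Once $\|\mathcal{G}^{(1)}\|$ is controlled, the lower bound follows from $1+\mathbf{y}_i^*G^{(1i)}\mathbf{y}_i=-(z\mathcal{G}^{(1)}_{ii})^{-1}$.

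The paper spends most of its effort on exactly this input:
\begin{itemize}
\item It first proves an outside-the-spectrum local law for the full $Q$ on $\widetilde{\mathbf{D}}_u$, to the right of $\vartheta_1$. This uses an initial estimate at $\eta=\mathtt{C}\vartheta_1$ (Lemma~\ref{lem: average local law for large eta}), self-improvement under an entrywise a priori bound (Lemma~\ref{lem: self improvement}), and a continuity argument in $\eta$ down to $n^{-2/3}$ (Proposition~\ref{eq_propooursidebulk}).
\item It then excludes eigenvalues of $Q$ from $(\vartheta_1,Cn^{1/\alpha}\log n)$ by playing $\operatorname{Im}m_Q\geq(n\eta)^{-1}$ against the local law (Lemma~\ref{lem: upper bound for eigenvalues}).
\item It runs the same argument for $Q^{(1)}$ relative to $\vartheta_2$.
\item Finally, it combines interlacing with $\vartheta_1-\vartheta_2\gtrsim n^{1/\alpha}\log^{-1}n$ to get $\vartheta_1>\lambda_1>\vartheta_2>\lambda_1^{(1)}$, hence the norm bound.
\end{itemize}
So your remark that no bootstrapping in $\eta$ is required holds only after this step, and the bootstrapping is precisely how the step is carried out.

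A secondary point: fluctuation averaging is not needed, and your accounting for it is off. The $n^{-1/2}$ comes directly from the large deviation bound via $\frac1n\|A\|_F\leq n^{-1/2}\|A\|$ with $\|A\|\lesssim n^{-1/\alpha}$. The error in the $m_2^{(1)}$ equation is then $\frac1n\sum_i\xi_i^4n^{-1/2-1/\alpha}/|z|\lesssim n^{-1/2-1/\alpha}$, because $\xi_i^2\leq\xi^2_{(2)}\lesssim|z|$. The $z^{-1}$ prefactor in the $m_1^{(1)}$ equation supplies the second factor $n^{-1/\alpha}$. An additional $n^{-1/2}$ from averaging would give a bound stronger than the one stated; it is not what produces $n^{-1/2-2/\alpha}$.
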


\begin{theorem}[Averaged local laws for bounded support $\xi^2:$ unconditional on $X$] \label{thm_boundedcaselocallaw} Suppose Assumptions \ref{assum_model}, \ref{assumption_techincial}, \ref{assum_additional_techinical} and (ii) of Assumption \ref{assum_D} hold. When $d>1$ and $\phi^{-1}>\mathsf{s}_3,$ for any fixed realization $\{\xi_i^2\} \in \Omega_D$ where $\Omega_D \equiv \Omega_{n,D}$ is introduced in Lemma \ref{localestimate2}, for $\eta_0=n^{-1/2-\epsilon_{\mathsf{d}}}$ defined in (\ref{eq: def of gamma}),  we have that the following statements hold true uniformly for $z \in \mathbf{D}_b^\prime$ in (\ref{eq_spectralparameterprime}) 
\begin{equation*}
\left| m_{1n}(z)-m_{1n,c}(z) \right| \leq n^{-1/2+\epsilon_{\mathsf{d}}}, \ \left| m_1(z)-m_{1n}(z) \right|=\rO_{\prec} \left((n \eta_0)^{-1} \right),
\end{equation*}
and 
\begin{equation*}
\left| m_{n}(z)-m_{n,c}(z) \right| \leq n^{-1/2+\epsilon_{\mathsf{d}}} , \ \left| m_Q(z)-m_{n}(z) \right|=\rO_{\prec} \left((n \eta_0)^{-1} \right).
\end{equation*}
\end{theorem}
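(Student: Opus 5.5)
The proof splits into two comparisons, both for a fixed realization $\{\xi_i^2\}\in\Omega_D$. The first is deterministic: the random discretized system (\ref{eq_systemequationsm1m2}) against the integrated system (\ref{eq_systemequationsm1m2intergrate}), giving $|m_{1n}-m_{1n,c}|\le n^{-1/2+\epsilon_{\mathsf d}}$. The second is probabilistic and conditional on $D$: the resolvent quantity $m_1$ against $m_{1n}$, giving the $\rO_\prec((n\eta_0)^{-1})$ bound. The statements for $m_n$ and $m_Q$ then follow algebraically. Throughout, the separated index is $j_1$, the index with $\xi_{j_1}^2=\xi_{(1)}^2$; the paper labels it $j=1$ in (\ref{eq_spectralparameterprime}).

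\emph{Step 1: the deterministic comparison.} Write both systems through the scalar equation (\ref{eq:F(m,z)}): $F_n(m,z)=0$ for $m_{1n}$, and its counterpart $F_c(m,z)=0$ for $m_{1n,c}$, in which $n^{-1}\sum_j$ is replaced by $\int\mathrm dF$.
\begin{itemize}
\item \emph{Perturbation size.} Evaluate $F_n-F_c$ at $m=m_{1n,c}(z)$. It is controlled by $\frac1n\sum_{j}\big(h_z(\xi_j^2)-\mathbb E h_z(\xi^2)\big)$ with $h_z(s)=s/(1+s\,m_{1n,c}(z))$, since $m\mapsto F$ is smooth in this quantity by Assumption \ref{assum_additional_techinical}.
\item \emph{Bulk indices $j\neq j_1$.} On $\mathbf D_b'$ the denominators satisfy $|1+\xi_j^2m_{1n,c}|\ge \frac12 n^{-1/(d+1)-\epsilon_{\mathsf d}}$. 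Because $d>1$, the $h_z(\xi_j^2)$ have bounded second moment. A Bernstein bound on a polynomial lattice of $z$, combined with Lipschitz continuity in $z$, then gives a deviation $\rO(n^{-1/2+\epsilon_{\mathsf b}})$. This is how $\Omega_D$ should be defined, consistent with (\ref{eq_closenessequation}).
\item \emph{Isolated term $j_1$.} Its contribution $n^{-1}h_z(\xi_{(1)}^2)$ is kept explicitly, or bounded on $\mathbf D_b'$ using the separation built into $\lambda_1+\ri\eta_0$.
\item \emph{Stability.} In the regime $d>1$, $\phi^{-1}>\mathsf s_3$, the expansion (\ref{eq_expansionlinear}) and its unconditional version give $|\partial_mF_c|\gtrsim 1-\phi\mathsf s_3>0$. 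The linear inversion therefore yields $|m_{1n}-m_{1n,c}|\le n^{-1/2+\epsilon_{\mathsf d}}$ once $\epsilon_{\mathsf d}>\epsilon_{\mathsf b}$.
\item \emph{Selecting the correct root.} Run a continuity argument in $\eta$, starting from $\eta=n^{-1/(d+1)+\epsilon_{\mathsf d}}$, where both solutions are close by crude bounds, and moving down.
\item \emph{The $m_n$ bound.} It follows by substituting into the third equation of each system, which is Lipschitz in $m_{2n}$; in turn $m_{2n}$ is Lipschitz in $m_{1n}$ up to the same error.
\end{itemize}

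\emph{Step 2: the resolvent comparison.} With $D$ frozen, $Y=\Sigma^{1/2}XD$ is a separable covariance matrix with deterministic column weights. First remove the column $\mathbf y_{j_1}$. A rank-one interlacing bound gives $|\operatorname{tr}(G\Sigma)-\operatorname{tr}(G^{(j_1)}\Sigma)|\le C/\eta$, so $|m_1-m_1^{(j_1)}|\le C(n\eta)^{-1}$, which is exactly the allowed error. The same removal costs $\rO(n^{-1})$ in $m_{1n}$ at the level of the system. For the minor, all remaining weights keep $|1+\xi_j^2 m|$ bounded below as above, and the standard machinery applies:
\begin{itemize}
\item Schur complements, $\mathcal G_{jj}=-\big(z(1+\xi_j^2\,\mathbf x_j^*\Sigma^{1/2}G^{(j)}\Sigma^{1/2}\mathbf x_j)\big)^{-1}$;
\item large-deviation bounds for the quadratic forms, giving errors $\Psi=\sqrt{\operatorname{Im} m_1/(n\eta)}+(n\eta)^{-1}$;
\item fluctuation averaging, upgrading the averaged error to $\Psi^2\lesssim (n\eta)^{-1}$ thanks to (\ref{eq_zopointrate11}).
\end{itemize}
This shows that $m_1^{(j_1)}$ solves $F_n^{(j_1)}(m,z)=\rO_\prec((n\eta)^{-1})$, and the stability bound from Step 1 transfers this to $|m_1-m_{1n}|\prec(n\eta)^{-1}\le (n\eta_0)^{-1}$. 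The passage down to $\eta_0$ uses a multiscale bootstrap in $\eta$ on an $n^{-3}$-lattice together with Lipschitz continuity. Finally, $m_Q=p^{-1}\operatorname{tr}G$ satisfies the analogous identity with $m_2$, so the last bound follows as for $m_n$.

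\emph{Main obstacle.} I expect the hard part to be the stability and continuity argument near the hard-edge singularity, in two places. First, denominators $1+\xi_j^2m$ can become as small as $n^{-1/(d+1)-\epsilon_{\mathsf d}}$, so the Lipschitz constants of $F_n$ grow polynomially. I would absorb this by requiring $\epsilon_{\mathsf d}$ small relative to $1/2-1/(d+1)$, as is built into $\mathtt C_d$. Second, the isolated term must not destroy the lower bound $1-\phi\mathsf s_3$ on $|\partial_mF|$. I would first try to verify that this lower bound persists uniformly on $\mathbf D_b'$ via (\ref{eq_a11coro}).
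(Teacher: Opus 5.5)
\textbf{The gap: how you handle the largest multiplier.} The proposal fails at its treatment of $\xi_{(1)}^2$, which is exactly where this theorem is delicate. Two of your claims about it fail near the point $E_0$ of (\ref{eq: def of E0}), and $\mathbf D_b'$ does not exclude that region, because (\ref{eq_spectralparameterprime}) constrains only $j\geq 2$.

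First, deleting $\xi_{(1)}^2$ from (\ref{eq_systemequationsm1m2}) does not move $m_{1n}$ by $\mathrm{O}(n^{-1})$. At $E_0+\mathrm{i}\eta_0$ one has $1+\xi_{(1)}^2 m_{1n}=\mathrm{i}\,\xi_{(1)}^2\operatorname{Im} m_{1n}$, with $\operatorname{Im} m_{1n}\asymp n^{-1/2}$ by (\ref{eq_zopointrate}). So the isolated summand $n^{-1}\xi_{(1)}^2/(1+\xi_{(1)}^2 m_{1n})$ has size $n^{-1/2}$. Without it the imaginary part is only of order $\eta_0$, so the change is of order $n^{-1/2}$.

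Second, Step 1 is not a valid inversion as written. You bound $F_n-F_c$ at $m_{1n,c}$ and then invoke $\partial_m F_c$. But going from $|F_n(m_{1n,c},z)|$ small to $|m_{1n}-m_{1n,c}|$ small needs stability of $F_n$. The $\xi_{(1)}^2$ summand adds to $\partial_m F_n$ the term $n^{-1}\xi_{(1)}^4(1+\xi_{(1)}^2 m)^{-2}$, times a prefactor that is positive and of order one near the edge. This term is of order one when $|1+\xi_{(1)}^2 m|\asymp n^{-1/2}$. When $1+\xi_{(1)}^2 m$ is essentially real, its sign is opposite to that of $\phi\mathsf{s}_3-1$. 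Concretely, near $E_0$ the equation $F_n=0$ reduces to leading order to a quadratic in $1+\xi_{(1)}^2 m$. Its branch points lie at $|E-E_0|\asymp n^{-1/2}$, and there $\partial_m F_n(m_{1n}(z),z)=\mathrm{o}(1)$ for $z=E+\mathrm{i}\eta_0$. So the check you planned, that the lower bound $1-\phi\mathsf{s}_3$ persists on $\mathbf D_b'$, fails exactly where the theorem is later used (Lemma \ref{lem_connection}).

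\textbf{The repair.} Both defects have the same fix, and it shows where the $(n\eta_0)^{-1}$ error comes from. Since $m_{1n}$ and $m_1$ are Stieltjes transforms of measures of mass $\asymp 1$ with bounded support, $|1+\xi_{(1)}^2 m|\geq \xi_{(1)}^2\operatorname{Im} m\gtrsim\eta$. Hence the isolated summand is $\mathrm{O}((n\eta)^{-1})$.
\begin{itemize}
\item Run Step 1 for the system with $\xi_{(1)}^2$ deleted. There every denominator near $m_{1n,c}$ is at least $\frac12 n^{-1/(d+1)-\epsilon_{\mathsf d}}$. Because $d>1$, the $m$-derivative is then within $\mathrm{o}(1)$ of $\partial_m F_c$.
\item Reinsert $\xi_{(1)}^2$ using that stability. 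This gives $|m_{1n}-m_{1n}^{(1)}|=\mathrm{O}((n\eta)^{-1})\leq (n\eta_0)^{-1}$, not $\mathrm{O}(n^{-1})$.
\item Start the continuity in $\eta$ at $\eta\asymp 1$, not at $n^{-1/(d+1)+\epsilon_{\mathsf d}}$, where no crude bound pins down the branch.
\item Drop the appeal to fluctuation averaging. Near the edge $|\mathcal G_{jj}|$ can reach $n^{1/(d+1)+\epsilon_{\mathsf d}}$, so the standard argument does not apply; the paper needs the resolvent-fraction technique of \cite{lee2016extremal} for Lemma \ref{lem_fa}. It is also unnecessary here. Once $\operatorname{Im} m_1^{(1)}\lesssim \eta+\Psi$ (the minor-system analogue of (\ref{eq_oneregimeedgecontrol}), plus the current error), the plain error already gives $\Psi\prec n^{-1/2}+(n\eta)^{-1}\lesssim (n\eta_0)^{-1}$.
\end{itemize}

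\textbf{Comparison with the paper.} With these changes your minor-based route works, and it is genuinely different from the paper's, which keeps the full matrix throughout.
\begin{itemize}
\item For the deterministic bound, the paper uses the one-shot contraction $|\mathsf{T}|\leq \mathsf{E}_1\mathsf{E}_2$. Here $\mathsf{E}_1\leq 1$ by the exact identity (\ref{eq_expansionusefullessorequaltoone}), and $\mathsf{E}_2<1$ follows from $\mathsf{W}(L_+)=\phi\mathsf{s}_3<1$. No branch selection is needed.
\item For the random bound, the paper combines the weak law of Proposition \ref{proposition_boundedweaklocallaw}, the contradiction argument $\operatorname{Im} m_1\prec (n\eta_0)^{-1}$ of Lemma \ref{lem: est for Im m_1}, and a descent in $\eta$.
\end{itemize}
Your route makes the role of $\xi_{(1)}^2$ explicit. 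In the paper that summand is handled only implicitly, since $\mathbf D_b'$ does not control $1+\xi_{(1)}^2 m$.
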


\quad Next we present the averaged local laws conditional on the data $X$ in Theorems \ref{thm_averagedlocallaw_unboundedmultiplier_conditionalonX} and \ref{thm_averagedlocallaw_boundedmultiplier_conditionalonX}. 
More specifically, we restrict our analysis to a probability event $\Omega_X$, 
depending only on $X$, which will be introduced in 
Section \ref{sec_characterization_OmegaX} after the necessary notation is 
established. This event satisfies $\mathbb{P}(\Omega_X)=1-\mathrm{o}(1)$.

\begin{theorem}[Averaged local laws for unbounded multiplier $\xi^2:$ conditional on $X$]\label{thm_averagedlocallaw_unboundedmultiplier_conditionalonX}
%    Suppose Assumption \ref{assumption_techincial} and $(i)$ of Assumption \ref{assum_D} hold. Under some high probability event $\Omega_X$ which is specified in Definition \ref{} below, for any fixed realization $\{\xi_i^2\} \in \Omega_D$, we have the followings hold true for all $z\in\mathbf{D}_u$ in \eqref{eq_spectraldomainone}.
Suppose the assumptions of Theorem \ref{thm_unboundedcaselocallaw} hold. 
Then, restricted on a probability event $\Omega_X$ defined in 
Definition \ref{def_OmegaX} below, which satisfies $\mathbb{P}(\Omega_X)=1-\mathrm{o}(1)$, we have
    \begin{itemize}
        \item [1.] If Case $(a)$ of $(i)$ of Assumption \ref{assum_D} holds, we have that 
        \begin{align*}
            m_{1}^{(1)}(z)=m_{1n}^{(1)}(z)+\mathrm{O}(n^{-1/2-2/\alpha}n^{\epsilon_1}),
        \end{align*}
        for any small constant $\epsilon_1>0$.
        \item [2.] If Case $(b)$ of $(i)$ of Assumption \ref{assum_D} holds, we have that 
        \begin{align*}
            m_{1}^{(1)}(z)=m_{1n}^{(1)}(z)+\mathrm{O}(n^{-1/2}n^{\epsilon_2}),
        \end{align*}
        for any small constant $\epsilon_2>0$.
    \end{itemize}
\end{theorem}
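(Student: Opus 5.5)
The plan is to reduce the conditional statement to the unconditional local law in Theorem \ref{thm_unboundedcaselocallaw}, using a Markov/Fubini argument with a high-probability event $\Omega_X$ chosen so that it captures what $X$ must satisfy for the local law to hold for typical $D$. Write $\mathcal{E}$ for the event, in the joint probability space of $(X,D)$, on which
\[
\sup_{z \in \mathbf{D}_u} \bigl| m_1^{(1)}(z)-m_{1n}^{(1)}(z) \bigr| \leq n^{\epsilon_1} n^{-1/2-2/\alpha}
\]
in Case (a), and with $n^{-1/2}$ in place of $n^{-1/2-2/\alpha}$ in Case (b). The first step is to remove the supremum over $z$ in Theorem \ref{thm_unboundedcaselocallaw}. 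I will pass to an $n^{-C}$-net of $\mathbf{D}_u$, use a union bound, and control the gaps with the Lipschitz bound $|\partial_z m| \leq \eta^{-2} \leq n^{4/3}$. This gives, for every fixed realization $\{\xi_i^2\}\in\Omega_D$, the bound $\mathbb{P}(\mathcal{E}^c \mid D) \leq n^{-D'}$ for any $D'>0$.

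The second step integrates over $D$ and applies Markov's inequality in the $X$-variable. By Fubini,
\[
\mathbb{E}_X\bigl[\mathbb{P}(\mathcal{E}^c \cap \{D\in\Omega_D\} \mid X)\bigr] \leq n^{-D'}.
\]
Define $\Omega_X := \{X : \mathbb{P}(\mathcal{E}^c\cap\{D\in \Omega_D\}\mid X)\leq n^{-D'/2}\}$. Markov's inequality gives $\mathbb{P}(\Omega_X^c)\leq n^{-D'/2}$. On $\Omega_X$, the local law then holds for all $D$ outside a set of conditional probability at most $n^{-D'/2}+\mathbb{P}(\Omega_D^c)=\mathrm{o}(1)$. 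This is the sense in which the conditional statement is meant. The random quantity $m_{1n}^{(1)}$ depends only on $D$, so no change is needed there.

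Read literally, however, the statement asks for a deterministic bound for fixed $X\in\Omega_X$ and each fixed realization $\{\xi_i^2\}\in\Omega_D$, and that is the main obstacle. A bound of that kind cannot come from Fubini alone. The remedy is to build $\Omega_X$ from explicit, $D$-independent properties of $X$, and then rerun the proof of Theorem \ref{thm_unboundedcaselocallaw} with $D$ deterministic. The properties are these:
\begin{itemize}
\item the norm bound $\|X\|\leq C$;
\item uniform concentration of quadratic forms $\mathbf{x}_i^* B \mathbf{x}_i - n^{-1}\operatorname{tr}B$ and off-diagonal forms $\mathbf{x}_i^* B \mathbf{x}_j$, for $B$ ranging over the resolvent minors $\Sigma^{1/2}G^{(\mathcal{T})}\Sigma^{1/2}$;
\item the fluctuation-averaging bounds.
\end{itemize}

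The difficulty is that the resolvents depend on $D$ as well. I will handle this by discretizing $D$ on the relevant compact range (all but $\mathrm{O}(1)$ multipliers are at most $\mathsf{T}$ on $\Omega_D$) and $z$ on a polynomial net. The large-deviation bounds hold with probability $1-n^{-D}$ for any $D$, which leaves room for a union bound over $n^{C}$ configurations in $z$ and the resolvent index sets. Once these estimates are in place, the self-consistent equation argument, meaning the stability of $F_n$ in \eqref{eq:F(m,z)} near $\vartheta_1$ from Lemma \ref{lem: basic bounds}, goes through verbatim. It gives the deterministic error $n^{\epsilon}$ times the same rates, which yields items 1 and 2.
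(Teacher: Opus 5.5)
Your Fubini--Markov reduction is correct, and it is a genuinely different route from the paper's. The problem is your remedy for the literal statement, which has a genuine gap (second paragraph).

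The paper fixes an explicit event (Definition \ref{def_OmegaX}: quadratic-form concentration, $\|XX^*\|\leq C_{X,0}$, and so on). It then reruns the whole unconditional argument with deterministic $n^{\epsilon}$ errors in place of $\prec$:
Proposition \ref{lem_ousidebulk_conditional} on $\widetilde{\mathbf{D}}_u$, the exclusion Lemma \ref{lem_upperbound_unboundedeigenvalue_conditional}, the a priori bound $\|G^{(1\mathcal{T})}\|\lesssim n^{-1/\alpha}\log n$ from $\vartheta_1-\vartheta_2\gtrsim n^{1/\alpha}\log^{-1}n$, and the stability step for $m_1^{(1)}$.

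You instead get the conditional law for free from Theorem \ref{thm_unboundedcaselocallaw}. The price is that your bound holds for all multipliers outside an $X$-dependent set of conditional probability $\mathrm{o}(1)$, not for every $\{\xi_i^2\}\in\Omega_D$. That form is all the downstream conditional arguments use: in \eqref{eq_conditional_squeeze_unbounded} one only needs the bad multiplier set to have conditional probability $\mathrm{o}(1)$ given $X$ on $\Omega_X$. Indeed, applying the same device directly to Proposition \ref{lem: eigenvalue rigidity} gives the version of Proposition \ref{lem_largesteigenvaluelocation_conditional} that Theorem \ref{thm_main_unbounded_conditional} needs, with no conditional local law at all. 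If the explicit properties of Definition \ref{def_OmegaX} are needed elsewhere, intersect the two events.

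The gap is in your remedy for the literal, uniform-in-$D$ reading.
\begin{itemize}
\item A polynomial-resolution net of the multiplier vector in $[0,\mathsf{T}]^n$ has $n^{cn}$ points. Under Assumption \ref{assum_model}, Lemma \ref{lem:large deviation} only gives failure probability $n^{-D}$ per configuration. So the union bound does not close; your count of $n^{C}$ configurations omits the $D$-direction.
\item This is not merely technical. The rerun requires the per-index estimate $Z_i=\mathrm{O}(n^{\epsilon}\xi_i^2/(\sqrt{n}\eta))$, and for typical $X$ it cannot hold uniformly over $\Omega_D$.
\end{itemize}
To see this, work at $\eta=\mathtt{C}\vartheta_1$. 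There the first-order Neumann term of $Z_i/\xi_i^2$ is $-z^{-2}\sum_{j\neq i}\xi_j^2 b_{ij}$, where $b_{ij}:=(\mathbf{x}_i^*\Sigma\mathbf{x}_j)^2-n^{-1}\mathbf{x}_j^*\Sigma^2\mathbf{x}_j$ has size $n^{-1}$ and a sign fixed by $X$. Keep the largest multipliers as Definition \ref{defn_probset}(a) requires. Then put order-one weights either on $\{j:b_{ij}>0\}$ or on $\{j:b_{ij}<0\}$.
\begin{itemize}
\item Switching between these two choices moves that term by order $|z|^{-2}$.
\item The higher-order terms move only by $\mathrm{O}(|z|^{-2}n^{-1/2+\epsilon}\log n+|z|^{-3})$.
\item Since $|z|^{-2}\gg n^{-1/2+\epsilon}|z|^{-1}$ when $\alpha>2$, the estimate fails for one of the two choices.
\end{itemize}

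The paper's proof has the same hole. Definition \ref{def_OmegaX}(1) is asserted simultaneously for all deterministic $A$, which already fails for $A=\mathbf{u}\mathbf{u}^*$ with $\mathbf{u}=\mathbf{x}_i/\|\mathbf{x}_i\|$. It is then applied to $A=\Sigma^{1/2}G^{(i)}\Sigma^{1/2}$, which depends on both $X$ and $D$. So drop the remedy, and state and prove the Fubini version.
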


\begin{theorem}[Averaged local laws for bounded multiplier $\xi^2$: conditional on $X$]\label{thm_averagedlocallaw_boundedmultiplier_conditionalonX}
%    Suppose Assumption \ref{assumption_techincial} and $(ii)$ of Assumption \ref{assum_D} hold. Under some high probability event $\Omega_X$ which is specified in Definition \ref{} below, for any fixed realization $\{\xi_i^2\} \in \Omega_D$, when $d>1$ and $\phi^{-1}>\mathsf{s}_3$, for $\eta_0=n^{-1/2-\epsilon_{\mathsf{d}}}$ defined in \eqref{eq: def of gamma}, we have that the followings hold true for all $z\in\mathbf{D}_b^{\prime}$ in \eqref{eq_spectralparameterprime}
Suppose the assumptions of Theorem \ref{thm_boundedcaselocallaw} hold. 
Then, restricted on a probability event $\Omega_X$ defined in 
Definition \ref{def_OmegaX} below, which satisfies $\mathbb{P}(\Omega_X)=1-\mathrm{o}(1)$, we have
    \begin{align*}
        \big|m_{1n}(z)-m_{1n,c}(z)\big|\leq n^{-1/2+\epsilon_{\mathsf{d}}},\,\big|m_{1}(z)-m_{1n}(z)\big|=\mathrm{O}((n\eta_0)^{-1}n^{\epsilon_3}),
    \end{align*}
    and
    \begin{align*}
        \big|m_{n}(z)-m_{n,c}(z)\big|\leq n^{-1/2+\epsilon_{\mathsf{d}}},\,\big|m_{Q}(z)-m_{n}(z)\big|=\mathrm{O}((n\eta_0)^{-1}n^{\epsilon_4}),
    \end{align*}
    for any small constants $\epsilon_3,\epsilon_4>0$.
\end{theorem}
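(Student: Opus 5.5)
We prove Theorem \ref{thm_averagedlocallaw_boundedmultiplier_conditionalonX} by rerunning the proof of Theorem \ref{thm_boundedcaselocallaw} with the randomness split differently. The first pair of bounds, $|m_{1n}-m_{1n,c}|\leq n^{-1/2+\epsilon_{\mathsf{d}}}$ and $|m_n-m_{n,c}|\leq n^{-1/2+\epsilon_{\mathsf{d}}}$, involves only the multipliers $\{\xi_i^2\}$ and the deterministic $\Sigma$, not $X$. Since $D$ is independent of $X$, the law of $D$ conditional on $X$ equals its unconditional law. Hence these bounds follow directly from Theorem \ref{thm_boundedcaselocallaw} together with the stability and closeness estimates of Lemma \ref{localestimate2} (in particular (\ref{eq_closenessequation}) and (\ref{eq_a11coro})) on $\Omega_D$. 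Nothing new is needed beyond noting that $\mathbb{P}(\Omega_D\mid X)=\mathbb{P}(\Omega_D)=1-\mathrm{o}(1)$.

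The substance is the comparison $m_1-m_{1n}$ and $m_Q-m_n$ at the precision $(n\eta_0)^{-1}n^{\epsilon}$. The unconditional proof gives this bound with overwhelming probability in the joint law of $(X,D)$, that is, for any $D_0>0$, $\mathbb{P}(|m_1(z)-m_{1n}(z)|>n^{\epsilon}(n\eta_0)^{-1})\leq n^{-D_0}$ uniformly on $\mathbf{D}_b'$. I would first make this uniform in $z$. I take an $n^{-10}$-net of $\mathbf{D}_b'$, apply a union bound, and use the Lipschitz continuity of $m_1$ and $m_{1n}$ in $z$ (they are bounded by $\eta^{-2}\leq n^{2}$) to pass from the net to all of $\mathbf{D}_b'$. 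The resulting bad event $\mathcal{B}$ has $\mathbb{P}(\mathcal{B})\leq n^{-D_0}$.

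Next comes a Markov-type transfer to the conditional law. Set $h(X):=\mathbb{P}(\mathcal{B}\mid X)$. Then $\mathbb{E}h=\mathbb{P}(\mathcal{B})\leq n^{-D_0}$. Define $\Omega_X:=\{h(X)\leq n^{-D_0/2}\}$; Markov's inequality gives $\mathbb{P}(\Omega_X^c)\leq n^{-D_0/2}=\mathrm{o}(1)$. On $\Omega_X$ the local law then holds for $\mathbb{P}(\cdot\mid X)$-almost every realization of $D$ outside an event of conditional probability $n^{-D_0/2}$. Because $\epsilon$ is arbitrary and we need a countable family of exponents, I would pick sequences $\epsilon_k\downarrow0$ and $D_k\uparrow\infty$ and intersect the corresponding events. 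A diagonal choice keeps $\mathbb{P}(\Omega_X)=1-\mathrm{o}(1)$. This intersected event is the one recorded in Definition \ref{def_OmegaX}. It should also include the $X$-measurable regularity events used later, such as the operator norm bound $\|XX^*\|\leq C$ and the Tracy--Widom concentration of $\widehat{\lambda}_1$.

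The main obstacle is the gap between this "with probability $1-\mathrm{o}(1)$ in $D$" statement and the deterministic-looking $\mathrm{O}(\cdot)$ claimed in the theorem. I read the statement as holding on $\Omega_X\cap\Omega_D$, with the $D$-exceptional set absorbed into a redefinition of $\Omega_D$ conditional on $X$; the downstream proofs only use the bounds on an event of conditional probability $1-\mathrm{o}(1)$. If one instead needs the bound for every $D\in\Omega_D$, I would redo the self-consistent equation argument of Theorem \ref{thm_boundedcaselocallaw} with $D$ fixed. The large-deviation bounds for the quadratic forms $\mathbf{x}_i^*\Sigma^{1/2}G^{(i)}\Sigma^{1/2}\mathbf{x}_i$ are then $X$-randomness statements, proven for each fixed $D$ and uniform over $\Omega_D$. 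The argument also relies on the lower bound $|1+\xi_j^2m_{1n}|\gtrsim n^{-1/(d+1)-\epsilon_{\mathsf{d}}}$ defining $\mathbf{D}_b'$ to keep the stability constant under control. Integrating out $D$ afterwards gives the same Markov transfer. Finally, $m_Q-m_n$ follows from $m_1-m_{1n}$ via the identities linking $m_Q$ to $m_2$ and the third equation of (\ref{eq_systemequationsm1m2}).
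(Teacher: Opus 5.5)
Your handling of $|m_{1n}-m_{1n,c}|$ and $|m_n-m_{n,c}|$ is the same as the paper's. These involve only $D$ and $\Sigma$, so they hold deterministically on $\Omega_D$.

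For $m_1-m_{1n}$ and $m_Q-m_n$ you take a genuinely different route. The paper fixes $X$ in the concrete event of Definition \ref{def_OmegaX} and uses its items, notably the large-deviation bound (1), as deterministic inputs to get $Z_i=\mathrm{O}(n^{\varepsilon_1}\Psi)$. It then re-runs the whole unconditional argument: the weak law of Lemma \ref{lem_weakaveragedlocallaw_conditional}, the $\operatorname{Im} m_1$ bound, the contradiction argument, and the $\eta$-induction with a lattice. In this way the bound is asserted for every $(X,D)\in\Omega_X\times\Omega_D$, with $\Omega_D\in\sigma(D)$ unchanged.

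You instead transfer Theorem \ref{thm_boundedcaselocallaw} by Fubini and Markov. This is much shorter. It needs only that the unconditional $\prec$-bound be uniform over realizations in $\Omega_D$, which holds because the constants depend only on those of Definition \ref{defn_probset}. It also applies verbatim to the other fixed-$D$, overwhelming-probability-in-$X$ inputs used later. The price is a weaker conclusion: you get the bound for $X\in\Omega_X$ and $D\in\Omega_D\setminus E(X)$ with $\mathbb{P}(E(X)\mid X)\le n^{-D_0/2}$, not for every $D\in\Omega_D$. Downstream this suffices, provided the identity $\mathbb{P}(\Omega_D^c\mid X)=\mathbb{P}(\Omega_D^c)$ in sandwich arguments such as (\ref{eq_conditional_squeeze_unbounded}) is replaced on $\Omega_X$ by $\mathbb{P}(\Omega_D^c\cup E(X)\mid X)\le \mathbb{P}(\Omega_D^c)+n^{-D_0/2}$.

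Two corrections are needed.
\begin{enumerate}
\item Your Markov-type event is not ``the one recorded in Definition \ref{def_OmegaX}''. That definition is a list of concrete properties of $X$, so you should intersect the two events rather than identify them.
\item Your fallback (fix $D$, prove the $X$-large deviations, then integrate $D$ out) does not give the every-$D$ version. It just reproduces the unconditional theorem and therefore the same Markov conclusion.
\end{enumerate}
Simultaneity over all $D\in\Omega_D$ on a single $X$-event is exactly what the paper's route claims. Its delicate point is that item (1) of Definition \ref{def_OmegaX} is applied to $A=\Sigma^{1/2}G^{(i)}\Sigma^{1/2}$, which depends on $D$ and on the other columns of $X$. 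Your weaker formulation sidesteps that issue rather than resolving it.
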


Theorem \ref{thm_averagedlocallaw_unboundedmultiplier_conditionalonX} and 
Theorem \ref{thm_averagedlocallaw_boundedmultiplier_conditionalonX} yield results 
analogous to those in Theorem \ref{thm_unboundedcaselocallaw} and 
Theorem \ref{thm_boundedcaselocallaw}, except that the control errors hold 
deterministically rather than in the stochastic domination sense. 
Indeed, when conditioning on a realization from the high-probability event 
$\Omega_X$, the sources of randomness from $X$ in $m_{1}(z)$, $m_{2}(z)$, 
and $m_{Q}(z)$ become fully deterministic. The construction of the event 
$\Omega_X$ will be given in Definition \ref{def_OmegaX}.
 %  The proofs of these theorems will be put into Section \ref{appendxi_locallawproof}.

\subsection{Characterization of $\Omega_D:$ good configurations for multipliers}\label{appendix_goodconfiguration}

In this subsection, independent of Sections \ref{sec_asymptoticlocalaveragedlocal} and \ref{secaveragelocalllca}, we define some probability events which are some "good configurations" for the first few largest eigenvalues of $D^2.$ Our proofs will be restricted on these probability events. In fact, as will be seen in Lemma \ref{lem_probabilitycontrol}, under Assumption \ref{assum_D},  these probability events hold with high probability when $n$ is sufficiently large. 

Recall Assumption \ref{assum_D} and $D^2=\operatorname{diag}\left\{ \xi_1^2, \cdots, \xi_n^2 \right\}.$
Moreover, we define the order statistics of $\{\xi^2_i\}$ as $\xi_{(1)}^2 \geq \xi_{(2)}^2 \geq \cdots \geq \xi_{(n)}^2. $ In what follows, we define these probability events according to the various assumptions of $\{\xi_i^2\}$ in (\ref{ass3.1})--(\ref{ass3.4}).

In what follows, we characterize the multiplier event $\Omega_D$. 
To distinguish among the different multiplier assumptions in Assumption \ref{assum_D}, 
we decompose $\Omega_D$ into three events, denoted by $\Omega_{\mathsf{p}}$, 
$\Omega_{\mathsf{e}}$, and $\Omega_{\mathsf{b}}$, corresponding respectively to the cases 
of polynomially decaying tails in (\ref{ass3.1}), exponentially decaying tails in 
(\ref{ass3.2}), and bounded support in (\ref{ass3.4}). For several carefully chosen 
constants introduced below, we will likewise use the subscripts $\mathsf{p}$, 
$\mathsf{e}$, and $\mathsf{b}$ to indicate their dependence on the corresponding 
regime in Assumption \ref{assum_D}.

%{\color{green} explain! $\mathsf{p}, \mathsf{e}, \mathsf{b}$}

\begin{definition}\label{defn_probset}
Denote {$\Omega_D \equiv \Omega_{n,D}$, $\Omega_D=\Omega_{\mathsf{p}}\cup\Omega_{\mathsf{e}}\cup\Omega_{\mathsf{b}}$} be the event on $\{\xi_i^2\}$ so that the following conditions hold:
\begin{itemize}
\item[] {\bf (a). Unbounded support with polynomial decay.}  When $\{\xi^2_i\}$ has unbounded support with polynomial decay tail as in (\ref{ass3.1}), we assume that for all $\epsilon_{\mathsf{p},1}\in(0,1/\alpha)$, $b_{\mathsf{p}}\in(1/2,1]$  and some constants $C_{\mathsf{p},0}>0,C_{\mathsf{p},1}>1,C_{\mathsf{p},2}>0$, $C_{\mathsf{p},3}>C_{\mathsf{p},1}$ and $c_{\mathsf{p},0}>1,$ the following holds on  $\Omega_{\mathsf{p}}$
\begin{equation}\label{def1}
\begin{aligned}
&  \xi^2_{(1)}-\xi^2_{(2)}\geq C_{\mathsf{p},0}^{-1}n^{1/\alpha}\log^{-c_{\mathsf{p},0}}n, \\
& C_{\mathsf{p},1}^{-1}n^{1/\alpha}\log^{-1}n\leq\xi^2_{(1)}\leq C_{\mathsf{p},1}n^{1/\alpha}\log n,  \\
&  \xi^2_{(i)}-\xi^2_{(i+1)}\geq C_{\mathsf{p},2}^{-1}n^{\epsilon_{\mathsf{p},1}}\log^{-1}n, \ 1 \leq i \leq \lfloor \log n\rfloor,\\
&  \xi^2_{(1)}-\xi^2_{(\lceil n^{b_{\mathsf{p}}} \rceil)}\geq C_{\mathsf{p},3}^{-1}n^{1/\alpha}\log^{-1} n, \\
&  \frac{1}{n}\sum_{i=1}^n\xi^2_i< \infty. 
%C\log^{1/\alpha} n.
\end{aligned}
\end{equation}
\item[] {\bf (b). Unbounded support with exponential decay.} When $\{\xi^2_i\}$ has unbounded support with  exponential decay tail as in (\ref{ass3.2}), we assume that for some constant $C_{\mathsf{e},0}>0$, $C_{\mathsf{e},1}>\max\{(\max(t,t^{-1}))^{1/\beta},1\}$, $C_{\mathsf{e},2}>C_{\mathsf{e},1}$ , $c_{\mathsf{e},0}>0$ and $0<c_{\mathsf{e},1}<\min\{t(C_{\mathsf{e},1}^{-1}-C_{\mathsf{e},2}^{-1})^{\beta},1\},$ 
%and sequence of constants $\mathsf c_n$ that $\mathsf c_n=\ro (1),$ 
the following holds on $\Omega_{\mathsf{e}}$
\begin{equation}\label{def3}
\begin{aligned}
&  \xi^2_{(1)}-\xi^2_{(2)}\geq C_{\mathsf{e},0}^{-1}\log^{1/\beta-1-c_{\mathsf{e},0}} n,\\
&  C_{\mathsf{e},1}^{-1}\log^{1/\beta} n\leq\xi^2_{(1)}\leq C_{\mathsf{e},1}\log^{1/\beta} n, \\
& \xi^2_{(1)}-\xi^2_{(\lceil n^{1-c_{\mathsf{e},1}}\rceil)}\geq C_{\mathsf{e},2}^{-1}\log^{1/\beta}n,\\
&  \frac{1}{n}\sum_{i=1}^n\xi^2_i<\infty. 
%\mathsf c_{n} \log n.
\end{aligned}
\end{equation}
\item[] {\bf (c). Bounded support with $d>1.$}  When $\{\xi^2_i\}$ has bounded support satisfying (\ref{ass3.4}) with $d>1,$ we assume that for some sufficiently small constants $0<\epsilon_{\mathsf{b}},\epsilon_{\mathsf{d}}<(1/2-1/(d+1))/\mathtt{C}_d$ ($\mathtt{C}_d>0$ is a large constant), $0<b \leq 1$, $0<c_{\mathsf{b},0}, c_{\mathsf{b},1}<1$ and $0<C_{\mathsf{b},0}<l,$ the following holds on $\Omega_{\mathsf{b}}$
\begin{equation}\label{def4}
\begin{aligned}
& n^{-1/(d+1)-\epsilon_{\mathsf{d}}}<l-\xi^2_{(1)}<n^{-1/(d+1)}\log n, \\
& \xi^2_{(1)}-\xi^2_{(2)}> n^{-1/(d+1)-\epsilon_{\mathsf{d}}}, \\
& l-\xi^2_{(\lfloor bn \rfloor)}>C_{\mathsf{b},0}, \\
& \frac{1}{n}\sum_{i=1}^n\xi^2_{i}\leq l, \\
&  \left|\frac{1}{n}\sum_{i=1}^n\frac{\xi^2_i}{1+\xi^2_{i}m_{1 n,c}(z)}-\int\frac{t}{1+tm_{1 n,c}(z)}\mathrm{d} F(t)\right|\le\frac{Cn^{\epsilon_{\mathsf{b}}}}{\sqrt{n}}, \ \text{for} \  z \in \mathbf{D}_b,\\
&\left|\sum_{i=1}^n(\xi_i^2-\mathbb{E}\xi^2)\right|\leq C\sqrt{(n\log^{c_{\mathsf{b},0}}n)\operatorname{Var}\xi^2 }, \\
&{\frac{1}{n}\sum_{i=1}^n(\xi_i^2-\mathbb{E}\xi^2)^2}\leq C (\log^{c_{\mathrm{b},1}}n)\operatorname{Var}\xi^2.
\end{aligned}
\end{equation}
where we recall that  $F(t)$ is the distribution of $\xi^2$ and $C>0$ is some generic constant. 
\end{itemize}
\end{definition}

\begin{remark}
Three remarks are in order. First, on the event $\Omega_D,$ for the unbounded support case, according to (a) and (b), we see that the first few largest $\xi_i^2$ are divergent and well separated from each other. Second, for the bounded support case, we only provide the results for $d>1$ in (c). Nevertheless, it is easy to see that similar results can be obtained for $-1<d \leq 1.$ Third, $\Omega_D$ lies in the sigma-algebra generated by the randomness in $D$, 
that is, $\Omega_D \in \mathcal{F}_D = \sigma(D)$. Last, since only one regime in Assumption \ref{assum_D} is active at a time, $\Omega_D$ should be understood as the corresponding good event for that regime; we write it as $\Omega_{\mathsf{p}}\cup\Omega_{\mathsf{e}}\cup\Omega_{\mathsf{b}}$ for notational convenience.
\end{remark}

\quad The following lemma shows that under Assumption \ref{assum_D}, the event $\Omega_D$ occurs with high probability in all the four settings. The proof will be given in Section \ref{sec_appendxi_goodevent}. 

\begin{lemma}\label{lem_probabilitycontrol}
Let $\Omega_D$ be the events defined in Definition \ref{defn_probset}, suppose Assumption \ref{assum_D} holds, we then have that when $n$ is sufficiently large  
\begin{equation*}
\mathbb{P}(\Omega_D)=1-\rO(\log^{-D}n),
\end{equation*}
for some constant $D>0$.
\end{lemma}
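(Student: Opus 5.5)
We prove Lemma \ref{lem_probabilitycontrol} by showing, separately in each regime, that every defining condition in Definition \ref{defn_probset} fails with probability $\rO(\log^{-D}n)$, and then taking a union bound over the finitely many conditions (plus at most $\lfloor\log n\rfloor$ gap conditions, which contribute at most a logarithmic factor absorbed by enlarging the exponent). The common tool is the exact distribution of order statistics. Write $\xi_i^2=F^{-1}(U_i)$ with $U_i$ i.i.d.\ uniform, or use the Rényi representation $1-F(\xi^2_{(k)})\overset{d}{=}\Gamma_k/\Gamma_{n+1}$ with $\Gamma_k=E_1+\cdots+E_k$ a sum of i.i.d.\ exponentials. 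Then the tail assumptions (\ref{ass3.1}), (\ref{ass3.2}) and (\ref{ass3.4}) translate statements about $\xi^2_{(k)}$ into statements about $\Gamma_k$. Lemma \ref{lem_summaryevt} (extreme value limits) guides the scalings.

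\emph{Polynomial case.} Here $\xi^2_{(k)}\approx L^{1/\alpha}(n/\Gamma_k)^{1/\alpha}$.
\begin{itemize}
\item The bound on $\xi^2_{(1)}$ follows from $\mathbb{P}(\xi^2_{(1)}>Cn^{1/\alpha}\log n)\le nL(Cn^{1/\alpha}\log n)^{-\alpha}=\rO(\log^{-\alpha}n)$ and $\mathbb{P}(\xi^2_{(1)}<n^{1/\alpha}/(C\log n))=(1-L C^\alpha\log^\alpha n/n)^n\le e^{-c\log^\alpha n}$.
\item The top gap is $\xi^2_{(1)}-\xi^2_{(2)}\asymp n^{1/\alpha}(\Gamma_1^{-1/\alpha}-\Gamma_2^{-1/\alpha})$. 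It is small only if $E_2\lesssim \log^{-c_{\mathsf p,0}}n$ or $\Gamma_1$ is atypically large, each of probability $\rO(\log^{-c}n)$.
\item The gaps $\xi^2_{(i)}-\xi^2_{(i+1)}$ for $i\le\log n$ are of order $n^{1/\alpha}(\log n)^{-1-1/\alpha}E_{i+1}$ on the event $\Gamma_i\le 2\log n$. Since $\epsilon_{\mathsf p,1}<1/\alpha$, requiring them to exceed $n^{\epsilon_{\mathsf p,1}}/\log n$ only asks $E_{i+1}\ge n^{-c}$; a union bound over $i$ gives $\rO(n^{-c}\log n)$.
\item For $\xi^2_{(\lceil n^{b_{\mathsf p}}\rceil)}$, Bernstein's inequality for the binomial count $\#\{i:\xi_i^2>n^{(1-b_{\mathsf p})/\alpha}\log n\}$ shows it is $\rO(n^{(1-b_{\mathsf p})/\alpha}\log n)=\ro(n^{1/\alpha}/\log n)$ with overwhelming probability.
\item Since $\alpha>2$ gives $\mathbb{E}\xi^2<\infty$, the sample mean is bounded by the law of large numbers. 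To get a $\log^{-D}$ rate, use Chebyshev's inequality with the finite variance.
\end{itemize}

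\emph{Exponential case.} This is identical in spirit, with $\xi^2_{(k)}=\mathsf{g}^{-1}(\log n-\log\Gamma_k+\log L)$. The scale is $\xi^2_{(1)}\asymp\log^{1/\beta}n$, and gaps are of order $(\mathsf{g}'(b_n))^{-1}E_2\asymp\log^{1/\beta-1}n\cdot E_2$, so the gap condition fails with probability $\rO(\log^{-c_{\mathsf e,0}}n)$. For the $\lceil n^{1-c_{\mathsf e,1}}\rceil$-th statistic, the same binomial counting gives $\xi^2_{(\lceil n^{1-c_{\mathsf e,1}}\rceil)}\approx (c_{\mathsf e,1}\log n/t)^{1/\beta}$. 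The constraint on $c_{\mathsf e,1}$ is exactly what makes this smaller than $(C_{\mathsf e,1}^{-1}-C_{\mathsf e,2}^{-1})\log^{1/\beta}n$.

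\emph{Bounded case.} By (\ref{eq_defnbfrak}), $\mathbb{P}(l-\xi^2_{(1)}\le x)=1-(1-\mathfrak b x^{d+1}(1+\ro(1)))^n$.
\begin{itemize}
\item At $x=n^{-1/(d+1)-\epsilon_{\mathsf d}}$ this is $\rO(n^{-(d+1)\epsilon_{\mathsf d}})$. At $x=n^{-1/(d+1)}\log n$ the complement is $e^{-c\log^{d+1}n}$.
\item The gap condition follows from the Rényi representation as before.
\item $l-\xi^2_{(\lfloor bn\rfloor)}>C_{\mathsf b,0}$ is a binomial large-deviation statement for $b$ small, using non-degeneracy of $F$.
\item The two moment conditions follow from Chebyshev's inequality applied to the centered sums, using boundedness.
\item The mean condition $\frac1n\sum\xi_i^2\le l$ is trivial.
\end{itemize}

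The main obstacle is the uniform-in-$z$ concentration
\[
\sup_{z\in\mathbf D_b}\Big|\frac1n\sum_i h_z(\xi_i^2)-\mathbb{E}h_z(\xi^2)\Big|\le Cn^{\epsilon_{\mathsf b}-1/2},\qquad h_z(s)=\frac{s}{1+s\,m_{1n,c}(z)} .
\]
I would first use Assumption \ref{assum_additional_techinical} and the edge behaviour of $m_{1n,c}$ (Lemma \ref{localestimate2}, unconditional version) to show $|1+s\,m_{1n,c}(z)|$ is bounded below on the support. This holds up to the singular point $s=l$ in case $d>1$, where $m_{1n,c}(L_+)=-l^{-1}$. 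There I split off the indices with $l-\xi_i^2\le n^{-1/(d+1)+\epsilon}$: there are only $\rO(n^{\epsilon})$ of them w.h.p., and each contributes at most $\rO(\eta^{-1}/n)$, which is small since $\eta\ge n^{-1/2-\epsilon_{\mathsf d}}$. The remaining bounded part obeys Hoeffding's inequality pointwise, with failure probability $e^{-cn^{2\epsilon_{\mathsf b}}}$. Finally, a net of polynomially many points in $\mathbf D_b$ together with the Lipschitz bound $|\partial_z h_z|\le n^{C}$ gives uniformity.
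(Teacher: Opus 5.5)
Your architecture (bound the failure probability of each defining condition of $\Omega_D$, then take a union bound) is the paper's. Replacing the paper's explicit joint densities of order statistics and binomial tail sums by the R\'enyi representation is a legitimate and cleaner way to handle all the order-statistic conditions in cases (a)--(c). The gap is in the step you yourself flag: the uniform concentration of $\frac1n\sum_i h_z(\xi_i^2)$ over $\mathbf D_b$. Your premise that $|1+s\,m_{1n,c}(z)|$ is bounded below on the support except near $s=l$ is false, because $\mathbf D_b$ also contains points on the bulk side of the edge. Take $z=L_+-\kappa+\mathrm{i}\eta$ with $\kappa=n^{-1/(d+1)+2\epsilon}$ (just above your cutoff) and $\eta=n^{-1/2-\epsilon_{\mathsf d}}$. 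The unconditional version of \eqref{eq_expansionlinear} then gives $\operatorname{Re}m_{1n,c}(z)=-l^{-1}-\frac{\mathsf s_4}{1-\phi\mathsf s_3}\kappa+\ro(\kappa)<-l^{-1}$ and $\operatorname{Im}m_{1n,c}(z)\le n^{-c}$. Hence $\operatorname{Re}(1+s\,m_{1n,c}(z))$ vanishes at an interior point $s_*=l-\Theta(\kappa)$, which lies in the region you keep. There $h_z(s_*)=-\mathrm{i}/\operatorname{Im}m_{1n,c}(z)$ has modulus at least $n^{c}$. Since $s_*$ moves with $z$, no fixed split near $s=l$ removes this peak, and Hoeffding with an $\rO(1)$ range is unavailable.

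What is missing is a variance bound that survives this moving singularity, and this is exactly the paper's key input. From the identity $\mathsf W(z)=1-\eta|m_{1n,c}(z)|^2/\operatorname{Im}m_{1n,c}(z)<1$ (see around \eqref{eq_defnmathsfW}) and Assumption~\ref{assum_additional_techinical}, the paper gets $\int s^2|1+s\,m_{1n,c}(z)|^{-2}\,\mathrm{d}F(s)\le C_0$ uniformly in $z$. You also have the crude bound $|h_z(s)|\le 1/\operatorname{Im}m_{1n,c}(z)\lesssim\eta^{-1}\le n^{1/2+\epsilon_{\mathsf d}}$, since $m_{1n,c}$ is the Stieltjes transform of a positive measure of mass $\asymp1$ with bounded support. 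Feed both into Bernstein's inequality at level $n^{1/2+\epsilon_{\mathsf b}}$. This gives failure probability $\exp(-c\min\{n^{2\epsilon_{\mathsf b}},n^{\epsilon_{\mathsf b}-\epsilon_{\mathsf d}}\})$, which survives your polynomial net, and no splitting near $s=l$ is needed.

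The paper itself stops at a pointwise Markov/Chebyshev bound, with failure probability $n^{-2\epsilon_{\mathsf b}}$, which cannot be union-bounded over a net. So your net step is the right complement once the pointwise bound is repaired.

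You must also fix $\epsilon_{\mathsf b}>\epsilon_{\mathsf d}$; your near-$l$ estimate $n^{\epsilon}/(n\eta)$ needs it too, and it is genuinely necessary. If $\epsilon_{\mathsf d}>\epsilon_{\mathsf b}$, choose $z$ with $\operatorname{Re}m_{1n,c}(z)=-1/\xi^2_{(1)}$ and $\eta=n^{-1/2-\epsilon_{\mathsf d}}$. Then the single summand $\frac1n h_z(\xi^2_{(1)})$ is of order $(n\eta)^{-1}=n^{-1/2+\epsilon_{\mathsf d}}$. All summands have imaginary parts of the same sign, so nothing cancels it, and the condition fails.
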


\begin{remark}\label{rmk_probability}
Essentially, for each multiplier regime in Assumption \ref{assum_D}, we have 
$\mathbb{P}(\Omega_{*})=1-\mathrm{o}(1)$, where $*\in\{\mathsf{p},\mathsf{e},\mathsf{b}\}$ 
is specified in Definition \ref{defn_probset}. Precise probabilistic error bounds 
for each case are provided in the proof of Lemma \ref{lem_probabilitycontrol} in Section \ref{sec_appendxi_goodevent}. Although the 
probability of each event $\Omega_{*}$ can be controlled separately, for simplicity 
of presentation and to streamline the proofs, throughout the paper we work on the 
combined event 
\[
\Omega_D=\Omega_{\mathsf{p}}\cup\Omega_{\mathsf{e}}\cup\Omega_{\mathsf{b}},
\]
which is required to hold with probability tending to one. Consequently, in 
Lemma \ref{lem_probabilitycontrol}, it suffices to choose a constant $D>0$ such 
that the events $\Omega_{\mathsf{p}}$, $\Omega_{\mathsf{e}}$, and $\Omega_{\mathsf{b}}$ 
hold uniformly.
\end{remark}

\subsection{Some auxiliary lemmas}\label{sec_summaryofextremevalue}
In this subsection, we first provide some technical lemmas which will be used in our proof. The following resolvent identities play an important role in our proof. Recall the resolvents defined in (\ref{eq_resolvents}) and the minors defined in (\ref{eq_defnminor}). 
\begin{lemma}[Firs resolvent identities]\label{lem: first resolvent}
For any $z_1,z_2\in\mathbb{C}_{+}$, we have
\begin{align*}
    G(z_1)-G(z_2)=(z_1-z_2)G(z_1)G(z_2),\quad \mathcal{G}(z_1)-\mathcal{G}(z_2)=(z_1-z_2)\mathcal{G}(z_1)\mathcal{G}(z_2).
\end{align*}
\end{lemma}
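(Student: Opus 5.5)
The identities are purely algebraic, so I will verify them directly from the definitions of the resolvents in (\ref{eq_resolvents}) and make no use of the randomness of $X$ or $D$. Fix $z_1,z_2\in\mathbb{C}_+$. Since $Q=YY^*$ is Hermitian (indeed real symmetric and positive semi-definite), its spectrum lies in $[0,\infty)\subset\mathbb{R}$. Hence $Q-z_kI$ is invertible for $k=1,2$, and $G(z_1)$, $G(z_2)$ are well defined and commute with each other, being functions of the same matrix $Q$. The same holds for $\mathcal{Q}=DX^*\Sigma XD$ and $\mathcal{G}$.

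The key step is to multiply and subtract. I would write
\begin{equation*}
G(z_1)-G(z_2)=G(z_1)\big[(Q-z_2I)-(Q-z_1I)\big]G(z_2),
\end{equation*}
which holds because $G(z_1)(Q-z_2I)G(z_2)=G(z_1)$ and $G(z_1)(Q-z_1I)G(z_2)=G(z_2)$. The bracket equals $(z_1-z_2)I$, so
\begin{equation*}
G(z_1)-G(z_2)=(z_1-z_2)G(z_1)G(z_2).
\end{equation*}
The identical computation with $\mathcal{Q}$ in place of $Q$ gives the statement for $\mathcal{G}$.

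There is no real obstacle here; the only point that needs attention is the invertibility of $Q-zI$ for non-real $z$. That follows from the self-adjointness of $Q$ and $\mathcal{Q}$, which gives $\|G(z)\|\le(\operatorname{Im}z)^{-1}$. The same argument applies verbatim to the minors $G^{(\mathcal{T})}$ and $\mathcal{G}^{(\mathcal{T})}$ in (\ref{eq_defnminorG}), which will be convenient later.
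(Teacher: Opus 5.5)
Your proof is correct: inserting $(Q-z_2I)-(Q-z_1I)=(z_1-z_2)I$ between $G(z_1)$ and $G(z_2)$ is exactly the standard algebraic argument, and the paper states this lemma without proof as a standard fact, so there is nothing to add. Your argument in fact works for any $z_1,z_2$ off the spectrum, not only in $\mathbb{C}_+$, which is how the paper later uses the identity at real points such as $\vartheta_1^-$ lying above the spectrum of $Q^{(1)}$.
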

\begin{lemma}[Second resolvent identities]\label{lem: Resolvent}
Let $\{\mathbf{y}_i\} \subset \mathbb{R}^p$ be the columns of $Y$ as in (\ref{eq_datamatrix}), then we have
\begin{eqnarray*}
			\mathcal G_{ii}(z) & = & -\frac{1}{z+z\mathbf{y}_{i}^{*}{ G}^{(i)}(z)\mathbf{y}_{i}}; \ \
			\mathcal G_{ij}(z)  =  z \mathcal{G}_{ii}(z)\mathcal{G}_{jj}^{(i)}(z)\mathbf{y}_{i}^{*}{G}^{(ij)}(z)\mathbf{y}_{j}\quad i\ne j,\\
			\mathcal G_{ij}(z) & = & \mathcal G_{ij}^{(k)}(z)+\frac{\mathcal G_{ik}(z)\mathcal G_{kj}(z)}{\mathcal G_{kk}(z)}\quad i,j\ne k.
		\end{eqnarray*}
%The results also hold for $G^{\mathcal{T}}$.
\end{lemma}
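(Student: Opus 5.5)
The plan is to obtain all three identities from Schur complement (block inversion) formulas applied to $A:=\mathcal{Q}-z$, where $\mathcal{Q}=Y^*Y$ has entries $\mathcal{Q}_{ij}=\mathbf{y}_i^*\mathbf{y}_j$. Two further tools are needed: the push-through identity $Y^{(\mathcal T)}\mathcal{G}^{(\mathcal T)}(z)=G^{(\mathcal T)}(z)Y^{(\mathcal T)}$, and the Sherman--Morrison formula. Since $z\in\mathbb{C}_+$ and $Q,\mathcal Q$ are Hermitian, every resolvent and every diagonal entry of a resolvent is well defined and nonzero (for instance $\operatorname{Im}\mathcal{G}_{ii}>0$), so all divisions below are legitimate. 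The minors keep their original indexing, as in (\ref{eq_defnminor}).

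\textbf{First identity.} Write $A$ in block form with respect to $\{i\}$ and its complement. The Schur complement formula gives
\[
\mathcal{G}_{ii}=\bigl(\mathbf{y}_i^*\mathbf{y}_i-z-\mathbf{y}_i^*Y^{(i)}\mathcal{G}^{(i)}Y^{(i)*}\mathbf{y}_i\bigr)^{-1}.
\]
By push-through, $Y^{(i)}\mathcal{G}^{(i)}Y^{(i)*}=G^{(i)}Q^{(i)}$. Writing $Q^{(i)}=(Q^{(i)}-z)+z$ gives $G^{(i)}Q^{(i)}=I+zG^{(i)}$. Substituting, the quadratic terms $\mathbf{y}_i^*\mathbf{y}_i$ cancel and the denominator becomes $-z-z\mathbf{y}_i^*G^{(i)}\mathbf{y}_i$. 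This is the first identity.

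\textbf{Second identity.} From the same block inversion, the off-diagonal part of row $i$ of $A^{-1}$ is $-\mathcal{G}_{ii}\,\mathbf{y}_i^*Y^{(i)}\mathcal{G}^{(i)}$. Taking its $j$-th entry, for $j\neq i$,
\[
\mathcal{G}_{ij}=-\mathcal{G}_{ii}\,\mathbf{y}_i^*\bigl(Y^{(i)}\mathcal{G}^{(i)}\bigr)_{\cdot j}=-\mathcal{G}_{ii}\,\mathbf{y}_i^*G^{(i)}\mathbf{y}_j,
\]
where the last step uses push-through. Next I remove $\mathbf{y}_j$ from $G^{(i)}$. Since $Q^{(i)}=Q^{(ij)}+\mathbf{y}_j\mathbf{y}_j^*$, Sherman--Morrison gives
\[
G^{(i)}\mathbf{y}_j=\frac{G^{(ij)}\mathbf{y}_j}{1+\mathbf{y}_j^*G^{(ij)}\mathbf{y}_j}.
\]
Applying the first identity to the minor $(i)$ yields $(1+\mathbf{y}_j^*G^{(ij)}\mathbf{y}_j)^{-1}=-z\mathcal{G}^{(i)}_{jj}$. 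Substituting gives $\mathcal{G}_{ij}=z\mathcal{G}_{ii}\mathcal{G}^{(i)}_{jj}\mathbf{y}_i^*G^{(ij)}\mathbf{y}_j$.

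\textbf{Third identity.} This is a purely linear-algebraic fact about any invertible matrix $A$ whose principal minor obtained by deleting row and column $k$ is also invertible. Partition $A$ with respect to $\{k\}$ and its complement, and write $A^{-1}$ in the corresponding block form. The block-inverse formula shows that the complementary block of $A^{-1}$ equals
\[
(A^{(k)})^{-1}+\frac{(A^{-1})_{\cdot k}(A^{-1})_{k\cdot}}{(A^{-1})_{kk}}.
\]
One can verify this directly by multiplying by $A^{(k)}$ and using $AA^{-1}=I$ on the rows other than $k$. Here $A^{(k)}=\mathcal{Q}^{(k)}-z$ on the index set with $k$ removed, so its inverse is $\mathcal{G}^{(k)}$. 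Reading off the $(i,j)$ entry for $i,j\neq k$ gives the claim.

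\textbf{Main obstacle.} None of the steps is conceptually hard. The only delicate point is bookkeeping in the second identity: keeping the original indices consistent when passing from $\mathcal{G}^{(i)}$ (an $n\times n$ resolvent restricted to the index set without $i$) to $G^{(i)}$ and $G^{(ij)}$ ($p\times p$ resolvents), and getting the correct factors of $z$ from the identity $G^{(i)}Q^{(i)}=I+zG^{(i)}$.
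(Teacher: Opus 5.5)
Your proof is correct and takes essentially the same route as the paper. The paper gives no argument beyond invoking Schur's complement formula and citing Pillai--Yin, Lemma 2.3; your block inversion of $\mathcal{Q}-z$, combined with the push-through identity $Y^{(i)}\mathcal{G}^{(i)}=G^{(i)}Y^{(i)}$ and Sherman--Morrison, is the standard way that citation is carried out, and your index bookkeeping and factors of $z$ all check out.
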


\begin{proof}
The proof is straightforward using Schur's complement formula; for example see \cite[Lemma 2.3]{PillaiandYin2014}. 
\end{proof}

\begin{lemma}[Matrix identities] For any finite subset $\mathcal{T}\subset\{1,\dots,n\}$, we have that 
\begin{equation}\label{lem:Wald}
\left\|G^{(\mathcal{T})}\Sigma^{1/2} \right \|^2_F=\eta^{-1}\operatorname{Im}\operatorname{Tr} \left(G^{(\mathcal{T})} \Sigma \right). 
\end{equation}
Moreover, we have that 
	\begin{gather}
			\left|{\rm Tr}(\mathcal{G}^{(i)}-\mathcal{G}) \right|  \leq  \eta^{-1}, \nonumber \\
			\left|{\rm Tr}({ G}^{(i)}-{ G}) \right| \leq  |z|^{-1}+\eta^{-1}, \label{lem:trace_difference} \\
			\left|\operatorname{Im}{\rm Tr}({G}^{(i)}-{\mathcal G})\right|  \leq  \eta|z|^{-2}+\eta^{-1}. \nonumber
		\end{gather}
\end{lemma}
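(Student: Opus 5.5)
The plan is to derive all four bounds from the spectral decomposition of $Q$ and $Q^{(\mathcal{T})}$, using only two facts. First, $Q^{(\mathcal{T})}$ is real symmetric, so $G^{(\mathcal{T})}(z)$ is normal and $G^{(\mathcal{T})}(z)^*=G^{(\mathcal{T})}(\bar z)$. Second, $Q$ is a rank-one perturbation of $Q^{(i)}$:
\[
Q=Q^{(i)}+\mathbf{y}_i\mathbf{y}_i^*.
\]

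For (\ref{lem:Wald}), I will write
\[
\|G^{(\mathcal{T})}\Sigma^{1/2}\|_F^2=\operatorname{Tr}\big(\Sigma^{1/2}G^{(\mathcal{T})*}G^{(\mathcal{T})}\Sigma^{1/2}\big)=\operatorname{Tr}\big(G^{(\mathcal{T})*}G^{(\mathcal{T})}\Sigma\big).
\]
Applying the first resolvent identity (Lemma \ref{lem: first resolvent}) with $z_1=z$ and $z_2=\bar z$ gives
\[
G^{(\mathcal{T})}(z)G^{(\mathcal{T})}(\bar z)=\frac{G^{(\mathcal{T})}(z)-G^{(\mathcal{T})}(\bar z)}{2\ri\eta}=\eta^{-1}\operatorname{Im}G^{(\mathcal{T})}(z).
\]
Taking the trace against $\Sigma$ then yields the identity, since $\Sigma$ is real and $\operatorname{Im}\operatorname{Tr}(G\Sigma)=\operatorname{Tr}(\operatorname{Im}G\,\Sigma)$.

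For the trace differences, I first treat the two $p\times p$ resolvents. Sherman--Morrison applied to the rank-one perturbation gives
\[
\operatorname{Tr}\big(G^{(i)}-G\big)=\frac{\mathbf{y}_i^*(G^{(i)})^2\mathbf{y}_i}{1+\mathbf{y}_i^*G^{(i)}\mathbf{y}_i}.
\]
Next I expand in the eigenbasis $(\lambda_k,\mathbf{u}_k)$ of $Q^{(i)}$, writing $a_k=|\mathbf{u}_k^*\mathbf{y}_i|^2\geq 0$. The numerator is bounded by
\[
\sum_k\frac{a_k}{|\lambda_k-z|^2}=\eta^{-1}\operatorname{Im}\sum_k\frac{a_k}{\lambda_k-z}\leq\eta^{-1}\Big|1+\sum_k\frac{a_k}{\lambda_k-z}\Big|,
\]
so $|\operatorname{Tr}(G^{(i)}-G)|\leq\eta^{-1}$. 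This is already stronger than the bound $|z|^{-1}+\eta^{-1}$ claimed in (\ref{lem:trace_difference}).

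The $n\times n$ resolvents are handled by dimension bookkeeping. $\mathcal{Q}=Y^*Y$ and $Q=YY^*$ share their nonzero eigenvalues, and similarly for $Y^{(i)}$. Hence
\[
\operatorname{Tr}\mathcal{G}=\operatorname{Tr}G-\frac{n-p}{z},\qquad \operatorname{Tr}\mathcal{G}^{(i)}=\operatorname{Tr}G^{(i)}-\frac{n-1-p}{z},
\]
with $\mathcal{G}^{(i)}$ read as an $(n-1)\times(n-1)$ matrix. It follows that $\operatorname{Tr}(\mathcal{G}^{(i)}-\mathcal{G})=\operatorname{Tr}(G^{(i)}-G)+z^{-1}$. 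This produces a bound $\eta^{-1}+|z|^{-1}$ for the $\mathcal{G}$-difference. If instead the convention keeps $\mathcal{G}^{(i)}$ as an $n\times n$ matrix, with the $i$-th row and column of $Y^{(i)}$ set to zero, then the extra diagonal entry $-1/z$ cancels that term and gives exactly $\eta^{-1}$. So the two displayed bounds are consistent under the paper's index-keeping convention stated before (\ref{eq_defnminor}).

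The imaginary-part bound is the delicate step, because it compares a $p$-dimensional trace with an $n$-dimensional one. I would use the relation above together with $\operatorname{Im}(1/z)=-\eta/|z|^2$. This gives
\[
\operatorname{Im}\operatorname{Tr}\big(G^{(i)}-\mathcal{G}\big)=\operatorname{Im}\operatorname{Tr}\big(G^{(i)}-G\big)+\frac{\eta(p-n)}{|z|^2}+\ldots,
\]
where the remaining terms are bounded as above. When the dimensions match, or when the intended comparison is between $G^{(i)}$ and $\mathcal{G}^{(i)}$ or their $\mathcal{T}$-analogues, only an $\mathrm{O}(1)$ multiple of $\eta|z|^{-2}$ survives, and the bound $\eta|z|^{-2}+\eta^{-1}$ follows. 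The main obstacle I expect is not analytic but this bookkeeping of the $-1/z$ contributions from zero eigenvalues. I would first pin down the convention for the minors and then verify that the constant in front of $\eta|z|^{-2}$ is the one claimed, adjusting it to the dimension difference if necessary.
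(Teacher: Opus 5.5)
Your derivation of (\ref{lem:Wald}) is correct and uses the same Ward identity the paper eventually invokes. Writing $G^{(\mathcal T)}(\bar z)G^{(\mathcal T)}(z)=\eta^{-1}\operatorname{Im}G^{(\mathcal T)}(z)$ directly is cleaner than the paper's rotation by the eigenvectors of $\Sigma$. For the trace differences the paper gives no argument and only cites Ding--Yang and Bao et al., so it does not settle the issues below either. Your Sherman--Morrison computation, together with the zero-eigenvalue bookkeeping $\operatorname{Tr}\mathcal G=\operatorname{Tr}G-(n-p)z^{-1}$, is a self-contained replacement. It correctly gives $|\operatorname{Tr}(G^{(i)}-G)|\le\eta^{-1}$. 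With the zero-padded $n\times n$ convention (consistent with $\mathcal G^{(i)}_{ii}=-z^{-1}$ used later in the paper) it also gives $|\operatorname{Tr}(\mathcal G^{(i)}-\mathcal G)|\le\eta^{-1}$.

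The third inequality is where your proposal does not close, and your suggested fallback is wrong. Comparing $G^{(i)}$ with $\mathcal G^{(i)}$ does not leave only an $\mathrm{O}(1)$ multiple of $\eta|z|^{-2}$. By the same bookkeeping, $\operatorname{Tr}G^{(i)}-\operatorname{Tr}\mathcal G^{(i)}$ equals $(n-1-p)z^{-1}$, or $(n-p)z^{-1}$ with zero padding, which is again of order $|n-p|$.

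In fact the inequality as literally written is false, and your own identity shows it. You have $\operatorname{Im}\operatorname{Tr}(G^{(i)}-\mathcal G)=\operatorname{Im}\operatorname{Tr}(G^{(i)}-G)+(p-n)\eta|z|^{-2}$, so $|\operatorname{Im}\operatorname{Tr}(G^{(i)}-\mathcal G)|\ge|p-n|\eta|z|^{-2}-\eta^{-1}$. At $z=\mathrm{i}\eta$ this lower bound is $(|p-n|-1)\eta^{-1}$, which exceeds the claimed $\eta|z|^{-2}+\eta^{-1}=2\eta^{-1}$ once $|p-n|\ge 4$. Since (\ref{ass1}) allows $|p-n|\asymp n$, this is the typical case, not an edge case.

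The statement is evidently a mislabelling of the standard bound for the dimension-changing resolvent. Take $\mathcal G^{(i)}$ to be $(n-1)\times(n-1)$; you already showed $\operatorname{Tr}(\mathcal G^{(i)}-\mathcal G)=\operatorname{Tr}(G^{(i)}-G)+z^{-1}$. Since $\operatorname{Im}z^{-1}=-\eta|z|^{-2}$, this gives $|\operatorname{Im}\operatorname{Tr}(\mathcal G^{(i)}-\mathcal G)|\le\eta|z|^{-2}+\eta^{-1}$ in one line; with zero padding the bound is even $\eta^{-1}$. You should flag the literal claim as false and prove this corrected version, rather than leaving the coefficient of $\eta|z|^{-2}$ to be adjusted later.
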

\begin{proof}
Due to similarity, we focus our discussion on the separable covariance i.i.d. data, i.e., Case (2) of Assumption \ref{assum_model}. In fact, it is easier to handle Case (1) since $\Sigma$ can  be always assumed to be  diagonal. 

We start with the proof of (\ref{lem:Wald}). Recall (\ref{eq_defnminorG}). We can write
\begin{equation*}
G^{(\mathcal{T})}=\left( \Sigma^{1/2} X^{(\mathcal{T})} D^2 X^{(\mathcal{T})} \Sigma^{1/2}-z   \right)^{-1}.
\end{equation*}
Let the spectral decomposition $\Sigma=U\Lambda U^*.$ Observe that 
\begin{align*}
\| & G^{(\mathcal{T})} \Sigma^{1/2} \|_F^2  =\operatorname{Tr} \left( \left( \Sigma^{1/2} X^{(\mathcal{T})} D^2 X^{(\mathcal{T})} \Sigma^{1/2}-z   \right)^{-1} U\Lambda U^* \left( \Sigma^{1/2} X^{(\mathcal{T})} D^2 X^{(\mathcal{T})} \Sigma^{1/2}-\bar{z}   \right)^{-1}\right) \\
&=\operatorname{Tr} \left( U \left( \Lambda^{1/2} U^* X^{(\mathcal{T})} D^2 X^{(\mathcal{T})} U \Lambda^{1/2}-z   \right)^{-1} U^* U\Lambda U^* U \left( \Lambda^{1/2} U^* X^{(\mathcal{T})} D^2 X^{(\mathcal{T})} U \Lambda^{1/2}-\bar{z}   \right)^{-1} U^*\right) \\
%&= \left\| \left( \Lambda^{1/2} U^* X^{(\mathcal{T})} D^2 X^{(\mathcal{T})} U \Lambda^{1/2}-z   \right)^{-1} \Lambda^{1/2} \right\|_F^2 \\
&=\eta^{-1} \operatorname{Im} \operatorname{Tr} \left[ \left( \Lambda^{1/2} U^* X^{(\mathcal{T})} D^2 X^{(\mathcal{T})} U \Lambda^{1/2}-z   \right)^{-1} \Lambda \right] , \\
&=\eta^{-1} \operatorname{Im} \operatorname{Tr} \left[ U^* U\left( \Lambda^{1/2} U^* X^{(\mathcal{T})} D^2 X^{(\mathcal{T})} U \Lambda^{1/2}-z   \right)^{-1} U^* U \Lambda \right]=\eta^{-1} \operatorname{Im} \operatorname{Tr} \left( G^{(\mathcal{T})} \Sigma \right),
\end{align*}
where in the fourth step we used Ward's identity (see the equation below (4.42) of \cite{ding2023local}).

Second, the proof of (\ref{lem:trace_difference}) follows from the definitions of the resolvents; see \cite[Lemma A.4]{Ding&Yang2018} and the proof of \cite[Lemma 4.6]{Bao2015} for more detail.   
 
\end{proof}

%{\color{red} [revise here]}
%{\color{red} ALSO need to list the i.i.d. case}
\begin{lemma}[Large deviation bounds]\label{lem:large deviation}
Let $\mathbf{u}=(u_1, u_2, \cdots, u_p)^*, \widetilde{\mathbf{u}}=(\widetilde{u}_1, \widetilde{u}_2, \cdots, \widetilde{u}_p)^* \in \mathbb{R}^p$  be two real independent random vectors. Moreover, let $A$ be a $p \times p$ matrix independent of the above vectors. Suppose  the entries of the random vectors are centered i.i.d. random variables with variance $n^{-1}$ and  $\mathbb{E}|\sqrt{n} v_{i}|^k \leq C_k,$ where $v_i=u_i, \widetilde{u}_i, 1 \leq i \leq p,$  then we have 
		\begin{align*}
		|\widetilde{\mathbf{u}}^{*}\mathbf{u}|  \prec  \sqrt{\frac{\|\mathbf{u}\|^{2}}{n},} \ \ \left|\mathbf{u}^{*}A\tilde{\mathbf{u}}\right| \prec  \frac{1}{n}\|A\|_{F}, \ \
		|\mathbf{u}^{*}A\mathbf{u}-\frac{1}{n}{\rm Tr}A|  \prec  \frac{1}{n}\|A\|_{F}. 		
		\end{align*}
\end{lemma}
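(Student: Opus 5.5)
The plan is to reduce all three bounds to the standard Hanson--Wright type concentration for linear and quadratic forms in independent entries with uniformly bounded moments of all orders. The usual route is the moment method of Erd\H{o}s--Knowles--Yau--Yin. Throughout, $A$ is independent of $\mathbf{u},\widetilde{\mathbf{u}}$, so I would condition on $A$ and treat it as deterministic; the bounds are then uniform in $A$ and transfer to the random case by averaging. After rescaling $\sqrt{n}\mathbf{u}$ and $\sqrt{n}\widetilde{\mathbf{u}}$, the entries have unit variance and $\mathbb{E}|\cdot|^k\le C_k$. Each claim then reads as a bound on $\sum_i a_i w_i$, $\sum_{i,j} a_{ij} w_i \widetilde w_j$, or $\sum_{i,j} a_{ij}(w_iw_j-\delta_{ij})$ by $n^{\epsilon}$ times the appropriate $\ell^2$ or Frobenius norm.

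\emph{Linear form.} Condition on $\mathbf{u}$, which is independent of $\widetilde{\mathbf{u}}$, and set $a=\mathbf{u}$. Then $\widetilde{\mathbf{u}}^*\mathbf{u}=n^{-1/2}\sum_i a_i(\sqrt n\widetilde u_i)$. Marcinkiewicz--Zygmund/Rosenthal gives
\[
\mathbb{E}\Big|\sum_i a_i w_i\Big|^{2q}\le C_q\Big(\sum_i a_i^2\Big)^q .
\]
Markov's inequality with $q$ chosen large relative to $D/\epsilon$ then gives $|\widetilde{\mathbf{u}}^*\mathbf{u}|\le n^{\epsilon}\|\mathbf{u}\|/\sqrt n$ with probability at least $1-n^{-D}$. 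The bound is uniform in the conditioning, so integrating over $\mathbf{u}$ yields the first claim.

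\emph{Bilinear form.} Condition on $\widetilde{\mathbf{u}}$ and apply the linear-form bound to the vector $A\widetilde{\mathbf{u}}$. This gives $|\mathbf{u}^*A\widetilde{\mathbf{u}}|\prec n^{-1/2}\|A\widetilde{\mathbf{u}}\|$. It remains to show $\|A\widetilde{\mathbf{u}}\|^2=\widetilde{\mathbf{u}}^*A^*A\widetilde{\mathbf{u}}\prec n^{-1}\|A\|_F^2$. This follows from the quadratic-form bound below applied to $B=A^*A$, since $n^{-1}\operatorname{Tr}B=n^{-1}\|A\|_F^2$ and $n^{-1}\|B\|_F\le n^{-1}\|A\|_F^2$. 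Combining the two steps gives $|\mathbf{u}^*A\widetilde{\mathbf{u}}|\prec n^{-1}\|A\|_F$.

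\emph{Quadratic form.} Split
\[
\mathbf{u}^*A\mathbf{u}-n^{-1}\operatorname{Tr}A=\sum_i a_{ii}\big(u_i^2-n^{-1}\big)+\sum_{i\ne j}a_{ij}u_iu_j .
\]
The diagonal part is a sum of independent centered variables with coefficients $a_{ii}/n$. The moment bound above applies with $\sum_i a_{ii}^2\le\|A\|_F^2$, so this part is $\prec n^{-1}\|A\|_F$. The off-diagonal part is the main obstacle, because it is a sum of dependent terms. I would handle it by decoupling: bound its $2q$-th moment by a constant times that of $\sum_{i\ne j}a_{ij}u_i\widetilde u_j$, with $\widetilde{\mathbf{u}}$ an independent copy (de la Pe\~na--Montgomery-Smith). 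Then apply the linear bound twice. Conditioning on $\widetilde{\mathbf{u}}$ gives a $\prec n^{-1/2}\|A_o\widetilde{\mathbf{u}}\|$ bound, where $A_o$ denotes the off-diagonal part of $A$. The linear bound applied to each row of $A_o$ then controls $\|A_o\widetilde{\mathbf{u}}\|^2$.

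Doing this row by row and summing naively loses a factor, so I would avoid the circularity with the bilinear step as follows. Expand $\mathbb{E}\big|\sum_{i\ne j}a_{ij}u_i\widetilde u_j\big|^{2q}$ directly. Only index configurations in which every index appears at least twice survive, and a standard graph-counting argument bounds these by $C_q\|A\|_F^{2q}n^{-2q}$. Markov's inequality then gives $\prec n^{-1}\|A\|_F$. Finally, a union bound over any polynomially-sized index family keeps the domination uniform, as the definition of $\prec$ requires.
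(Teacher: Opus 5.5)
Your proof is correct. The paper gives no argument of its own for this lemma: it only cites Lemma 3.4 of \cite{PillaiandYin2014} and Lemma 5.6 of \cite{yang2019edge}, which rest on the standard moment-method large deviation estimates. What you have written is a self-contained version of that standard argument, with some differences in the details.

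In the standard proof, the off-diagonal quadratic part is decoupled by the elementary partition-averaging identity $\sum_{i\ne j}=Z^{-1}\sum_{I\sqcup J=[p]}\sum_{i\in I,\,j\in J}$ together with Minkowski's inequality over the partitions. You instead use a black-box decoupling inequality. The resulting bilinear forms are then bounded by conditioning there, whereas you use a combinatorial moment expansion.

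Your graph-counting claim is true but only asserted. When every index appears at least twice, sum out one index at a time using H\"older's inequality with exponent equal to its multiplicity, followed by $\ell^{k}\subset\ell^{2}$ for $k\ge 2$; this bounds each configuration by $\|A\|_F^{2q}$.

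The graph counting is in fact unnecessary, because your worry that the row-by-row route loses a factor is unfounded. Condition on $\widetilde{\mathbf u}$ and apply Marcinkiewicz--Zygmund to get $\mathbb{E}|\mathbf u^*A\widetilde{\mathbf u}|^{2q}\le C_q n^{-q}\,\mathbb{E}\|A\widetilde{\mathbf u}\|^{2q}$. Minkowski's inequality in $L^q$ then gives $\|\,\|A\widetilde{\mathbf u}\|^2\|_{L^q}\le\sum_i\|(A\widetilde{\mathbf u})_i\|_{L^{2q}}^2\le C_q n^{-1}\|A\|_F^2$, with no loss. A union bound over the $p$ rows of the rowwise stochastic-domination estimate also loses nothing.

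This gives the bilinear bound directly, so you need not route it through the quadratic bound for $A^*A$. Applied to the off-diagonal part of $A$, it is exactly the moment bound that the moment form of the decoupling inequality requires.
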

\begin{proof}
The proof can be found in Lemma 3.4 of \cite{PillaiandYin2014} or Lemma 5.6 of \cite{yang2019edge}. 
\end{proof}

\quad In what follows, we provide a mini-review of the extreme value theory for a sequence of i.i.d. random variables following \cite{hansen2020three}. For more systematic treatments, we refer to the monographs \cite{beirlant2004statistics,coles2001introduction,fang1990,resnick2008extreme}.  

\begin{lemma}[Fisher-Tippett-Gnedenko Theorem]\label{lem_summaryevt} Let $\{x_i^2\}$ be a sequence of i.i.d. random variables and denote $M_n:=x_{(1)}^2$ as the largest order statistic. 
\begin{enumerate}
\item If there exist some constants $\alpha_n>0$ and $\beta_n \in \mathbb{R}$ and some non-degenerate {\normalfont cdf} G such that $\alpha_n^{-1}(M_n-\beta_n)$ converges in distribution to G, then $G$ belongs to the type of one of the following three {\normalfont cdfs:}
\begin{equation*}
\begin{split}
&\text{Gumbel}: \ G_0(x)=\exp(-e^{-x}), \ x \in \mathbb{R}, \\
&\text{Fr{\'e}chet}: \ G_{1, \alpha}(x)=\exp(-x^{-\alpha}), \ x \geq 0, \ \alpha>0, \\
& \text{Weibull}: \ G_{2, \alpha}(x)=\exp(-|x|^{\alpha}), \ x \leq 0, \ \alpha>0.
\end{split}
\end{equation*}
\item Recall (\ref{eq_defnbn}). First, if $\{x_i\}$ satisfies (\ref{ass3.1}), we have that 
\begin{equation*}
\frac{M_n}{b_n} \overset{d}{\Rightarrow}  G_{1, \alpha},
\end{equation*}
{for $b_n=(L n)^{1/\alpha}.$} Second, if $\{x_i\}$ satisfies (\ref{ass3.2}) and (\ref{assum_gg}), we have that
\begin{equation*}
\mathsf{g}'(b_n)(M_n-b_n) \overset{d}{\Rightarrow}  G_{0}.
\end{equation*} 
Finally, if (\ref{ass3.4}) holds, recall $\mathfrak{b}$ in (\ref{eq_defnbfrak}), we have that 
\begin{equation*}
(\mathfrak{b}n)^{1/(d+1)}(M_n-l) \overset{d}{\Rightarrow} G_{2, d+1}. 
\end{equation*}
 \end{enumerate}
\end{lemma}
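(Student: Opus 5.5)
Part~1 is the classical Fisher--Tippett--Gnedenko classification. I would not reprove it; I would cite it from \cite{resnick2008extreme,beirlant2004statistics}. The substance is part~2, where the normalizing sequences are computed explicitly for each of the three multiplier classes in Assumption~\ref{assum_D}.

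All three cases use the same reduction. For any deterministic sequence $u_n\uparrow$ the upper endpoint of the support,
\begin{equation*}
\mathbb{P}(M_n\leq u_n)=F(u_n)^n=\exp\bigl(n\log(1-(1-F(u_n)))\bigr).
\end{equation*}
Whenever $n(1-F(u_n))\to\tau\in[0,\infty)$, we have $1-F(u_n)\to0$, so $\log(1-y)=-y(1+\ro(1))$ gives $\mathbb{P}(M_n\leq u_n)\to e^{-\tau}$. Each case therefore reduces to choosing $u_n=u_n(x)$ as the affine normalization and computing the limit of $n(1-F(u_n))$.

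\textbf{Fréchet case.} Under (\ref{ass3.1}) I take $u_n=xb_n$ with $b_n=(Ln)^{1/\alpha}$ and $x>0$. Then $u_n\to\infty$ and
\begin{equation*}
n(1-F(xb_n))=nL(xb_n)^{-\alpha}(1+\ro(1))=x^{-\alpha}(1+\ro(1)),
\end{equation*}
which gives $G_{1,\alpha}$. The asymptotic $b_n\sim(Ln)^{1/\alpha}$ also agrees with (\ref{eq_defnbn}), since $1-F(b_n)=1/n$ there.

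\textbf{Weibull case.} Under (\ref{ass3.4})--(\ref{eq_defnbfrak}) I take $u_n=l+x(\mathfrak{b}n)^{-1/(d+1)}$ with $x\leq0$. Then $l-u_n=|x|(\mathfrak{b}n)^{-1/(d+1)}\downarrow0$, and the definition of $\mathfrak{b}$ gives
\begin{equation*}
n(1-F(u_n))=n\,\mathfrak{b}\,|x|^{d+1}(\mathfrak{b}n)^{-1}(1+\ro(1))\to|x|^{d+1}.
\end{equation*}
This yields $G_{2,d+1}$. For $x>0$ the probability is trivially $1$.

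\textbf{Gumbel case (main obstacle).} Under (\ref{ass3.2}) write $1-F(x)=e^{-\mathsf{g}(x)}$ with $\mathsf{g}(x)=tx^\beta-\log L$; this is smooth and increasing for large $x$. By (\ref{eq_defnbn}) and continuity, $\mathsf{g}(b_n)=\log n$. Set $u_n=b_n+x/\mathsf{g}'(b_n)$. Then
\begin{equation*}
n(1-F(u_n))=\exp\bigl(-[\mathsf{g}(u_n)-\mathsf{g}(b_n)]\bigr),
\end{equation*}
so it suffices to show $\mathsf{g}(b_n+x/\mathsf{g}'(b_n))-\mathsf{g}(b_n)\to x$, locally uniformly in $x$.

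I would write this difference as $\int_0^x \mathsf{g}'(b_n+s/\mathsf{g}'(b_n))/\mathsf{g}'(b_n)\,\mathrm{d}s$. The task is then to show that $\mathsf{g}'(b_n+s/\mathsf{g}'(b_n))/\mathsf{g}'(b_n)\to1$ uniformly for $|s|\leq|x|$. This is exactly where the von Mises condition (\ref{assum_gg}) enters. Set $h=1/\mathsf{g}'$. By the mean value theorem,
\begin{equation*}
h\bigl(b_n+s\,h(b_n)\bigr)-h(b_n)=s\,h(b_n)\,h'(\zeta),
\end{equation*}
and $h'(\zeta)\to0$ because $\zeta\to\infty$. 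Dividing by $h(b_n)$ gives the required ratio convergence. For the explicit $\mathsf{g}$ this is also directly checkable: $h'(x)=(1-\beta)/(t\beta x^{\beta})\to0$, and indeed $s/\mathsf{g}'(b_n)=\ro(b_n)$.

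Dominated convergence on the compact $s$-interval then gives the limit $x$, and hence $\mathbb{P}(\mathsf{g}'(b_n)(M_n-b_n)\leq x)\to\exp(-e^{-x})$.
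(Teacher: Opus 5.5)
Your proposal is correct. The paper gives no argument of its own for this lemma: it simply refers to \cite{hansen2020three} and \cite{beirlant2004statistics} for both parts. You likewise cite part~1, but for part~2 you carry out the standard domain-of-attraction verification yourself. You reduce everything to the implication $n(1-F(u_n))\to\tau\Rightarrow F(u_n)^n\to e^{-\tau}$ and then compute $\tau$ for each of the three affine normalizations.

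What your route buys is a self-contained check that the specific normalizing sequences in the statement are the right ones.
\begin{itemize}
\item It confirms that $b_n=(Ln)^{1/\alpha}$ is consistent with (\ref{eq_defnbn}).
\item It shows exactly where the von Mises condition (\ref{assum_gg}) enters. It is used only through $h=1/\mathsf{g}'$ with $h'\to0$, which also gives $h(b_n)=\ro(b_n)$.
\item Your mean-value argument works for any $\mathsf{g}$ as in (\ref{eq_defng}) satisfying (\ref{assum_gg}), not only for $tx^\beta-\log L$.
\end{itemize}

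It also pins down the convention for $\mathsf{g}$. You take $\mathsf{g}(x)=tx^\beta-\log L$, which is the reading consistent with (\ref{eq_defng}). The formula $\mathsf{g}(x)=e^{\log L}tx^\beta$ printed after Theorem~\ref{thm_main_unbounded} would multiply $\mathsf{g}'(b_n)$ by $L$. The limit would then become $\exp(-e^{-x/L})$ rather than $\exp(-e^{-x})$ unless $L=1$. Your explicit computation exposes this typo, whereas the bare citation leaves it hidden.

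The citation route, in exchange, is shorter. Through the general theory it also covers necessary-and-sufficient domain-of-attraction conditions and more general tails. The cost is that the reader must match the paper's hypotheses to the textbook conditions. For the exact tail forms assumed in Assumption~\ref{assum_D}, your sufficiency computation is all that is needed.
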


\begin{proof}
The proof can be found in the standard textbook or review article regarding extreme value theory. For example,  see \cite{hansen2020three} and \cite{beirlant2004statistics}. 
\end{proof}

In the following, we summarize several notations for the sample covariance matrix $S$ in Table \ref{table_notations} for (\ref{eq_twresultoriginal}). For $z=E+\mathrm{i}\eta\in\mathbb{C}_{+}$, define the resolvent of $S$ and the Stieltjes transform of its empirical spectral distribution by
\begin{equation*}
G^{\mathtt{S}}(z)=(S-zI)^{-1}, \qquad m_S(z)=p^{-1}\operatorname{tr}G^{\mathtt{S}}(z).
\end{equation*} 

Under Assumptions \ref{assum_model} and \ref{assumption_techincial}, we conclude from \cite[Theorem 3.1]{knowles2017anisotropic} that $m_S(z)$ converges to a deterministic limit $m_n^{\mathtt{S}}(z)$ satisfying
\begin{align*}
F_{n,\mathtt{S}}(m_n^{\mathtt{S}}(z),z)=0,\qquad z\in\mathbb{C}_{+},
\end{align*}
where
\begin{align}\label{eq_systemequation_S}
F_{n,\mathtt{S}}(m_n^{\mathtt{S}}(z),z)=\frac{1}{-z+\frac{1}{n}\sum_{i=1}^p\frac{\sigma_i}{1+\sigma_i m_n^{\mathtt{S}}(z)}}-m_n^{\mathtt{S}}(z).
\end{align}

Essentially, $m_{n}^{\mathtt{S}}(z)$ is the Stieltjes transform of the limiting spectral distribution of $S$, namely $\rho^{\mathtt{S}}$. As in (\ref{eq_twresultoriginal}), the rightmost edge of $\rho^{\mathtt{S}},$ $E_{+},$ can be characterized via
\begin{align}\label{eq_def_Eplus}
m_n^{\mathtt{S}}(E_{+})&=\frac{1}{-E_{+}+\frac{1}{n}\sum_{i=1}^p\frac{\sigma_i}{1+\sigma_i m_n^{\mathtt{S}}(E_{+})}}, \quad
1=\frac{\frac{1}{n}\sum_{i=1}^p\frac{\sigma_i^2}{|1+\sigma_i m_n^{\mathtt{S}}(E_{+})|^2}}{\left|E_{+}-\frac{1}{n}\sum_{i=1}^p\frac{\sigma_i}{1+\sigma_i m_n^{\mathtt{S}}(E_{+})}\right|^2}.
\end{align}
Moreover, for $\gamma_0$ in (\ref{eq_twresultoriginal}), we have 
\begin{align}\label{eq_def_gamma0}
\gamma_0=\Big(\frac{|f_{\mathtt{S}}^{\prime\prime}(m_n^{\mathtt{S}}(E_{+}))|}{2}\Big)^{-1/3},\qquad
f_{\mathtt{S}}(x):=-\frac{1}{x}+\frac{1}{n}\sum_{i=1}^p\frac{\sigma_i}{1+\sigma_i x},
\end{align}
which served as the natural scaling factor for the Tracy–Widom fluctuation; see \cite{knowles2017anisotropic}.

We point out that under the stronger assumptions that the empirical spectral distribution of $\Sigma$ converges to a nonrandom probability measure $H$ and that $\lim_{n\to\infty} p/n = c\in(0,\infty)$, the Stieltjes transform of the limiting spectral distribution of $S$ satisfies the following equation \cite{bai2009spectral}
\begin{align}\label{eq_systemequation_S_integralform}
\frac{1}{m_0^{\mathtt{S}}(z)}=-z+c\int\frac{t}{1+t m_0^{\mathtt{S}}(z)}\mathrm{d}H(t).
\end{align}
In this setting, (\ref{eq_def_Eplus}) reduces to
\begin{align*}
    m_0^{\mathtt{S}}(E_{+})&=\frac{1}{-E_{+}+c\int\frac{t}{1+tm_0^{\mathtt{S}}(E_{+})}\mathrm{d}H(t)}, \quad
1=\frac{c\int\frac{t^2}{|1+tm_0^{\mathtt{S}}(E_{+})|^2}\mathrm{d}H(t)}{\left|E_{+}-c\int\frac{t}{1+t m_0^{\mathtt{S}}(E_{+})}\mathrm{d}H(t)\right|^2}.
\end{align*}
A similar expression for $\gamma_0$ can be obtained by replacing 
$m_n^{\mathtt{S}}$ with $m_0^{\mathtt{S}}$ in (\ref{eq_def_gamma0}).

\subsection{Characterization of $\Omega_X$: good configurations for the data matrix $X$}\label{sec_characterization_OmegaX}

In this subsection, independent of Sections \ref{sec_asymptoticlocalaveragedlocal} 
and \ref{secaveragelocalllca}, we define several probability events that represent 
``good configurations'' of $X$. In particular, when studying the largest eigenvalues 
of $Q$ (or $\widetilde{Q}$) (see Table \ref{table_notations} for the definitions) 
conditional on $X$, we will work on a high-probability event that is measurable 
with respect to the $\sigma$-algebra generated by $X$ under Assumption 
\ref{assum_model}. In this subsection, we characterize this event, denoted by 
$\Omega_X$.  To this end, let $\mathcal{F}_X=\sigma(X)$, and let $\Omega_X \in \mathcal{F}_X$ 
be an event depending only on $X$ such that $\mathbb{P}(\Omega_X)\to 1$.

As will be seen below, $\Omega_X$ is defined as the intersection of several events 
encoding the necessary probabilistic properties of the randomness in $X$ that 
follow from Assumption \ref{assum_model}, each of which holds with probability at 
least $1-\mathrm{O}(n^{-D})$ for some $D>0$.

\begin{definition}\label{def_OmegaX}
Denote $\Omega_X$ as the event on $X$ so that the following conditions hold:
\begin{enumerate}
    \item [(1)] For any deterministic $p\times p$ matrix $A$, we have all $1\leq i,j\leq n$, the columns of $X$ satisfy
    \begin{align*}
    |\mathbf{x}_i^*\mathbf{x}_j|=\mathrm{O}\left(n^{\varepsilon_1}\sqrt{\frac{\|\mathbf{x}_i\|^2}{n}}\right), \, |\mathbf{x}_i^*A\mathbf{x}_j|=\mathrm{O}\left(n^{\varepsilon_1}\frac{\|A\|_F}{n}\right),\,|\mathbf{x}_i^*A\mathbf{x}_i-\frac{1}{n}\operatorname{Tr}A|=\mathrm{O}\left(n^{\varepsilon_1}\frac{\|A\|_F}{n}\right),
\end{align*}
    for an arbitrarily small constant $\varepsilon_1>0$.
    \item [(2)] $\|XX^*\|\leq C_{X,0}$ for some constant $C_{X,0}>0$.
    \item [(3)] The eigenvalue rigidity and level repulsion estimates for the non-spiked sample covariance matrix $S$ take the form
    \begin{align*}
        &|\widehat{\lambda}_i-\gamma_i|\leq C_{X,1}(i\wedge n+1-i)^{-1/3}n^{-2/3+\varepsilon_2},\quad \text{where}\; n\int_{\gamma_i}^{\infty}\mathrm{d}\rho^{\mathtt{S}}=i-\frac{1}{2},\quad i=1,\dots,n;\\
        &|\widehat{\lambda}_k-\widehat{\lambda}_{k+1}|\geq C_{X,2}k^{-1/3}n^{-2/3-\varepsilon_3}, \quad k=1,\dots, \lfloor \omega p \rfloor,\quad |\widehat{\lambda}_1-E_{+}|\geq C_{X,3} n^{-2/3-\varepsilon_3},
    \end{align*}
   for some values $0<\omega<1$, $C_{X,1},C_{X,2},C_{X,3}>0$ and arbitrarily small constants $\varepsilon_2,\varepsilon_3>0$.
    \item [(4)]  The complete delocalization of the non-degenerated eigenvectors $\{\nu_k\}$ of $S$ holds as
    \begin{align*}
        |\langle \nu_k, \mathbf{u}\rangle|^2\leq C_{X,4}n^{-1+\varepsilon_4},
    \end{align*}
    uniformly for $1\leq k\leq p$ and all unit  $\mathbf{u}\in\mathbb{R}^p$, where $\varepsilon_4>0$ is an arbitrarily small constant and $C_{X,4}>0$.
    \item [(5)] Let $\mathcal{S}:=X^*\Sigma X$ be the companion matrix of $S$, the complete delocalization of the non-degenerated eigenvectors $\{w_j\}$ of $\mathcal{S}$ holds as
    \begin{align*}
        |\langle w_j, \mathbf{v}\rangle|^2\leq C_{X,5}n^{-1+\varepsilon_5},
    \end{align*}
    uniformly for $1\leq j\leq n$ and all unit  $\mathbf{v}\in\mathbb{R}^n$, where $\varepsilon_5>0$ is an arbitrarily small constant and $C_{X,5}>0$.
    \item [(6)] For any fixed deterministic unit vector { $\mathbf{k}\in\mathbb{R}^p$, it holds $|\mathbf{k}^*XX^*\mathbf{k}-1|=\mathrm{O}(n^{-1/2+\varepsilon_6})$,} where $\varepsilon_6>0$ is an arbitrarily small constant. Moreover, for the columns of $X$, it holds that 
    \begin{align*}
        {\sum_{i=1}^n(\mathbf{k}^*\mathbf{x}_i)^4=n^{-1}(3+\sum_{j=1}^p\mathsf{k}_{j}^4(\mathbb{E}(\sqrt{n}x_{11})^4-3))+\mathrm{o}(n^{-1}),\quad \sum_{i=1}^n(\mathbf{k}^*\mathbf{x}_i)^8=\mathrm{O}(n^{-3}\log^{\varepsilon_7}n),}
    \end{align*}
    for any arbitrarily small constant $\varepsilon_7>0$, where {$\mathsf{k}_j$ is the $j$-th element of $\mathbf{k}$.}
\end{enumerate}
\end{definition}

The following lemma shows that under some regularity conditions, the probability event $\Omega_X$ holds with high probability. 
\begin{lemma}\label{lem_Xgoodevents}
    Let $\Omega_X$ be the event defined in Definition \ref{def_OmegaX}, under Assumptions \ref{assum_model}, \ref{assumption_techincial}, and  \ref{assum_additional_techinical}, we then have that when $n$ is sufficiently large,
    \begin{align*}
        \mathbb{P}(\Omega_X)=1-\mathrm{O}(n^{-D}),
    \end{align*}
    for some large constant $D>0$.
\end{lemma}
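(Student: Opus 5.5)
The plan is to write $\Omega_X=\bigcap_{k=1}^{6}\Omega_X^{(k)}$, where $\Omega_X^{(k)}$ is the event in item $(k)$ of Definition \ref{def_OmegaX}. For each $k$ I would show $\mathbb{P}((\Omega_X^{(k)})^c)=\mathrm{O}(n^{-D_k})$ for some $D_k>0$, and then conclude by a union bound with $D=\min_k D_k$. None of the items involves $D$ or $\Sigma$'s spikes, so everything reduces to known high-probability estimates for the non-spiked sample covariance matrix $S=\Sigma^{1/2}XX^*\Sigma^{1/2}$ and its companion $\mathcal{S}=X^*\Sigma X$. These hold under Assumptions \ref{assum_model} and \ref{assumption_techincial}, with Assumption \ref{assum_additional_techinical} guaranteeing a regular square-root edge at $E_+$.

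\textbf{Items (1), (2), (4), (5) and the rigidity part of (3).} These are all statements with overwhelming probability, i.e.\ probability $1-\mathrm{O}(n^{-D})$ for every $D>0$.
\begin{itemize}
\item Item (1) is Lemma \ref{lem:large deviation}, applied with $\mathbf{u}=\mathbf{x}_i$ and $\widetilde{\mathbf{u}}=\mathbf{x}_j$. The estimate is uniform in $i,j$ because there are only $n^2$ pairs. The matrix $A$ is deterministic, so "for any $A$" is read as uniformity over any fixed deterministic family of polynomially many matrices, again handled by a union bound.
\item Item (2) follows from the standard bound $\|XX^*\|\le (1+\sqrt{\phi})^2+n^{-2/3+\epsilon}$, which holds with overwhelming probability under the finite-moment assumption.
\item Eigenvalue rigidity and complete eigenvector delocalization, for deterministic unit vectors $\mathbf{u}$ and $\mathbf{v}$, follow from the anisotropic local law of \cite{knowles2017anisotropic}. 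Again "all unit vectors" is understood as uniformity over fixed deterministic families of polynomial cardinality; a net argument does not apply, since $\mathbf{u}=\nu_k$ itself is excluded.
\end{itemize}

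\textbf{Item (6).} The quadratic form satisfies $\mathbf{k}^*XX^*\mathbf{k}-1=\sum_i\bigl((\mathbf{k}^*\mathbf{x}_i)^2-n^{-1}\bigr)$. This is a sum of $n$ i.i.d.\ centred variables of size $n^{-1}$, so high-moment Markov bounds give the rate $n^{-1/2+\varepsilon_6}$. For the fourth powers, each summand has mean
\[
\mathbb{E}(\mathbf{k}^*\mathbf{x}_i)^4=n^{-2}\Bigl(3+\sum_j\mathsf{k}_j^4(\mathfrak{m}_4-3)\Bigr),
\]
computed by expanding and using independence of the entries. The centred sum has fluctuations $\mathrm{O}_\prec(n^{-3/2})=\mathrm{o}(n^{-1})$. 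The eighth-power sum is bounded by $n\cdot n^{-4}$ up to $\prec$-factors, by the moment bound and a union bound.

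\textbf{Main obstacle: the gap lower bounds in item (3).} The remaining requirements are $|\widehat{\lambda}_k-\widehat{\lambda}_{k+1}|\ge C k^{-1/3}n^{-2/3-\varepsilon_3}$ for $k\le\omega p$, and $|\widehat{\lambda}_1-E_+|\ge Cn^{-2/3-\varepsilon_3}$.

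For the edge condition I would use (\ref{eq_twresultoriginal}) with a quantitative rate. Since the Tracy--Widom density is bounded, this gives $\mathbb{P}\bigl(|\widehat{\lambda}_1-E_+|<n^{-2/3-\varepsilon_3}\bigr)=\mathrm{O}(n^{-\varepsilon_3})+\mathrm{O}(n^{-c})$. Here $n^{-c}$ is the polynomial rate in edge universality, obtained via Green function comparison with the Gaussian case.

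For the gaps I would invoke quantitative level repulsion: for the $k$-th gap on its natural scale $k^{-1/3}n^{-2/3}$, the probability that the gap falls below $s$ times that scale is $\mathrm{O}(s^{2-\delta})+\mathrm{O}(n^{-c})$. This is available for Wishart matrices and transferred by comparison, using the rigidity from step one as input. A naive union bound over $\omega p$ gaps is not summable when $s=n^{-\varepsilon_3}$ with $\varepsilon_3$ arbitrarily small. I would therefore first try to bound the \emph{number} of small gaps, which is where the argument must be done with care. Concretely, I would estimate $\mathbb{E}\#\{k\le\omega p:\text{gap}_k<s\cdot\text{scale}\}$ through the two-point correlation function on mesoscopic windows. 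The edge gaps $k\le n^{c'}$ would be handled individually with the bounds above. The bulk gaps would be handled through a Wegner-type estimate on each of $n^{1-c''}$ windows combined with rigidity.

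If this only yields $1-\mathrm{O}(n^{-D_3})$ with $D_3$ depending on $\varepsilon_3$ and the comparison rate $c$, that is still consistent with the statement, which asks only for \emph{some} constant $D>0$. In that case I take $D=\min_k D_k=D_3$ in the union bound.
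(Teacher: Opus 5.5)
Your skeleton (cite a high-probability estimate for each item of Definition \ref{def_OmegaX}, then take a union bound) is the same as the paper's proof, which is a single citation. You are also right that ``for any deterministic $A$'' in item (1) and ``all unit $\mathbf{u}$'' in items (4)--(5) give empty events if read literally, so they must mean uniformity over fixed families of polynomial size.

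The genuine gap is item (3), and more careful counting cannot close it, because the gap lower bounds there fail with probability tending to one. For $k\asymp n$ the requirement reads $\widehat{\lambda}_k-\widehat{\lambda}_{k+1}\ge Cn^{-1-\varepsilon_3}$, simultaneously for all $\asymp n$ bulk gaps. Take real Gaussian $X$ and $\Sigma=I$, which the assumptions allow. The bulk is then in the $\beta=1$ class, where a spacing normalized by the local mean spacing falls below $s$ with probability of order $s^{2}$. The expected number of violating indices is therefore of order $n^{1-2\varepsilon_3}\to\infty$ whenever $\varepsilon_3<1/2$, which the definition allows; for the GOE the smallest bulk gap is in fact of order $n^{-3/2}$. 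So estimating $\mathbb{E}\#\{k\le\omega p:\text{gap}_k<s\cdot\text{scale}\}$ can only confirm that the event fails, and no Wegner-type input changes that.

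The edge condition is weaker but still incompatible with the lemma. Since the $\mathrm{TW}_1$ density is positive, your bound $\mathbb{P}(|\widehat{\lambda}_1-E_+|<n^{-2/3-\varepsilon_3})=\mathrm{O}(n^{-\varepsilon_3})$ is attained, e.g.\ for Gaussian data. Hence no $D>\varepsilon_3$ is possible, and your final step $D=\min_k D_k$ cannot produce a large $D$ under any reading of the statement.

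What can be proved is a weakened version matched to how item (3) is actually used. In the proof of part (3) of Theorem \ref{thm_main_bounded_conditional} it enters only through $\widehat{\lambda}_1-\widehat{\lambda}_k\gtrsim k^{2/3}n^{-2/3-\varepsilon_3}$ for $2\le k\le(1-\omega)p$, the top gap, and $|\widehat{\lambda}_1-E_+|\gtrsim n^{-2/3-\varepsilon_3}$. Replace the consecutive-gap condition by the single bound $\widehat{\lambda}_1-\widehat{\lambda}_2\ge n^{-2/3-\varepsilon_3/2}$, with $\varepsilon_2\le\varepsilon_3/4$.
\begin{itemize}
\item For $k\le n^{3\varepsilon_2}$, the top-gap bound gives the cumulative bound directly.
\item For larger $k$, rigidity in a neighbourhood of $E_+$ together with monotonicity of the $\widehat{\lambda}_k$ gives $\widehat{\lambda}_1-\widehat{\lambda}_k\gtrsim k^{2/3}n^{-2/3}$.
\end{itemize}
Edge universality and edge level repulsion then give $\mathbb{P}(\Omega_X)=1-\mathrm{O}(n^{-c})$ for a small $c>0$ depending on $\varepsilon_3$. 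This suffices, because every conditional theorem is claimed only with probability $1-\mathrm{o}(1)$.

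Two smaller corrections:
\begin{itemize}
\item Assumption \ref{assum_additional_techinical} is a condition on $m_{2n,c}(L_+)$ for the bootstrapped law. It is not the regularity $\min_i|1+\sigma_i m_n^{\mathtt{S}}(E_+)|\ge\tau$ of the edge of $S$ that your rigidity and Tracy--Widom inputs need; the paper takes that implicitly from (\ref{eq_twresultoriginal}).
\item In item (6), a union bound with an $n^{\epsilon}$ loss does not give the stated $\log^{\varepsilon_7}n$ factor. Concentration of the eighth-power sum around its mean $\asymp n^{-3}$, with fluctuations $\mathrm{O}_\prec(n^{-7/2})$, does.
\end{itemize}
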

\begin{proof}
The results in Definition \ref{def_OmegaX} have been established in \cite{Bao2015,Alex2014,ding2024eigenvector,Ding&Yang2018,knowles2013isotropic,knowles2017anisotropic}.
\end{proof}

\begin{remark}\label{rmk_notationconventions} In this remark, we provide several probabilistic clarifications and conventions, as there are two sources of randomness: one arising from the data matrix $X$, and the other from the multipliers in $D$.

Since the multiplier matrix $D$ is assumed to be independent of the data matrix $X$, we can realize them on a common product probability space. More specifically, let
\begin{align*}
    (\widetilde{\Omega}_X,\mathcal{F}_X,\mathbb{P}_X)
    \qquad \text{and} \qquad
    (\widetilde{\Omega}_D,\mathcal{F}_D,\mathbb{P}_D),
\end{align*}
be probability spaces on which $X$ and $D$ are defined, respectively. We then consider the product probability space
\[
(\widetilde{\Omega},\mathcal{F},\mathbb{P})
:=
(\widetilde{\Omega}_X \times \widetilde{\Omega}_D,\ \mathcal{F}_X \otimes \mathcal{F}_D,\ \mathbb{P}_X \otimes \mathbb{P}_D),
\]
where $\widetilde{\Omega}_X \times \widetilde{\Omega}_D$ denotes the Cartesian product of the two sample spaces, $\mathcal{F}_X \otimes \mathcal{F}_D$ denotes the product $\sigma$-algebra, and $\mathbb{P}_X \otimes \mathbb{P}_D$ denotes the product probability measure. We regard $X$ and $D$ as coordinate random elements on $\widetilde{\Omega}$, namely,
\begin{align*}
    X(\omega_X,\omega_D) := X(\omega_X),
    \qquad
    D(\omega_X,\omega_D) := D(\omega_D).
\end{align*}
In particular, $X$ and $D$ are independent by construction.

Building on the above product probability space, all random quantities considered below— including $Q=\Sigma^{1/2}XD^2X^*\Sigma^{1/2}$, $\lambda_1(Q)$, and $\xi_{(1)}^2$—are understood as random variables defined on $(\widetilde{\Omega},\mathcal{F},\mathbb{P})$. By convention, $\mathbb{P}$ denotes the joint probability measure on this product space.
If an event depends only on $X$ (respectively, only on $D$), then its probability under $\mathbb{P}$ coincides with the corresponding marginal probability under $\mathbb{P}_X$ (respectively, $\mathbb{P}_D$). Moreover, $\mathbb{P}(\,\cdot\,\mid X)$ denotes the conditional probability with respect to $\sigma(X)$, as introduced above. 

Based on the above discussion, let $\Omega_D \in \sigma(D)$ and $\Omega_X \in \sigma(X)$ denote the “good” events introduced in Definition~\ref{defn_probset} and Definition~\ref{def_OmegaX}, respectively. Since both are events in the common $\sigma$-algebra $\mathcal{F}$, their intersection $\Omega_X \cap \Omega_D$ is well defined. Moreover, by independence,
\begin{align*}
    \mathbb{P}(\Omega_X \cap \Omega_D)
    =
    \mathbb{P}(\Omega_X)\,\mathbb{P}(\Omega_D).
\end{align*}

\end{remark}

\section{Proof of averaged local laws}\label{appendxi_locallawproof}

{
In this section, we prove the local laws stated in Theorems 
\ref{thm_unboundedcaselocallaw}--\ref{thm_averagedlocallaw_boundedmultiplier_conditionalonX}. }

\subsection{Unbounded support and unconditional setting: proof of Theorem \ref{thm_unboundedcaselocallaw}}\label{appendix_sec_prooflocallawunbounded}

In this section, we will prove Theorem \ref{thm_unboundedcaselocallaw}. Due to similarity, we focus on the proof of part 1 and briefly discuss that of part 2. The proof contains two steps. In the first step, we establish the local laws for $Q$ outside the bulk of the spectrum on the domain $\widetilde{\mathbf{D}}_{u}$ denoted as follows   
\begin{equation}\label{eq_spectraldomainonereduced}
\widetilde{\mathbf{D}}_{u  } \equiv \widetilde{\mathbf{D}}_{u }(\mathtt C):=\left\{z=E+\ri\eta: 0<E-\vartheta_1 \leq \mathtt C d_{1}, \ n^{-2/3}\leq\eta\leq \mathtt C \vartheta_1 \right\},
\end{equation}
where the parameters $\mathtt{C}$, $d_1$ and $\vartheta_1$ are same as in \eqref{eq_spectraldomainone}.
To be specific, we will establish the following proposition. 
\begin{proposition} \label{eq_propooursidebulk} Under the assumptions of Theorem \ref{thm_unboundedcaselocallaw}, the following statements holds uniformly on $\widetilde{\mathbf{D}}_{u}$ in (\ref{eq_spectraldomainonereduced}) when conditional on the event $\Omega_D$ in Definition \ref{defn_probset}. 
\begin{enumerate}
\item[(1).] If {\normalfont Case (a)} of (i) of  Assumption \ref{assum_D} holds, we have that 
\begin{align}\label{eq_entrywiselocallaw}
\mathcal{G}_{ij}(z)=-\frac{\delta_{ij}}{z\big(1+m_{1n}(z)\xi_i^2\big)}+\rO_{\prec} \left( n^{-1/2-1/\alpha} \right),
\end{align}
where $\delta_{ij}$ is the Dirac delta function so that $\delta_{ij}=1$ when $i=j$ and $\delta_{ij}=0$ when $i \neq j.$ Moreover, we have that 
\begin{equation*}
m_1(z)=m_{1n}(z)+\rO_{\prec}\left( n^{-1/2-2/\alpha}\right), \  m_2(z)=m_{2n}(z)+\rO_{\prec}\left( n^{-1/2-1/\alpha}\right), 
\end{equation*}
and 
\begin{equation}\label{eq_averagedlawused}
m_{Q}(z)=m_{n}(z)+\rO_{\prec} \left( n^{-1/2-2/\alpha} \right).
\end{equation}
\item[(2).] If {\normalfont Case (b)} of (i) of Assumption \ref{assum_D} holds, we have that the results in part (1) hold by setting $\alpha=\infty.$
\end{enumerate}  
\end{proposition}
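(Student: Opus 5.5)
We will prove Proposition \ref{eq_propooursidebulk} by the standard Schur-complement/self-consistent-equation route, conditioning throughout on a fixed realization $\{\xi_i^2\}\in\Omega_D$, so that $D$ is deterministic and all randomness comes from $X$. We focus on Case (a); Case (b) is identical with $n^{-1/\alpha}$ replaced by $1$.

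\textbf{Step 1 (a priori bounds outside the spectrum).} For $z\in\widetilde{\mathbf{D}}_u$ we have $E>\vartheta_1$. The goal is to show that, with high probability, $E-\lambda_1(Q)\gtrsim d_1$ and $\|G(z)\|\lesssim d_1^{-1}$. We would first apply the argument to the minor $Q^{(1)}$. With the largest multiplier removed, $\|Q^{(1)}\|\le \xi_{(2)}^2\|\Sigma^{1/2}X^{(1)}\|^2$ plus fluctuations. Using Definition \ref{defn_probset}(a) for the gap $\xi_{(1)}^2-\xi_{(2)}^2$, and the fact that the column contribution concentrates at $\varphi\xi_{(2)}^2$, this gives $\|Q^{(1)}\|\le \vartheta_1-c d_1$. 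The bounds of Lemma \ref{lem: basic bounds} on $m_{1n},m_{2n}$ then provide the deterministic controls $|1+\xi_i^2m_{1n}(z)|\gtrsim 1$ for $i\ne(1)$, and $|z(1+\xi_{(1)}^2m_{1n})|\gtrsim d_1$ for the top index.

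\textbf{Step 2 (entrywise law via Lemma \ref{lem: Resolvent}).} We write $\mathcal G_{ii}=-[z(1+\mathbf y_i^*G^{(i)}\mathbf y_i)]^{-1}$ with $\mathbf y_i^*G^{(i)}\mathbf y_i=\xi_i^2\mathbf x_i^*\Sigma^{1/2}G^{(i)}\Sigma^{1/2}\mathbf x_i$. The large deviation bound of Lemma \ref{lem:large deviation}, combined with Ward's identity (\ref{lem:Wald}), gives
\[
\mathbf x_i^*\Sigma^{1/2}G^{(i)}\Sigma^{1/2}\mathbf x_i=m_1^{(i)}+\rO_\prec\bigl(n^{-1}\|G^{(i)}\Sigma\|_F\bigr)=m_1^{(i)}+\rO_\prec(n^{-1/2}E^{-1}).
\]
Replacing $m_1^{(i)}$ by $m_1$ costs $\rO(1/(n\eta))$ by (\ref{lem:trace_difference}). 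Off-diagonal entries are handled the same way through the second identity in Lemma \ref{lem: Resolvent}. Combining these, the errors in $\mathcal G_{ij}$ are of size $n^{-1/2}E^{-1}\cdot \xi_i^2$-weighted factors, and since $E\asymp\vartheta_1\asymp n^{1/\alpha}$ they are $\rO_\prec(n^{-1/2-1/\alpha})$.

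\textbf{Step 3 (self-consistent equation and stability).} Averaging, we get $m_2=\frac1n\sum_i\xi_i^2\mathcal G_{ii}$. The identity $G=-z^{-1}(1+m_2\Sigma)^{-1}+\text{error}$, obtained from the $p\times p$ resolvent expansion, then yields $F_n(m_1,z)=\rO_\prec(n^{-1/2-2/\alpha})$ with $F_n$ as in (\ref{eq:F(m,z)}). The extra factor $n^{-1/\alpha}$ arises because $m_1\asymp E^{-1}$. The delicate point is that the error terms are averages of centered quadratic-form fluctuations, so a fluctuation-averaging argument (or simply the $E^{-1}$ scaling) is needed to obtain the $n^{-1/2}$ rate rather than $(n\eta)^{-1}$. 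Since $\eta\ge n^{-2/3}$ and $E$ is large, the $1/(n\eta)$ terms carry the factor $E^{-2}$ and are negligible.

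Stability is then used to convert $F_n(m_1,z)=\rO_\prec(\cdot)$ into $|m_1-m_{1n}|\prec$ the same rate. Concretely, we show $|\partial_m F_n(m_{1n},z)|\gtrsim 1$ on $\widetilde{\mathbf D}_u$; this uses Lemma \ref{lem: basic bounds}, since $\operatorname{Re}m_{1n}\asymp -E^{-1}$ keeps us away from the edge. A continuity argument in $\eta$, starting from $\eta=\mathtt C\vartheta_1$ where everything is trivially bounded, completes this step. Then $m_2$, $m_Q$ and (\ref{eq_entrywiselocallaw}) follow by substituting back.

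\textbf{Main obstacle.} The main obstacle is the single top index $(1)$. For it, $1+\xi_{(1)}^2m_{1n}(z)$ is only of size $d_1/\vartheta_1$, which is small, so $\mathcal G_{(1)(1)}$ is large and the error in that entry is amplified. We would handle this by keeping the $(1)$-term separate in the averages. Its contribution to $m_1$ enters only with weight $1/n$, which should preserve the averaged rate; this explains why the theorem is ultimately stated for $m_1^{(1)}$. If that bookkeeping fails, we would instead prove everything for $Q^{(1)}$ directly, where all denominators are of order one.
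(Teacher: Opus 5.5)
Your Step~1 is where the argument fails, and Steps~2--3 depend on it. The bound $\|G^{(i)}\Sigma\|_F\prec\sqrt n\,E^{-1}$, the replacement of $m_1^{(i)}$ by $m_1$, and the lower bound on $|1+\mathbf y_{(1)}^*G^{(1)}\mathbf y_{(1)}|$ all presuppose that no eigenvalue of $Q$ (or of its minors) comes within $\sim d_1$ of $\widetilde{\mathbf D}_u$. Your justification does not deliver this.

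First, the crude bound $\|Q^{(1)}\|\le\xi_{(2)}^2\|\Sigma\|\,\|X\|^2\approx\sigma_1(1+\sqrt\phi)^2\xi_{(2)}^2$ is in general not below $\vartheta_1\approx\phi\bar\sigma_1\xi_{(1)}^2$. Under (\ref{ass3.1}) the ratio $\xi_{(2)}^2/\xi_{(1)}^2$ has a nondegenerate limit law on $(0,1)$, and under (\ref{ass3.2}) it tends to $1$. Meanwhile $\Omega_D$ only guarantees $\xi_{(1)}^2-\xi_{(2)}^2\gtrsim n^{1/\alpha}\log^{-c_{\mathsf p,0}}n$. Already for $\Sigma=I$, $\phi=1$ you would need $\xi_{(2)}^2<\xi_{(1)}^2/4$.

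Second, the sharper claim that the top of $Q^{(1)}$ sits near $\varphi\xi_{(2)}^2$ is an outlier-location statement of exactly the type being proved, now for $Q^{(1)}$ (and recursively for $Q^{(12)},\dots$). Even granting it, placing $\lambda_1(Q)$ below $\vartheta_1-cd_1$ requires analysing $1+\mathbf y_{(1)}^*G^{(1)}(\lambda)\mathbf y_{(1)}=0$ with an averaged law for $m_1^{(1)}$ on the real axis. In the paper, that law (Theorem \ref{thm_unboundedcaselocallaw}), the exclusion Lemma \ref{lem: upper bound for eigenvalues}, and the ordering $\vartheta_1>\lambda_1>\vartheta_2>\lambda_1^{(1)}$ are all \emph{consequences} of the present proposition. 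As written, your argument is circular.

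Two smaller points. (\ref{lem:trace_difference}) only gives $|m_1^{(i)}-m_1|\lesssim(n\eta)^{-1}$, which is $n^{-1/3}$ at $\eta=n^{-2/3}$. What you need is $\frac1n\mathbf y_i^*G\Sigma G^{(i)}\mathbf y_i\prec\xi_i^2n^{-1-2/\alpha}$, and that again needs resolvent control. On the other hand, no fluctuation averaging is required: once $\|G^{(i)}\|_F\prec n^{1/2-1/\alpha}$, the large deviation bound alone gives the $n^{-1/2}$ rate.

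The missing idea is that the resolvent bounds must come out of the entrywise law inside the bootstrap, not from spectral information. The paper proceeds in three steps.

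\begin{enumerate}
\item At $\eta=\mathtt C\vartheta_1$ it uses only the trivial bounds $\|G^{(\mathcal T)}\|,\|\mathcal G^{(\mathcal T)}\|\le\eta^{-1}$ and $|1+\xi_i^2m_1|\ge1-C\mathtt C^{-1}$ (Lemma \ref{lem: average local law for large eta}).
\item It assumes (\ref{eq_entrywiselocallaw}) at a point $z$ (Lemma \ref{lem: self improvement}).
  \begin{itemize}
  \item The identity $1+\xi_{(1)}^2m_{1n}(\vartheta_1)=-d_1m_{1n}(\vartheta_1)$ and Lemma \ref{lem: basic bounds} give $\mathcal G_{(1)(1)}=\mathrm{O}_\prec(d_1^{-1})$, and $\mathcal G_{ii}=\mathrm{O}_\prec(n^{-1/\alpha})$ for the other indices.
  \item The third identity of Lemma \ref{lem: Resolvent} passes these to the minors $\mathcal G^{(i)}$.
  \item Since $Q^{(i)}$ and $\mathcal Q^{(i)}$ share their nonzero eigenvalues, $\|G^{(i)}\|_F\le\|\mathcal G^{(i)}\|_F+\sqrt{|n-p|}/|z|\prec n^{1/2-1/\alpha}$.
  \end{itemize}
  These bounds replace your Step~1. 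With them one re-derives both self-consistent equations, keeping the top index separate as you propose, closes by stability, and recovers (\ref{eq_entrywiselocallaw}) at $z$.
\item A lattice in $\eta$ of mesh $n^{-3}$, together with the Lipschitz bound on $\mathcal G$, carries the entrywise law from $\eta=\mathtt C\vartheta_1$ down to $\eta=n^{-2/3}$.
\end{enumerate}

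Your Step~3 already mentions continuity from $\eta=\mathtt C\vartheta_1$. To close the gap, that argument has to propagate the entrywise law itself, and with it the bounds on $\mathcal G_{(1)(1)}$ and on the Frobenius norms. Step~1 should then be dropped.
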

\quad  Once Proposition \ref{eq_propooursidebulk} is proved, we can quantify the rough locations of the eigenvalues of $Q$ as summarized in the following lemma. 
 
% {\color{red}[NEED TO BE MORE CAREFUL ON THE CASE $\alpha=\infty$. Need to go over the proof below again.]}
\begin{lemma}\label{lem: upper bound for eigenvalues}
Suppose Assumptions \ref{assum_model}, \ref{assumption_techincial} and (i) of Assumption \ref{assum_D} hold. For some sufficiently large constant $C>0,$ with high probability, for any fixed realization $\{\xi_i^2\} \in \Omega_D$ in Definition \ref{defn_probset}, for all $1 \leq i \leq \min\{p,n\},$ we have that 
\begin{equation}\label{eq_boundestimateone}
\lambda_i(Q) \notin (\vartheta_1, C n^{1/\alpha} \log n), \ \text{if \normalfont{Case (i)-a} of Assumption \ref{assum_D} holds},  
\end{equation}
and 
\begin{equation}\label{eq_boundestimatetwo}
\lambda_i(Q) \notin (\vartheta_1, C  \log^{1/\beta} n), \ \text{if {\normalfont Case (i)-b of Assumption \ref{assum_D} holds}}.  
\end{equation}
%
%\ref{ass1},\ref{ass2},\ref{ass3.1},\ref{ass3.2} and events $\Omega_n$ hold. With high probability there is no eigenvalue of $\mathcal{W}$ in $(\lambda_{(1)},Cn^{2/\alpha}\log n)$ for $\alpha\in(0,+\infty)$ and no eigenvalues in $(\lambda_{(1)},C\log^{K}n)$ for $\alpha=+\infty$.
\end{lemma}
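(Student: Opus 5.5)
We plan to derive Lemma \ref{lem: upper bound for eigenvalues} from the averaged local law (\ref{eq_averagedlawused}) of Proposition \ref{eq_propooursidebulk} on the outside-bulk domain $\widetilde{\mathbf{D}}_u$, together with an a priori norm bound for large energies. We treat Case (i)-a in detail; Case (i)-b follows by taking $\alpha=\infty$, using $d_1$ and $e$ as in (\ref{eq_firstddefinition}) and (\ref{e2_definition}) and the $\Omega_{\mathsf{e}}$ bounds in place of $\Omega_{\mathsf{p}}$.

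\emph{Step 1: large energies.} On $\Omega_D$ and the high-probability event $\|XX^*\|\leq C$, we have $\lambda_1(Q)\leq \|\Sigma\|\,\xi_{(1)}^2\,\|XX^*\|\leq C' n^{1/\alpha}\log n$ by (\ref{def1}). Hence, for $C$ large enough, no eigenvalue lies in $[Cn^{1/\alpha}\log n,\infty)$, and it suffices to exclude eigenvalues from $(\vartheta_1,\vartheta_1+\mathtt{C}d_1]$ and from the middle range $[\vartheta_1+\mathtt{C}d_1, Cn^{1/\alpha}\log n)$. Since $d_1\ll \vartheta_1\asymp \xi_{(1)}^2$, we would cover the middle range by enlarging $\mathtt{C}$ or repeating the local law with $\vartheta_1$ replaced by a moving reference energy. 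Lemma \ref{lem: basic bounds} still applies for $E\asymp n^{1/\alpha}$ above $\vartheta_1$, because there $1+\xi_j^2 m_{1n}(E)$ stays bounded away from zero for all $j$.

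\emph{Step 2: exclusion via $\operatorname{Im} m_Q$.} We use the standard argument. If some $\lambda_i(Q)=E\in(\vartheta_1,\vartheta_1+\mathtt{C}d_1]$, set $z=E+\ri\eta$ with $\eta=n^{-2/3}$. Then
\begin{equation*}
\operatorname{Im} m_Q(z)\geq \frac{1}{p}\frac{\eta}{(\lambda_i-E)^2+\eta^2}=\frac{1}{p\eta}\asymp n^{-1/3}.
\end{equation*}
On the other hand, Lemma \ref{lem: basic bounds}(3) gives $\operatorname{Im} m_n(z)=\rO(\eta E^{-2})$, and by (\ref{eq_averagedlawused}) we get $\operatorname{Im} m_Q(z)\leq \rO(\eta E^{-2})+n^{\epsilon}n^{-1/2-2/\alpha}\ll n^{-1/3}$, which is a contradiction. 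To make the statement hold simultaneously for all $E$, we take a lattice of mesh $n^{-10}$ in $E$, apply a union bound (stochastic domination is uniform on $\widetilde{\mathbf{D}}_u$), and use the Lipschitz continuity of $m_Q$ and $m_n$ in $z$ at scale $\eta^{-2}$.

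\emph{Main obstacle.} We expect the delicate point to be the bound on $\operatorname{Im} m_n$ near the left end $E\downarrow\vartheta_1$. There $1+(\xi_{(1)}^2+d_1)m_{1n}(E)\approx 0$, so the deterministic density could a priori be nonnegligible, since $\vartheta_1$ sits close to the outlier-like singularity generated by $\xi_{(1)}^2$. We plan to handle this using the gap condition $\xi_{(1)}^2-\xi_{(2)}^2\gtrsim n^{1/\alpha}\log^{-c_{\mathsf{p},0}}n$ and the choice $d_1\ll$ this gap. Together these imply that for $E>\vartheta_1$ the term $j=(1)$ in $m_{2n}$ satisfies $|1+\xi_{(1)}^2m_{1n}(z)|\gtrsim d_1/E$, so $\operatorname{Im} m_{2n}(z)$ and hence $\operatorname{Im} m_n(z)$ remain $\rO(\eta\,\mathrm{poly}\log n/E)$, which is still far below $n^{-1/3}$. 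If this estimate turns out to be too weak at the very edge, we would instead work with the minor $Q^{(1)}$, for which Theorem \ref{thm_unboundedcaselocallaw} applies without the singular term, and then recover $Q$ through the rank-one interlacing $\lambda_{i+1}(Q)\leq\lambda_i(Q^{(1)})$. In that case $\lambda_1(Q)$ is located through the master equation, and only the remaining eigenvalues need to be excluded.
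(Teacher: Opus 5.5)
Your Step 2 is exactly the paper's proof: assuming an eigenvalue $E$ in the interval, the choice $z=E+\ri n^{-2/3}$ gives $\operatorname{Im} m_Q(z)\gtrsim n^{-1/3}$ from the spectral decomposition, while Lemma \ref{lem: basic bounds}(3) together with (\ref{eq_averagedlawused}) gives $\operatorname{Im} m_Q(z)\prec n^{-1/2-2/\alpha}$. Your ``main obstacle'' near $E\downarrow\vartheta_1$ is already covered by Lemma \ref{lem: basic bounds}(3), which holds uniformly on $\mathbf{D}_u$.

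The paper simply asserts $z\in\widetilde{\mathbf{D}}_u$ for the whole interval, so your lattice/union bound and your separate treatment of $[\vartheta_1+\mathtt{C}d_1,\,Cn^{1/\alpha}\log n)$ only make explicit what it leaves implicit. On that middle range, rerunning the local law at larger $E$ is the right fix. Enlarging the constant $\mathtt{C}$ would not suffice, since $d_1=n^{1/\alpha}\log^{-\epsilon_{\mathsf{p}}}n\ll n^{1/\alpha}\log n$.
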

\begin{proof}
Due to similarity, we focus our arguments on (\ref{eq_boundestimateone}). We prove the results by contradiction. 
%We use contradiction to prove this lemma and we take the case $\alpha\in(0,+\infty)$ for example. 
Assume that there is an eigenvalue of $Q$ in the interval as in (\ref{eq_boundestimateone}), denoted as $\widehat{\lambda}$. Let $z=\widehat{\lambda}+\ri n^{-2/3}.$ Since $z\in\widetilde{\mathbf{D}}_u \subset \mathbf{D}_u$ as in (\ref{eq_spectraldomainone}), by Lemma \ref{lem: basic bounds}, we obtain $\operatorname{Im}m_n(z)=\eta \widehat{\lambda}^{-2}$. According to (\ref{def1}) and \eqref{eq_mu1part}, we have on event $\Omega_D$
\begin{equation}\label{eq_lowerboundmu1}
\vartheta_1 \gtrsim n^{1/\alpha} \log^{-1} n.
\end{equation} 
Then, according to (\ref{eq_averagedlawused}), together with \eqref{eq_lowerboundmu1} and assumption of $\widehat{\lambda}$, we observe that 
\begin{equation}\label{eq: value of m_W outside spectrum}
    \begin{split}
        \operatorname{Im}m_{Q}(z)&=\operatorname{Im}m_n(z)+\operatorname{Im}(m_{Q}(z)-m_n(z))\\
        &\prec n^{-2/\alpha-2/3}+n^{-1/2-2/\alpha} \prec n^{-1/2-2/\alpha}.
    \end{split}
\end{equation}
On the other hand, we have
\begin{equation*}
\operatorname{Im}m_{Q}(z)=\frac{1}{n}\sum_i\frac{\eta}{(\lambda_i-\widehat{\lambda})^2+\eta^2}\geq \frac{1}{n\eta}=n^{-1/3},
\end{equation*}
which contradicts \eqref{eq: value of m_W outside spectrum}. Therefore, there is no eigenvalue in this interval. Similarly, we can prove (\ref{eq_boundestimatetwo}). The only difference is that (\ref{eq_lowerboundmu1}) should be replaced by $\vartheta_1 \gtrsim  \log^{1/\beta} n$ according to (\ref{def3}) so that the error rate in (\ref{eq: value of m_W outside spectrum}) should be updated to $n^{-1/2}.$ This completes the proof. 
\end{proof}

\quad Armed with the above lemma, we can proceed to the second step to conclude the proof of Theorem \ref{thm_unboundedcaselocallaw}.  In what follows, we first provide the proof of Proposition \ref{eq_propooursidebulk} in Section \ref{prop_subsubsubsubsubsub}. After that, we prove Theorem \ref{thm_unboundedcaselocallaw} in Section \ref{sec_proofa2}.

\subsubsection{Proof of Proposition \ref{eq_propooursidebulk}}\label{prop_subsubsubsubsubsub}

We first prepare two lemmas. The first is to establish Proposition \ref{eq_propooursidebulk} on a large scale of $\eta.$

\begin{lemma}[Average local law for large $\eta$]\label{lem: average local law for large eta} Proposition \ref{eq_propooursidebulk} holds when $\eta=\mathtt{C} \vartheta_1.$
\end{lemma}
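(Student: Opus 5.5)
At $\eta=\mathtt C\vartheta_1$ the argument is non-perturbative in the spectral sense: $|z|\asymp\eta\asymp\vartheta_1$ and every resolvent entry is trivially bounded. We have $\|G\|,\|\mathcal G\|\le\eta^{-1}\asymp\vartheta_1^{-1}$, and on $\Omega_D$, by (\ref{def1}) or (\ref{def3}), $\vartheta_1\gtrsim n^{1/\alpha}\log^{-1}n$ (respectively $\log^{1/\beta}n$). The plan is to derive the self-consistent equations for $\mathcal G_{ii}$ and $m_1$ from the Schur complement and to show that (\ref{eq_systemequationsm1m2}) is stable at this scale. A single step then suffices, and no continuity or bootstrapping argument in $\eta$ is needed.

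First, by Lemma \ref{lem: Resolvent}, $\mathcal G_{ii}=-[z(1+\mathbf y_i^*G^{(i)}\mathbf y_i)]^{-1}$. Since $\mathbf y_i=\xi_i\Sigma^{1/2}\mathbf x_i$ with $\mathbf x_i$ independent of $G^{(i)}$, Lemma \ref{lem:large deviation} gives
\[
\mathbf y_i^*G^{(i)}\mathbf y_i=\xi_i^2 m_1^{(i)}+\rO_\prec\big(\xi_i^2 n^{-1}\|\Sigma^{1/2}G^{(i)}\Sigma^{1/2}\|_F\big).
\]
By (\ref{lem:Wald}), or simply $\|G^{(i)}\|\le\eta^{-1}$, the Frobenius norm is $\rO(\sqrt p\,\vartheta_1^{-1})$. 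The error is therefore $\rO_\prec(\xi_i^2 n^{-1/2}\vartheta_1^{-1})$. By (\ref{lem:trace_difference}), $m_1^{(i)}=m_1+\rO((n\eta)^{-1})$. Because $|m_1|\lesssim\vartheta_1^{-1}$, the quantity $1+\xi_i^2m_1$ stays away from $0$ in modulus: its imaginary part is of order $\xi_i^2\,\mathrm{Im}\,m_1$, and $\mathrm{Im}\,m_1\gtrsim\eta/|z|^2\asymp\vartheta_1^{-1}$ for the largest $\xi_i^2$, while it is $\approx1$ for the others. This yields
\[
\mathcal G_{ii}=-\frac{1}{z(1+\xi_i^2m_1)}+\rO_\prec\big(n^{-1/2}\vartheta_1^{-1}\big).
\]
With $\vartheta_1\gtrsim n^{1/\alpha-o(1)}$ this matches (\ref{eq_entrywiselocallaw}) up to the $n^\epsilon$ slack absorbed by $\prec$. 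The off-diagonal entries are handled by the second identity of Lemma \ref{lem: Resolvent} and the bilinear large deviation bound, which give an error of the same order.

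Next, I close the equations. Summing gives $m_2=\frac1n\sum\xi_i^2\mathcal G_{ii}$, and using $G=\big(-z(I+m_2\Sigma)\big)^{-1}+$ error, which follows from the standard identity $zG=\dots$ obtained by Sherman--Morrison on each column, I get $m_1=\frac1n\sum_i\sigma_i/(-z(1+\sigma_i m_2))+$ error. Here the weights $\xi_i^2$ are averaged, and on $\Omega_D$, $\frac1n\sum\xi_i^2<\infty$. So $m_1$ satisfies $F_n(m_1,z)=\rO_\prec(n^{-1/2}\vartheta_1^{-2})$, with $F_n$ as in (\ref{eq:F(m,z)}). The extra factor $\vartheta_1^{-1}$ comes from the prefactor $1/z$, which gives the rate $n^{-1/2-2/\alpha}$.

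The main obstacle is the stability of $F_n(\cdot,z)$ at $m_{1n}(z)$, i.e. showing $|\partial_mF_n(m_{1n},z)|\gtrsim1$ so that I can invert. For $|z|\asymp\vartheta_1$ I would expand: $\partial_mF_n=-1+\rO\big(\frac1n\sum\xi_j^4|1+\xi_j^2m|^{-2}\,|z|^{-2}\big)$. The only dangerous term is $j=(1)$, where $\xi_{(1)}^2\sim\vartheta_1$, and that term is divided by $n|z|^2$, hence negligible. The remaining terms are controlled by Lemma \ref{lem: basic bounds}. Given stability, a Lipschitz/contraction argument in a ball of radius $n^{-1/2-2/\alpha+\epsilon}$ around $m_{1n}$ gives $m_1=m_{1n}+\rO_\prec(n^{-1/2-2/\alpha})$. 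Substituting back gives the laws for $m_2$, $\mathcal G_{ij}$ and $m_Q$, through $m_Q=\frac1p\mathrm{tr}\,G$ and the analogous expression for $m_n$. Case (b) is identical with $\alpha=\infty$.
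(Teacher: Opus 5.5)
Your proposal is correct and follows essentially the paper's proof. You expand $\mathcal G_{ii}$ via the Schur complement with $Z_i\prec\xi_i^2/(\sqrt n\eta)$, obtain the $m_1$ equation from the resolvent decomposition of $G$ around $-z^{-1}(I+m_2\Sigma)^{-1}$, and close by stability because the linearization coefficient is $\rO(n^{-2/\alpha})$. The paper gets $|1+\xi_i^2 m_1|\geq 1-C\mathtt{C}^{-1}$ directly from $|m_1|\leq C\eta^{-1}$, $\xi_i^2\lesssim\vartheta_1$ and $\mathtt{C}$ large, which is simpler than your imaginary-part argument and needs no a priori bound on $\lambda_1(Q)$.

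One step needs fixing: the contraction cannot be run on a small ball around $m_{1n}$, since that ball is not yet known to contain $m_1$. Run it instead on the a priori region $|m|\lesssim(\mathtt{C}\vartheta_1)^{-1}$, which contains both $m_1$ and $m_{1n}$ and on which $\partial_m F_n=-1+\ro(1)$.
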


\begin{proof}
 In the sequel, without loss of generality, we assume that $\xi_1^2 \geq \xi_2^2 \geq \cdots \geq \xi_n^2.$ According to (\ref{def1}), (\ref{eq_mu1part}) and the definition of $d_1$ in (\ref{eq_firstddefinition}), we have for $z\in\widetilde{\mathbf{D}}_u$ with $\eta=\mathtt{C} \vartheta_1$, 
\begin{equation}\label{eq_Econtrol}
{
E \asymp \vartheta_1\gtrsim\xi_1^2,\quad \max\left\{\|G^{(\mathcal{T})}\|,\|\mathcal{G}^{(\mathcal{T})}\|\right\}\leq\eta^{-1}=\mathtt{C}^{-1} \vartheta_1^{-1}}
\end{equation}
for any finite $\mathcal{T}\subset\{1,\dots, n\}$. With the above preparation, in the sequel, we establish the system equations of $m_1$ and $m_2$ using Lemma \ref{lem: Resolvent}. We start with $m_2.$ By Lemma \ref{lem: Resolvent} and the definition of $m_2$ in (\ref{eq_m1m2}), we have
\begin{gather}\label{eq: decomp m_2}
    {
    m_2=\frac{1}{n}\sum_{i=1}^n\frac{\xi^2_i}{-z-z\mathbf{y}^{*}_i G^{(i)}\mathbf{y}_i}=\frac{1}{n}\sum_{i=1}^n\frac{\xi^2_i}{-z(1+\xi^2_in^{-1}{\rm tr}G^{(i)}\Sigma+Z_i)},}\\
    Z_i=\mathbf{y}^{*}_i G^{(i)}\mathbf{y}_i-\xi^2_in^{-1}{\rm tr}G^{(i)}\Sigma. \nonumber
\end{gather}
As $\mathbf{y}_i$ is independent of $G^{(i)}$, by (1) of  Lemma \ref{lem:large deviation}, we see that 
\begin{equation}\label{eq: bound for Z}
    Z_i\prec\frac{\xi^2_i}{n}\|G^{(i)}\Sigma\|_F\leq\frac{\xi^2_i}{n}\|G^{(i)}\|\|\Sigma\|_F\prec\frac{\xi^2_i}{\sqrt{n}\eta}.
\end{equation}
Moreover, using the definition of $m_1$ in (\ref{eq_m1m2}) and resolvent identity, we obtain that for some constant $C>0$ 
\begin{equation}\label{eq_m1approximate}
    \frac{1}{n}{\rm tr}(G^{(i)}\Sigma)-m_1(z)=\frac{1}{n}\mathbf{y}_i^{*}G \Sigma G^{(i)}\mathbf{y}_i\leq C\frac{\xi^2_i}{n \eta^2}.
\end{equation}
On the other hand, by (\ref{eq_Econtrol}) and Definition \ref{defn_probset}, there exists some constant $C>C_{\mathsf{p},1}$ such that
\begin{equation*}
|1+\xi^2_i m_1|\geq 1-C \mathtt{C}^{-1}>0, 
\end{equation*} 
when $\mathtt{C}>0$ is chosen large. 
Then, on the event $\Omega_D$, (\ref{eq: decomp m_2}) reads that
\begin{equation}\label{eq: m_2 by m_1}
    m_2=\frac{1}{n}\sum_{i=1}^n\frac{\xi^2_i}{-z(1+\xi^2_im_1+\rO_{\prec}(\frac{\xi^2_i}{\sqrt{n}\eta}))}=\frac{1}{n}\sum_{i=1}^n\frac{\xi^2_i}{-z(1+\xi^2_im_1)}+\rO_{\prec}(n^{-1/2-1/\alpha}).
\end{equation}

Next, we consider $m_1.$ Decompose that 
\[
Q-zI=\sum_{i=1}^n\mathbf{y}_i\mathbf{y}_i^{*}+zm_2(z)\Sigma-z(I+m_2(z)\Sigma).
\]
Applying the resolvent expansion to the first order, we obtain
\[
G=-z^{-1}(I+m_2(z)\Sigma)^{-1}+z^{-1}G\left(\sum_{i=1}^n\mathbf{y}_i\mathbf{y}_i^{*}+zm_2(z)\Sigma\right)(I+m_2(z)\Sigma)^{-1}.
\]
Using the Shernman-Morrison formula, we have
\begin{equation}\label{eq_usefulformula}
G\mathbf{y}_i=\frac{G^{(i)}\mathbf{y}_i}{1+\mathbf{y}_i^{*} G^{(i)}\mathbf{y}_i}.
\end{equation}
Combining the above two identities and using the Lemma \ref{lem: Resolvent}, it follows that
\begin{equation}\label{eq: decomp of mathcal_G}
\begin{split}
    G&=-z^{-1}(I+m_2(z)\Sigma)^{-1}+\left[z^{-1}\sum_{i=1}^n\frac{G^{(i)}(\mathbf{y}_i\mathbf{y}_i^{*}-n^{-1}\xi^2_i\Sigma)}{1+\mathbf{y}_i^{*}G^{(i)}\mathbf{y}_i}(I+m_2(z)\Sigma)^{-1} \right]\\
    &+\left[z^{-1}\frac{1}{n}\sum_{i=1}^n\frac{(G^{(i)}-G)\xi^2_i\Sigma}{1+\mathbf{y}_i^{*}G^{(i)}\mathbf{y}_i}(I+m_2(z)\Sigma)^{-1} \right]\\
    &:=-z^{-1}(I+m_2(z)\Sigma)^{-1}+R_1+R_2.
\end{split}
\end{equation}
In what follows, we control the two error terms $R_1,R_2.$ For $R_1$, we notice that 
\begin{equation}\label{eq: decomp R_1}
    \begin{split}
    &\frac{z}{n}{\rm tr}(R_1\Sigma)=\frac{1}{n}\sum_i{\rm tr}\left(\frac{G^{(i)}(\mathbf{y}_i\mathbf{y}_i^{*}-n^{-1}\xi^2_i\Sigma)}{1+\mathbf{y}_i^{*} G^{(i)}\mathbf{y}_i}(I+m_2^{(i)}\Sigma)^{-1}\Sigma\right)\\
    &+\frac{1}{n}\sum_i{\rm tr}\left(\frac{ G^{(i)}(\mathbf{y}_i\mathbf{y}_i^{*}-n^{-1}\xi^2_i\Sigma)}{1+\mathbf{y}_i^{*} G^{(i)}\mathbf{y}_i}(I+m_2\Sigma)^{-1}(m_2^{(i)}-m_2)\Sigma(I+m^{(i)}_2\Sigma)^{-1}\Sigma\right)\\
    &:=\mathtt R_{11}+\mathtt R_{12}.
\end{split}
\end{equation}
Since $\|\mathcal G^{(i)}\| \leq\eta^{-1}$, using (\ref{eq_Econtrol}), on the event $\Omega_D$, we have that for some constant $C>0,$ 
\begin{equation}\label{eq_eqcontrolcontrolcontrolcontrol}
|m_2^{(i)}(z)|\leq\frac{1}{n}\sum_{j\neq i}\xi^2_j| \mathcal G^{(i)}_{jj}|\leq C \frac{\log^2 n}{n^{1/\alpha}}. 
\end{equation}
Moreover, according to (\ref{eq: bound for Z}), on the event $\Omega_D$, when $n$ is sufficiently large, we have for some constant $C>C_{\mathsf{p},1}$
\begin{equation}\label{eq_controlower}
|1+\mathbf{y}_i^{*} G^{(i)}\mathbf{y}_i| \asymp  |1+\xi_i^2 n^{-1} {\rm tr} G^{(i)} \Sigma | \geq 1- C \mathtt{C} ^{-1}>0,
\end{equation}
whenever $\mathtt{C}$ is chosen sufficiently large. Both above two estimations hold with high probability. Consequently, for all $i=1,\dots,n$, we have
\begin{align}\label{eq_controncontrolcontrol} 
  &{\rm tr}\Big(\frac{ G^{(i)}(\mathbf{y}_i\mathbf{y}_i^{*}-n^{-1}\xi^2_i\Sigma)}{1+\mathbf{y}_i^{*}  G^{(i)}\mathbf{y}_i}(I+m_2^{(i)}\Sigma)^{-1}\Sigma\Big)   \asymp {\rm tr} \left(\xi^2_i G^{(i)}(\mathbf{x}_i\mathbf{x}_i^{*}-n^{-1}I) (I+m_2^{(i)}\Sigma)^{-1} \Sigma^2 \right) \\
   &= \xi_i^2 \left( \mathbf{x}_i^* G^{(i)} (I+m_2^{(i)}\Sigma)^{-1} \Sigma^2 \mathbf{x}_i -n^{-1} {\rm tr} \left( G^{(i)} (I+m_2^{(i)}\Sigma)^{-1} \Sigma^2 \right) \right) \prec \xi_i^2 \frac{1}{\eta\sqrt{n}}\nonumber, 
%   &\le C_1  \xi^2_i\eta^{-1}\|(\mathbf{u}_i\mathbf{u}_i^{*}-n^{-1}I)\|_2\prec \xi^2_i\eta^{-1}n^{-1/2},
\end{align}
where in the third step we used (1) of Lemma \ref{lem:large deviation}.  Then, on the event $\Omega_D$ and using (\ref{eq_Econtrol}), we have $\mathtt{R}_{11} \prec n^{-1/2-1/\alpha}. $ For $\mathtt R_{12}$, using the definition in (\ref{eq_m1m2}), Lemma \ref{lem: Resolvent} and the definition of $\mathcal{G}^{(i)}$ (see (\ref{eq_trivialcontrol}) below), we find that  
\begin{equation}\label{eq_decompositionleavoneout}
m_2(z)-m^{(i)}_2(z)=\frac{1}{n}\sum_{j=1}^n \xi^2_j(\mathcal G_{jj}-\mathcal G^{(i)}_{jj})=\frac{1}{n}\sum_{j\neq i}\xi^2_j\frac{ \mathcal G_{ji} \mathcal G_{ij}}{\mathcal G_{ii}}+\frac{\xi_i^2(\mathcal{G}_{ii}-|z|^{-1})}{n}.
\end{equation}
Using Lemma \ref{lem: Resolvent} and Lemma \ref{lem:large deviation}, we observe that on the event $\Omega_D$, 
\begin{equation*}
  \frac{1}{\mathcal G_{ii}(z)}=-z-z\mathbf{y}_i^{*} G^{(i)}\mathbf{y}_i\prec |z|.
\end{equation*}
Moreover, Lemmas \ref{lem: Resolvent} and \ref{lem:large deviation} yield that 
\begin{equation*}  
  \mathcal G_{ij}(z)=z\mathcal G_{ii}(z)\mathcal G^{(i)}_{jj}(z)\mathbf{y}_i^{*} G^{(ij)}\mathbf{y}_j\prec |z|\eta^{-2} |\xi_i\xi_j| n^{-1}\|\mathcal{G}^{(ij)}\|_F\prec n^{-1/2}|z|\eta^{-3}|\xi_i\xi_j| ,\quad i\neq j.  
\end{equation*}
Combining the above bounds with (\ref{def1}), we see that
\begin{equation*}
 m_2(z)-m^{(i)}_2(z) \prec n^{-1-1/\alpha}.
\end{equation*}

Together with (\ref{eq_controlower}) and (\ref{eq_controncontrolcontrol}), we arrive at 
$\mathtt R_{12} \prec 1/(\eta^2 n^{3/2}).$
Summarizing the above bounds, we see that $(zn^{-1}){\rm tr}(R_1\Sigma)\prec n^{-1/2-1/\alpha}.$

For $R_2$, applying the Sherman–Morrison formula to $((G^{(i)})^{-1}+\mathbf{y}_i \mathbf{y}_i^*)^{-1},$ we obtain that 
\begin{gather}\label{eq_simimimimimi}
\begin{split}
    \frac{1}{n}\left|{\rm tr}\Big(\frac{(G^{(i)}-G)\Sigma(I+m_2\Sigma)^{-1}\Sigma}{1+\mathbf{y}_i^{*} G^{(i)}\mathbf{y}_i}\Big) \right|&=\frac{1}{n}\left|\frac{\mathbf{y}_i^{*} G^{(i)}\Sigma(I+m_2\Sigma)^{-1}\Sigma G \mathbf{y}_i}{1+\mathbf{y}_i^{*}G^{(i)}\mathbf{y}_i}\right|
 %   &\le \frac{\sigma_1^2\xi^2_i}{n\log n}\|\mathcal{G}\|^2_2\\
    \prec \frac{\xi_i^2}{n \eta^2},
\end{split}
\end{gather}
where in the second step we used Lemma \ref{lem:large deviation} and (\ref{eq_controlower}) and a discussion similar to (\ref{eq_eqcontrolcontrolcontrolcontrol}). Together with the definition of $R_2$ in (\ref{eq: decomp of mathcal_G}), on the event $\Omega_D$, we find that
\[
\begin{split}
  \frac{z}{n} \left|{\rm tr}(R_2\Sigma) \right|
  %&=|\frac{1}{n^2}\sum_i\xi^2_i {\rm tr}\Big(\frac{(\mathcal{G}^{(i)}-\mathcal{G})\Sigma(I+m_2\Sigma)^{-1}\Sigma}{(1+\mathbf{y}_i^{*}\mathcal{G}^{(i)}\mathbf{y}_i)}\Big)|\\
  \leq \frac{1}{n^2}\sum_i\frac{\xi^2_i}{\eta^2 }\prec n^{-1-2/\alpha}.
\end{split}
\]
As a result, in light of the definition $m_1$ in (\ref{eq_m1m2}),
we have
\begin{equation}\label{eq: m_1 by m_2}
\begin{split}
    m_1&=\frac{1}{n}{\rm tr}(G(z)\Sigma)=-z^{-1}\frac{1}{n}{\rm tr}((I+m_2(z)\Sigma)^{-1}\Sigma)+\rO_{\prec}(n^{-1/2-2/\alpha})\\
    &=-\frac{1}{n}\sum_{i=1}^p\frac{\sigma_i}{z(1+m_2\sigma_i)}+\rO_{\prec}(n^{-1/2-2/\alpha}).
\end{split}
\end{equation}

Now, we control $m_2(z)-m_{2n}(z).$ Recall the definition of $m_{2n}(z)$ in (\ref{eq_systemequationsm1m2}). Combining \eqref{eq: m_2 by m_1} and \eqref{eq: m_1 by m_2},  we observe
\begin{align}\label{eq_closenessm2m2n}
%\begin{split}
& m_2(z)-m_{2n}(z)  \nonumber\\
   & =\frac{1}{n}\sum_{i=1}^n \Big(\frac{\xi^2_i}{-z(1+\xi^2_im_1)}+\frac{\xi^2_i}{z(1+\xi^2_im_{1n})} \Big)+\rO_{\prec}(n^{-1/2-1/\alpha})\\
 %  & =\frac{1}{n}\sum_{i=1}^n\frac{\xi^4_i(m_1-m_{1n})}{z(1+\xi^2_im_1)(1+\xi^2_im_{1n})}+\rO_{\prec}(n^{-1/2-1/\alpha}) \nonumber \\
   & =\Big(\frac{1}{n}\sum_{i=1}^n\frac{\xi^4_i}{z(1+\xi^2_im_1)(1+\xi^2_im_{1n})}\Big)\Big(\frac{1}{n}\sum_{i=1}^n\frac{\sigma_i^2(m_2-m_{2n})}{z(1+\sigma_im_2)(1+\sigma_im_{2n})}\Big)+\rO_{\prec}(n^{-1/2-1/\alpha}). \nonumber
%\end{split}
\end{align}
By a discussion similar to (\ref{eq_eqcontrolcontrolcontrolcontrol}) and (\ref{eq_controlower}), with high probability, on the event $\Omega_D$, we have 
%Since $|z|\sim \xi^2_{s_1}$ with $|1+\xi^2_im_1|>0$,$|1+\xi^2_im_{1n}|>0$,$|1+\sigma_im_1|>0$,$|1+\sigma_im_{1n}|>0$, thereafter we have
\[
\begin{split}
  |m_2-m_{2n}|&=\rO\Big(\frac{1}{n}\sum_{i=1}^n\frac{\xi^4_i}{z^2}|m_2-m_{2n}| \Big)+\rO_{\prec}(n^{-1/2-1/\alpha})\\
  &=\rO(n^{-2/\alpha}|m_2-m_{2n}|)+\rO_{\prec}(n^{-1/2-1/\alpha}),
\end{split}
\]
where in the second step we used (\ref{def1}). Then we can conclude that $m_2(z)-m_{2n}(z)\prec n^{-1/2-1/\alpha}$. Parallel, applying the above argument to $m_1(z)-m_{1n}(z)$, we also have $m_1(z)-m_{1n}(z)\prec n^{-1/2-2/\alpha}$.

Armed with the estimates of $m_1-m_{1n}$ and $m_2-m_{2n}$, we proceed to finish the rest of the proof. Recall $m_Q$ in (\ref{eq_mq}). Using (\ref{eq: decomp of mathcal_G}) and a discussion similar to \eqref{eq: m_1 by m_2}, one can see that 
\begin{align}\label{eq_verbalttt}
%\begin{split}
    m_{\mathcal{Q}}&=\frac{1}{p}{\rm tr}(G(z))=-\frac{1}{p}\sum_{i=1}^p\frac{1}{z(1+m_2\sigma_i)}+\rO_{\prec}(n^{-1/2-2/\alpha})\nonumber\\
    &=-\frac{1}{p}\sum_{i=1}^p\frac{1}{z(1+m_{2n}\sigma_i)}+\frac{1}{p}\sum_{i=1}^p\frac{(m_2-m_{2n})\sigma_i}{z(1+m_2\sigma_i)(1+m_{2n}\sigma_i)}+\rO_{\prec}(n^{-1/2-2/\alpha}) \nonumber \\
    &=m_{n}+\rO_{\prec}(n^{-1/2-2/\alpha}),
%\end{split}
\end{align}
where in the last we used the definition of $m_n(z)$ in (\ref{eq_systemequationsm1m2}). Finally, for the control of the matrix $\mathcal G,$ for the diagonal entries, by Lemma \ref{lem: Resolvent} and a discussion similar to  \eqref{eq: bound for Z} and (\ref{eq_m1approximate}),  we have
\[
\begin{split}
    \mathcal G_{ii}&=-\frac{1}{z(1+\mathbf{y}^{*}_i G^{(i)}\mathbf{y}_i)}=-\frac{1}{z(1+\xi^2_in^{-1}{\rm tr} G^{(i)}\Sigma+\rO_{\prec}(\frac{\xi^2_i}{\sqrt{n}\eta}))}\\
    &=-\frac{1}{z(1+\xi^2_im_1+\rO_{\prec}(\frac{\xi^2_i}{\sqrt{n}\eta}))}=-\frac{1}{z(1+\xi^2_im_{1n})}+\rO_{\prec}(n^{-1/2-1/\alpha}).
\end{split}
\]
For off-diagonal entries, using Lemmas \ref{lem: Resolvent} and \ref{lem:large deviation}, we have
\[
\begin{split}
    |\mathcal G_{ij}|&\leq|z||\mathcal G_{ii}||\mathcal G^{(i)}_{ii}||\mathbf{y}^{*}_i G^{(ij)}\mathbf{y}_j| \prec n^{-1/2-2/\alpha}. 
%    \\
%    &\le \frac{\log^4n}{p}\|\mathcal{G}^{(ij)}\|_F\prec n^{-1/2-2/\alpha}.
\end{split}
\]
This completes the proof when (\ref{ass3.1}) holds. For the case (\ref{ass3.2}), the main difference is to use the estimates of (\ref{def3}) instead of (\ref{def1}) whenever it is needed. We omit further details. This completes our proof.

\end{proof}
\quad  The second part is to prove Proposition \ref{eq_propooursidebulk} under a prior control of the resolvent as shown in the following lemma.  

\begin{lemma} \label{lem: self improvement}
Proposition \ref{eq_propooursidebulk} holds if (\ref{eq_entrywiselocallaw}) holds uniformly for $z \in \widetilde{\mathbf{D}}_{u}.$ 
\end{lemma}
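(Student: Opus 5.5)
The lemma says that, once the entrywise law (\ref{eq_entrywiselocallaw}) is known uniformly on $\widetilde{\mathbf{D}}_u$ (as an a priori input), all the averaged statements of Proposition \ref{eq_propooursidebulk} follow. The plan is to rerun the argument of Lemma \ref{lem: average local law for large eta} for general $z\in\widetilde{\mathbf{D}}_u$. There, the deterministic bound $\|G^{(\mathcal T)}\|\le \eta^{-1}$ with $\eta=\mathtt C\vartheta_1$ supplied every control. For small $\eta$ (down to $n^{-2/3}$) that bound is useless, so I will replace it by bounds extracted from the a priori estimate.

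\textbf{Step 1 (a priori bounds).} Since $E-\vartheta_1>0$ on $\widetilde{\mathbf{D}}_u$ and $1+\vartheta_1$-type relations hold through (\ref{eq: def of vartheta_1}), Lemma \ref{lem: basic bounds} gives $|1+\xi_i^2 m_{1n}(z)|\gtrsim$ a positive quantity. For $i\neq(1)$ this is $\gtrsim 1$. For the top index we use the gap $d_1$ together with the separation in (\ref{def1}). Hence $|\mathcal G_{ii}|\prec |z|^{-1}$, up to the largest index, and $|\mathcal G_{ij}|\prec n^{-1/2-1/\alpha}$.

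Next, I will transfer these bounds to the minors $\mathcal G^{(\mathcal T)}$ for finite $\mathcal T$, using the third identity of Lemma \ref{lem: Resolvent}. Then I will convert them into Frobenius-norm bounds on $G^{(i)}\Sigma$ through Ward's identity (\ref{lem:Wald}): $\|G^{(i)}\Sigma^{1/2}\|_F^2=\eta^{-1}\operatorname{Im}\operatorname{tr}(G^{(i)}\Sigma)$. Here $\operatorname{Im} m_1\approx\operatorname{Im} m_{1n}\lesssim \eta E^{-1}$ by Lemma \ref{lem: basic bounds}, so $\|G^{(i)}\Sigma\|_F\prec\sqrt{n/E}$. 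This replaces the crude bound $\sqrt n\,\eta^{-1}$ used for large $\eta$.

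\textbf{Step 2 (self-consistent equations).} With these inputs, the large deviation bounds of Lemma \ref{lem:large deviation} give $Z_i\prec \xi_i^2 n^{-1}\|G^{(i)}\Sigma\|_F$. The difference $n^{-1}\operatorname{tr}G^{(i)}\Sigma-m_1$ is controlled via (\ref{lem:trace_difference}) and the Sherman–Morrison identity (\ref{eq_usefulformula}). Repeating the derivation of (\ref{eq: m_2 by m_1}) and (\ref{eq: m_1 by m_2}), including the decomposition (\ref{eq: decomp of mathcal_G}) into $R_1,R_2$, then yields
\[
m_2=\frac1n\sum_i\frac{\xi_i^2}{-z(1+\xi_i^2m_1)}+\rO_\prec(n^{-1/2-1/\alpha}),\qquad m_1=-\frac1n\sum_i\frac{\sigma_i}{z(1+\sigma_im_2)}+\rO_\prec(n^{-1/2-2/\alpha}).
\]
The $R_1$ term is a sum of nearly independent centered quantities. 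I will bound it by the fluctuation-averaging mechanism, or, more simply, by the per-term bound $\xi_i^2(\eta\sqrt n)^{-1}$ combined with the a priori entrywise control, which suffices for the stated rates.

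\textbf{Step 3 (stability).} Subtracting the system (\ref{eq_systemequationsm1m2}) gives the linearized relation (\ref{eq_closenessm2m2n}). Its coefficient $n^{-1}\sum_i \xi_i^4/(z^2(1+\xi_i^2m_1)(1+\xi_i^2m_{1n}))$ is $\rO(n^{-2/\alpha}\log^C n)$ for $i\ne(1)$. The top index needs separate treatment: its contribution carries the factor $1/n$ times $\xi_{(1)}^4|z|^{-2}|1+\xi_{(1)}^2m|^{-2}$, which stays small thanks to the lower bound from Step 1 and $d_1$. This gives $|m_2-m_{2n}|\prec n^{-1/2-1/\alpha}$, then $m_1-m_{1n}$, and finally $m_Q-m_n$ as in (\ref{eq_verbalttt}).

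The main obstacle is exactly this stability step and the matching lower bound on $|1+\xi_{(1)}^2m_1|$ near $\vartheta_1$, where the top multiplier is nearly resonant. I would handle it by keeping the index $(1)$ out of the sums, that is, working with $m^{(1)}$ quantities. This is consistent with Theorem \ref{thm_unboundedcaselocallaw} being stated only for $m_1^{(1)}$. For case (b) the same argument applies with $\alpha=\infty$ and (\ref{def3}) in place of (\ref{def1}).
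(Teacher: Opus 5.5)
The gap is in Step 1, at exactly the point where the self-improvement must beat the trivial bound for small $\eta$: the Frobenius-norm control of $G^{(i)}$.

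You obtain it from Ward's identity (\ref{lem:Wald}) together with $\operatorname{Im} m_1^{(i)}\approx\operatorname{Im} m_{1n}\lesssim\eta E^{-1}$. This fails for three reasons.

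First, the hypothesis (\ref{eq_entrywiselocallaw}) controls only entries of the $n\times n$ resolvent $\mathcal G$. It does not control $m_1=n^{-1}\operatorname{tr}(G\Sigma)$, whose averaged law is part of what you are proving.

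Second, even granting $m_1-m_{1n}\prec n^{-1/2-2/\alpha}$, Ward's identity turns that additive error into $n\cdot n^{-1/2-2/\alpha}\eta^{-1}=n^{7/6-2/\alpha}$ in $\|G^{(i)}\Sigma^{1/2}\|_F^2$ at $\eta=n^{-2/3}$. That is far above the needed $n^{1-2/\alpha}$.

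Third, even granting $\operatorname{Im} m_1^{(i)}\lesssim\eta E^{-1}$ outright, that upper bound from Lemma \ref{lem: basic bounds} loses a factor $E$ against the actual order $\eta E^{-2}$. You therefore only reach $\|G^{(i)}\Sigma^{1/2}\|_F\prec\sqrt{n/E}\approx n^{1/2-1/(2\alpha)}$ instead of $n^{1/2-1/\alpha}$. Propagated, this gives $Z_i\prec\xi_i^2n^{-1/2-1/(2\alpha)}$. The $m_2$ equation then closes with error $n^{-1/2-1/(2\alpha)}$, and the $m_1$, $m_Q$ equations with error $n^{-1/2-3/(2\alpha)}$. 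Both miss the rates $n^{-1/2-1/\alpha}$ and $n^{-1/2-2/\alpha}$ asserted in Proposition \ref{eq_propooursidebulk}.

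Your fallback for $R_1$, the per-term bound $\xi_i^2(\eta\sqrt n)^{-1}$, is the large-$\eta$ bound you had set aside. At $\eta=n^{-2/3}$ it equals $\xi_i^2n^{1/6}$, which is useless.

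The missing idea, which is how the paper proceeds, is to route the Frobenius norm through $\mathcal G^{(i)}$ and use the a priori law entry by entry. Since $Q^{(i)}$ and $\mathcal Q^{(i)}$ share their nonzero eigenvalues, $\|G^{(i)}\|_F^2\le\|\mathcal G^{(i)}\|_F^2+|n-p|\,|z|^{-2}$. The sum $\|\mathcal G^{(i)}\|_F^2=\sum_{j,k}|\mathcal G^{(i)}_{jk}|^2$ is bounded using $\mathcal G^{(i)}_{ii}=-z^{-1}$, $\mathcal G^{(i)}_{kk}\prec n^{-1/\alpha}$ and $\mathcal G^{(i)}_{kl}\prec n^{-1/2-1/\alpha}$. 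These are recorded in (\ref{eq_trivialcontrol})--(\ref{eq_nontrivialcontrol}) and follow from (\ref{eq_entrywiselocallaw}) and the third identity of Lemma \ref{lem: Resolvent}. This gives $\|G^{(i)}\|_F\prec n^{1/2-1/\alpha}$, hence $Z_i\prec\xi_i^2n^{-1/2-1/\alpha}$ and $n^{-1}\operatorname{tr}(G^{(i)}\Sigma)-m_1\prec\xi_i^2n^{-1-2/\alpha}$. The same per-term bounds then control $R_1$ and $R_2$ with no fluctuation averaging.

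Finally, your closing suggestion to drop the index $(1)$ and work with $m^{(1)}$ quantities would prove a different statement, since Proposition \ref{eq_propooursidebulk} concerns the full $m_1$, $m_2$, $m_Q$. Keep the top index in the sums, as you sketched first. Expanding $1+\xi_{(1)}^2m_{1n}(z)$ around $\vartheta_1$ via (\ref{eq: def of vartheta_1}) gives $|\mathcal G_{11}|\asymp d_1^{-1}$. That makes its contribution to both the self-consistent equations and the stability coefficient negligible.
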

\begin{proof}
According to the prior control (\ref{eq_entrywiselocallaw}), we have that for $1 \leq i \neq j \leq n$ 
\begin{gather}\label{eq_expansionone}
\mathcal G_{ii}=\frac{1}{z(1+\xi^2_im_{1n}(z))}+\rO_{\prec}(n^{-1/2-1/\alpha}), \quad \mathcal G_{ij} =\rO_{\prec}(n^{-1/2-1/\alpha}).
\end{gather}
For the diagonal entries, when $i=1,$ using (\ref{eq: def of vartheta_1}), we observe that
\begin{align}\label{eq_g11control}
 %   \begin{split}
        \mathcal G_{11}&=-\frac{1}{z(1+\xi^2_{1}m_{1n}(z))}+\rO_{\prec}(n^{-1/2-1/\alpha}) \nonumber\\
        &=-\frac{1}{z(1+\xi^2_{1}m_{1n}(\vartheta_1))}+\frac{z\xi_{1}^2(m_{1n}(z)-m_{1n}(\vartheta_1))}{(z(1+\xi^2_{1}m_{1n}(\vartheta_1)))(z(1+\xi^2_{1}m_{1n}(z)))}+\rO_{\prec}(n^{-1/2-1/\alpha})\\
        &=\frac{1}{zd_1m_{1n}(\vartheta_1)}-\frac{z\xi_{1}^2(m_{1n}(z)-m_{1n}(\vartheta_1))}{zd_1m_{1n}(\vartheta_1)}\big(\mathcal G_{11}+\rO_{\prec}(n^{-1/2-2/\alpha})\big)+\rO_{\prec}(n^{-1/2-1/\alpha}) \nonumber\\
        &=\frac{1}{zd_1m_{1n}(\vartheta_1)}-\frac{z\xi_{1}^2}{zd_1m_{1n}(\vartheta_1)}(\mathcal G_{11}+\rO_{\prec}(n^{-1/2-1/\alpha}))\times \rO_{\prec}(n^{-1/\alpha})+\rO_{\prec}(n^{-1/2-1/\alpha}), \nonumber
 %   \end{split}
\end{align}
where in the fourth step we used Lemma \ref{lem: basic bounds}. Also, by Lemma \ref{lem: basic bounds}, (\ref{eq_mu1part}) and (\ref{def1}),  it yields that on the event $\Omega_D$, there exists some constant $C>0$ such that
\begin{gather}\label{eq_g11}
\begin{split}
    |\mathcal G_{11}|&=\frac{1}{|zd_1m_{1n}(\vartheta_1)|}+\rO_{\prec}(n^{-1/2-1/\alpha})=\frac{C}{d_1}+\rO_{\prec}(n^{-1/2-1/\alpha}).
\end{split}
\end{gather}
For $2 \leq i \leq n,$ by (\ref{def1}) and the definition of $d_1,$ using Lemma \ref{lem: basic bounds}, we can also find that on the event $\Omega_D$,
\begin{equation}\label{eq_expansiontwo}
\mathcal{G}_{ii}=\rO_{\prec}(n^{-1/\alpha}). 
\end{equation}

In the sequel, we provide some basic controls for the matrix $\mathcal G^{(i)}$ for all $1 \leq i \leq n.$ By an elementary calculation, it is not hard to see that
\begin{equation}\label{eq_trivialcontrol}
\mathcal G_{ii}^{(i)}=-z^{-1}; \ \mathcal{G}_{i k}^{(i)}=0, \  1 \leq k \neq i \leq n. 
\end{equation}  
Moreover, using (\ref{eq_expansionone}), (\ref{eq_expansiontwo}) and the third identity of Lemma \ref{lem: Resolvent}, for $1 \leq i \leq n,$ 
\begin{equation}\label{eq_nontrivialcontrol}
\mathcal G_{kk}^{(i)}=\rO_{\prec}(n^{-1/\alpha}),  \ k \neq i; \ \mathcal G_{kl}^{(i)}=\rO_{\prec}(n^{-1/2-1/\alpha}), \ k,l \neq i.
\end{equation}

With the above preparation, we now proceed to the control of $Z_i$ in (\ref{eq: decomp m_2}). Since $Q$ and $\mathcal{Q}$ have the same non-zero eigenvalues, instead of the strategy in (\ref{eq: bound for Z}), we bound it as follows 
\begin{equation}\label{eq_standarddiscussion}
\begin{split}
     Z_i&\prec\frac{\xi^2_i}{n}\| G^{(i)}\Sigma\|_F\prec\frac{\xi^2_i}{n}\|G^{(i)}\|_F=\frac{\xi^2_i}{n}(\operatorname{tr}((G^{(i)})^2))^{1/2}\leq\frac{\xi^2_i}{n}\operatorname{tr}((\mathcal G^{(i)})^2)^{1/2}+\frac{\xi_i^2}{n}\frac{\sqrt{|n-p|}}{|z|}\\
    & \asymp \frac{\xi^2_i}{n}\|\mathcal G^{(i)}\|_F+\frac{\xi^2_i}{n}\frac{n^{1/2}}{n^{1/\alpha}}=\frac{\xi^2_i}{n} \Big((\mathcal G_{ii}^{(i)})^2+\sum_{j\neq i}(\mathcal G^{(i)}_{jj})^2+\sum_{j\neq k\neq i}(\mathcal G^{(i)}_{jk})^2 \Big)^{1/2}+\frac{\xi^2_i}{n^{1/2+1/\alpha}}  \\
& \prec \frac{\xi_1^2}{n}\left( |z|^{-2}+n n^{-2/\alpha}+n^2 n^{-1-2/\alpha}\right)^{1/2}+\frac{\xi^2_i}{n^{1/2+1/\alpha}}\prec\frac{\xi_i^2}{n^{1/2+1/\alpha}},
\end{split}
\end{equation}
where in the third and fourth steps we used (\ref{eq_trivialcontrol}) and (\ref{eq_nontrivialcontrol}), respectively. 

Besides, by a discussion similar to (\ref{eq_standarddiscussion}),  we now have from Lemma \ref{lem:large deviation} that 
\begin{equation}\label{eq_cccccc11111}
\begin{split}
   T_i:=\frac{1}{n}{\rm tr} G^{(i)}\Sigma-m_1(z)&=\frac{1}{n}\mathbf{y}_i^{*}G \Sigma G^{(i)}\mathbf{y}_i=\frac{1}{n}\frac{\mathbf{y}^{*}_i G^{(i)}\Sigma G^{(i)}\mathbf{y}_i}{1+\mathbf{y}^{*}_i G^{(i)}\mathbf{y}_i}\\
  %  &\prec\frac{\xi^2_i}{n}\frac{n^{-1}\| G^{(i)}\|^2_F}{|1+\mathbf{y}^{*}_i G^{(i)}\mathbf{y}_i|}\\
    &\prec\frac{\xi^2_i}{n^2}|z||\mathcal G_{ii}|\left(\|\mathcal G^{(i)}\|^2_F+\frac{n}{n^{2/\alpha}} \right) \prec \frac{\xi_i^2}{n^{1+2/\alpha}}. 
\end{split}
\end{equation}
where in the second step we used the relation (\ref{eq_usefulformula}), in the third step we used Lemma \ref{lem: Resolvent} and in the last two steps we used a discussion similar to (\ref{eq_standarddiscussion}). 

With above estimations, we now use an idea similar to the proof of  Lemma \ref{lem: average local law for large eta} to conclude the proof. The key ingredient is the relation of $m_1$ and $m_2.$ We start with $m_2.$ Using Lemma \ref{lem: Resolvent}, we find that  
\begin{equation}\label{eq_eeeone}
\frac{1}{-z(1+\xi_i^2 m_1(z))}=\frac{1}{\mathcal{G}_{ii}^{-1}+z(Z_i+T_i)}.
\end{equation}
Consequently, by (\ref{eq_g11}), (\ref{eq_expansiontwo}), (\ref{eq_standarddiscussion}) and (\ref{eq_cccccc11111}), we have
\begin{equation}\label{eq_eeetwo}
\frac{1}{-z(1+\xi_i^2 m_1(z))} \prec n^{-1/\alpha}. 
\end{equation}
Then using the decomposition as in (\ref{eq: decomp m_2}), we find
\begin{gather}\label{eq_modeltemp}
    \begin{split}
        m_2&=\frac{1}{n}\frac{\xi^2_1}{-z(1+\xi^2_1n^{-1}\operatorname{tr} G^{(1)}\Sigma+Z_1)}+\frac{1}{n}\sum_{i=2}^n\frac{\xi^2_i}{-z(1+\xi^2_in^{-1}\operatorname{tr} G^{(i)}\Sigma+Z_i)}\\
        &=\frac{1}{n}\frac{\xi^2_1}{-z(1+\xi^2_1m_1(z)+\xi_1^2 n^{-1-2/\alpha}+Z_1)}+\frac{1}{n}\sum_{i=2}^n\frac{\xi^2_i}{-z(1+\xi^2_im_1(z)+\xi_i^2 n^{-1-2/\alpha}+Z_i)}\\
        &=\frac{1}{n}\sum_{i=1}^n\frac{\xi^2_i}{-z(1+\xi^2_im_1(z))}+\rO_{\prec}\Big(n^{-3/2}+\frac{1}{n}\sum_{i=2}^n\frac{\xi^4_i}{|z|n^{1/2+1/\alpha}}\Big)\\
 %       &=\frac{1}{n}\sum_{i=1}^n\frac{\xi^2_i}{-z(1+\xi^2_im_1(z))}+O_{\prec}(\frac{1}{np}n^{-2/\alpha+\epsilon}+\frac{1}{n^{3/2}}+n^{-1/2-2/\alpha})\\
        &=\frac{1}{n}\sum_{i=1}^n\frac{\xi^2_i}{-z(1+\xi^2_im_1(z))}+\rO_{\prec}(n^{-1/2-1/\alpha}),
    \end{split}
\end{gather}
where in the second step we used (\ref{eq_cccccc11111}), in the third step we used a discussion similar to (\ref{eq_g11control}) and in the last step we used (\ref{def1}). Next, recalling the definition of $m_1$ in \eqref{eq_m1m2}, we can also obtain the expression for $m_1$ using a parallel argument between (\ref{eq: decomp of mathcal_G}) and (\ref{eq: m_1 by m_2}). We provide only some key ingredients in the sequel. First, for $\mathtt{R}_{11}$ in (\ref{eq: decomp R_1}), note that by the definition of $m_2^{(i)}$ and (\ref{eq_trivialcontrol}), we have that   
\begin{equation*}
\begin{split}
   \big |m^{(i)}_2 \big |&\leq\frac{1}{n} \big(\sum_{j\neq i}\xi^2_j| \mathcal G^{(i)}_{jj}|+|z|^{-1} \big)\\
    &\leq\frac{1}{n} \Big[\sum_{j\neq 1}\xi^2_j\Big( | \mathcal G_{jj}|+\frac{|\mathcal G_{j1}||\mathcal G_{1j}|}{|\mathcal G_{11}|} \Big)+|z|^{-1} \Big] \prec n^{-1/\alpha},
    %\frac{1}{n}\sum_{j\neq1}\frac{\xi^2_j}{n^{2/\alpha}}\rightarrow0,
\end{split}
\end{equation*}
where in the second step we used Lemma \ref{lem: Resolvent} and in the last step we used (\ref{eq_expansionone}), (\ref{eq_expansiontwo}) and (\ref{def1}). Moreover, by Lemma \ref{lem: Resolvent} and (\ref{eq_expansiontwo}), we find that $(1+\mathbf{y}_i^* G^{(i)} \mathbf{y}_i)^{-1} \prec 1.$ Therefore, we conclude that for all $1 \leq i \leq n,$
\begin{equation*}
\begin{split}
  &   {\rm tr}\Big(\frac{ G^{(i)}(\mathbf{y}_i\mathbf{y}_i^{*}-n^{-1}\xi^2_i\Sigma)}{1+\mathbf{y}_i^{*} G^{(i)}\mathbf{y}_i}(I+m_2^{(i)}\Sigma)^{-1}\Sigma\Big)  \asymp {\rm tr} \left(\xi^2_i G^{(i)}(\mathbf{u}_i\mathbf{u}_i^{*}-n^{-1}I) (I+m_2^{(i)}\Sigma)^{-1} \Sigma^2 \right)\\
   &= \xi_i^2 \left( \mathbf{u}_i^* G^{(i)} (I+m_2^{(i)}\Sigma)^{-1} \Sigma^2 \mathbf{u}_i -n^{-1} {\rm tr} \left( G^{(i)} (I+m_2^{(i)}\Sigma)^{-1} \Sigma^2 \right) \right)\\
 & \prec \frac{\xi_i^2}{n}\|G^{(i)}\|_F\prec\frac{\xi_i^2}{n^{1/2+1/\alpha}},
\end{split}
\end{equation*}
where in the last step we used a discussion similar to (\ref{eq_standarddiscussion}). Together with (\ref{def1}), we can conclude that $\mathtt R_{11} \prec n^{-1/2-1/\alpha}.$ For $\mathtt R_{12},$ using (\ref{eq_decompositionleavoneout}), (\ref{eq_expansionone}) and (\ref{eq_expansiontwo}) 
\begin{equation*}
m_2-m^{(i)}_2 \prec\frac{1}{n}\sum_{j\neq i}\frac{\xi^2_in^{-1-2/\alpha}}{n^{-1/\alpha}} + \frac{1}{n}\prec n^{-1}.
\end{equation*}
Then by an argument similar to (\ref{eq_simimimimimi}), we can conclude that $\mathtt R_{12} \prec n^{-1-1/\alpha}.$ Similarly, for $R_2,$ we have that
\[
\begin{split}
    &\frac{1}{n} \left|{\rm tr}\left(\frac{(G^{(i)}-G)\Sigma(I+m_2\Sigma)^{-1}\Sigma}{1+\mathbf{y}_i^{*} G^{(i)}\mathbf{y}_i}\right) \right|=\frac{1}{n}\left|\frac{\mathbf{y}_i^{*} G^{(i)}\Sigma(I+m_2\Sigma)^{-1}\Sigma G\mathbf{y}_i}{1+\mathbf{y}_i^{*} G^{(i)}\mathbf{y}_i}\right|\\
    &\asymp\frac{1}{n}\left|\mathbf{y}_i^{*} G^{(i)}\Sigma(I+m_2\Sigma)^{-1}\Sigma G^{(i)}\mathbf{y}_i \right | \prec\frac{\xi_i^2}{n^2}\| G^{(i)}\|_F \| G \|_F \prec\frac{\xi_i^2}{n^{1+2/\alpha}}.
\end{split}
\]
Consequently, we have that 
\begin{equation*}
\frac{z}{n} \left|{\rm tr}(R_2\Sigma) \right|\prec\frac{1}{n}\sum_i\frac{\xi_i^2}{n^{1+1/\alpha}}\prec n^{-1-1/\alpha}.
\end{equation*}

Combining all the above arguments, we find that \eqref{eq: m_1 by m_2} still holds. Then, together with \eqref{eq_modeltemp}, we have already obtained that 
\begin{align*}
    m_2=\frac{1}{n}\sum_{i=1}^n\frac{\xi^2_i}{-z(1+\xi^2_im_1(z))}+\mathrm{O}_{\prec}(n^{-1/2-1/\alpha}),\\
    m_1=\frac{1}{n}\sum_{i=1}^p\frac{\sigma_i}{-z(1+m_2\sigma_i)}+\mathrm{O}_{\prec}(n^{-1/2-2/\alpha}).
\end{align*}
Then, using an argument similar to the discussions between (\ref{eq_closenessm2m2n}) and (\ref{eq_verbalttt}), we can conclude the proof. 
\end{proof}

\quad By Lemma \ref{lem: average local law for large eta} and Lemma \ref{lem: self improvement}, we now proceed to the proof of Proposition \ref{eq_propooursidebulk}.  We will use a continuity argument as in \cite[Lemma A.12]{Ding&Yang2018} or \cite[Section 4.1]{Alex2014}. In fact, our discussion is easier since the real part in the spectral domain $\widetilde{\mathbf{D}}_{u}$ is divergent so that the rate is independent of $\eta$. Due to similarity, we focus on explaining the key ingredients. 

\begin{proof}[\bf Proof of Proposition \ref{eq_propooursidebulk}]
 For each $z=E+\ri\eta\in\widetilde{\mathbf{D}}_{u}$, we fix the real part and construct a sequence $\{\eta_j\}$ by setting $\eta_j=\mathtt C \vartheta_1-j n^{-3}$. Then it is clear that $\eta$ falls in an interval $[\eta_{j-1},\eta_j]$ for some $0\leq j\leq Cn^{1/\alpha+3}\log n, C>0$ is some large constant.

In Lemma \ref{lem: average local law for large eta}, we have proved that the results hold  for $\eta_0$. Now we assume (\ref{eq_entrywiselocallaw}) holds for some $\eta_j$. Then according to Lemma \ref{lem: self improvement}, we have that
\begin{equation*}
|m_1(z_j)-m_{1n}(z_j)|+|m_Q(z_j)-m_n(z_j)| \prec n^{-1/2-2/\alpha}, \ |m_2(z_j)-m_{2n}(z_j)| \prec n^{-1/2-1/\alpha}. 
\end{equation*}
%{\color{red} here}
%{\color{blue} Now assume $\max_{ij}(G+z^{-1}(I+m_{1n}(z)D^2))\prec n^{-1/2-2/\alpha}$ holds } 
For any $\eta^{\prime}$ lying in the interval $[\eta_{k-1},\eta_k]$, denote $z^{\prime}=E+\ri\eta^{\prime}$ and $z_j=E+\ri\eta_j$. According to the first resolvent identity in  Lemma \ref{lem: first resolvent}, we have that 
 \begin{equation} \label{eq: Lip for mathcal G}
 \| \mathcal G(z')-\mathcal{G}(z_j) \| \leq n^{-3} \| \mathcal{G}(z') \| \| \mathcal{G}(z_j) \| \prec n^{-11/6-1/\alpha},
\end{equation}  
where in the second step we used the basic bound $\|\mathcal G(z^{\prime})\|\leq n^{2/3},$ (\ref{eq_expansionone}), (\ref{eq_expansiontwo}) and Gershgorin circle theorem.

On the one hand, according to the definitions in (\ref{eq_m1m2}), using the first resolvent identity, we have that  
\begin{equation*}
\begin{split}
    m_1(z^{\prime})-m_1(z_j)&=\frac{1}{n}{\rm tr}[(G(z^{\prime})-G(z_j))\Sigma]=\frac{1}{n^4}{\rm tr}(G(z^{\prime})G(z_j)\Sigma) \prec\frac{1}{n^4 \eta_j}\|G(z_j)\|_F \prec n^{-17/6-1/\alpha},
\end{split}
\end{equation*}
where in the last step we used a discussion similar to (\ref{eq_standarddiscussion}). Similarly, combining (\ref{eq: Lip for mathcal G}) and (\ref{def1}),  we have that  
\begin{equation*}
    m_2(z^{\prime})-m_2(z_j)=\frac{1}{n}\sum_{i=1}^n\xi^2_i(\mathcal G_{ii}(z^{\prime})-\mathcal G_{ii}(z_j))
\prec n^{-11/6-1/\alpha},
\end{equation*}
and by a discussion similar to (\ref{eq_standarddiscussion})
\begin{align*}
|m_Q(z')-m_Q(z_j)|& =\frac{1}{p} \left| \operatorname{tr} \left( G(z')-G(z_j) \right) \right|  \leq n^{-4} \|G(z') \|_F \| G(z_j) \|_F \\
& \prec n^{-4} (n^{1-2/\alpha})^{1/2} n^{1/2+2/3}=n^{-7/3-1/\alpha}. 
\end{align*}
 On the other hand, using the definitions in (\ref{eq_systemequationsm1m2}), we decompose that 
\[
\begin{split}
    m_{1n}(z^{\prime})-m_{1n}(z_j)
    &=\frac{1}{n}\sum_i\Big(\frac{\sigma_i}{-z^{\prime}(1+\sigma_im_{2n}(z^{\prime}))}-\frac{\sigma_i}{-z^{\prime}(1+\sigma_im_{2n}(z_j))}\Big)\\
    &+\frac{1}{n}\sum_i\Big(\frac{\sigma_i}{-z^{\prime}(1+\sigma_im_{2n}(z_j))}-\frac{\sigma_i}{-z_j(1+\sigma_im_{2n}(z_j))}\Big)\\
    &:=\mathcal{M}_{11}+\mathcal{M}_{12}.
\end{split}
\]
For $\mathcal{M}_{11}$, according to Lemma \ref{lem: basic bounds} and (\ref{eq_systemequationsm1m2}), we readily obtain that
\[
\begin{split}
    \mathcal{M}_{11}&=\frac{1}{n}\sum_{i}\frac{\sigma^2_i}{-z^{\prime}(1+\sigma_im_{2n}(z^{\prime}))(1+\sigma_im_{2n}(z_j))}\big(m_{2n}(z_j)-m_{2n}(z^{\prime})\big)\\
    &=\rO(|z'|^{-1})\times\frac{1}{n}\sum_i\Big(\frac{\xi^2_i}{-z_j(1+\xi^2_im_{1n}(z_j))}-\frac{\xi^2_i}{-z_j(1+\xi^2_im_{1n}(z^{\prime}))}\Big)\\
    &+\rO(|z'|^{-1})\times\frac{1}{n}\sum_i\Big(\frac{\xi^2_i}{-z_j(1+\xi^2_im_{1n}(z^{\prime}))}-\frac{\xi^2_i}{-z^{\prime}(1+\xi^2_im_{1n}(z^{\prime}))}\Big)\\
    &=\rO(|z'|^{-1}) \times \left( \mathtt{M}_{11,1}+ \mathtt{M}_{11,2} \right). 
\end{split}
\]

For $\mathtt{M}_{11,1},$ by a discussion similar to (\ref{eq_g11}) and (\ref{eq_expansiontwo}), we find that 
\[
\begin{split}
    \mathtt{M}_{11,1}&=\frac{1}{n}\sum_{i=1}^n\frac{\xi_i^4}{-z_j (1+\xi^2_im_{1n}(z^{\prime}))(1+\xi^2_im_{1n}(z_j))}(m_{1n}(z')-m_{1n}(z_j))\\
    &\prec\Big(\frac{\xi_1^4}{-nz_j d^2_1 |m_{1n}(z_j)||m_{1n}(z')|}+\frac{1}{n}\sum_{i\ge2}\frac{\xi_i^4}{|z^{\prime}(1+\xi^2_1m_{1n}(z^{\prime}))(1+\xi^2_im_{1n}(z_j))|}\Big)\times|m_{1n}(z_j)-m_{1n}(z^{\prime})|\\
    &\prec \ro(1) \times |m_{1n}(z_j)-m_{1n}(z^{\prime})|.
\end{split}
\]
Similarly, for $\mathtt{M}_{11,2},$ we have that 
\[
\begin{split}
    \mathtt{M}_{11,2}=\frac{1}{n}\sum_{i}\frac{\xi_i^2 (z_j-z^{\prime})}{z^{\prime}z_j(1+\xi_i^2 m_{1n}(z'))} \prec \frac{z^{\prime}-z_j}{z^{\prime}z_j} \prec n^{-3-2/\alpha}.
\end{split}
\]
Analogously, we can prove that $\mathcal{M}_{12} \prec n^{-3-4/\alpha}$. 
Therefore, combining the above bounds with (\ref{def1}), we see that $|m_{1n}(z^{\prime})-m_{1n}(z_j)|\prec  n^{-3-2/\alpha}.$

\quad By similar procedures and arguments, we can also prove that 
\begin{equation*}
    |m_{2n}(z^{\prime})-m_{2n}(z_j)|\prec n^{-3-2/\alpha}, \  |m_{n}(z^{\prime})-m_{n}(z_j)|\prec n^{-3-2/\alpha}, 
\end{equation*}
\begin{gather*}
    \left\|(z^{\prime})^{-1}(I+m_{1n}(z^{\prime})D^2)^{-1}-(z_j)^{-1}(I+m_{1n}(z_j)D^2)^{-1}\right\|\prec n^{-3-2/\alpha}.
\end{gather*}

Therefore, combining all the above bounds with triangle inequality, we see that the results of part 1 of Theorem \ref{thm_unboundedcaselocallaw} hold for $z'.$ Using an induction procedure and a standard lattice argument (for example, see \cite{Alex2014,Ding&Yang2018}), we find that the results hold for all $z \in \widetilde{\mathbf{D}}_{u}$ and conclude the proof of Proposition \ref{eq_propooursidebulk}. 
\end{proof}

\subsubsection{Proof of Theorem \ref{thm_unboundedcaselocallaw}} \label{sec_proofa2}

Once Proposition \ref{eq_propooursidebulk} is proved, we can roughly locate the edge eigenvalues of $Q$ according to Lemma \ref{lem: upper bound for eigenvalues}, so that we can expand the spectral domain from $\widetilde{\mathbf{D}}_{u}$ to $\mathbf{D}_{u}$ for $Q^{(1)}$ and conclude the proof of Theorem \ref{thm_unboundedcaselocallaw}.

\quad Recall  the definitions of {\color{blue} $\vartheta_2$} and $\lambda_1^{(1)}$ around (\ref{eq_keylocationdefinition}) and in Figure \ref{fig_locationtizubounded}. By Lemma \ref{lem: upper bound for eigenvalues} and an analogous argument, as well as  
Weyl's inequality, we find that conditional on the event $\Omega_D,$ with high probability, 
\begin{equation}\label{eq_keyused}
    \vartheta_1>\lambda_1>\vartheta_2>\lambda_1^{(1)}.
\end{equation}
By (\ref{eq_mu1part}) and a similar argument, we see that $\vartheta_k \asymp \xi_k^2, k=1,2.$ Together with (\ref{def1}), we have that $\vartheta_1-\vartheta_2\geq C_1 n^{1/\alpha}\log^{-1}n$ for some constant $C_1>0$ on the event $\Omega_D.$ This implies for some constant $C>0,$ for $z \in \mathbf{D}_{u},$ 
\begin{gather}\label{eq: eigen gap}
    |\lambda_1^{(1)}-z|\geq Cn^{1/\alpha}\log^{-1}n,
\end{gather}
Now we proceed to the proof of Theorem \ref{thm_unboundedcaselocallaw}. Recall (\ref{eq_defnminor}) and (\ref{eq_defnminorG}). 

\begin{proof}[\bf Proof of Theorem \ref{thm_unboundedcaselocallaw}]  
Observe  by \eqref{eq: eigen gap} that it holds uniformly for $z\in\mathbf{D}_{u}$ and $\mathcal{T}\subset\{2,\dots,n\}$, for some constant $C_1>0$
\begin{gather}\label{eq: est of G^(1)}
    \|G^{(1\mathcal{T})}\|\leq C_1 n^{-1/\alpha} \log n.
\end{gather}
By the definition of $m_2^{(1)}$ and a decomposition similar to (\ref{eq: decomp m_2}),  we have that 
\begin{gather*}
        m_2^{(1)}=\frac{1}{n}\sum_{i=2}^n\frac{\xi^2_i}{-z-z\mathbf{y}_i^{*}G^{(1i)}\mathbf{y}_i}=\frac{1}{n}\sum_{i=2}^n\frac{\xi^2_i}{-z(1+\xi^2_in^{-1}\operatorname{tr}G^{(1i)}\Sigma+Z_i^{(1)})},\\
        Z_i^{(1)}=\mathbf{y}_i^{*}G^{(1i)}\mathbf{y}_i-\xi^2_in^{-1}\operatorname{tr}G^{(1i)}\Sigma.
\end{gather*}
By arguments similar to (\ref{eq: bound for Z}) and (\ref{eq_m1approximate}) but with \eqref{eq: est of G^(1)}, we obtain that 
\begin{gather*}
    Z_i^{(1)}\prec\frac{\xi^2_i}{n}\|G^{(1i)}\Sigma\|_F\leq\frac{\xi^2_i}{n}\|G^{(1i)}\|\|\Sigma\|_F\prec\frac{\xi^2_i}{n^{1/2}}n^{-1/\alpha},\\
    \frac{1}{n}\operatorname{tr}(G^{(1i)}\Sigma)-m_1^{(1)}(z)=\frac{1}{n}\mathbf{y}_i^{*}G^{(1)}\Sigma G^{(1i)}\mathbf{y}_i\prec \frac{\xi^2_i}{n}n^{-2/\alpha}.
\end{gather*}
In addition, using (\ref{eq: est of G^(1)}) and a discussion similar to (\ref{eq_eeeone})--(\ref{eq_modeltemp}), we readily see that 
\begin{gather*}
    m^{(1)}_2=\frac{1}{n}\sum_{i=2}^n\frac{\xi^2_i}{-z(1+\xi^2_im^{(1)}_1)}+\rO_{\prec}(n^{-1/2-1/\alpha}).
\end{gather*}
Using the decomposition 
\begin{gather*}
    Q^{(1)}-zI=\sum_{i=2}^n\mathbf{y}_i\mathbf{y}_i^{*}+zm^{(1)}_2(z)\Sigma-z(I+m_2^{(1)}(z)\Sigma),
\end{gather*}
by arguments similar to (\ref{eq: decomp of mathcal_G})--(\ref{eq: m_1 by m_2}) with $\|\mathcal{G}^{(1\mathcal{T})}\|=\|G^{(1 \mathcal{T})}\|\prec n^{-1/\alpha}$, we conclude that 
\begin{gather*}
\begin{split}
    m_1^{(1)}&=-z^{-1}\frac{1}{n}\operatorname{tr}((I+m_2^{(1)}(z)\Sigma)^{-1}\Sigma)+\rO_{\prec}(n^{-1/2-2/\alpha})\\
    &=-\frac{1}{n}\sum_{i=1}^p\frac{\sigma_i}{z(1+m_2^{(1)}\sigma_i)}+\rO_{\prec}(n^{-1/2-2/\alpha}).
\end{split}
\end{gather*}
Combining with the definitions in (\ref{eq_systemequationsm1m2}), we see that
\begin{gather*}
    \begin{split}
      &  m_1^{(1)}(z)-m_{1n}^{(1)}(z)=-\frac{1}{n}\sum_{i=1}^p\frac{\sigma_i}{z(1+\sigma_im_2^{(1)}(z))}+\frac{1}{n}\sum_{i=1}^p\frac{\sigma_i}{z(1+\sigma_im_{2n}^{(1)}(z))}+\rO_{\prec}(n^{-1/2-2/\alpha})\\
        &=\frac{1}{n}\sum_{i=1}^p\frac{\sigma_i^2(m_2^{(1)}(z)-m_{2n}^{(1)}(z))}{z(1+\sigma_im_{2n}^{(1)}(z))(1+\sigma_im_2^{(1)}(z))}+\rO_{\prec}(n^{-1/2-2/\alpha})\\
        &=\Big(\frac{1}{n}\sum_{i=1}^p\frac{\sigma_i^2}{z(1+\sigma_im_{2n}^{(1)}(z))(1+\sigma_im_2^{(1)}(z))}\Big)\Big(\frac{1}{n}\sum_{i=2}^n\frac{\xi^4_i(m_1^{(1)}(z)-m_{1n}^{(1)}(z))}{z(1+\xi^2_im_1^{(1)}(z))(1+\xi^2_im_{1n}^{(1)}(z))} \Big)+\rO_{\prec}(n^{-1/2-2/\alpha})\\
        &=\ro(1)(m_1^{(1)}(z)-m_{1n}^{(1)}(z))+\rO_{\prec}(n^{-1/2-2/\alpha}),
    \end{split}
\end{gather*} 
where in the third step we used a discussion similar to (\ref{eq_eeetwo}) and (\ref{def1}). 
\end{proof}

\subsection{Unbounded support and conditional on $X$ setting: proof of Theorem \ref{thm_averagedlocallaw_unboundedmultiplier_conditionalonX}}

In this section, we prove Theorem \ref{thm_averagedlocallaw_unboundedmultiplier_conditionalonX} 
by an argument similar to that used in the proof of 
Theorem \ref{thm_unboundedcaselocallaw}. Recall that in the proof of 
Theorem \ref{thm_unboundedcaselocallaw}, we first establish the local laws for $Q$ 
outside the bulk of the spectrum on the domain $\widetilde{\mathbf{D}}_u$, which 
allows us to roughly locate the edge eigenvalues of $Q$. In the second step, we 
extend the spectral domain from $\widetilde{\mathbf{D}}_u$ to $\mathbf{D}_u$ for 
$Q^{(1)}$, thereby completing the proof. The proof of Theorem \ref{thm_averagedlocallaw_unboundedmultiplier_conditionalonX} 
follows the same strategy, except that the error bounds become deterministic when 
working on the event $\Omega_X$ and conditioning on a realization of 
$\{\xi_i^2\}$.

To this end, the following result is parallel to 
Proposition \ref{eq_propooursidebulk}.

\begin{proposition}\label{lem_ousidebulk_conditional}
    Under the assumptions of Theorem \ref{thm_averagedlocallaw_unboundedmultiplier_conditionalonX}, the following results hold uniformly on the spectral domain $\widetilde{\mathbf{D}}_u$ when restricted on $\Omega_X$ and $\Omega_D$,
    \begin{enumerate}
    \item[(1).] If {\normalfont Case (a)} of (i) of  Assumption \ref{assum_D} holds, we have that 
    \begin{align}
        \mathcal{G}_{ij}(z)=-\frac{\delta_{ij}}{z\big(1+m_{1n}(z)\xi_i^2\big)}+\mathrm{O}( n^{\epsilon_1}n^{-1/2-1/\alpha} ),
    \end{align}
    where $\delta_{ij}$ is the Dirac delta function and $\epsilon_1>\varepsilon_1$ in Definition \ref{def_OmegaX} is some small constant. Moreover, we have that 
    \begin{equation*}
        m_1(z)=m_{1n}(z)+\mathrm{O}( n^{\epsilon_2}n^{-1/2-2/\alpha}), \  m_2(z)=m_{2n}(z)+\mathrm{O}( n^{\epsilon_2}n^{-1/2-1/\alpha}), 
    \end{equation*}
    and 
    \begin{equation}
        m_{Q}(z)=m_{n}(z)+\mathrm{O}( n^{\epsilon_2}n^{-1/2-2/\alpha} ),
    \end{equation}
    for some small constant $\epsilon_2>\varepsilon_1$.
    \item[(2).] If {\normalfont Case (b)} of (i) of Assumption \ref{assum_D} holds, we have that the results in part (1) hold by setting $\alpha=\infty.$
\end{enumerate}  
\end{proposition}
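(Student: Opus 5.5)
The plan is to rerun the proof of Proposition \ref{eq_propooursidebulk} verbatim, with one change. Every application of the large deviation bounds in Lemma \ref{lem:large deviation} is replaced by the deterministic estimates in item (1) of Definition \ref{def_OmegaX}, and every use of $\|XX^*\|\prec 1$ is replaced by item (2). So I fix $X\in\Omega_X$ and a realization $\{\xi_i^2\}\in\Omega_D$. All quantities are then deterministic, and every bound of the form $\mathrm{O}_\prec(\cdot)$ in the earlier proof becomes $\mathrm{O}(n^{\epsilon}\cdot)$ for any $\epsilon>\varepsilon_1$. Lemma \ref{lem: basic bounds} concerns only $m_{1n},m_{2n},m_n$, so it transfers unchanged.

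There is one subtlety. Item (1) of Definition \ref{def_OmegaX} is stated for deterministic $A$, but in the proof the quadratic forms are $\mathbf{x}_i^*G^{(i)}(\cdots)\mathbf{x}_i$, where $G^{(i)}$ depends on the other columns of $X$ and on $D$. I would therefore read item (1) as holding for matrices $A$ independent of $\mathbf{x}_i$, i.e., for minors. Concretely, I would intersect $\Omega_X$ with the event that the large deviation bounds hold simultaneously for all $i,j$, all minors $G^{(\mathcal T)}$ with $|\mathcal T|\le 2$, and all $z$ in an $n^{-3}$-net of $\widetilde{\mathbf D}_u$. For each fixed $D$, a union bound over polynomially many events, each of probability $1-n^{-D}$, keeps this event's probability at $1-\mathrm{o}(1)$. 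I expect this to be the main obstacle: the event must be defined so that it depends on $X$ only, while the resolvents involve $D$. To handle this, I would apply Fubini on the product space of Remark \ref{rmk_notationconventions}. The set of $(X,D)$ on which the bounds fail has probability $\mathrm{O}(n^{-D})$. Hence, by Markov's inequality, the set of $X$ whose conditional failure probability exceeds $n^{-D/2}$ has probability at most $n^{-D/2}$. Alternatively, I would use the fact that, on $\Omega_D$, the $\xi_i^2$ enter the resolvent only through bounded Lipschitz dependence, and discretize $D$.

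With this in place the steps are as follows. (i) Prove the large-$\eta$ case $\eta=\mathtt C\vartheta_1$ as in Lemma \ref{lem: average local law for large eta}. Derive the bounds \eqref{eq: bound for Z}, \eqref{eq_m1approximate}, \eqref{eq_controncontrolcontrol} with $n^{\epsilon}$ in place of $\prec$, and obtain the perturbed self-consistent equations \eqref{eq: m_2 by m_1} and \eqref{eq: m_1 by m_2}. Use the stability contraction, in which the coefficient is $\mathrm{O}(n^{-2/\alpha})$ on $\Omega_D$, to get $m_2-m_{2n}=\mathrm{O}(n^{\epsilon_2-1/2-1/\alpha})$, the analogous bound for $m_1$, and the entrywise bound for $\mathcal G$. (ii) Repeat the self-improvement of Lemma \ref{lem: self improvement}. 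The bounds \eqref{eq_standarddiscussion} and \eqref{eq_cccccc11111} only require the a priori entrywise control, the trivial bound \eqref{eq_trivialcontrol}, and the quadratic-form estimates now supplied by $\Omega_X$. (iii) Run the continuity argument in $\eta$ with step $n^{-3}$, as in the proof of Proposition \ref{eq_propooursidebulk}. The Lipschitz bounds from the first resolvent identity are deterministic and therefore carry over directly.

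The polynomially many induction steps do not accumulate errors. Each step re-derives the bound from the self-consistent equation, not from the previous bound, so the same $n^{\epsilon_2}$ works uniformly. One does have to fix $\epsilon_1<\epsilon_2$ slightly larger than $\varepsilon_1$ so that the losses in steps (i)--(iii) are absorbed. Part (2) follows by setting $\alpha=\infty$ and using \eqref{def3} in place of \eqref{def1}.
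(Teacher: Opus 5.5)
Your proposal is correct and is essentially the paper's proof. It uses the same three steps: the large-$\eta$ step as in Lemma \ref{lem: average local law for large eta}, the self-improvement of Lemma \ref{lem: self improvement}, and the $n^{-3}$-lattice continuity argument, with every $\mathrm{O}_\prec$ bound replaced by an $n^{\epsilon}$-bound on $\Omega_X\cap\Omega_D$.

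What you add is the Fubini--Markov construction of an $X$-measurable event that controls the quadratic forms in the random minors $\Sigma^{1/2}G^{(\mathcal T)}\Sigma^{1/2}$. The paper skips this by applying item (1) of Definition \ref{def_OmegaX}, which is stated for deterministic $A$, directly to these matrices. Your repair is legitimate, with two caveats. First, it only gives the bounds for realizations in $\Omega_D$ outside an $X$-dependent exceptional set of small conditional probability; this is harmless for the later conditional arguments. Second, drop your alternative of discretizing $D$: a net over $n$ multipliers is exponentially large, so the union bound fails.
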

\begin{proof}
The proof is similar to that of Proposition \ref{eq_propooursidebulk}, except that we need to restrict our discussion to $\Omega_X$ as defined in Definition \ref{def_OmegaX}. We therefore focus only on the key differences. We first establish Proposition \ref{lem_ousidebulk_conditional} for large $\eta=\mathtt{C}\vartheta_1$, as in the proof of Lemma \ref{lem: average local law for large eta}, where $\mathtt{C}>0$ is a sufficiently large constant. On the event $\Omega_X$ in Definition \ref{def_OmegaX}, we have
    \begin{align*}
        Z_i=\mathbf{y}_i^*G^{(i)}\mathbf{y}_i-\xi^2_in^{-1}\operatorname{tr}G^{(i)}\Sigma=\mathrm{O}(n^{\varepsilon_1}\frac{\xi^2_i}{\sqrt{n}\eta}).
    \end{align*}
%    for any sufficiently small constant $\varepsilon_1>0$ in Definition \ref{def_probmeasure_conditionalX}. 
    In the sequel, to simplify notation, we use the same symbol $\epsilon\geq \varepsilon_1$ to denote a generic small positive constant, whose value may vary from line to line. It then follows that
    \begin{align*}
        m_2=\frac{1}{n}\sum_{i=1}^n\frac{\xi^2}{-z(1+\xi^2_im_1)}+\mathrm{O}(n^{\epsilon}n^{-1/2-1/\alpha}).
    \end{align*}
    On the other hand, we decompose $G$ as
    \begin{align*}
        G=z^{-1}(I+m_2(z)\Sigma)^{-1}+R_1+R_2,
    \end{align*}
   with estimates parallel to those in the proof of Lemma \ref{lem: average local law for large eta} that
    \begin{align*}
        \frac{z}{n}|\operatorname{tr}(R_1\Sigma)|\leq C_1n^{\epsilon}n^{-1/2-1/\alpha},\quad \frac{z}{n}|\operatorname{tr}(R_2\Sigma)|\leq C_2n^{\epsilon}n^{-1-2/\alpha},
    \end{align*}
    for some constants $C_1,C_2>0$. Hence,
    \begin{align*}
        m_1(z)=-\frac{1}{n}\sum_{i=1}^p\frac{\sigma_i}{z(1+m_2(z)\sigma_i)}+\mathrm{O}(n^{\epsilon} n^{-1/2-2/\alpha}).
    \end{align*}
    Consequently, for $z\in\widetilde{\mathbf{D}}_u$ with $\eta=\mathtt{C}\vartheta_1$, we obtain
    \begin{gather*}
        |m_1(z)-m_{1n}(z)|=\mathrm{O}(n^{\epsilon}n^{-1/2-2/\alpha}),\quad |m_2(z)-m_{2n}(z)|=\mathrm{O}(n^{\epsilon}n^{-1/2-1/\alpha}),\\
        |m_{\mathcal{Q}}(z)-m_n(z)|=\mathrm{O}(n^{\epsilon}n^{-1/2-2/\alpha}).
    \end{gather*}
    Moreover, at $\eta=\mathtt{C}\vartheta_1$, we also have
    \begin{align}\label{eq_priorinput_averagelocallaw_conditional}
        \left|\mathcal{G}_{ii}(z)+\frac{1}{z(1+\xi^2_im_{1n}(z))}\right|=\mathrm{O}(n^{\epsilon}n^{-1/2-1/\alpha}),\quad |\mathcal{G}_{ij}(z)|=\mathrm{O}(n^{\epsilon}n^{-1/2-2/\alpha}).
    \end{align}
   This proves Proposition \ref{lem_ousidebulk_conditional} for $\eta=\mathtt{C}\vartheta_1$.

  Next, suppose that \eqref{eq_priorinput_averagelocallaw_conditional} holds uniformly for $z\in\widetilde{\mathbf{D}}_u$. Then, for some constant $C>0$,
\begin{align*}
    |\mathcal{G}_{11}(z)|=\frac{1}{|zd_1m_{1n}(\vartheta_1)|}+\mathrm{O}(n^{\epsilon}n^{-1/2-1/\alpha})=\frac{C}{d_1}+\mathrm{O}(n^{\epsilon}n^{-1/2-1/\alpha})=\mathrm{O}(n^{\epsilon}n^{-1/\alpha}).
\end{align*}
By the same argument, we also have $|\mathcal{G}_{ii}(z)|=\mathrm{O}(n^{\epsilon}n^{-1/\alpha})$. In addition, a direct calculation yields, for $1\leq i\leq n$,
\begin{align*}
    |\mathcal{G}_{kk}^{(i)}|=\mathrm{O}(n^{\epsilon}n^{-1/\alpha}),\;k\neq i;\quad |\mathcal{G}_{kl}^{(i)}|=\mathrm{O}(n^{\epsilon}n^{-1/2-1/\alpha}), \; k,l\neq i.
\end{align*}
Taking these Green function bounds as prior inputs and repeating the same argument as in the large $\eta$ case above, we conclude that the estimates in Proposition \ref{lem_ousidebulk_conditional} hold uniformly for $z\in\widetilde{\mathbf{D}}_u$.

Finally, we apply the same continuity argument on the lattice $\eta_j=\mathtt{C}\vartheta_1-jn^{-3}, 0\leq j\leq Cn^{1/\alpha+3}\log n$ as in the proof of Proposition \ref{eq_propooursidebulk}. Especially, for $z^{\prime}=E+\mathrm{i}\eta^{\prime}$ with $\eta^{\prime}\in[\eta_{j-1},\eta_j]$, we have
\begin{gather*}
    |m_{1n}(z^{\prime})-m_{1n}(z_j)|=\mathrm{O}(n^{\epsilon}n^{-3-2/\alpha}),\quad |m_{2n}(z^{\prime})-m_{2n}(z_j)|=\mathrm{O}(n^{\epsilon}n^{-3-2/\alpha})\\
    |m_{n}(z^{\prime})-m_{n}(z_j)|=\mathrm{O}(n^{\epsilon}n^{-3-2/\alpha}),
\end{gather*}
and 
\begin{align*}
    \|(z^{\prime})^{-1}(I+m_{1n}(z^{\prime})D^2)^{-1}-(z_j)^{-1}(I+m_{1n}(z_j)D^2)^{-1}\|=\mathrm{O}(n^{\epsilon}n^{-3-2/\alpha}).
\end{align*}
%Combining an induction argument with the standard lattice continuity argument, we conclude the proof of the lemma.
\end{proof}

The following lemma is the conditional counterpart Lemma \ref{lem: upper bound for eigenvalues}. 
\begin{lemma}\label{lem_upperbound_unboundedeigenvalue_conditional}
Suppose Assumptions \ref{assum_model}, \ref{assumption_techincial} and (i) of Assumption \ref{assum_D} hold. Under $\Omega_X$, for some sufficiently large constant $C>0,$ fix any realization $\{\xi_i^2\} \in \Omega_D$, we have that for all $1 \leq i \leq \min\{p,n\},$
\begin{equation}
\lambda_i(Q) \notin (\vartheta_1, C n^{1/\alpha} \log n), \ \text{if \normalfont{Case (i)-a} of Assumption \ref{assum_D} holds},  
\end{equation}
and 
\begin{equation}
\lambda_i(Q) \notin (\vartheta_1, C  \log^{1/\beta} n), \ \text{if {\normalfont Case (i)-b of Assumption \ref{assum_D} holds}}.  
\end{equation}
%
%\ref{ass1},\ref{ass2},\ref{ass3.1},\ref{ass3.2} and events $\Omega_n$ hold. With high probability there is no eigenvalue of $\mathcal{W}$ in $(\lambda_{(1)},Cn^{2/\alpha}\log n)$ for $\alpha\in(0,+\infty)$ and no eigenvalues in $(\lambda_{(1)},C\log^{K}n)$ for $\alpha=+\infty$.
\end{lemma}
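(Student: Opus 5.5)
We argue by contradiction, in the same way as the proof of Lemma \ref{lem: upper bound for eigenvalues}. The stochastic-domination input is replaced by the deterministic averaged law of Proposition \ref{lem_ousidebulk_conditional}, which is available once we restrict to $\Omega_X$ and fix a realization $\{\xi_i^2\}\in\Omega_D$. We treat Case (i)-a in detail. Case (i)-b is identical after formally setting $\alpha=\infty$ in the error terms and using (\ref{def3}) in place of (\ref{def1}).

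Suppose that some eigenvalue $\widehat\lambda$ of $Q$ lies in $(\vartheta_1, Cn^{1/\alpha}\log n)$, and set $z=\widehat\lambda+\ri n^{-2/3}$. We first check that $z\in\widetilde{\mathbf{D}}_u\subset\mathbf{D}_u$. On $\Omega_D$, relation (\ref{eq_mu1part}) together with (\ref{def1}) gives $\vartheta_1\asymp\xi_{(1)}^2$, so $\vartheta_1\gtrsim n^{1/\alpha}\log^{-1}n$. By choosing the constant $\mathtt C$ in (\ref{eq_spectraldomainonereduced}) large, and after restricting $C$ if needed so that $Cn^{1/\alpha}\log n-\vartheta_1\le \mathtt C d_1$ (or covering the interval by finitely many shifted domains of the same type), we may assume $0<E-\vartheta_1\le \mathtt C d_1$ holds for $E=\widehat\lambda$. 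This bookkeeping between the window length $d_1$ and the target interval is the only place that needs care. If necessary, we handle the far part of the interval by a cruder bound: for $E\ge 2\vartheta_1$ we use $\|Q\|\le \|XX^*\|\,\|\Sigma\|\,\xi_{(1)}^2\le C_{X,0}\tau^{-1}\xi_{(1)}^2$ from item (2) of Definition \ref{def_OmegaX}. To make this fully effective the constant should be sharpened via the refined equation (\ref{eq: def of vartheta_1}); alternatively, the local law is simply extended to a window of length $\asymp\vartheta_1$, which the proof of Proposition \ref{lem_ousidebulk_conditional} permits, since only $|E|\asymp\vartheta_1$ and the gap bounds were used there.

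Now take $z=\widehat\lambda+\ri n^{-2/3}$ in the domain. Lemma \ref{lem: basic bounds} gives $\operatorname{Im}m_n(z)=\rO(\eta E^{-2})=\rO(n^{-2/3-2/\alpha}\log^2 n)$. Proposition \ref{lem_ousidebulk_conditional} (the statement for $m_Q$) gives $|m_Q(z)-m_n(z)|=\rO(n^{\epsilon_2}n^{-1/2-2/\alpha})$ deterministically on $\Omega_X\cap\Omega_D$. Combining the two,
\[
\operatorname{Im} m_Q(z)=\rO\big(n^{\epsilon_2}n^{-1/2-2/\alpha}\big).
\]
On the other hand, since $\widehat\lambda$ is an eigenvalue,
\[
\operatorname{Im}m_Q(z)\ge \frac{1}{p}\cdot\frac{\eta}{(\widehat\lambda-\widehat\lambda)^2+\eta^2}=\frac{1}{p\eta}\gtrsim n^{-1/3}.
\]
Because $\epsilon_2$ is arbitrarily small and $\alpha>2$, we have $n^{-1/3}\gg n^{\epsilon_2-1/2-2/\alpha}$, which is a contradiction. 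For Case (i)-b the upper bound is $\rO(n^{\epsilon_2-1/2})$, and the same contradiction follows. No probability enters beyond the restriction to $\Omega_X\cap\Omega_D$, which yields the statement for every such realization.
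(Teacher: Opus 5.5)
Your proposal is correct and is the paper's own proof: argue by contradiction at $z=\widehat\lambda+\mathrm{i}n^{-2/3}$, bound $\operatorname{Im}m_n(z)=\rO(\eta E^{-2})$ by Lemma \ref{lem: basic bounds} and $|m_Q-m_n|$ deterministically by Proposition \ref{lem_ousidebulk_conditional}, and contradict $\operatorname{Im}m_Q(z)\geq (p\eta)^{-1}\gtrsim n^{-1/3}$. You are right that the window mismatch is real: the paper simply asserts $z\in\widetilde{\mathbf{D}}_u$, but the interval has length of order $n^{1/\alpha}\log n\gg\mathtt{C}d_1$, so shrinking $C$ cannot help. The remedy that actually closes it is your last one. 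Extend the outside-bulk law to $\vartheta_1<E\leq K\vartheta_1$ for a large constant $K$, where the lower bound on $|1+\xi_i^2m_{1n}(E)|$ only improves, and combine this with your norm bound $\lambda_1\leq\|\Sigma\|\,\|XX^*\|\,\xi_{(1)}^2\leq K\vartheta_1$ on $\Omega_X\cap\Omega_D$.
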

\begin{proof}
   The proof again proceeds by contradiction, following the same argument as in the proof of Lemma \ref{lem: upper bound for eigenvalues}. The only difference is that, if we assume that $\widehat{\lambda}$ is an eigenvalue of $Q$ lying in the interval $(\vartheta_1, C n^{1/\alpha}\log n)$, then Proposition \ref{lem_ousidebulk_conditional} yields
    \begin{align*}
        \operatorname{Im}m_Q(z)=\operatorname{Im}m_n(z)+\operatorname{Im}(m_Q(z)-m_n(z))=\mathrm{O}(n^{\epsilon}n^{-1/2-2/\alpha}),
    \end{align*}
    for $z=\widehat{\lambda}+\mathrm{i}n^{-2/3}\in\widetilde{\mathbf{D}}_u$. The remainder of the proof is similar to that of Lemma \ref{lem: upper bound for eigenvalues}, and we therefore omit the details.
\end{proof}

With the above preparation, we proceed to conclude the proof of Theorem \ref{thm_averagedlocallaw_unboundedmultiplier_conditionalonX}
following the arguments in Section \ref{sec_proofa2}.

\begin{proof}[\bf Proof of Theorem \ref{thm_averagedlocallaw_unboundedmultiplier_conditionalonX}]
By Proposition \ref{lem_ousidebulk_conditional} and Lemma \ref{lem_upperbound_unboundedeigenvalue_conditional}, we can enlarge the spectral domain for $Q^{(1)}$ from $\widetilde{\mathbf{D}}_u$ to $\mathbf{D}_u$. We first note that, on the events $\Omega_X$ and $\Omega_D$,
\begin{align*}
    \vartheta_1>\lambda_1>\vartheta_2>\lambda_1^{(1)}.
\end{align*}
Since $\vartheta_1-\vartheta_2\geq C_1 n^{1/\alpha}\log^{-1}n$ for some constant $C_1>0$, it follows that for any $z\in\mathbf{D}_u$, $|\lambda_1^{(1)}-z|\geq C_2 n^{1/\alpha}\log^{-1}n$ for some constant $C_2>0$. This spectral separation yields the a priori bound 
\begin{align*}
    \|G^{(1\mathcal{T})}(z)\|\leq C_3n^{-1/\alpha}\log n,
\end{align*}
uniformly for $z\in\mathbf{D}_u$ and $\mathcal{T}\subset\{2,\dots,n\}$, with some constant $C_3>0$. Next, by the definition of $m_2^{(1)}$ and a decomposition analogous to \eqref{eq: decomp m_2}, we have
\begin{gather*}
        m_2^{(1)}=\frac{1}{n}\sum_{i=2}^n\frac{\xi^2_i}{-z-z\mathbf{y}_i^{*}G^{(1i)}\mathbf{y}_i}=\frac{1}{n}\sum_{i=2}^n\frac{\xi^2_i}{-z(1+\xi^2_in^{-1}\operatorname{tr}G^{(1i)}\Sigma+Z_i^{(1)})},\\
        Z_i^{(1)}=\mathbf{y}_i^{*}G^{(1i)}\mathbf{y}_i-\xi^2_in^{-1}\operatorname{tr}G^{(1i)}\Sigma.
\end{gather*}
On the event $\Omega_X$, using the above a priori bound on $|G^{(1i)}(z)|$, we obtain
\begin{gather*}
    |Z_i^{(1)}|\leq C n^{\epsilon}\frac{\xi^2_i}{n}\|G^{(1i)}\Sigma\|_F=\mathrm{O}\big(n^{\epsilon}\frac{\xi^2_i}{n^{1/2}}n^{-1/\alpha}\big),\\
    |\frac{1}{n}\operatorname{tr}(G^{(1i)}\Sigma)-m_1^{(1)}(z)|=|\frac{1}{n}\mathbf{y}_i^{*}G^{(1)}\Sigma G^{(1i)}\mathbf{y}_i|=\mathrm{O}\big(n^{\epsilon}\frac{\xi^2_i}{n}n^{-2/\alpha}\big).
\end{gather*}
Therefore
\begin{align*}
    m_2^{(1)}=\frac{1}{n}\sum_{i=2}^n\frac{\xi^2_i}{-z(1+\xi^2_im_1^{(1)})}+\mathrm{O}(n^{\epsilon}n^{-1/2-1/\alpha}).
\end{align*}
By the same argument, we also obtain
\begin{align*}
    m_1^{(1)}=-\frac{1}{n}\sum_{i=1}^p\frac{\sigma_i}{z(1+m_2^{(1)}\sigma_i)}+\mathrm{O}(n^{\epsilon}n^{-1/2-2/\alpha}).
\end{align*}
Finally, we observe that
\begin{align*}
    &m_1^{(1)}(z)-m_{1n}^{(1)}(z)=-\frac{1}{n}\sum_{i=1}^p\frac{\sigma_i}{z(1+m_2^{(1)}\sigma_i)}+\frac{1}{n}\sum_{i=1}^p\frac{\sigma_i}{z(1+m_{2n}^{(1)}\sigma_i)}+\mathrm{O}(n^{\epsilon}n^{-1/2-2/\alpha})\\
    &=\mathrm{o}(1)(m_1^{(1)}(z)-m_{1n}^{(1)}(z))+\mathrm{O}(n^{\epsilon}n^{-1/2-2/\alpha}).
\end{align*}
This completes the proof.
\end{proof}

\subsection{Bounded support and unconditional setting: proof of Theorem \ref{thm_boundedcaselocallaw}}\label{sec_proof_bounded}
In this section, we will prove Theorem \ref{thm_boundedcaselocallaw}. In Section \ref{sec_proofpartiboundedlocallaw}, we study $m_{1n,c}(z)-m_{1n}(z)$ and $m_{n,c}-m_n,$ which is a counterpart of Lemma 4.4 of \cite{Kwak2021}. Then in Section \ref{sec_proofpartiiboundedlocallaw2}, we study $m_{1n}(z)-m_{1}(z)$ and $m_Q-m_n,$ which is a counterpart of Proposition 5.1 of \cite{Kwak2021}. 

\subsubsection{Control of $m_{1n,c}(z)-m_{1n}(z)$ and $m_{n,c}(z)-m_n(z)$}\label{sec_proofpartiboundedlocallaw}

Due to similarity, we focus on $|m_{1n,c}-m_{1n}|$ and briefly discuss $|m_{n,c}-m_n|$ in the end. The proof ideas follow  Lemma 4.5 of \cite{lee2016extremal} or Lemma 4.4 of \cite{Kwak2021}. We focus on the parts that differ the most from the argument in \cite{lee2016extremal,Kwak2021}.

\begin{proof}
According to the definitions of $m_{1n,c}$ and $m_{1n}$ in (\ref{eq_systemequationsm1m2intergrate}) and (\ref{eq_systemequationsm1m2}), we observe that 
\begin{align}\label{eq_p1plusp2}
    &|m_{1n,c}(z)-m_{1n}(z)|\\
    & \leq \Big|\frac{1}{n}\sum_{i=1}^p\frac{\sigma_i}{-z+\sigma_i\int\frac{s}{1+sm_{1n,c}(z)}\mathrm{d}F(s)}-\frac{1}{n}\sum_{i=1}^p\frac{\sigma_i}{-z+\frac{\sigma_i}{n}\sum_{j=1}^n\frac{\xi^2_j}{1+\xi^2_jm_{1n,c}(z)}}\Big| \nonumber \\
    &+|m_{1n,c}(z)-m_{1n}(z)|\Big|\frac{1}{n}\sum_{i=1}^p\frac{\frac{\sigma_i^2}{n}\sum_{j=1}^n\frac{\xi^4_j}{(1+\xi^2_jm_{1n}(z))(1+\xi^2_jm_{1n,c}(z))}}{(-z+\frac{\sigma_i}{n}\sum_{j=1}^n\frac{\xi^2_j}{1+\xi^2_jm_{1n}(z)})(-z+\frac{\sigma_i}{n}\sum_{j=1}^n\frac{\xi^2_j}{1+\xi^2_jm_{1n,c}(z)})}\Big| \\ \nonumber
    &:=  \mathsf{P}_1+\mathsf{P}_2. 
\end{align}

On the one hand, for $\mathsf{P}_1, $ we have that 
\begin{equation*}
\mathsf{P}_1=\frac{1}{n}\sum_{i=1}^p\frac{\sigma_i^2|n^{-1}\sum_j\frac{\xi^2_j}{1+\xi^2_jm_{1n,c}(z)}-\int\frac{s}{1+sm_{1n,c}(z)}\mathrm{d}F(s)|}{|(-z+\frac{\sigma_i}{n}\sum_{j=1}^n\frac{\xi^2_j}{1+\xi^2_jm_{1n,c}(z)})(-z+\sigma_i\int\frac{s}{1+sm_{1n,c}(z)}\mathrm{d}F(s))|}.
\end{equation*}
Since $z \in \mathbf{D}_b^\prime \subset \mathbf{D}_b,$ according to Assumption \ref{assum_additional_techinical} and the continuity of $m_{1n,c}$, 
we conclude that $|-z+\sigma_i\int\frac{s}{1+sm_{1n,c}(z)}\mathrm{d}F(s))| \geq c$ for some constant $c>0.$ Moreover, by definition of $\mathbf{D}_b^\prime$, together with (\ref{def4}), we can show that $|-z+\frac{\sigma_i}{n}\sum_{j=1}^n\frac{\xi^2_j}{1+\xi^2_jm_{1n,c}(z)}| \geq c'$ for some $c'>0$ when $n$ is sufficiently large. Using (\ref{def4}) again, we conclude, on the event $\Omega_D,$ for some small constant $\epsilon>0$ and some constant $C>0$ 
\begin{equation}\label{eq_defnp1}
\mathsf{P}_1 \leq  C n^{-1/2+\epsilon}. 
\end{equation} 

\quad On the other hand, for $\mathsf{P}_2,$ for notional convenience, we further write it as $\mathsf{P}_2=|m_{1n,c}(z)-m_{1n}(z)| \times |\mathsf{T}|.$ For $\mathsf{T},$ by Cauchy-Schwarz inequality, we have that
\begin{equation}\label{eq_tintot1t2}
|\mathsf{T}| \leq \mathsf{E}_1 \mathsf{E}_2,
\end{equation}
where $\mathsf{E}_k, k=1,2,$ are defined as 
\begin{gather}
    \begin{split}
        &\mathsf{E}_1:= \Big(\frac{1}{n}\sum_{i=1}^p\frac{\frac{\sigma^2_i}{n}\sum_{j=1}^n\frac{\xi^4_j}{(1+\xi^2_jm_{1n}(z))^2}}{|-z+\frac{\sigma_i}{n}\sum_{j=1}^n\frac{\xi^2_j}{1+\xi^2_jm_{1n}(z)}|^2}\Big)^{1/2},\\
        &\mathsf{E}_2:=\Big(\frac{1}{n}\sum_{i=1}^p\frac{\frac{\sigma^2_i}{n}\sum_{j=1}^n\frac{\xi^4_j}{(1+\xi^2_jm_{1n,c}(z))^2}}{|-z+\frac{\sigma_i}{n}\sum_{j=1}^n\frac{\xi^2_j}{1+\xi^2_jm_{1n,c}(z)}|^2}\Big)^{1/2}.
    \end{split}
\end{gather}
Together with the identity (\ref{eq_expansionusefullessorequaltoone}) below and the fact $m_{1n}(z) \asymp 1$ for $z \in \mathbf{D}_b^\prime$, we find that $\mathsf{E}_1  \leq 1.$ For the term $\mathsf{E}_2,$ we first consider a closely related quantity $\mathsf{W}(z)$ defined as 
\begin{equation*}
\mathsf{W}(z):=\frac{1}{n} \sum_{i=1}^p \frac{\sigma_i^2 \int \frac{s^2}{|1+s m_{1n,c}(z)|^2} \mathrm{d} F(s) }{|-z+\sigma_i \int \frac{s}{1+s m_{1n,c}(z)} \mathrm{d} F(s)|^2}=1-\eta \frac{|m_{1n,c}(z)|^2}{\operatorname{Im} m_{1n,c}(z)}.
\end{equation*}
By assumption that $\phi^{-1}>\mathsf{s}_3$ (recall (\ref{eq_phasetransition})) and  $m_{1n,c}(L_+)=-l^{-1}$ , we see that
\begin{equation}\label{eq_edgeresults}
\mathsf{W}(L_+)<1.
\end{equation}
Armed with (\ref{eq_edgeresults}), using  (\ref{def4}) and Assumption \ref{assum_additional_techinical}, we can apply  an argument similar to Lemma A.6 of \cite{lee2016extremal} or Lemma A.7 of \cite{Kwak2021}  to conclude that  when $n$ is sufficiently large, for $z \in \mathbf{D}_b$ and  some constant $0<\mathfrak{c}'<1$
\begin{equation}\label{eq_defnmathsfW}
\mathsf{E}^2_2=\mathsf{W}(L_+)+\ro(1)<\mathfrak{c}'.
\end{equation}

Consequently, we find that when $n$ is sufficiently large, $\mathsf{E}_2<1.$ Together with (\ref{eq_tintot1t2}), we can conclude that $|\mathsf{T}|<1$. This yields that $\mathsf{P}_2 = \mathfrak{c} |m_{1n,c}-m_{1n}|,$ for some constant $0<\mathfrak{c}<1.$ 

\quad Inserting the above controls back into (\ref{eq_p1plusp2}), using (\ref{eq_defnp1}), we can conclude our proof
\begin{equation}\label{eq_deterministiclose}
m_{1n,c}=m_{1n}(z)+\rO(n^{-1/2+\epsilon}). 
\end{equation} 
By the result of $|m_{1n,c}-m_{1n}|$, the proof of $m_{n,c}-m_n$ follows from an argument similar to (\ref{eq_p1plusp2}), using (\ref{eq_systemequationsm1m2}) and the equivalent expressions
\begin{equation*}
m_n(z)=\frac{1}{p} \sum_{i=1}^p \frac{1}{-z+\sigma_i n^{-1} \sum_{j=1}^n \frac{\xi_j^2}{1+\xi_j^2 m_{1n}}}, \ m_{n,c}(z)=\frac{1}{p} \sum_{i=1}^p \frac{1}{-z+\sigma_i \int_0^l \frac{s}{1+s m_{1n,c}(z)} \mathrm{d} F(s)}.
\end{equation*}
We omit the details. 
\end{proof}

\subsubsection{Control of $m_{1n}(z)-m_1(z)$ and $m_n(z)-m_Q(z)$}\label{sec_proofpartiiboundedlocallaw2}
As the proof of the remaining part is analogous to Section \ref{sec_proofpartiboundedlocallaw}, we focus on $|m_{1n,c}-m_{1n}|$ and will briefly discuss $|m_Q-m_n|$ from line to line. 
The proof ideas follow  Proposition 5.1 of \cite{lee2016extremal} or Proposition 5.1 of \cite{Kwak2021}. We focus on explaining the parts deviates the most.  The proof relies on the following two lemmas. 

\begin{lemma}\label{lem: first bound for m_1n-m_1}
Conditional on the event $\Omega_D$ in Theorem \ref{thm_boundedcaselocallaw}, for all $z=E+\ri\eta\in\mathbf{D}_b^{\prime}$ with $n^{-1/2+\epsilon_{\mathsf{d}}}\leq\eta\leq n^{-1/(d+1)+\epsilon_{\mathsf{d}}}$, we have 
\begin{gather*}
    |m_{1n}(z)-m_1(z)|\prec\frac{1}{n\eta_0}, \   |m_{n}(z)-m_Q(z)|\prec\frac{1}{n\eta_0}.
\end{gather*}
\end{lemma}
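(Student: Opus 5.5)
The bound will come from a self-consistent (Schur complement) equation for $m_1$, valid for fixed multipliers on $\Omega_D$, which is then compared with the deterministic system (\ref{eq_systemequationsm1m2}) through a stability estimate. The estimate is first proved at the largest $\eta=n^{-1/(d+1)+\epsilon_{\mathsf{d}}}$ and then carried down to $\eta\geq n^{-1/2+\epsilon_{\mathsf{d}}}$ by a continuity argument. This parallels Proposition 5.1 of \cite{Kwak2021} and \cite{lee2016extremal}. Since $\eta\geq n^{-1/2+\epsilon_{\mathsf{d}}}\gg\eta_0$ on the domain, the target $(n\eta_0)^{-1}=n^{-1/2+\epsilon_{\mathsf{d}}}$ is weaker than the natural $(n\eta)^{-1}$ rate. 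We may therefore aim for the cruder bound $\Lambda:=|m_1-m_{1n}|\prec (n\eta)^{-1}+n^{-1/2}$, which suffices.

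The first step derives the approximate equations. As in (\ref{eq: decomp m_2}),
\[
m_2=\frac1n\sum_{i=1}^n\frac{\xi_i^2}{-z(1+\xi_i^2 m_1+\xi_i^2 T_i+Z_i)},
\]
where $Z_i\prec n^{-1}\|G^{(i)}\Sigma\|_F\prec\sqrt{\operatorname{Im} m_1/(n\eta)}$ by Lemma \ref{lem:large deviation} and (\ref{lem:Wald}). The term $T_i=n^{-1}\operatorname{tr}(G^{(i)}\Sigma)-m_1$ is $\rO((n\eta)^{-1})$ by (\ref{lem:trace_difference}). In the same way, the expansion (\ref{eq: decomp of mathcal_G}) yields
\[
m_1=\frac1n\sum_{i=1}^p\frac{\sigma_i}{-z(1+\sigma_i m_2)}+\rO_\prec\bigl(\sqrt{\operatorname{Im} m_1/(n\eta)}+(n\eta)^{-1}\bigr).
\]
The denominators must be kept away from zero. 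Here the definition of $\mathbf{D}_b'$ in (\ref{eq_spectralparameterprime}) and the estimate $|m_{1n}-m_{1n,c}|\leq n^{-1/2+\epsilon_{\mathsf{d}}}$ (already proved in Section \ref{sec_proofpartiboundedlocallaw}) give $|1+\xi_j^2m_{1n}|\gtrsim n^{-1/(d+1)-\epsilon_{\mathsf{d}}}$ for $j\geq2$. The index $j=(1)$ is handled separately: its contribution carries a factor $1/n$, and its denominator is controlled through the separation assumptions in (\ref{def4}).

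The second step establishes stability. Subtract the system (\ref{eq_systemequationsm1m2}), i.e.\ $F_n(m_{1n},z)=0$ in (\ref{eq:F(m,z)}), from the approximate equation. This gives $(1-\mathsf T)(m_1-m_{1n})=\text{error}$, where $\mathsf T$ is the same bilinear quantity as in (\ref{eq_tintot1t2}). The Cauchy--Schwarz bound $|\mathsf T|\leq \mathsf{E}_1\mathsf{E}_2$, together with (\ref{eq_defnmathsfW}) (which uses $\phi^{-1}>\mathsf{s}_3$, i.e.\ $\mathsf W(L_+)<1$), gives $|1-\mathsf T|\geq 1-\mathfrak c'>0$. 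Hence $\Lambda\prec$ error, provided the error terms are evaluated at $m_1$ close to $m_{1n}$. By (\ref{eq_zopointrate11}), $\operatorname{Im} m_{1n}=\rO(\max\{\eta,(n\eta)^{-1}\})$, so the error is $\prec(n\eta)^{-1}+\sqrt{\eta/(n\eta)}\lesssim n^{-1/2}+(n\eta)^{-1}\leq n^{-1/2+\epsilon_{\mathsf d}}=(n\eta_0)^{-1}$.

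The third step runs a continuity (bootstrapping) argument in $\eta$ on a grid of mesh $n^{-3}$, using Lipschitz bounds from Lemma \ref{lem: first resolvent}. At the top scale the a priori bound $\|G\|\leq\eta^{-1}$ already makes $\Lambda=\ro(1)$. At each grid step the assumption $\Lambda\leq n^{-\delta}$ allows the expansion above, which improves it to $\Lambda\prec(n\eta_0)^{-1}$, and the improved bound propagates to the next grid point. A union bound over the grid and a lattice argument make the estimate uniform in $z$. The bound for $|m_n-m_Q|$ then follows from $m_Q=p^{-1}\operatorname{tr}G=p^{-1}\sum_i(-z(1+\sigma_im_2))^{-1}+\text{error}$, as in (\ref{eq_verbalttt}).

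The main obstacle is the second step near the edge. When $d>1$, $\rho$ decays like $\kappa^d$ instead of a square root, so stability cannot come from the usual square-root edge expansion. It must come from the strict inequality $\mathsf W<1$, propagated uniformly to the random system on $\mathbf D_b'$. Both the largest multiplier $\xi^2_{(1)}$, whose denominator $1+\xi^2_{(1)}m_1$ can be as small as $n^{-1/(d+1)-\epsilon_{\mathsf{d}}}$, and the $n^{-1/2+\epsilon_{\mathsf{b}}}$ discrepancies between $m_{1n}$ and $m_{1n,c}$ must be kept from destroying the bound $|\mathsf T|<1$. I would first try treating the $j=(1)$ term as an explicit $\rO(n^{-1}\cdot n^{1/(d+1)+\epsilon_{\mathsf{d}}})=\ro(1)$ perturbation of $\mathsf T$, which is small because $d>1$.
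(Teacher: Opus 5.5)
Your ingredients are the right ones, and they match the paper's: the approximate self-consistent equations, Cauchy--Schwarz stability from $\mathsf W(L_+)<1$, and the splitting $\operatorname{Im} m_1\le |m_1-m_{1n}|+\operatorname{Im} m_{1n}$ combined with (\ref{eq_zopointrate11}). However, the continuity argument as written has no valid starting point.

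The top of the domain is $\eta=n^{-1/(d+1)+\epsilon_{\mathsf d}}\to 0$. There $\|G\|\le \eta^{-1}=n^{1/(d+1)-\epsilon_{\mathsf d}}$ diverges, so it gives no $\ro(1)$ bound on $\Lambda$. The trivial resolvent bound only initializes at $\eta\asymp 1$, which lies outside $\mathbf D_b$. Starting there would force you to run, and justify stability of, the bootstrap across $n^{-1/(d+1)+\epsilon_{\mathsf d}}\le\eta\lesssim 1$.

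Even granting an initial bound, the hypothesis $\Lambda\le n^{-\delta}$ does not license the expansion you use. On $\mathbf D_b'$ the denominators $|1+\xi_j^2 m_{1n}|$ for $j\ge 2$ can be as small as $n^{-1/(d+1)-\epsilon_{\mathsf d}}$. The denominator $|1+\xi_{(1)}^2 m_{1n}|$ can be as small as $\xi_{(1)}^2\operatorname{Im} m_{1n}\asymp \eta$, near $E_0$ where its real part vanishes. Replacing $m_1$ by $m_{1n}$ in these denominators therefore needs $\Lambda$ polynomially smaller than them, not merely $n^{-\delta}$.

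Relatedly, the separation in (\ref{def4}) controls only the $j\ge 2$ denominators, not the $j=(1)$ one. The $j=(1)$ term is harmless for a different reason. Its contribution to $m_2$ is $\rO((n\eta)^{-1})$ via $|\mathcal G_{11}|\le\eta^{-1}$. Its contribution to $\mathsf T$ is $\rO((n\eta^2)^{-1})=\rO(n^{-2\epsilon_{\mathsf d}})$. This comes from the cutoff $\eta\ge n^{-1/2+\epsilon_{\mathsf d}}$, not from $d>1$, and it is why the lemma is restricted to this range of $\eta$.

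The paper avoids both problems. It takes the weak averaged local law, Proposition \ref{proposition_boundedweaklocallaw} (proved by the standard argument starting at $\eta\asymp1$), as the a priori input. It then works on the event $\Xi_1$ of Lemma \ref{lem: est for Im m_1} and argues by contradiction rather than by continuity in $\eta$. Suppose $|m_{1n}-m_1|>n^{\epsilon}(n\eta_0)^{-1}$. On this range $(n\eta)^{-1}\ll (n\eta_0)^{-1}$, so (\ref{eq_modificationkeykey}) gives $n^{c_2\epsilon}\Psi=\ro(|m_1-m_{1n}|)$. Insert this into (\ref{eq_decompositiondeomdeomdeodemdem}), together with $\mathsf C_1\le\mathfrak c<1$ from (\ref{eq_mathsfc1control}) and $\mathsf C_2=\rO(n^{c_2\epsilon}\Psi)$. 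This yields $|m_{1n}-m_1|=\ro(|m_{1n}-m_1|)$, a contradiction. The stability constant there comes from Cauchy--Schwarz with $m_1$ left inside one factor, so no expansion of the near-singular denominators around $m_{1n}$ is needed.

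To repair your route, either adopt this scheme (weak law plus contradiction), or start the bootstrap at $\eta\asymp 1$. In the latter case the hypothesis must be small relative to $\min_j|1+\xi_j^2 m_{1n}|$ at each scale, and you must verify the stability bound along the whole path.
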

%{\color{red}[from here]}
\begin{lemma}\label{lem: second bound for m_1n-m_1}
 Assuming that $|m_{1n}(z)-m_1(z)|\prec n^{\epsilon_{\mathsf{d}}}(n\eta_0)^{-1}$, then conditional on the event $\Omega_D,$ we have that for all $z \in \mathbf{D}_b^\prime$ 
\begin{gather*}
    |m_{1n}(z)-m_1(z)|\prec\frac{1}{n\eta_0}, \   |m_{n}(z)-m_Q(z)|\prec\frac{1}{n\eta_0}.
\end{gather*}
\end{lemma}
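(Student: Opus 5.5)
We need to propose a proof of Lemma \ref{lem: second bound for m_1n-m_1}: this is a self-improving (bootstrapping) estimate. Given the a priori bound $|m_{1n}-m_1|\prec n^{\epsilon_{\mathsf{d}}}(n\eta_0)^{-1}$ on $\mathbf{D}_b'$, we want to upgrade it to $(n\eta_0)^{-1}$, and then transfer the bound to $m_Q-m_n$. The approach follows Proposition 5.1 of \cite{Kwak2021} and \cite{lee2016extremal}. We use the same resolvent expansions as in the proof of Lemma \ref{lem: average local law for large eta}, but now with $\xi_i^2\le l$ bounded. The $\xi_{(1)}^2$ column is singled out through the set $\mathbf{D}_b'$, which keeps every $|1+\xi_j^2 m_{1n,c}(z)|$ bounded below by $n^{-1/(d+1)-\epsilon_{\mathsf{d}}}$.

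First, by Theorem \ref{thm_boundedcaselocallaw}'s deterministic part (already proved in Section \ref{sec_proofpartiboundedlocallaw}) and the a priori bound, $m_1$ is within $n^{-1/2+\epsilon_{\mathsf{d}}}+n^{\epsilon_{\mathsf{d}}}(n\eta_0)^{-1}$ of $m_{1n,c}$. By definition of $\mathbf{D}_b'$ and the first line of (\ref{def4}), this makes $|1+\xi_j^2 m_1(z)|\gtrsim n^{-1/(d+1)-\epsilon_{\mathsf{d}}}$ for $j\ge2$, and comparable control holds for $j=1$. This gives a priori bounds $|\mathcal G_{jj}|\prec n^{1/(d+1)+\epsilon_{\mathsf{d}}}$. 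Second, I write the Schur complement identity $\mathcal G_{ii}=-[z(1+\xi_i^2 m_1+\xi_i^2 T_i+Z_i)]^{-1}$ as in (\ref{eq: decomp m_2}). Here $Z_i$ is bounded via Lemma \ref{lem:large deviation} and Ward's identity (\ref{lem:Wald}) by $\sqrt{\operatorname{Im} m_1/(n\eta)}$. By (\ref{eq_zopointrate11}) and the a priori bound this is $\prec (n\eta_0)^{-1/2}\max\{\eta,(n\eta)^{-1}\}^{1/2}$. The term $T_i$ is $\rO_\prec((n\eta)^{-1})$ by (\ref{lem:trace_difference}). Averaging gives the self-consistent relations
\[
m_2=\frac1n\sum_i\frac{\xi_i^2}{-z(1+\xi_i^2m_1)}+\mathcal{E}_2,\qquad m_1=\frac1n\sum_{k=1}^p\frac{\sigma_k}{-z(1+\sigma_k m_2)}+\mathcal{E}_1 .
\]
The errors $\mathcal E_1,\mathcal E_2$ are obtained through the $R_1,R_2$ decomposition (\ref{eq: decomp of mathcal_G}).

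The key step, and the main obstacle, is showing that $\mathcal E_1,\mathcal E_2$ are $\prec (n\eta_0)^{-1}$ rather than merely $(n\eta_0)^{-1/2}$. The naive bound on $\mathtt R_{11}$ is a sum of $n$ centered terms. I plan to handle it with the fluctuation averaging lemma (as in \cite{Kwak2021}, Lemma 5.4-type): a high-moment expansion of $\frac1n\sum_i (1-\mathbb E_i)[\cdots]$ with the a priori entry bounds gains an extra factor $(n\eta)^{-1/2}$. The single index $i$ with $\xi_i^2=\xi_{(1)}^2$ is treated separately, since its $\mathcal G_{ii}$ may be as large as $n^{1/(d+1)+\epsilon_{\mathsf{d}}}$. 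Its contribution is $n^{-1}\cdot n^{1/(d+1)+\epsilon_{\mathsf{d}}}\cdot(\text{error})$, which is $\ll (n\eta_0)^{-1}$ because $1/(d+1)<1/2$ and $\epsilon_{\mathsf{d}}$ is small.

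Subtracting the relations from (\ref{eq_systemequationsm1m2}) as in (\ref{eq_closenessm2m2n}) gives $m_1-m_{1n}=\mathsf T\,(m_1-m_{1n})+\rO_\prec((n\eta_0)^{-1})$. The stability factor satisfies $|\mathsf T|\le \mathsf E_1\mathsf E_2<\mathfrak c<1$ by (\ref{eq_defnmathsfW}), using $\phi^{-1}>\mathsf s_3$, exactly as in Section \ref{sec_proofpartiboundedlocallaw}; the quadratic corrections are controlled by the a priori bound. This yields $|m_1-m_{1n}|\prec (n\eta_0)^{-1}$. Finally, $m_Q-m_n$ follows by inserting this into $m_Q=\frac1p\sum_k[-z(1+\sigma_k m_2)]^{-1}+\rO_\prec((n\eta_0)^{-1})$ and using Assumption \ref{assum_additional_techinical} to bound the denominators below. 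Uniformity in $z\in\mathbf{D}_b'$ is obtained by a lattice argument together with Lipschitz continuity, as in the proof of Proposition \ref{eq_propooursidebulk}.
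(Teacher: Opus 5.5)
Your skeleton matches the paper's: the Schur-complement self-consistent equations for $m_1,m_2$, the contraction $|\mathsf T|\le\mathsf E_1\mathsf E_2<1$ from \eqref{eq_defnmathsfW}, and the use of the hypothesis to keep $m_1$ within $n^{-1/2+2\epsilon_{\mathsf d}}\ll n^{-1/(d+1)-\epsilon_{\mathsf d}}$ of $m_{1n}$, so that the denominators $1+\xi_j^2m_1$, $j\ge2$, inherit the control defining $\mathbf D_b'$.

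\textbf{The main obstacle you identify comes from a slip.} Lemma \ref{lem:large deviation} with \eqref{lem:Wald} bounds $Z_i$, and also the quadratic-form fluctuation in the $i$-th summand of $\mathtt R_{11}$, by $\Psi=\sqrt{\operatorname{Im}m_1/(n\eta)}+(n\eta)^{-1}$. The factor is $(n\eta)^{-1/2}$, not $(n\eta_0)^{-1/2}$. Set $x:=|m_1-m_{1n}|$. Then $\operatorname{Im}m_1\le\operatorname{Im}m_{1n}+x$ and \eqref{eq_zopointrate11} give
$\Psi\lesssim\sqrt{x/(n\eta)}+n^{-1/2}+(n\eta)^{-1}\lesssim\sqrt{x/(n\eta_0)}+(n\eta_0)^{-1}$ on all of $\mathbf D_b'$. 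At $\eta=\eta_0$ you can even use Lemma \ref{lem: est for Im m_1} and Remark \ref{remk_zibound} directly.

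For $d>1$ the averages $\frac1n\sum_{j\ge2}|1+\xi_j^2m|^{-k}$, $k=1,2$, are $\rO_\prec(1)$. Hence every error in your $\mathcal E_1,\mathcal E_2$ is already $\rO_\prec(\Psi+(n\eta_0)^{-1})$, with no fluctuation averaging. The contraction then gives $x\le\mathfrak c\,x+n^{\delta}\bigl(\sqrt{x/(n\eta_0)}+(n\eta_0)^{-1}\bigr)$ for every $\delta>0$, hence $x\prec(n\eta_0)^{-1}$. This sublinear dependence of $\Psi$ on $x$ is what removes the factor $n^{\epsilon_{\mathsf d}}$. The paper phrases it as a contradiction: assuming $n^{\epsilon}(n\eta_0)^{-1}<x\le n^{\epsilon+\epsilon_{\mathsf d}}(n\eta_0)^{-1}$ gives $n^{c_2\epsilon}\Psi=\ro(x)$ as in \eqref{eq_modificationkeykey}, contradicting $x=\rO(n^{c_2\epsilon}\Psi)$ from \eqref{eq_decompositiondeomdeomdeodemdem}.

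\textbf{The tool you propose is not available here.} The paper's fluctuation averaging Lemma \ref{lem_fa} takes $|m_1-m_{1n}|\prec(n\eta_0)^{-1}$, i.e.\ Theorem \ref{thm_boundedcaselocallaw} itself, as input (first line of \eqref{eq_keybounds}), so invoking it is circular. Moreover, individual resolvent entries are not controlled at $\eta\asymp n^{-1/2}$ in this regime (Section \ref{sec_FAlemma}). So a standard fluctuation averaging lemma cannot simply be imported under your weaker hypothesis.

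\textbf{Your treatment of index $1$ is wrong.} It is false that comparable control holds for $j=1$, and $\mathcal G_{11}$ is not $\rO_\prec(n^{1/(d+1)+\epsilon_{\mathsf d}})$. The set $\mathbf D_b'$ deliberately imposes nothing on $\xi_{(1)}^2$. It contains points near $\tilde z_0$, where the top eigenvalue sits and $1+\xi_{(1)}^2\operatorname{Re}m_{1n}=0$. There $|1+\xi_{(1)}^2m_{1n}|\asymp n^{-1/2}$ by \eqref{eq_zopointrate}, and by \eqref{eq_form} $|\mathcal G_{11}|$ is at least of order $n^{1/2-\epsilon_{\mathsf d}/2}$ at a nearby point.

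The direct index-$1$ contributions to $m_2$ and $m_{2n}$ are harmless: they are trivially $\rO((n\eta_0)^{-1})$, as in \eqref{eq_trivialcontroleta}. The problem is the index-$1$ coefficient $\frac1n\xi_{(1)}^4/|(1+\xi_{(1)}^2m_1)(1+\xi_{(1)}^2m_{1n})|$ in $\mathsf T$. It is bounded below by a constant once $|1+\xi_{(1)}^2m_1|\lesssim n^{-1/2}$, which the available bounds do not exclude, and then $|\mathsf T|<\mathfrak c$ fails. So keep index $1$ out of the linearization and bound its terms directly. Run the contraction over $j\ge2$ only, which is where \eqref{eq_defnmathsfW} (compare \eqref{eq_L3BOUND}) actually applies.
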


\quad Armed with the above two lemmas, we now proceed to the control of $m_{1n}(z)-m_1(z).$ 

\begin{proof}[\bf Proof: control of $m_{1n}(z)-m_1(z)$ and $m_n(z)-m_Q(z)$] Due to similarity, we only prove $m_{1n}(z)-m_1(z).$  We prove this by mathematical induction as that of Proposition 5.1 of \cite{lee2016extremal}. Fix $E$ such that $z=E+\ri\eta_0\in\mathbf{D}_b^{\prime}$, we consider a sequence $(\eta_j)$ defined by $\eta_j=\eta_0+jn^{-2}$. Let $K$ be the smallest positive integer such that $\eta_K\geq n^{-1/2+\epsilon_{\mathsf{d}}}$. Note that for $j=K$, by Lemma \ref{lem: first bound for m_1n-m_1}, we have that  $|m_{1n}(z_j)-m_1(z_j)|\prec\frac{1}{n\eta_0}. $
Then for any $z=E+\ri\eta$ with $\eta_{j-1}\le\eta\leq\eta_j$, we have that for some constant $C>0$
\begin{align*}
 |m_1(z_j)-m(z)|=\frac{1}{n} \operatorname{tr}\left[(G(z_j)-G(z)) \Sigma\right]=\frac{|z_j-z|}{n} \operatorname{tr}(G(z_j)G(z) \Sigma) \leq C \frac{|z_j-z|}{\eta_{j-1}^2}\leq C\frac{n^{2\epsilon_{\mathsf{d}}}}{n},
\end{align*}
where we used the first resolvent identity and the trivial bound $|G(z)| \leq \eta^{-1}$, and similarly 
\begin{align*}
|m_{1n}(z_j)-m_{1n}(z)|=\left| \int \left[\frac{1}{x-z_j}-\frac{1}{x-z} \right] \rho(x) \mathrm{d} x   \right|\leq\frac{|z_j-z|}{\eta_{j-1}^2}\leq\frac{n^{2\epsilon_{\mathsf{d}}}}{n}. 
\end{align*}
Thus we find that if $|m_{1n}(z_j)-m_1(z_j)|\prec (n\eta_0)^{-1}$, then by Lemma \ref{lem: second bound for m_1n-m_1}, for some constant $C'>0$
\begin{gather}\label{eq_latticeargumentbelow}
    |m_{1n}(z)-m_1(z)|\leq|m_{1n}(z_j)-m_1(z_j)|+\frac{C' n^{2\epsilon_{\mathsf{d}}}}{n}\prec\frac{n^{\epsilon_{\mathsf{d}}}}{n\eta_0}.
\end{gather}
This gives the result that $|m_{1n}(z)-m_1(z)|\prec(n\eta_0)^{-1}$ for $z=E+\ri\eta$ with $\eta_{j-1}\leq\eta\leq\eta_j$. The proof for each $z$ can be completed by an induction on  $j.$ Finally, using an induction procedure and a standard lattice argument (for example, see \cite{Alex2014,Ding&Yang2018}), we find that the results hold for all $z \in \mathbf{D}_{b}^\prime$. More specifically, we construct a lattice $\mathcal{L}$ from $z^{\prime}=E^{\prime}+\ri\eta_0\in\mathbf{D}_b^{\prime}$ with $|z-z^{\prime}|\leq n^{-3}$. It is obvious that the bound holds uniformly on $\mathcal{L}$. For any $z=E+\ri\eta_0\notin\mathcal{L}$, we find a $z^{\prime}\in\mathcal{L}$ and then $|z-z^{\prime}|\leq n^{-3}$. Moreover, using resolvent identity, we can conclude that  $|m_1(z)-m_1(z^{\prime})|\leq\eta^{-2}_0|z-z^{\prime}| \ll (n \eta_0)^{-1}$. Therefore, we conclude the proof.

\end{proof}

\quad In what follows, we prove lemmas \ref{lem: first bound for m_1n-m_1} and \ref{lem: second bound for m_1n-m_1}. The proofs are similar to those of Lemmas 5.6 and 5.7 of \cite{lee2016extremal}, except that we will need a weak local law as follows.

\begin{proposition}[Weak averaged local law]  \label{proposition_boundedweaklocallaw}
Suppose the assumptions of Theorem \ref{thm_boundedcaselocallaw} hold. We have that for $z \in \mathbf{D}_b^\prime$ 
\begin{equation*}
|m_Q(z)-m_n(z)|+|m_1(z)-m_{1n}(z)|+|m_2(z)-m_{2n}(z)|=\rO_{\prec}\left( (n \eta)^{-1/4} \right).
\end{equation*}
\end{proposition}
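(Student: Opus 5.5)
We prove the weak law in the standard way: derive approximate self-consistent equations for $(m_1,m_2)$ with errors controlled by $\operatorname{Im} m_1$, and then use the stability of the system (\ref{eq_systemequationsm1m2}) near $\widehat{L}_+$ to turn those equations into a bound. The argument is carried out first for large $\eta$ and then pushed down to $\eta \geq n^{-1/2-\epsilon_{\mathsf{d}}}$ by continuity. Throughout we fix a realization $\{\xi_i^2\}\in\Omega_D$. Since $\xi_i^2\le l$, all resolvent entries satisfy the trivial bound $|\mathcal{G}_{ij}|\le \eta^{-1}$, and the multipliers cause none of the difficulties seen in the unbounded case.

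\textbf{Step 1 (approximate equations).} By Lemma \ref{lem: Resolvent}, $\mathcal{G}_{ii}=-[z(1+\xi_i^2 n^{-1}\operatorname{tr}G^{(i)}\Sigma+Z_i)]^{-1}$. Lemma \ref{lem:large deviation} and the Ward identity (\ref{lem:Wald}) give
\[
Z_i\prec \frac{1}{n}\|G^{(i)}\Sigma^{1/2}\|_F\prec \sqrt{\frac{\operatorname{Im} m_1^{(i)}}{n\eta}} .
\]
The identity (\ref{lem:trace_difference}) gives $|n^{-1}\operatorname{tr}G^{(i)}\Sigma-m_1|\prec (n\eta)^{-1}$. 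Set
\[
\Psi:=\sqrt{\frac{\operatorname{Im} m_1+\operatorname{Im} m_{1n}+|m_1-m_{1n}|}{n\eta}}+\frac{1}{n\eta} .
\]
Then $1+\xi_i^2 m_1+\mathrm{O}_\prec(\Psi)$ appears in each denominator. On $\mathbf{D}_b'$ these denominators stay away from zero: for $i\ge2$ this is the definition of $\mathbf{D}_b'$ together with (\ref{def4}), while the single index $i=1$ is treated separately, since its total contribution to $m_2$ carries a factor $n^{-1}$ and is harmless once $|1+\xi_1^2m_1|\gtrsim n^{-1/(d+1)-\epsilon_{\mathsf{d}}}$. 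Expanding gives
\[
m_2=\frac1n\sum_i\frac{\xi_i^2}{-z(1+\xi_i^2m_1)}+\mathrm{O}_\prec(\Psi).
\]
In the same way, the decomposition (\ref{eq: decomp of mathcal_G}) yields $m_1=\frac1n\sum_i \sigma_i/(-z(1+\sigma_i m_2))+\mathrm{O}_\prec(\Psi)$, and similarly for $m_Q$. Here $R_1$ is controlled by a fluctuation-averaging-free bound of order $\Psi$ (a single large deviation per index, then averaged), and $R_2$ by $(n\eta)^{-1}$. Eliminating $m_2$ produces $F_n(m_1,z)=\mathrm{O}_\prec(\Psi)$, with $F_n$ as in (\ref{eq:F(m,z)}).

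\textbf{Step 2 (stability).} Compare with $F_n(m_{1n},z)=0$. Linearizing,
\[
F_n(m_1)-F_n(m_{1n})=(\mathsf{T}-1)(m_1-m_{1n})+\mathrm{O}(|m_1-m_{1n}|^2\cdot\text{coef}),
\]
where $\mathsf{T}$ is the quantity in (\ref{eq_tintot1t2}). In the regime $d>1$, $\phi^{-1}>\mathsf{s}_3$, the bound (\ref{eq_defnmathsfW}) gives $|\mathsf{T}|\le \mathfrak{c}<1$ uniformly on $\mathbf{D}_b'$. The system is therefore linearly stable with an $\mathrm{O}(1)$ constant, unlike the square-root edge. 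Hence, provided $|m_1-m_{1n}|\le n^{-\delta}$ a priori,
\[
|m_1-m_{1n}|\prec \Psi\le \sqrt{\frac{\operatorname{Im} m_{1n}+|m_1-m_{1n}|}{n\eta}}+\frac{1}{n\eta} .
\]
Using $\operatorname{Im} m_{1n}\lesssim 1$ and solving this quadratic inequality gives $|m_1-m_{1n}|\prec (n\eta)^{-1/2}\le (n\eta)^{-1/4}$. The same bound then transfers to $m_2$ and to $m_Q-m_n$ through the equations of Step 1 and the boundedness of the coefficients (Assumption \ref{assum_additional_techinical}). The weaker exponent $1/4$ leaves room for the crude handling of the $i=1$ term and of $\mathrm{O}(1)$ stability constants.

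\textbf{Step 3 (continuity).} For $\eta$ of order one, the a priori bound $|m_1-m_{1n}|\le n^{-\delta}$ follows from Steps 1–2 with the trivial resolvent bounds. We then decrease $\eta$ along the lattice $\eta_j=\eta_0+jn^{-3}$. Lipschitz continuity in $z$ (the first resolvent identity, with constants $\eta^{-2}\le n^{2}$) and a union bound over the polynomially many lattice points propagate the a priori bound, and the dichotomy argument then closes the bootstrap, since $(n\eta)^{-1/4}\le n^{-1/8+\epsilon_{\mathsf{d}}/4}\ll n^{-\delta}$.

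\textbf{Main obstacle.} The main difficulty is keeping the denominators $1+\xi_i^2 m_1$ bounded below uniformly for the near-edge multipliers, both for the random $m_1$ and for $m_{1n}$, while $z$ moves along the continuity path. The set $\mathbf{D}_b'$ only guarantees this for $m_{1n,c}$. To handle it, we would first transfer the lower bound from $m_{1n,c}$ to $m_{1n}$ using the deterministic closeness $|m_{1n}-m_{1n,c}|\le n^{-1/2+\epsilon_{\mathsf{d}}}$ from Section \ref{sec_proofpartiboundedlocallaw}, and then to $m_1$ via the a priori bound. This transfer is valid because $n^{-1/2+\epsilon_{\mathsf{d}}}+n^{-\delta}\ll n^{-1/(d+1)-\epsilon_{\mathsf{d}}}$, which in turn requires $d>1$ and the smallness of $\epsilon_{\mathsf{d}}$ relative to $1/2-1/(d+1)$.
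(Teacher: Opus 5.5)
Your overall scheme (self-consistent equations with error $\Psi$, stability, continuity in $\eta$) is the same as the paper's; the paper defers the continuity and stability step to Lemma~5.12 of \cite{yang2019edge}. As you set it up, however, your continuity argument does not close.

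In your last paragraph, transferring the lower bound $|1+\xi_j^2m_{1n,c}|>\tfrac12 n^{-1/(d+1)-\epsilon_{\mathsf{d}}}$ ($j\ge2$) to $m_1$ requires the a priori bound $|m_1-m_{1n}|\le n^{-\delta}$ with $\delta>1/(d+1)+\epsilon_{\mathsf{d}}$. In Step~3 you close the bootstrap via $(n\eta)^{-1/4}\le n^{-1/8+\epsilon_{\mathsf{d}}/4}\ll n^{-\delta}$, i.e.\ $\delta<1/8-\epsilon_{\mathsf{d}}/4$. These two requirements are incompatible for every $1<d\le 7$. Even with your improved rate $(n\eta)^{-1/2}\le n^{-1/4+\epsilon_{\mathsf{d}}/2}$ they are incompatible for $1<d\le 3$. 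A hypothesis that is uniform in $z$ is too weak near the edge, where $\mathbf{D}_b'$ only guarantees denominators of size $n^{-1/(d+1)-\epsilon_{\mathsf{d}}}$.

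The missing idea is to make the bootstrap hypothesis local, e.g.\ $|m_1-m_{1n}|\le n^{-c}\min_{j\ge2}|1+\xi_j^2m_{1n}(z)|$ with small $0<c<\epsilon/2$. You should also keep the output of Step~2 in its local form, $|m_1-m_{1n}|\prec\sqrt{\operatorname{Im}m_{1n}/(n\eta)}+(n\eta)^{-1}$, instead of using $\operatorname{Im}m_{1n}\lesssim 1$. This then closes:
\begin{itemize}
\item where $\operatorname{Im}m_{1n}\le n^{\epsilon}(n\eta)^{-1}$, the right side is $\lesssim n^{-1/2+\epsilon_{\mathsf{d}}+\epsilon}\ll n^{-c}n^{-1/(d+1)-\epsilon_{\mathsf{d}}}$;
\item elsewhere it is at most $n^{-\epsilon/2}\operatorname{Im}m_{1n}$, while $|1+\xi_j^2m_{1n}|\gtrsim\min\{1,\operatorname{Im}m_{1n}\}$.
\end{itemize}

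A second, smaller issue is the stability claim in Step~2. The bound (\ref{eq_defnmathsfW}) concerns the continuous law $\mathrm{d}F$ and enters (\ref{eq_tintot1t2}) only through $\mathsf{E}_2$; the discrete factor $\mathsf{E}_1$ is merely $\le 1$ by (\ref{eq_expansionusefullessorequaltoone}). When you compare $m_1$ with $m_{1n}$, both factors are discrete and both contain the top multiplier $\xi_{(1)}^2$, for which $\mathbf{D}_b'$ deliberately imposes no lower bound. Near the point $z_0$ of (\ref{eq: def of E0}) one has $|1+\xi_{(1)}^2m_{1n}|\asymp n^{-1/2}$ by (\ref{eq_zopointrate}). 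So the single term $n^{-1}\xi_{(1)}^4/((1+\xi_{(1)}^2m_1)(1+\xi_{(1)}^2m_{1n}))$ of $\mathsf{T}$ is of order one (or larger), and (\ref{eq_defnmathsfW}) no longer yields $|\mathsf{T}|\le\mathfrak{c}<1$.

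You hint at the remedy in Step~1 but drop it when you linearize the full $F_n$ of (\ref{eq:F(m,z)}). Move the $j=1$ term into the error on both sides. It is $\mathrm{O}((n\eta)^{-1})$ using only $|\mathcal{G}_{11}|\le\eta^{-1}$ and $\operatorname{Im}m_{1n}\gtrsim\eta$; contrary to the condition you state, no lower bound on $|1+\xi_{(1)}^2m_1|$ is needed. Then prove the contraction for the coefficient built from $j\ge2$ only, by comparison with $\mathsf{W}(L_+)<1$, using $d>1$ to make $n^{-1}\sum_{j\ge2}\xi_j^4|1+\xi_j^2m|^{-2}$ close to its integral.

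With both repairs, your linear-stability route would give the stronger rate $(n\eta)^{-1/2}$, rather than the $(n\eta)^{-1/4}$ of the statement that the argument of \cite{yang2019edge} delivers.
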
 
\begin{proof}
The proof of Proposition \ref{proposition_boundedweaklocallaw} is relatively standard in the random matrix literature, for example, see Section 4.1 of \cite{Alex2014} or Section 3.6 of \cite{erdHos2013spectral} or Appendix A.2 of \cite{Ding&Yang2018} or Section 5.2 of \cite{yang2019edge}. Due to similarity, as in Lemma 5.12 of \cite{yang2019edge},  we only provide the key ingredients. Define the $z$-dependent parameter
\begin{gather}\label{eq_defnpsi}
    \Psi(z):=\sqrt{\frac{\operatorname{Im}m_{1}(z)}{n\eta}}+\frac{1}{n \eta}.
\end{gather}      
Recall (\ref{eq: decomp m_2}). By Lemma \ref{lem:large deviation} and (\ref{lem:Wald}), we find that 
\begin{gather}\label{eq_zibound}
    Z_i\prec\frac{\xi^2_i}{n}\|G^{(i)}\Sigma^{1/2}\|_F \leq l\sqrt{\frac{\operatorname{Im}m^{(i)}_{1}(z)}{n\eta}}\asymp \Psi,
\end{gather}
where in the last step we used (\ref{lem:trace_difference}).  Together with (\ref{lem:trace_difference}) and the first equation of (\ref{eq: decomp m_2}), we conclude that 
\begin{gather}\label{eq_m2uboundeddecomposition}
    m_2=\frac{1}{n}\sum_{i=1}^n\frac{\xi^2_i}{-z(1+\xi^2_im_1(z)+\rO_{\prec}(\Psi))}.
\end{gather} 

\quad For $m_1(z),$ recall (\ref{eq: decomp of mathcal_G}). According to the definition of $m_1(z)$ in (\ref{eq_m1m2}), we have that 
\begin{gather}\label{eq_m1decompositionfinafinalfinal}
    m_1(z)=\frac{1}{n}\operatorname{tr}(G(z)\Sigma)=-\frac{1}{n}\sum_{i=1}^p\frac{\sigma_i}{z(1+m_2(z)\sigma_i)}+\frac{1}{n}\operatorname{tr}(R_1\Sigma)+\frac{1}{n}\operatorname{tr}(R_2\Sigma).
\end{gather}
Similarly, for $m_Q(z)$ in (\ref{eq_mq}), we have that
\begin{equation}\label{eq_mqdecomposition}
  m_{Q}(z)=\frac{1}{p}\operatorname{tr}(G(z))=-\frac{1}{p}\sum_{i=1}^p\frac{1}{z(1+m_2(z)\sigma_i)}+\frac{1}{p}\operatorname{tr}(R_1)+\frac{1}{p}\operatorname{tr}(R_2).
\end{equation}

On the one hand, when $\eta \asymp 1,$ by a discussion similar to (5.45) of \cite{yang2019edge} , we find that $\| (I+m_2^{(i)} \Sigma)^{-1} \|<\infty.$ Then using (\ref{eq_zibound}), by a discussion similar to the equations between (\ref{eq: decomp R_1}) and (\ref{eq: m_1 by m_2}), we find that 
\begin{gather}\label{eq: est of m_1 and m_2}
m_1(z)=-\frac{1}{n}\sum_{i=1}^p\frac{\sigma_i}{z(1+m_2(z)\sigma_i)}+\rO_{\prec}\Big(\frac{1}{n}\sum_i\frac{\xi^2_i\Psi}{z(1+\xi^2_im_1(z)+\rO_{\prec}(\Psi))}\Big).
\end{gather}
Similarly, we have 
\begin{equation*}
  m_{Q}(z)=-\frac{1}{p}\sum_{i=1}^p\frac{1}{z(1+m_2(z)\sigma_i)}+\rO_{\prec}\Big(\frac{1}{p}\sum_i\frac{\xi^2_i\Psi}{z(1+\xi^2_im_1(z)+\rO_{\prec}(\Psi))}\Big).
\end{equation*}

On the other hand, denote $\Xi:=\{|\mathcal{G}_{ij}(z)+\delta_{ij}(z(1+m_{1n}(z) \xi_i^2))^{-1}|+|m_2(z)-m_{2n}(z)| \leq (\log n)^{-1} \}.$ When restricted on $\Xi,$ by Assumption \ref{assum_additional_techinical}, we also have that $\| (I+m_2^{(i)} \Sigma)^{-1} \|<\infty.$ By an analogous argument, we find that (\ref{eq: est of m_1 and m_2}) also holds true. By an argument similar to (\ref{eq_nontrivialcontrol}) using Lemma \ref{lem: Resolvent}, we have that for $i \neq j,$ $\mathbf{1}(\eta \geq 1) \mathcal{G}_{ij} \prec \Psi, \ \mathbf{1}(\Xi) \mathcal{G}_{ij} \prec \Psi. $

The above arguments show that the counterparts of Lemmas 5.9 and 5.10 of \cite{yang2019edge} hold. Therefore, by the same arguments as in Lemma 5.12 of \cite{yang2019edge}, we can conclude the proof.  

\end{proof} 

\quad Next we provide some useful controls whose proofs and results will be used in the proof of Lemmas \ref{lem: first bound for m_1n-m_1} and \ref{lem: second bound for m_1n-m_1}. The results and arguments are analogous to Lemma 5.4 of \cite{lee2016extremal}. We only point out the key ingredients in our proof and refer the readers to \cite{lee2016extremal} for more details. 

\begin{lemma}\label{lem: est for Im m_1}
Suppose the assumptions of Theorem \ref{thm_boundedcaselocallaw} hold. Then we have that on the event $\Omega_D$ and for all $z=E+\ri\eta_0\in\mathbf{D}_b^{\prime}$
\begin{gather*}
    \operatorname{Im}m_1(z)\prec\frac{1}{n\eta_0}, \    \  \operatorname{Im}m_Q(z)\prec\frac{1}{n\eta_0}.
\end{gather*}
\end{lemma}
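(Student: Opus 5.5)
We prove the bound on $\operatorname{Im} m_1$ first; the bound on $\operatorname{Im} m_Q$ then follows along the same lines. The plan is to compare $\operatorname{Im} m_1(z)$ with $\operatorname{Im} m_{1n}(z)$ and exploit the fact that, at $\eta=\eta_0=n^{-1/2-\epsilon_{\mathsf{d}}}$, the deterministic quantity is already small. By (\ref{eq_zopointrate11}) of Lemma \ref{localestimate2}, for $z\in\mathbf{D}_b^\prime$ we have $\operatorname{Im} m_{1n}(z)=\rO(\max\{\eta_0,(n\eta_0)^{-1}\})$. Since $\eta_0\ll (n\eta_0)^{-1}$ precisely because $\eta_0<n^{-1/2}$, this gives $\operatorname{Im} m_{1n}(z)\lesssim (n\eta_0)^{-1}$. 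It therefore suffices to show $|m_1(z)-m_{1n}(z)|\prec (n\eta_0)^{-1}$ at $\eta_0$, or at least to bound $\operatorname{Im}(m_1-m_{1n})$ at that size. Since this is essentially the content being proved in Lemmas \ref{lem: first bound for m_1n-m_1} and \ref{lem: second bound for m_1n-m_1}, we must avoid circularity. Following Lemma 5.4 of \cite{lee2016extremal}, we instead argue through the eigenvalue counting directly.

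\textbf{Step 1.} Write $\operatorname{Im} m_1(z)=\frac1n\sum_k \frac{\eta_0\,\langle \mathbf{u}_k,\Sigma\mathbf{u}_k\rangle}{(\lambda_k-E)^2+\eta_0^2}$, with $\mathbf{u}_k$ the eigenvectors of $Q$. Since $\Sigma$ is bounded, this is comparable to $\operatorname{Im} m_Q(z)$. Hence it suffices to bound $\frac1n\sum_k \eta_0/((\lambda_k-E)^2+\eta_0^2)$.

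\textbf{Step 2.} Use the weak local law, Proposition \ref{proposition_boundedweaklocallaw}, at the larger scale $\eta_1=n^{-1/2+\epsilon_{\mathsf{d}}}$ (still inside $\mathbf{D}_b^\prime$), together with (\ref{eq_zopointrate11}) and (\ref{eq_oneregimeedgecontrol}) for $\operatorname{Im} m_n$. Since $\operatorname{Im} m_Q(E+\ri\eta)\geq \frac{1}{n}\#\{k:|\lambda_k-E|\leq\eta\}/(2\eta)$, this yields that the number of eigenvalues in windows of length $\eta_1$ around any such $E$ is $\prec n\eta_1(\operatorname{Im} m_n+(n\eta_1)^{-1/4})$. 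A dyadic decomposition in $|\lambda_k-E|$ then gives the bulk of the sum. The key structural input in the regime $d>1$, $\phi^{-1}>\mathsf{s}_3$ is that near $\widehat L_+$ the density behaves like $\kappa^d$ by (\ref{eq: concave decay of rho_Q}). Also, the only eigenvalue that can sit near $E$ at distance $\ll n^{-1/2}$ is one associated with an isolated large multiplier $\xi^2_{(j)}$. But $z\in\mathbf{D}_b^\prime$ forces $|1+\xi_j^2m_{1n,c}(z)|\gtrsim n^{-1/(d+1)-\epsilon_{\mathsf{d}}}$ for $j\ge2$. Combined with the spacing conditions in (\ref{def4}), this shows that only $\rO_\prec(1)$ eigenvalues lie within distance $\eta_0$ of $E$. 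Each such eigenvalue contributes at most $(n\eta_0)^{-1}$. The remaining eigenvalues at distance $\geq\eta_0$ contribute $\frac1n\sum\eta_0/(\lambda_k-E)^2$, which, via the dyadic counting and the $\kappa^d$ decay, is $\prec \eta_0\cdot n^{\epsilon}+(n\eta_0)^{-1}\prec (n\eta_0)^{-1}$.

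\textbf{Step 3.} The argument for $m_Q$ is identical, or follows from Step 1 directly. This completes the proof.

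The main obstacle is Step 2, namely controlling the number of eigenvalues very close to $E$ at the sub-$n^{-1/2}$ scale. The weak law only gives counting at scale $\eta_1$. To do better, I would first try the perturbative identity $\mathcal{G}_{jj}=-1/(z(1+\mathbf{y}_j^*G^{(j)}\mathbf{y}_j))$. This identity shows that an eigenvalue within $\eta_0$ of $E$ forces $|1+\xi_j^2 m_1^{(j)}(z)|$ to be tiny for some $j$, which contradicts the definition of $\mathbf{D}_b^\prime$ combined with $|m_1^{(j)}-m_{1n}|\prec(n\eta_1)^{-1/4}$ and the linear expansion (\ref{eq_a11coro}).
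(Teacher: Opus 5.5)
Your Step 1 and your preliminary observation are correct: $\operatorname{Im} m_1\asymp\operatorname{Im} m_Q$ since $\tau\le\sigma_i\le\tau^{-1}$, and \eqref{eq_zopointrate11} gives $\operatorname{Im} m_{1n}(E+\ri\eta_0)\lesssim (n\eta_0)^{-1}$ because $\eta_0<n^{-1/2}$. The gap is in Step 2.

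The only random-matrix input you use there is the weak law, Proposition \ref{proposition_boundedweaklocallaw}, and its error $(n\eta)^{-1/4}$ is far too large. Apply it at $\eta_0$, or at $\eta_1=n^{-1/2+\epsilon_{\mathsf{d}}}$ and transport it down using monotonicity of $\eta\mapsto\eta\operatorname{Im} m_Q(E+\ri\eta)$. Either way you only get $\operatorname{Im} m_Q(E+\ri\eta_0)\prec n^{-1/8+2\epsilon_{\mathsf{d}}}$, and the resulting count allows order $n^{3/8}$ eigenvalues within $\eta_1$ of $E$. The target is $n^{-1/2+\epsilon_{\mathsf{d}}}$.

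The decay \eqref{eq: concave decay of rho_Q} concerns the deterministic density $\rho$. Turning it into empirical eigenvalue counts in dyadic shells down to scale $n^{-1/2}$ is exactly what a strong local law provides, so your dyadic estimate presupposes the conclusion. Likewise, the claim that an eigenvalue within $\eta_0$ of $E$ must be attached to an isolated large multiplier, so only $\rO_\prec(1)$ of them exist, is the picture the lemma is meant to establish. The spacing conditions in \eqref{def4} constrain the multipliers, not the eigenvalues of $Q$.

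Your proposed repair does not close this either. An eigenvalue within $\eta_0$ of $E$ only forces $\sum_j\operatorname{Im}\mathcal G_{jj}(z)\ge(2\eta_0)^{-1}$, hence some $|\mathcal G_{jj}|\gtrsim(n\eta_0)^{-1}=n^{-1/2+\epsilon_{\mathsf d}}$. For a delocalized eigenvector this places no constraint on $|1+\xi_j^2m_1^{(j)}+Z_j|$.

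The missing idea is the self-improving stability argument that the paper adapts from Lemma 5.4 of \cite{lee2016extremal}. It does compare $m_1$ with $m_{1n}$, and it is not circular because its only a priori input is the weak law.

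\begin{itemize}
\item Work on the high-probability event where the weak law holds and $\max_i|Z_i|\le n^{c_2\epsilon}\Psi$, with $\Psi=\sqrt{\operatorname{Im} m_1/(n\eta)}+(n\eta)^{-1}$ as in \eqref{eq_defnpsi}. Suppose $\operatorname{Im} m_1(z)>n^{\epsilon}(n\eta_0)^{-1}$.
\item Then $n^{c_2\epsilon}\Psi\le 2n^{(c_2-1/2)\epsilon}\operatorname{Im} m_1$. By your bound on $\operatorname{Im} m_{1n}$, also $|m_1-m_{1n}|\ge\operatorname{Im} m_1-\operatorname{Im} m_{1n}\gtrsim\operatorname{Im} m_1$.
\item Subtracting the self-consistent equations gives \eqref{eq_decompositiondeomdeomdeodemdem}, namely $m_{1n}-m_1=\mathsf C_1(m_{1n}-m_1)+\rO(n^{c_2\epsilon}\Psi)$. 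The weak law serves only to replace the coefficients by their deterministic counterparts. The bound $|\mathsf C_1|\le\mathfrak c<1$ comes from the Cauchy--Schwarz argument of \eqref{eq_tintot1t2}, i.e.\ from the stability encoded in $\phi^{-1}>\mathsf s_3$.
\item Hence $|m_1-m_{1n}|\lesssim n^{(c_2-1/2)\epsilon}|m_1-m_{1n}|$, a contradiction for $c_2<1/2$.
\item The bound on $\operatorname{Im} m_Q$ then follows from your Step 1, or from the parallel expansion \eqref{eq_mqdecomposition}.
\end{itemize}

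What closes the bound is that the fluctuations $Z_i$ are controlled by $\operatorname{Im} m_1$ itself, combined with $\mathsf C_1<1$. No a priori eigenvalue counting is needed.
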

\begin{proof}
Due to similarity, we focus our proof on $\operatorname{Im} m_Q(z).$ We prove by contradiction. Given some $\epsilon>0,$ conditional on $\Omega_D,$ for some sufficiently small constants $0<c_1, c_2<1,$ we first introduce a probability event $\Xi_1 \equiv \Xi_1(\epsilon)$ so that the followings holds:
\begin{enumerate}
\item For $z \in \mathbf{D}_b^\prime,$
\begin{equation*}
|m_Q(z)-m_n(z)|+|m_1(z)-m_{1n}(z)|+|m_2(z)-m_{2n}(z)| \leq  (n \eta)^{-1/4+c_1\epsilon}.
\end{equation*}
\item For $z \in \mathbf{D}_b^\prime$ and $Z_i$ in (\ref{eq: decomp m_2}),
\begin{equation*}
\max_i Z_i \leq n^{c_2 \epsilon}\Psi.
\end{equation*}
\end{enumerate}
According to Proposition \ref{proposition_boundedweaklocallaw} and (\ref{eq_zibound}), we find that there exists some large constant $D>0$ so that $\mathbb{P}(\Xi_1)=1-n^{-D}.$ In what follows, we restrict ourselves on $\Xi_1$ so that the discussions are purely deterministic.  

\quad  Assuming that  $\operatorname{Im}m_1(z)>n^{\epsilon}\frac{1}{n\eta_0}. $ According to the definition of $\Psi$ in (\ref{eq_defnpsi}), we conclude
\begin{equation}\label{eq_Psicontrol}  
\Psi=\ro(\operatorname{Im}m_{1}(z)).
\end{equation}
 Moreover, by (\ref{eq_zopointrate}) and (\ref{eq_oneregimeedgecontrol}), we readily see that    $\operatorname{Im}m_{1n}(z)\ll \operatorname{Im}m_1(z) $. This implies that for some constant $C>0$  
 \begin{gather}\label{eq_tobeusedasacontradiction}
    |m_{1n}(z)-m_1(z)|\geq |\operatorname{Im}m_{1n}-\operatorname{Im}m_1|>Cn^{\epsilon}\frac{1}{n\eta_0}.
\end{gather}

\quad On the other hand, by Proposition \ref{proposition_boundedweaklocallaw} and Assumption \ref{assum_additional_techinical}, (\ref{eq: est of m_1 and m_2}) still holds. Together with $m_{1n}$ in (\ref{eq_systemequationsm1m2}), using (\ref{eq: decomp m_2}), we find that 
\begin{align}
m_{1n}-m_1 & =\frac{1}{n}\sum_{i=1}^p\frac{\frac{\sigma_i^2}{n}\sum_j\frac{\xi^4_j(m_{1n}-m_1)+\xi^2_j\rO(n^{c_2 \epsilon}\Psi)}{(1+\xi^2_jm_{1n})(1+\xi^2_jm_1+\rO(n^{c_2 \epsilon}\Psi))}}{(z-\frac{\sigma_i}{n}\sum_j\frac{\xi^2_j}{1+\xi^2_jm_{1n}})(z-\frac{\sigma_i}{n}\sum_j\frac{\xi^2_j}{1+\xi^2_jm_{1}+\rO(n^{c_2\epsilon}\Psi)})}+\rO(n^{c_2 \epsilon}\Psi) \nonumber \\
&=\mathsf{C}_1(m_{1n}-m_1)+\mathsf{C}_2+\rO(n^{c_2 \epsilon} \Psi), \label{eq_decompositiondeomdeomdeodemdem}
\end{align}
where $\mathsf{C}_1, \mathsf{C}_2$ are defined as 
\begin{align*}
& \mathsf{C}_1:=\frac{1}{n}\sum_{i=1}^p\frac{\frac{\sigma_i^2}{n}\sum_j\frac{\xi^4_j}{(1+\xi^2_jm_{1n})(1+\xi^2_jm_1+\rO(n^{c_2 \epsilon}\Psi))}}{(z-\frac{\sigma_i}{n}\sum_j\frac{\xi^2_j}{1+\xi^2_jm_{1n}})(z-\frac{\sigma_i}{n}\sum_j\frac{\xi^2_j}{1+\xi^2_jm_{1}+\rO(n^{c_2\epsilon}\Psi)})}, \\
& \mathsf{C}_2:=\frac{1}{n}\sum_{i=1}^p\frac{\frac{\sigma_i^2}{n}\sum_j\frac{\xi^2_j\rO(n^{c_2 \epsilon}\Psi)}{(1+\xi^2_jm_{1n})(1+\xi^2_jm_1+\rO(n^{c_2 \epsilon}\Psi))}}{(z-\frac{\sigma_i}{n}\sum_j\frac{\xi^2_j}{1+\xi^2_jm_{1n}})(z-\frac{\sigma_i}{n}\sum_j\frac{\xi^2_j}{1+\xi^2_jm_{1}+\rO(n^{c_2\epsilon}\Psi)})}.
\end{align*}
We first control $\mathsf{C}_2.$ It is easy to see that $m_1\asymp 1$ by contradiction. If $m_1\ll 1$, one can observe from (\ref{eq_m2uboundeddecomposition}) that $m_2\asymp 1$ which yields $m_1 \asymp 1$ by (\ref{eq: est of m_1 and m_2}). If $m_1\gg 1$, we have $m_2\ll 1$ from (\ref{eq_m2uboundeddecomposition}), and then it gives that $m_1 \asymp 1$ by (\ref{eq: est of m_1 and m_2}). Similarly, we can show that $m_2 \asymp 1.$ Together with Proposition \ref{proposition_boundedweaklocallaw}, we find that $m_{1n}, m_{2n} \asymp 1.$ Since $z \asymp 1,$ using the definition of $m_{2n}$ in (\ref{eq_systemequationsm1m2}) and Proposition \ref{proposition_boundedweaklocallaw}, we find that 
\begin{equation*}
\frac{1}{n}\sum_j\frac{\xi^2_j\rO(n^{c_2 \epsilon}\Psi)}{(1+\xi^2_jm_{1n})(1+\xi^2_jm_1+\rO(n^{c_2 \epsilon}\Psi))}=\rO(n^{c_2 \epsilon} \Psi ).  
\end{equation*}
Moreover, by Proposition \ref{proposition_boundedweaklocallaw}, (\ref{eq_deterministiclose}) and Assumption \ref{assum_additional_techinical}, we find that 
\begin{equation*}
\frac{1}{n}\sum_i \frac{1}{(z-\frac{\sigma_i}{n}\sum_j\frac{\xi^2_j}{1+\xi^2_jm_{1n}})(z-\frac{\sigma_i}{n}\sum_j\frac{\xi^2_j}{1+\xi^2_jm_{1}+\rO(n^{c_2\epsilon}\Psi)})} \asymp 1. 
\end{equation*}  
This yields that 
\begin{equation}\label{eq_mathsfc2control}
\mathsf{C}_2=\rO(n^{c_2 \epsilon} \Psi). 
\end{equation}

\quad For $\mathsf{C}_1,$ using Proposition \ref{proposition_boundedweaklocallaw} and Assumption \ref{assum_additional_techinical}, by an argument similar to (\ref{eq_tintot1t2}), we can conclude that when $n$ is sufficiently large, for some constant $0<\mathfrak{c}<1,$ 
\begin{equation}\label{eq_mathsfc1control}
\mathsf{C}_1  \leq \mathfrak{c}.
\end{equation}
Combining (\ref{eq_decompositiondeomdeomdeodemdem}), (\ref{eq_mathsfc2control}) and (\ref{eq_mathsfc1control}), we conclude that $|m_{1n}-m_1|=\rO(n^{c_2 \epsilon}\Psi),$
which contradicts (\ref{eq_tobeusedasacontradiction}) since $c_2<1$ is sufficiently small. This completes our proof for each fixed $z.$ For uniformity in $z,$ we can follow a standard lattice argument as discussed below (\ref{eq_latticeargumentbelow}). This finishes the proof. The discussion for $m_Q$ follows from an analogous discussion with the help of (\ref{eq_m2uboundeddecomposition}) and (\ref{eq: est of m_1 and m_2}). 
\end{proof}

\begin{remark}\label{remk_zibound}
Two remarks are in order. First, it is easy to see that repeating the proof of Lemma \ref{lem: est for Im m_1}, we can prove the results for all $\eta$ as specified in (\ref{eq_spectraldomaintwo}). Second, we remark that combining (\ref{eq_zibound}) and Lemma \ref{lem: est for Im m_1}, when $z=E+\ri \eta_0 \in \mathbf{D}_b^\prime,$ we have that conditional on $\Omega_D,$ $Z_i \prec(n \eta_0)^{-1}.$
\end{remark}

\quad Armed with the above discussions and results, following the strategies of Lemmas 5.6 and 5.7 of \cite{lee2016extremal} or \cite{Kwak2021},  we prove Lemmas \ref{lem: first bound for m_1n-m_1} and \ref{lem: second bound for m_1n-m_1} using similar arguments as in Lemma \ref{lem: est for Im m_1}. Due to similarity, we only provide the key ingredients. 

\begin{proof}[\bf Proof of Lemmas \ref{lem: first bound for m_1n-m_1} and \ref{lem: second bound for m_1n-m_1}] Due to similarity, we focus our proof on Lemma \ref{lem: first bound for m_1n-m_1} and briefly mention that of Lemma \ref{lem: second bound for m_1n-m_1} in the end. Due to similarity, we only explain $|m_{1n}(z)-m_1(z)|.$

\quad The proof is similar to that of Lemma \ref{lem: est for Im m_1} and we prove by contradiction. We also restrict ourselves on the event $\Xi_1$ in Lemma \ref{lem: est for Im m_1}. We assume that $|m_{1n}(z)-m_1(z)|>n^{\epsilon}(n\eta_0)^{-1}$. To see a contraction,  in addition to the the arguments of Lemma \ref{lem: est for Im m_1}, we need to provide a finer control for $\Psi$ since in the current proof it depends on $\eta$ instead of $\eta_0.$ Note that
\begin{gather}\label{eq_modificationkeykey}
    \begin{split}
       n^{c_2 \epsilon} \Psi&=n^{c_2 \epsilon}\sqrt{\frac{|\operatorname{Im}m_1-\operatorname{Im}m_{1n}+\operatorname{Im}m_{1n}|}{n\eta}}+n^{c_2 \epsilon} \frac{1}{n\eta}\\
        &\leq   n^{c_2 \epsilon} \sqrt{\frac{|\operatorname{Im}m_1-\operatorname{Im}m_{1n}|}{n\eta}} +  n^{c_2 \epsilon} \sqrt{\frac{\operatorname{Im}m_{1n}}{n\eta}}+n^{c_2 \epsilon} \frac{1}{n\eta}\\
        &=\ro(|m_1-m_{1n}|),
    \end{split}
\end{gather}
where in the last step we used (\ref{eq_zopointrate11}) and the assumption  $|m_{1n}(z)-m_1(z)|>n^{\epsilon}(n\eta_0)^{-1} \gg (n \eta)^{-1}$ when $n^{-1/2+\epsilon_{\mathsf{d}}} \leq \eta \leq n^{-1/(d+1)+\epsilon_{\mathsf{d}}}.$ Replacing $  n^{c_2 \epsilon} \Psi$ with $\ro(|m_1-m_{1n}|)$ in the arguments between (\ref{eq_decompositiondeomdeomdeodemdem}) and (\ref{eq_mathsfc1control}), we find that $|m_{1n}-m_1|=\ro(|m_{1n}-m_1|)$ which is a contraction. This proves the result for each fixed $z.$ The uniformity follows from the same lattice argument as mentioned in the end of the proof of Lemma \ref{lem: est for Im m_1}. 

The proof of Lemma \ref{lem: second bound for m_1n-m_1} is similar. We also prove by contradiction and assume that $n^{\epsilon}(n\eta_0)^{-1}<|m_1(z)-m_{1n}(z)|\leq n^{\epsilon+\epsilon_{\mathsf{d}}}(n\eta_0)^{-1}.$  Under this assumption, together with (\ref{eq_zopointrate}) and (\ref{eq_oneregimeedgecontrol}), we find that (\ref{eq_modificationkeykey}) still holds. The rest of the arguments are similar and we omit the details.

\end{proof}

\subsection{Bounded support and conditional on $X$ setting: proof of Theorem \ref{thm_averagedlocallaw_boundedmultiplier_conditionalonX}}

In this section, we prove Theorem \ref{thm_averagedlocallaw_boundedmultiplier_conditionalonX} 
by an argument similar to that used in the proof of 
Theorem \ref{thm_boundedcaselocallaw}.

\begin{proof}[\bf Proof of Theorem \ref{thm_averagedlocallaw_boundedmultiplier_conditionalonX}]
We begin by recalling the definitions of $m_{1n}(z)$ and $m_{1n,c}(z)$ in \eqref{eq_systemequationsm1m2} and \eqref{eq_systemequationsm1m2intergrate}. By construction, both $m_{1n}(z)$ and $m_{1n,c}(z)$ depend only on the population covariance $\Sigma$ and the multiplier matrix $D$. Then, on the event $\Omega_D$, the proof of
\begin{align*}
    |m_{1n}(z)-m_{1n,c}(z)|\leq n^{-1/2+\epsilon_{\mathsf{d}}},\quad |m_n(z)-m_{n,c}(z)|\leq n^{-1/2+\epsilon_{\mathsf{d}}},\quad z\in\mathbf{D}_b^{\prime},
\end{align*}
follows exactly the same argument as in the proof of Theorem \ref{thm_boundedcaselocallaw}.

We next estimate $|m_1(z)-m_{1n}(z)|$ and $|m_Q(z)-m_n(z)|$ on the events $\Omega_X$ and $\Omega_D$. Recall \eqref{eq: decomp m_2}.
\begin{align*}
    m_2(z)=\frac{1}{n}\sum_{i=1}^n&\frac{\xi^2_i}{-z-z\mathbf{y}^{*}_i G^{(i)}\mathbf{y}_i}=\frac{1}{n}\sum_{i=1}^n\frac{\xi^2_i}{-z(1+\xi^2_in^{-1}{\rm tr}G^{(i)}\Sigma+Z_i)},\\
    &Z_i=\mathbf{y}^{*}_i G^{(i)}\mathbf{y}_i-\xi^2_in^{-1}{\rm tr}G^{(i)}\Sigma.
\end{align*}
Restricting to the events $\Omega_D$ and $\Omega_X$, we obtain
\begin{align*}
    Z_i=\mathrm{O}\left(n^{\varepsilon_1}\frac{\xi^2_i\|G^{(i)}\Sigma^{1/2}\|_F}{n}\right)=\mathrm{O}\left(n^{\varepsilon_1}\sqrt{\frac{\operatorname{Im}m_{1}^{(i)}(z)}{n\eta}}\right)=\mathrm{O}(n^{\varepsilon_1}\Psi),
\end{align*}
where $\varepsilon_1>0$ is defined in Definition \ref{def_OmegaX} and $\Psi(z)$ is
defined in \eqref{eq_defnpsi}. Consequently,
\begin{align*}
    m_{2}=\frac{1}{n}\sum_{i=1}^n&\frac{\xi^2_i}{-z-z\mathbf{y}^{*}_i G^{(i)}\mathbf{y}_i}=\frac{1}{n}\sum_{i=1}^n\frac{\xi^2_i}{-z(1+\xi^2_in^{-1}{\rm tr}G^{(i)}\Sigma+\mathrm{O}(n^{\varepsilon_1}\Psi)}.
\end{align*}

On the other hand, for $m_1(z)$, using the same decomposition as in \eqref{eq: decomp of mathcal_G}, we obtain
\begin{align*}
    m_{1}(z)=\frac{1}{n}\operatorname{tr}(G(z)\Sigma)=-\frac{1}{n}\sum_{i=1}^p\frac{\sigma_i}{z(1+m_{2}(z)\sigma_i)}+\frac{1}{n}\operatorname{tr}(R_{1}\Sigma)+\frac{1}{n}\operatorname{tr}(R_{2}\Sigma),
\end{align*}
and similarly 
\begin{align*}
    m_{Q}(z)=\frac{1}{p}\operatorname{tr}(G(z))=-\frac{1}{p}\sum_{i=1}^p\frac{1}{z(1+m_{2}(z)\sigma_i)}+\frac{1}{p}\operatorname{tr}(R_{1})+\frac{1}{p}\operatorname{tr}(R_{2}),
\end{align*}
where 
\begin{align*}
    &R_{1}=z^{-1}\sum_{i=1}^n\frac{G^{(i)}(\mathbf{y}_i\mathbf{y}_i^*-n^{-1}\xi^2_i\Sigma)}{1+\mathbf{y}_i^*G^{(i)}\mathbf{y}_i}(I+m_{2}(z)\Sigma)^{-1},\\
    &R_{2}=z^{-1}\frac{1}{n}\sum_{i=1}^n\frac{(G^{(i)}-G)\xi^2_i\Sigma}{1+\mathbf{y}_i^*G^{(i)}\mathbf{y}_i}(I+m_{2}(z)\Sigma)^{-1}.
\end{align*}
On $\Omega_X$, when $\eta\asymp 1$, we have 
\begin{align*}
    m_{1}(z)=-\frac{1}{n}\sum_{i=1}^p\frac{\sigma_i}{z(1+m_2(z))\sigma_i}+\mathrm{O}\Big(\frac{1}{n}\sum_i\frac{n^{\varepsilon_1}\xi^2_i\Psi}{z(1+\xi^2_im_{1}(z))+\mathrm{O}(n^{\varepsilon_1}\Psi)}\Big).
\end{align*}
Under $\Omega_X$ and $\Omega_D$, using the above estimates as initial input and following a similar argument as in the proof of Proposition \ref{proposition_boundedweaklocallaw}, we obtain the following weak averaged local law.
\begin{lemma}[Weak averaged local law conditional on sample]\label{lem_weakaveragedlocallaw_conditional}
    Suppose the assumptions of Theorem \ref{thm_averagedlocallaw_boundedmultiplier_conditionalonX}, under events $\Omega_D$ and $\Omega_X$, we have that for $z\in\mathbf{D}_b^{\prime}$,
    \begin{align*}
        |m_{Q}(z)-m_n(z)|+|m_{1}(z)-m_{1n}(z)|+|m_{2}-m_{2n}(z)|=\mathrm{O}(n^{\epsilon_1}(n\eta)^{-1/4}),
    \end{align*}
    for some small constant $\epsilon_1>\varepsilon_1$.
\end{lemma}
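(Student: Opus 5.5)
The plan is to transplant the proof of Proposition \ref{proposition_boundedweaklocallaw} verbatim, replacing every stochastic-domination bound that comes from $X$ by the deterministic bounds available on $\Omega_X$. Since $D$ is fixed in $\Omega_D$ and $X$ is fixed in $\Omega_X$, every step becomes purely deterministic. The only loss is a factor $n^{\varepsilon_1}$ from item (1) of Definition \ref{def_OmegaX}, which we absorb into $n^{\epsilon_1}$.

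\textbf{Step 1: approximate self-consistent equations.} With $\Psi$ as in \eqref{eq_defnpsi}, we already have on $\Omega_X\cap\Omega_D$ that $Z_i=\mathrm{O}(n^{\varepsilon_1}\Psi)$, using the quadratic-form bound of Definition \ref{def_OmegaX}(1) together with the Ward identity \eqref{lem:Wald} and \eqref{lem:trace_difference}. This gives the analogue of \eqref{eq_m2uboundeddecomposition} for $m_2$. For $m_1$ and $m_Q$, we bound the error terms $\operatorname{tr}(R_1\Sigma)/n$ and $\operatorname{tr}(R_2\Sigma)/n$ as between \eqref{eq: decomp R_1} and \eqref{eq: m_1 by m_2}:
\begin{itemize}
\item $\mathtt R_{11}$ is a sum of centered quadratic forms, each controlled by Definition \ref{def_OmegaX}(1);
\item $\mathtt R_{12}$ and $R_2$ are controlled via the Sherman--Morrison identity \eqref{eq_usefulformula} and Lemma \ref{lem: Resolvent}, with off-diagonal $\mathcal G_{ij}$ bounded by $n^{\varepsilon_1}\Psi$ through the bilinear-form bound in Definition \ref{def_OmegaX}(1).
\end{itemize}
These estimates are valid on two sets: first for $\eta\asymp1$, where $\|(I+m_2^{(i)}\Sigma)^{-1}\|$ is bounded; and second on the deterministic event $\Xi=\{|\mathcal G_{ij}+\delta_{ij}(z(1+m_{1n}\xi_i^2))^{-1}|+|m_2-m_{2n}|\le(\log n)^{-1}\}$, where Assumption \ref{assum_additional_techinical} provides the same bound. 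This yields the conditional counterparts of Lemmas 5.9--5.10 of \cite{yang2019edge}, namely \eqref{eq: est of m_1 and m_2} with error $n^{\varepsilon_1}\Psi$.

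\textbf{Step 2: stability.} Subtracting the exact system \eqref{eq_systemequationsm1m2} gives
\[
m_{1n}-m_1=\mathsf C_1(m_{1n}-m_1)+\mathrm{O}(n^{\varepsilon_1}\Psi),
\]
as in \eqref{eq_decompositiondeomdeomdeodemdem}. On $\Xi$, the Cauchy--Schwarz argument \eqref{eq_tintot1t2}--\eqref{eq_defnmathsfW} gives $|\mathsf C_1|\le\mathfrak c<1$, hence $|m_{1n}-m_1|\lesssim n^{\varepsilon_1}\Psi$. The same bound then follows for $m_2$ and $m_Q$.

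Away from $\Xi$, where only a priori information is available, we use the cruder quadratic-type stability of Lemma 5.12 of \cite{yang2019edge}: the self-consistent error $\delta$ satisfies $|m_1-m_{1n}|\lesssim\sqrt{\delta}$. Inserting $\Psi\le\sqrt{\operatorname{Im}m_1/(n\eta)}+(n\eta)^{-1}$ and bootstrapping gives $|m_1-m_{1n}|\lesssim n^{\epsilon_1}(n\eta)^{-1/4}$.

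\textbf{Step 3: continuity in $\eta$.} We run the continuity argument downward in $\eta$ on a lattice of mesh $n^{-3}$, starting at $\eta\asymp1$. Since the target bound $n^{\epsilon_1}(n\eta)^{-1/4}\ll(\log n)^{-1}$ on $\mathbf D_b'$, each step keeps us in $\Xi$ at the next lattice point. The Lipschitz bound $|G(z)-G(z')|\le|z-z'|\eta^{-2}$ transfers estimates between neighbouring points, and the corresponding Lipschitz property of $m_{1n}$ follows from its Stieltjes representation.

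The main obstacle is Step 3 together with the stability of Step 2 near the edge. In the unconditional proof, a union bound over the lattice is absorbed into $n^{-D}$; here every bound must hold simultaneously for all lattice points and all indices $i$ on the single event $\Omega_X$. We therefore need to read Definition \ref{def_OmegaX}(1) as uniform over the (polynomially many) resolvent matrices $A=G^{(i)}(z)\Sigma$ that arise. Since these matrices depend on $X$, we handle this by noting that the large-deviation estimates in Lemma \ref{lem_Xgoodevents} are established uniformly on an $n^{-3}$-net, which is intersected into $\Omega_X$. We would make this explicit, and then verify that the $\varepsilon_1$-losses accumulate only into a fixed $n^{\epsilon_1}$ with $\epsilon_1>\varepsilon_1$.
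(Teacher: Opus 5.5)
Your Steps 1--3 follow the paper's route. Its proof of this lemma records $Z_i=\mathrm{O}(n^{\varepsilon_1}\Psi)$ on $\Omega_X\cap\Omega_D$, writes the $\eta\asymp1$ self-consistent equations with error $n^{\varepsilon_1}\Psi$, and then reruns Proposition \ref{proposition_boundedweaklocallaw} (Lemma 5.12 of \cite{yang2019edge}) with these deterministic inputs.

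\textbf{The gap in the step you flag.} The gap is in the step you yourself single out as the main obstacle, and your resolution does not close it. The matrices you feed into Definition \ref{def_OmegaX}(1), such as $G^{(i)}(z)\Sigma$ and $G^{(i)}(I+m_2^{(i)}\Sigma)^{-1}\Sigma^2$, are functions of $(X^{(i)},D,z)$, not of $(X^{(i)},z)$ alone.

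For a \emph{fixed} multiplier realization, the following do give an $X$-event $\Omega_X(D)$ of probability $1-\mathrm{O}(n^{-c})$ for any fixed $c$: independence of $\mathbf{x}_i$ from $X^{(i)}$, a union bound over $i$ and an $n^{-3}$-net in $z$, and Lipschitz continuity in $z$. But this event depends on $D$. A deterministic statement on $\Omega_X\cap\Omega_D$ with $\Omega_X\in\sigma(X)$ needs a single event valid for every $D\in\Omega_D$.

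Once $D$ varies, the family of matrices is not polynomially large. A net in $(\xi_1^2,\dots,\xi_n^2)\in[0,l]^n$ has $n^{Cn}$ points. Neither the polynomial tail bounds available under Assumption \ref{assum_model}, nor even stretched-exponential ones, can absorb that many points, and a net in $z$ does nothing about it. The paper has the same hole: it applies Definition \ref{def_OmegaX}(1), stated for deterministic $A$, directly to resolvents, and Lemma \ref{lem_Xgoodevents} only covers deterministic $A$.

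\textbf{A repair.} What does work is a Fubini--Markov step. Set $\mathcal{E}:=\{(X,D): D\in\Omega_D,\ X\in\Omega_X(D)\}$. Then $\mathbb{P}(\Omega_D\setminus\mathcal{E})=\mathrm{O}(n^{-c})$, so $\mathbb{P}(\Omega_D\setminus\mathcal{E}\mid X)\le n^{-c/2}$ outside an $X$-event of probability $\mathrm{O}(n^{-c/2})$, which you remove from $\Omega_X$. The lemma then holds on $\mathcal{E}$ rather than on all of $\Omega_X\cap\Omega_D$. That suffices for the conditional limit theorems: in sandwich arguments such as \eqref{eq_conditional_squeeze_unbounded} one just adds $\mathbb{P}(\Omega_D\setminus\mathcal{E}\mid X)$ to $\mathbb{P}(\Omega_D^c\mid X)$.

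\textbf{A second gap in Step 3.} Your inference that the target bound being $\ll(\log n)^{-1}$ keeps you inside $\Xi$ does not follow. $\Xi$ also requires $|\mathcal{G}_{ii}+(z(1+m_{1n}\xi_i^2))^{-1}|\le(\log n)^{-1}$ for every $i$. An averaged bound does not provide this, and it in fact fails on $\mathbf{D}_b'$, because that domain keeps $1+\xi_j^2m_{1n,c}$ away from zero only for $j\geq2$.

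Near $\tilde{z}_0$ one has $|1+\xi_{(1)}^2m_{1n}|\asymp n^{-1/2}$ by \eqref{eq_zopointrate}, while $Z_1$ fluctuates on the same scale. So $\mathcal{G}_{11}$, which is of size about $n^{1/2}$ (compare Lemma \ref{lem_keycomponentend}(3)), differs from its deterministic counterpart by an amount of that same size.

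The a priori event driving the continuity argument must therefore involve only averaged quantities. The largest-multiplier term of $m_2$ has to be handled separately, e.g.\ via the trivial bound $|\mathcal{G}_{11}|\le\eta^{-1}$. This defect is inherited from Proposition \ref{proposition_boundedweaklocallaw}, which the paper invokes unchanged.
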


Lemma \ref{lem_weakaveragedlocallaw_conditional} implies that, the following two statements hold deterministically for some small constant $\epsilon_2>\varepsilon_1$:
\begin{itemize}
    \item [1.] For $z\in\mathbf{D}_b^{\prime}$,
    \begin{align*}
        |m_{Q}(z)-m_{n}(z)|+|m_{1}(z)-m_{1n}(z)|+|m_{2}(z)-m_{2n}(z)|\leq n^{\epsilon_2}(n\eta)^{-1/4}.
    \end{align*}
    \item [2.] For $z\in\mathbf{D}_b^{\prime}$,
    \begin{align*}
        \max_iZ_i\leq n^{\epsilon_2}\Psi.
    \end{align*}
\end{itemize}
Let $\Xi_1$ denote the event that these two statements hold. Then, by Lemma \ref{lem: est for Im m_1}, on $\Omega_D$ and $\Omega_X$,  $\operatorname{Im}m_{1}(z)=\mathrm{O}(\frac{n^{\epsilon_3}}{n\eta_0}),\quad\operatorname{Im}m_{Q}(z)=\mathrm{O}(\frac{n^{\epsilon_3}}{n\eta_0}),$ for $\eta_0=n^{-1/2-\epsilon_{\mathsf{d}}}$ and some small constant $\epsilon_3>\varepsilon_1$. These bounds serve as the key input for a contradiction argument analogous to those in the proofs of Lemmas \ref{lem: first bound for m_1n-m_1} and \ref{lem: second bound for m_1n-m_1}. We can  obtain the following result. 
\begin{lemma}
    When restricted on the events $\Omega_D$ and $\Omega_X$, for all $z=E+\mathrm{i}\eta\in\mathbf{D}_b^{\prime}$ with $n^{-1/2+\epsilon_{\mathsf{d}}}\leq\eta\leq n^{-1/(d+1)+\epsilon_{\mathsf{d}}}$, we have
    \begin{align*}
        |m_{1n}(z)-m_{1}(z)|=\mathrm{O}(\frac{n^{\epsilon_4}}{n\eta_0}),\quad |m_{n}(z)-m_{Q}(z)|=\mathrm{O}(\frac{n^{\epsilon_4}}{n\eta_0}).
    \end{align*}    
    for some $\epsilon_4>\varepsilon_1$.
\end{lemma}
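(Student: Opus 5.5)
The statement is the conditional counterpart of Lemmas \ref{lem: first bound for m_1n-m_1} and \ref{lem: second bound for m_1n-m_1}. I would prove it by contradiction, restricting throughout to $\Omega_D\cap\Omega_X$ and the deterministic event $\Xi_1$ just constructed. On this event every error term that was written as $\rO_{\prec}$ in the unconditional proof becomes a deterministic bound with an explicit factor $n^{\epsilon}$ ($\epsilon>\varepsilon_1$). The inputs are:
\begin{itemize}
\item the weak law of Lemma \ref{lem_weakaveragedlocallaw_conditional};
\item the bound $\max_i Z_i\leq n^{\epsilon_2}\Psi$;
\item the bounds $\operatorname{Im}m_1,\operatorname{Im}m_Q=\mathrm{O}(n^{\epsilon_3}(n\eta_0)^{-1})$ at height $\eta_0$;
\item the deterministic estimates (\ref{eq_zopointrate11})--(\ref{eq_oneregimeedgecontrol}) for $\operatorname{Im}m_{1n}$ from Lemma \ref{localestimate2}.
\end{itemize}

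Fix $z=E+\ri\eta\in\mathbf{D}_b'$ with $n^{-1/2+\epsilon_{\mathsf{d}}}\leq\eta\leq n^{-1/(d+1)+\epsilon_{\mathsf{d}}}$, and suppose $|m_{1n}(z)-m_1(z)|>n^{\epsilon_4}(n\eta_0)^{-1}$, where $\epsilon_4$ is chosen larger than $\epsilon_2$ and $\epsilon_3$ plus a margin. I would reproduce the self-consistent decomposition (\ref{eq_decompositiondeomdeomdeodemdem}):
\[
m_{1n}-m_1=\mathsf{C}_1(m_{1n}-m_1)+\mathsf{C}_2+\mathrm{O}(n^{\epsilon_2}\Psi).
\]
It uses the conditional expansions of $m_2$ and $m_1$ derived just above, with $Z_i=\mathrm{O}(n^{\varepsilon_1}\Psi)$ and the conditional analogues of the $R_1,R_2$ bounds. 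As in Lemma \ref{lem: est for Im m_1}, one shows $m_1,m_2\asymp1$ by the dichotomy argument. Then $\mathsf{C}_2=\mathrm{O}(n^{\epsilon_2}\Psi)$, and by the Cauchy--Schwarz argument (\ref{eq_tintot1t2}) together with $\mathsf{W}(L_+)<1$ (from $\phi^{-1}>\mathsf{s}_3$) we get $|\mathsf{C}_1|\leq\mathfrak{c}<1$. Note that these two steps involve only $\Sigma$, $D$ and the weak law, so they transfer verbatim once the weak law holds deterministically.

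The key step is the refined bound (\ref{eq_modificationkeykey}). Split
\[
\Psi\leq\sqrt{|\operatorname{Im}m_1-\operatorname{Im}m_{1n}|/(n\eta)}+\sqrt{\operatorname{Im}m_{1n}/(n\eta)}+(n\eta)^{-1}.
\]
Use $\operatorname{Im}m_{1n}=\mathrm{O}(\max\{\eta,(n\eta)^{-1}\})$ from (\ref{eq_zopointrate11}), and note that $\eta\geq n^{-1/2+\epsilon_{\mathsf{d}}}$ forces $(n\eta)^{-1}\ll (n\eta_0)^{-1}$. Under the contradiction hypothesis this gives $n^{\epsilon_2}\Psi=\mathrm{o}(|m_1-m_{1n}|)$, and hence
\[
|m_{1n}-m_1|\leq(\mathfrak{c}+\mathrm{o}(1))|m_{1n}-m_1|,
\]
a contradiction. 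The bound for $m_Q-m_n$ then follows from the $m_Q$ expansion (\ref{eq_mqdecomposition}) and the $m_1$ bound, exactly as in the unconditional case. Uniformity in $z$ comes from a lattice of mesh $n^{-3}$ plus the Lipschitz bound $\eta^{-2}|z-z'|$; no union bound over probabilities is needed, since everything is deterministic on $\Omega_X\cap\Omega_D$.

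\textbf{Main obstacle.} The hard part is the middle square-root term in (\ref{eq_modificationkeykey}). Beating it requires $n^{\epsilon_2}\sqrt{\eta/(n\eta)}=n^{\epsilon_2}n^{-1/2}\ll n^{\epsilon_4}(n\eta_0)^{-1}=n^{\epsilon_4-\epsilon_{\mathsf{d}}-1/2}$. This forces $\epsilon_4>\epsilon_2+\epsilon_{\mathsf{d}}$, so the exponents must be tracked carefully, and $\epsilon_{\mathsf{d}}$ must be small relative to $1/2-1/(d+1)$, which Definition \ref{defn_probset} guarantees.
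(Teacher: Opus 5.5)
Your proposal is correct and is essentially the paper's argument: a contradiction on $\Omega_X\cap\Omega_D\cap\Xi_1$. It runs through the decomposition (\ref{eq_decompositiondeomdeomdeodemdem}) with $|\mathsf{C}_1|\le\mathfrak{c}<1$ and $\mathsf{C}_2=\mathrm{O}(n^{\epsilon_2}\Psi)$, closed by the refined bound (\ref{eq_modificationkeykey}). One correction to your ``main obstacle'': $(n\eta_0)^{-1}=n^{-1/2+\epsilon_{\mathsf{d}}}$, not $n^{-1/2-\epsilon_{\mathsf{d}}}$. So the middle term $n^{\epsilon_2-1/2}$ is beaten once $\epsilon_4+\epsilon_{\mathsf{d}}>\epsilon_2$, and your constraint $\epsilon_4>\epsilon_2+\epsilon_{\mathsf{d}}$ is unnecessary, though harmless. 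The term that actually fixes the exponents is $n^{\epsilon_2}\sqrt{|m_1-m_{1n}|/(n\eta)}$, which requires $\epsilon_4+2\epsilon_{\mathsf{d}}>2\epsilon_2$.
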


With the above estimates, we can complete the proof of Theorem \ref{thm_averagedlocallaw_boundedmultiplier_conditionalonX} by following arguments analogous to those used in the proof of Theorem \ref{thm_boundedcaselocallaw}. This concludes the proof.
%
%We further have the following iterative improvement.
%\begin{lemma}
%        Assuming that $|m_{1n}(z)-m_{1}(z)|=\mathrm{O}(n^{\epsilon_4}(n\eta_0)^{-1})$, then under the events $\Omega_D$ and $\Omega_X$, we have for all $z\in\mathbf{D}_b^{\prime}$,
%        \begin{align*}
%            |m_{1n}(z)-m_{1}(z)|=\mathrm{O}(\frac{n^{\epsilon_5}}{n\eta_0}),\quad |m_n(z)-m_{Q}(z)|=\mathrm{O}(\frac{n^{\epsilon_5}}{n\eta_0}),
%        \end{align*}
%        for small constant $\epsilon_5>\varepsilon_1$.
%\end{lemma}
% Finally, a standard lattice argument yields that, for all $z\in\mathbf{D}_b^{\prime}$, under $\Omega_D$ and $\Omega_X$,
%\begin{align*}
%    |m_{1n}(z)-m_{1}(z)|=\mathrm{O}(\frac{n^{\epsilon_5}}{n\eta_0}),\quad |m_n(z)-m_{Q}(z)|=\mathrm{O}(\frac{n^{\epsilon_5}}{n\eta_0}).
%\end{align*}
%This concludes Theorem \ref{thm_averagedlocallaw_boundedmultiplier_conditionalonX}. 
\end{proof}
%Observe that 
%\begin{align*}
%    \mathbb{P}\big(\Omega|\mathcal{F}_X\big)=\mathbb{P}(\Omega)=1-\mathrm{O}(\log^{-D}n),
%\end{align*}
%and $\{|m_{1n}(z)-m_{1}(z)|\ge n^{\epsilon_7}/(n\eta_0)|\mathcal{F}_X\}\Rightarrow\{\Omega^c\}$, we have
%\begin{align*}
%    &\mathbb{P}\big(|m_{1n}(z)-m_{1}(z)|\ge \frac{n^{\epsilon_7}}{n\eta_0}|\mathcal{F}_X\big)=\mathbb{E}\big[\mathbb{P}\big(|m_{1n}(z)-m_{1}(z)|\ge \frac{n^{\epsilon_7}}{n\eta_0}|\mathcal{F}_X,\mathcal{F}_D\big)|\mathcal{F}_X\big]\\
%    &\le\mathbb{E}\big[\mathbb{P}(\Omega^c|\mathcal{F}_X,\mathcal{F}_D)|\mathcal{F}_X\big]=\mathrm{O}(\log^{-D}n).
%\end{align*}

\section{Locations for extreme bootstrapped eigenvalues and proof of the main results}\label{sec_locationofeigenvalues}
In this section, we study the limiting behavior of the largest eigenvalue of Q, namely $\lambda_1(Q)$. In what follows, we focus on multipliers with polynomially decaying tails; the case of exponentially decaying tails can be treated by a parallel argument.
% In Section \ref{sec_sub_unbounded1st}, we investigate the case when $\{\xi_i^2\}$ have unbounded support as in (i) of Assumption \ref{assum_D}. In Section \ref{sec_sub_bounded1st}, we study the bounded support case as in (ii) of Assumption \ref{assum_D}.  

\subsection{The unbounded support and unconditional case}\label{sec_sub_unbounded1st}

In order to quantify the location of $\lambda_1 \equiv \lambda_1(Q),$ we need to introduce several auxiliary quantities. Recall $\vartheta_1$ defined in (\ref{eq: def of vartheta_1}). Similarly, we denote $\vartheta_2$ by replacing $\xi^2_{(1)}$ with $\xi^2_{(2)}$ in (\ref{eq: def of vartheta_1}).  Moreover,  for $d_1$ and the sufficiently small constant $\epsilon>0$ in (\ref{eq_firstddefinition}), we denote 
\begin{equation}\label{eq_keylocationdefinition}
\vartheta_1^{\pm}:=\vartheta_1 \pm n^{-1/2+2 \epsilon} d_1 ,
%\left( \log n \right)^{\alpha \delta(\alpha-4)-1} n^{\epsilon},
\end{equation}
and recall that 
\begin{equation}\label{eq_qremoveonecolumn}
Q^{(1)}:=Q-\mathbf{y}_{(1)}\mathbf{y}^{*}_{(1)},
\end{equation}
where $\mathbf{y}_{(1)}$ is the column of $Y$ in (\ref{eq_datamatrix}) associated with $\xi^2_{(1)}$. Accordingly, we denote the largest eigenvalue of $Q^{(1)}$ as $\lambda_1^{(1)} \equiv \lambda_1(Q^{(1)}).$ Throughout this section, we shall prove Figure \ref{fig_locationtizubounded} so that the location of $\lambda_1$ can be quantified with high probability on the event $\Omega_D$. 

\begin{figure}[ht]
\centering
\begin{tikzpicture}
\foreach \x/\y in {0/\lambda_2, 0.8/\lambda_1^{(1)}, 1.5/\vartheta_2, 3/\vartheta_1^-, 4/\vartheta_1, 5/\vartheta_1^+}
 \draw[ultra thick] (\x,0.1) -- (\x,-0.1) node[below]{$\y$};
\draw[ultra thick] (-0.5,0)--(6,0);
\foreach \mycoord in {(3,0)}
    \draw [mybrace] \mycoord -- node[above, yshift=2mm]{$\lambda_1$ is here} ++(2,0); 
\end{tikzpicture}
\caption{Location of the largest eigenvalue of $Q$.} \label{fig_locationtizubounded} 
\end{figure}

More formally, the main result is summarized in Proposition \ref{lem: eigenvalue rigidity} below. 
\begin{proposition}\label{lem: eigenvalue rigidity}
Suppose Assumptions \ref{assum_model}, \ref{assumption_techincial} and (i) of Assumption \ref{assum_D} hold. For some sufficiently small constant $\epsilon>0$ and $\vartheta_1^{\pm}$ defined in (\ref{eq_keylocationdefinition}), restricted  on the probability event $\Omega_D$ in Lemma \ref{lem_probabilitycontrol}, with high probability, we have that 
\begin{equation*}
\lambda_1 \in [\vartheta_1^-, \vartheta_1^+]. 
\end{equation*}
\end{proposition}

We now proceed to the proof of Proposition \ref{lem: eigenvalue rigidity} following  Figure \ref{fig_locationtizubounded}. 
\begin{proof}[\bf Proof of Proposition \ref{lem: eigenvalue rigidity}]
%Due to similarity, we focus on the setting $2\leq \alpha<\infty$ {\color{red} [revised] and will only discuss the case $\alpha=\infty$ briefly in the end. Almost mention how we condition on the probability event}. 
In what follows, we restrict the discussion on the probability event $\Omega_D$ in Lemma \ref{lem_probabilitycontrol}.
By Weyl's inequality, we have that $\lambda_2 \leq \lambda_1^{(1)}.$ Moreover, by (\ref{eq_keyused}), we see that with high probability $\lambda_1^{(1)}<\vartheta_2.$ The rest of the proof leaves to prove that the following two claims: 
\begin{equation}\label{eq_mu1mu2bound}
\vartheta_1-\vartheta_2 \geq n^{1/\alpha} \log^{-1}n,
\end{equation}
and for $Q^{(1)}$ in (\ref{eq_qremoveonecolumn}) and 
\begin{equation}\label{eq_defnmlambda}
M(\lambda)=1+\mathbf{y}^{*}_{(1)}G_1^{(1)}(\lambda)\mathbf{y}_{(1)}, \ G^{(1)}_1(\lambda):=(Q^{(1)}-\lambda I)^{-1},
\end{equation}
$M(\lambda)$ changes sign with high probability at $\vartheta_1^-$ and $\vartheta_1^+.$  In fact, for $\lambda_1,$ it should satisfy the following equation with high probability
\begin{equation}\label{eq_masterequation}
{\rm det}(\lambda_1I-\mathbf{y}_{(1)}\mathbf{y}^{*}_{(1)}-Q^{(1)})=0\Rightarrow M(\lambda_1)=0,
\end{equation}
as long as $\lambda_1>\lambda_1^{(1)}.$ On the other hand, if $M(\lambda)$ changes sign at $\vartheta_1^{\pm},$ by continuity,  there must at least be an eigenvalue of $Q$ in the interval $[\vartheta_1^-, \vartheta_1^+].$ If (\ref{eq_mu1mu2bound}) holds, combining the above arguments, we see that the only possibility is $\lambda_1$ and it is also true that $\lambda_1>\lambda_1^{(1)}.$

We first justify (\ref{eq_mu1mu2bound}). Recall that $\vartheta_1$ is defined in (\ref{eq: def of vartheta_1}) according to $1+(\xi^2_{(1)}+d_{1})m_{1n}(\vartheta_1)=0. $ Together with (\ref{eq_functionFequal}) and (\ref{eq:F(m,z)}), we readily obtain that 
\begin{equation}\label{eq: 2.2 equa}
1=\frac{1}{n}\sum_{i=1}^p\frac{\sigma_i}{\frac{\vartheta_1}{\xi^2_{(1)}+d_{1}}-\frac{\sigma_i}{n}\sum_{j=1}^n\frac{\xi_{(j)}^2}{\xi^2_{(1)}+d_{1}-\xi^2_{(j)}}}.
\end{equation}
Recall (\ref{e2_definition}). Using the definition of $d_1$ in \eqref{eq_firstddefinition} and \eqref{def1}, we see that on $\Omega_D,$ for some constant $C>0$

\begin{equation}\label{eq: 2.2 beta=2}
\begin{split}
\frac{1}{n}\sum_{j=1}^n\frac{\xi_{(j)}^2}{\xi^2_{(1)}+d_{1}-\xi^2_{(j)}}&=\frac{1}{n}\frac{\xi_{(1)}^2}{d_{1}}+\frac{1}{n}\sum_{j=2}^{n}\frac{\xi_{(j)}^2}{\xi^2_{(1)}+d_{1}-\xi^2_{(j)}}\\
   &\leq \frac{C\log^{1+\epsilon_{\mathsf{p}}} n}{n}+\frac{1}{n}\sum_{j=2}^{n}\frac{\xi_{(j)}^2}{\xi^2_{(1)}+d_{1}-\xi^2_{(2)}}\\
   &\leq\frac{C\log^{1+\epsilon_{\mathsf{p}}}}{n}+\frac{C \log^{\epsilon_{\mathsf{p}}}n}{n^{1/\alpha}}=C e.
\end{split}
\end{equation}

Recall the definition of $\varphi$ in Theorem \ref{thm_main_unbounded}, 
the above arguments imply that on $\Omega_D$
\begin{equation}\label{eq_mu1part}
\frac{\vartheta_1}{\xi_{(1)}^2+d_1}=\varphi +\rO(e).  
\end{equation}
Moreover, by Taylor's expansion, we can further calculate the high order error of 
\begin{align*}
    \frac{\vartheta_1}{\xi^2_{(1)}+d_1}-\varphi=\frac{\xi^2_{(1)}+d_1}{\vartheta_1}\times\frac{1}{n}\sum_{i=1}^n\sigma_i^2\times\frac{1}{n}\sum_{j=2}^n\frac{\xi^2_{(j)}}{\xi^2_{(1)}+d_1-\xi^2_{(j)}}+\mathrm{O}(e^2).
\end{align*}
We have the replacement on the ride-hand-side of the above equation,
\begin{align*}
    \frac{1}{n}\sum_{j=2}^n\frac{\xi^2_{(j)}}{\xi^2_{(1)}+d_1-\xi^2_{(j)}}=\frac{\mathbb{E}\xi^2}{\xi^2_{(1)}+d_1}+\mathrm{O}\big(\frac{1}{n}\sum_{j=2}^n\frac{\xi^4_{(j)}}{(\xi^2_{(1)}+d_1)(\xi^2_{(1)}+d_1-\xi^2_{(j)})}+\frac{1}{n}\big),
\end{align*}
 Then, we have 
\begin{align*}
   \frac{1}{n}\sum_{j=2}^n\frac{\xi^4_{(j)}}{(\xi^2_{(1)}+d_1)(\xi^2_{(1)}+d_1-\xi^2_{(j)})}=\mathrm{O}(\frac{e}{\xi^2_{(1)}+d_1}),
\end{align*}
where we used the estimates in \eqref{def1}.

Then by Assumption \ref{assum_D}, we can obtain that 
\begin{equation*}
	\frac{1}{n}\sum_{j=2}^n\frac{\xi^2_{(j)}}{\xi^2_{(1)}+d_1-\xi^2_{(j)}}=\frac{\mathbb{E}\xi^2}{\xi^2_{(1)}}+\rO(\frac{e}{\xi^2_{(1)}+d_1}).
\end{equation*}  As a result, we have
\begin{align}\label{eq_expansion_vartheta1}
    \frac{\vartheta_1}{\xi^2_{(1)}+d_1}-\varphi=\frac{\xi^2_{(1)}+d_1}{\vartheta_1}\times\frac{1}{n}\sum_{i=1}^n\sigma_i^{2}\times\frac{\mathbb{E}\xi^2}{\xi^2_{(1)}}+\mathrm{O}(e^{2}).
\end{align}
Therefore, we conclude an enhanced form for \eqref{eq_mu1part} that
\begin{equation}\label{eq_mu1linear}
	\vartheta_1=\varphi\times(\xi^2_{(1)}+d_1)+\mathbb{E}\xi^2\times\frac{1}{n\varphi}\sum_{i=1}^n\sigma^2_i+\rO(e\log^{\epsilon_{\mathsf{p}}}n).
\end{equation}

By an analogous argument, we have that for some constants $C_k>0, k=1,2,3,$ 
%
%
%we conclude that 
%\[
%\frac{\mu_2}{\xi^2_{(2)}+d_{2}}=C\phi\bar{\sigma}.
%\]
%Moreover, one has
\[
\begin{split}
\frac{1}{n}\Big(\sum_{j=1}^n \frac{\xi^2_{(j)}}{\xi^2_{(1)}+d_{1}-\xi^2_{(j)}}-\sum_{j=2}^n\frac{\xi^2_{(j)}}{\xi^2_{(2)}+d_{1}-\xi^2_{(j)}}\Big)
%&=\frac{1}{n}\Big(\frac{\xi^2_{(1)}}{d_{1}}+\sum_{j=2}^n\frac{\xi^2_{(j)}}{\xi^2_{(1)}+d_{1}-\xi^2_{(j)}}-\sum_{j=2}^n \frac{\xi^2_{(j)}}{\xi^2_{(2)}+d_{1}-\xi^2_{(j)}}\Big)\\
%&=C_1e-\frac{1}{n}\sum_{j=2}^n\frac{\xi^2_{(j)}(\xi^2_{(1)}-\xi^2_{(2)})}{(\xi^2_{(1)}+d_{1}-\xi^2_{(j)})(\xi^2_{(2)}+d_{1}-\xi^2_{(j)})}\\
\geq C_1 e-\frac{1}{n}\sum_{j=2}^n \frac{\xi^2_{(j)}}{\xi^2_{(2)}+d_{1}-\xi^2_{(j)}}\geq C_2 e,
\end{split}
\]
and on the other hand
\[
\begin{split}
    &\frac{1}{n}\Big(\sum_{j=1}^n \frac{\xi^2_{(j)}}{\xi^2_{(1)}+d_{1}-\xi^2_{(j)}}-\sum_{j=2}^n \frac{\xi^2_{(j)}}{\xi^2_{(2)}+d_{1}-\xi^2_{(j)}}\Big)\\
    &\leq \frac{1}{n}\Big(\frac{\xi^2_{(1)}}{d_{1}}+\sum_{j=2}\frac{\xi^2_{(j)}}{\xi^2_{(1)}+d_{1}-\xi^2_{(j)}}+\sum_{j=2}\frac{\xi^2_{(j)}}{\xi^2_{(2)}+d_{1}-\xi^2_{(j)}}\Big) \leq C_3e.
\end{split}
\]
Using the above control, the definition of $\vartheta_2$ and a discussion similar to (\ref{eq_mu1part}), we can prove 
\begin{equation}\label{eq_mu2part}
\frac{\vartheta_2}{\xi_{(2)}^2+d_1}=\varphi+\rO(e).  
\end{equation}

Combining (\ref{eq_mu1part}) and (\ref{eq_mu2part}), we immediately see that  
\begin{equation}\label{eq_bbbbb}
\frac{\vartheta_1}{\xi^2_{(1)}+d_{1}}-\frac{\vartheta_2}{\xi^2_{(2)}+d_{1}}=\rO(e).
\end{equation}
This implies that 
\[
\begin{split}
    \vartheta_1-\vartheta_2&=(\xi^2_{(1)}+d_{1})\Big(\frac{\vartheta_1}{\xi^2_{(1)}+d_{1}}-\frac{\vartheta_2}{\xi^2_{(2)}+d_{1}}\Big)+\vartheta_2\Big(\frac{\xi^2_{(1)}+d_{1}}{\xi^2_{(2)}+d_{1}}-1\Big)\\
    &=(\xi^2_{(1)}+d_{1})\Big(\frac{\vartheta_1}{\xi^2_{(1)}+d_{1}}-\frac{\vartheta_2}{\xi^2_{(2)}+d_{1}}\Big)+\frac{\vartheta_2}{\xi^2_{(2)}+d_1}(\xi^2_{(1)}-\xi^2_{(2)}) \geq n^{2/\alpha}\log^{-c_{\mathsf{p},0}}n,
\end{split}
\]
where in the third step we used (\ref{eq_bbbbb}), (\ref{def1}) and the definition of $e$ in (\ref{e2_definition}). This completes the proof of (\ref{eq_mu1mu2bound}).

Next, we will show that  $M(\vartheta_1^-)<0,  \ M(\vartheta_1^+)>0. $ Due to similarity, in what follows, we focus on the first inequality. Note that 
\begin{equation*}
\mathbf{y}_1^* G_1^{(1)}(\vartheta_1^-) \mathbf{y}_1= \xi_{(1)}^2  \ub_1^*\Sigma^{1/2} G_1^{(1)}(\vartheta_1^-) \Sigma^{1/2} \ub_1. 
\end{equation*}
Moreover, recall that $m_1^{(1)}(\vartheta_1^-)=n^{-1} \operatorname{tr}(\Sigma^{1/2}G_1^{(1)}(\vartheta_1^-)\Sigma^{1/2}).$ Then according to Lemma \ref{lem:large deviation}, we have that 
\begin{align}\label{eq_conconconone}
\mathbf{y}_1^* G_1^{(1)}(\vartheta_1^-) \mathbf{y}_1& =\xi^2_{(1)} m_1^{(1)}(\vartheta_1^-)+\rO_{\prec} \Big( \frac{\xi^2_{(1)}}{n}\| G_1^{(1)}(\vartheta_1^-)\|_F \Big)  \nonumber \\
& =\xi^2_{(1)} m_1^{(1)}(\vartheta_1^-)+\rO_{\prec} \Big( \frac{\xi^2_{(1)}}{n^{1/2+1/\alpha}}\Big), 
\end{align}
where in the second step we used (\ref{eq_mu1mu2bound}) and the fact $\vartheta_2>\lambda_1^{(1)}$.  Moreover, for some sufficiently small constant $\epsilon_0>0$ and $z_0=\vartheta_1^-+\mathrm{i} n^{-1/2-\epsilon_0},$ we can decompose that  

\begin{align}\label{eq_bigexpansion}
m_1^{(1)}(\vartheta_1^-)& =\left[m_1^{(1)}(\vartheta_1^-)-m_1^{(1)}(z_0) \right]+\left[m_1^{(1)}(z_0)-m_{1n}^{(1)}(z_0)\right]+\left[m_{1n}^{(1)}(z_0)-m_{1n}^{(1)}(\vartheta_1^-)\right]+m_{1n}^{(1)}(\vartheta_1^-) \nonumber \\
&=\mathsf{P}_1+\mathsf{P}_2+\mathsf{P}_3+m_{1n}^{(1)}(\vartheta_1^-).
\end{align}
First, by Theorem \ref{thm_unboundedcaselocallaw}, we have that $\mathsf{P}_2 \prec n^{-1/2-2/\alpha}$. Second, let $\{\mathbf{v}^{(1)}_i\}$ be the eigenvectors of $Q^{(1)}$ associated with the eigenvalues $\{\lambda_i^{(1)}\},$ then we have that
%
%
%
%Suppose $\lambda^{(1)}_i$ be the ordered eigenvalues of $\mathcal{Q}^{(1)}$ with the corresponding eigenvectors $\mathbf{v}^{(1)}_i$, we may rewrite the first distinction as
\[
\begin{split}
|\mathsf{P}_1|&\leq\frac{1}{n}\sum_{i=1}^p|T\mathbf{v}^{(1)}_i|^2\left|\frac{1}{\lambda^{(1)}_i-\vartheta_1^-}-\frac{1}{\lambda^{(1)}_i-z_0}\right|\\
  &=\frac{1}{n}\sum_{i=1}^p|T\mathbf{v}^{(1)}_i|^2\left|\frac{\mathrm{i}n^{-1/2-\epsilon_0}}{(\lambda^{(1)}_i-\vartheta_1^-)(\lambda^{(1)}_i-z_0)}\right|\\
  &\leq\frac{1}{n}\sum_{i=1}^p|T\mathbf{v}^{(1)}_i|^2\Big|\frac{\mathrm{i}n^{-1/2-\epsilon_0}+\rO_{\prec}(n^{-2/\alpha-1/2}\log^2n)}{|\lambda^{(1)}_i-z_0|^2}\Big|\\
  &\prec\operatorname{Im}(m_1^{(1)}(z_0))\times \rO_{\prec}(n^{-1/2-\epsilon_0})
%  &\prec\operatorname{Im}(m_{1n}^{(1)}(z_0))\times O_{\prec}(n^{-1/2-\epsilon})+O_{\prec}(n^{-1-4/\alpha})\\
\prec n^{-1-1/\alpha},
\end{split}
\]
where in the third step we used (\ref{eq_mu1mu2bound}) and the fact $\vartheta_2>\lambda_1^{(1)}$ and in the last step we used Lemma \ref{lem: basic bounds}, (\ref{eq_mu1part}) and (\ref{def1}). Third, according to Lemma \ref{lem_solutionsystem},  we can decompose that 
\[
\begin{split}
   \mathsf{P}_3 &=\frac{1}{n}\sum_{i=1}^p\frac{\sigma_i}{-z_0(1+\frac{\sigma_i}{n}\sum_{j=2}^n\frac{\xi^2_{(j)}}{-z_0(1+m^{(1)}_{1n}(z_0)\xi^2_{(j)})})}-\frac{1}{n}\sum_{i=1}^p\frac{\sigma_i}{-\vartheta_1^-(1+\frac{\sigma_i}{n}\sum_{j=2}^n \frac{\xi^2_{(j)}}{-\vartheta_1^-(1+m^{(1)}_{1n}(\vartheta_1^-)\xi^2_{(j)})})}\\
    &=\big[\frac{1}{n}\sum_{i=1}^p\frac{\sigma_i}{-z_0(1+\frac{\sigma_i}{n}\sum_{j=2}^n \frac{\xi^2_{(j)}}{-z_0(1+m^{(1)}_{1n}(z_0)\xi^2_{(j)})})}-\frac{1}{n}\sum_{i=1}^p\frac{\sigma_i}{-z_0(1+\frac{\sigma_i}{n}\sum_{j=2}^n\frac{\xi^2_{(j)}}{-\vartheta_1^-(1+m^{(1)}_{1n}(\vartheta_1^-)\xi^2_{(j)})})} \big]\\
    &+\big[\frac{1}{n}\sum_{i=1}^p\frac{\sigma_i}{-z_0(1+\frac{\sigma_i}{n}\sum_{j=2}^n \frac{\xi^2_{(j)}}{-\vartheta_1^-(1+m^{(1)}_{1n}(\vartheta_1^-)\xi^2_{(j)})})}-\frac{1}{n}\sum_{i=1}^p\frac{\sigma_i}{-\vartheta_1^-(1+\frac{\sigma_i}{n}\sum_{j=2}^n\frac{\xi^2_{(j)}}{-\vartheta_1^-(1+m^{(1)}_{1n}(\vartheta_1^-)\xi^2_{(j)})})} \big]\\
    &:=\mathcal{M}^{(1)}_{31}+\mathcal{M}^{(1)}_{32}.
\end{split}
\]
Note that according to (\ref{eq_mu1part}), (\ref{eq_mu1mu2bound}), (\ref{def1}) and Lemma \ref{lem: basic bounds}, we conclude that with high probability $|1+\sigma_i m_{2n}^{(1)}(z_0)|, |1+\sigma_i m_{2n}^{(1)}(\vartheta_1^-)|  \asymp 1.$ For $\mathcal{M}^{(1)}_{31}$, using the definitions in  (\ref{eq_systemequationsm1m2}) and the above bounds,  we have that with high probability
\begin{align} \label{eq_m11control}
  &  \mathcal{M}^{(1)}_{31}=\frac{1}{n}\sum_{i=1}^p \frac{\sigma^2_i}{-z_0(1+\sigma_im^{(1)}_{2n}(z))(1+\sigma_im^{(1)}_{2n}(\vartheta_1^-))}\big(m^{(1)}_{2n}(\vartheta_1^-)-m^{(1)}_{2n}(z)\big) \nonumber  \\
    &=\rO(|z_0|^{-1})\times\frac{1}{n}\sum_{j=2}^n \Big(\frac{\xi^2_{(j)}}{-\vartheta_1^-(1+\xi^2_{(j)}m_{1n}^{(1)}(\vartheta_1^-))}-\frac{\xi^2_{(j)}}{-z(1+\xi^2_{(j)}m_{1n}^{(1)}(z_0))}\Big) \nonumber \\
    &=\rO(|z_0|^{-1})\times\frac{1}{n}\sum_{j=2}^n \Big(\frac{\xi^2_{(j)}}{-\vartheta_1^-(1+\xi^2_{(j)}m^{(1)}_{1n}(\vartheta_1^-))}-\frac{\xi^2_{(j)}}{-\vartheta_1^-(1+\xi^2_{(j)}m^{(1)}_{1n}(z_0))}\Big) \nonumber \\
    &+\rO(|z_0|^{-1})\times\frac{1}{n}\sum_{j=2}^n \Big(\frac{\xi^2_{(j)}}{-\vartheta_1^-(1+\xi^2_{(j)}m^{(1)}_{1n}(z))}-\frac{\xi^2_{(j)}}{-z(1+\xi^2_{(j)}m_{1n}^{(1)}(z_0))}\Big) \nonumber \\ 
    &=\rO(|z_0|^{-1})\times\frac{1}{n}\sum_{j=2}^n \frac{\xi^4_{(j)}(m^{(1)}_{1n}(z_0)-m_{1n}^{(1)}(\vartheta_1^-))}{-\vartheta_1^-(1+\xi^2_{(j)}m^{(1)}_{1n}(\vartheta_1^-))(1+\xi^2_{(j)}m^{(1)}_{1n}(z_0))}+\rO(|z_0|^{-1})\times\frac{1}{n}\sum_{j=2}^n \frac{\xi^2_{(j)}(\vartheta_1^--z_0)}{z\vartheta_1^-(1+\xi^2_{(j)}m^{(1)}_{1n}(z_0))} \nonumber \\
    &=\ro(1)\times(m^{(1)}_{1n}(z_0)-m^{(1)}_{1n}(\vartheta_1^-))+\rO(n^{-2/\alpha-1/2-\epsilon_0}). 
\end{align}
where in the second to last step we used Lemma \ref{lem: basic bounds} and in the last step we used Lemma \ref{lem: basic bounds}  and (\ref{def1}). This implies with high probability 
\begin{equation*}\label{control_m11}
\mathcal{M}_{31}^{(1)}=\rO \left( n^{-2/\alpha-1/2-\epsilon_0} \right).  
\end{equation*}

Similarly, for $\mathcal{M}^{(1)}_{32}$, we have that 
\begin{equation}\label{eq_controlm12}
\begin{split}
    \mathcal{M}^{(1)}_{32}=\frac{1}{n}\sum_{i=1}^p \frac{\sigma_i(z_0-\vartheta_1^-)}{z_0\vartheta_1^-(1+\sigma_im^{(1)}_{2n}(\vartheta_1^-))} =\rO(n^{-2/\alpha-1/2-\epsilon_0}).
\end{split}
\end{equation}
Combining the above arguments, we have that $\mathsf{P}_3=\rO\left( n^{-2/\alpha-1/2-\epsilon_0} \right). $

Inserting the bounds of $\mathsf{P}_k, 1 \leq k \leq 3$ into (\ref{eq_bigexpansion}), we conclude that 
\[
|m^{(1)}_1(\vartheta_1^-)-m^{(1)}_{1n}(\vartheta_1^-)|\prec n^{-1/\alpha-1/2-\epsilon_0}.
\]
Together with (\ref{eq_conconconone}) and (\ref{def1}), it yields that  
\begin{equation}\label{eq_M1reducedcase}
  M(\vartheta_1^-)=1+(\xi^2_{(1)}+d_1)m^{(1)}_{1n}(\vartheta_1^-)+\rO_{\prec}(n^{-1/2-\epsilon_0}).
\end{equation}
%{\color{red}[from here]}where we use the fact $m^{(1)}_{1n}(\lambda)\prec \|(\mathcal{Q}^{(1)}-\lambda I)^{-1}\|_2\prec n^{-2/\alpha}$.

In what follows, we study $1+(\xi^2_{(1)}+d_1)m^{(1)}_{1n}(\vartheta_1^-)$. We rewrite that
\begin{equation}\label{eq_reducedcontrol}
\begin{split}
    1+(\xi^2_{(1)}+d_1)m^{(1)}_{1n}(\vartheta_1^-)&=1+(\xi^2_{(1)}+d_1)m^{(1)}_{1n}(\vartheta_1)-(\xi^2_{(1)}+d_1)\big(m^{(1)}_{1n}(\vartheta_1)-m^{(1)}_{1n}(\vartheta_1^-)\big).
\end{split}
\end{equation}
Using the definition for $\vartheta_1$ that $1+(\xi^2_{(1)}+d_1)m_{1n}(\vartheta_1)=0$ and the definitions in (\ref{eq_systemequationsm1m2}), by a discussion similar to (\ref{eq_m11control}),  we have that for some constant $C>0$
\begin{equation}\label{eq: m^s_1_1n-m_1n}
\begin{split}
    &1+(\xi^2_{(1)}+d_1)m^{(1)}_{1n}(\vartheta_1)\\
    &=1+(\xi^2_{(1)}+d_1)m_{1n}(\vartheta_1)+(\xi^2_{(1)}+d_1)\big(m_{1n}^{(1)}(\vartheta_1)-m_{1n}(\vartheta_1)\big)\\
    &=(\xi^2_{(1)}+d_1)\frac{1}{n}\sum_{i=1}^p\Big(\frac{\sigma_i}{-\vartheta_1(1+\sigma_i m_{2n}^{(1)}(\vartheta_1))}-\frac{\sigma_i}{-\vartheta_1(1+\sigma_i m_{2n}(\vartheta_1))}\Big)\\
%    &=(\xi^2_{(1)}+d_1) \left[\frac{1}{n}\sum_{i=1}^p\frac{\sigma^2_i}{-\vartheta_1(1+\sigma_i m_{2n}^{(1)}(\vartheta_1))(1+\sigma_i m_{2n}(\vartheta_1))}\right]\big(m_{2n}(\vartheta_1)-m_{2n}^{(1)}(\vartheta_1)\big)\\
    & \leq C \frac{(\xi^2_{(1)}+d_1)}{\vartheta_1}\times \Big|\frac{1}{n}\sum_{j=1}^n\frac{\xi^2_{(j)}}{-\vartheta_1(1+\xi^2_{(j)}m_{1n}(\vartheta_1))}-\frac{1}{n}\sum_{j=2}^n\frac{\xi^2_{(j)}}{-\vartheta_1(1+\xi^2_{(j)}m^{(1)}_{1n}(\vartheta_1))}\Big|\\
    &\leq C \frac{(\xi^2_{(1)}+d_1)}{\vartheta_1}\times\xi^2_{(2)}|m_{1n}^{(1)}(\vartheta_1)-m_{1n}(\vartheta_1)|+n^{-1}.
\end{split}
\end{equation}
where in the last step we again used (\ref{def1}). This yields that 
\begin{equation*}
|(\xi^2_{(1)}+d_1)\big(m_{1n}^{(1)}(\vartheta_1)-m_{1n}(\vartheta_1)\big)| \leq C \frac{(\xi^2_{(1)}+d_1)}{\vartheta_1}\times\xi^2_{(2)}|m_{1n}^{(1)}(\vartheta_1)-m_{1n}(\vartheta_1)|+n^{-1}, 
\end{equation*}
which implies $(\xi^2_{(1)}+d_1)\big(m^{(1)}_{1n}(\vartheta_1)-m_{1n}(\vartheta_1)\big)=\rO(n^{-1})$. Together with (\ref{eq_reducedcontrol}), we have 
\begin{equation}\label{eq_finalpartone}
1+(\xi^2_{(1)}+d_1)m^{(1)}_{1n}(\vartheta_1^-)=-(\xi^2_{(1)}+d_1)\big(m^{(1)}_{1n}(\vartheta_1)-m^{(1)}_{1n}(\vartheta_1^-)\big)+\rO(n^{-1}).
\end{equation}

Recall that we have proved the facts that $\vartheta_1, \vartheta_1^->\lambda_1^{(1)},$ by Theorem \ref{thm_boundedcaselocallaw} and the monotonicity of $m_1^{(1)}$ outside the bulk,  the first term on the right-hand side of (\ref{eq_finalpartone}) is negative. In order to show $M(\vartheta_1^-)<0,$ in light of (\ref{eq_M1reducedcase}), it suffices to show that its magnitude is much larger than $\rO(n^{-1/2-\epsilon_0}).$  To see this, we decompose that
\[
\begin{split}
    &m^{(1)}_{1n}(\vartheta_1)-m^{(1)}_{1n}(\vartheta_1^-)\\
  %  &=\frac{1}{n}\sum_{i=1}^p\frac{\sigma_i}{-\vartheta_1(1+\sigma_im^{(1)}_{2n}(\vartheta_1))}-\frac{1}{n}\sum_{i=1}^p\frac{\sigma_i}{-\vartheta_1^-(1+\sigma_im^{(1)}_{2n}(\vartheta_1^-))}\\
    &=\Big[\frac{1}{n}\sum_{i=1}^p\frac{\sigma_i}{-\vartheta_1(1+\frac{\sigma_i}{n}\sum_{j=2}^n\frac{\xi^2_{(j)}}{-\vartheta_1(1+\xi^2_{(j)}m_{1n}^{(1)}(\vartheta_1)})}-\frac{1}{n}\sum_{i=1}^p\frac{\sigma_i}{-\vartheta_1(1+\frac{\sigma_i}{n}\sum_{j=2}^n\frac{\xi^2_{(j)}}{-\vartheta_1(1+\xi^2_{(j)}m_{1n}^{(1)}(\vartheta_1^-))})} \Big]\\
    &+\Big[\frac{1}{n}\sum_{i=1}^p\frac{\sigma_i}{-\vartheta_1(1+\frac{\sigma_i}{n}\sum_{j=2}^n\frac{\xi^2_{(j)}}{-\vartheta_1(1+\xi^2_{(j)}m_{1n}^{(1)}(\vartheta_1^-))})}-\frac{1}{n}\sum_{i=1}^p\frac{\sigma_i}{-\vartheta_1^-(1+\frac{\sigma_i}{n}\sum_{j=2}^n\frac{\xi^2_{(j)}}{-\vartheta_1^-(1+\xi^2_{(j)}m_{1n}^{(1)}(\vartheta_1^-))})} \Big]\\
    &:=\tilde{\mathcal{M}}_{11}^{(1)}+\tilde{\mathcal{M}}_{12}^{(1)}.
\end{split}
\]
Similar to the discussion of (\ref{eq_m11control}), we have that $\tilde{\mathcal{M}}_{11}^{(1)}$,
\[
\begin{split}
    \tilde{\mathcal{M}}_{11}^{(1)}&=\frac{1}{n}\sum_{i=1}^p\frac{\sigma^2_i\Big(\frac{1}{n}\sum_{j=2}^n\frac{\xi^2_{(j)}}{-\vartheta_1(1+\xi^2_{(j)}m_{1n}^{(1)}(\lambda))}-\frac{1}{n}\sum_{j=2}^n\frac{\xi^2_{(j)}}{-\vartheta_1(1+\xi^2_{(j)}m_{1n}^{(1)}(\vartheta_1))}\Big)}{-\vartheta_1(1+\frac{\sigma_i}{n}\sum_{j=2}^n\frac{\xi^2_{(j)}}{-\vartheta_1(1+\xi^2_{(j)}m_{1n}^{(1)}(\vartheta_1))})(1+\frac{\sigma_i}{n}\sum_{j=2}^n\frac{\xi^2_{(j)}}{-\vartheta_1(1+\xi^2_{(j)}m_{1n}^{(1)}(\vartheta_1^-))})}\\
    &=\rO(\frac{1}{\vartheta_1})\times\frac{1}{n}\sum_{j=2}^n\frac{\xi^4_{(j)}(m^{(1)}_{1n}(\vartheta_1)-m^{(1)}_{1n}(\vartheta_1^-))}{-\vartheta_1(1+\xi^2_{(j)}m_{1n}^{(1)}(\vartheta_1^-))(1+\xi^2_{(j)}m_{1n}^{(1)}(\vartheta_1))}\\
    &=\ro(1)\times(m^{(1)}_{1n}(\vartheta_1)-m^{(1)}_{1n}(\vartheta_1^-)).
\end{split}
\]
Moreover, similar to (\ref{eq_controlm12}), for $\tilde{\mathcal{M}}_{12}^{(1)}$ we have that with high probability
\[
\begin{split}
    \tilde{\mathcal{M}}_{12}^{(1)}&=\frac{1}{n}\sum_{i=1}^p\frac{\sigma_i(\vartheta_1-\vartheta_1^-)}{\vartheta_1\vartheta_1^-(1+\frac{\sigma_i}{n}\sum_{j=2}^n\frac{\xi^2_{(j)}}{-\vartheta_1(1+\xi^2_{(j)}m_{1n}^{(1)}(\vartheta_1^-))})(1+\frac{\sigma_i}{n}\sum_{j=2}^n\frac{\xi^2_{(j)}}{-\vartheta_1^-(1+\xi^2_{(j)}m_{1n}^{(1)}(\vartheta_1^-))})}\\
    & \asymp \frac{\vartheta_1-\vartheta_1^-}{\vartheta_1^-\vartheta_1}  \asymp n^{-1/2-1/\alpha+\epsilon},
\end{split}
\]
where $\epsilon>0$ is defined in \eqref{eq_keylocationdefinition}. This implies that $m^{(1)}_{1n}(\vartheta_1)-m^{(1)}_{1n}(\lambda) \asymp n^{-1/2-1/\alpha+\epsilon}. $
Together with (\ref{eq_finalpartone}), the definition of {\color{blue} $d_1$} and (\ref{def1}), we readily see that 
\begin{equation}\label{eq: lambda-lambda_(1)}
   1+(\xi^2_{(1)}+{\color{blue} d_1})m^{(1)}_{1n}(\vartheta_1^-) \asymp -n^{-1/2+\epsilon},
\end{equation}
which concludes the proof of $M(\vartheta_1^-)<0$ when $n$ is sufficiently large. Similarly, we can prove that $M(\vartheta_1^+)>0.$ This completes the proof.

\end{proof}

\subsection{The unbounded support and conditional on $X$ case}

In this subsection, we provide the counterpart of the results in Proposition \ref{lem: eigenvalue rigidity} conditional on $X$ via the probability event $\Omega_X$ in Definition \ref{def_OmegaX}. 
%When the multipliers are unbounded, the proofs are 
%
%
%Now, with above preparation, we are going to prove the parallel result of Proposition \ref{lem: eigenvalue rigidity} under $\Omega_X$.
\begin{proposition}\label{lem_largesteigenvaluelocation_conditional}
Suppose Assumptions \ref{assum_model}, \ref{assumption_techincial} and (i) of Assumption \ref{assum_D} hold. For some  small constant $\epsilon>0$ and $\vartheta_1^{\pm}$ defined in (\ref{eq_keylocationdefinition}), restricted on $\Omega_X$ and $\Omega_D$, we have that
\begin{equation*}
\lambda_1 \in [\vartheta_1^-, \vartheta_1^+]. 
\end{equation*}
\end{proposition}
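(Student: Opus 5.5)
We follow the proof of Proposition \ref{lem: eigenvalue rigidity} step by step. Wherever that proof produced a stochastic-domination bound from the randomness of $X$, we instead use the deterministic bounds available on $\Omega_X$, at the cost of factors $n^{\varepsilon_1}$. Fix a realization of $X$ in $\Omega_X$ and a realization $\{\xi_i^2\}\in\Omega_D$.

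\textbf{Step 1: ordering of the auxiliary quantities.} The deterministic facts are unchanged, because they involve only $\Sigma$ and $D$. These are the defining equation \eqref{eq: 2.2 equa}, the expansions \eqref{eq_mu1part} and \eqref{eq_mu2part}, and hence the gap \eqref{eq_mu1mu2bound}. Weyl's inequality gives $\lambda_2\le\lambda_1^{(1)}$. For $\lambda_1^{(1)}<\vartheta_2$ we use the chain $\vartheta_1>\lambda_1>\vartheta_2>\lambda_1^{(1)}$. This chain was already established on $\Omega_X\cap\Omega_D$ in the proof of Theorem \ref{thm_averagedlocallaw_unboundedmultiplier_conditionalonX}, via Proposition \ref{lem_ousidebulk_conditional} and Lemma \ref{lem_upperbound_unboundedeigenvalue_conditional}. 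It follows that $\|G_1^{(1)}(\lambda)\|\lesssim n^{-1/\alpha}\log n$ for all $\lambda\in[\vartheta_1^-,\vartheta_1^+]$.

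\textbf{Step 2: sign change of the master function.} It remains to show $M(\vartheta_1^-)<0<M(\vartheta_1^+)$ for the function $M$ of \eqref{eq_defnmlambda}. Continuity then places an eigenvalue in $[\vartheta_1^-,\vartheta_1^+]$, and Step 1 forces it to be $\lambda_1$. Item (1) of Definition \ref{def_OmegaX} replaces the large deviation bound in \eqref{eq_conconconone}. Applied to the matrix $A=\Sigma^{1/2}G_1^{(1)}\Sigma^{1/2}$, it gives
\begin{equation*}
\mathbf{y}_{(1)}^*G_1^{(1)}(\vartheta_1^-)\mathbf{y}_{(1)}=\xi_{(1)}^2m_1^{(1)}(\vartheta_1^-)+\mathrm{O}\big(n^{\varepsilon_1}\xi_{(1)}^2n^{-1/2-1/\alpha}\big).
\end{equation*}
Here lies the point needing care: $G_1^{(1)}$ depends on $X$, so item (1) cannot be applied as if $A$ were deterministic. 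We handle it in one of two ways. The first is to apply item (1) to the column $\mathbf{x}_{(1)}$, which is independent of $Q^{(1)}$, and strengthen $\Omega_X$ to hold for all leave-one-out resolvents on an $n^{-10}$-net of spectral parameters. This is a union over polynomially many events, each holding with probability $1-\mathrm{O}(n^{-D})$, so the strengthened event still has probability tending to one. The second is to treat the index of $\xi_{(1)}$ as independent of $X$ and condition appropriately. We then decompose $m_1^{(1)}(\vartheta_1^-)$ into $\mathsf{P}_1+\mathsf{P}_2+\mathsf{P}_3+m_{1n}^{(1)}(\vartheta_1^-)$ as in \eqref{eq_bigexpansion}. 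Theorem \ref{thm_averagedlocallaw_unboundedmultiplier_conditionalonX} gives $\mathsf{P}_2=\mathrm{O}(n^{\epsilon}n^{-1/2-2/\alpha})$. The bound on $\mathsf{P}_1$ uses only the spectral gap and $\operatorname{Im}m_1^{(1)}(z_0)$, so it is deterministic. $\mathsf{P}_3$ is purely deterministic. Altogether
\begin{equation*}
M(\vartheta_1^-)=1+(\xi_{(1)}^2+d_1)m_{1n}^{(1)}(\vartheta_1^-)+\mathrm{O}(n^{-1/2-\epsilon_0+\epsilon}).
\end{equation*}

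\textbf{Step 3: size of the main term.} The estimate \eqref{eq: lambda-lambda_(1)}, namely $1+(\xi_{(1)}^2+d_1)m_{1n}^{(1)}(\vartheta_1^-)\asymp-n^{-1/2+2\epsilon}$, involves only $D$ and $\Sigma$, so it carries over verbatim. To make it dominate the error from Step 2, we choose $\varepsilon_1$ and the resulting $\epsilon$ much smaller than $\epsilon_0$ and the shift exponent in \eqref{eq_keylocationdefinition}. This gives $M(\vartheta_1^-)<0$, and the symmetric argument gives $M(\vartheta_1^+)>0$.

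The main obstacle is the measurability issue in Step 2: making the quadratic-form control on $\Omega_X$ legitimately apply to the $X$-dependent resolvent $G_1^{(1)}$. We expect the net-plus-union-bound enlargement of $\Omega_X$ to resolve it.
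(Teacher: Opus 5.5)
Your outline is the paper's proof almost line for line: the same ordering $\lambda_1^{(1)}<\vartheta_2<\vartheta_1^-$, the same master function $M$, the same splitting \eqref{eq_bigexpansion}, and the same appeal to \eqref{eq: lambda-lambda_(1)}. However, it inherits a step from the paper that does not hold. Your own Step 2 bound gives
\[
M(\vartheta_1^-)=1+\xi_{(1)}^2\,m_{1n}^{(1)}(\vartheta_1^-)+\mathrm{O}\big(n^{-1/2+\varepsilon_1}\log n\big).
\]
(The error is $n^{\varepsilon_1}\xi_{(1)}^2n^{-1/2-1/\alpha}$, not $n^{-1/2-\epsilon_0}$; that slip is harmless.) You then replace the main term by $1+(\xi_{(1)}^2+d_1)m_{1n}^{(1)}(\vartheta_1^-)$, exactly as the paper does in \eqref{eq_M1reducedcase}. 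The term this adds, $d_1 m_{1n}^{(1)}(\vartheta_1^-)$, has size $\asymp d_1/\vartheta_1$. That is only logarithmically small: at least of order $\log^{-1-\epsilon_{\mathsf{p}}}n$ under \eqref{ass3.1}, and of order $\log^{-1-\epsilon_{\mathsf{e}}}n$ under \eqref{ass3.2}. It therefore swamps the $n^{-1/2+2\epsilon}$ in \eqref{eq: lambda-lambda_(1)}. Combining correctly gives $M(\vartheta_1^\pm)=d_1|m_{1n}^{(1)}(\vartheta_1^\pm)|(1+\mathrm{o}(1))>0$, so there is no sign change. Since $M$ is increasing on $(\lambda_1^{(1)},\infty)$, this forces $\lambda_1<\vartheta_1^-$; indeed, because $m_{1n}^{(1)}(E)=-\varphi E^{-1}(1+\mathrm{o}(1))$, $\lambda_1$ sits at distance $\asymp d_1$ below $\vartheta_1$. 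So with $\vartheta_1$ defined by \eqref{eq: def of vartheta_1}, the window \eqref{eq_keylocationdefinition} misses $\lambda_1$, and the sign-change argument cannot close.

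The repair is to centre at the root $\vartheta$ of $1+\xi_{(1)}^2m_{1n}^{(1)}(\vartheta)=0$, with no $d_1$. The leave-one-out $m_{1n}^{(1)}$ has no singularity there: on $\Omega_D$ the factors $1+\xi_j^2m_{1n}^{(1)}$, $j\neq(1)$, stay away from zero by the gap $\xi_{(1)}^2-\xi_{(2)}^2$. Also $\vartheta$ lies within $\mathrm{O}(d_1)$ of $\vartheta_1$, hence inside $\mathbf{D}_u$. Then $\xi_{(1)}^2\big(m_{1n}^{(1)}(\vartheta)-m_{1n}^{(1)}(\vartheta\mp n^{-1/2+2\epsilon}d_1)\big)\asymp n^{-1/2+2\epsilon}d_1/\vartheta$. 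This dominates the error once $\varepsilon_1<2\epsilon$, and your Steps 2 and 3 go through. Alternatively, keep $\vartheta_1$ and widen the window to $\mathrm{O}(d_1)$. Either version suffices for Theorems \ref{thm_main_unbounded} and \ref{thm_main_unbounded_conditional}, since $d_1/b_n\to0$ and $\mathsf{g}'(b_n)d_1\to0$.

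You are right about the point you single out as the main obstacle. Item (1) of Definition~\ref{def_OmegaX} cannot be applied to the $X$-dependent matrix $\Sigma^{1/2}G_1^{(1)}\Sigma^{1/2}$. The paper does so without comment, and read simultaneously over all $A$ it is not a high-probability event. But your first fix does not work. $G_1^{(1)}$ depends on $D$, both through which column is removed and through every $\xi_j^2$ with $j\neq(1)$. So a union over a net of spectral parameters does not produce an event measurable with respect to $X$. A net over $D\in\mathbb{R}^n$ would need exponentially many points, against failure probabilities that are only polynomially small.

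What works is your second idea made precise. Let $\mathcal{E}$ be the joint event in $(X,D)$ on which the quadratic-form bound holds; conditioning on $D$ and the remaining columns gives $\mathbb{P}(\mathcal{E}^c)=\mathrm{O}(n^{-D})$. Put $\Omega_X':=\{\mathbb{P}(\mathcal{E}^c\mid X)\leq n^{-D/2}\}$, so Markov's inequality gives $\mathbb{P}((\Omega_X')^c)\leq n^{-D/2}$. On $\Omega_X'$ the location statement then holds with conditional probability $1-\mathrm{o}(1)$, rather than surely on $\Omega_X\cap\Omega_D$. That weaker form is all Theorem~\ref{thm_main_unbounded_conditional} needs, because $\mathbb{P}(\mathcal{E}^c\mid X)$ is absorbed next to $\mathbb{P}(\Omega_D^c\mid X)$ in \eqref{eq_conditional_squeeze_unbounded}.
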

\begin{proof}
The proof of this lemma follows that of Proposition \ref{lem: eigenvalue rigidity}. The main differences mirror those in the modification from Theorem \ref{thm_unboundedcaselocallaw} to Theorem \ref{thm_averagedlocallaw_unboundedmultiplier_conditionalonX}, where the controls in Definition \ref{def_OmegaX} are incorporated. Due to the similarity, we only sketch the main ideas.      
    
     Recall the definition of $\vartheta_1$ and $\vartheta_2$ as
    \begin{align*}
        1+(\xi^2_{(1)}+d_1)m_{1n}(\vartheta_1)=0,\quad 1+(\xi^2_{(2)}+d_1)m_{1n}(\vartheta_2)=0,
    \end{align*}
    where $m_{1n}(z)$ satisfies
    \begin{align*}
        m_{1n}(z)=\frac{1}{n}\sum_{i=1}^p\frac{\sigma_i}{-z+\frac{\sigma_i}{n}\sum_{j=1}^n\frac{\xi^2_{j}}{1+\xi^2_jm_{1n}(z)}}.
    \end{align*}
    Then, based on event $\Omega_D$, we have
    \begin{align*}
        \vartheta_1-\vartheta_2\geq n^{1/\alpha}\log^{-1}n.
    \end{align*}
    On the other hand, for $Q^{(1)}$ and
    \begin{align*}
        M(\lambda)=1+\mathbf{y}_{(1)}^*G_1^{(1)}(\lambda)\mathbf{y}_{(1)},
    \end{align*}
    where we recall that $G_1^{(1)}(\lambda)=(Q^{(1)}-\lambda I)^{-1}$. Under $\Omega_X$ and $\Omega_D$, we have
    \begin{align*}
        \mathbf{y}_{(1)}^*G_1^{(1)}(\vartheta_1^{-})\mathbf{y}_{(1)}=\xi^2_{(1)}m_{1}^{(1)}(\vartheta_1^{-1})+\mathrm{O}(n^{\varepsilon_1}\frac{\xi^2_{(1)}}{n^{1/2+1/\alpha}}),
    \end{align*}
    for small $\varepsilon_1>0$ in Definition \ref{def_OmegaX}. Moreover, it establishes that 
    \begin{align*}
        |m_{1}^{(1)}(\vartheta_1^{-1})-m_{1n}^{(1)}(\vartheta_1^{-1})|\leq n^{\epsilon_1}n^{-1/\alpha-1/2-\epsilon_0},
    \end{align*}
    for small $\epsilon_1>\varepsilon_1$ and {$\epsilon_0>0$ defined from $z_0=\vartheta_1^{-}+\mathrm{i}n^{-1/2-\epsilon_0}$ around \eqref{eq_bigexpansion}.} Therefore, it yields that 
    \begin{align*}
        M(\vartheta_1^{-1})=1+(\xi^2_{(1)}+d_1)m_{1n}^{(1)}(\vartheta_1^-)+\mathrm{O}(n^{\epsilon_3}n^{-1/2-\epsilon_0}),
    \end{align*}
    for small $\epsilon_3>\varepsilon_1$. On the other hand, we have
    \begin{align*}
        1+(\xi^2_{(1)}+d_1)m_{1n}^{(1)}(\vartheta_1^-)=-(\xi^2_{(1)}+d_1)(m_{1n}^{(1)}(\vartheta_1)-m_{1n}^{(1)}(\vartheta_1^-))+\mathrm{O}(n^{-1})\asymp -n^{-1/2+\epsilon},
    \end{align*}
    for $\epsilon>0$ defined in \eqref{eq_keylocationdefinition}. This concludes that $M(\vartheta_1^{-})<0$ when restricted on $\Omega_X$ and $\Omega_D$. Parallel, it can be shown that $M(\vartheta_1^{+})>0$ under $\Omega_X$ and $\Omega_D$. This implies that $M(\lambda)$ changes sign in $[\vartheta_1^{-},\vartheta_1^{+}]$. By the equivalence
    \begin{align*}
        \operatorname{det}(\lambda_1 I-\mathbf{y}_{(1)}\mathbf{y}_{(1)}^*-Q^{(1)})=0\Leftrightarrow M(\lambda_1)=0,
    \end{align*}
 which concludes the proof. 
    \end{proof}

\subsection{The bounded support case}\label{sec_sub_bounded1st} 
In this section, we study the first order convergence of $\lambda_1$ under the assumptions of part (1) of Theorem \ref{thm_main_bounded}. The other parts will be discussed in Section \ref{sec_proof_nonspike}. The main result of this section can be summarized in the following proposition. 

\begin{proposition}\label{prop_boundedsetting} Suppose the assumptions of part (1) of Theorem \ref{thm_main_bounded} hold, then conditional on the probability event $\Omega_D$ as in Lemma \ref{localestimate2}, we have that 
   \begin{gather*}
       \left |\lambda_{1}-\left(\widehat{L}_{+}-\frac{  1-\phi \widehat{\mathsf{s}}_3}{\widehat{\mathsf{s}}_4}\frac{l-\xi^2_{(1)}}{l\xi^2_{(1)}}\right) \right |=\rO_{\mathbb{P}} \left[\frac{1}{n^{1/(d+1)}} \left(\frac{n^{3 \epsilon_{\mathsf{d}}}}{n^{-1/(d+1)+1/2}}+ \frac{\log n}{n^{(\min\{d-1,1\})/(d+1)}} \right) \right].
    \end{gather*}
\end{proposition}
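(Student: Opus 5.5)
Conditioning on $\Omega_D$ and fixing a realization of the multipliers, the plan mirrors the perturbative scheme of Proposition \ref{lem: eigenvalue rigidity}, but on the local scale $\mathbf{D}_b$. We isolate the column $\mathbf{y}_{(1)}$ associated with $\xi^2_{(1)}$, set $Q^{(1)}=Q-\mathbf{y}_{(1)}\mathbf{y}_{(1)}^*$, and characterize $\lambda_1$ as the largest zero of $M(\lambda)=1+\mathbf{y}_{(1)}^*(Q^{(1)}-\lambda)^{-1}\mathbf{y}_{(1)}$ lying above the spectrum of $Q^{(1)}$.

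The first step is to show that $\lambda_1^{(1)}\le \widehat{L}_++\rO(n^{-1/2+\epsilon})$, so that $\lambda_1^{(1)}$ sits strictly below the candidate location. This gap is $\asymp n^{-1/(d+1)}\gg n^{-1/2}$ since $d>1$. We obtain it from the averaged local law of Theorem \ref{thm_boundedcaselocallaw} applied to $Q^{(1)}$, together with the bound \eqref{eq_zopointrate11} on $\operatorname{Im}m_{1n}$. The argument is the same contradiction as in Lemma \ref{lem: upper bound for eigenvalues}: an eigenvalue there would force $\operatorname{Im} m_Q\ge (n\eta_0)^{-1}n^{\epsilon}$. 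The removed column changes $m_{1n}$ by only $\rO(n^{-1})$, by an argument analogous to \eqref{eq: m^s_1_1n-m_1n}, so $\widehat L_+^{(1)}=\widehat L_++\rO(n^{-1+\epsilon})$.

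The second step replaces the real spectral parameter by $z=\lambda+\mathrm{i}\eta_0$ with $\eta_0=n^{-1/2-\epsilon_{\mathsf d}}$, as in \eqref{eq: def of gamma}. By the large deviation bound (Lemma \ref{lem:large deviation}) we have $\mathbf{y}_{(1)}^*G^{(1)}\mathbf{y}_{(1)}=\xi_{(1)}^2 m_1^{(1)}(z)+\rO_\prec(n^{-1}\|G^{(1)}\|_F)$. The local law gives $m_1^{(1)}=m_{1n}^{(1)}+\rO_\prec((n\eta_0)^{-1})$, and the real versus imaginary shift is controlled exactly as the term $\mathsf P_1$ in \eqref{eq_bigexpansion}. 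Hence $M(\lambda)=1+\xi_{(1)}^2\operatorname{Re}m_{1n}(\lambda+\mathrm{i}\eta_0)+\rO_\prec(n^{-1/2+3\epsilon_{\mathsf d}})$ for $\lambda$ in a window of size $n^{-1/(d+1)}\log n$ around $\widehat L_+$. Remark \ref{rmk_boundedatleastonesolution} guarantees a root exists.

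The third step expands using \eqref{eq_expansionlinear}. Since $m_{1n}(\widehat L_+)=-l^{-1}$, we get
\[
1+\xi^2_{(1)}m_{1n}(z)=1-\frac{\xi^2_{(1)}}{l}-\xi_{(1)}^2\frac{\widehat{\mathsf s}_4}{1-\phi\widehat{\mathsf s}_3}(\widehat L_+-z)+\rO\big((\log n)(\kappa+\eta_0)^{\min\{d,2\}}\big).
\]
Setting this to zero and using \eqref{eq_a11coro} for the derivative bound gives $\widehat L_+-\lambda_1=\frac{1-\phi\widehat{\mathsf s}_3}{\widehat{\mathsf s}_4}\frac{l-\xi^2_{(1)}}{l\xi^2_{(1)}}+$ error. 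We then show that $M$ changes sign at the two endpoints obtained by shifting this value by the claimed error. Here $\kappa\asymp l-\xi^2_{(1)}\asymp n^{-1/(d+1)}$ up to logs by \eqref{def4}. The $n^{-1/2+3\epsilon_{\mathsf d}}$ error, divided by the slope, contributes the first error term. The nonlinear remainder $(\log n)\kappa^{\min\{d,2\}}$, relative to $\kappa$, contributes $(\log n)n^{-\min\{d-1,1\}/(d+1)}$ times $n^{-1/(d+1)}$. Uniqueness follows because the separation $\xi^2_{(1)}-\xi^2_{(2)}>n^{-1/(d+1)-\epsilon_{\mathsf d}}$ places the analogous zero for $\xi_{(2)}$ (and $\lambda_1^{(1)}$) below.

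The main obstacle is the second step. Near $\widehat L_+$ the density $\rho$ has the $\kappa^d$ decay, and $|1+\xi_j^2 m_{1n}|$ can be as small as $n^{-1/(d+1)}$ for the top few $j$. The local law therefore only holds on $\mathbf{D}_b'$, and one must verify that $\lambda+\mathrm{i}\eta_0\in\mathbf{D}_b'$ along the whole window, including for $j=2$. I would handle this by combining the gap condition in \eqref{def4} with the linear expansion, which shows that $1+\xi_{(2)}^2m_{1n}$ stays of order $n^{-1/(d+1)-\epsilon_{\mathsf d}}$ there. The Frobenius norm $\|G^{(1)}\|_F$ would be bounded via Ward's identity (Lemma \ref{lem:Wald}) and Lemma \ref{lem: est for Im m_1}.
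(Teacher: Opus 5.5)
Your Step 1 aims at the wrong target. The candidate $\widehat L_+-\frac{1-\phi\widehat{\mathsf s}_3}{\widehat{\mathsf s}_4}\frac{l-\xi^2_{(1)}}{l\xi^2_{(1)}}$ (equivalently $E_0$ in \eqref{eq: def of E0}) lies \emph{below} $\widehat L_+$ by order $n^{-1/(d+1)}$, because $\phi\widehat{\mathsf s}_3<1$, $\widehat{\mathsf s}_4>0$ and $\xi^2_{(1)}<l$. This is why the Weibull law \eqref{eq_distributionresultweibull} lives on $x\le 0$, and you use it yourself when you take $\kappa\asymp l-\xi^2_{(1)}>0$ in Step 3. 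So a bound $\lambda_1^{(1)}\le\widehat L_++\rO(n^{-1/2+\epsilon})$, even if proved, does not separate $\lambda_1^{(1)}$ from the candidate.

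In fact $\lambda_1^{(1)}$ is the eigenvalue generated by $\xi^2_{(2)}$. It sits below $E_0$ by a distance of order $\xi^2_{(1)}-\xi^2_{(2)}$, which on $\Omega_D$ can be as small as $n^{-1/(d+1)-\epsilon_{\mathsf d}}$. What your scheme actually needs is that $Q^{(1)}$ has no eigenvalue in $[E_0-n^{-1/2+3\epsilon_{\mathsf d}},\widehat L_++\mathtt c]$. Once you have that, $M$ is increasing on $(\lambda_1^{(1)},\infty)$ with exactly one zero, namely $\lambda_1$. Your uniqueness sentence, which as written only asserts what must be proved, then comes for free.

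For the same reason, your claim in the obstacle paragraph that $\lambda+\mathrm i\eta_0\in\mathbf D_b'$ along a window of size $n^{-1/(d+1)}\log n$ is false. That window contains the energies where $1+\xi^2_{(2)}m_{1n}$ vanishes, i.e.\ the poles of $M$. The expansion can only be used within $\ro(n^{-1/(d+1)-\epsilon_{\mathsf d}})$ of $E_0$ and above it.

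Second, the exclusion tool you propose does not work at this scale. A single eigenvalue within $\eta_0$ of $E$ contributes only $\asymp(n\eta_0)^{-1}$ to $\operatorname{Im}m_Q(E+\mathrm i\eta_0)$, not $(n\eta_0)^{-1}n^{\epsilon}$. That is exactly the error in Theorem \ref{thm_boundedcaselocallaw}, and the size of the bounds in \eqref{eq_zopointrate11} and Lemma \ref{lem: est for Im m_1}. Taking $\eta>\eta_0$ only makes things worse, since the local-law error on $\mathbf D_b'$ stays $(n\eta_0)^{-1}$. So the contradiction argument of Lemma \ref{lem: upper bound for eigenvalues} does not transfer.

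The missing ingredient is the sharp bound $\operatorname{Im}m_1^{(1)}(E+\mathrm i\eta_0)\asymp\eta_0$, with no $n^{\epsilon}$ loss, uniformly on $\mathbf D_b'$. This is Lemma \ref{lem_keycomponentend}(2). The paper gets it from the self-consistent equation for $\operatorname{Im}m_1$, whose coefficient is a contraction by \eqref{eq_defnmathsfW}. It combines this with the fluctuation-averaging estimates of Lemma \ref{lem_fa}, which make the $\operatorname{Im}Z_j$ terms $\ro(\eta_0)$.

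With that input your perturbative route can be completed, and it would be a legitimate alternative to the paper's. The paper never evaluates $M$ on the real axis. It shows $\operatorname{Im}m_Q(E+\mathrm i\eta_0)\asymp\eta_0$ for $|E-E_0|\ge n^{-1/2+3\epsilon_{\mathsf d}}$. It then finds a nearby point where $\operatorname{Im}m_Q\gg\eta_0$ via \eqref{eq_keyrepresentationG11}, whose right-hand side $-z(1+\xi_1^2m_1^{(1)}+Z_1)$ is $-zM(z)$. It traps $\lambda_1$ by the spectral decomposition and monotonicity, and finally converts $E_0$ into the explicit formula as in your Step 3.
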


{
In the sequel, to facilitate the argument, we introduce an auxiliary quantity $E_0$, defined by
\begin{align}\label{eq: def of E0}
    \operatorname{Re}m_{1n}(E_0+\mathrm{i}\eta_0)=-(\xi^2_{(1)})^{-1},\quad \eta_0=n^{-1/2-\epsilon_{\mathsf{d}}}.
\end{align}
}
The proof of Proposition \ref{prop_boundedsetting} relies crucially on the following lemma whose justification will be provided in the end of this section.

\begin{lemma}\label{lem_connection} Suppose the assumptions of Proposition \ref{prop_boundedsetting} hold. Conditional on the probability event $\Omega_D$ as in Lemma \ref{localestimate2},  we have that
\begin{equation}\label{eq_holdspartoneoneone}
\lambda_1=E_0+\rO_{\mathbb{P}} \left( n^{-1/2+3\epsilon_{\mathsf{d}}} \right), 
\end{equation}
and
\begin{gather}\label{eq_secondconnect}
    \operatorname{Re}m_{1n}(\lambda_{1}+\ri\eta_0)=-\frac{1}{\xi^2_{(1)}}+\rO_{\mathbb{P}}(n^{-1/2+3\epsilon_{\mathsf{d}}}).
\end{gather}

\end{lemma}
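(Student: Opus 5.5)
We follow the Kwak–Lee–Park (and Lee–Schnelli extremal) strategy: isolate the column $\mathbf{y}_{(1)}$ carrying the largest multiplier $\xi^2_{(1)}$, and characterize $\lambda_1$ as the zero of $M(\lambda)=1+\mathbf{y}_{(1)}^*G_1^{(1)}(\lambda)\mathbf{y}_{(1)}$ from \eqref{eq_defnmlambda}. Throughout we fix a realization $\{\xi_i^2\}\in\Omega_D$ as in Definition~\ref{defn_probset}(c).

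\textbf{Step 1 (locating the bulk of $Q^{(1)}$).} First we show that, with high probability, $\lambda_1^{(1)}\le \widehat L_++n^{-1/2+\epsilon}$. Removing $\xi^2_{(1)}$ shifts $m_{1n}$ only by $\rO(n^{-1})$, and on $\Omega_D$ we have $l-\xi^2_{(2)}\ge l-\xi^2_{(1)}+n^{-1/(d+1)-\epsilon_{\mathsf d}}$. Consequently the remaining multipliers have $|1+\xi_j^2 m_{1n}(z)|$ bounded below on $\mathbf{D}_b'$. Combining the averaged local law of Theorem~\ref{thm_boundedcaselocallaw} (applied to $Q^{(1)}$) with the estimate \eqref{eq_oneregimeedgecontrol}, the standard argument of Lemma~\ref{lem: upper bound for eigenvalues} rules out eigenvalues of $Q^{(1)}$ above $\widehat L_+-\kappa$ for a suitable $\kappa\gg n^{-1/2+3\epsilon_{\mathsf d}}$. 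Here we use $\rho(\widehat L_+-\kappa)\asymp\kappa^d$ with $d>1$, so only $\rO(n\kappa^{d+1})$ eigenvalues could sit that close to the edge.

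\textbf{Step 2 (equation for $\lambda_1$).} Next we show that $\lambda_1$ lies near $E_0$ from \eqref{eq: def of E0}, which satisfies $\widehat L_+-E_0\asymp l-\xi^2_{(1)}\asymp n^{-1/(d+1)}$ by \eqref{eq_expansionlinear}. For $\lambda$ in a window of size $n^{-1/2+3\epsilon_{\mathsf d}}$ around $E_0$, we set $z=\lambda+\ri\eta_0$. The large deviation bound (Lemma~\ref{lem:large deviation}) together with Remark~\ref{remk_zibound} gives
\[
\mathbf{y}_{(1)}^*G_1^{(1)}(z)\mathbf{y}_{(1)}=\xi^2_{(1)}m_1^{(1)}(z)+\rO_\prec\bigl((n\eta_0)^{-1}\bigr),
\]
and Theorem~\ref{thm_boundedcaselocallaw} gives $m_1^{(1)}(z)=m_{1n}(z)+\rO_\prec((n\eta_0)^{-1})$. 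We then compare $M(\lambda)$ with $M(\lambda+\ri\eta_0)$. The difference is controlled by $\eta_0\,\operatorname{Im}$-type sums, which are small because of the spectral gap from Step~1 (as for $\mathsf{P}_1$ in the proof of Proposition~\ref{lem: eigenvalue rigidity}). This yields
\[
M(\lambda)=1+\xi^2_{(1)}\operatorname{Re}m_{1n}(\lambda+\ri\eta_0)+\rO_\prec\bigl(n^{-1/2+2\epsilon_{\mathsf d}}\bigr).
\]

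\textbf{Step 3 (sign change and inversion).} By \eqref{eq_a11coro}, $\operatorname{Re}m_{1n}(\cdot+\ri\eta_0)$ is strictly increasing with derivative $\widehat{\mathsf s}_4/(1-\phi\widehat{\mathsf s}_3)\asymp1$. Hence $M$ changes sign across $E_0\pm n^{-1/2+3\epsilon_{\mathsf d}}$, and by continuity $Q$ has an eigenvalue there. It must be $\lambda_1$, since $\lambda_2\le\lambda_1^{(1)}$ lies below by Step~1 (Weyl's inequality) and $\lambda_1>\lambda_1^{(1)}$ forces $M(\lambda_1)=0$. This gives \eqref{eq_holdspartoneoneone}. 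Then \eqref{eq_secondconnect} follows by plugging $M(\lambda_1)=0$ into the display of Step~2, or alternatively by the Lipschitz bound \eqref{eq_a11coro}, which converts $|\lambda_1-E_0|$ into the same error.

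The main obstacle is Step~1: proving that $Q^{(1)}$ has no outlier in $[E_0-n^{-1/2+3\epsilon_{\mathsf d}},\infty)$. The density vanishes only like $\kappa^d$ rather than like a square root, and the local laws hold only on $\mathbf{D}_b'$ with $\eta\ge\eta_0$. We would handle this by the $\operatorname{Im}m_Q$ contradiction argument of Lemma~\ref{lem: est for Im m_1}, applied at $\eta_0$ to $Q^{(1)}$, where $\operatorname{Im}m_{1n}^{(1)}\asymp\eta_0$ away from $z_0$.
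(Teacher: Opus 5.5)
The gap is in Step 1, and your whole argument rests on it: you need $\lambda_1^{(1)}<E_0-n^{-1/2+3\epsilon_{\mathsf d}}$ both to make $M$ well defined and increasing on the window, and to identify its zero with $\lambda_1$. The tools you propose cannot deliver this bound.

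An eigenvalue of $Q^{(1)}$ at $E$ forces $\operatorname{Im} m_Q^{(1)}(E+\ri\eta_0)\gtrsim(n\eta_0)^{-1}=n^{-1/2+\epsilon_{\mathsf d}}$. By contrast, Theorem \ref{thm_boundedcaselocallaw} (transferred to the minor) controls $m_Q^{(1)}$ only up to $\rO_\prec((n\eta_0)^{-1})$, and the contradiction argument of Lemma \ref{lem: est for Im m_1} yields only $\operatorname{Im} m_1^{(1)}\prec(n\eta_0)^{-1}$. Neither is better than the lower bound, so no contradiction arises. This is unlike Lemma \ref{lem: upper bound for eigenvalues}, where the error $n^{-1/2-2/\alpha}$ is far below $(n\eta)^{-1}=n^{-1/3}$.

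Three further problems with Step 1:
\begin{itemize}
\item The density count $\rO(n\kappa^{d+1})$ can never certify that a window is empty.
\item The scale you state is off. Since $\widehat L_+-E_0\asymp l-\xi^2_{(1)}$, the exclusion must reach below $E_0$, so the separation has to come from $\xi^2_{(1)}-\xi^2_{(2)}>n^{-1/(d+1)-\epsilon_{\mathsf d}}$, not from $\kappa\gg n^{-1/2+3\epsilon_{\mathsf d}}$. Likewise $\lambda_1^{(1)}\le\widehat L_++n^{-1/2+\epsilon}$ is not the needed statement.
\item \eqref{eq_oneregimeedgecontrol} concerns $m_{1n}$ and excludes $|z-z_0|<Cn^{-1/2+3\epsilon_{\mathsf d}}$, which is precisely the window you must clear. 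Moreover, near $\tilde z_0$ removing $\xi^2_{(1)}$ changes $m_{1n}$ by order $n^{-1/2}$ (cf.\ \eqref{eq_zopointrate}), not $\rO(n^{-1})$. You have to work with the minor's own equivalent $m_{1n}^{(1)}$.
\end{itemize}

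The missing ingredient is the sharper bound $\operatorname{Im} m_Q^{(1)}(E+\ri\eta_0)\asymp\eta_0=n^{-2\epsilon_{\mathsf d}}(n\eta_0)^{-1}$, uniformly for $E+\ri\eta_0\in\mathbf D_b'$ and including $E$ near $E_0$. This is part (2) of Lemma \ref{lem_keycomponentend}, and it requires fluctuation averaging. Each $Z_j$ is only $\prec(n\eta_0)^{-1}\gg\eta_0$; it is the averaged bounds of Lemma \ref{lem_fa} (e.g.\ part (3), of order $n^{-1/2-\frac12(\frac12-\frac1{d+1})+2\epsilon_{\mathsf d}}\ll\eta_0$) that let the self-consistent equation for $\operatorname{Im} m_1^{(1)}$ close at order $\eta_0$.

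With this input, your route works and is a genuine alternative. Step 2 becomes rigorous via $|M(\lambda)-M(\lambda+\ri\eta_0)|\le\sqrt2\,\operatorname{Im}\,\mathbf y_{(1)}^*G_1^{(1)}(\lambda+\ri\eta_0)\mathbf y_{(1)}$, which holds once every eigenvalue of $Q^{(1)}$ is at distance at least $\eta_0$ from $\lambda$.

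The paper instead stays at $\eta=\eta_0$. It combines parts (1)--(3) of Lemma \ref{lem_keycomponentend} for $Q$ itself with $\operatorname{Im} m_Q(\lambda_1+\ri\eta_0)\ge(n\eta_0)^{-1}$ and the monotonicity of $E\mapsto\operatorname{Im} m_Q(E+\ri\eta_0)$ to the right of the spectrum. Part (3) there is essentially your sign change, carried out at complex $z$ on $1/\mathcal G_{11}(z)=-z\bigl(1+\mathbf y_{(1)}^*G^{(1)}(z)\mathbf y_{(1)}\bigr)$. This avoids evaluating any resolvent on the real axis.
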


\quad Armed with Lemma \ref{lem_connection}, we proceed to the proof of Proposition \ref{prop_boundedsetting}.

\begin{proof}[\bf Proof of Proposition \ref{prop_boundedsetting}] Recall (\ref{eq_conditionaledgedefinition}), we have that conditional on $\Omega_D,$ $m_{1n}(\widehat{L}_+)=-l^{-1}.$  Together with (\ref{eq_expansionlinear}), we conclude that
\begin{equation*}
\operatorname{Re}m_{1n}(\widehat{L}_++\ri n^{-1/2-\epsilon_{\mathsf{d}}})=-l^{-1}+\rO(n^{-1/2-\epsilon_{\mathsf{d}}}).
\end{equation*}
Moreover, according to the definition in (\ref{eq: def of E0}), we have that
\begin{equation*}
\operatorname{Re} m_{1n}(E_0+\ri n^{-1/2-\epsilon_{\mathsf{d}}})=-(\xi_{(1)}^2)^{-1}. 
\end{equation*}
By (\ref{def4}), we obtain that conditional on $\Omega_D$
\begin{equation*}
\operatorname{Re}m_{1n}(\widehat{L}_++\ri n^{-1/2-\epsilon_{\mathsf{d}}})=\operatorname{Re} m_{1n}(E_0+\ri n^{-1/2-\epsilon_{\mathsf{d}}})+\rO \left( \frac{\log n}{n^{d+1}} \right),
\end{equation*}
which implies that $\widehat{L}_+=E_0+\rO(\log n/n^{d+1})$ according to the stability of the system equation \eqref{eq:F(m,z)}. Let $\Xi$ be the probability event that (\ref{eq_holdspartoneoneone}) holds. We therefore conclude from (\ref{eq_closenessequation}) that  when restricted on $\Xi$ and $n$ is sufficiently large, $\lambda_1+\ri \eta_0 \in \mathbf{D}_b.$ Consequently, 
%Observe that $\lambda_1 \leq l \sigma_1 \lambda_1(XX^*).$ Since $\lambda_1 (XX^*) \rightarrow (1+\sqrt{\phi})^2$ almost surely \cite{bai2009spectral}, we have that $\lambda_1$ is bounded almost surely.  
by part I of Lemma \ref{localestimate2}, we find the following holds on $\Xi$
 \begin{gather}\label{eq_keykeyequationequation}
     m_{1n}( \widehat{L}_{+})-m_{1n}(\lambda_{1}+\ri\eta_0)=\frac{\widehat{\mathsf{s}}_4}{(1-\phi\widehat{\mathsf{s}}_3)}\left(\widehat{L}_{+}-\lambda_1-\ri \eta_0\right)+\rO\left( (\log n) (n^{-1/(d+1)})^{\min\{d,2\}}\right).
 \end{gather}
Again by  $m_{1n}(\widehat{L}_+)=-l^{-1},$ Together with the second part of Lemma \ref{lem_connection},   considering the real parts of both sides of (\ref{eq_keykeyequationequation}), we obtain that on $\Xi$
\begin{equation*}
-l^{-1}+\xi_{(1)}^{-2}+\rO_{\mathbb{P}}(n^{-1/2+3\epsilon_{\mathsf{d}}})=\frac{\widehat{\mathsf{s}}_4}{(1-\phi\widehat{\mathsf{s}}_3)}\left(\widehat{L}_{+}-\lambda_1\right)+\rO\left( (\log n) (n^{-1/(d+1)})^{\min\{d,2\}}\right).
\end{equation*}
This completes our proof. 

\end{proof}

\quad The rest of this section is left to the proof of Lemma \ref{lem_connection}.  We first prove the following lemma, which will be used in the proof of Lemma \ref{lem_connection}. Essentially, it locates the points in $\mathbf{D}_b^\prime$ for which $\operatorname{Im} m_Q(z) \gg  \eta_0$ is near the edge. It is a counterpart of \cite[Lemmas 5.12, 5.13 and 5.15]{lee2016extremal} and \cite[Lemmas 5.13, 5.14 and 5.16]{Kwak2021}. Due to similarity, we only sketch the key points of the proof. Let {$\tilde{z}_0=E_0+\mathrm{i}\eta_0$ defined in (\ref{eq: def of E0}).}

\begin{lemma}\label{lem_keycomponentend}  Suppose the assumptions of Lemma \ref{lem_connection}, we have that the following statements hold with high probability 
\begin{enumerate}
\item[(1).] For any $z=E+\ri \eta_0 \in \mathbf{D}_b^{\prime}$ satisfying $|z-\tilde{z}_0| \geq n^{-1/2+3\epsilon_{\mathsf{d}}},$ 
\begin{equation}\label{eq_outsideetazero}
\operatorname{Im} m_1(z) \asymp \eta_0, \ \operatorname{Im} m_Q(z) \asymp \eta_0.
\end{equation}
\item[(2).] For $m_1^{(1)}(z)$ and $m_Q^{(1)}(z)$ defined around (\ref{eq_defnminorG}), we have that for all $z=E+\ri \eta_0 \in \mathbf{D}_b^{\prime},$
\begin{equation}\label{eq_outsideetazerominor}
\operatorname{Im} m_1^{(1)}(z) \asymp \eta_0, \ \operatorname{Im} m_Q^{(1)}(z) \asymp \eta_0.
\end{equation}
\item[(3).] There exists some $E_0' \in \mathbb{R}$ such that for $z_0'=E_0'+\ri \eta_0,$ the following statements hold simultaneously 
\begin{equation}\label{eq_insideeta}
|z_0'-\tilde{z}_0| \leq n^{-1/2+3 \epsilon_{\mathsf{d}}}, \ \text{and} \ \operatorname{Im} m(z_0') \gg \eta_0. 
\end{equation}
\end{enumerate}

\end{lemma}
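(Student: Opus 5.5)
Part (1) is obtained by combining the averaged local law with the deterministic controls on $\operatorname{Im} m_{1n}$ and $\operatorname{Im} m_n$ in Lemma \ref{localestimate2}. For $z=E+\ri\eta_0\in\mathbf{D}_b'$ with $|z-\tilde z_0|\geq n^{-1/2+3\epsilon_{\mathsf{d}}}$, the point $z$ is also at distance of order $n^{-1/2+3\epsilon_{\mathsf{d}}}$ from $z_0$ in (\ref{eq_zopointrate}), since $\tilde z_0$ and $z_0$ coincide up to smaller order. Hence (\ref{eq_oneregimeedgecontrol}) gives $\operatorname{Im} m_{1n}(z)\asymp\eta_0$ and $\operatorname{Im} m_n(z)\asymp\eta_0$. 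Theorem \ref{thm_boundedcaselocallaw} only gives an error $(n\eta_0)^{-1}=n^{-1/2+\epsilon_{\mathsf{d}}}$, which is far bigger than $\eta_0$, so we cannot transfer the bound naively. Instead I would rerun the contradiction argument of Lemma \ref{lem: est for Im m_1} in a refined form. Suppose $\operatorname{Im} m_1(z)\geq n^{\epsilon}\eta_0$, or is much smaller than $\eta_0$. Write the self-consistent equation (\ref{eq_decompositiondeomdeomdeodemdem}) for the imaginary parts and separate the $j$ associated with $\xi_{(1)}^2$. The definition of $\mathbf{D}_b'$ keeps $|1+\xi_j^2 m_{1n}|$ bounded below for all $j\ge2$, so only the first index can produce a large imaginary part. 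That contribution is $n^{-1}\operatorname{Im}\big(\xi_{(1)}^2/(1+\xi_{(1)}^2 m_1)\big)$. It is comparable to $\eta_0$ precisely when $|\operatorname{Re}(1+\xi_{(1)}^2 m_1)|$ is much larger than $n^{-1/2}$, i.e. away from $\tilde z_0$. Here I use the linear expansion (\ref{eq_a11coro}) to convert $|z-\tilde z_0|\geq n^{-1/2+3\epsilon_{\mathsf{d}}}$ into $|1+\xi_{(1)}^2 m_{1n}(z)|\gtrsim n^{-1/2+3\epsilon_{\mathsf{d}}}$. The remaining indices contribute $\asymp\eta_0$ through the stable factor $\mathsf{C}_1\le\mathfrak c<1$. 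The fluctuation terms $Z_i$ are then controlled by Remark \ref{remk_zibound}, which shows they are negligible relative to this real-part separation. This closes the bootstrap and yields (\ref{eq_outsideetazero}) for $m_1$; the bound for $m_Q$ then follows from (\ref{eq_mqdecomposition}).

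Part (2) follows from the same argument applied to $Q^{(1)}$, and there is no exceptional region. Removing the column of $\xi_{(1)}^2$, the largest remaining multiplier is $\xi_{(2)}^2$. By (\ref{def4}) it satisfies $l-\xi_{(2)}^2\geq n^{-1/(d+1)-\epsilon_{\mathsf{d}}}$ beyond the gap to $\xi^2_{(1)}$, so $1+\xi_j^2 m^{(1)}_{1n}(z)$ stays bounded below for every remaining $j$ uniformly on $\mathbf{D}_b'$. Hence the self-consistent equation for $m_1^{(1)}$ has a uniformly stable imaginary part of order $\eta_0$ throughout. I would state the minor versions of Lemma \ref{localestimate2} and Theorem \ref{thm_boundedcaselocallaw} via Remark \ref{rem_keyrem}-type remarks and then run the proof of part (1) verbatim.

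For part (3), I would argue by contradiction. Suppose $\operatorname{Im} m_Q(z)\lesssim n^{\epsilon}\eta_0$ for all $z=E+\ri\eta_0$ with $|z-\tilde z_0|\le n^{-1/2+3\epsilon_{\mathsf{d}}}$. Combined with part (1), this gives $\operatorname{Im} m_Q\lesssim n^\epsilon\eta_0$ on the whole horizontal line. Now compare with the rank-one structure $Q=Q^{(1)}+\mathbf{y}_{(1)}\mathbf{y}_{(1)}^*$. By part (2) and interlacing there is no eigenvalue of $Q^{(1)}$ near $\tilde z_0$. Yet $M(\lambda)=1+\mathbf{y}_{(1)}^*G_1^{(1)}(\lambda)\mathbf{y}_{(1)}$ changes sign across $[E_0-n^{-1/2+3\epsilon_{\mathsf{d}}},E_0+n^{-1/2+3\epsilon_{\mathsf{d}}}]$. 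This uses the large deviation bound to get $M\approx 1+\xi^2_{(1)}m_1^{(1)}$, the local law to pass to $m_{1n}^{(1)}$, and the linear expansion (\ref{eq_a11coro}) with slope $\widehat{\mathsf{s}}_4/(1-\phi\widehat{\mathsf{s}}_3)\asymp1$. The sign change forces an eigenvalue $\lambda$ of $Q$ in that window. Taking $E_0'=\lambda$ gives $\operatorname{Im} m_Q(z_0')\geq (p\eta_0)^{-1}\gg\eta_0$, which yields (\ref{eq_insideeta}). I expect the main obstacle to be part (1), namely showing that the $O_\prec$ errors in the self-consistent equation are genuinely smaller than $\eta_0$ rather than $(n\eta_0)^{-1}$. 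If the direct bootstrap fails, I would first prove the bound at larger $\eta$ and then descend using monotonicity of $\eta\mapsto\eta^{-1}\operatorname{Im}m$ off the spectrum, following \cite[Lemma 5.13]{lee2016extremal}.
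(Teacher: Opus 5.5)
Your argument for part (1) breaks at the step where the fluctuation terms are ``controlled by Remark~\ref{remk_zibound}''. That remark only gives $|Z_j|\prec(n\eta_0)^{-1}=n^{-1/2+\epsilon_{\mathsf{d}}}$. This is enough to make $Z_1$ harmless inside the denominator $|1+\xi_1^2m_{1n}+Z_1|\gtrsim n^{-1/2+3\epsilon_{\mathsf{d}}}$, and the paper uses it for exactly that. However, the $Z_j$ with $j\geq2$ also enter the imaginary part of the self-consistent equation as additive sources, namely through
\[
\Big(\frac{1}{n}\sum_{i=1}^p\frac{\sigma_i^2}{|z-\frac{\sigma_i}{n}\sum_{k}\frac{\xi_k^2}{1+\xi_k^2m_{1n}+Z_k}|^2}\Big)\cdot\frac{1}{n}\sum_{j=2}^n\frac{\xi_j^2\operatorname{Im}Z_j}{|1+\xi_j^2m_{1n}+Z_j|^2},
\]
and through the analogous averages coming from $\operatorname{tr}(R_1\Sigma)$ (the term $\mathsf{R}_3$ in \eqref{eq_decomposition}). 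For most $j$ the weights are of order one. So bounding each $Z_j$ separately only gives $n^{-1/2+\epsilon_{\mathsf{d}}}=n^{2\epsilon_{\mathsf{d}}}\eta_0$, which swamps the quantity $\eta_0$ you are trying to identify. Rerunning Lemma~\ref{lem: est for Im m_1} does not help, since its errors are of size $n^{c_2\epsilon}\Psi\geq(n\eta_0)^{-1}$.

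The missing idea is fluctuation averaging, Lemma~\ref{lem_fa}. Weighted averages such as $\frac{1}{n}\sum_{i\geq2}Z_i$ and $\frac{1}{n}\sum_{i\geq2}(\xi_i^2+\xi_i^4)Z_i/(1+\xi_i^2m_{1n})^2$ are $\prec n^{-1/2-\frac{1}{2}(\frac{1}{2}-\frac{1}{d+1})+2\epsilon_{\mathsf{d}}}$. This is $o(\eta_0)$ because $\epsilon_{\mathsf{d}}$ is small compared with $\frac{1}{2}-\frac{1}{d+1}$. Its proof is a high-moment expansion in the resolvent fractions $F_{ij}=\mathcal{G}_{ij}/\mathcal{G}_{jj}$, as in \cite[Section 6]{lee2016extremal}; the fractions are needed because off-diagonal resolvent entries are not small at $\eta\asymp n^{-1/2}$ in this regime. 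With it the paper gets $\mathsf{R}_1\asymp\eta_0$, $\mathsf{R}_2=\mathsf{c}'\operatorname{Im}m_1+o(\eta_0)$ with $\mathsf{c}'<1$, and $\mathsf{R}_3=o(\eta_0)$. Part (1) then follows directly, with no contradiction argument.

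Your fallback does not close this gap either. Since $\eta\mapsto\eta^{-1}\operatorname{Im}m(E+\ri\eta)$ is decreasing, information at larger $\eta$ transfers to $\eta_0$ only as a lower bound. Monotonicity of $\eta\operatorname{Im}m(E+\ri\eta)$ gives only $\operatorname{Im}m(E+\ri\eta_0)\leq(\eta/\eta_0)\operatorname{Im}m(E+\ri\eta)$, which is far above $\eta_0$. The underlying reason is that $\operatorname{Im}m_Q(E+\ri\eta_0)\lesssim\eta_0$ is an eigenvalue-exclusion statement on scale $n^{-1/2}$. A single eigenvalue at $E$ contributes only $(p\eta)^{-1}$ to $\operatorname{Im}m_Q(E+\ri\eta)$, and for $\eta\gg n^{-1/2}$ this lies below the local-law error $n^{-1/2+\epsilon_{\mathsf{d}}}$, so larger scales cannot detect it.

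Because you run part (2) verbatim, it inherits the same gap. Your part (3) rests on part (2) in two places: the eigenvalue-free window of $Q^{(1)}$ that makes $M$ continuous, and the control of $\|G_1^{(1)}(\lambda)\Sigma\|_F$ and $m_1^{(1)}(\lambda)-m_1^{(1)}(\lambda+\ri\eta_0)$ on the real axis.

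Granting part (2), your secular-equation route for (3) is a valid alternative to the paper's. The paper picks $z_1$ in the window with $\operatorname{Re}\mathcal{G}_{11}(z_1)=0$. It deduces $|\operatorname{Im}(z_1\xi_1^2\mathcal{G}_{11}(z_1))|\geq n^{1/2-\epsilon_{\mathsf{d}}/2}$ from part (2) and $|Z_1|\leq n^{-1/2+\epsilon'}$. It then concludes from $\operatorname{Im}m_1(z_1)\asymp\eta_0+n^{-1}\operatorname{Im}(\xi_1^2z_1\mathcal{G}_{11}(z_1))$. Your version produces an actual eigenvalue of $Q$ in the window, which is essentially the content of Lemma~\ref{lem_connection}.
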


\begin{proof}
Due to similarity, we focus our discussion on $m_1(z)$ and will explain the minor differences for $m_Q(z)$ from line to line. Recall (\ref{eq: decomp m_2}). Our proof relies on the following fluctuation average which provides a stronger control on $n^{-1} \sum_{i=1}^p Z_i$ than the one in Remark \ref{remk_zibound}. They are counterparts of Lemmas 5.8 and 5.9 and Corollary 5.10 of \cite{lee2016extremal}. We deter its proof to the Appendix \ref{sec_FAlemma}. 
\begin{lemma}\label{lem_fa} Suppose the assumptions of Lemma \ref{lem_keycomponentend} hold, we have that the followings hold on $\Omega_D$ 
\begin{enumerate}
\item[(1).] For all $i \neq 1$ and all $z=E+\ri \eta_0 \in \mathbf{D}_b^\prime,$ we have
\begin{equation}\label{eq_c5first}
|m_2-m_2^{(1)}| \prec \frac{1}{n \eta_0}, \ \ |m_2-m_2^{(i)}|+|m_2^{(i)}-m_2^{(i1)}| \prec n^{-1+1/(d+1)+4 \epsilon_{\mathsf{d}}}.  
\end{equation}
\item[(2).] For all $z \in \mathbf{D}_b^\prime,$
\begin{equation*}
\left| \frac{1}{n} \sum_{i=2}^n Z_i \right|+\left| \frac{1}{n} \sum_{i=2}^n Z_i^{(1)} \right| \prec n^{-1/2-\frac{1}{2}(\frac{1}{2}-\frac{1}{d+1})+2 \epsilon_{\mathsf{d}}}.
\end{equation*}
\item[(3).] For all $z \in \mathbf{D}_b^\prime,$
\begin{equation*}
\left| \frac{1}{n} \sum_{i=2}^n \frac{(\xi_i^2+\xi_i^4) Z_i}{(1+\xi_i^2 m_{1n}(z))^2 }  \right| \prec n^{-1/2-\frac{1}{2}(\frac{1}{2}-\frac{1}{d+1})+2 \epsilon_{\mathsf{d}}}. 
\end{equation*}
\end{enumerate}
\end{lemma}

\quad Armed with Lemma \ref{lem_fa}, we proceed to finish our proof.  The proof of part (1) is similar to that of (\ref{eq_oneregimeedgecontrol}) by using the local law Theorem \ref{thm_boundedcaselocallaw}. We only provide the key arguments. Using (\ref{eq_m1decompositionfinafinalfinal}), 
(\ref{eq_zibound}), and (\ref{eq_m2uboundeddecomposition}), we see that 
\begin{gather*}
     m_1=-\frac{1}{n}\sum_{i=1}^p \frac{\sigma_i}{z-\frac{\sigma_i}{n}\sum_{j=1}^n\frac{\xi^2_j}{1+\xi_j^2m_1+Z_j}}+\frac{1}{n} \operatorname{tr}(R_1 \Sigma)+\frac{1}{n} \operatorname{tr}(R_2 \Sigma).
\end{gather*}
According to Theorem \ref{thm_boundedcaselocallaw}, by a discussion similar to (\ref{eq: est of m_1 and m_2}) using Remark \ref{remk_zibound}, we have 
\begin{gather}\label{eq_decomposition}
\begin{split}
    \operatorname{Im}m_1(z)
    &=\frac{1}{n}\sum_{i=1}^p \frac{\sigma_i\eta_0}{|z-\frac{\sigma_i}{n}\sum_{j=1}^n \frac{\xi^2_j}{1+\xi^2_jm_{1n}+	Z_j}|^2}+\frac{1}{n}\sum_{i=1}^p \frac{\frac{\sigma_i^2}{n}\sum_{j=1}^n\frac{\xi^4_j\operatorname{Im}m_{1}+\xi^2_j \operatorname{Im} Z_j}{|1+\xi^2_jm_{1n}+Z_j|^2}}{|z-\frac{\sigma_i}{n}\sum_j\frac{\xi^2_j}{1+\xi^2_jm_{1n}+Z_j}|^2}\\
    &+\rO_{\prec}\left(\frac{1}{n}\sum_{j=1}^n \frac{Z_j}{|1+\xi^2_j m_{1n}+Z_j|^2}+\frac{1}{(n \eta_0)^2}\right) \\
    & :=\mathsf{R}_1+\mathsf{R}_2+\mathsf{R}_3.
\end{split}
\end{gather}
Together with Assumption \ref{assum_additional_techinical}, (\ref{def4}) and Remark \ref{remk_zibound}, we find that for some small constant $c'>0,$ when $n$ is sufficiently large, 
\begin{equation}\label{eq_lowerbound}
\left| z-\frac{\sigma_i}{n} \sum_{j=1}^n \frac{\xi_j^2}{1+\xi_j^2 m_{1n}(z)+Z_j} \right| \geq c'. 
\end{equation}
This implies that 
\begin{equation}\label{eq_R1bound}
\mathsf{R}_1 \asymp \eta_0. 
\end{equation}
For $\mathsf{R}_2,$ on the one hand, by Theorem \ref{thm_boundedcaselocallaw} and (\ref{eq_defnmathsfW}), we can conclude that there exists some constant $0<\mathsf{c}'<1,$
\begin{equation*}
\frac{1}{n}\sum_{i=1}^p \frac{\frac{\sigma_i^2}{n}\sum_{j=1}^n\frac{\xi^4_j}{|1+\xi^2_jm_{1n}+Z_j|^2}}{|z-\frac{\sigma_i}{n}\sum_j\frac{\xi^2_j}{1+\xi^2_jm_{1n}+Z_j}|^2} \leq \mathsf{c}'<1.
\end{equation*} 
Moreover, as $|z-\tilde{z}_0| \geq n^{-1/2+3 \epsilon_{\mathsf{d}}},$ according to (\ref{eq_a11coro}) and Remark \ref{remk_zibound}, we find that $1+\xi_j^2 m_{1n}(z)+Z_j \geq C n^{-1/2+3 \epsilon_{\mathsf{d}}}$ for some constant $C>0.$ Together with (\ref{eq_lowerbound}) and (3) of Lemma \ref{lem_fa}, we find that  with high probability  
\begin{equation*}
\frac{1}{n}\sum_{i=1}^p \frac{\frac{\sigma_i^2}{n}\sum_{j=1}^n\frac{\xi^2_j \operatorname{Im} Z_j}{|1+\xi^2_jm_{1n}+Z_j|^2}}{|z-\frac{\sigma_i}{n}\sum_j\frac{\xi^2_j}{1+\xi^2_jm_{1n}+Z_j}|^2} \ll \eta_0.
\end{equation*}
Consequently, we have that with high probability 
\begin{equation}\label{eq_r2control}
\mathsf{R}_2=\mathsf{c}' \operatorname{Im} m_{1}+\ro(\eta_0). 
\end{equation}
Similarly, we can prove that with high probability $\mathsf{R}_3=\ro(\eta_0).$ Together with (\ref{eq_R1bound}), (\ref{eq_r2control}) and (\ref{eq_decomposition}), we can conclude the prove of $m_1(z).$ The discussion for $m_Q(z)$ is similar except we need to use (\ref{eq_m2uboundeddecomposition}) and (\ref{eq_mqdecomposition}).

The proof of part (2) is similar to that of part (1) using Lemma \ref{lem_fa}, Theorem \ref{thm_boundedcaselocallaw} and Remark \ref{remk_zibound}. The idea is analogous to the proof of Lemma 5.13 of \cite{lee2016extremal} or Lemma 5.14 of \cite{Kwak2021}. We omit further details.  

Finally, for part (3), we find from Lemma \ref{lem:large deviation}, (\ref{eq_outsideetazerominor}) and (\ref{lem:Wald}) that for some small constant $\epsilon'<\epsilon_{\mathsf{d}}/2,$ with high probability, 
\begin{equation}\label{eq_boundboundbound121212121}
|Z_1| \leq n^{-1/2+\epsilon'}.  
\end{equation} 
Without loss of generality, we assume that $\xi_1^2 \geq \xi_2^2 \geq \cdots \geq \xi_n^2.$ On the one hand, by the definition of $m_2(z)$ and (\ref{eq: decomp m_2}), we have 
\begin{equation*}
   m_2=\frac{ \xi_1^2 \mathcal{G}_{11}}{n}+\frac{1}{n}\sum_{i=2}^{n}\frac{\xi^2_i}{-z(1+\xi^2_im_1^{(i)}+Z_i)}.
\end{equation*}
Together with Lemma \ref{lem: Resolvent}, we see that 
\begin{gather}\label{eq_keyrepresentationG11}
        \frac{1}{\xi_1^2 \mathcal{G}_{11}}=-z(1+\xi^2_{1}m_1^{(1)}+Z_{1}). 
\end{gather} 
Denote
\begin{equation*}
 z^{\pm}_{0}=\tilde{z}_0\pm n^{-1/2+3\epsilon_{\mathsf{d}}},\quad \tilde{z}_0=E_0+\mathrm{i}\eta_0. 
\end{equation*}
Recall (\ref{eq: def of E0}) that $1+\xi_1^2  \operatorname{Re} m_{1n}(E_0+\ri \eta_0)=0.$ Using Theorem \ref{thm_boundedcaselocallaw}, (\ref{lem:trace_difference}) and (\ref{eq_a11coro}), together with (\ref{eq_outsideetazerominor}) and Remark \ref{remk_zibound}, we conclude that for some constant $C>0$
\begin{equation*}
\frac{1}{\xi_1^2 \mathcal{G}_{11}(z_0^-)} \geq  Cn^{-1/2+3\epsilon_{\mathsf{d}}}, \ \text{and} \  \ \frac{1}{\xi_1^2 \mathcal{G}_{11}(z_0^+)} \leq -Cn^{-1/2+3\epsilon_{\mathsf{d}}}. 
\end{equation*}
Consequently, by continuity, we find that there exists $z_{1}=E_{1}+\ri\eta_0$ with $E_{1}\in(E_0-n^{-1/2+3\epsilon_{\mathsf{d}}},E_0+n^{-1/2+3\epsilon_{\mathsf{d}}})$ that $\operatorname{Re}\mathcal{G}_{11}(z_{1})=0$. For the choice of $z_1,$ together with (\ref{eq_keyrepresentationG11}), we find that 
\begin{gather}\label{eq_form}
    |\operatorname{Im}(z_1 \xi_1^2 \mathcal G_{11}(z_{1}))|=\frac{1}{|\operatorname{Im}m^{(1)}_1(z_{1})+\operatorname{Im}Z_{1}|}\geq n^{1/2-\epsilon_{\mathsf{d}}/2},
\end{gather}
where we used (\ref{eq_boundboundbound121212121}) with the assumption $\epsilon'<\epsilon_{\mathsf{d}}/2$ and (\ref{eq_outsideetazerominor}).  On the other hand, following lines of the proof of \cite[Lemma 5.15]{lee2016extremal}, by a decomposition similar to (\ref{eq: m_1 by m_2}) and a discussion similar to (\ref{eq_decomposition}), using Lemma \ref{lem_fa}, we find that 
\begin{gather*}
        \operatorname{Im}m_1(z_{1}) \asymp \eta_0+\frac{\operatorname{Im} (\xi_1^2 z_1 \mathcal{G}_{11}(z_1))}{n}.
\end{gather*}
Together with (\ref{eq_form}), we conclude that
\begin{equation*}
\operatorname{Im} m_1(z_1) \gg \eta_0. 
\end{equation*}
The discussion for $m_Q$ is similar and we omit the details. This completes our proof. 

\end{proof}

\quad Finally, armed with Lemma \ref{lem_keycomponentend}, we proceed to the proof of Lemma \ref{lem_connection}. Since the details are similar to those of Proposition 4.6 of \cite{lee2016extremal} or Proposition 4.7 of \cite{Kwak2021}, we only provide the key ingredients.  

\begin{proof}[\bf Proof of Lemma \ref{lem_connection}]
We first prove (\ref{eq_holdspartoneoneone}). Using the spectral decomposition of $Q,$ for $m_Q(z)$ in (\ref{eq_mq}), we find that  
\begin{gather}\label{eq_spectraldecompositionuseful}
    \operatorname{Im}m_Q(E+\ri\eta_0)=\frac{1}{n}\sum_{i=1}^n\frac{\eta_0}{(\lambda_i-E)^2+\eta_0^2}.
\end{gather}
This yields that 
\begin{equation*}
\operatorname{Im}m_Q(\lambda_1+\ri\eta_0)\ge(n\eta_0)^{-1}\gg\eta_0,
\end{equation*}
where we used the definition of $\eta_0$ in (\ref{eq: def of E0}). It is clear that $\lambda_1=\rO_{\mathbb{P}}(1).$ First, if $\lambda_1 \in \mathbf{D}_b^\prime,$ then the proof follows directly from (\ref{eq_outsideetazero}). Second, if $\lambda_1 \notin \mathbf{D}_b^{\prime},$ on the other hand, for the upper bound, 
 %{\color{red} need to add more here, the discussion for the upper bound needs to be revised. we do not know whether it is in $\mathbf{D}_b^\prime$ yet. }
by (3) of Lemma \ref{lem_keycomponentend} and (\ref{eq_zopointrate}), using the definition of $\mathbf{D}_b^{\prime}$ in (\ref{eq_spectralparameterprime}), with an argument similar to Proposition 4.7 of \cite{lee2016extremal}, we have that on $\Omega_D,$  $\lambda_1<E_0+n^{-1/2+3\epsilon_{\mathsf{d}}}$ holds with $1-\ro(1)$ probability. On the other hand, for the lower bound, we prove by contradiction.  We assume that $\lambda_1<E_0-n^{-1/2+3\epsilon_{\mathsf{d}}}$. Then we see from (\ref{eq_spectraldecompositionuseful}) that $\operatorname{Im}m_Q(E+\ri\eta_0)$ is a decreasing function of $E$ on the interval $(E_0-n^{-1/2+3\epsilon_{\mathsf{d}}},E_0+n^{-1/2+3\epsilon_{\mathsf{d}}})$. However, from Lemma \ref{lem_keycomponentend} and its proof (recall that $z_0^-=E_0-n^{-1/2+3\epsilon_{\mathsf{d}}}$), we have seen that, $\operatorname{Im}m_Q(z_0)\gg\eta_0$, $\operatorname{Im}m_Q(z_0^{-})\asymp\eta_0,$ which is a contradiction. It implies that $\lambda_1\geq E_0-n^{-1/2+3\epsilon_{\mathsf{d}}}$ and completes the proof of (\ref{eq_holdspartoneoneone}). 

\quad Then we prove (\ref{eq_secondconnect}). By (\ref{eq_a11coro}), (\ref{eq_zopointrate}) and (\ref{eq_oneregimeedgecontrol}), we find that 
\begin{equation*}
\operatorname{Re} m_{1n}(\lambda_1+\ri \eta_0)=\operatorname{Re} m_{1n}(z_0)+\rO_{\mathbb{P}}(n^{-1/2+3 \epsilon_{\mathsf{d}}})=-\frac{1}{\xi_{(1)}^2}+\rO_{\mathbb{P}}(n^{-1/2+3 \epsilon_{\mathsf{d}}}),
\end{equation*}
where in the last step we used the definition (\ref{eq: def of E0}). This completes our proof. 
\end{proof}

\subsection{The bounded support and conditional on $X$ case}
In this section, we study the convergence of $\lambda_1$ in part (1) of Theorem \ref{thm_main_bounded_conditional}, in parallel with Theorem \ref{thm_main_bounded}. The proof for Theorem \ref{thm_main_bounded_conditional} will be deferred to Section \ref{sec_proof_nonspike}. In the sequel, we provide the key observation to prove the statement (1) of Theorem \ref{thm_main_bounded_conditional}, which is the analogue of Proposition \ref{prop_boundedsetting}.
% {\color{green}[deterministically.]}

\begin{proposition}\label{prop_boundedsetting_conditional} Suppose the assumptions of part (1) of Theorem \ref{thm_main_bounded_conditional} hold, then restricted on $\Omega_X$ and $\Omega_D$, we have
   \begin{gather*}
       \left |\lambda_{1}-\left(\widehat{L}_{+}-\frac{  1-\phi \widehat{\mathsf{s}}_3}{\widehat{\mathsf{s}}_4}\frac{l-\xi^2_{(1)}}{l\xi^2_{(1)}}\right) \right |=\rO \left[\frac{1}{n^{1/(d+1)}} \left(\frac{n^{\mathrm{c} \epsilon_{\mathsf{d}}}}{n^{-1/(d+1)+1/2}}+{ \frac{\log n}{n^{(\min\{d-1,1\})/(d+1)}}} \right) \right],
    \end{gather*}
    {for some constant $\mathrm{c}>3$ and $(\mathrm{c}-3)\epsilon_{\mathrm{d}}>\varepsilon_1$, where $\varepsilon_1$ is introduced in Definition \ref{def_OmegaX}.}
\end{proposition}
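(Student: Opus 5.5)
The plan is to repeat the proof of Proposition \ref{prop_boundedsetting} verbatim on the deterministic event $\Omega_X\cap\Omega_D$. Stochastic domination and $\rO_{\mathbb{P}}$ statements are replaced by deterministic bounds carrying an extra factor $n^{\varepsilon}$ with $\varepsilon$ slightly larger than $\varepsilon_1$; this extra factor is what forces the exponent $\mathrm{c}\epsilon_{\mathsf{d}}$ with $\mathrm{c}>3$ and $(\mathrm{c}-3)\epsilon_{\mathsf{d}}>\varepsilon_1$.

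The deterministic equivalents $m_{1n}$, $\widehat L_+$, $\widehat{\mathsf{s}}_3$ and $\widehat{\mathsf{s}}_4$ depend only on $\Sigma$ and $D$. Hence Lemma \ref{localestimate2} applies unchanged on $\Omega_D$: the identity $m_{1n}(\widehat L_+)=-l^{-1}$, the linear expansions (\ref{eq_expansionlinear})–(\ref{eq_a11coro}), and the imaginary-part controls (\ref{eq_zopointrate11})–(\ref{eq_oneregimeedgecontrol}). The same holds for the auxiliary point $E_0$ in (\ref{eq: def of E0}) and for $\widehat L_+=E_0+\rO(\log n/n^{d+1})$.

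The core step is a conditional analogue of Lemma \ref{lem_connection}: on $\Omega_X\cap\Omega_D$ we want
\[
\lambda_1=E_0+\rO\bigl(n^{-1/2+\mathrm{c}\epsilon_{\mathsf{d}}}\bigr),\qquad \operatorname{Re}m_{1n}(\lambda_1+\ri\eta_0)=-\xi_{(1)}^{-2}+\rO\bigl(n^{-1/2+\mathrm{c}\epsilon_{\mathsf{d}}}\bigr).
\]
To get this, I would establish a conditional version of Lemma \ref{lem_keycomponentend}, using Theorem \ref{thm_averagedlocallaw_boundedmultiplier_conditionalonX} in place of Theorem \ref{thm_boundedcaselocallaw}, and property (1) of Definition \ref{def_OmegaX} to bound $Z_i$ and $Z_1$ by $n^{\varepsilon_1}\Psi$ and $n^{-1/2+\varepsilon_1+\epsilon'}$. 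The three parts then go as follows.
\begin{enumerate}
\item[(i)] For $|z-\tilde z_0|\geq n^{-1/2+\mathrm{c}\epsilon_{\mathsf{d}}}$, we have $\operatorname{Im} m_Q\asymp\eta_0$. This uses the self-consistent decomposition (\ref{eq_decomposition}) with contraction constant $\mathsf c'<1$.
\item[(ii)] The same bound $\operatorname{Im} m_Q^{(1)}\asymp\eta_0$ holds for the minor.
\item[(iii)] There is a point $z_1$ near $\tilde z_0$ where $\operatorname{Re}\mathcal G_{11}=0$, so $\operatorname{Im} m_Q(z_1)\gg\eta_0$. This comes from the sign change of $1/(\xi_1^2\mathcal G_{11})$ at $z_0^\pm$ in (\ref{eq_keyrepresentationG11}).
\end{enumerate}
The window is widened from $3\epsilon_{\mathsf{d}}$ to $\mathrm{c}\epsilon_{\mathsf{d}}$ precisely so that the sign of $1+\xi_1^2m_1^{(1)}(z_0^\pm)+Z_1$ dominates the enlarged error $n^{\varepsilon_1}(n\eta_0)^{-1}$. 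With these in hand, the monotonicity argument via (\ref{eq_spectraldecompositionuseful}) pins down $\lambda_1$ exactly as before. The real-part statement then follows from (\ref{eq_a11coro}) and the definition of $E_0$.

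Finally, I insert this into (\ref{eq_keykeyequationequation}), using $m_{1n}(\widehat L_+)=-l^{-1}$, and take real parts. This gives
\[
\frac{\widehat{\mathsf{s}}_4}{1-\phi\widehat{\mathsf{s}}_3}(\widehat L_+-\lambda_1)=\xi_{(1)}^{-2}-l^{-1}+\rO\bigl(n^{-1/2+\mathrm{c}\epsilon_{\mathsf{d}}}+(\log n)n^{-\min\{d,2\}/(d+1)}\bigr).
\]
Rearranging, and noting $\xi_{(1)}^{-2}-l^{-1}=(l-\xi_{(1)}^2)/(l\xi_{(1)}^2)$, yields the claimed bound.

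The main obstacle is step (iii), together with the fluctuation averaging of Lemma \ref{lem_fa} under $\Omega_X$. That lemma is proved by high-moment expansions, which are not automatically deterministic. My first attempt would be to add its conclusions to $\Omega_X$ as a high-probability $X$-measurable event for each realization in $\Omega_D$, via a Fubini argument over the product space of Remark \ref{rmk_notationconventions}. The conditional probability of failure is then small with probability $1-\ro(1)$ in $X$.
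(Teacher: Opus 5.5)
Your proposal follows essentially the same route as the paper: you establish the conditional analogue of Lemma \ref{lem_connection} (the paper's Lemma \ref{lem_boundedsupport_conditional_lem1}) from a conditional Lemma \ref{lem_keycomponentend}, using the conditional local law and the monotonicity of $\operatorname{Im} m_Q(E+\ri\eta_0)$, and then take real parts in (\ref{eq_keykeyequationequation}) exactly as in Proposition \ref{prop_boundedsetting}. The paper simply asserts that Lemma \ref{lem_keycomponentend} ``now holds surely'' on $\Omega_X\cap\Omega_D$, so your Fubini treatment of the fluctuation-averaging input is more careful than the original; note, though, that it gives the bound only on a good event of conditional probability $1-\ro(1)$, not literally on all of $\Omega_X\cap\Omega_D$, which is still enough for Theorem \ref{thm_main_bounded_conditional}(1).
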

\begin{proof}
The proof of Proposition \ref{prop_boundedsetting_conditional} follows similar arguments to that of Proposition \ref{prop_boundedsetting} and relies primarily on the following lemma.
    \begin{lemma}\label{lem_boundedsupport_conditional_lem1}
        Suppose the assumptions of Theorem \ref{thm_main_bounded_conditional} hold, on events $\Omega_X$ and $\Omega_D$, we have
        \begin{align*}
            \lambda_{1}=E_0+\mathrm{O}({ n^{-1/2+\mathrm{c}\epsilon_{\mathsf{d}}}}),
        \end{align*}
        where $E_0$ is defined in Lemma \ref{lem_connection}, and 
        \begin{align*}
            \operatorname{Re}m_{1n}(\lambda_{1}+\mathrm{i}\eta_0)=-\frac{1}{\xi^2_{(1)}}+\mathrm{O}({n^{-1/2+\mathrm{c}\epsilon_{\mathsf{d}}}}),
        \end{align*}
        {for some constant $\mathrm{c}>3$ and $(\mathrm{c}-3)\epsilon_{\mathrm{d}}>\varepsilon_1$.}
    \end{lemma}
%   

%    \begin{align*}
%        \operatorname{Re}m_{1n}(E_0+\mathrm{i}\eta_0)=-\xi^{-2}_{(1)},\quad \eta_0=n^{-1/2-\epsilon_{\mathsf{d}}}.
%    \end{align*}
%    The above lemma follows immediately from the next observation, whose proof is omitted.
%    \begin{lemma}
%    Suppose the assumptions of Lemma \ref{lem_boundedsupport_conditional_lem1}, we have that the followings holds deterministically 
%\begin{enumerate}
%\item[(1).] For any $z=E+\ri \eta_0 \in \mathbf{D}_b^{\prime}$ satisfying that $|z-z_0| \geq n^{-1/2+3\epsilon_{\mathsf{d}}},$ 
%\begin{equation*}
%\operatorname{Im} m_1(z) \asymp \eta_0, \ \operatorname{Im} m_Q(z) \asymp \eta_0.
%\end{equation*}
%\item[(2).] For $m_1^{(1)}(z)$ and $m_Q^{(1)}(z)$ defined around (\ref{eq_defnminorG}), we have that for all $z=E+\ri \eta_0 \in \mathbf{D}_b^{\prime},$
%\begin{equation*}
%\operatorname{Im} m_1^{(1)}(z) \asymp \eta_0, \ \operatorname{Im} m_Q^{(1)}(z) \asymp \eta_0.
%\end{equation*}
%\item[(3).] There exists some $E_0' \in \mathbb{R}$ such that for $z_0'=E_0'+\ri \eta_0,$ the followings holds simultaneously 
%\begin{equation*}
%|z_0'-z_0| \leq n^{-1/2+3 \epsilon_{\mathsf{d}}}, \ \text{and} \ \operatorname{Im} m(z_0') \gg \eta_0. 
%\end{equation*}
%\end{enumerate}
%\end{lemma}
The proof of Lemma \ref{lem_boundedsupport_conditional_lem1} follows from an argument similar to Lemma \ref{lem_connection}. In fact, using the spectral decomposition of $Q$ and the results in Lemma \ref{lem_keycomponentend} (which now holds surely on $\Omega_D$ and $\Omega_X$), it yields that $\operatorname{Im}m_Q(\lambda_1+\mathrm{i}\eta_0)\geq (n\eta_0)^{-1}\gg\eta_0$. On the other hand, on events $\Omega_X$ and $\Omega_D$, we have $\lambda_1<E_0+n^{-1/2+\mathrm{c}\epsilon_{\mathsf{d}}}$. Since $\operatorname{Im}m_Q(E+\mathrm{i}\eta_0)$ is decreasing in $E$ over the interval $(E_0-n^{-1/2+\mathrm{c}\epsilon_{\mathsf{d}}},E_0+n^{-1/2+\mathrm{c}\epsilon_{\mathsf{d}}})$, a contradiction argument shows that  $\lambda_1\geq E_0-n^{-1/2+\mathrm{c}\epsilon_{\mathsf{d}}}$. This proves the first statement of Lemma \ref{lem_boundedsupport_conditional_lem1}. For the second statement, Lemma \ref{localestimate2} yields 
\begin{align*}
    \operatorname{Re}m_{1n}(\lambda_1+\mathrm{i}\eta_0)=\operatorname{Re}m_{1n}(z_0)+\mathrm{O}(n^{-1/2+\mathrm{c}\epsilon_{\mathsf{d}}})=-\frac{1}{\xi^2_{(1)}}+\mathrm{O}(n^{-1/2+\mathrm{c}\epsilon_{\mathsf{d}}}),
\end{align*}
on the event $\Omega_D$.  

Combining the results with the an argument similar to  the proof of Proposition \ref{prop_boundedsetting}, we can prove Proposition \ref{prop_boundedsetting_conditional}. 
\end{proof}

\section{Proof of the results of Section \ref{sec_boostrapeffect_nonspike}}\label{sec_proof_nonspike}

In this section, we prove Theorems \ref{thm_main_unbounded} and \ref{thm_main_bounded} using the results in Sections \ref{sec_sub_unbounded1st} and \ref{sec_sub_bounded1st}. 

\subsection{Proof of the results in Section \ref{sec_nonspike_thebad}}
\subsubsection{Proof of Theorem \ref{thm_main_unbounded}}\label{sec_proofoftheorem31}
 Due to the similarity of the arguments, we focus on proving (\ref{eq_mainresultunboundedfrechet}) for the case where the multipliers have polynomially decaying tails as in \eqref{ass3.1}. Combining (\ref{eq_mu1part}) and  Lemma \ref{lem_probabilitycontrol}, when $n$ is sufficiently large, restricted on event $\Omega_D$, we have that $\vartheta_1=(\xi_{(1)}^2+d_1)\left(\phi \bar{\sigma}_1 +\rO_{\mathbb{P}}(e) \right).$ Together with Proposition \ref{lem: eigenvalue rigidity}, we find that
\begin{equation*}
\lambda_1=(\xi_{(1)}^2+d_1)\left(\phi \bar{\sigma}_1 +\rO_{\mathbb{P}}(e) \right)+\rO_{\mathbb{P}}\left( n^{-1/2+2\epsilon}d_1 \right). 
\end{equation*}
With the above preparation, we now proceed to the proof. For notional simplicity, we denote 
\begin{align}\label{eq_conventionsYnZn}
     Y_n:=\frac{\lambda_1}{\varphi b_n},\quad Z_n:=\frac{\xi^2_{(1)}}{b_n}.
\end{align}
According to the definition of $b_n$ in (\ref{eq_defnbn}), under the assumption of (\ref{ass3.1}), we readily obtain that $b_n \asymp n^{1/\alpha}.$ Therefore, using the definition of $d_1$ in \eqref{eq_firstddefinition}, we obtain on the event $\Omega_D$ that
\begin{align*}
    |Y_n-Z_n|=\mathrm{o}_{\mathbb{P}}(1).
\end{align*}
Equivalently, we have that 
\begin{align}\label{eq_difference}
    |Y_n-Z_n|\mathbf{1}_{\Omega_D}=\mathrm{o}_{\mathbb{P}}(1).
\end{align}
In fact, the above statement still holds with high probability even without restricting on $\Omega_D$ with Lemma \ref{lem_probabilitycontrol}. To see this, we notice that, for any $\varepsilon>0$,
\begin{align*}
    \mathbb{P}(|Y_n-Z_n|>\varepsilon)\leq\mathbb{P}(\Omega_D^c)+\mathbb{P}\big(\{|Y_n-Z_n|>\varepsilon\}\cap\Omega_D\big).
\end{align*}

Together with Lemma \ref{lem_probabilitycontrol} and (\ref{eq_difference}), we obtain that 
\begin{align}\label{eq_ffffuuuuuu}
    |Y_n-Z_n|=\mathrm{o}_{\mathbb{P}}(1).
\end{align}
In addition, by Lemma \ref{lem_summaryevt}, for any $x \geq 0$,
\begin{align*}
 \mathbb{P}(Z_n\leq x)=\exp(-x^{-\alpha})+\mathrm{o}(1).
\end{align*}
We can therefore complete the proof for $Y_n = Y_n - Z_n + Z_n$ using the above results and Slutsky’s theorem.

For (\ref{eq_mainresultunboundedgumbel}) under the assumption that the multipliers have exponentially decaying tails as in (\ref{ass3.2}), the arguments are similar. Under the assumption of (\ref{ass3.2}), we have that $b_n \asymp \log^{1/\beta}n.$ Denote 
\begin{align}\label{eq_mathsfyz}
    \mathsf{Y}_n:=\mathsf{g}^{\prime}(b_n)\big(\varphi^{-1}\lambda_1-(b_n+\mathbb{E}\xi^2\frac{\bar{\sigma}_2}{\varphi\bar{\sigma}_1})\big),\quad \mathsf{Z}_n:=\mathsf{g}^{\prime}(b_n)\big(\xi^2_{(1)}-b_n\big).
\end{align}
By an argument similar to (\ref{eq_ffffuuuuuu}), using Proposition \ref{lem: eigenvalue rigidity} and its proof, we can show that
\begin{align*}
    |\mathsf{Y}_n-\mathsf{Z}_n|=\mathrm{o}_{\mathbb{P}}(1).
\end{align*}
Then the proof follows from Lemma \ref{lem_summaryevt} and Slutsky’s theorem.

\subsubsection{Proof of Theorem \ref{thm_main_unbounded_conditional}}\label{sec_proof33}

The proofs are similar to those presented in Section~\ref{sec_proofoftheorem31}, with the additional need to address arguments involving conditional probability. We focus on the proof of~(\ref{eq_mainresultunboundedfrechet_conditional}) when the multipliers have polynomially decaying tails as in \eqref{ass3.1}. We also recall the probabilistic notation conventions introduced in Remark~\ref{rmk_notationconventions}. When restricted on $\Omega_X$ and $\Omega_D,$ according to \eqref{eq_mu1part}, we have that
    \begin{align*}
        \vartheta_1=(\xi^2_{(1)}+d_1)(\phi\bar{\sigma}_1+\mathrm{O}(e)). 
    \end{align*}
    Together with the results of Proposition \ref{lem_largesteigenvaluelocation_conditional}, we have that when restricted on $\Omega_X$ and $\Omega_D,$
    \begin{align*}
        \lambda_1=(\xi^2_{(1)}+d_1)(\phi\bar{\sigma}_1+\mathrm{O}(e))+\mathrm{O}(n^{-1/2+2\epsilon}d_1).
    \end{align*}
Using the conventions in (\ref{eq_conventionsYnZn}) and an argument similar to (\ref{eq_difference}), we have that for any $\delta>0$, there exists $N_\delta$ such that for all $n \geq N_\delta$,
\begin{align}\label{eq_prf_unboundedconditional_1}
\left|Y_n- Z_n \right|
\leq \delta
\qquad \text{on} \ \Omega_X \cap \Omega_D,
\end{align}
where we refer to Remark \ref{rmk_notationconventions} for the precise definitions of $\Omega_X \cap \Omega_D$.

Now we proceed to the proof. For all $x>0,$ we denote the following events
    \begin{align*}
        A_n(x):=\{Y_n\leq x\},\quad B_n(x):=\{Z_n\leq x\}.
    \end{align*}
Then on the event $\Omega_X \cap \Omega_D$, for $\delta \equiv \delta(x) \in (0,x),$ by (\ref{eq_prf_unboundedconditional_1}), we have that     
    \begin{align*}
        \big|Y_n-Z_n\big|\leq\delta.
    \end{align*}
The above arguments directly imply that   
    \begin{align*}
        B_n(x-\delta)\cap\Omega_X\cap\Omega_D\subset A_n(x),\quad A_n(x)\cap\Omega_X\cap\Omega_D\subset B_n(x+\delta).
    \end{align*} 
This yields that, on $\Omega_X,$    
    \begin{align*}
        B_n(x-\delta)\cap\Omega_D\subset A_n(x),\quad A_n(x)\subset B_n(x+\delta)\cup\Omega_D^c.
    \end{align*}
%Consequently, together with the monotonicity of conditional expectation, on the one hand, we have that 
%\begin{align}
%\mathbf{1}_{\Omega_X}\,1_{B_n(x-\delta)\cap \Omega_D}
%\le
%\mathbf{1}_{\Omega_X}\,1_{A_n(x)} \quad \text{a.s.}
%\end{align}

Applying conditional expectation and using the monotonicity property, we obtain
\begin{align}\label{eq_keyone}
\mathbf{1}_{\Omega_X}\,\mathbb{P}\big(B_n(x-\delta)\cap \Omega_D \mid X\big)
\leq
\mathbf{1}_{\Omega_X}\,\mathbb{P}\big(A_n(x)\mid X\big)
\quad \text{a.s.}
\end{align} 
and      
\begin{align*}
\mathbf{1}_{\Omega_X}\,\mathbb{P}\big(A_n(x)\mid X\big)
\leq
\mathbf{1}_{\Omega_X}\,\mathbb{P}\big(B_n(x+\delta)\mid X\big)
+
\mathbf{1}_{\Omega_X}\,\mathbb{P}\big(\Omega_D^c\mid X\big)
\quad \text{a.s.}
\end{align*}
In addition, note that
\begin{align*}
B_n(x-\delta)\cap \Omega_D
&=
B_n(x-\delta)\setminus \big(B_n(x-\delta)\cap \Omega_D^c\big). 
\end{align*}
Consequently, we have
\begin{align*}
\mathbf{1}_{\Omega_X}\,\mathbb{P}\big(B_n(x-\delta)\cap \Omega_D \mid X\big)
&=
\mathbf{1}_{\Omega_X}\,\mathbb{P}\big(B_n(x-\delta)\mid X\big)
-\mathbf{1}_{\Omega_X}\,\mathbb{P}\big(B_n(x-\delta)\cap \Omega_D^c \mid X\big) \\
&\geq
\mathbf{1}_{\Omega_X}\,\mathbb{P}\big(B_n(x-\delta)\mid X\big)
-\mathbf{1}_{\Omega_X}\,\mathbb{P}\big(\Omega_D^c \mid X\big).
\end{align*}
Combining this with (\ref{eq_keyone}), we obtain
\begin{align}
\mathbf{1}_{\Omega_X}\,\mathbb{P}\big(B_n(x-\delta)\mid X\big)
-\mathbf{1}_{\Omega_X}\,\mathbb{P}\big(\Omega_D^c \mid X\big)
\leq
\mathbf{1}_{\Omega_X}\,\mathbb{P}\big(A_n(x)\mid X\big)
\quad \text{a.s.}
\end{align}

Combining the above arguments, we see that 
\begin{align}\label{eq_conditional_squeeze_unbounded}
\mathbf{1}_{\Omega_X}\Big[
\mathbb{P}\big(B_n(x-\delta)\mid X\big)
-\mathbb{P}\big(\Omega_D^c\mid X\big)
\Big]
\leq
\mathbf{1}_{\Omega_X}\,\mathbb{P}\big(A_n(x)\mid X\big)
\leq
\mathbf{1}_{\Omega_X}\Big[
\mathbb{P}\big(B_n(x+\delta)\mid X\big)
+\mathbb{P}\big(\Omega_D^c\mid X\big)
\Big] \ \text{a.s.}
\end{align}

Since both $B_n(\cdot)$ and $\Omega_D$ are $\sigma(D)$-measurable and $D$ is independent of $X$, we have
\begin{align*}
\mathbb{P}\big(B_n(\cdot)\mid X\big)=\mathbb{P}\big(B_n(\cdot)\big),
\qquad
\mathbb{P}\big(\Omega_D^c\mid X\big)=\mathbb{P}\big(\Omega_D^c\big)
\quad \text{a.s.}
\end{align*}
Thus, on $\Omega_X$, almost surely,
\begin{align*}
\mathbb{P}(B_n(x-\delta))-\mathbb{P}(\Omega_D^c)
\leq
\mathbb{P}(A_n(x)\mid X)
\leq
\mathbb{P}(B_n(x+\delta))+\mathbb{P}(\Omega_D^c).
\end{align*}
Together with Lemmas \ref{lem_summaryevt} and \ref{lem_probabilitycontrol}, we conclude that on $\Omega_X$, almost surely, for any $\delta \in (0,x),$ 
    \begin{align}\label{eq_here}
        \exp \left(-(x-\delta)^{-\alpha}\right)+\mathrm{o}(1)\leq \mathbb{P}\left(\frac{\lambda_1}{\varphi b_n}\leq x|X \right)\leq    \exp \left(-(x+\delta)^{-\alpha}\right)+\mathrm{o(1)}.
    \end{align}
This implies for any fixed $x>0$ by letting $\delta \downarrow 0$, we have  on $\Omega_X$, almost surely,
\begin{equation*}
\Delta_n:=\left| \mathbb{P}\left(\frac{\lambda_1}{\varphi b_n}\leq x|X \right)- \exp \left(-x^{-\alpha}\right) \right|=\mathrm{o}(1). 
\end{equation*} 
Furthermore, to get rid of $\Omega_X,$ we have that for any $\varepsilon>0$,
\begin{align}\label{eq_unboundedconditional_getridofOmegaX}
    \mathbb{P}\big(\Delta_n>\varepsilon\big)\leq
    \mathbb{P}(\Omega_X^c)+
    \mathbb{P}\big(\{\Delta_n>\varepsilon\}\cap\Omega_X\big).
\end{align}  
Combining with Lemma \ref{lem_Xgoodevents}, we readily obtain that 
    \begin{align*}
        \mathbb{P}(\Delta_n>\epsilon)=\mathrm{o}(1).
    \end{align*}  
This completes the proof for all $x>0.$ When $x \downarrow 0,$ using (\ref{eq_here}), by setting $\delta=x/2,$ we have  
    \begin{align*}
        \exp \left(-2^{\alpha} x^{-\alpha}\right)+\mathrm{o}(1)\leq \mathbb{P}\left(\frac{\lambda_1}{\varphi b_n}\leq x|X \right)\leq    \exp \left(-x^{-\alpha}\right)+\mathrm{o(1)}.
    \end{align*} 
    Since $x \downarrow 0,$ we have that   
        \begin{align*}
        \exp \left(-x^{-\alpha}\right)+\mathrm{o}(1)\leq \mathbb{P}\left(\frac{\lambda_1}{\varphi b_n}\leq x|X \right)\leq    \exp \left(-x^{-\alpha}\right)+\mathrm{o(1)}.
    \end{align*}
The remainder of the proof follows by an argument similar to the case $x>0$, which completes the proof.

For (\ref{eq_mainresultunboundedgumbel_conditional}), under the assumption that the multipliers have exponentially decaying tails as in (\ref{ass3.2}), the argument is similar to the discussion at the end of Section \ref{sec_proofoftheorem31}. In particular, one works with $\mathsf{Y}_n$ and $\mathsf{Z}_n$ in (\ref{eq_mathsfyz}) and applies essentially the same verbatim argument as in the proof of (\ref{eq_mainresultunboundedfrechet_conditional}) given above. Due to this similarity, we omit the details.

\subsection{Proof of the results in Section \ref{sec_nonspike_thegood}}\label{sec_prf_nonspike_thegood}
\subsubsection{Proof of Theorem \ref{thm_main_bounded}}\label{sec_prf_main_bounded}
For part (1), it follows from Proposition \ref{prop_boundedsetting} that 
\begin{gather*}
       \left |\lambda_{1}-\left(\widehat{L}_{+}-\frac{  1-\phi \widehat{\mathsf{s}}_3}{\widehat{\mathsf{s}}_4}\frac{l-\xi^2_{(1)}}{l\xi^2_{(1)}}\right) \right |=\rO_{\mathbb{P}} \left[\frac{1}{n^{1/(d+1)}} \left(\frac{n^{3 \epsilon_{\mathsf{d}}}}{n^{-1/(d+1)+1/2}}+{\frac{\log n}{n^{(\min\{d-1,1\})/(d+1)}} }\right) \right].
    \end{gather*}
Changing $\xi^2_{(1)}$ to $l$ and introducing an additional $n^{-1/(d+1)}$ error, we have the following.
\begin{gather*}
       \left |\lambda_{1}-\left(\widehat{L}_{+}-\frac{  1-\phi \widehat{\mathsf{s}}_3}{\widehat{\mathsf{s}}_4}\frac{l-\xi^2_{(1)}}{l^2}\right) \right |=\rO_{\mathbb{P}} \left[\frac{1}{n^{1/(d+1)}} \left(\frac{n^{3 \epsilon_{\mathsf{d}}}}{n^{-1/(d+1)+1/2}}+{\frac{\log n}{n^{(\min\{d-1,1\})/(d+1)}}} \right) \right].
    \end{gather*}
Then, by \eqref{eq_closenessequation} of Lemma \ref{localestimate2}, together with Lemma \ref{lem_summaryevt}, we can conclude the proof using an argument similar to those between (\ref{eq_conventionsYnZn}) and (\ref{eq_ffffuuuuuu}). 

Then we proceed to the proof of parts (2) and (3). Following \cite[Lemma 2.5]{ding2021spiked}, we see that restricted on the event $\Omega_D$ in Lemma \ref{lem_probabilitycontrol}, $(\widehat{L}_+, m_{1n}(\widehat{L}_+))$ should satisfy the following systems of equations 
\begin{gather}\label{eq_edgeequationstwodecide}
    m_{1n}(\widehat{L}_{+})=\frac{1}{n}\sum_{i=1}^p \frac{\sigma_i}{-\widehat{L}_{+}+\frac{\sigma_i}{n}\sum_j\frac{\xi^2_j}{1+\xi^2_jm_{1n}(\widehat{L}_{+})}},\quad 1=\frac{1}{n}\sum_{i=1}^p \frac{\frac{\sigma_i^2}{n}\sum_j\frac{\xi^4_j}{|1+\xi^2_jm_{1n}(\widehat{L}_{+})|^2}}{\left|\widehat{L}_{+}-\frac{\sigma_i}{n}\sum_j\frac{\xi^2_j}{1+\xi^2_jm_{1n}(\widehat{L}_{+})}\right|^2}.
\end{gather}
Similarly, $(L_+, m_{1n,c}(L_+))$ should satisfy the following equations
\begin{gather}\label{eq_edgeequationstwodecide2}
    m_{1n,c}(L_{+})=\frac{1}{n}\sum_i\frac{\sigma_i}{-L_{+}+\sigma_i\int\frac{s}{1+sm_{1n,c}(L_{+})}\mathrm{d}F(s)},\quad 1=\frac{1}{n}\sum_i\frac{\sigma^2_i\int\frac{s^2}{|1+sm_{1n,c}(L_{+})|^2}\mathrm{d}F(s)}{|L_{+}-\sigma_i\int\frac{s}{1+sm_{1n,c}(L_{+})}\mathrm{d}F(s)|^2}.
\end{gather}
Using the definitions of $\mathsf{s}_k$ and $\widehat{\mathsf{s}}_k, 1 \leq k \leq 3,$ by (\ref{def4}) and an argument similar to part (b) of Lemma \ref{localestimate2}, when $n$ is sufficiently large, we see that our assumption $\phi^{-1}<\mathsf{s}_3$ implies $\phi^{-1}<\widehat{\mathsf{s}}_3$ on $\Omega_D.$ This yields that for some constant $\delta>0$
\begin{gather}\label{eq_>1plusdelta}
    \frac{1}{n} \sum_i\frac{\sigma_i^2 \widehat{\mathsf{s}}_1}{(\widehat{L}_{+}-\sigma_i\widehat{\mathsf{s}}_2)^2} >1+\delta,\quad \frac{1}{n}\sum_i\frac{\sigma_i^2\mathsf{s}_1}{(L_{+}-\sigma_i\mathsf{s}_2)^2}>1+\delta.
\end{gather}
where the first inequality is restricted to the event $\Omega_D$. From now on, for notional simplicity, we always restrict ourselves to $\Omega_D$ so that the discussion is purely deterministic. Recall (\ref{eq_phasetransition}) and (\ref{eq_finitesample123}). Combining the second equations in (\ref{eq_edgeequationstwodecide}) and (\ref{eq_edgeequationstwodecide2}) with (\ref{eq_>1plusdelta}), we readily obtain 
\begin{equation}\label{rem1nbound}
m_{1n}(\widehat{L}_{+})>-l^{-1}, \ \ m_{1n,c}(L_{+})>-l^{-1}.
\end{equation}
Together with (\ref{def4}), we  have that for all $1 \leq j \leq n$ and some constant $\delta'>0$ 
\begin{equation}\label{eq_keyboundhastobeenused}
\frac{1}{\left|1+\xi_j^2 m_{1n}(\widehat{L}_+)\right|} \geq \delta', \   \frac{1}{\left|1+s m_{1n,c}(L_+)\right|} \geq \delta' \ \text{for any} \ 0<s \leq l. 
\end{equation}
We now proceed to the proof. The proof consists of two steps. In the first step, we prove the results assuming that
\begin{equation}\label{eq_keyanasztaz}
\left| m_{1n,c}(L_+)-m_{1n}(\widehat{L}_+) \right|=\mathrm{O}_{\mathbb{P}}(n^{-1/2}), \ |L_+-\widehat{L}_+|=\rO_\mathbb{P}( n^{-1/2}). 
\end{equation}
In the second step, we justify (\ref{eq_keyanasztaz}). We start with step one. 

\vspace{3pt}

\noindent{\bf Step one:} Under the assumption \ref{eq_keyanasztaz}, the key component of the proof is the following lemma. Denote
\begin{align*}
&\mathsf{C}_1:=\frac{1}{n} \sum_{i=1}^p \frac{\sigma_i}{ \left( L_+-\sigma_i \int \frac{s}{1+sm_{1n,c}} \mathrm{d} F(s) \right)^2},\quad \mathsf{C}_2:=\frac{1}{n} \sum_{i=1}^p \frac{\sigma_i^2}{ \left( L_+-\sigma_i \int \frac{s}{1+sm_{1n,c}} \mathrm{d} F(s) \right)^2},
\end{align*}
and
\begin{equation*}
 \mathcal{X}:=\frac{1}{n} \sum_{j=1}^n \Big( \frac{\xi_j^2}{1+\xi_j^2 m_{1n,c}(L_+)}-\int \frac{s}{1+sm_{1n,c}(L_+)} \mathrm{d} F(s) \Big).
\end{equation*}
According to Assumption \ref{assum_additional_techinical}, we have
\begin{equation*}
\mathsf{C}_1 \asymp 1,\quad \mathsf{C}_2\asymp 1.
\end{equation*}

\begin{lemma}\label{lem_keyfinalfinalkey} Under the assumptions of Theorem \ref{thm_main_bounded} and (\ref{eq_keyanasztaz}), we have
\begin{equation*}
\mathsf{C}_1(\widehat{L}_+-L_+)=\mathsf{C}_2\mathcal{X}+\rO_{\mathbb{P}}(n^{-1}). 
\end{equation*} 
\end{lemma}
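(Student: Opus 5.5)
The plan is to linearize the first edge equation in \eqref{eq_edgeequationstwodecide} around its deterministic counterpart in \eqref{eq_edgeequationstwodecide2}, and to use the second (criticality) equation to cancel the first-order contribution of $m_{1n}(\widehat{L}_+)-m_{1n,c}(L_+)$. Write $\widehat m:=m_{1n}(\widehat{L}_+)$ and $m:=m_{1n,c}(L_+)$. For each $i$, put
\[
\widehat a_i:=\widehat{L}_+-\frac{\sigma_i}{n}\sum_j\frac{\xi_j^2}{1+\xi_j^2\widehat m},\qquad a_i:=L_+-\sigma_i\int\frac{s}{1+sm}\,\mathrm{d}F(s).
\]
By \eqref{eq_keyboundhastobeenused} and Assumption \ref{assum_additional_techinical}, both $|\widehat a_i|$ and $|a_i|$ are bounded below. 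Under \eqref{eq_keyanasztaz}, we have $\widehat a_i-a_i=\rO_{\mathbb{P}}(n^{-1/2})$ uniformly in $i$.

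\textbf{Step 1 (expand $\widehat a_i-a_i$).} Split the difference as follows:
\[
\widehat a_i-a_i=(\widehat{L}_+-L_+)-\sigma_i\mathcal{X}-\frac{\sigma_i}{n}\sum_j\Big(\frac{\xi_j^2}{1+\xi_j^2\widehat m}-\frac{\xi_j^2}{1+\xi_j^2 m}\Big).
\]
The last sum equals $-(\widehat m-m)\,\frac1n\sum_j\frac{\xi_j^4}{(1+\xi_j^2 m)^2}+\rO(|\widehat m-m|^2)$, by \eqref{eq_keyboundhastobeenused} and boundedness of $\xi^2$. On $\Omega_D$ the empirical average $\frac1n\sum_j\xi_j^4/(1+\xi_j^2m)^2$ differs from $\int s^2/(1+sm)^2\,\mathrm{d}F$ by $\rO(n^{-1/2+\epsilon})$, using a bound of the same type as in \eqref{def4}. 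Both products $|\widehat m-m|^2$ and $|\widehat m-m|\cdot n^{-1/2+\epsilon}$ are then $\rO_{\mathbb{P}}(n^{-1+\epsilon})$. Denote $\mathsf{K}:=\int s^2/(1+sm)^2\,\mathrm{d}F(s)$. We obtain
\[
\widehat a_i-a_i=(\widehat{L}_+-L_+)-\sigma_i\mathcal{X}+\sigma_i\mathsf{K}(\widehat m-m)+\rO_{\mathbb{P}}(n^{-1+\epsilon}).
\]

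\textbf{Step 2 (use the first equation).} Subtract the first equations of \eqref{eq_edgeequationstwodecide} and \eqref{eq_edgeequationstwodecide2}, which read $\widehat m=-\frac1n\sum\sigma_i/\widehat a_i$ and $m=-\frac1n\sum\sigma_i/a_i$. A Taylor expansion gives
\[
\widehat m-m=\frac1n\sum_i\frac{\sigma_i(\widehat a_i-a_i)}{a_i^2}+\rO_{\mathbb{P}}(n^{-1}).
\]
Inserting Step 1 yields
\[
\widehat m-m=\mathsf{C}_1(\widehat{L}_+-L_+)-\mathsf{C}_2\mathcal{X}+\mathsf{K}\,\frac1n\sum_i\frac{\sigma_i^2}{a_i^2}\,(\widehat m-m)+\rO_{\mathbb{P}}(n^{-1+\epsilon}).
\]
The key observation is that the second equation of \eqref{eq_edgeequationstwodecide2} states exactly $\mathsf{K}\cdot\frac1n\sum_i\sigma_i^2/a_i^2=1$, since all quantities at the real edge are real. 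Hence the $(\widehat m-m)$ terms cancel, leaving $0=\mathsf{C}_1(\widehat{L}_+-L_+)-\mathsf{C}_2\mathcal{X}+\rO_{\mathbb{P}}(n^{-1+\epsilon})$, which is the claim.

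\textbf{Main obstacle (sharpening the error).} The difficulty is upgrading the error $n^{-1+\epsilon}$ to $\rO_{\mathbb{P}}(n^{-1})$, since \eqref{eq_keyanasztaz} is stated at scale $n^{-1/2}$ in probability, without $n^\epsilon$ losses. I would replace the $\Omega_D$ bound by a direct second-moment CLT bound. Specifically, the fluctuation $\frac1n\sum_j\big(\xi_j^4/(1+\xi_j^2m)^2-\mathsf{K}\big)$ is a centered i.i.d. average with bounded summands, hence $\rO_{\mathbb{P}}(n^{-1/2})$ by Chebyshev. Every product of two $\rO_{\mathbb{P}}(n^{-1/2})$ quantities is then $\rO_{\mathbb{P}}(n^{-1})$, including the Taylor remainders, which are quadratic in $\widehat a_i-a_i$ and $\widehat m-m$. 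One more point must be handled: \eqref{eq_edgeequationstwodecide} involves $|\cdot|^2$. I would note that $\widehat m$ and $\widehat{L}_+$ are real, since $\widehat{L}_+$ lies to the right of the support, so the moduli coincide with squares and the cancellation above is exact.
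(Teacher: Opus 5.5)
Your proposal is correct and follows essentially the same route as the paper: subtract the first edge equations in \eqref{eq_edgeequationstwodecide} and \eqref{eq_edgeequationstwodecide2}, linearize, and cancel the first-order $m_{1n}(\widehat{L}_+)-m_{1n,c}(L_+)$ contribution using a criticality (second) edge equation. The only difference is where you expand. You expand around the deterministic point and use the population identity $\mathsf{K}\,\mathsf{C}_2=1$ from \eqref{eq_edgeequationstwodecide2}, which costs one extra Chebyshev bound for the empirical fourth-moment average; the paper instead expands around the empirical point and uses the second equation of \eqref{eq_edgeequationstwodecide}. Your explicit removal of the $n^{\epsilon}$ losses to reach a genuine $\rO_{\mathbb{P}}(n^{-1})$ is, if anything, more careful than the paper's.
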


Armed with the Lemma \ref{lem_keyfinalfinalkey}, we can easily prove parts (2) and (3). Recall the definition of $\mathsf{v}$ in \eqref{eq_phasetransition}.  It is clear from (\ref{def4}) $\mathcal{X}=\rO_{\mathbb{P}}(n^{-1/2}),$ and from central limit theorem that $\mathsf{C}_2/\mathsf{C}_1\mathcal{X}$ is asymptotically Gaussian with variance $n^{-1} \mathsf{v}$.   
We decompose
\begin{equation*}
\lambda_1-L_+=\lambda_1-\widehat{L}_++\widehat{L}_+-L_+.
\end{equation*}
According to \cite{DX_ell,9779233} and Lemma \ref{localestimate2}, we find that on the event $\Omega_D,$ $|\lambda_1-\widehat{L}_+| \prec n^{-2/3}$ and $n^{2/3} \gamma(\lambda_1-\widehat{L}_+)$ follows the type-1 Tracy-Widom law. Moreover, it is easy to see that when $\mathsf{v} \gtrsim n^{-1/3+\iota}$ for some sufficiently small constant $\iota>0$, the Gaussian component dominates, while when $\mathsf{v}=\mathrm{O}\bigl(n^{-1/3-\iota}\bigr)$, the Tracy–Widom component dominates.

 This concludes the general results in part (3) using an argument similar to those between (\ref{eq_conventionsYnZn}) and (\ref{eq_ffffuuuuuu}). Moreover, for part (2), it is easy to see that when $d>1,$ by the Cauchy-Schwarz inequality, $\mathsf{v}$ is bounded from the blow so that the Gaussian part dominates the Tracy-Widom part, and hence we conclude the proof using an argument similar to those between (\ref{eq_conventionsYnZn}) and (\ref{eq_ffffuuuuuu}). To complete Step one, we now prove Lemma \ref{lem_keyfinalfinalkey}. 
\begin{proof}[\bf Proof of Lemma \ref{lem_keyfinalfinalkey}]

Using the first parts in equations (\ref{eq_edgeequationstwodecide}) and (\ref{eq_edgeequationstwodecide2}), we see that 
    \begin{align}\label{eq_generalgeneraldecomposition}
       &m_{1n,c}(L_{+})-m_{1n}(\widehat{L}_{+})\\
       &=\frac{1}{n}\sum_i\frac{\sigma_i}{\widehat{L}_{+}-\frac{\sigma_i}{n}\sum_j\frac{\xi^2_j}{1+\xi^2_jm_{1n}(\widehat{L}_{+})}}-\frac{1}{n}\sum_i\frac{\sigma_i}{L_{+}-\frac{\sigma_i}{n}\sum_j\frac{\xi^2_j}{1+\xi^2_jm_{1n,c}(L_{+})}} \nonumber \\
       &+\frac{1}{n}\sum_i\frac{-\sigma_i\int\frac{s}{1+sm_{1n,c}(L_{+})}\mathrm{d}F(s)+\frac{\sigma_i}{n}\sum_j\frac{\xi^2_j}{1+\xi^2_jm_{1n,c}(L_{+})}}{(L_{+}-\frac{\sigma_i}{n}\sum_j\frac{\xi^2_j}{1+\xi^2_jm_{1n,c}(L_{+})})(L_{+}-\sigma_i\int\frac{s}{1+sm_{1n,c}(L_{+})}\mathrm{d}F(s))} \nonumber \\
%        &=\frac{1}{n}\sum_i\frac{\sigma_i(L_{+}-\widehat{L}_{+})}{(\widehat{L}_{+}-\frac{\sigma_i}{n}\sum_j\frac{\xi^2_j}{1+\xi^2_jm_{1n,c}(L_{+})})(L_{+}-\frac{\sigma_i}{n}\sum_j\frac{\xi^2_j}{1+\xi^2_jm_{1n,c}(L_{+})})} \nonumber \\
%        &+\frac{1}{n}\sum_i\frac{\sigma_i}{\widehat{L}_{+}-\frac{\sigma_i}{n}\sum_j\frac{\xi^2_j}{1+\xi^2_jm_{1n}(\widehat{L}_{+})}}-\frac{1}{n}\sum_i\frac{\sigma_i}{\widehat{L}_{+}-\frac{\sigma_i}{n}\sum_j\frac{\xi^2_j}{1+\xi^2_jm_{1n,c}(L_{+})}} \nonumber \\
%        &+\frac{1}{n}\sum_i\frac{\sigma_i\int\frac{s}{1+sm_{1n,c}(L_{+})}\mathrm{d}F(s)-\frac{\sigma_i}{n}\sum_j\frac{\xi^2_j}{1+\xi^2_jm_{1n,c}(L_{+})}}{(L_{+}-\frac{\sigma_i}{n}\sum_j\frac{\xi^2_j}{1+\xi^2_jm_{1n,c}(L_{+})})(L_{+}-\sigma_i\int\frac{s}{1+sm_{1n,c}(L_{+})}\mathrm{d}F(s))} \nonumber \\
        &=\frac{1}{n}\sum_i\frac{\sigma_i(L_{+}-\widehat{L}_{+})}{(\widehat{L}_{+}-\frac{\sigma_i}{n}\sum_j\frac{\xi^2_j}{1+\xi^2_jm_{1n,c}(L_{+})})(L_{+}-\frac{\sigma_i}{n}\sum_j\frac{\xi^2_j}{1+\xi^2_jm_{1n,c}(L_{+})})} \nonumber \\
        &+\frac{1}{n}\sum_i\frac{-\frac{\sigma_i^2}{n}\sum_j\frac{\xi^4_j(m_{1n}(\widehat{L}_{+})-m_{1n,c}(L_{+}))}{(1+\xi^2_jm_{1n}(\widehat{L}_{+}))(1+\xi^2_jm_{1n,c}(L_{+}))}}{(\widehat{L}_{+}-\frac{\sigma_i}{n}\sum_j\frac{\xi^2_j}{1+\xi^2_jm_{1n}(\widehat{L}_{+})})(\widehat{L}_{+}-\frac{\sigma_i}{n}\sum_j\frac{\xi^2_j}{1+\xi^2_jm_{1n,c}(L_{+})})} \nonumber \\
        &+\frac{1}{n}\sum_i\frac{-\sigma^2_i\int\frac{s}{1+sm_{1n,c}(L_{+})}\mathrm{d}F(s)+\frac{\sigma^2_i}{n}\sum_j\frac{\xi^2_j}{1+\xi^2_jm_{1n,c}(L_{+})}}{(L_{+}-\frac{\sigma_i}{n}\sum_j\frac{\xi^2_j}{1+\xi^2_jm_{1n,c}(L_{+})})(L_{+}-\sigma_i\int\frac{s}{1+sm_{1n,c}(L_{+})}\mathrm{d}F(s))} \nonumber \\
        &:=\mathsf{T}_1+\mathsf{T}_2+\mathsf{T}_3. 
\end{align}
For the term $\mathsf{T}_1,$ by (\ref{eq_keyanasztaz}), Assumption \ref{assum_additional_techinical} and (\ref{def4}), we can see that 
\begin{equation}\label{eq_T1controlcontrol}
\mathsf{T}_1=\mathsf{C}_1(L_+-\widehat{L}_+)+\rO_{\mathbb{P}}(n^{-1}).
\end{equation}

For the term $\mathsf{T}_2,$ we see that 
\begin{align}\label{eq_T2controlcontrol}
        &\mathsf{T}_2=\frac{1}{n}\sum_i\frac{-\frac{\sigma_i^2}{n}\sum_j\frac{\xi^4_j(m_{1n}(\widehat{L}_{+})-m_{1n,c}(L_{+}))}{(1+\xi^2_jm_{1n}(\widehat{L}_{+}))(1+\xi^2_jm_{1n,c}(L_{+}))}}{(\widehat{L}_{+}-\frac{\sigma_i}{n}\sum_j\frac{\xi^2_j}{1+\xi^2_jm_{1n}(\widehat{L}_{+})})^2} \nonumber \\
        &+\frac{1}{n}\sum_i\frac{\big(\frac{\sigma_i^2}{n}\sum_j\frac{\xi^4_j}{(1+\xi^2_jm_{1n}(\widehat{L}_{+}))(1+\xi^2_jm_{1n,c}(L_{+}))}\big)^2(m_{1n}(\widehat{L}_{+})-m_{1n,c}(L_{+}))^2}{(\widehat{L}_{+}-\frac{\sigma_i}{n}\sum_j\frac{\xi^2_j}{1+\xi^2_jm_{1n}(\widehat{L}_{+})})^2(\widehat{L}_{+}-\frac{\sigma_i}{n}\sum_j\frac{\xi^2_j}{1+\xi^2_jm_{1n,c}(L_{+})})} \nonumber \\
        &=\frac{1}{n}\sum_i\frac{-\frac{\sigma_i^2}{n}\sum_j\frac{\xi^4_j(m_{1n}(\widehat{L}_{+})-m_{1n,c}(L_{+}))}{(1+\xi^2_jm_{1n}(\widehat{L}_{+}))^2}}{(\widehat{L}_{+}-\frac{\sigma_i}{n}\sum_j\frac{\xi^2_j}{1+\xi^2_jm_{1n}(\widehat{L}_{+})})^2}+\frac{1}{n}\sum_i\frac{-\frac{\sigma_i^2}{n}\sum_j\frac{\xi^4_j(m_{1n}(\widehat{L}_{+})-m_{1n,c}(L_{+}))^2}{(1+\xi^2_jm_{1n}(\widehat{L}_{+}))^2(1+\xi^2_jm_{1n,c}(L_{+}))}}{(\widehat{L}_{+}-\frac{\sigma_i}{n}\sum_j\frac{\xi^2_j}{1+\xi^2_jm_{1n,c}(L_{+})})^2} \nonumber \\
        &+\frac{1}{n}\sum_i\frac{\big(\frac{\sigma_i^2}{n}\sum_j\frac{\xi^4_j}{(1+\xi^2_jm_{1n}(\widehat{L}_{+}))(1+\xi^2_jm_{1n,c}(L_{+}))}\big)^2(m_{1n}(\widehat{L}_{+})-m_{1n,c}(L_{+}))^2}{(\widehat{L}_{+}-\frac{\sigma_i}{n}\sum_j\frac{\xi^2_j}{1+\xi^2_jm_{1n}(\widehat{L}_{+})})(\widehat{L}_{+}-\frac{\sigma_i}{n}\sum_j\frac{\xi^2_j}{1+\xi^2_jm_{1n,c}(L_{+})})^2}  \\
        &=-(m_{1n}(\widehat{L}_{+})-m_{1n,c}(L_{+}))+\rO_{\mathbb{P}}(n^{-1}), \nonumber
\end{align}
where in the last step we used the second equation of (\ref{eq_edgeequationstwodecide}) for the first term of (\ref{eq_T2controlcontrol}), and (\ref{eq_keyanasztaz}), Theorem \ref{thm_boundedcaselocallaw}, (\ref{def4}) and Assumption \ref{assum_additional_techinical}  for the second and third terms.  Similarly, for $\mathsf{T}_3,$ we have that
\begin{equation}\label{eq_T3controlcontrol}
\mathsf{T}_3=\mathsf{C}_2 \mathcal{X}+\rO_{\mathbb{P}}(n^{-1}). 
\end{equation}

Insert (\ref{eq_T1controlcontrol}), (\ref{eq_T2controlcontrol}) and (\ref{eq_T3controlcontrol}) into (\ref{eq_generalgeneraldecomposition}), we can conclude the proof. 
\end{proof}

Then we prove (\ref{eq_keyanasztaz}) to complete step two and the proof of the theorem. 

\vspace{3pt} 

\noindent{\bf Step two:} To prove (\ref{eq_keyanasztaz}), we first rewrite (\ref{eq_edgeequationstwodecide}) and (\ref{eq_edgeequationstwodecide2}). Recall (\ref{eq:F(m,z)}). We find that (\ref{eq_edgeequationstwodecide}) can be rewritten as 
\begin{gather*}
    F_n(m_{1n}(\widehat{L}_{+}),\widehat{L}_{+})=0,\quad \frac{\partial F_n}{\partial x}(m_{1n}(\widehat{L}_{+}),\widehat{L}_{+})=0,
\end{gather*}
where we denote 
\begin{gather}\label{eq_Fnxyoriginaldefinition}
    F_n(x,y)=\frac{1}{n}\sum_{i=1}^p\frac{\sigma_i}{-y+\frac{\sigma_i}{n}\sum_{j=1}^n\frac{\xi^2_j}{1+x\xi^2_j}}-x.
\end{gather}
Similarly, (\ref{eq_edgeequationstwodecide2}) can be rewritten as 
\begin{gather*}
    F_{n,c}(m_{1n,c}(L_{+}),L_{+})=0,\quad \frac{\partial F_{n,c}}{\partial x}(m_{1n,c}(L_{+}),L_{+})=0,
\end{gather*}
where we denote
\begin{gather*}
    F_{n,c}(x,y)=\frac{1}{n}\sum_{i=1}^p\frac{\sigma_i}{-y+\sigma_i\int\frac{s}{1+xs}dF(s)}-x.
\end{gather*}  
For pair $(\widetilde{x}, \widetilde{y})$ so that $\widetilde{x}>-l^{-1}$ (recall (\ref{rem1nbound})), as long as they satisfy Assumption \ref{assum_additional_techinical} in the sense that $\min_{1 \leq i \leq p} |\Tilde{y}-\sigma_i\int\frac{s}{1+\Tilde{x}s}dF(s)| \geq \tau, $ by (\ref{def4}), we find that 
\begin{gather}\label{eq_controlboundveryveryuseful}
    \left|F_{n,c}(\Tilde{x},\Tilde{y})-F_n(\Tilde{x},\Tilde{y})\right|+\left|\frac{\partial F_{n,c}}{\partial x}(\Tilde{x},\Tilde{y})-\frac{\partial F_{n}}{\partial x}(\Tilde{x},\Tilde{y})\right|+
    \left|\frac{\partial F_{n,c}}{\partial y}(\Tilde{x},\Tilde{y})-\frac{\partial F_{n}}{\partial y}(\Tilde{x},\Tilde{y})\right|=\rO_{\mathbb{P}}(n^{-1/2}).
\end{gather}
Set $(x_0, y_0)=(m_{1n,c}(L_+), L_+).$ Then we have 
\begin{gather}\label{eq_assumptionequation}
    F_{n,c}(x_0,y_0)=0,\quad \frac{\partial F_{n,c}}{\partial x}(x_0,y_0)=0,\quad  0<\frac{\partial F_{n,c}}{\partial y}(x_0,y_0)<\infty, \quad \frac{\partial^2 F_{n}}{\partial y^2}(x_0,y_0)<0.
\end{gather}
It suffices to prove the following lemma. 
\begin{lemma}\label{lem_stabilityargument}
 There exists a pair $(x_1,y_1)$ with condition $|x_1-x_0|+|y_1-y_0|=\mathrm{O}_{\mathbb{P}}(n^{-1/2})$ such that with probability $1-\ro(1)$
    \begin{gather}\label{eq_final_result}
    F_n(x_1,y_1)=0,\quad \frac{\partial F_{n}}{\partial x}(x_1,y_1)=0. 
\end{gather}
\end{lemma}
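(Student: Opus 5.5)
The claim is a perturbation statement for a nondegenerate critical point: by (\ref{eq_assumptionequation}), $x_0$ is a double root of $x\mapsto F_{n,c}(x,y_0)$, and $F_n$ is an $\rO_{\mathbb{P}}(n^{-1/2})$ perturbation of $F_{n,c}$ in $C^1$ by (\ref{eq_controlboundveryveryuseful}). I would therefore prove it with the implicit function theorem applied to the map
\[
\Phi_n(x,y):=\Big(F_n(x,y),\ \tfrac{\partial F_n}{\partial x}(x,y)\Big),
\]
compared with its deterministic counterpart $\Phi_{n,c}:=(F_{n,c},\partial_xF_{n,c})$. The point $(x_0,y_0)$ is a zero of $\Phi_{n,c}$.

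First I would compute the Jacobian of $\Phi_{n,c}$ at $(x_0,y_0)$:
\[
J=\begin{pmatrix}\partial_xF_{n,c} & \partial_yF_{n,c}\\ \partial_x^2F_{n,c} & \partial_x\partial_yF_{n,c}\end{pmatrix}(x_0,y_0).
\]
Since $\partial_xF_{n,c}(x_0,y_0)=0$, we get $\det J=-\partial_yF_{n,c}\,\partial_x^2F_{n,c}$. By (\ref{eq_assumptionequation}), $\partial_yF_{n,c}\asymp1$. The square-root behaviour of $\widetilde\rho$ at $L_+$ in part (b) of Lemma \ref{localestimate2} (the unconditional version) gives $\partial_x^2F_{n,c}(x_0,y_0)\neq0$ uniformly in $n$. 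Indeed, the edge is a simple fold, and the paper records this as the fourth condition in (\ref{eq_assumptionequation}). So $|\det J|\geq c>0$.

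Next I would fix a neighbourhood $B_r=\{|x-x_0|+|y-y_0|\le r\}$ with $r$ a small constant. By (\ref{eq_keyboundhastobeenused}) and Assumption \ref{assum_additional_techinical}, the denominators $1+s x$ and $y-\sigma_i\int\frac{s}{1+xs}dF(s)$ stay bounded away from zero on $B_r$. Hence $F_{n,c}$ and $F_n$ have uniformly bounded derivatives up to third order there, on $\Omega_D$.

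The main technical step is to upgrade (\ref{eq_controlboundveryveryuseful}) from a pointwise bound to a uniform one over $B_r$, and to include the second derivatives $\partial_x^2$ and $\partial_x\partial_y$. The required difference is an empirical average minus its mean,
\[
\frac1n\sum_j g(\xi_j^2,x)-\int g(s,x)\,dF(s),\qquad g(s,x)=\frac{s^k}{(1+xs)^m},
\]
for finitely many pairs $(k,m)$. Each such average is $\rO_{\mathbb{P}}(n^{-1/2})$ by the CLT, since $g$ is bounded. Uniformity in $x$ I would obtain from a grid of mesh $n^{-1}$ together with the Lipschitz bound in $x$, at the cost of a $\log n$ factor, or more cleanly by Taylor expanding $g$ around $x_0$ to a fixed order. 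This is where I expect the real work to lie: the analogous control in (\ref{def4}) is stated only for $z\in\mathbf D_b$ and with an $n^{\epsilon_{\mathsf b}}$ loss. I would therefore redo it directly for the real variable $x$ near $x_0>-l^{-1}$, where everything is smooth.

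With these bounds, $\|\Phi_n-\Phi_{n,c}\|_{C^1(B_r)}=\rO_{\mathbb{P}}(n^{-1/2})$. I would then apply a quantitative inverse function theorem. Consider the Newton-type map
\[
T(w)=w-J^{-1}\Phi_n(w),\qquad w=(x,y).
\]
On $B_{Kn^{-1/2}}$ it is a contraction, since $\|I-J^{-1}D\Phi_n\|\le C(|w-w_0|+n^{-1/2})$. It also maps this ball into itself, since $|J^{-1}\Phi_n(w_0)|=\rO_{\mathbb{P}}(n^{-1/2})$. The Banach fixed point theorem then gives a zero $(x_1,y_1)$ of $\Phi_n$ with $|x_1-x_0|+|y_1-y_0|=\rO_{\mathbb{P}}(n^{-1/2})$, on an event of probability $1-\ro(1)$ obtained by intersecting with $\Omega_D$. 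This establishes (\ref{eq_final_result}).

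Finally, to use the lemma for (\ref{eq_keyanasztaz}), the solution $(x_1,y_1)$ must be identified with $(m_{1n}(\widehat L_+),\widehat L_+)$. I would argue that $x_1>-l^{-1}$ and that $\partial_x^2F_n(x_1,y_1)\neq0$, so $(x_1,y_1)$ is a genuine fold of the curve $F_n=0$ near $y_0$. Uniqueness of the rightmost edge near $L_+$, which follows from (\ref{eq_closenessequation})-type closeness and the square-root behaviour in Lemma \ref{localestimate2}(b), then forces this identification.
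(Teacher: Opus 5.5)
Your argument is correct, but it takes a different route from the paper's. The paper restricts to an event on which (\ref{eq_controlboundveryveryuseful}) holds with error $n^{-1/2+\epsilon}$. It then uses Bolzano's theorem in a box of radius $n^{-1/2+\epsilon}$ around $(x_0,y_0)$ to produce a zero curve of $F_n$ and, separately, a zero curve of $\partial_xF_n$, and argues by contradiction that these curves must cross.

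That topological argument uses only the $C^1$-closeness (\ref{eq_controlboundveryveryuseful}). However, it localizes only to $n^{-1/2+\epsilon}$ and gives no local uniqueness. Its closing step is also loose: the implicit-differentiation identity written there drops the $\partial_yF_n$ term, and once that term is restored no contradiction follows without further input. The missing input is exactly the fold nondegeneracy $\partial_x^2F_{n,c}(x_0,y_0)\neq0$. For example, write $\{F_n=0\}$ as $y=Y_n(x)$ using $\partial_yF_n>0$, and apply the intermediate value theorem to $Y_n'=-\partial_xF_n/\partial_yF_n$; it changes sign across $x_0$ because $Y_{n,c}''(x_0)\neq0$.

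Your Newton/contraction argument for $\Phi_n=(F_n,\partial_xF_n)$ makes this nondegeneracy explicit through $\det J=-\partial_yF_{n,c}\,\partial_x^2F_{n,c}$. In return it gives the sharp $\rO_{\mathbb{P}}(n^{-1/2})$ radius and uniqueness of the zero in $B_r$, which is what the later identification with $(m_{1n}(\widehat L_+),\widehat L_+)$ actually needs.

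The extra cost is smaller than you expect, and no uniform-in-$B_r$ bound is needed. With probability $1-\ro(1)$ the third derivatives of $F_n$ are bounded on $B_r$ by a deterministic constant. So it suffices to have $\Phi_n(w_0)-\Phi_{n,c}(w_0)=\rO_{\mathbb{P}}(n^{-1/2})$ and $D\Phi_n(w_0)-J=\ro_{\mathbb{P}}(1)$ at the single point $w_0=(x_0,y_0)$. These follow from Chebyshev's inequality and the law of large numbers respectively.

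Two small corrections.

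First, the fourth entry of (\ref{eq_assumptionequation}), as printed, concerns $\partial_y^2F_n$, not $\partial_x^2F_{n,c}$. Take the nondegeneracy from the square-root edge (cf.\ (\ref{eq_secondmomentbounded})), or check it directly. With $h(x)=\int\frac{s}{1+xs}\,\mathrm{d}F(s)$, differentiating $\partial_xF_{n,c}=\frac1n\sum_i\sigma_i^2\int\frac{s^2\,\mathrm{d}F(s)}{(1+xs)^2}\,(y-\sigma_ih(x))^{-2}-1$ once more in $x$ gives two terms. Both are negative and of order one at $(x_0,y_0)$, because:
\begin{itemize}
\item $1+x_0s\geq\tau$ for $0<s\leq l$, by (\ref{rem1nbound}) and the unconditional form of (\ref{eq_boundedfrombelowimportant});
\item $L_+-\sigma_ih(x_0)\geq c>0$, by Assumption \ref{assum_additional_techinical} and continuity from $y=+\infty$.
\end{itemize}

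Second, in your identification step do not invoke (\ref{eq_closenessequation}). It is proved only in regime (a) of Lemma \ref{localestimate2}, whereas in regime (b) the closeness of $\widehat L_+$ to $L_+$ is precisely what this lemma is supposed to deliver. You need an independent $\ro(1)$ localization of $(m_{1n}(\widehat L_+),\widehat L_+)$ inside $B_r$ before uniqueness of the fixed point can finish the job. This step lies outside the lemma as stated, and the paper only asserts it.
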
  
  
With Lemma \ref{lem_stabilityargument}, according to (\ref{eq_edgeequationstwodecide}) and Theorem \ref{lem_solutionsystem}, we see that (\ref{eq_keyanasztaz}) holds. In the rest, we prove Lemma \ref{lem_stabilityargument} using (\ref{eq_controlboundveryveryuseful}).

\begin{proof}[\bf Proof of Lemma \ref{lem_stabilityargument}]For some small $\epsilon>0,$ we consider the probability event $\Xi$ so that (\ref{def4}) holds and (\ref{eq_controlboundveryveryuseful}) reads as
\begin{equation}\label{eq_reinterpretteresult}
   \left|F_{n,c}(\Tilde{x},\Tilde{y})-F_n(\Tilde{x},\Tilde{y})\right|+\left|\frac{\partial F_{n,c}}{\partial x}(\Tilde{x},\Tilde{y})-\frac{\partial F_{n}}{\partial x}(\Tilde{x},\Tilde{y})\right|+
    \left|\frac{\partial F_{n,c}}{\partial y}(\Tilde{x},\Tilde{y})-\frac{\partial F_{n}}{\partial y}(\Tilde{x},\Tilde{y})\right|=\rO(n^{-1/2+\epsilon}).
\end{equation}
We have seen that $\mathbb{P}(\Xi)=1-\mathrm{o}(1).$ Now we fix a realization $\{\xi_i^2\} \in \Xi$ so that the discussions below are purely deterministic.

For the above fixed constant $\epsilon>0,$ we set the region
\begin{gather*}
    \mathcal{N}(x,y):=\{(x,y): |x-x_0|+|y-y_0|\leq n^{-1/2+\epsilon}\},
\end{gather*}
To prove the first part of (\ref{eq_final_result}), it suffices to prove that there exists a solution of $F_n(x,y)=0$ in the region $\mathcal{N}(x,y).$ By Bolzano's theorem, we see that for sufficiently large $n,$ we can find two points $(x_{11},y_{11})$ and $(x_{12},y_{12})$ on $\mathcal{N}(x,y)$ so that $F_{n,c}(x_{11},y_{11})<0, \ F_{n,c}(x_{12},y_{12})>0.$ Together with (\ref{eq_reinterpretteresult}), we see that $ F_{n}(x_{11},y_{11})<0,\ F_{n}(x_{12},y_{12})>0.$ Therefore, by continuity, we can find some point $(x',y')$ so that $F_n(x',y')=0.$ Repeating the above procedure, by implicit function theorem, we find that there exists a curve $x \equiv x(y)$ on $\mathcal{N}(x,y)$ so that $F_n(x,y)=0.$ Similarly, we can show that there exists another curve $\widehat{x} \equiv \widehat{x}(\widehat{y})$ on $\mathcal{N}(\widehat{x},\widehat{y})$ so that the second part of (\ref{eq_final_result}) holds in the sense that $\partial F_n (\widehat{x}, \widehat{y})/\partial \widehat{x}=0.$ 

In order to show (\ref{eq_final_result}), we need to prove that the curves $(x,y)$ and $(\widehat{x}, \widehat{y})$ must have at least one intersection in the region $\mathcal{N}(x,y).$ We prove by contradiction. Otherwise, the curve $(x,y)$ will lie in one of the areas separated by $(\widehat{x},\widehat{y})$ with strictly $\partial F_n(x,y)/ \partial x<0$ or $\partial F_n(x,y)/ \partial x>0$. By (\ref{eq_assumptionequation}), we see that $\mathcal{N}(x,y),$  $\partial F_n(x,y)/\partial y>0$. Without loss of generality, we assume $\partial F_n(x,y)/ \partial x<0$. On the one hand, as $F_n(x,y)=0$, one may conclude that for small neighbor around the points on $(x,y)$, it holds that $\mathrm{d} x/\mathrm{d} y>0$. On the other hand, taking the derivative $F_n(x,y)$ with respect to $y$, we obtain that 
\begin{gather*}
    \frac{\mathrm{d} x}{\mathrm{d} y}\times \Big(\frac{1}{n}\sum_{i=1}^p\frac{\frac{ \sigma_i^2}{n}\sum_j\frac{\xi^4_j}{(1+x\xi^2_j)^2}}{(-y+\frac{\sigma_i}{n}\sum_j\frac{\xi^2_j}{1+x\xi^2_j})^2}-1\Big)=0,
\end{gather*}
which implies $\partial F_n(x,y)/\partial x=0$ and gives the contradiction. This concludes our proof.  
\end{proof}

\begin{remark}\label{rmk_varaincecontrol}
We note that, by applying a Taylor expansion to the function $\frac{1}{1+\xi^2 m_{1n,c}(L_{+})}$ around $\mathbb{E}\xi^2$, and in view of Assumption \ref{assum_D}, an argument similar to that in \eqref{rem1nbound} yields
\[
m_{1n,c}(L_{+}) > -l^{-1}.
\]
Consequently, for $\mathsf{v}$ in (\ref{eq_defnvariance}), it follows that $\mathsf{v}$ is of the same order as $\operatorname{Var}(\xi^2)$.
\end{remark}

\subsubsection{Proof of Theorem \ref{thm_main_bounded_conditional}}\label{sec_conditional12121212121}

In this section, we prove Theorem \ref{thm_main_bounded_conditional} by modifying the proof of Theorem \ref{thm_main_bounded} in the same manner as in Section \ref{sec_proof33}, where the proof of Theorem \ref{thm_main_unbounded} in Section \ref{sec_proofoftheorem31} was adapted to establish Theorem \ref{thm_main_unbounded_conditional}. We recall the probabilistic notation conventions introduced in Remark \ref{rmk_notationconventions}.

 For part $(1)$ of Theorem \ref{thm_main_bounded_conditional}, when restricted on $\Omega_X$ and $\Omega_D$, it follows from Proposition \ref{prop_boundedsetting_conditional} that when $d>1$,
\begin{align*}
    \left|n^{1/(d+1)}\big(\lambda_1-\widehat{L}_{+}\big)-n^{1/(d+1)}\big(\frac{1-\phi\widehat{\mathsf{s}}_3}{\widehat{\mathsf{s}}_4}\frac{l-\xi^2_{(1)}}{l^2}\big)\right|=\mathrm{o}(1).
\end{align*}
Applying \eqref{eq_closenessequation} of Lemma \ref{localestimate2}, we have on $\Omega_X\cap\Omega_D,$ when $d>1,$
\begin{align*}
    \left|n^{1/(d+1)}\big(\lambda_1-L_{+}\big)-n^{1/(d+1)}\big(\frac{1-\phi \mathsf{s}_3}{\mathsf{s}_4}\frac{l-\xi^2_{(1)}}{l^2}\big)\right|=\mathrm{o}(1),
\end{align*}
where we refer to Remark \ref{rmk_notationconventions} for the precise definitions of $\Omega_X\cap\Omega_D$. Denote 
\begin{align*}
    \widetilde{Y}_n:=\frac{l^2(\mathfrak{b}n)^{1/(d+1)}}{\mathsf{s}_4^{-1}(1-\phi \mathsf{s}_3)}(\lambda_1-L_{+}),\quad \widetilde{Z}_n:=(\mathfrak{b}n)^{1/(d+1)}(l-\xi^2_{(1)}),
\end{align*}
where $\mathfrak{b}$ is defined in \eqref{eq_defnbfrak}. Applying an argument similar to \eqref{eq_prf_unboundedconditional_1}, we have that for any $\delta>0$, there exists $N_{\delta}$ such that for all $n\geq N_{\delta}$,
\begin{align}\label{eq_prf_boundedconditional_1}
    |\widetilde{Y}_n-\widetilde{Z}_n|\leq\delta,\qquad \text{on} \ \Omega_X \cap \Omega_D.
\end{align}
Then, for all $x\leq 0$, we can denote the following events
\begin{align*}
    \widetilde{A}_n(x):=\{\widetilde{Y}_n\leq x\},\quad \widetilde{B}_n(x):=\{\widetilde{Z}_n\leq x\}.
\end{align*}
Then on the event $\Omega_X\cap\Omega_D$, for $\delta\equiv\delta(x)\in(0,-x)$, by \eqref{eq_prf_boundedconditional_1}, we have
\begin{align*}
    |\widetilde{Y}_n-\widetilde{Z}_n|\leq\delta,
\end{align*}
which immediately implies that
\begin{align*}
    \widetilde{B}_n(x-\delta)\cap\Omega_X\cap\Omega_D\subset\widetilde{A}_n(x),\quad \widetilde{A}_n(x)\cap\Omega_X\cap\Omega_D\subset\widetilde{B}_n(x+\delta).
\end{align*}
Applying conditional expectation and using the monotonicity property, we obtain similarly to \eqref{eq_keyone} that
\begin{align*}
\mathbf{1}_{\Omega_X}\,\mathbb{P}\big(\widetilde{B}_n(x-\delta)\cap \Omega_D \mid X\big)
\leq
\mathbf{1}_{\Omega_X}\,\mathbb{P}\big(\widetilde{A}_n(x)\mid X\big)
\quad \text{a.s.}
\end{align*} 
and      
\begin{align*}
\mathbf{1}_{\Omega_X}\,\mathbb{P}\big(\widetilde{A}_n(x)\mid X\big)
\leq
\mathbf{1}_{\Omega_X}\,\mathbb{P}\big(\widetilde{B}_n(x+\delta)\mid X\big)
+
\mathbf{1}_{\Omega_X}\,\mathbb{P}\big(\Omega_D^c\mid X\big)
\quad \text{a.s.}
\end{align*}
Then, a parallel argument to the proof of Theorem \ref{thm_main_unbounded_conditional}, we are ready to see 
\begin{align*}
\mathbf{1}_{\Omega_X}\Big[
\mathbb{P}\big(\widetilde{B}_n(x-\delta)\mid X\big)
-\mathbb{P}\big(\Omega_D^c\mid X\big)
\Big]
\leq
\mathbf{1}_{\Omega_X}\,\mathbb{P}\big(\widetilde{A}_n(x)\mid X\big)
\leq
\mathbf{1}_{\Omega_X}\Big[
\mathbb{P}\big(\widetilde{B}_n(x+\delta)\mid X\big)
+\mathbb{P}\big(\Omega_D^c\mid X\big)
\Big] \ \text{a.s.}
\end{align*}
Since both $\widetilde{B}_n(\cdot)$ and $\Omega_D$ are $\sigma(D)$-measurable and $D$ is independent with $X$, it holds on $\Omega_X$, almost surely,
\begin{align*}
    \mathbb{P}(\widetilde{B}_n(x-\delta))-\mathbb{P}(\Omega_D^c)
\leq
\mathbb{P}(\widetilde{A}_n(x)\mid X)
\leq
\mathbb{P}(\widetilde{B}_n(x+\delta))+\mathbb{P}(\Omega_D^c).
\end{align*}
We conclude that on $\Omega_X$, almost surely, for any $\delta\in(0,-x)$,
\begin{align*}
    \exp(-|x-\delta|^{d+1})+\mathrm{o}(1)\leq\mathbb{P}\Big(\frac{l^2(\mathfrak{b}n)^{1/(d+1)}}{\mathsf{s}_4^{-1}(1-\phi \mathsf{s}_3)}(\lambda_1-L_{+})\leq x|X\Big)\leq\exp(-|x+\delta|^{d+1})+\mathrm{o}(1).
\end{align*}
This implies that for any fixed $x<0$ by letting $\delta\downarrow0$, we have on $\Omega_X$, almost surely,
\begin{align*}
    \widetilde{\Delta}_n:=\Big|\mathbb{P}\Big(\frac{l^2(\mathfrak{b}n)^{1/(d+1)}}{\mathsf{s}_4^{-1}(1-\phi \mathsf{s}_3)}(\lambda_1-L_{+})\leq x|X\Big)-\exp(-|x|^{d+1})\Big|=\mathrm{o}(1).
\end{align*}
Furthermore, since $\mathbb{P}(\Omega_X^c)=\mathrm{o}(1)$, we can get rid of $\Omega_X$ as the argument in \eqref{eq_unboundedconditional_getridofOmegaX}, which implies for all $x<0$ and any $\epsilon>0$,
\begin{align}\label{eq_dldldldldlddl}
    \mathbb{P}(\widetilde{\Delta}_n>\epsilon)=\mathrm{o}(1).
\end{align}
Finally, following a parallel argument, using the continuous property of $\exp(-|x|^{d+1})$ for $x\uparrow0$ as in the proof of Theorem \ref{thm_main_unbounded_conditional}, we can complete the proof.

%Then, a parallel argument as the proof of Theorem \ref{thm_main_unbounded_conditional}, we can conclude that  with probability $1-\mathrm{o}(1)$,
%\begin{align*}
%    \mathbb{P}\big(l^2 \frac{(\mathfrak{b}n)^{1/(d+1)}}{\mathsf{s}_4^{-1}(1-\phi \mathsf{s}_3)}(\lambda_1-L_+) \leq x|X\big)=\exp(-|x|^{d+1})+\mathrm{o}(1).
%\end{align*}

Next, we prove $(2)$ in Theorem \ref{thm_main_bounded_conditional}. The counterpart of Lemma \ref{lem_keyfinalfinalkey} can be summarized as follows. 
    \begin{lemma}\label{lem_edgeconvergencestructure_bounded_input4}
        Suppose that the assumptions of Theorem \ref{thm_main_bounded_conditional} hold. When restricted to the event $\Omega_D$, we have
        \begin{align*}
            \big|m_{1n,c}(L_{+})-m_{1n}(\widehat{L}_{+})\big|=\mathrm{O}(n^{-1/2}),\quad \big|L_{+}-\widehat{L}_{+}\big|=\mathrm{O}(n^{-1/2}).
        \end{align*}
        As a consequence, it yields that
        \begin{align*}
            \mathsf{C}_1(\widehat{L}_{+}-L_{+})=\mathsf{C}_2\mathcal{X}+\mathrm{O}(n^{-1}).
        \end{align*}
    \end{lemma}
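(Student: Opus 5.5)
The key observation is that $m_{1n}$, $\widehat{L}_+$, $m_{1n,c}$ and $L_+$ depend only on $\Sigma$ and the multipliers $\{\xi_j^2\}$, and not on $X$. So Lemma \ref{lem_edgeconvergencestructure_bounded_input4} is a statement about $D$ alone, and conditioning on $X$ changes nothing. I would therefore reuse the two-step argument from the proof of Theorem \ref{thm_main_bounded} (Step two first, then Step one), now run deterministically on a fixed realization $\{\xi_j^2\}\in\Omega_D$. Where that proof used $\rO_{\mathbb{P}}(n^{-1/2})$ bounds coming from the CLT-type controls, I would use the deterministic bounds built into Definition \ref{defn_probset}(c). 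These are the fifth, sixth and seventh lines of (\ref{def4}), which control $n^{-1}\sum_j \xi_j^2/(1+\xi_j^2 m)-\int s/(1+sm)\,\mathrm{d}F(s)$ and the centered moments of $\xi^2$. I expect to pay a factor $n^{\epsilon_{\mathsf{b}}}$ or a power of $\log n$, which I will absorb into the $\mathrm{O}(\cdot)$, possibly after shrinking $\epsilon_{\mathsf{b}}$.

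\textbf{First part: closeness of the edges.} On $\Omega_D$ I would verify (\ref{eq_controlboundveryveryuseful}) with the deterministic error $n^{-1/2+\epsilon_{\mathsf{b}}}$ for $F_n-F_{n,c}$, $\partial_x(F_n-F_{n,c})$ and $\partial_y(F_n-F_{n,c})$, uniformly over $(\widetilde x,\widetilde y)$ in a small neighborhood of $(x_0,y_0)=(m_{1n,c}(L_+),L_+)$. This uses two inputs. First, (\ref{rem1nbound}) and (\ref{eq_keyboundhastobeenused}) keep $1+\xi_j^2 x$ away from zero for $x>-l^{-1}+c$. Second, Assumption \ref{assum_additional_techinical} keeps the denominators $-y+\sigma_i(\cdots)$ bounded below. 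Differentiating in $x$ only produces $\xi_j^4/(1+\xi_j^2x)^2$ averages, and these are controlled the same way by the last two lines of (\ref{def4}) together with boundedness.

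With these bounds, the proof of Lemma \ref{lem_stabilityargument} goes through verbatim on the fixed realization. It uses Bolzano's theorem on $\mathcal N$, the implicit function curves, and the sign conditions (\ref{eq_assumptionequation}). This produces $(x_1,y_1)$ with $|x_1-x_0|+|y_1-y_0|\le n^{-1/2+\epsilon}$ solving the edge system (\ref{eq_edgeequationstwodecide}). The uniqueness in Theorem \ref{lem_solutionsystem} then identifies $(x_1,y_1)=(m_{1n}(\widehat{L}_+),\widehat{L}_+)$.

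\textbf{Second part: the linearization.} I would redo the decomposition (\ref{eq_generalgeneraldecomposition}) into $\mathsf{T}_1+\mathsf{T}_2+\mathsf{T}_3$.
\begin{itemize}
\item $\mathsf{T}_1$: the first part bounds the cross terms quadratically by $n^{-1}$, giving $\mathsf{C}_1(L_+-\widehat{L}_+)+\mathrm{O}(n^{-1})$.
\item $\mathsf{T}_2$: the second edge equation in (\ref{eq_edgeequationstwodecide}) gives $-(m_{1n}(\widehat{L}_+)-m_{1n,c}(L_+))+\mathrm{O}(n^{-1})$. This term cancels the left-hand side.
\item $\mathsf{T}_3$: replacing the empirical $\xi$-sum in its denominator by the integral costs another $\mathrm{O}(n^{-1})$, since it multiplies $\mathcal X=\mathrm{O}(n^{-1/2})$. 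This gives $\mathsf{C}_2\mathcal X+\mathrm{O}(n^{-1})$.
\end{itemize}
Rearranging yields the claimed identity.

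The main obstacle I anticipate is the first part. The target is a clean $\mathrm{O}(n^{-1/2})$, whereas (\ref{def4}) only gives $n^{-1/2+\epsilon_{\mathsf{b}}}$, or $\sqrt{\log^{c_{\mathsf{b},0}} n/n}$ for linear statistics. If a sharp bound is needed, I would note that $x\mapsto \xi^2/(1+\xi^2 x)$ is smooth and bounded near $x_0$. Its fluctuations at the single point $x_0$ are then $\mathrm{O}(n^{-1/2})$ up to the logarithmic factor allowed in $\Omega_D$. Stability (\ref{eq_assumptionequation}) transfers this rate linearly to $(x_1,y_1)$. Any leftover logarithm I would interpret as permitted by the convention for $\mathrm{O}(\cdot)$ on $\Omega_D$.
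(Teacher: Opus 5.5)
Your proposal follows the paper's route. The paper's proof is only the remark that Step two (Lemma \ref{lem_stabilityargument}) and Step one (Lemma \ref{lem_keyfinalfinalkey}) from the proof of Theorem \ref{thm_main_bounded} go through with deterministic error controls on a fixed realization in $\Omega_D$, which is what you carry out.

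Your caveat is correct. From (\ref{def4}) one only gets $n^{-1/2+\epsilon_{\mathsf{b}}}$ for the edge closeness, and hence $\mathrm{O}(n^{-1+2\epsilon_{\mathsf{b}}})$ in the linearization; the paper suppresses this loss as well. The loss is harmless for how the lemma is used, since $\sqrt{n/\mathsf{v}}\,n^{-1+2\epsilon_{\mathsf{b}}}\to 0$ whenever $\mathsf{v}\gtrsim n^{-1/3+\iota}$ and $\epsilon_{\mathsf{b}}$ is small.
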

    Both of the quantities $\mathsf{C}_1, \mathsf{C}_2 \asymp 1$ and $\mathcal{X}$ are defined in the Lemma \ref{lem_keyfinalfinalkey}. The proof of Lemma \ref{lem_edgeconvergencestructure_bounded_input4} follows lines of the proof the Lemma \ref{lem_keyfinalfinalkey}, where the error controls turn to be purely deterministic when restricted to the event $\Omega_D$. We omit the details. 

Similar to the decomposition presented below Lemma \ref{lem_keyfinalfinalkey}, we decompose
    \begin{align}\label{eq_boundedconditional_Gaussain_decomp1}
        \lambda_{1}-L_{+}&=\lambda_{1}-\widehat{L}_{+}+\widehat{L}_{+}-L_{+}.
    \end{align}
    The basic idea is that, when $\mathsf{v} \gtrsim n^{-1/3+\iota}$, $\widehat{L}_{+}-L_{+}$ will dominate the fluctuation of $\lambda_1-\widehat{L}_{+}$ and it will be asymptotically Gaussian when conditional on $X$. To this end, we define
    \begin{align*}
        \mathtt{Y}_n:=\sqrt{n/\mathsf{v}}(\lambda_1-L_{+}),\quad \mathtt{Z}_n:=\sqrt{n/\mathsf{v}} \frac{\mathsf{C}_2}{\mathsf{C}_1} \mathcal{X},
    \end{align*}
where $\mathsf{v}$ is defined in \eqref{eq_defnvariance} and $\operatorname{Var}(\mathcal{X})=(\mathsf{C}_1^2/\mathsf{C}_2^2)\mathsf{v}n^{-1}$. According to Theorem 3.7 of \cite{ding2021spiked}, on the event $\Omega_X\cap\Omega_D$, for $\epsilon<\iota/2,$ we have that 
    \begin{align*}
        \lambda_1-\widehat{L}_{+}=\mathrm{O}(n^{-2/3+\epsilon}).
    \end{align*}
In addition, according to Lemma \ref{lem_edgeconvergencestructure_bounded_input4}, on the event $\Omega_D$, 
    \begin{align*}
        \widehat{L}_{+}-L_{+}=\frac{\mathsf{C}_2}{\mathsf{C}_1}\mathcal{X}+\mathrm{O}(n^{-1}).
    \end{align*}
    Therefore, we obtain on $\Omega_X\cap\Omega_D$,
    \begin{align*}
        \mathtt{Y}_n-\mathtt{Z}_n=\sqrt{n/\mathsf{v}}(\lambda_1-\widehat{L}_{+})+\mathrm{O}\left(\frac{1}{\sqrt{n\mathsf{v}}}\right)=\mathrm{O}\left(\frac{n^{-1/6+\epsilon}}{\sqrt{\mathsf{v}}}\right)+\mathrm{O}\left(\frac{1}{\sqrt{n\mathsf{v}}}\right).
    \end{align*}
    When $\mathsf{v}\gtrsim n^{-1/3+\iota}$, both terms on the right-hand side tend to zero. Therefore, for every fixed $\delta>0$, there exists $N_{\delta}$ such that for all $n\geq N_{\delta}$,
    \begin{align*}
        |\mathtt{Y}_n-\mathtt{Z}_n|\leq\delta \qquad\text{on }\Omega_X\cap\Omega_D.
    \end{align*}
    Moreover, $\mathcal{X}$ depends only on the multipliers, and hence $\mathtt{Z}_n$ is $\sigma(D)$-measurable. Applying Lyapunov-Linderberg central limit theorem for the i.i.d. bounded multipliers, we have for every fixed $x\in\mathbb{R}$, when $n$ is sufficiently large
    \begin{align*}
        |\mathbb{P}(\mathtt{Z}_n\leq x)-\Phi(x)|=\mathrm{o}(1).
    \end{align*}
The rest of the proof follows from an argument similar to those between (\ref{eq_prf_boundedconditional_1}) and (\ref{eq_dldldldldlddl}), using $\mathsf{Y}_n$ and $\mathsf{Z}_n$. We omit the details.

% Now, using the fact that $\mathtt{Z}_n$ depends only on $D$ and $D$ is independent of $X$, for very fixed $x\in\mathbb{R}$, we have
%%    \begin{align*}
%%        \mathbb{P}(\mathtt{Z}_n\le x|X)=\mathbb{P}(\mathtt{Z}_n\le x),\qquad \text{a.s.}
%%    \end{align*}
%%    Therefore,
%    \begin{align*}
%        |\mathbb{P}(\mathtt{Z}_n\le x|X)-\Phi(x)|=\mathrm{o}(1) \qquad \text{a.s.}
%    \end{align*}
% 
%
%    To transfer the limit from $\mathtt{Z}_n$ to $\mathtt{Y}_n$, we have for any fixed $x\in\mathbb{R}$ and small $\delta>0$,
%    \begin{align*}
%    \{\mathtt{Z}_n\le x-\delta\}\cap\Omega_X\cap\Omega_D\subset\{\mathtt{Y}_n\le x\},
%\end{align*}
%and
%\begin{align*}
%    \{\mathtt{Y}_n\le x\}\cap\Omega_X\cap\Omega_D\subset\{\mathtt{Z}_n\le x+\delta\}.
%\end{align*}
%Then, using an argument similar to the proof of $(1)$, we can conclude that for every fixed $x\in\mathbb{R}$, it holds with probability $1-\mathrm{o}(1)$, 
%\begin{align*}
%    \big|\mathbb{P}\big(\sqrt{n/\mathsf{v}^{-1/2}}(\lambda_1-L_+)\le x\big|X\big)-\Phi(x)\big|=\mathrm{o}(1).
%\end{align*}

Finally, we turn to part (3) of Theorem \ref{thm_main_bounded_conditional}.  Under the assumptions of part (3) that when $\mathsf{v}$ is smaller, the term $\widehat{L}_{+}-L_{+}$ in \eqref{eq_boundedconditional_Gaussain_decomp1} no longer dominates the fluctuation of $\lambda_1-\widehat{L}_{+}$ since it vanishes at a faster rate. Consequently, the decomposition in \eqref{eq_boundedconditional_Gaussain_decomp1} is no longer sufficient to characterize the conditional limiting behavior. To overcome this, we instead work with the following decomposition:
    \begin{align}\label{eq_ddd}
        (\lambda_1-L_{+})/\mathbb{E}\xi^2=\widehat{\lambda}_1-E_{+}+\lambda_1/\mathbb{E}\xi^2-\widehat{\lambda}_1+E_{+}-L_{+}/\mathbb{E}\xi^2.
    \end{align}
    Recall that $\widehat{\lambda}_1$ is the largest eigenvalue of the sample covariance matrix $S$, and $E_{+}$ is the right edge of its limiting spectral distribution; both quantities are defined in \eqref{eq_twresultoriginal}. 
    
    %{\color{green}[This has been summiarzed in Definition \ref{def_OmegaX}. Notations needed to be summarized.]}
   
Note that for the first part on the right-hand side of (\ref{eq_ddd}), on the event $\Omega_X$, by (3) of Definition \ref{def_OmegaX}, we have that 
    \begin{align}\label{eq_lowerbound_conditional_edge}
        |\widehat{\lambda}_1-E_{+}|\geq C_{X,3}n^{-2/3-\varepsilon_3},
    \end{align}
     for some constant $C_{X,3}>0$ and sufficiently small constant $\varepsilon_3>0$ defined in Definition \ref{def_OmegaX}. 
    %In the following, to simplify the argument, we assume $\mathbb{E}\xi^2=1$ without loss of generality. Moreover, we introduce the convention
    % \[
    %\mathsf{P}^*(\cdot):=\mathbb P(\cdot\mid X).
    %\]
    %$\mathsf P^*(\cdot)$ measures the randomness only from the multipliers $\{\xi_i^2\}_{i=1}^n$  when we fix a realization of $X\in\Omega_X$.
   
Then we analyze the second term $\lambda_1/\mathbb{E}\xi^2-\widehat{\lambda}_1$ on the right-hand side of (\ref{eq_ddd}) on the event $\Omega_X\cap\Omega_D$. Let $\nu_k$ denote the eigenvectors of the sample covariance matrix $S$ associated with $\widehat{\lambda}_k$. Apply the Rayleigh–Schrödinger expansion to the top eigenvalue perturbation from $\lambda_1/\mathbb{E}\xi^2$ to $\widehat{\lambda}_1$, we have
    \begin{align}\label{eq_boundedconditional_indicator_expansion}
        \lambda_1/\mathbb{E} \xi^2-\widehat{\lambda}_1=\nu_1^*\Sigma^{1/2}X(D^2/\mathbb{E}\xi^2-I)X^*\Sigma^{1/2}\nu_1+\mathrm{O}\left(\sum_{k\geq 2}\frac{|\nu_k^*\Sigma^{1/2}X(D^2/\mathbb{E}\xi^2-I)X^*\Sigma^{1/2}\nu_1|^2}{\widehat{\lambda}_1-\widehat{\lambda}_k}\right).
    \end{align}
    For the quadratic term in the remainder, we write 
    \begin{align*}
        \nu_k^*\Sigma^{1/2}X(D^2/\mathbb{E}\xi^2-I)X^*\Sigma^{1/2}\nu_1=\sum_{i=1}^n(\xi^2_i/\mathbb{E}\xi^2-1)(\nu_k^*\Sigma^{1/2}\mathbf{x}_i)(\mathbf{x}_i^*\Sigma^{1/2}\nu_1).
    \end{align*}
On the event $\Omega_X$, according to parts (3) and (4) of Definition \ref{def_OmegaX}, we have for $1\leq k\leq p$,
    \begin{align*}
        (\nu_k^*\Sigma^{1/2}\mathbf{x}_i)(\mathbf{x}_i^*\Sigma^{1/2}\nu_1)&=\Big(\nu_k^*\big(\Sigma^{1/2}\mathbf{x}_i/(\mathbf{x}_i^*\Sigma\mathbf{x}_i)^{1/2}\big)\times \nu_1\big(\Sigma^{1/2}\mathbf{x}_i/(\mathbf{x}_i^*\Sigma\mathbf{x}_i)^{1/2}\big)\Big) \mathbf{x}_i^*\Sigma\mathbf{x}_i\\
        &=\mathbf{x}_i^*\Sigma\mathbf{x}_i\times\mathrm{O}(n^{-1+\varepsilon_4})=\bar{\sigma}_1\times\mathrm{O}(n^{-1+\varepsilon_4})+\mathrm{O}(n^{-3/2+\varepsilon_1+\varepsilon_4}).
    \end{align*}
     for some sufficiently small constants $\iota/3>\varepsilon_1, \varepsilon_4>0$ defined in Definition \ref{def_OmegaX} and $\bar{\sigma}_1$ in (\ref{eq_somenotations}). Moreover, by Remark \ref{rmk_varaincecontrol} that $\mathsf{v} \asymp \operatorname{Var}(\xi^2)$, {we have on the event $\Omega_D$, by (c) of Definition \ref{defn_probset}
     \begin{align}\label{eq_boundedcondition_sumxi}
         \sum_{i=1}^n(\xi^2_i/\mathbb{E}\xi^2-1)=\mathrm{O}(\sqrt{\mathsf{v}n\log^{c_{\mathsf{b},0}}n}).
     \end{align} }
     Combining these estimates, we obtain on the  event $\Omega_X\cap\Omega_D$,
     \begin{align}\label{eq_boundedcondition_indicator_est1}
         \nu_k^*\Sigma^{1/2}X(D^2/\mathbb{E}\xi^2-I)X^*\Sigma^{1/2}\nu_1=\mathrm{O}(\sqrt{\mathsf{v}n^{-1+\widetilde{\epsilon}_1}}),
     \end{align}
     for some sufficiently small constant $\iota/3>\widetilde{\epsilon}_1>\varepsilon_4$. %}
     
% Therefore, for some small constant $\epsilon_3>0$, it holds on $\Omega_X$ that
%\begin{align*}
%    \mathbb E\Big(
%    \Big|
%    \nu_k^*\Sigma^{1/2}X(D^2-I)X^*\Sigma^{1/2}\nu_1
%    \Big|^2
%    \,\Big|\,X
%    \Big)
%    &=\mathsf v \sum_{i=1}^n
%    (\nu_k^*\Sigma^{1/2}\mathbf x_i)^2(\mathbf x_i^*\Sigma^{1/2}\nu_1)^2=\mathrm{O}\Big(\frac{\mathsf v\,n^{\epsilon_3}}{n}\Big),
%\end{align*}
%uniformly in $k$. By Chebyshev's inequality under the conditional law $\mathsf P^*$, it follows that
%\begin{align}\label{eq_boundedcondition_indicator_est1}
%    \nu_k^*\Sigma^{1/2}X(D^2-I)X^*\Sigma^{1/2}\nu_1
%=\mathrm{O}_{\mathsf P^*}\Big(\sqrt{\mathsf v\,n^{-1+\epsilon_3}}\Big)
%\end{align}
%on $\Omega_X$.

On the other hand, to estimate the summation $\sum_{k\geq 2}(\widehat{\lambda}_1-\widehat{\lambda}_k)^{-1}$, we decompose the eigenvalues $\widehat{\lambda}_k$ into two parts:
    \begin{align*}
        \sum_{k\geq 2}\frac{1}{\widehat{\lambda}_1-\widehat{\lambda}_k}=\sum_{k\geq 2}^{\lfloor(1-\omega)p\rfloor}\frac{1}{\widehat{\lambda}_1-\widehat{\lambda}_k}+\sum_{k\geq \lfloor(1-\omega)p\rfloor+1}^p\frac{1}{\widehat{\lambda}_1-\widehat{\lambda}_k},
    \end{align*}
    for some fixed $\omega\in(0,1)$ {in (3) of Definition \ref{def_OmegaX}. By part (3) of Definition \ref{def_OmegaX}}, on the event $\Omega_X$, we have that for $\{\widehat{\lambda}_k\}_{k=2}^{\lfloor(1-\omega)p\rfloor}$, 
    \begin{align*}
        \frac{1}{\widehat{\lambda}_1-\widehat{\lambda}_k}\leq C_{X,2}^{-1}\frac{n^{2/3+\varepsilon_3}}{k^{2/3}},
    \end{align*}
    for some constant $C_{X,2}>0$ and sufficiently small constant $\iota/3>\varepsilon_3>0$ as in Definition \ref{def_OmegaX}. Moreover, for $\{\widehat{\lambda}_k\}_{k=\lfloor(1-\omega)p\rfloor+1}^p$, we have
    \begin{align*}
        \frac{1}{\widehat{\lambda}_1-\widehat{\lambda}_k}\leq C_2,
    \end{align*}
    for some constant $C_2>0$. Therefore,
    \begin{align*}
        \frac{1}{n}\sum_{k\geq 2}\frac{1}{\widehat{\lambda}_1-\widehat{\lambda}_k}&=\frac{1}{n}\sum_{k\geq 2}^{\lfloor(1-\omega)p\rfloor}\frac{1}{\widehat{\lambda}_1-\widehat{\lambda}_k}+\frac{1}{n}\sum_{k\geq \lfloor(1-\omega)p\rfloor+1}^p\frac{1}{\widehat{\lambda}_1-\widehat{\lambda}_k}\\
        &\leq C_{X,2}^{-1}\frac{n^{\varepsilon_3}}{n^{1/3}}\sum_{k\geq 2}^{\lfloor(1-\omega)p\rfloor}\frac{1}{k^{2/3}}+C_2\leq C_3n^{\varepsilon_3},
    \end{align*}
    for some constant $C_3>0$ under $\Omega_X$.

    Combining the above bound with \eqref{eq_boundedcondition_indicator_est1}, we deduce that on $\Omega_X\cap\Omega_D$,
    \begin{align*}
        \sum_{k\geq 2}\frac{|\nu_k^*\Sigma^{1/2}X(D^2/\mathbb{E}\xi^2-I)X^*\Sigma^{1/2}\nu_1|^2}{\widehat{\lambda}_1-\widehat{\lambda}_k}=\mathrm{O}\big(\mathsf{v}n^{\widetilde{\epsilon}_1+\varepsilon_3}\big).
    \end{align*}
    Consequently,
    \begin{align}\label{eq_herehrehrehrhehrehrhhere}
        \lambda_1/\mathbb{E}\xi^2-\widehat{\lambda}_1=\nu_1^*\Sigma^{1/2}X(D^2/\mathbb{E}\xi^2-I)X^*\Sigma^{1/2}\nu_1+\mathrm{O}(\mathsf{v}n^{\widetilde{\epsilon}_2}),
    \end{align}
   on $\Omega_X\cap\Omega_D$, where $\widetilde{\epsilon}_2:=\widetilde{\epsilon}_1+\varepsilon_3<2\iota/3$. It remains to estimate the leading term in \eqref{eq_boundedconditional_indicator_expansion}.  For this purpose, we write
    \begin{align*}
        \nu_1^*\Sigma^{1/2}X(D^2/\mathbb{E}\xi^2-I)X^*\Sigma^{1/2}\nu_1=\sum_{i=1}^n(\xi^2_i/\mathbb{E}\xi^2-1)(\nu_1^*\Sigma^{1/2}\mathbf{x}_i)^2.
    \end{align*}
    Denote $\widetilde{w}=X^*\Sigma^{1/2}\nu_1$ and recall $\mathcal{S}=X^*\Sigma X$. Since $S\nu_1=\widehat\lambda_1\nu_1$, we have $\mathcal{S}\widetilde{w}=X^*\Sigma^{1/2}S\nu_1=\widehat{\lambda}_1X^*\Sigma^{1/2}\nu_1=\widehat{\lambda}_1\widetilde{w}$. Hence, $\widetilde{w}$ is the eigenvector of the companion matrix $\mathcal{S}$ associated with the eigenvalue $\widehat{\lambda}_1$. Moreover, 
    \begin{align*}
    \|\widetilde w\|^2=\nu_1^*\Sigma^{1/2}XX^*\Sigma^{1/2}\nu_1=\nu_1^*S\nu_1=\widehat\lambda_1.
    \end{align*}
    Therefore, after normalization, $w_1=\widetilde{w}/\|\widetilde{w}\|$ a unit eigenvector of $\mathcal{S}$ corresponding to $\widehat{\lambda}_1$.
    By part (5) of Definition \ref{def_OmegaX}, on the event $\Omega_X$,
    \begin{align*}
        \max_{1\leq i\leq n}|w_{i1}|^2=\mathrm{O}(n^{-1+\varepsilon_5}),
    \end{align*}
    for some sufficiently small constants $\iota/3>\varepsilon_5>0$ defined in Definition \ref{def_OmegaX}, where $w_{i1}$ is the $i$-th element in $w_1$. On the other hand, by (2) of Definition \ref{def_OmegaX}, $\|\widetilde{w}\|=\sqrt{\widehat{\lambda}_1}=\mathrm{O}(1)$ on the event $\Omega_X$. Combining the last two estimates, we have that 
    \begin{align*}
        \max_{1\leq i\leq n}|\nu_1^*\Sigma^{1/2}\mathbf{x}_i|^2=\mathrm{O}(n^{-1+\varepsilon_5}).
    \end{align*}
    Consequently, together with \eqref{eq_boundedcondition_sumxi}, on the event $\Omega_X\cap\Omega_D$,
    %\begin{align*}
    %    &\mathbb{E}\big(\sum_{i=1}^n(\xi^2_i-1)(\nu_1^*\Sigma^{1/2}\mathbf{x}_i)^2|X\big)=0,\quad \mathbb{E}\big((\sum_{i=1}^n(\xi^2_i-1)(\nu_1^*\Sigma^{1/2}\mathbf{x}_i)^2)^2|X\big)=\mathrm{O}(\frac{\mathsf{v}n^{\epsilon_5}}{n}),
    %\end{align*}
    \begin{align*}
        \nu_1^*\Sigma^{1/2}X(D^2/\mathbb{E}\xi^2-I)X^*\Sigma^{1/2}\nu_1=\mathrm{O}(\sqrt{\mathsf{v}n^{\widetilde{\epsilon}_3}/n}),
    \end{align*}
    for some sufficiently small constant $\widetilde{\epsilon}_3>\varepsilon_5$. 
    %It implies that $\nu_1^*\Sigma^{1/2}X(D^2-I)X^*\Sigma^{1/2}\nu_1=\mathrm{O}_{\mathsf{P}^*}(\sqrt{\mathsf{v}n^{\epsilon_5}/n})$, under $\Omega_X$. In summary, we get that under $\Omega_X$,
    Together with (\ref{eq_herehrehrehrhehrehrhhere}), we conclude that on  $\Omega_X\cap\Omega_D$ 
    {
    \begin{align}\label{eq_estdifference_lambda1hatlambda1}
        \lambda_1/\mathbb{E}\xi^2-\widehat{\lambda}_1=\mathrm{O}\big(\sqrt{\mathsf{v}n^{\widetilde{\epsilon}_3}/n}+\mathsf{v}n^{\widetilde{\epsilon}_2}\big)\ll n^{-2/3-\varepsilon_3},
    \end{align}
    when $\mathsf{v}\ll n^{-2/3-\iota}$ with $\widetilde{\epsilon}_2<2\iota/3$ and $\varepsilon_3<\iota/3$.
    }

 Finally,  it remains to control the last term $E_{+}-L_{+}/\mathbb{E}\xi^2$ in (\ref{eq_ddd}). 
By an argument similar to \eqref{eq_keyanasztaz}, we can obtain for $\widetilde{L}_{+}:=L_{+}/\mathbb{E}\xi^2$
    \begin{align}\label{eq_estdifference_mQmS}
        |m_{2n,c}(\widetilde{L}_{+})-m_{n}^{\mathtt{S}}(E_{+})|=\mathrm{O}(\mathsf{v}),\quad |\widetilde{L}_{+}-E_{+}|=\mathrm{O}(\mathsf{v}).
    \end{align}
    {
    Consequently, on $\Omega_X\cap\Omega_D$,
    \begin{align}\label{eq_estdifference_E+L+}
        |E_{+}-L_{+}/\mathbb{E}\xi^2|=\mathrm{O}(\mathsf{v})\ll n^{-2/3-\varepsilon_3},
    \end{align}
     when $\mathsf{v}\ll n^{-2/3-\iota}$ with $\varepsilon_3<\iota/3$.

In summary, combining \eqref{eq_lowerbound_conditional_edge}, \eqref{eq_estdifference_lambda1hatlambda1} and \eqref{eq_estdifference_E+L+}, when $\mathsf{v}\ll n^{-2/3-\iota}$, we conclude that on the event $\Omega_X\cap\Omega_D$,
\begin{align*}
    (\lambda_1-L_{+})/\mathbb{E}\xi^2=\widehat{\lambda}_1-E_{+}+\mathrm{o}(n^{-2/3-\varepsilon_3}).
\end{align*}}
Hence, it follows that on $\Omega_X\cap\Omega_D$,
\begin{align*}
    \frac{n^{2/3}\gamma_0}{\mathbb{E}\xi^2}(\lambda_1-L_{+})=n^{2/3}\gamma_0(\widehat{\lambda}_1-E_{+})+\mathrm{o}(1).
\end{align*}
In addition, when $\mathsf{v}\ll n^{-2/3-\iota}$,
by a discussion similar to the proof of Lemma \ref{lem_stabilityargument}, using (\ref{eq_estdifference_mQmS}), we can show that
% \eqref{eq_estdifference_mQmS} implies that the self-consistent system for $(m_{2n,c}(\widetilde L_+),\widetilde L_+)$ is an $\mathrm{O}(\mathsf v)$ perturbation of that for $(m_n^{\mathtt S}(E_+),E_+)$. Since the edge coefficient depends smoothly on the corresponding system parameters under the regular edge condition,
for $\gamma$ defined in (b) of Lemma \ref{localestimate2}, we have
$\gamma=(\gamma_0/\mathbb{E}\xi^2)(1+\mathrm{o}(1))$. Therefore, on $\Omega_X\cap\Omega_D$, we have
\begin{align*}
    n^{2/3}\gamma(\lambda_1-L_{+})=n^{2/3}\gamma_0(\widehat{\lambda}_1-E_{+})+\mathrm{o}(1).
\end{align*}
With the above results on $\Omega_X \cap \Omega_D$, the arguments for the conditional probability can be established by following the same strategy as in Section~\ref{sec_conditional12121212121}. We omit the details due to their similarity.

\subsection{Proof of Corollary \ref{cor_joint_gaussian_edge}}\label{sec_applications_nonspike}

\begin{proof}
We work on the event $\Omega_X$ defined in Definition \ref{def_OmegaX}. For every fixed
integer $k\geq1$, edge rigidity implies that, on
$\Omega_X\cap\Omega_D$,
\begin{equation}\label{eq_conditional_joint_edge_rigidity}
\max_{1\leq i\leq k}
|\lambda_i-\widehat{L}_+|
=
\rO(n^{-2/3+\epsilon})
\end{equation}
for any sufficiently small constant $\epsilon>0$. Recall that
\[
\mathtt{Z}_n:=\sqrt{n\mathsf{v}^{-1}}(\widehat{L}_+-L_+)
\]
depends only on the multipliers. Since $D$ is independent of $X$, the
edge-shift central limit theorem gives
\begin{equation}\label{eq_conditional_common_gaussian_clt}
\sup_{x\in\mathbb{R}}
\left|
\mathbb{P}(\mathtt{Z}_n\leq x\mid X)-\Phi(x)
\right|
\longrightarrow0.
\end{equation}

For $1\leq i\leq k$, write
\begin{equation*}
\sqrt{n\mathsf{v}^{-1}}(\lambda_i-L_+)
=
\mathtt{Z}_n+
\sqrt{n\mathsf{v}^{-1}}(\lambda_i-\widehat{L}_+).
\end{equation*}
In the Gaussian regimes under consideration, we may choose
$\epsilon>0$ sufficiently small so that
\begin{equation*}
\sqrt{n\mathsf{v}^{-1}}\,n^{-2/3+\epsilon}
=
\ro(1).
\end{equation*}
Indeed, this follows immediately when $\mathsf{v}\asymp1$, whereas
for vanished variance it follows from
$\mathsf{v}\gtrsim n^{-1/3+\iota}$ by choosing
$\epsilon<\iota/2$. Consequently, for every fixed $\delta>0$, with probability at least $1-\mathrm{o}(1)$,
\begin{equation}\label{eq_conditional_joint_remainder}
\mathbb{P}\left(
\max_{1\leq i\leq k}
\left|
\sqrt{n\mathsf{v}^{-1}}(\lambda_i-L_+)-\mathtt{Z}_n
\right|>\delta
\Big|X
\right)
\rightarrow 0.
\end{equation}

Fix $(x_1,\ldots,x_k)\in\mathbb{R}^k$ and set
$x_*:=\min_{1\leq i\leq k}x_i$. It follows from
\eqref{eq_conditional_joint_remainder} that, for every $\delta>0$,
\begin{align*}
&\mathbb{P}(\mathtt{Z}_n\leq x_*-\delta\mid X)
-
\mathbb{P}\left(
\max_{1\leq i\leq k}
\left|
\sqrt{n\mathsf{v}^{-1}}(\lambda_i-L_+)-\mathtt{Z}_n
\right|>\delta
\Big|X
\right)\\
&\qquad\leq
\mathbb{P}\left(
\sqrt{n\mathsf{v}^{-1}}(\lambda_i-L_+)\leq x_i,\
1\leq i\leq k
\Big|X
\right)\\
&\qquad\leq
\mathbb{P}(\mathtt{Z}_n\leq x_*+\delta\mid X)
+
\mathbb{P}\left(
\max_{1\leq i\leq k}
\left|
\sqrt{n\mathsf{v}^{-1}}(\lambda_i-L_+)-\mathtt{Z}_n
\right|>\delta
\Big|X
\right).
\end{align*}
Applying \eqref{eq_conditional_common_gaussian_clt} and
\eqref{eq_conditional_joint_remainder}, and then letting
$\delta\downarrow0$, yields
\begin{equation*}
\mathbb{P}\left(
\sqrt{n\mathsf{v}^{-1}}(\lambda_i-L_+)\leq x_i,\
1\leq i\leq k
\Big|X
\right)
\rightarrow
\Phi(x_*),
\end{equation*}
with probability at least $1-\mathrm{o}(1)$.

Finally, the common edge-shift term cancels when two leading
eigenvalues are subtracted. By
\eqref{eq_conditional_joint_edge_rigidity}, when $k\geq2$,
\begin{equation*}
\max_{1\leq i\leq k-1}
(\lambda_i-\lambda_{i+1})
\leq
2\max_{1\leq i\leq k}
|\lambda_i-\widehat{L}_+|
=
\rO(n^{-2/3+\epsilon})
\end{equation*}
on $\Omega_X\cap\Omega_D$. Since both events have probability tending
to one, the same eigengap estimate holds conditionally on $X$.

%Since $\{\sigma_i\}_{1\le i\le p}$ and $\{\widehat{\sigma}_{i}\}_{1\le i\le p}$ are uniformly bounded, the dominated convergence theory implies that 
%    \begin{align}\label{eq_QuEST_convergence}
%        \mathbb{E}\big[p^{-1}\sum_{i=1}^p(\widehat{\sigma}_i-\sigma_i)^2\big]\rightarrow 0,
%    \end{align}
%Then, denote the Green function and the Stieltjes transform of ESD of $\widehat{\Sigma}^{1/2}WW^*\widehat{\Sigma}^{1/2}$ by
%\begin{align*}
%    G^{\mathtt{R}}(z)=(R-zI)^{-1},\quad m_R=\frac{1}{p}\operatorname{tr}G^{\mathtt{R}}(z),\quad z=E+\mathrm{i}\eta\in\mathbb{C}_{+}.
%\end{align*}
%We have the conditions in \cite{knowles2017anisotropic} hold and   the anisotropic local law of $R$ implies that $m_R$ is closed to a deterministic quantity $m_n^{\mathtt{R}}(z)$. By \eqref{eq_QuEST_convergence}, it can be shown that 
%\begin{align*}
%    |m_n^{\mathtt{R}}(z)-m_n^{\mathtt{S}}(z)|=\mathrm{o}(1).
%\end{align*}
%By the edge universality of the large sample covariance matrix \cite{knowles2017anisotropic}, we can conclude that the limiting Tracy-Widom distribution holds for $\zeta_1$. Consequently, the main results of Corollary \ref{coro_conditionalboostrap} follow immediately.
%}
\end{proof}

\section{Proof of the results of Section \ref{sec_boostrapeffect_spike}}\label{sec_proof_spike}
%In this section, we prove the results of Section \ref{sec_boostrapeffect_spike}.
%{\color{green}[PLZ PLZ PLZ PLZ PLZ!!!!!CHECK WHAT YOU WROTE. PLZ AVOID THOSE PLAIN WRONG TYPOS.]}
%\subsection{Proof of the results in Section \ref{sec_spike_thegood}}
{
\subsection{Proof of Theorem \ref{thm_sample}}
The proof of this theorem is largely parallel to those of Theorem 2.1 and equation (2.1) in \cite{CHP}. We therefore only outline the main ideas and omit the routine details. The key step is to introduce auxiliary quantities $\theta_i, 1 \leq i \leq r,$ defined as the solutions to 
\begin{gather}\label{eq_def_theta}
    \frac{\theta_i}{\widetilde{\sigma}_i}=\Big(1-\frac{1}{n\theta_i}\sum_{j=r+1}^p\frac{\sigma_j}{1-\widetilde{\sigma}_i^{-1}\sigma_j}\Big)^{-1},
\end{gather}  
for each $1\leq i\leq r$. Under the restriction that $\theta_i \in [\widetilde{\sigma}_i, 2 \widetilde{\sigma}_i],$ the existence and uniqueness of $\theta_i$ were established in \cite{CHP,yu2024testing}. Moreover, under the assumptions of Theorem \ref{thm_sample}, for each $1 \leq i \leq r$,
\begin{equation}\label{eq_thetaipriorbound}
\begin{split}
    \theta_i/\widetilde{\sigma}_i=1+\frac{1}{n}\sum_{j=r+1}^p\frac{\sigma_j/\widetilde{\sigma}_i}{1-\sigma_j/\widetilde{\sigma}_i}=1+\frac{1}{n}\sum_{j=r+1}^p\frac{\sigma_j}{\widetilde{\sigma}_i}+\mathrm{o}(n^{-1/2}).
\end{split}
\end{equation}

Following the arguments in the proof of Lemma 3.4 of \cite{yu2024testing} or Theorem 2.1 of \cite{CHP}, we obtain that, for $1\leq i\leq r$,
\begin{gather}\label{eq_limitingrepresentation_1}
    \frac{\widehat{\mu}_i}{\theta_i}=1+\mathbf{k}_i^{*}(V_1^{*}XX^{*}V_1-I_r)\mathbf{k}_i+\ro_{\mathbb{P}}(n^{-1/2}),
\end{gather}
where $\mathbf{k}_i, 1 \leq i \leq r,$ denotes the $i$-th standard basis vector in $\mathbb{R}^r.$ Since $\mathbf{v}_i^*XX^*\mathbf{v}_i$ has mean one with variance $(2+\sum_{k=1}^pv_{ki}^4(\mathfrak{m}_4-3))/n$, the representation in \eqref{eq_limitingrepresentation_1} yields the following central limit theorem for $\widehat{\mu}_i/\theta_i, 1\leq i\leq r$,
\begin{align}\label{eq_unconditionalspike_hatmutheta}
    \lim_{n\rightarrow\infty}\mathbb{P}\Big(\sqrt{\frac{n}{\mathsf{V}_i^{\mathtt{S}}}}\big(\frac{\widehat{\mu}_i}{\theta_i}-1\big)\leq x\Big)=\Phi(x),
\end{align}
where $\mathsf{V}_i^{\mathtt{S}}=2+\sum_{k=1}^pv_{ki}^4(\mathfrak{m}_4-3).$

On the other hand, by the definition of $\theta_i$ and \eqref{eq_thetaipriorbound}, we have
 \begin{align}\label{eq_keyrepresentation}
        \frac{\widehat{\mu}_i}{\widetilde{\sigma}_i}&=\frac{\widehat{\mu}_i}{\theta_i}\cdot\frac{\theta_i}{\widetilde{\sigma}_i}=\frac{\widehat{\mu}_i}{\theta_i}+\frac{\widehat{\mu}_i}{\theta_i}\Big(\frac{1}{n}\sum_{j=r+1}^p\frac{\sigma_j}{\widetilde{\sigma}_i}+\mathrm{o}(n^{-1/2})\big)\Big)\nonumber\\
        &=\frac{\widehat{\mu}_i}{\theta_i}+\frac{1}{n}\sum_{j=r+1}^p\frac{\sigma_j}{\widetilde{\sigma}_i}+\mathrm{o}_{\mathbb{P}}(n^{-1/2})=1+\frac{1}{n}\sum_{j=r+1}^p\frac{\sigma_j}{\widetilde{\sigma}_i}+\mathrm{o}_{\mathbb{P}}(n^{-1/2}),
    \end{align}
    where we used the fact $\widehat{\mu}_i/\theta_i=1+\mathrm{O}_{\mathbb{P}}(n^{-1/2})$ and the assumption $\widetilde{\sigma}_i\gg n^{1/4}$. Recall $ \mathsf{M}^\mathtt{S}_i$ in (\ref{eq_keynotations}).
%    \begin{align}\label{eq_def_MS}
%        \mathsf{M}^\mathtt{S}_i=1+\frac{1}{n}\sum_{k=r+1}^p \frac{\sigma_k}{\widetilde{\sigma}_i},
%    \end{align}
%    for $1\leq i\leq r$. 
The proof follows from the weak convergence in \eqref{eq_unconditionalspike_hatmutheta}, (\ref{eq_keyrepresentation}), and  the Slutsky's theorem. 
%, we have 
%    \begin{align}
%        \lim_{n\rightarrow\infty}\mathbb{P}\Big(\sqrt{\frac{n}{\mathsf{V}_i^{\mathtt{S}}}}(\frac{\widehat{\mu}_i}{\widetilde{\sigma}_i}-\mathsf{M}_i^{\mathtt{S}})\le x\Big)=\Phi(x),
%    \end{align}
%    for $1\leq i\leq r$ and every fixed $x\in\mathbb{R}$. The desired conclusion follows.
}

\subsection{Proof of Theorem \ref{thm_main_spike_unconditional}}
%{\color{green}[some arguments are missing; in E.2.1, change $\zeta_i$ to $\pi_i.$]}

\subsubsection{Limiting representation of the spiked eigenvalues of $Q$}
  Recall the bootstrapped sample covariance matrix $\widetilde{Q}$, and denote $\widetilde{Y}=\widetilde{\Sigma}^{1/2}XD$. We then write $\widetilde{Q}:=\widetilde{Y} \widetilde{Y}^*$ and $\widetilde{\mathcal Q}:=\widetilde{Y}^* \widetilde{Y}.$ Since these two matrices share the same nonzero eigenvalues, it suffices to work with the latter for simplicity. Under the spiked covariance model in \eqref{eq_truemodelspiked}, we decompose $\widetilde{\Sigma}$ as follows.
\begin{equation*}
 \widetilde{\Sigma}:=\Sigma_s+\Sigma_o,
\end{equation*}
where we denote the two $p \times p$ matrices as 
\begin{equation}\label{eq_twomatricesdecomposition}
\Sigma_s:=\sum_{i=1}^r\Tilde{\sigma}_i\mathbf{v}_i\mathbf{v}_i^{*} \equiv V_1\Lambda_sV_1^{*},\quad \Sigma_o:=\sum_{i=r+1}^p \sigma_i\mathbf{v}_i\mathbf{v}_i^{*} \equiv V_2\Lambda_oV_2^{*}.
\end{equation}
Consequently, we can decompose $\widetilde{\mathcal Q}$ as follows
\begin{equation*}
\widetilde{\mathcal Q}=DX^{*}\Tilde{\Sigma}XD=DX^{*}\Sigma_sXD+DX^{*}\Sigma_oXD.
\end{equation*}
Note that with high probability 
 \begin{equation*}
        \|DX^{*}\Sigma_oXD\|=\| D^2 X^* \Sigma_0 X \| \leq \| D^2 \| \|X^{*} \Sigma_0 X\| \leq \sigma_{r+1} \xi_{(1)}^2 \| X^* X \| \sim \xi^2_{(1)},
    \end{equation*}
    where in the last step we used \cite{Wen2021} that $\|X^* X \|$ is bounded from above with high probability. Using (\ref{def1}) and (\ref{def3}) as well as Weyl's inequality, we see that from the assumption of (\ref{spiked_assumption}) that, for $1 \leq i \leq r,$ 
    \begin{equation}\label{eq_proof4.1firstpartone}
   \frac{ \mu_i-\lambda_i(DX^* \Sigma_sXD)}{\widetilde{\sigma}_i}=\ro_{\mathbb{P}}(1). 
    \end{equation}
Then we consider the first few largest eigenvalues of $DX^* \Sigma_s XD,$ or equivalently those of $\Sigma_s^{1/2} XD^2X^* \Sigma_s^{1/2}.$ By a discussion similar to Lemma D.1 of \cite{ding2021spiked}, we find that if $\lambda$ is an eigenvalue of $\Sigma_s^{1/2} XD^2X^* \Sigma_s^{1/2},$ recalling (\ref{eq_twomatricesdecomposition}), we have 
    \begin{gather}\label{eq_masterequationeigenvalues}
        \operatorname{det}(V_1^{*}XD^2X^{*}V_1-\lambda\Lambda_s^{-1})=0.
    \end{gather}   
Note that 
\begin{equation*}
    V_1^{*}XD^2X^{*}V_1-\mathbb{E}\xi^2I_r=V_1^{*}X(D^2-\mathbb{E}\xi^2I)X^{*}V_1+\mathbb{E}\xi^2\times(V_1^{*}XX^{*}V_1-I_r),
\end{equation*}
where $I_r$ is the $r\times r$ identity matrix. For the first term in above decomposition, we observe that 
    \begin{gather*}
        |\mathbf{v}_i^{*}X(D^2-\mathbb{E}\xi^2I)X^{*}\mathbf{v}_i|=\mathrm{O}_{\mathbb{P}}(n^{-1/2}),\quad |\mathbf{v}_i^*X(D^2-\mathbb{E}\xi^2I)X^{*}\mathbf{v}_j|=\mathrm{O}_{\mathbb{P}}(n^{-1/2}),
    \end{gather*}  
    for $1\leq i\neq j\leq r$. By Assumptions \ref{assum_model} and \ref{assum_D}, and notice that $r$ is finite, straightforward calculations indicate that 
 \begin{equation*}
     \|V_1^{*}X(D^2-\mathbb{E}\xi^2I)X^{*}V_1\|=\mathrm{O}_{\mathbb{P}}(n^{-1/2}).
 \end{equation*}
 On the other hand, by Theorem 7.1 of \cite{BaiandYao2008}, one has
 \begin{equation*}
     \|V_1^{*}XX^{*}V_1-I_r\|=\mathrm{O}_{\mathbb{P}}(n^{-1/2}).
 \end{equation*}
As a consequence, we conclude that 
 \begin{equation*}
   \|V_1^{*}XD^2X^{*}V_1-\mathbb{E} \xi^2 I_r\|=\mathrm{O}_{\mathbb{P}}(n^{-1/2}), 
\end{equation*}    
 Together with (\ref{eq_masterequationeigenvalues}), we conclude that for $1 \leq i \leq r$
\begin{equation}\label{eq_proof.1firstparttwo}
\frac{\lambda_i(DX^* \Sigma_s XD)}{\widetilde{\sigma}_i}= \mathbb{E} \xi^2+\mathrm{o}_{\mathbb{P}}(1). 
\end{equation}   
Combining (\ref{eq_proof4.1firstpartone}), we obtain the first order result for the limiting property of $\mu_i/\widetilde{\sigma}_i$. 

Then we proceed with the second order results for $\mu_i$. The proof follows from  strategies similar to Theorem 3.3 of \cite{yu2024testing}. We focus on explaining the main ideas and omit the details. Recall the definition of $\theta_i, 1 \leq i \leq r$ in \eqref{eq_def_theta}. In order to establish the asymptotics of $\mu_i/\theta_i,$ for notational convenience, we now work with the rescaled matrix 
\begin{equation*}
\check{\mathcal Q}:= \check{D} X^* \widetilde{\Sigma} X \check{D}, \ \check{D}^2:=(\mathbb{E} \xi^2)^{-1} D^2, 
\end{equation*}
whose eigenvalues are denoted as $\check{\lambda}_1 \geq \check{\lambda}_2 \geq \cdots \geq \check{\lambda}_{ \{p \wedge n\}}>0.$ {Recall that $\mu_i$'s are ordered eigenvalues of the bootstrapped sample covariance matrix $\widetilde{Q}$ in Table \ref{table_notations}. Note that
\begin{align}\label{eq_def_checklambda}
    \check{\lambda}_i=\frac{\mu_i}{\mathbb{E}\xi^2},\quad i=1,\dots,p\wedge n.
\end{align}
}
 By a discussion similar to (\ref{eq_masterequationeigenvalues}), using (\ref{eq_twomatricesdecomposition}), we find that  $\check{\lambda}_i, 1 \leq i \leq r$ satisfy the equation
    \begin{gather*}
        \operatorname{det}(\Lambda_s^{-1}-V_1^{*}X\check{D}(\check{\lambda}_i I-\check{D}X^{*}\Sigma_oX\check{D})^{-1}\check{D}X^{*}V_1)=0.
    \end{gather*}
Denote $\mathbf{B}(x):=xI-\check{D}X^{*}\Sigma_oX\check{D}$ and $\delta_i=(\check{\lambda}_i-\theta_i)/\theta_i$, the above determinant can be rewritten into 
\begin{gather}\label{eq_generalldeterminant}
    \operatorname{det}(\theta_i\Lambda_s^{-1}-\theta_iV_1^{*}X\check{D}\mathbf{B}^{-1}(\theta_i)\check{D}X^{*}V_1+\delta_i\theta_i^2V_1^{*}X\check{D}\mathbf{B}^{-1}(\check{\lambda}_i)\mathbf{B}^{-1}(\theta_i)\check{D}X^{*}V_1)=0.
\end{gather}
Following the procedure in Section 7.1 of \cite{CHP} or Lemma C.5 of \cite{yu2024testing}, we find that for $1 \leq i,l \leq r$
   \begin{gather*}
        \theta_i\mathbf{e}_i^{*}V_1^{*}X\check{D}\mathbf{B}^{-1}(\theta_i)\check{D}X^{*}V_1\mathbf{e}_l=\mathbf{1}(l=i)\big(\mathbf{v}_i^*X\check{D}^2X^*\mathbf{v}_l-\frac{1}{n}\sum_{j=1}^n\xi^2_j/\mathbb{E}\xi^2+\pi_i\big)+\mathrm{O}_{\mathbb{P}}(\frac{1}{\sqrt{n}}\frac{(n\vee p)}{n\widetilde{\sigma}_i}+\frac{1}{n}),
    \end{gather*}
where $\pi_i$ is a random quantity associated with $\theta_i$ satisfying
\begin{equation}\label{eq_def_zeta}
    \pi_i:=\frac{1}{n}\sum_{j=1}^n\frac{\xi^2_j}{\mathbb{E}\xi^2}\Big(1-\frac{\xi^2_j}{n\theta_i\mathbb{E}\xi^2}\sum_{k=r+1}^p\frac{\sigma_k}{1-\theta_i^{-1}\sigma_k\pi_i}\Big)^{-1}.
\end{equation}
Similarly, by a discussion similar to Lemma C.6 of \cite{yu2024testing}, we conclude that 
    \begin{gather*}
        \delta_i \theta_i^2[V_1^{*}X\check{D}\mathbf{B}^{-1}(\check{\lambda}_i)\mathbf{B}^{-1}(\theta_i)\check{D}X^{*}V_1]_{il}= \delta_i \times (\mathbf{1}(l=i)+\ro_{\mathbb{P}}(1)).
    \end{gather*}
Inserting the above two controls into (\ref{eq_generalldeterminant}), by the assumption of (\ref{eq_separation}), using Leibniz’s formula for determinant, one has that 
    \begin{gather}\label{eq: spike_det_est}
        \delta_i(1+\ro_{\mathbb{P}}(1))=(\mathbb{E}\xi^2)^{-1}\mathbf{v}_i^{*}XD^2X^{*}\mathbf{v}_i-\frac{1}{n}\sum_{j=1}^n\xi^2_j/\mathbb{E}\xi^2-\frac{\theta_i}{\widetilde{\sigma}_i}+\pi_i+\mathrm{O}_{\mathbb{P}}(\frac{1}{\sqrt{n}}\frac{(n\vee p)}{n\widetilde{\sigma}_i}+\frac{1}{n}).
    \end{gather}

By a similar argument as in Lemma 3.2 of \cite{yu2024testing} and notice that $\xi^2_j\ll \widetilde{\sigma}_i, 1\leq j\leq n$, one has for $\widetilde{\sigma}_r\gg n^{1/4}$,
\begin{equation}
\begin{split}
    \pi_i-\frac{\theta_i}{\Tilde{\sigma}_i}&=\frac{1}{n}\sum_{j=1}^n\xi^2_j/\mathbb{E}\xi^2-1+\big(\frac{1}{n}\sum_{k=r+1}^p\frac{\sigma_k}{\widetilde{\sigma}_i}\times\mathbb{E}(\xi^2_1/\mathbb{E}\xi^2-1)^2\big)\\
    &+\Big(\big(\frac{1}{n}\sum_{k=r+1}^p\frac{\sigma_k}{\widetilde{\sigma}_i}\big)^2\times\mathbb{E}(\xi_1^2/\mathbb{E}\xi^2-1)^3\Big)\times(1+\mathrm{o}(1))\\
    &+\mathrm{O}\Big(\big(\frac{1}{n}\sum_{k=r+1}^p\frac{\sigma_k}{\widetilde{\sigma}_i}\big)^3\Big)+\mathrm{o}(n^{-1/2})\\
    &=\frac{1}{n}\sum_{j=1}^n\xi^2_j/\mathbb{E}\xi^2-1+\big(\frac{1}{n}\sum_{k=r+1}^p\frac{\sigma_k}{\widetilde{\sigma}_i}\times\mathbb{E}(\xi^2_1/\mathbb{E}\xi^2-1)^2\big)+\mathrm{o}(n^{-1/2}).
\end{split}
\end{equation}
Combining the above results, we conclude that
    \begin{equation}\label{eq_limitingrepresentation_spike}
    \frac{\check{\lambda}_i-\theta_i}{\theta_i}=(\mathbb{E}\xi^2)^{-1}\mathbf{v}_i^{*}XD^2X^{*}\mathbf{v}_i-1+\delta_{\widetilde{\sigma}_i}/\mathbb{E}\xi^2+\mathrm{O}_{\mathbb{P}}(\frac{1}{\sqrt{n}}\frac{(n\vee p)}{n\widetilde{\sigma}_i}+\frac{1}{n}),
    \end{equation}
where 
\begin{equation*}
    \delta_{\widetilde{\sigma}_i}:=\mathbb{E}\xi^2\times\big(\frac{1}{n}\sum_{k=r+1}^p\frac{\sigma_k}{\widetilde{\sigma}_i}\times\mathbb{E}(\xi^2/\mathbb{E}\xi^2-1)^2\big)+\mathrm{o}(n^{-1/2}),
\end{equation*}
is defined as a deterministic correction quantity. 

{
\subsubsection{Proof of Theorem \ref{thm_main_spike_unconditional}}\label{sec_proof_spike_unconditional}

Recall the definition of $\check{\lambda}_i$ in \eqref{eq_def_checklambda}, by \eqref{eq_limitingrepresentation_spike} and  \eqref{eq_thetaipriorbound}, we have
\begin{align}\label{eq_expansion111}
    \frac{\check{\lambda}_i}{\widetilde{\sigma}_i}&=\frac{\check{\lambda}_i}{\theta_i}\cdot \frac{\theta_i}{\widetilde{\sigma}_i}=\frac{\check{\lambda}_i}{\theta_i}\cdot\big(1+\frac{1}{n}\sum_{j=r+1}^p\frac{\sigma_j}{\widetilde{\sigma}_i}+\mathrm{o}(n^{-1/2})\big) \nonumber\\
    &=\Big((\mathbb{E}\xi^2)^{-1}\mathbf{v}_i^{*}XD^2X^{*}\mathbf{v}_i-1+1+\delta_{\widetilde{\sigma}_i}/\mathbb{E}\xi^2+\mathrm{o}_{\mathbb{P}}(n^{-1/2})\Big)\cdot\big(1+\frac{1}{n}\sum_{j=r+1}^p\frac{\sigma_j}{\widetilde{\sigma}_i}+\mathrm{o}(n^{-1/2})\big).
\end{align}
We first analyze the random term $(\mathbb{E}\xi^2)^{-1}\mathbf{v}_i^*XD^2X^*\mathbf{v}_i-1$. Let $\mathfrak{m}_4=\mathbb{E}(\sqrt{n}x_{11})^4$. It is straightforward to verify that
\begin{align*}
    \mathbb{E}\big[(\mathbb{E}\xi^2)^{-1}\mathbf{v}_i^*XD^2X^*\mathbf{v}_i-1\big]=0,
\end{align*}
and
\begin{align*}
    &\operatorname{Var}\big((\mathbb{E}\xi^2)^{-1}\mathbf{v}_i^*XD^2X^*\mathbf{v}_i\big)=(\mathbb{E}\xi^2)^{-2}\mathbb{E}[\mathbf{v}_i^*XD^2X^*\mathbf{v}_i]^2-1\\
    &=\frac{1}{n}\Big[\frac{\mathbb{E}\xi^4}{(\mathbb{E}\xi^2)^2}\big(3+(\sum_{k=1}^pv_{ki}^4(\mathfrak{m}_4-3))\big)-1\Big].
\end{align*}
Recall  $\mathsf{M}_i^{\mathtt{S}}$ from \eqref{eq_keynotations} and  $\mathsf{M}_i^{\mathtt{Unc}}$ in (\ref{eq_biasedmean_spike1}). We have that 
\begin{align*}
    &\mathbb{E}\Big(\big((\mathbb{E}\xi^2)^{-1}\mathbf{v}_i^{*}XD^2X^{*}\mathbf{v}_i-1+1+\delta_{\widetilde{\sigma}_i}/\mathbb{E}\xi^2\big)\cdot\big(1+\frac{1}{n}\sum_{j=r+1}^p\frac{\sigma_j}{\widetilde{\sigma}_i}+\mathrm{o}(n^{-1/2})\big)\Big)\\
   % &=(1+\delta_{\widetilde{\sigma}_i}/\mathbb{E}\xi^2)\cdot\big(1+\frac{1}{n}\sum_{j=r+1}^p\frac{\sigma_j}{\widetilde{\sigma}_i}+\mathrm{o}(n^{-1/2})\big)\\
    &=1+\frac{1}{n}\sum_{k=r+1}^p\frac{\sigma_k}{\widetilde{\sigma}_i}\cdot\big(\mathbb{E}(\xi^2/\mathbb{E}\xi^2-1)^2+1\big)+\mathrm{o}(n^{-1/2})\\
    &=1+\frac{\operatorname{Var}(\xi^2)}{(\mathbb{E}\xi^2)^2}(\mathsf{M}_i^{\mathtt{S}}-1)+\mathsf{M}_i^{\mathtt{S}}-1+\mathrm{o}(n^{-1/2})=\mathsf{M}_i^{\mathtt{Unc}}+\mathrm{o}(n^{-1/2}).
\end{align*}
Similarly, for $\mathsf{V}_i^{\mathtt{Unc}}$ in \eqref{eq_biasedmean_spike2}
\begin{align*}
    &\operatorname{Var}\Big(\big((\mathbb{E}\xi^2)^{-1}\mathbf{v}_i^{*}XD^2X^{*}\mathbf{v}_i-1+1+\delta_{\widetilde{\sigma}_i}/\mathbb{E}\xi^2\big)\cdot\big(1+\frac{1}{n}\sum_{j=r+1}^p\frac{\sigma_j}{\widetilde{\sigma}_i}+\mathrm{o}(n^{-1/2})\big)\Big)\\
    &=\operatorname{Var}\Big(\big((\mathbb{E}\xi^2)^{-1}\mathbf{v}_i^{*}XD^2X^{*}\mathbf{v}_i-1\big)\cdot\big(1+\mathrm{o}(1)\big)\Big)\\
    &=\frac{1}{n}\Big(\frac{\mathbb{E}\xi^4}{(\mathbb{E}\xi^2)^2}\big(3+(\sum_{k=1}^pv_{ki}^4(\mathfrak{m}_4-3))\big)-1\Big)\cdot(1+\mathrm{o}(1)) \\
    &=\frac{\mathsf{V}_i^{\mathtt{Unc}}}{n}\cdot(1+\mathrm{o}(1)).
\end{align*}

%Moreover,
%\begin{align*}
%    &\frac{\check{\lambda}_i}{\widetilde{\sigma}_i}=\frac{\mu_i}{\mathbb{E}\xi^2\cdot\widetilde{\sigma}_i}\\
%    &=\Big(\big((\mathbb{E}\xi^2)^{-1}\mathbf{v}_i^{*}XD^2X^{*}\mathbf{v}_i-1+1+\delta_{\widetilde{\sigma}_i}/\mathbb{E}\xi^2\big)\cdot\big(1+\frac{1}{n}\sum_{j=r+1}^p\frac{\sigma_j}{\widetilde{\sigma}_i}+\mathrm{o}(n^{-1/2})\big)\Big)+\mathrm{o}_{\mathbb{P}}(n^{-1/2}).
%\end{align*}
Recall (\ref{eq_expansion111}). Combining the above calculations, the proof follows by applying the central limit theorem to $\mathbf{v}_i^* X D^2 X^* \mathbf{v}_i$ and invoking Slutsky’s theorem. This gives the desired results of $\check{\lambda}_i/\widetilde{\sigma}_i=\mu_i/(\mathbb{E}\xi^2\widetilde{\sigma}_i)$ for $i=1,\dots,r$.
%Therefore, and Slutsky's theorem, we conclude that 
%\begin{equation}\label{eq_unconditionalspike_musigma}
%\lim_{n \rightarrow \infty} \mathbb{P} \left( \sqrt{\frac{n}{\mathsf{V}^{\mathtt{Unc}}_i}}\left(\frac{\mu_i}{\mathbb{E}\xi^2\cdot\widetilde{\sigma}_i}-\mathsf{M}^{\mathtt{Unc}}_i\right) \leq x \right)=\Phi(x),
%\end{equation}
%where
%\begin{align*}
%    \mathsf{M}^{\mathtt{Unc}}_i=1+\frac{\operatorname{Var}(\xi^2)}{(\mathbb{E}\xi^2)^2}(\mathsf{M}_i^{\mathtt{S}}-1)+\mathsf{M}_i^{\mathtt{S}}-1,
%\end{align*}
%and 
%\begin{align*}
%    \mathsf{V}^{\mathtt{Unc}}_i=\frac{\mathbb{E}\xi^4}{(\mathbb{E}\xi^2)^2}\big(3+(\sum_{k=1}^pv_{ki}^4(\mathfrak{m}_4-3))\big)-1.
%\end{align*}
%This completes the proof.
}

\subsection{Proof of Theorem \ref{thm_main_spike_conditional}}
%{\color{green}1. if notations exist, cite them. Do not create new notations. 2. recall from time to time if the logic becomes clear.}

\subsubsection{Proof of \eqref{eq_main_spike_conditional_theta}}\label{sec_prf_main_spikeconditional_theta}

Following the proof of \eqref{eq_limitingrepresentation_spike}, we have for some small constant $\epsilon_0>0$, on $\Omega_X\cap\Omega_D$,
\begin{align*}
    \frac{\check\lambda_i-\theta_i}{\theta_i}=
    (\mathbb{E}\xi^2)^{-1}\mathbf{v}_i^*XD^2X^*\mathbf{v}_i-1+\delta_{\widetilde\sigma_i}/\mathbb{E}\xi^2+
    \mathrm{O}\left(
    \frac{n^{\epsilon_0}}{\sqrt n}\frac{n\vee p}{n\widetilde\sigma_i}+\frac{n^{\epsilon_0}}{n}
    \right),
\end{align*}
where recall that $\check{\lambda}_i=\mu_i/\mathbb{E}\xi^2$ as in \eqref{eq_def_checklambda} and $n \vee p=\max\{n,p\}.$ By an argument similar to (\ref{eq_expansion111}), we obtain on $\Omega_X\cap\Omega_D$ that,
\begin{align}\label{eq_kakakakadddkkkexpress}
    \frac{\check\lambda_i}{\widetilde\sigma_i}&=\big(\mathbf{v}_i^*XX^*\mathbf{v}_i+(\mathbb{E}\xi^2)^{-1}\mathbf{v}_i^*X(D^2-(\mathbb{E}\xi^2)I)X^*\mathbf{v}_i+\frac{1}{n}\sum_{k=r+1}^p\frac{\sigma_k}{\widetilde{\sigma_i}}\times\mathbb{E}(\xi^2/\mathbb{E}\xi^2-1)^2\big) \nonumber \\
    &\times(1+\frac{1}{n}\sum_{k=r+1}^p\frac{\sigma_k}{\widetilde{\sigma}_i})+\mathrm{O}\left(
    \frac{n^{\epsilon_0}}{\sqrt n}\frac{n\vee p}{n\widetilde\sigma_i}+\frac{n^{\epsilon_0}}{n}
    \right)+\mathrm{o}(n^{-1/2}) \nonumber \\
    &=\big(\mathbf{v}_i^*XX^*\mathbf{v}_i-1+1+\frac{1}{n}\sum_{k=r+1}^p\frac{\sigma_k}{\widetilde{\sigma_i}}\times\mathbb{E}(\xi^2/\mathbb{E}\xi^2-1)^2+(\mathbb{E}\xi^2)^{-1}\mathbf{v}_i^*X(D^2-(\mathbb{E}\xi^2)I)X^*\mathbf{v}_i\big)\\
    &\times(1+\frac{1}{n}\sum_{k=r+1}^p\frac{\sigma_k}{\widetilde{\sigma}_i})+\mathrm{O}\left(
    \frac{n^{\epsilon_0}}{\sqrt n}\frac{n\vee p}{n\widetilde\sigma_i}+\frac{n^{\epsilon_0}}{n}
    \right)+\mathrm{o}(n^{-1/2}) \nonumber \\
    &=\big(\mathbf{v}_i^*XX^*\mathbf{v}_i-1+1+\frac{1}{n}\sum_{k=r+1}^p\frac{\sigma_k}{\widetilde{\sigma}_i}+\frac{1}{n}\sum_{k=r+1}^p\frac{\sigma_k}{\widetilde{\sigma_i}}\times\mathbb{E}(\xi^2/\mathbb{E}\xi^2-1)^2+(\mathbb{E}\xi^2)^{-1}\mathbf{v}_i^*X(D^2-(\mathbb{E}\xi^2)I)X^*\mathbf{v}_i\big) \nonumber \\
    &+(\mathbf{v}_i^*XX^*\mathbf{v}_i-1+(\mathbb{E}\xi^2)^{-1}\mathbf{v}_i^*X(D^2-(\mathbb{E}\xi^2)I)X^*\mathbf{v}_i)\times\frac{1}{n}\sum_{k=r+1}^p\frac{\sigma_k}{\widetilde{\sigma}_i} \nonumber \\
    &+\big(\frac{1}{n}\sum_{k=r+1}^p\frac{\sigma_k}{\widetilde{\sigma}_i}\big)^2+\mathrm{O}\left(
    \frac{n^{\epsilon_0}}{\sqrt n}\frac{n\vee p}{n\widetilde\sigma_i}+\frac{n^{\epsilon_0}}{n}
    \right)+\mathrm{o}(n^{-1/2}) \nonumber \\
    &=\mathbf{v}_i^*XX^*\mathbf{v}_i-1+\mathsf{M}_i^{\mathtt{Unc}}+(\mathbb{E}\xi^2)^{-1}\mathbf{v}_i^*X(D^2-(\mathbb{E}\xi^2)I)X^*\mathbf{v}_i \nonumber \\
    &+(\mathbf{v}_i^*XX^*\mathbf{v}_i-1+(\mathbb{E}\xi^2)^{-1}\mathbf{v}_i^*X(D^2-(\mathbb{E}\xi^2)I)X^*\mathbf{v}_i)\cdot\frac{1}{n}\sum_{k=r+1}^p\frac{\sigma_k}{\widetilde{\sigma}_i} \nonumber \\
&    +\mathrm{O}\left(
    \frac{n^{\epsilon_0}}{\sqrt n}\frac{n\vee p}{n\widetilde\sigma_i}+\frac{n^{\epsilon_0}}{n}
    \right)+\mathrm{o}(n^{-1/2}),
\end{align}
where we used the assumption $\widetilde{\sigma}_i\gg n^{1/4}$ in the last step and recall the definition of $\mathsf{M}_i^{\mathtt{Unc}}$ in \eqref{eq_biasedmean_spike1}.
By part (6) of Definition \ref{def_OmegaX} and part (c) of Definition \ref{defn_probset}, we have on the event $\Omega_X\cap\Omega_D$
\begin{align*}
    (\mathbf{v}_i^*XX^*\mathbf{v}_i-1+(\mathbb{E}\xi^2)^{-1}\mathbf{v}_i^*X(D^2-(\mathbb{E}\xi^2)I)X^*\mathbf{v}_i)\cdot\frac{1}{n}\sum_{k=r+1}^p\frac{\sigma_k}{\widetilde{\sigma}_i}=\mathrm{O}(\frac{n^{\epsilon_1}}{\sqrt{n}\widetilde{\sigma}_i}),
\end{align*}
for some small constant $\epsilon_1>\varepsilon_6$ defined in Definition \ref{def_OmegaX}. It follows that for $\widetilde{\sigma}_i\gg n^{1/4}$, the expression of $\check{\lambda}_i/\widetilde{\sigma}_i$ in (\ref{eq_kakakakadddkkkexpress}) can be simplified to 
\begin{align}\label{eq_spike_conditional_representation_lambdasigma}
    \frac{\check{\lambda}_i}{\widetilde{\sigma}_i}&=\mathbf{v}_i^*XX^*\mathbf{v}_i-1+\mathsf{M}_i^{\mathtt{Unc}}+(\mathbb{E}\xi^2)^{-1}\mathbf{v}_i^*X(D^2-(\mathbb{E}\xi^2)I)X^*\mathbf{v}_i+\mathrm{O}\left(
    \frac{n^{\epsilon_2}}{\sqrt n}\frac{n\vee p}{n\widetilde\sigma_i}+\frac{n^{\epsilon_2}}{n}
    \right)+\mathrm{o}(n^{-1/2})\nonumber \\
    &=\mathsf{M}_i^{\mathtt{Con1}}+\frac{1}{\mathbb{E}\xi^2}\sum_{j=1}^n \omega_j^2(\xi_j^2-\mathbb{E}\xi^2)+r_n,
\end{align}
where $\mathsf{M}_i^{\mathtt{Con1}}$ is defined in \eqref{eq_biasedmean_conditional} for $i=1,\dots, r$, we denote $\omega_j:=\sum_{k=1}^p v_{ki}x_{kj}, j=1,\dots, n$, and the remainder $r_n$ satisfies
\begin{align}\label{eq_spike_conditional_remainder_bound_final}
    r_n=\mathrm{O}\left(\frac{n^{\epsilon_2}}{\sqrt n}\frac{n\vee p}{n\widetilde\sigma_i}+\frac{n^{\epsilon_2}}{n}\right)+\mathrm{o}(n^{-1/2}),
\end{align}
on $\Omega_X\cap\Omega_D$, for some small constant $\epsilon_2\geq\max\{\epsilon_0,\epsilon_1\}$. Since $\widetilde{\sigma}_i\gg n^{1/4}$, the right-hand side of \eqref{eq_spike_conditional_remainder_bound_final} is of order $\mathrm{o}(n^{-1/2})$. Therefore, for every fixed $\delta>0$, there exists $N_\delta$ such that for all $n\geq N_\delta$,
\begin{align}\label{eq_spike_conditional_remainder_small_final}
    \sqrt{\frac{n}{\mathsf{V}_i^{\mathtt{Con}}}}\,|r_n|\leq \delta
    \qquad\text{on }\Omega_X\cap\Omega_D,
\end{align}
where recall the definition of $\mathsf{V}_i^{\mathtt{Con}} \asymp 1$ in \eqref{eq_biasedvar_conditional}. Recall that $\omega_j:=\sum_{k=1}^p v_{ki}x_{kj}.$
%
% as
%\begin{align}
%    \mathsf{V}_i^{\mathtt{Con}}=\frac{\operatorname{Var}\xi^2}{(\mathbb{E}\xi^2)^2}(\mathsf{V}_i^{\mathtt{S}}+1).
%\end{align}
Denote
\begin{align}\label{eq_definitionsimportant}
    T_n
    :=
    \sqrt{\frac{n}{\mathsf V_i^{\mathtt{Con}}}}
    \left(
    \frac{\check\lambda_i}{\widetilde\sigma_i}-\mathsf M_i^{\mathtt{Con1}}
    \right), \  \
    \eta_{j,n}
    :=
    \frac{\sqrt n}{(\mathbb E\xi^2)\sqrt{\mathsf V_i^{\mathtt{Con}}}}
    \,\omega_j^2(\xi_j^2-\mathbb E\xi^2).
\end{align}
Then on $\Omega_X\cap\Omega_D$, \eqref{eq_spike_conditional_representation_lambdasigma} can be wriiten as
\begin{align}\label{eq_spike_conditional_Tn_decomp_final}
    T_n=\sum_{j=1}^n \eta_{j,n}+R_n,
\end{align}
where $R_n:=\sqrt{\frac{n}{\mathsf V_i^{\mathtt{Con}}}}\,r_n .$ By \eqref{eq_spike_conditional_remainder_small_final}, $R_n$ satisfies that for sufficiently large $n$
\begin{align}\label{eq_spike_conditional_Rn_small_final}
    |R_n|\leq\delta
    \quad \text{on} \ \ \Omega_X\cap\Omega_D.
\end{align}

We first establish the conditional CLT for $\sum_{j=1}^n\eta_{j,n}$ in (\ref{eq_definitionsimportant}) using the Lindeberg-Feller CLT. Since the multipliers are independent of $X$, conditional on $X,$ the random variables $\{\eta_{j,n}\}_{j=1}^n$ are independent and centered. Consequently, we have that 
\begin{align*}
    \mathbb{E}(\eta_{j,n}\mid X)=\frac{\sqrt{n}w_j^2}{(\mathbb{E}\xi^2)\sqrt{\mathsf{V}_i^{\mathtt{Con}}}}\mathbb{E}(\xi^2_j-\mathbb{E}\xi^2)=0, \ \ 
    \sum_{j=1}^n \mathbb{E}(\eta_{j,n}^2\mid X)
    &=
    \frac{n}{(\mathbb{E}\xi^2)^2\mathsf{V}_i^{\mathtt{Con}}}
    \sum_{j=1}^n \omega_j^4\,\mathbb{E}\big((\xi_j^2-\mathbb{E}\xi^2)^2\big).
\end{align*}
By part (6) of Definition \ref{def_OmegaX}, on $\Omega_X$,
\begin{align*}
    \sum_{j=1}^n \omega_j^4=
\frac{1}{n}\Big(3+\sum_{k=1}^p v_{ki}^4(\mathfrak{m}_4-3)\Big)+\mathrm{o}(n^{-1}),
\end{align*}
we obtain
\begin{align*}
    \sum_{j=1}^n \mathbb{E}(\eta_{j,n}^2\mid X)=1+\mathrm{o}(1)
\quad\text{on }\Omega_X.
\end{align*}

We verify the Lindeberg conditions for the triangular array $\{\eta_{j,n}\}_{j=1}^n$. Fix $\varepsilon>0$, and define
\begin{align*}
    L_n(X):=\sum_{j=1}^n\mathbb{E}\Big(\eta_{j,n}^2 \mathbf{1}(|\eta_{j,n}|>\varepsilon)\,\Big|\,X\Big).
\end{align*}
On $\Omega_X$, we decompose
\begin{align}\label{eq_decomposition_lindeberg}
    L_n(X)=L_{n,1}(X)+L_{n,2}(X),
\end{align}
where
\begin{align*}
    L_{n,1}(X)&:=\sum_{j=1}^n\mathbb{E}\Big(
    \eta_{j,n}^2 \mathbf{1}(|\eta_{j,n}|>\varepsilon)\mathbf{1}_{\Omega_D}\,\Big|\,X\Big),\
    L_{n,2}(X):=\sum_{j=1}^n\mathbb{E}\Big(
    \eta_{j,n}^2 \mathbf{1}(|\eta_{j,n}|>\varepsilon)\mathbf{1}_{\Omega_D^c}\,\Big|\,X\Big).
\end{align*}
For $L_{n,1}(X)$, by Markov's inequality,
\begin{align*}
    L_{n,1}(X)&\leq\frac{1}{\varepsilon^2}\sum_{j=1}^n
    \mathbb{E}\big(|\eta_{j,n}|^4\mathbf{1}_{\Omega_D}\mid X\big).
\end{align*}
%Recall that
%\begin{align*}
%    \eta_{n,j}=
%    \frac{\sqrt n}{(\mathbb{E}\xi^2)\sqrt{\mathsf{V}_i^{\mathtt{Con}}}}
%    \,\omega_j^2(\xi_j^2-\mathbb{E}\xi^2),
%\end{align*}
%so that
Note that
\begin{align*}
    \sum_{j=1}^n
    \mathbb{E}\big(|\eta_{j,n}|^4\mathbf{1}_{\Omega_D}\mid X\big)
    =\frac{n^2}{(\mathbb{E}\xi^2)^4(\mathsf{V}_i^{\mathtt{Con}})^2}
    \sum_{j=1}^n\omega_j^8\,
    \mathbb{E}\Big((\xi^2-\mathbb{E}\xi^2)^4\mathbf{1}_{\Omega_D}\Big).
\end{align*}
By part (6) of Definition \ref{def_OmegaX},
\begin{align*}
    \sum_{j=1}^n \omega_j^8=
    \mathrm{O}\big(n^{-3}\log^{\varepsilon_7}n\big)
    \qquad\text{on }\Omega_X,
\end{align*}
for some small constant $\varepsilon_7>0$ defined in Definition \ref{def_OmegaX}. One the one hand, if the multipliers are bounded-support, exponential-tail, or polynomial-tail with parameter $\alpha\ge4$, then Assumption \ref{assum_D} implies $\mathbb{E}(\xi^2-\mathbb{E}\xi^2)^4<\infty, $ and hence
\begin{align*}
    \sum_{j=1}^n
    \mathbb{E}\big(|\eta_{j,n}|^4\mathbf{1}_{\Omega_D}\mid X\big)=
    \mathrm{O}\big(n^{-1}\log^{\varepsilon_7}n\big)=
    \mathrm{o}(1)
    \qquad\text{on }\Omega_X.
\end{align*}
On the other hand, if the multipliers have polynomial tail with parameter $\alpha\in(2,4)$, then part (a) of Definition \ref{defn_probset} gives, on $\Omega_D$,
\begin{align*}
    |\xi_j^2-\mathbb{E}\xi^2|
    \leq C n^{1/\alpha}\log n,
    \qquad 1\leq j\leq n,
\end{align*}
for some constant $C>0$. Therefore,
\begin{align*}
    \mathbb{E}\Big((\xi^2-\mathbb{E}\xi^2)^4\mathbf{1}_{\Omega_D}\Big)=
    \mathrm{O}\big(n^{2/\alpha}\log^2 n\big),
\end{align*}
and hence
\begin{align*}
    \sum_{j=1}^n\mathbb{E}\big(|\eta_{j,n}|^4\mathbf{1}_{\Omega_D}\mid X\big)=
    \mathrm{O}\big(n^{-1+2/\alpha}\log^{2+\varepsilon_7}n\big)
    =\mathrm{o}(1)
    \qquad\text{on }\Omega_X.
\end{align*}
Combining the above estimates, we conclude that
\begin{align}\label{eq_spike_Ln1_small}
    L_{n,1}(X)=\mathrm{o}(1)
    \qquad\text{on }\Omega_X.
\end{align}

For $L_{n,2}(X),$ since $\mathbf{1}(|\eta_{j,n}|>\varepsilon)\leq1$,
\begin{align*}
    L_{n,2}(X)
    \leq
    \sum_{j=1}^n
    \mathbb{E}\big(\eta_{j,n}^2\mathbf{1}_{\Omega_D^c}\mid X\big)=
    \frac{n}{(\mathbb{E}\xi^2)^2\mathsf{V}_i^{\mathtt{Con}}}
    \sum_{j=1}^n \omega_j^4\,
    \mathbb{E}\Big((\xi_j^2-\mathbb{E}\xi^2)^2\mathbf{1}_{\Omega_D^c}\,\Big|\,X\Big).
\end{align*}
Since $D$ is independent of $X$, we have that 
\begin{align*}
    \mathbb{E}\Big((\xi_j^2-\mathbb{E}\xi^2)^2\mathbf{1}_{\Omega_D^c}\,\Big|\,X\Big)
    =
    \mathbb{E}\Big((\xi^2-\mathbb{E}\xi^2)^2\mathbf{1}_{\Omega_D^c}\Big).
\end{align*}
Therefore, we have that 
\begin{align*}
    L_{n,2}(X)
    &\leq
    \frac{n}{(\mathbb{E}\xi^2)^2\mathsf{V}_i^{\mathtt{Con}}}
    \Big(\sum_{j=1}^n\omega_j^4\Big)
    \mathbb{E}\Big((\xi^2-\mathbb{E}\xi^2)^2\mathbf{1}_{\Omega_D^c}\Big).
\end{align*}
By part (6) of Definition \ref{def_OmegaX}, we have that 
\begin{align*}
    \sum_{j=1}^n \omega_j^4=
    \frac{1}{n}\Big(3+\sum_{k=1}^p v_{ki}^4(\mathfrak{m}_4-3)\Big)+\mathrm{o}(n^{-1}),
\end{align*}
and hence $n\sum_{j=1}^n\omega_j^4=\mathrm{O}(1)\, \text{on }\Omega_X.$ It follows that
\begin{align*}
    L_{n,2}(X)
    \leq
    C\,\mathbb{E}\Big((\xi^2-\mathbb{E}\xi^2)^2\mathbf{1}_{\Omega_D^c}\Big)
   \ \text{on }\Omega_X,
\end{align*}
for some constant \(C>0\). 
%Since $\alpha>2$, we have
%\begin{align*}
%    \mathbb{E}(\xi^2-\mathbb{E}\xi^2)^2<\infty.
%\end{align*}
Moreover, since $\mathbf{1}_{\Omega_D^c}\rightarrow0$ almost surely and
\[
0\leq (\xi^2-\mathbb E\xi^2)^2\mathbf{1}_{\Omega_D^c}\leq(\xi^2-\mathbb{E}\xi^2)^2,
\]
by dominated convergence theorem, we have that 
$\mathbb{E}\Big((\xi^2-\mathbb{E}\xi^2)^2\mathbf{1}_{\Omega_D^c}\Big)\rightarrow0.$ Therefore
\begin{align}\label{eq_spike_Ln2_small}
    L_{n,2}(X)=\mathrm{o}(1)
    \ \text{on }\Omega_X.
\end{align}

Combining \eqref{eq_spike_Ln1_small} and \eqref{eq_spike_Ln2_small}, we conclude from \eqref{eq_decomposition_lindeberg} that
\begin{align*}
    \sum_{j=1}^n\mathbb{E}\Big(
    \eta_{j,n}^2\mathbf{1}(|\eta_{j,n}|>\varepsilon)
    \,\Big|\,X\Big)\rightarrow0
    \ \text{on }\Omega_X.
\end{align*}
That is, the Lindeberg condition holds on \(\Omega_X\). Consequently, for every fixed \(x\in\mathbb R\),
\begin{align}\label{eq_spike_conditional_partialsum_clt_final}
    \Big|\mathbb{P}\Big(\sum_{j=1}^n\eta_{j,n}\leq x\,\Big|\,X\Big)-\Phi(x)\Big|\rightarrow0
    \ \text{on} \ \Omega_X.
\end{align}

We next work with $T_n$ in (\ref{eq_definitionsimportant}). Recall that for any fixed $x\in\mathbb R$ and $\delta>0$, we have by \eqref{eq_spike_conditional_Rn_small_final}, for all sufficiently large $n$, on $\Omega_X\cap\Omega_D$,
\begin{align*}
    \Big|T_n-\sum_{j=1}^n\eta_{j,n}\Big|\leq\delta.
\end{align*}
Hence, for any fixed $x\in\mathbb{R}$,
\begin{align*}
    \Big\{\sum_{j=1}^n\eta_{j,n}\leq x-\delta\Big\}\cap\Omega_X\cap\Omega_D
    \subset
    \{T_n\leq x\},
\end{align*}
and
\begin{align*}
    \{T_n\leq x\}\cap\Omega_X\cap\Omega_D
    \subset
    \Big\{\sum_{j=1}^n\eta_{j,n}\leq x+\delta\Big\}.
\end{align*}
Equivalently, on $\Omega_X$,
\begin{align*}
    \Big\{\sum_{j=1}^n\eta_{j,n}\leq x-\delta\Big\}\cap\Omega_D
    \subset
    \{T_n\leq x\},
    \quad
    \{T_n\leq x\}
    \subset
    \Big\{\sum_{j=1}^n\eta_{j,n}\leq x+\delta\Big\}\cup\Omega_D^c.
\end{align*}
Then, by an argument similar to (\ref{eq_prf_boundedconditional_1}), we can obtain on that $\Omega_X$,
\begin{align*}
    \mathbb{P}\Big(\sum_{j=1}^n\eta_{j,n}\leq x-\delta\,\Big|\,X\Big)-\mathbb{P}(\Omega_D^c\mid X)
    \le
    \mathbb{P}(T_n\leq x\mid X),
\end{align*}
and
\begin{align*}
    \mathbb{P}(T_n\leq x\mid X)
    \le
    \mathbb{P}\Big(\sum_{j=1}^n\eta_{j,n}\leq x+\delta\,\Big|\,X\Big)+\mathbb{P}(\Omega_D^c\mid X).
\end{align*}
Since $\Omega_D\in\sigma(D)$ and $D$ is independent of $X$, we have $\mathbb{P}(\Omega_D^c\mid X)=\mathbb{P}(\Omega_D^c) \
\text{almost surely}. $ Therefore, by \eqref{eq_spike_conditional_partialsum_clt_final} and $\mathbb{P}(\Omega_D^c)\rightarrow0$ from Lemma \ref{lem_probabilitycontrol}, we obtain on $\Omega_X$,
\begin{align*}
    \Phi(x-\delta)+\mathrm{o}(1)
    \le
    \mathbb{P}(T_n\leq x\mid X)
    \le
    \Phi(x+\delta)+\mathrm{o}(1).
\end{align*}
Letting first $n\rightarrow\infty$ and then $\delta\downarrow0$, and using the continuity of $\Phi$, we conclude that on $\Omega_X$, for every fixed $x\in\mathbb{R}$
\begin{align}\label{eq_spike_conditional_clt_on_good_event_final}
    \Big|\mathbb{P}\Big(\sqrt{\frac{n}{\mathsf{V}_i^{\mathtt{Con}}}}
    \Big(\frac{1}{\mathbb{E} \xi^2}\frac{\mu_i}{\widetilde{\sigma}_i}-\mathsf{M}_i^{\mathtt{Con1}}\Big)\leq x\,\Big|\,X\Big)-\Phi(x)\Big|=\mathrm{o}(1),
\end{align}
where we recall the definition in (\ref{eq_def_checklambda}). 

With the above results on $\Omega_X$, the arguments for the conditional probability can be established by following the same strategy as in Section~\ref{sec_conditional12121212121}. We omit the details due to their similarity. 

%Finally, following a similar strategy as in Section~\ref{sec_conditional12121212121}, define
%\begin{align*}
%    \Delta_n(X):=\Big|\mathbb{P}\Big(\sqrt{\frac{n}{\mathsf{V}_i^{\mathtt{Con}}}}
%    \Big(\frac{\check{\lambda}_i}{\widetilde{\sigma}_i}-\mathsf{M}_i^{\mathtt{Con1}}
%    \Big)\le x\,\Big|\,X\Big)-\Phi(x) \Big|.
%\end{align*}
%Then $\Delta_n(X)$ is $\sigma(X)$-measurable. For every $\varepsilon>0$,
%\begin{align*}
%    \mathbb{P}(\Delta_n(X)>\varepsilon)\le
%    \mathbb{P}(\Omega_X^c)+\mathbb{P}(\{\Delta_n(X)>\varepsilon\}\cap\{\Omega_X\}).
%\end{align*}
%The first term tends to zero because $\mathbb{P}(\Omega_X)=1-\mathrm{o}(1)$, while the second term tends to zero by \eqref{eq_spike_conditional_clt_on_good_event_final}. Therefore, 
%\begin{align*}
%    \mathbb{P}(\Delta_n(X)>\epsilon)=\mathrm{o}(1).
%\end{align*}
%Equivalently, for every fixed $x\in\mathbb{R}$, with probability at least 
%$1-\mathrm{o}(1)$,
%\begin{align*}
%    \Big|\mathbb{P}\Big(
%    \sqrt{\frac{n}{\mathsf{V}_i^{\mathtt{Con}}}}
%    \Big(\frac{\check\lambda_i}{\widetilde\sigma_i}-\mathsf{M}_i^{\mathtt{Con1}}\Big)\le x\,\Big|\,X\Big)-\Phi(x)\Big|
%    =\mathrm{o}(1).
%\end{align*}
%This proves the desired central limit theorem for $\check{\lambda}_i/\widetilde{\sigma}_i=\mu_i/(\mathbb{E}\xi^2\widetilde{\sigma}_i), i=1,\dots, r$.

\subsubsection{Proof of \eqref{eq_main_spike_conditional_hatmu}}

The proof follows similar arguments to those in the previous section, with the main additional task being to control the difference between $\widetilde{\sigma}_i$ and $\widehat{\mu}_i$. We focus on explaining the main ideas and omit the details. In the following, we use the convention $\mathsf{P}^*=\mathbb{P}(\cdot|X)$ to simplify the notation. The core inputs of the proof are the results in \eqref{eq_limitingrepresentation_spike} and \eqref{eq_limitingrepresentation_1}. Recall $\check{\lambda}_i=\mu_i/\mathbb{E}\xi^2$ in \eqref{eq_def_checklambda}. We have
\begin{gather*}
    \frac{\check{\lambda}_i}{\theta_i}-1=(\mathbb{E}\xi^2)^{-1}\mathbf{v}_i^{*}XD^2X^{*}\mathbf{v}_i-1+\delta_{\widetilde{\sigma}_i}/\mathbb{E}\xi^2+\mathrm{o}_{\mathsf{P}^*}(n^{-1/2})\\
    \frac{\widehat{\mu}_i}{\theta_i}-1=\mathbf{v}_i^{*}XX^*\mathbf{v}_i-1+\mathrm{o}_{\mathbb{P}}(n^{-1/2}).
\end{gather*}
Then, on $\Omega_X$, it holds that 
\begin{align*}
    \frac{\widehat{\mu}_i}{\theta_i}-1=\mathbf{v}_i^{*}XX^*\mathbf{v}_i-1+\mathrm{o}(n^{-1/2}).
\end{align*}
Consequently,
\begin{equation*}
    \begin{split}
        \frac{\check{\lambda}_i}{\widehat{\mu}_i}&=\frac{\check{\lambda}_i}{\theta_i}\times\frac{\theta_i}{\widehat{\mu}_i}=\frac{1+(\mathbb{E}\xi^2)^{-1}\mathbf{v}_i^{*}XD^2X^{*}\mathbf{v}_i-1+\delta_{\widetilde{\sigma}_i}/\mathbb{E}\xi^2+\mathrm{o}_{\mathsf{P}^*}(n^{-1/2})}{1+\mathbf{v}_i^{*}XX^*\mathbf{v}_i-1+\mathrm{o}(n^{-1/2})}\\
        &=\frac{(\mathbb{E}\xi^2)^{-1}\mathbf{v}_i^{*}X(D^2-\mathbb{E}\xi^2\cdot I)X^{*}\mathbf{v}_i+\delta_{\widetilde{\sigma}_i}/\mathbb{E}\xi^2}{\mathbf{v}_i^{*}XX^*\mathbf{v}_i+\mathrm{o}(n^{-1/2})}+1+\mathrm{o}_{\mathsf{P}^*}(n^{-1/2}).
    \end{split}
\end{equation*}
By part (6) of Definition \ref{def_OmegaX}, we have on the event $\Omega_X$, $\mathbf{v}_i^*XX^*\mathbf{v}_i=1+\mathrm{O}(n^{-1/2+\varepsilon_6})$ for some small constant $\varepsilon_6>0$. It yields that 
\begin{align*}
     \frac{\check{\lambda}_i}{\widehat{\mu}_i}&=(\mathbb{E}\xi^2)^{-1}\mathbf{v}_i^{*}X(D^2-\mathbb{E}\xi^2\cdot I)X^{*}\mathbf{v}_i+\delta_{\widetilde{\sigma}_i}/\mathbb{E}\xi^2+1+\mathrm{o}(n^{-1/2})+\mathrm{o}_{\mathsf{P}^*}(n^{-1/2})\\
     &=(\mathbb{E}\xi^2)^{-1}\mathbf{v}_i^{*}X(D^2-\mathbb{E}\xi^2\cdot I)X^{*}\mathbf{v}_i+\frac{1}{n}\sum_{k=r+1}^p\frac{\sigma_k}{\widetilde{\sigma}_i}\times\mathbb{E}(\xi^2/\mathbb{E}\xi^2-1)^2+1+\mathrm{o}(n^{-1/2})+\mathrm{o}_{\mathsf{P}^*}(n^{-1/2})\\
     &=(\mathbb{E}\xi^2)^{-1}\mathbf{v}_i^{*}X(D^2-\mathbb{E}\xi^2\cdot I)X^{*}\mathbf{v}_i+\mathsf{M}_i^{\mathtt{Con2}}+\mathrm{o}(n^{-1/2})+\mathrm{o}_{\mathsf{P}^*}(n^{-1/2}),
\end{align*}
where we used the fact $\widetilde{\sigma}_i\gg n^{1/4}$ and recall the definition of $\mathsf{M}_i^{\mathtt{Con2}}$ in \eqref{eq_biasedmean_spike1}. 

With the above representation, the remainder of the proof follows by arguments similar to those used in the analysis of (\ref{eq_spike_conditional_representation_lambdasigma}) in Section \ref{sec_prf_main_spikeconditional_theta}. We omit the details due to the similarity.

\section{Proof of some auxiliary lemmas}\label{appendix_last}

\subsection{Preliminary estimates: Proof of Lemmas \ref{lem: basic bounds} and \ref{localestimate2}}\label{sec_proofpreliminarystieltjes}

\subsubsection{Proof of Lemma \ref{lem: basic bounds}}

Due to similarity, we only prove the results for the separable covariance i.i.d. data model when $\xi^2$ decays  polynomially, i.e., when (\ref{eq:F(m,z)}) and (\ref{ass3.1}) hold. The other cases can be proved analogously, and we omit the details.    

\begin{proof}[{\bf Proof}]
We start with the first statement.   We now abbreviate $F_n(m_{1n}(z)) \equiv F_{n}(m_{1n}(z),z)$ throughout the proof. 
For the real part,  it suffices to prove that with high probability for some $0<C_2<1<C_1$
\begin{equation}\label{eq_realcontrol}
\operatorname{Re} m_{1n}(z) \in \left[-C_1  \frac{\phi \bar{\sigma}_1 E}{E^2+\eta^2}, -C_2  \frac{\phi \bar{\sigma}_1 E}{E^2+\eta^2}\right].
\end{equation}
Moreover, by continuity and Theorem \ref{lem_solutionsystem}, it suffices to prove the following inequalities
\begin{equation}\label{eq)ccc}
 \operatorname{Re}F_n(-C_2 \phi \bar{\sigma}_1 E (E^2+\eta^2)^{-1}+\mathrm{i} \operatorname{Im} m_{1n}(z))<0, \  \operatorname{Re}F_n(-C_1 \phi \bar{\sigma}_1 E (E^2+\eta^2)^{-1}+\mathrm{i} \operatorname{Im} m_{1n}(z))>0.
\end{equation} 
We only focus on the first part. By definition, we have that   
 \begin{align}\label{eq_expansion}
 \operatorname{Re} & F_n(m_{1n}(z))=-\operatorname{Re}m_{1n}(z) \\
&  -\frac{1}{n}\sum_{i=1}^p\frac{\sigma_i\operatorname{Re}(z-\frac{\sigma_i}{n}\sum_{j=1}^n\frac{\xi^2_j}{1+m_{1n}(z)\xi^2_j})}{\operatorname{Re}^2(z-\frac{\sigma_i}{n}\sum_{j=1}^n\frac{\xi^2_j}{1+m_{1n}(z)\xi^2_j})+\operatorname{Im}^2(z-\frac{\sigma_i}{n}\sum_{j=1}^n\frac{\xi^2_j}{1+m_{1n}(z)\xi^2_j})}. \nonumber
 \end{align}
Note that 
 \begin{gather*}
     \operatorname{Re}\Big(z-\frac{\sigma_i}{n}\sum_{j=1}^n\frac{\xi^2_j}{1+m_{1n}\xi^2_j} \Big)=E-\frac{\sigma_i}{n}\sum_{j=1}^n\frac{\xi^2_j(1+\xi^2_j\operatorname{Re}m_{1n})}{(1+\xi^2_j\operatorname{Re}m_{1n})^2+\xi^4_j\operatorname{Im}^2m_{1n}},\\
     \operatorname{Im}\Big(z-\frac{\sigma_i}{n}\sum_{j=1}^n\frac{\xi^2_j}{1+m_{1n}\xi^2_j}\Big)=\eta+\frac{\sigma_i}{n}\sum_{j=1}^n\frac{\xi^4_j\operatorname{Im}m_{1n}}{(1+\xi^2_j\operatorname{Re}m_{1n})^2+\xi^4_j\operatorname{Im}^2m_{1n}}.
 \end{gather*}
By a discussion similar to (\ref{eq: 2.2 beta=2}),  if $\operatorname{Re}m_{1n}=-C_2 (\phi \bar{\sigma}_1 E)/(E^2+\eta^2)$, we have that
 \begin{equation}\label{eq_controlcontrolddd}
 \begin{split}
    \operatorname{Re}\Big(z-\frac{\sigma_i}{n}\sum_{j=1}^n\frac{\xi^2_j}{1+m_{1n}\xi^2_j} \Big)
%    &=E-\frac{\sigma_i}{n}\sum_{j=1}^n\frac{\xi^2_j(1+\xi^2_j\operatorname{Re}m_{1n})}{(1+\xi^2_j\operatorname{Re}m_{1n})^2+\xi^4_j\operatorname{Im}^2m_{1n}}\\
%    &\ge E-\frac{\sigma_i}{n}\sum_{j=1}^n\frac{\xi^2_j}{1+\xi^2_j\operatorname{Re}m_{1n}}\\
    &\geq E(1-\ro(1)),\\
     \operatorname{Im}\Big(z-\frac{\sigma_i}{n}\sum_{j=1}^n\frac{\xi^2_j}{1+m_{1n}\xi^2_j} \Big)
%     &=\eta+\frac{\sigma_i}{n}\sum_{j=1}^n\frac{\xi^4_j\operatorname{Im}m_{1n}}{(1+\xi^2_j\operatorname{Re}m_{1n})^2+\xi^4_j\operatorname{Im}^2m_{1n}}\\
%     &\le \eta+\frac{\sigma_i}{2n}\sum_{j=1}^n\frac{\xi^2_j}{1+\xi^2_j\operatorname{Re}m_{1n}}\\
     &\leq \eta+ E\times \ro(1).
 \end{split}
 \end{equation}
% where we use the inequality $a^2+b^2\ge2ab$ for positive constants $a,b$. 
Therefore, together with (\ref{eq_expansion}), we see that 
\begin{align}\label{eq: 2.3}
  %\begin{split}
   \operatorname{Re}F_n(-C_2 \phi \bar{\sigma}_1 E (E^2+\eta^2)^{-1}+\ri \operatorname{Im} m_{1n}(z))
   &\leq C_2\phi\bar{\sigma}_1\frac{E}{(E^2+\eta^2)}-\frac{1}{n}\sum_{i=1}^p\frac{\sigma_i\times E(1-\ro(1))}{E^2+(\eta+E \times \ro(1))^2} \nonumber\\
   &\leq (C_2-1+\ro(1)) \frac{\phi\bar{\sigma}_1E}{(E^2+\eta^2)}<0,
%\end{split}  
\end{align}
for sufficient large $n$. This completes the discussion for the real part. For the complex part, the idea is similar and it suffices to prove that when $z \in \mathbf{D}_{u},$ for some constants $C_1, C_2>0$ 
\begin{equation}\label{eq_imaginarycontrol}
\operatorname{Im} m_{1n}(z) \in \left[ C_1 \frac{ \eta \phi \bar{\sigma}_1}{E^2+\eta^2}, C_2 \eta \left| \operatorname{Re} m_{1n}(z) \right| \right].
\end{equation} 
Equivalently, it suffices to prove that
\begin{equation*}
\operatorname{Im} F_n(\operatorname{Re} m_{1n}(z)+\ri C_2 \eta \left| \operatorname{Re} m_{1n}(z) \right|)<0,  \  \operatorname{Im} F_n\Big(\operatorname{Re} m_{1n}(z)+\ri C_1 \frac{ \eta \phi \bar{\sigma}_1}{E^2+\eta^2} \Big)>0,  
\end{equation*}
where by definition
\[
\operatorname{Im}F_n(m_{1n},z)=-\operatorname{Im}m_{1n}+\frac{1}{n}\sum_{i=1}^p\frac{\sigma_i\operatorname{Im}(z-\frac{\sigma_i}{n}\sum_{j=1}^n\frac{\xi^2_j}{1+m_{1n}\xi^2_j})}{\operatorname{Re}^2(z-\frac{\sigma_i}{n}\sum_{j=1}^n\frac{\xi^2_j}{1+m_{1n}\xi^2_j})+\operatorname{Im}^2(z-\frac{\sigma_i}{n}\sum_{j=1}^n\frac{\xi^2_j}{1+m_{1n}\xi^2_j})}.
\]
The proof of the above inequalities is similar to (\ref{eq)ccc}) using (\ref{eq_realcontrol}). We briefly discuss the proof of the first inequality by an argument similar to (\ref{eq_controlcontrolddd}) 
\[
\begin{split}
   \operatorname{Im}F_n(m_{1n},z)&=\eta\operatorname{Re}m_{1n}+\frac{\phi\bar{\sigma}_1\eta(1+E)}{E^2+\eta^2(1+E\times \ro(1))^2}\\
   &\leq-C_2\frac{\phi\bar{\sigma}_1\eta E}{(E^2+\eta^2)}+\frac{\phi\bar{\sigma}_1\eta(1+E)}{E^2(1+\eta^2\times \ro(1))+\eta^2(1+2E\times \ro(1))}<0.
\end{split}
\]
This completes our proof.

For the second statement, from the first statement, we see that it is valid to write $m_{1n}(E).$ Since $\vartheta_1 \gg d_1$ holds with high probability (see (\ref{eq_lowerboundmu1})), it suffices to prove that for some constants $0<C_2<1<C_1,$ when $|E-\vartheta_1| \leq C d_1,$  
\begin{equation}\label{eq_boundbound}
m_{1n}(E) \in \left[-C_1 \frac{\phi \bar{\sigma}_1}{E},-C_2 \frac{\phi \bar{\sigma}_1}{E}\right].
\end{equation}
Due to simplicity, we again only focus on the proof of the upper bound. According to Theorem \ref{lem_solutionsystem} and (\ref{eq_functionFequal}), we shall have that $F_n(m_{1n}(E))=0.$ Moreover, since $F_{n}(m_{1n}(\vartheta_1))=0,$ to prove (\ref{eq_boundbound}), it suffices to prove
\begin{equation}\label{eq_reduceddiscussion}
F_{n}(-C_2 \phi \bar{\sigma}_1/E)<0, \ F_{n}(-C_1 \phi \bar{\sigma}_1/E)>0.
\end{equation}
Due to similarity, we focus our discussion on the first inequality. By definition, we have that 
\[
 \begin{split}
     F_n(-C_2 \phi \bar{\sigma}_1/E)&=C_2\frac{\phi\bar{\sigma}_1}{E}-\frac{1}{n}\sum_{i=1}^p\frac{\sigma_i}{E-\frac{\sigma_i}{n}\sum_{j=1}^n\frac{\xi^2_j}{1-\xi^2_j(C_2\frac{\phi\bar{\sigma}_1}{E})}}\\
     &=C_2 \frac{\phi\bar{\sigma}_1}{E}-\frac{1}{n}\sum_{i=1}^p\frac{\sigma_i}{E(1-\frac{\sigma_i}{n} \sum_{j=1}^n \frac{\xi_j^2}{E-C_2\xi_j^2 \phi \bar{\sigma}_1})}.
 \end{split}
 \]
By a discussion parallel to (\ref{eq: 2.2 beta=2}), we further have that
\begin{equation}\label{eq_similarcontrolone}
 F_n(-C_2 \phi \bar{\sigma}_1/E)=C_2 \frac{\phi \bar{\sigma}_1}{E}-\frac{1}{n} \sum_{i=1}^p \frac{\sigma_i}{E(1-\ro(1))}=(C_2-1+\ro(1)) \frac{\phi \bar{\sigma}_1}{E}<0.  
\end{equation}
This completes the proof of the first statement.

Finally, we prove the third statement using the first two statements. For $z \in \mathbf{D}_{u},$ by definition, we have that
\begin{align}\label{eq: m_2n}
 %  \begin{split}
   m_{2n}(z)&=\frac{1}{n}\sum_{j=1}^n\frac{\xi^2_j}{-E-\ri\eta-(E+\ri\eta)\xi^2_j(\operatorname{Re}m_{1n}+\ri \operatorname{Im}m_{1n})}\nonumber\\
   &=\frac{1}{n}\sum_{j=1}^n\frac{\xi^2_j \left[(-E-\xi^2_j(E\operatorname{Re}m_{1n}-\eta\operatorname{Im}m_{1n})+\ri\eta+\ri\xi^2_j(E\operatorname{Im}m_{1n}+\eta\operatorname{Re}m_{1n})) \right]}{(-E-\xi^2_j(E\operatorname{Re}m_{1n}-\eta\operatorname{Im}m_{1n}))^2+(-\eta-\xi^2_j(E\operatorname{Im}m_{1n}+\eta\operatorname{Re}m_{1n}))^2}.
%\end{split} 
\end{align}
According to the results in the first two statements, the definition of $\mathbf{D}_{u}$ in (\ref{eq_spectraldomainone}) and the elementary relation that $|m_{1n}(z)|=\rO(1),$ we find that for some constants $C_1, C_2>0,$ when $n$ is sufficiently large 
 \begin{equation*}
 |(-E-\xi^2_j(E\operatorname{Re}m_{1n}-\eta\operatorname{Im}m_{1n})+\ri\eta+\ri\xi^2_j(E\operatorname{Im}m_{1n}+\eta\operatorname{Re}m_{1n}))| \leq C_1 E, 
 \end{equation*}
and
\begin{equation*}
(-E-\xi^2_j(E\operatorname{Re}m_{1n}-\eta\operatorname{Im}m_{1n}))^2+(-\eta-\xi^2_j(E\operatorname{Im}m_{1n}+\eta\operatorname{Re}m_{1n}))^2 \geq C_2 (E+\xi_j^2)^2. 
\end{equation*}
Together with a discussion similar to (\ref{eq: 2.2 beta=2}), we readily see that for some large constant $C>0$
%then when $\eta\le O(1)$, by \eqref{eq: interval for Re} and \eqref{eq: interval for Im} we have,
\begin{equation}\label{eq_rem2n}
\begin{split}
    |m_{2n}(z)|&\leq \frac{C}{n}\sum_{j=1}^n\frac{E \xi^2_j}{(E+\xi_j^2)^2} \leq \frac{C}{n}\sum_{j=1}^n\frac{\xi^2_j}{E-\xi^2_j}=\rO(e_{2}).
\end{split}
\end{equation}
Then together with the definition of $m_n(z),$ we see that 
\[
\begin{split}
   |m_n(z)| \leq \frac{1}{p|z|}\sum_{i=1}^p\frac{1}{|1+\sigma_i m_{2n}(z)|} \le\frac{1}{p |z|}\sum_{i=1}^p\frac{1}{1+\sigma_i|m_{2n}(z)|}
\leq \frac{1}{|z|}=\rO(E^{-1}).
\end{split}
\]

To control the imaginary part, by \eqref{eq: m_2n}, we can write
\[
\operatorname{Im}m_{2n}(z)=\frac{1}{n}\sum_{j=1}^n\frac{\xi^2_j(\eta+\xi^2_j(E\operatorname{Im}m_{1n}+\eta\operatorname{Re}m_{1n}))}{(-E-\xi^2_j(E\operatorname{Re}m_{1n}-\eta\operatorname{Im}m_{1n}))^2+(-\eta-\xi^2_j(E\operatorname{Im}m_{1n}+\eta\operatorname{Re}m_{1n}))^2}.
\]
Combining with (\ref{eq_realcontrol}) and (\ref{eq_imaginarycontrol}), we see that for some constant $C>0$ 
\begin{align}\label{eq_imm2n}
%\begin{split}
   \operatorname{Im}m_{2n}(z)& \leq \frac{C}{n}\sum_{j=1}^n\frac{\xi^2_j(\eta+\xi^2_j\eta)}{(E+\xi^2_j(\rO(1)+\eta^2\times \rO(E^{-1})))^2+(\eta+\xi^2_j\times \rO(1)+\eta\times \rO(E^{-1}))^2} \nonumber \\
   &=\rO\Big(\frac{1}{n}\sum_{j=1}^n\frac{\eta\xi^4_j}{\rO(E^2)} \Big)= \rO\big(\frac{\eta}{E} \big),
%\end{split}
\end{align}
where in the last step we used (\ref{def1}). 
Moreover, using the definition of $m_n(z)$ in (\ref{eq_systemequationsm1m2}), we can write
\[
\operatorname{Im}m_n(z)=\frac{1}{p}\sum_{i=1}^p\frac{\eta+\sigma_i\eta\operatorname{Re}m_{2n}+\sigma_i E\operatorname{Im}m_{2n}}{(E+\sigma_i E\operatorname{Re}m_{2n}-\sigma_i\eta\operatorname{Im}m_{2n})^2+(\eta+\sigma_i\eta\operatorname{Re}m_{2n}+\sigma_i E\operatorname{Im}m_{2n})^2}.
\]
Together with (\ref{eq_rem2n}) and (\ref{eq_imm2n}), we can easily see that 
\begin{equation*}
\operatorname{Im} m_n(z)=\rO\big( \frac{\eta }{E^2} \big).
\end{equation*}
This completes our proof. 
%\end{proof}
\end{proof}

\begin{remark}\label{rem_JHX}
We may observe from the proof of Lemma \ref{lem: basic bounds} that in many cases we can directly write $m_{1n}(\vartheta_1)$ without considering its imaginary part. For example, when (\ref{ass3.1}) holds, for $z_0=\vartheta_1+\ri\eta$, using (\ref{eq:F(m,z)}) and (\ref{eq: def of vartheta_1}), we see that 
\begin{align*}
    \lim_{\eta\downarrow0}\operatorname{Im}m_{1n}(z_0)&=\lim_{\eta\downarrow0}\frac{1}{n}\sum_i\frac{\sigma_i(\eta+\frac{\sigma_i}{n}\sum_{j=1}^n\frac{\xi^4_j\operatorname{Im}m_{1n}(z_0)}{|1+\xi^2_jm_{1n}(z_0)|^2})}{|z_0-\frac{\sigma_i}{n}\sum_{j=1}^n\frac{\xi^2_j}{1+\xi^2_jm_{1n}(z_0)}|^2}\\
    &=\lim_{\eta\downarrow0}\frac{1}{n}\sum_i\frac{\frac{\sigma_i^2}{n}\sum_{j=1}^n\frac{\xi^4_j\operatorname{Im}m_{1n}(z_0)}{|1+\xi^2_jm_{1n}(z_0)|^2}}{|\vartheta_1-\frac{\sigma_i}{n}\sum_{j=1}^n\frac{\xi^2_j}{1+\xi^2_jm_{1n}(z_0)}|^2}\\
    &=\Big(\frac{1}{n}\sum_i\frac{\frac{\sigma_i^2}{n}\sum_{j=1}^n\frac{(\xi^2_{(1)}+d_2)^2\xi^4_j}{|\xi^2_{(1)}+d_2-\xi^2_j|^2}}{|\vartheta_1-\frac{\sigma_i}{n}\sum_{j=1}^n\frac{(\xi^2_{(1)}+d_2)\xi^2_j}{\xi^2_{(1)}+d_2-\xi^2_j}|^2}\Big)\times\lim_{\eta\downarrow0}\operatorname{Im}m_{1n}(z_0).
\end{align*}
Then by a discussion similar to (\ref{eq_imm2n}), using the fact that $\alpha> 2$, we find that for any $\vartheta_1\gtrsim\xi^2_{(1)}$ 
\begin{gather*}
    \lim_{\eta\downarrow0}\operatorname{Im}m_{1n}(z_0)=\mathrm{o}(1)\times \lim_{\eta\downarrow0}\operatorname{Im}m_{1n}(z_0),
\end{gather*}
which holds true if and only if $\lim_{\eta\downarrow0}\operatorname{Im}m_{1n}(z_0)=0$. This shows that $m_{1n}(\vartheta_1)$ is well-defined for $\vartheta_1\gtrsim\xi^2_{(1)}$.

\end{remark}

\subsubsection{Proof of Lemma \ref{localestimate2}}

\begin{proof}[\bf Proof] We first prove the results on the event $\Omega_D$. Let $\Omega_D$ be the event that satisfies (c) of Definition \ref{defn_probset}. According to Lemma \ref{lem_probabilitycontrol}, we find that $\mathbb{P}(\Omega_D)=1-\mathrm{o}(1).$ Now we choose a realization $\{\xi_i^2\} \in \Omega_D$ so that the proofs of parts (a) and (b) of Lemma \ref{localestimate2} are purely deterministic. 

\begin{proof}[\bf Proof of (a)] We start with (\ref{eq_conditionaledgedefinition}). According to (\ref{eq_systemequationsm1m2}), we have that
\begin{gather*}
    m_{1n}(z)=\frac{1}{n}\sum_{i=1}^p\frac{\sigma_i}{-z+\frac{\sigma_i}{n} \sum_{j=1}^n\frac{\xi^2_j}{1+\xi^2_jm_{1n}(z)}}.
\end{gather*}
To characterize the bulk of the spectrum, we take the imaginary part on both sides of the above equation and let $\eta \downarrow 0$ to obtain that 
\begin{gather}
\begin{split}
    \operatorname{Im}m_{1n}(z)
    &=\frac{1}{n}\sum_{i=1}^p \frac{\sigma_i^2 \left(\frac{1}{n}\sum_j\frac{\xi^4_j\operatorname{Im}(m_{1n})}{\operatorname{Re}^2(1+\xi^2_jm_{1n})+\xi^4_j\operatorname{Im}^2(m_{1n})}\right)}{(E-\operatorname{Re}(\frac{\sigma_i}{n}\sum_j\frac{\xi^2_j}{1+\xi^2_jm_{1n}}))^2+\operatorname{Im}^2(\frac{\sigma_i}{n}\sum_j\frac{\xi^2_j}{1+\xi^2_jm_{1n}})}.
\end{split}
\end{gather}
The above equation can be further rewritten as 
%We may rewrite the above equation as
\begin{gather}\label{eq: supp for m1phi}
    0=\operatorname{Im}m_{1n}(z)(1-g (m_{1n},E)),   
\end{gather}
where $g(m_{1n},E)$ is denoted as 
\begin{gather*}
    g(m_{1n},E):=\frac{1}{n}\sum_{i=1}^p\frac{\sigma_i^2 \left(\frac{1}{n}\sum_j\frac{\xi^4_j}{\operatorname{Re}^2(1+\xi^2_jm_{1n})+\xi^4_j\operatorname{Im}^2(m_{1n})}\right)}{(E-\operatorname{Re}(\frac{\sigma_i}{n}\sum_j\frac{\xi^2_j}{1+\xi^2_jm_{1n}}))^2+\operatorname{Im}^2(\frac{\sigma_i}{n}\sum_j\frac{\xi^2_j}{1+\xi^2_jm_{1n}})}.
\end{gather*}
Similarly to the arguments used in \cite{Kwak2021,lee2016extremal}, it is easy to see that for any fixed $\operatorname{Re}m_{1n}<-l^{-1}$ and $E$, $g(m_{1n},E)\rightarrow 1$ when $|\operatorname{Im}m_{1n}|\rightarrow\infty,$ and $g(m_{1n},E)\rightarrow+\infty$ to satisfy (\ref{eq: supp for m1phi}) when $|\operatorname{Im}m_{1n}|\rightarrow 0.$ 

Therefore, by monotonicity, there exists a unique $\operatorname{Im}m_{1n}>0$ such that \eqref{eq: supp for m1phi} holds, which corresponds to the bulk of the spectrum. 

Furthermore, for any fixed $\operatorname{Re} m_{1n} >-l^{-1}$ and fixed  $E$ so that Theorem \ref{lem_solutionsystem} holds, we see that $g(m_{1n},E)$ is monotonically decreasing in terms of $|\operatorname{Im} m_{1n}|.$

Let $E_+$ be defined according to $m_{1n}(E_+)=-l^{-1}.$ In view of (\ref{eq:F(m,z)}) and (\ref{eq_functionFequal}), we have
\begin{equation*}
l^{-1}=\frac{1}{n} \sum_{i=1}^p  \frac{ \sigma_i}{E_+-\sigma_i \widehat{\mathsf{s}}_2}. 
\end{equation*}
Let $\widetilde{\mathsf{s}}_3$ be defined similarly to $\widehat{\mathsf{s}}_3$ in (\ref{eq_finitesample123}) by replacing $\widehat{L}_+$ with $E_+.$ Based on the above arguments and definitions, it is easy to see that  
\[
\begin{split}
    &\sup_{\operatorname{Re}m_{1n}\in(-l^{-1},\infty)} g(m_{1n}, E)=g(-l^{-1}, E_+)=\phi \widetilde{\mathsf{s}}_3.
\end{split}
\]
Assuming that $\phi \widetilde{\mathsf{s}}_3<1,$ we conclude that (\ref{eq: supp for m1phi}) holds only if $\operatorname{Im} m_{1n}(z)=0$, which corresponds to the outside region of the spectrum. This shows that $m_{1n}=-l^{-1}$ is at the right edge of the spectrum and gives the expression of the end point $\widehat{L}_{+}$ as in (\ref{eq_conditionaledgedefinition}). Therefore, $\widehat{L}_+=E_+$ and $\widehat{\mathsf{s}}_3=\widetilde{\mathsf{s}}_3.$ This completes the proof. 

% in \eqref{eq: def of L+}.

\quad Second, the proof of \eqref{eq: concave decay of rho_Q} follows from an argument similar to Lemma 8.4 of \cite{lee2016extremal} that uses the estimate (\ref{ass3.4}), and we omit the details. 

\quad Third, we prove (\ref{eq_closenessequation}). The closeness of $\mathsf{s}_k$ and $\widehat{\mathsf{s}}_k, k=1,2,3,4,$  follows from arguments similar to the third-to-last equation of (\ref{def4}). Now we proceed to the proof of the second equation in (\ref{eq_closenessequation}).
According to the proof of (\ref{eq_conditionaledgedefinition}) and an analogous argument, we found that $m_{1n}(\widehat{L}_+)=-l^{-1}$ and $m_{1n,c}(L_+)=-l^{-1}.$ Together with the definitions of $\mathsf{s}_2, \widehat{\mathsf{s}}_2,$ $m_{2n}$ and $m_{2n,c},$ we find that 
\begin{equation}\label{eq_deterministicrelation}
\mathsf{s}_2=-m_{2n,c}(L_+) L_+, \ \widehat{\mathsf{s}}_2=-m_{2n}(\widehat{L}_+) \widehat{L}_+. 
\end{equation} 
Next, by (\ref{eq_conditionaledgedefinition}) and an analogous argument for $L_+$, we have 
\begin{align}\label{eq_denomimatorcontrol}
  %  \begin{split}
        0&=\frac{1}{n}\sum_{i=1}^p \frac{-l\sigma_i}{(-L_{+}+\sigma_i\mathsf{s}_2)}+\frac{1}{n}\sum_{i=1}^p \frac{l\sigma_i}{(-\widehat{L}_{+}+\sigma_i\widehat{\mathsf{s}}_2)} \nonumber \\
        &=\frac{1}{n}\sum_i\frac{-l\sigma_i}{(-L_{+}+\sigma_i\mathsf{s}_2)}+\frac{1}{n}\sum_i\frac{l\sigma_i}{(-L_{+}+\sigma_i\widehat{\mathsf{s}}_2)}+\frac{1}{n}\sum_i\frac{l\sigma_i(\widehat{L}_{+}-L_{+})}{(-\widehat{L}_{+}+\sigma_i \widehat{\mathsf{s}}_2)(-L_{+}+\sigma_i\widehat{\mathsf{s}}_2)}.
   % \end{split}
\end{align} 
By the first equation of (\ref{eq_closenessequation}), (\ref{eq_deterministicrelation}) and Assumption \ref{assum_additional_techinical}, we can conclude our proof.

\quad Fourth, we work with (\ref{eq_expansionlinear}) and (\ref{eq_a11coro}). Due to similarity, we only prove (\ref{eq_expansionlinear}). According to (\ref{lem_solutionsystem}), we have that 
\begin{gather*}
    m_{1n}(z)=\frac{1}{n}\sum_{i=1}^p \frac{\sigma_i}{-z+\frac{\sigma_i}{n}\sum_{j=1}^n\frac{\xi^2_j}{1+\xi^2_jm_{1n}(z)}}.
\end{gather*}
Consequently, it is easy to see that for $z= \widehat{L}_{+}-\kappa+\ri\eta\in\mathbf{D}_b,$ 
\begin{gather}\label{eq_decompositionmmmmmm}
\begin{split}
    m_{1n}(\widehat{L}_{+})-m_{1n}(z)
    =R_1(\widehat{L}_{+}-z)+R_2(m_{1n}(\widehat{L}_{+})-m_{1n}(z)),
\end{split}
\end{gather}
where we denote 
\begin{gather*}
    R_1:=\frac{1}{n}\sum_{i=1}^p \frac{\sigma_i}{(-\widehat{L}_{+}+\frac{\sigma_i}{n}\sum_j\frac{\xi^2_j}{1+\xi^2_jm_{1n}(\widehat{L}_{+})})(-z+\frac{\sigma_i}{n}\sum_j\frac{\xi^2_j}{1+\xi^2_jm_{1n}(z)})}\\
    R_2:=\frac{1}{n}\sum_{i=1}^p \frac{\frac{\sigma_i^2}{n}\sum_j\frac{\xi^4_j}{(1+\xi^2_jm_{1n}(\widehat{L}_{+}))(1+\xi^2_jm_{1n}(z))}}{(-\widehat{L}_{+}+\frac{\sigma_i}{n}\sum_j\frac{\xi^2_j}{1+\xi^2_jm_{1n}(\widehat{L}_{+})})(-z+\frac{\sigma_i}{n}\sum_j\frac{\xi^2_j}{1+\xi^2_jm_{1n}(z)})}.
\end{gather*}
To study the terms $R_1$ and $R_2,$ we will need the following control whose proof follows from equations (4.24)-(4.28) of \cite{lee2016extremal} 
\begin{equation}\label{eq_boundused}
\frac{1}{n} \sum_{j=1}^n \frac{\xi_j^4}{(1-\xi_j^2 l^{-1})(1+\xi_j^2 m_{1n}(z))}=
\begin{cases}
\rO\left( \log n \right),  & d \geq 2; \\
\rO \left( |l^{-1}+m_{1n}(z)|^{d-2}\log n \right), & 1<d \leq 2.
\end{cases}
\end{equation}
For the denominator of $R_1$, since $z \in \mathbf{D}_b,$ by a discussion similar to (\ref{eq_denomimatorcontrol}), we find that they are bounded from below, so that $R_1=\rO(1).$  Furthermore, since $m_{1n}(\widehat{L}_+)=-l^{-1},$ by a straightforward calculation, using the definition of $\widehat{\mathsf{s}}_2$ in (\ref{eq_finitesample123}) and the control (\ref{eq_boundused}), we observe that 
 \begin{gather}\label{eq_R1control}
    \begin{split}
        R_1&=\widehat{\mathsf{s}}_4-\frac{1}{n}\sum_{i=1}^p \frac{\sigma_i (z-\widehat{L}_{+})+\frac{\sigma_i^2}{n}\sum_j\frac{\xi^4_j(m_{1n}(z)+l^{-1})}{(1-\xi^2_jl^{-1})(1+\xi^2_jm_{1n}(s))}}{(-\widehat{L}_{+}+\sigma_i\widehat{\mathsf{s}}_2)^2(-z+\frac{\sigma_i}{n}\sum_j\frac{\xi^2_j}{1+\xi^2_jm_{1n}(z)})}\\
        &=\widehat{\mathsf{s}}_4+\rO(|z-\widehat{L}_{+}|)+\rO(|m_{1n}(z)+l^{-1}|^{ \min\{d-1,1\}}\log n),
%        &=\frac{ \widehat{\varsigma}_3}{ \widehat{\varsigma}_1}+\rO(|z-\widehat{L}_{+}|)+\rO(|m_{1n}(z)+l^{-1}|^{\min\{d-1,1\}}\log n).
    \end{split}
\end{gather}
where in the second step, we used again an argument similar to (\ref{eq_denomimatorcontrol}). Similarly, for $R_2,$ we find that 
\begin{gather}\label{eq_R2control}
\begin{split}
R_2&=\phi \widehat{\mathsf{s}}_3+\rO(|z-\widehat{L}_{+}|)+\rO(|m_{1n}(z)+l^{-1}|^{\min\{d-1,1\}}\log n).
\end{split}
\end{gather}
Next, we provide a useful deterministic control. Using a discussion similar to \cite[Lemma A.4]{Kwak2021}, we find from (\ref{eq_systemequationsm1m2}) that 
\begin{equation}\label{eq_expansionusefullessorequaltoone}
\frac{1}{n} \sum_{i=1}^p \frac{\sigma_i^2 \frac{1}{n} \sum_{j=1}^n \frac{\xi_j^4}{(1+\xi_j^2 m_{1n}(z))^2} }{|-z+\frac{\sigma_i}{n} \sum_{j=1}^n \frac{\xi_j^2}{1+\xi_j^2 m_{1n}(z)}|^2}=1-\frac{1}{n}\sum_i\frac{\sigma_i\eta/\operatorname{Im}m_{1n}(z)}{|-z+\frac{\sigma_i}{n}\sum_j\frac{\xi^2_j}{1+\xi^2_jm_{1n}(z)}|^2}=1-\eta \frac{|m_{1n}(z)|^2}{\operatorname{Im} m_{1n}(z)}.
\end{equation}
Since $\operatorname{Im} m_{1n}(z)>0$ and $\eta>0$, this implies that
\begin{equation*}
0 \leq 1-\frac{1}{n}\sum_i\frac{\sigma_i\eta/\operatorname{Im}m_{1n}(z)}{|(-z+\frac{\sigma_i}{n}\sum_j\frac{\xi^2_j}{1+\xi^2_jm_{1n}(z)})|^2} < 1. 
\end{equation*} 
Together with the Cauchy-Schwarz inequality, we see that 
\begin{gather*}
    \begin{split}
       |R_2| \leq (\phi\widehat{\mathsf{s}}_3)^{1/2}\left(1-\frac{1}{n}\sum_i\frac{\sigma_i\eta/\operatorname{Im}m_{1n}(z)}{|(-z+\frac{\sigma_i}{n}\sum_j\frac{\xi^2_j}{1+\xi^2_jm_{1n}(z)})|^2}\right)^{1/2} <1,
    \end{split}
\end{gather*} 
where we used the fact $\operatorname{Im}m_{1n}(z)>0$, $\eta>0$, and assumption $\phi\widehat{\mathsf{s}}_3<1$. Using (\ref{eq_decompositionmmmmmm}), we find that $m_{1n}(\widehat{L}_+)-m_{1n}(z) \asymp \widehat{L}_+-z.$ Then we can conclude our proof using (\ref{eq_decompositionmmmmmm}), (\ref{eq_R1control}) and (\ref{eq_R2control}).

\quad Finally, we prove the controls for the imaginary parts. For (\ref{eq_zopointrate11}), the discussion is  similar to that of Lemma 4.5 in \cite{Kwak2021}. According to (\ref{eq_functionFequal}), we see that 
\begin{gather}\label{eq_diudiudiudiudiu}
   \begin{split}
       -m_{1n}(z)&=\frac{1}{n}\frac{\sigma_{1}}{z-\frac{\sigma_{1}}{n}\sum_{j=1}^n\frac{\xi^2_j}{1+\xi^2_jm_{1n}(z)}}+\frac{1}{n}\sum_{i=2}^{p}\frac{\sigma_i}{z-\frac{\sigma_i}{n}\sum_{j=1}^n\frac{\xi^2_j}{1+\xi^2_jm_{1n}(z)}}\\
       &=\rO(\frac{1}{n\eta})+\frac{1}{n}\sum_{i=2}^{p}\frac{\sigma_i}{z-\frac{\sigma_i}{n}\sum_{j=1}^n\frac{\xi^2_j}{1+\xi^2_jm_{1n}(z)}},
   \end{split} 
\end{gather}
where in the step we the fact that $\operatorname{Im} m_{1n}(z) >0$ and the trivial bound that 
\begin{equation}\label{eq_trivialcontroleta}
\frac{1}{n} \Big|\Big(z-\frac{\sigma_{1}}{n}\sum_{j=1}^n\frac{\xi^2_j}{1+\xi^2_jm_{1n}(z)}\Big)^{-1} \Big| \leq n^{-1} \Big(\eta+\frac{\sigma_1}{n} \sum_{j=1}^n \frac{\xi_j^4 \operatorname{Im} m_{1n}(z)}{|1+\xi_j^2 m_{1n}(z)|^2} \Big)^{-1} \leq (n \eta)^{-1}. 
\end{equation}
Taking the imaginary part on both sides of (\ref{eq_diudiudiudiudiu}), we see that for some constant $0<c<1,$
\begin{gather*}
\begin{split}
    \operatorname{Im}m_{1n}(z)&=\frac{1}{n}\sum_{i=2}^{p}\frac{\sigma_i(\eta+\frac{\sigma_i}{n}\sum_{j=1}^n\frac{\xi^4_j\operatorname{Im}m_{1n}(z)}{|1+\xi^2_jm_{1n}(z)|})}{|z-\frac{\sigma_i}{n}\sum_{j=1}^n\frac{\xi^2_j}{1+\xi^2_jm_{1n}(z)}|^2}+\rO(\frac{1}{n\eta})\\
    &=\frac{1}{n}\sum_{i=2}^{p}\frac{\sigma_i\eta}{|z-\frac{\sigma_i}{n}\sum_{j=1}^n\frac{\xi^2_j}{1+\xi^2_jm_{1n}(z)}|^2}+\frac{1}{n}\sum_{i=2}^{p}\frac{\frac{\sigma_i^2}{n}\sum_{j=1}^n\frac{\xi^4_j\operatorname{Im}m_{1n}(z)}{|1+\xi^2_jm_{1n}(z)|}}{|z-\frac{\sigma_i}{n}\sum_{j=1}^n\frac{\xi^2_j}{1+\xi^2_jm_{1n}(z)}|^2}+\rO(\frac{1}{n\eta})\\
    &=\rO(\eta)+\rO(\frac{1}{n\eta})+ c \operatorname{Im} m_{1n}(z),
    \end{split}
\end{gather*}
where in the last step we used \eqref{eq_expansionusefullessorequaltoone} and discussions similar to (\ref{eq_L1bound}) and (\ref{eq_L3BOUND}) below. This concludes the proof. Then we prove (\ref{eq_oneregimeedgecontrol}) and (\ref{eq_zopointrate}) following \cite[Lemma 5.2]{lee2016extremal}. Due to similarity, we focus our analysis on $m_{1n}(z)$ and discuss $m_n(z)$ briefly in the end. In what follows, for notational simplicity, without loss of generality, we assume that on $\Omega_D,$ $\xi_{(i)}^2=\xi_i^2.$ In what follows, we identify $\eta \equiv \eta_0$ till the end of the proof of the lemma.  According to (\ref{eq_systemequationsm1m2})
\begin{align}\label{eq_decompositionl1l2l3}
 &   \operatorname{Im}m_{1n}(z)=\frac{1}{n}\sum_{i=1}^p\frac{\sigma_i(\eta+\frac{\sigma_i}{n}\sum_{j=1}^n\frac{\xi^4_j\operatorname{Im}m_{1n}(z)}{|1+\xi^2_jm_{1n}(z)|^2})}{|z-\frac{\sigma_i}{n}\sum_{j=1}^n\frac{\xi^2_j}{1+\xi^2_jm_{1n}(z)}|^2} \nonumber \\
    &=\frac{1}{n}\sum_{i=1}^p \frac{\sigma_i\eta}{|z-\frac{\sigma_i}{n}\sum_{j=1}^n\frac{\xi^2_j}{1+\xi^2_jm_{1n}(z)}|^2}+\frac{1}{n}\sum_{i=1}^p \frac{\frac{\sigma_i^2}{n}\frac{\xi^4_1\operatorname{Im}m_{1n}(z)}{|1+\xi^2_1m_{1n}(z)|^2}}{|z-\frac{\sigma_i}{n}\sum_{j=1}^n\frac{\xi^2_j}{1+\xi^2_jm_{1n}(z)}|^2}+ \frac{1}{n} \sum_{i=1}^p \frac{\frac{\sigma_i^2}{n}\sum_{j=2}^n\frac{\xi^4_j\operatorname{Im}m_{1n}(z)}{|1+\xi^2_jm_{1n}(z)|^2}}{|z-\frac{\sigma_i}{n}\sum_{j=1}^n\frac{\xi^2_j}{1+\xi^2_jm_{1n}(z)}|^2} \nonumber \\
    &=\mathsf{L}_1+\mathsf{L}_2+\mathsf{L}_3.  
    \end{align}
For the denominator, by the results and arguments in Section \ref{sec_proofpartiboundedlocallaw}, we observe that when $z\in \mathbf{D}_b^\prime$  
\begin{align*}
z-\frac{\sigma_i}{n} \sum_{j=1}^n \frac{\xi_j^2}{1+\xi_j^2 m_{1n}(z)}& = z-\frac{\sigma_i}{n} \sum_{j=2}^n \frac{\xi_j^2}{1+\xi_j^2 m_{1n}(z)}+\frac{\sigma_i}{n} \frac{\xi_1^2}{1+\xi_1^2 m_{1n}(z)} \\
&=z-\frac{\sigma_i}{n} \sum_{j=2}^n \frac{\xi_j^2}{1+\xi_j^2 m_{1n,c}(z)}+\rO((n \eta)^{-1}+n^{-1/2-1/(d+1)}). 
\end{align*} 
Together with Assumption \ref{assum_additional_techinical} and (\ref{def4}), we find that for some small constant $c'>0,$ when $n$ is sufficiently large, 
\begin{equation*}
\left| z-\frac{\sigma_i}{n} \sum_{j=1}^n \frac{\xi_j^2}{1+\xi_j^2 m_{1n}(z)} \right| \geq c'. 
\end{equation*}
This implies that 
\begin{equation}\label{eq_L1bound}
\mathsf{L}_1 \asymp \eta. 
\end{equation}

For $\mathsf{L}_2,$ on the one hand, when $|z-z_0| \geq Cn^{-1/2+3 \epsilon_{\mathsf{d}}},$ by (\ref{eq_a11coro}), we conclude that on $\Omega_D,$ for some constant $C>0$
\begin{equation}\label{eq_L2bound1}
|\mathsf{L}_2| \leq n^{-1/2-3 \epsilon_{\mathsf{d}}} \operatorname{Im} m_{1n}(z).  
\end{equation}
On the other hand, when $z=z_0$ so that $\operatorname{Re} m_{1n}(z)=-\xi_1^2,$ we can rewrite $\mathsf{L}_2$ as 
\begin{equation}\label{eq_L2bound2}
\mathsf{L}_2=\frac{1}{n}\sum_{i=1}^ p\frac{\frac{\sigma_i}{n\operatorname{Im}m_{1n}(z)}}{|z-\frac{\sigma_i}{n}\sum_{j}\frac{\xi^2_j}{1+\xi^2_jm_{1n}(z)}|^2}=\mathrm{O}(\frac{1}{n}(\operatorname{Im}m_{1n}(z))^{-1}). 
\end{equation}

Next, for $\mathsf{L}_3,$ by (\ref{eq_defnmathsfW}), (\ref{def4}) and the results and arguments in Section \ref{sec_proofpartiboundedlocallaw}, using the trivial bound for $\mathsf{L}_2$ such that $|\mathsf{L}_2|=\rO((n\eta)^{-1}),$ we conclude that when $z \in \mathbf{D}_b^{\prime},$ for some constant $0<\mathfrak{c}<1$
\begin{equation}\label{eq_L3BOUND}
|\mathsf{L}_3| \leq \mathfrak{c} \operatorname{Im} m_{1n}(z). 
\end{equation}

\quad Consequently, we find that (\ref{eq_oneregimeedgecontrol}) follows from (\ref{eq_decompositionl1l2l3}), (\ref{eq_L1bound}), (\ref{eq_L2bound1}) and (\ref{eq_L3BOUND}). Moreover, (\ref{eq_zopointrate}) follows from (\ref{eq_decompositionl1l2l3}), (\ref{eq_L1bound}), (\ref{eq_L2bound2}) and (\ref{eq_L3BOUND}) by solving the associated quadratic equation of $\operatorname{Im}m_{1n}(z)$. Finally, we mention that the results for $\operatorname{Im}m_n(z)$ essentially follow from  (\ref{eq_systemequationsm1m2}) 
\begin{gather*}
    \operatorname{Im}m_n(z)=\frac{1}{n}\sum_{i=1}^p \frac{\eta}{|z-\frac{\sigma_i}{n}\sum_j\frac{\xi^2_j}{1+\xi^2_jm_{1n}(z)}|^2}+\frac{1}{n}\sum_{i=1}^p \frac{\frac{1}{n}\sum_{j}\frac{\xi^4_j\operatorname{Im}m_{1n}(z)}{|1+\xi^2_jm_{1n}(z)|^2}}{|z-\frac{\sigma_i}{n}\sum_j\frac{\xi^2_j}{1+\xi^2_jm_{1n}(z)}|^2},
\end{gather*}
with the results of $\operatorname{Im}m_{1n}(z)$. This completes our proof.

\end{proof}

\begin{proof}[\bf Proof of Part (b)] 

For (\ref{eq_boundedfrombelowimportant}), on the one hand, when when $d>1$ and $\phi^{-1}<\widehat{\mathsf{s}}_3,$ the result has been proved in (\ref{rem1nbound}).  On the other hand, when $-1<d \leq 1,$  we employ the proof idea as in the proof of Lemma A.3 of \cite{lee2013local11111} using a continuity argument. Recall (\ref{eq_Fnxyoriginaldefinition}). Denote 
\begin{equation*}
   g(x,y) \equiv \frac{\partial F_n(x,y)}{\partial x}+1=\frac{1}{n}\sum_{i=1}^p\frac{\frac{\sigma_i^2}{n}\sum_{j=1}^n\frac{\xi^4_j}{(1+x\xi^2_j)^2}}{(-y+\frac{\sigma_i}{n}\sum_{j=1}^n\frac{\xi^2_j}{1+x\xi^2_j})^2}.
\end{equation*}
From our assumption that $-1<d \leq 1$ and (\ref{ass3.4}), we find that there exist constants $C,C_0>0$ such that $\mathrm{d}F(x)\geq C(l-x)^{d}\geq C_0(l-x)$ for $x\in(0,l)$. Let $D$ be a sufficiently large constant and choose a sufficiently small constant $0<\epsilon<D^{-1}$, we have that when $n$ is sufficiently large, there exists some constants $C_1, C_2, C_3>0$
    \begin{align*}
            g(-(l+\epsilon)^{-1},\widehat{L}_{+})&=\frac{1}{n}\sum_{i=1}^p\frac{\frac{\sigma_i^2}{n}\sum_{j=1}^n\frac{(l+\epsilon)^2\xi^4_j}{(l+\epsilon-\xi^2_j)^2}}{\left(\widehat{L}_{+}-\frac{\sigma_i}{n}\sum_{j=1}^n\frac{(l+\epsilon)\xi^2_j}{l+\epsilon-\xi^2_j} \right)^2}\\
            &\geq\frac{C_1}{n}\sum_{i=1}^p\frac{\sigma_i^2\int_{l-(D-1)\epsilon}^l\frac{(l+\epsilon)^2x^2}{(l+\epsilon-x)^2}\mathrm{d}F(x)}{(\widehat{L}_{+}-\sigma_i\rO(1))^2} \geq \frac{1}{n}\sum_{i=1}^p\frac{C_2\int_{l-(D-1)\epsilon}^l\frac{(l-x)}{(l+\epsilon-x)^2}\mathrm{d}x}{(\widehat{L}_{+}-\sigma_i\rO(1))^2}\\
            &=\frac{1}{n}\sum_{i=1}^p\frac{C_2\int_{\epsilon}^{D\epsilon}\frac{(t-\epsilon)}{t^2}\mathrm{d}t}{(\widehat{L}_{+}-\sigma_i\rO(1))^2} \geq C_3(\log D-1+\frac{1}{D})>1,
    \end{align*}
    for sufficiently large $D>0.$ Similar arguments apply to $g(-(l-\epsilon),\widehat{L}_+).$ Consequently, by the continuity of $g(x,y)$, we obtain that $ \partial F_n (-l^{-1},\widehat{L}_{+}) /\partial x>0. $ Since $\partial F_n (m_{1n}(\widehat{L}_+),\widehat{L}_{+}) /\partial x=0,$ we can conclude that (\ref{rem1nbound}) still holds. That is, $m_{1n}(\widehat{L}_{+})>-l^{-1}$. This finishes the proof of (\ref{eq_boundedfrombelowimportant}). For (\ref{thm_main_squared_root_bounded}) and (\ref{eq_gammadefinition}), using (\ref{eq_boundedfrombelowimportant}),   by  a discussion similar to (\ref{eq_assumptionequation}), we see that 
\begin{equation}\label{eq_secondmomentbounded}
\frac{\partial^2 F_n(m_{1n}(\widehat{L}_+), \widehat{L}_+)}{\partial x^2} \asymp 1.
\end{equation} 
Armed with this input, the square root behavior of $\rho$ at $\widehat{L}_+$ can be obtained in the same way as Lemma A.1 of \cite{lee2013local11111}. Due to similarity, we omit the details.
\end{proof}

Finally, it is easy to check that we can follow the lines of the proofs of parts (a) and (b) to prove the unconditional results by replacing the related quantities verbatim. 
\end{proof}

\subsection{Control of some bad probability events: proof of Lemma \ref{lem_probabilitycontrol}}\label{sec_appendxi_goodevent}

In this subsection, we prove Lemma \ref{lem_probabilitycontrol} case by case. 
%We first prove Case (a) in Definition \ref{defn_probset}. 

%{\color{blue}
%We first recall the assumption of the unbounded multiplier:
%\begin{assumption}
% We assume that $\xi^2$ has an unbounded support and $\xi^2$ has polynomial decay tail such  that 
%\begin{equation}
%	\mathbb{P}(\xi^2>x)=\frac{L}{x^{\alpha}},\quad x\rightarrow\infty,
%\end{equation}
%	for some $\alpha\in[2,+\infty)$ and some constant $L>0$.
%\end{assumption}
%}

\begin{proof}[\bf Proof of Case (a)] First, the last statement of (\ref{def1}) follows directly from strong law of large number. In fact, the result holds almost surely. 

{

Then,  we prove the second statement of (\ref{def1}). For the upper bound, since $\{\xi_i^2\}$ are independent, we readily see that when $n$ is sufficiently large,  
%{\color{blue}[the use of $\asymp$ is wrong]}
%For any positive constants $\kappa_0$ and $\delta$,
\begin{equation*}
\begin{split}
    \mathbb{P}(\xi_{(1)}^2 \leq C_{\mathsf{p},1}n^{1/\alpha}
    \log n)&=\left(1-\mathbb{P}(\xi^2> C_{\mathsf{p},1}n^{1/\alpha}
    \log n) \right)^n =\left(1-\frac{L}{(C_{\mathsf{p},1} n^{1/\alpha}\log n)^{\alpha}}\right)^n\\
    &=(1-LC_{\mathsf{p},1}^{-\alpha}n^{-1}\log^{-\alpha} n)^n{=\exp\left( -1/(C' \log^{\alpha} n)\right) (1+\mathrm{o}(1))}= 1-\rO(\log^{-\alpha}n) 
\end{split}
\end{equation*}
where in the fourth step we set
%we used the Taylor expansion for $\ln(1-LC^{-\alpha}n^{-1}\log^{-\alpha}n)^n$ with the dominated term being $n\cdot (C^{\prime}n)^{-1}\log^{-\alpha}n$ for 
$C^{\prime}=L^{-1}C_{\mathsf{p},1}^{\alpha}$; and in the last step we used the Taylor expansion for $\exp(x)$ near zero.
% with $x=-(C^{\prime}\log^{\alpha}n)^{-1}$. 
This proves the upper bound. For the lower bound, we can show that  
\begin{align}\label{eq_bbbbboneoneoneone}
%\begin{split}
    \mathbb{P}(\xi^2_{(1)}\geq C_{\mathsf{p},1}^{-1}n^{1/\alpha}\log^{-1}n)& =1-\mathbb{P}(\xi^2_{(1)}< C_{\mathsf{p},1}^{-1}n^{1/\alpha}\log^{-1}n)=1-\big(\mathbb{P}(\xi^2< C_{\mathsf{p},1}^{-1}n^{1/\alpha}\log^{-1}n)\big)^n \nonumber \\
    &=1-\big(1-\mathbb{P}(\xi^2>C_{\mathsf{p},1}^{-1}n^{1/\alpha}\log^{-1}n)\big)^n=1-\left(1-\frac{LC_{\mathsf{p},1}^{\alpha}}{(n^{1/\alpha}\log^{-1}n)^{\alpha}}\right)^n\\
    &=1-\exp(-C^{\prime \prime}\log^{\alpha}n){(1+\mathrm{o}(1))}\geq 1-\mathrm{O}(n^{-C^{\prime \prime}}), \nonumber
%\end{split}
\end{align}
where in the second-to-last step, we used  $C^{\prime \prime}=LC_{\mathsf{p},1}^{\alpha}$. In the last step, we used the fact  that $\exp(-C^{\prime \prime}\log^{\alpha}n)\ll\exp(-C^{\prime \prime}\log n)=n^{-C^{\prime \prime}}$. This concludes the proof of the second statement. 

Next, we prove the first statement. By the joint distribution of the order statistics \cite{david2004order}, $(\xi^2_{(1)},\xi^2_{(2)})$ has the density function 
\begin{align}\label{eq_jointdensity1and2}
    f_{\xi^2_{(1)},\xi^2_{(2)}}(x,y)=n(n-1)(F_{\xi^2}(y))^{n-2}f_{\xi^2}(x)f_{\xi^2}(y),\quad x>y>0,
\end{align}
where $F_{\xi^2}(x)$ and $f_{\xi^2}(x)$ are CDF and PDF of $\xi^2$, respectively. Then, we have
\begin{align*}
    \mathbb{P}(\xi^2_{(1)}-\xi^2_{(2)}\leq\epsilon)=\iint_{0<x-y\le\epsilon}f_{\xi^2_{(1)},\xi^2_{(2)}}(x,y)\mathrm{d}x\mathrm{d}y.
\end{align*}
After the change of variables $u = y$ and $v = x - y$, we obtain that
\begin{align}\label{eq_joint}
    \mathbb{P}(\xi^2_{(1)}-\xi^2_{(2)}\leq\epsilon)=\int_0^{\infty}\left(\int_{0}^{\epsilon}n(n-1)f_{\xi^2}(u+v)f_{\xi^2}(u)\mathrm{d}v\right)(F_{\xi^2}(u))^{n-2}\mathrm{d}u.
\end{align}

Before utilizing (\ref{eq_joint}) for the analysis, we first provide some useful controls. By \eqref{ass3.1} and the fact that $\{\xi^2_i\}$ are independent and continuous random variables, we have that 
\begin{align}\label{eq_hererepeating}
    &\mathbb{P}(\xi^2_{(2)}\geq C_{\mathsf{p},0}^{-1}n^{1/\alpha}\log^{-1}n)=1-\mathbb{P}(\xi^2_{(2)}<C_{\mathsf{p},0}^{-1}n^{1/\alpha}\log^{-1}n)\\
    &=1-\mathbb{P}(\xi^2<C_{\mathsf{p},0}^{-1}n^{1/\alpha}\log^{-1}n)^n-n\mathbb{P}(\xi^2\geq C_{\mathsf{p},0}^{-1}n^{1/\alpha}\log^{-1}n)\mathbb{P}(\xi^2<C_{\mathsf{p},0}^{-1}n^{1/\alpha}\log^{-1}n)^{n-1} \nonumber\\
    &\ge1-\big(1-\mathbb{P}(\xi^2>C_{\mathsf{p},0}^{-1}n^{1/\alpha}\log^{-1}n)\big)^n-n(1-\mathbb{P}(\xi^2>C_{\mathsf{p},0}^{-1}n^{1/\alpha}\log^{-1}n))^{n-1} \nonumber \\
    &=1-(1-\frac{LC_{\mathsf{p},0}^{\alpha}}{(n^{1/\alpha}\log^{-1}n)^{\alpha}})^n-n(1-\frac{LC_{\mathsf{p},0}^{\alpha}}{(n^{1/\alpha}\log^{-1}n)^{\alpha}})^{n-1} \nonumber \\
    &=1-{(\exp(-LC_{\mathsf{p},0}^{\alpha}\log^{\alpha}n)+n\exp(-LC_{\mathsf{p},0}^{\alpha}\log^{\alpha}n))(1+\mathrm{o}(1))} \nonumber \\
   & =1-\rO\left( n\exp(-LC_{\mathsf{p},0}^{\alpha}\log^{\alpha}n) \right). \nonumber
\end{align}

%where we estimated $\ln(1-LC^{\alpha}n^{-1}\log^{\alpha}n)^n$ at $-LC^{\alpha}\log^{\alpha}n$ using the Taylor expansion.
%It indicates that the integral region of $u$ in (\ref{eq_joint}) should be greater than $C^{-1}n^{1/\alpha}\log^{-1}n$ with probability at least $1-n\exp(-LC^{\alpha}\log^{\alpha}n)$.

With the above control, we proceed to study (\ref{eq_joint}).  By the law of total probability \cite{ross1998first}, we have
\begin{align}\label{eq_finalreponseoneone}
    \mathbb{P}(\xi^2_{(1)}-\xi^2_{(2)}\leq\epsilon)&=\mathbb{P}(\xi^2_{(1)}-\xi^2_{(2)}\leq \epsilon|\xi^2_{(2)}\geq C_{\mathsf{p},0}^{-1}n^{1/\alpha}\log^{-1}n)\cdot\mathbb{P}(\xi^2_{(2)}\geq C_{\mathsf{p},0}^{-1}n^{1/\alpha}\log^{-1}n) \nonumber\\
    &+\mathbb{P}(\xi^2_{(1)}-\xi^2_{(2)}\leq\epsilon|\xi^2_{(2)}< C_{\mathsf{p},0}^{-1}n^{1/\alpha}\log^{-1}n)\cdot\mathbb{P}(\xi^2_{(2)}< C_{\mathsf{p},0}^{-1}n^{1/\alpha}\log^{-1}n)\\
=\mathbb{P}(\xi^2_{(1)}-\xi^2_{(2)}\leq\epsilon &|\xi^2_{(2)}\geq C_{\mathsf{p},0}^{-1}n^{1/\alpha}\log^{-1}n)\cdot(1-\rO \left(n\exp(-LC_{\mathsf{p},0}^{\alpha}\log^{\alpha}n) \right))+\mathrm{O}(n\exp(-LC_{\mathsf{p},0}^{\alpha}\log^{\alpha}n)) \nonumber \\
=\mathbb{P}(\xi^2_{(1)}-\xi^2_{(2)}\leq \epsilon &|\xi^2_{(2)}\geq C_{\mathsf{p},0}^{-1}n^{1/\alpha}\log^{-1}n)+\mathrm{O}(n\exp(-LC_{\mathsf{p},0}^{\alpha}\log^{\alpha}n)). \nonumber
\end{align}
Then, it is sufficient to calculate $\mathbb{P}(\xi^2_{(1)}-\xi^2_{(2)}\leq \epsilon |\xi^2_{(2)}\geq C_{\mathsf{p},0}^{-1}n^{1/\alpha}\log^{-1}n)$. For notional simplicity, we denote $$a_n:=C_{\mathsf{p},0}^{-1}n^{1/\alpha}\log^{-1}n.$$ Consequently, we can write
\begin{align}\label{eq_gapstatistics_prob1}
    \mathbb{P}(\xi^2_{(1)}-\xi^2_{(2)}\leq \epsilon & |\xi^2_{(2)}\geq a_n) \nonumber \\
  &  =\int_{a_n}^{\infty}\big(\int_{0}^{\epsilon}n(n-1)f_{\xi^2}(u+v)f_{\xi^2}(u)\mathrm{d}v\big)(F_{\xi^2}(u))^{n-2}\mathrm{d}u.
\end{align}
To prove our result, we now choose $\delta:=\log^{-c_{\mathsf{p},0}}n$ and set
 \begin{equation*}
 \epsilon=a_n\delta.
 \end{equation*}
In what follows, in terms of (\ref{eq_gapstatistics_prob1}), we shall only focus on the case when $u \gg \epsilon,$ where we have that  
\begin{align}\label{eq_calculatehere}
    &\int_0^{\epsilon}f_{\xi^2}(u+v)\mathrm{d}v=\int_u^{u+\epsilon}f_{\xi^2}(x)\mathrm{d}x=F_{\xi^2}(u+\epsilon)-F_{\xi^2}(u) \nonumber \\
    &=Lu^{-\alpha}-L(u+\epsilon)^{-\alpha}=Lu^{-\alpha}(1-(\frac{u+\epsilon}{u})^{-\alpha})=Lu^{-\alpha}\big(1-(1+\frac{\epsilon}{u})^{-\alpha}\big) \nonumber \\
    &= Lu^{-\alpha}\big(1-(1-\alpha\frac{\epsilon}{u})(1+\mathrm{o}(1))\big)=L\alpha\epsilon u^{-\alpha-1}(1+\mathrm{o}(1))=\epsilon f_{\xi^2}(u){(1+\mathrm{o}(1))},
\end{align}
where we used Taylor expansion to approximate {$(1+\epsilon/u)^{-\alpha}=1-(\alpha\epsilon/u)(1+\mathrm{o}(1))$} for $u\gg\epsilon$. Moreover, by the assumption of \eqref{ass3.1}, we have
\begin{align*}
    1-F_{\xi^2}(u)=Lu^{-\alpha},\quad f_{\xi^2}(u)=L\alpha u^{-\alpha-1},\quad (F_{\xi^2}(u))^{n-2}=(1-Lu^{-\alpha})^{n-2}= \exp(-nLu^{-\alpha})(1+\mathrm{o}(1)),
\end{align*}
 where the last approximation holds for sufficiently large $u \gg 1$. 
Inserting all the above into  (\ref{eq_gapstatistics_prob1}), we have that
% On the other hand, taking for some $\delta\downarrow0$, we can calculate the inner integral of \eqref{eq_gapstatistics_prob1} as
\begin{align*}
    &\mathbb{P}(\xi^2_{(1)}-\xi^2_{(2)}\leq \epsilon|\xi^2_{(2)}\geq a_n) \leq n^2\epsilon\int_{a_n}^{\infty}f^2_{\xi^2}(u)(F_{\xi^2}(u))^{n-2}\mathrm{d}u\times(1+\mathrm{o}(1)) \\
  &  = n^2(L\alpha)^2\epsilon\int_{a_n}^{\infty}u^{-2\alpha-2}\exp(-nLu^{-\alpha})\mathrm{d}u\times(1+\mathrm{o}(1)).
\end{align*}
Substituting $s=nLu^{-\alpha}$, we have $u=n^{1/\alpha}L^{1/\alpha}/s^{1/\alpha}$ and $\mathrm{d}u=-(1/\alpha)(nL)^{1/\alpha}s^{-1/\alpha-1}\mathrm{d}s$. It follows that when $n$ is sufficiently large 
{
\begin{align*}
    \mathbb{P}(\xi^2_{(1)}-\xi^2_{(2)}\leq \epsilon|\xi^2_{(2)}\geq a_n) & =\alpha(nL)^{-1/\alpha}\epsilon\int_{0}^{nL/a_n^{\alpha}}s^{1+1/\alpha}\exp(-s)\mathrm{d}s\times(1+\mathrm{o}(1))\\
 &   \leq \alpha(nL)^{-1/\alpha}\epsilon\int_{0}^{\infty}s^{1+1/\alpha}\exp(-s)\mathrm{d}s\times(1+\mathrm{o}(1)).
\end{align*}
}
Note the remaining integral is of the form of the Gamma function that
\begin{align*}
    \int_0^{\infty}s^{1+1/\alpha}\exp(-s)\mathrm{d}s=\Gamma(2+\frac{1}{\alpha}).
\end{align*}
Consequently, we have that for some constant $C'>0$
\begin{align}\label{eq_gapstatistics_prob2}
    \mathbb{P}(\xi^2_{(1)}-\xi^2_{(2)}\leq \epsilon|\xi^2_{(2)}\geq a_n) & \leq C' \alpha L^{-1/\alpha}\Gamma(2+\frac{1}{\alpha})n^{-1/\alpha}\epsilon \nonumber \\
 &   = C' \alpha L^{-1/\alpha}\Gamma(2+\frac{1}{\alpha})n^{-1/\alpha}a_n\delta=\mathrm{O}(\log^{-1-c_{\mathsf{p},0}}).
\end{align}
Together with (\ref{eq_finalreponseoneone}), we can therefore conclude our proof. 

For the third statement, due to similarity, we focus on the case $i = K := \lfloor \log n \rfloor$, which is in fact the most challenging scenario. We will explain how to handle all cases $1 \leq i \leq K$ at the end of the proof. We start with the preparation on some controls for $\xi^2_{(K)}.$ 
%
%Recall from the second statement of equation (\ref{def1}) (previously (A.25)) that for some constant $C>1$ and $D>0$ 
%\begin{equation}\label{eq_1stuseful}
%\mathbb{P}\left( \xi_{(1)}^2 \leq C n^{1/\alpha} \log n \right)=1-\mathrm{O}\left( \log^{-D}n \right). 
%\end{equation}
For some $C_K>C_{\mathsf{p},1}$ in the second statement, we have that 
\begin{align*}
     &\mathbb{P}(\xi^2_{(K)}\geq C_K^{-1} n^{1/\alpha}\log^{-1}n)\\
    &=\sum_{k=K}^n\binom{n}{k}\Big(\mathbb{P}(\xi^2\geq C_K^{-1}n^{1/\alpha}\log^{-1}n)\Big)^k\Big(1-\mathbb{P}(\xi^2\geq C_K^{-1}n^{1/\alpha}\log^{-1}n)\Big)^{n-k}.
\end{align*}
Denote 
\begin{equation*}
S_n=\left|\{1 \leq j \leq n: \xi_j^2 \geq C_K^{-1} n^{1/\alpha}\log^{-1}n \} \right|. 
\end{equation*}
It is clear that $S_n$ follows a Binomial distribution $\mathsf{B}(n,p_n)$ with $p_n:=\mathbb{P}(\xi^2\geq C_K^{-1}n^{1/\alpha}\log^{-1}n)$. According to \eqref{ass3.1}, for sufficiently large $n,$ we have 
\begin{align*}
    p_n= L C^{\alpha}_K n^{-1}\log^{\alpha}n, \ \ \text{and} \ \ \mu_n:=np_n \gg K.
\end{align*}
Therefore, the Chernoff left-tail bound yields that  
\begin{align*}
    \mathbb{P}(S_n<K) = \mathbb{P}\left(S_n < \frac{K}{\mu_n}\mu_n\right) \leq \exp\left(-\frac{\mu_n}{2}\left(1-\frac{K}{\mu_n}\right)^2\right) \sim \exp\left(-\frac{1}{2}\log^{\alpha}n\right).
\end{align*}

Consequently, we have that
\begin{align*}
    \mathbb{P}(\xi^2_{(K)}\geq C_K^{-1} n^{1/\alpha}\log^{-1}n) = \mathbb{P}(S_n\geq K) = 1 - \mathrm{O}\left(\exp\left(-\frac{1}{2}\log^{\alpha}n\right)\right).
\end{align*}

On the other hand, as $\xi^2_{(K)}\leq \xi^2_{(1)}$, together with the second statement, we have that 
\begin{align*}
    \mathbb{P}(\xi^2_{(K)}\leq C_K n^{1/\alpha}\log n)=1-\mathrm{O}(\log^{-D}n).
\end{align*}
Combining the above arguments, we see that
\begin{align}\label{eq_b12134567890123}
    \mathbb{P}(C_K^{-1}n^{1/\alpha}\log^{-1}n\leq \xi^2_{(K)}\leq C_K n^{1/\alpha}\log n)=1-\mathrm{O}(\log^{-D}n).
\end{align} 

With the above preparation, we now proceed to conclude the proof. Denote $c_{n,-}:=C_K^{-1}n^{1/\alpha}\log^{-1}n$, $c_{n,+}:=C_K n^{1/\alpha}\log n$ and
\begin{align*}
    I_{c_n}:=[c_{n,-},c_{n,+}].
\end{align*}
For notional simplicity, till the end of the proof in this comment, we denote 
\begin{equation*}
\epsilon:=C_{\mathsf{p},2}^{-1}n^{\epsilon_{\mathsf{p},1}}\log^{-1}n.
\end{equation*} 
We have that 
\begin{align}\label{eq_important}
     \mathbb{P}(\xi^2_{(K-1)}-\xi^2_{(K)}\leq\epsilon)&=\int_{I_{c_n}}\int_{0<x-y\leq\epsilon}f_{\xi^2_{(K-1)},\xi^2_{(K)}}(x,y)\mathrm{d}x\mathrm{d}y+\int_{I_{c_n}^c}\int_{0<x-y\leq\epsilon}f_{\xi^2_{(K-1)},\xi^2_{(K)}}(x,y)\mathrm{d}x\mathrm{d}y \nonumber \\
     &=\int_{I_{c_n}}\int_{0<x-y\leq\epsilon}f_{\xi^2_{(K-1)},\xi^2_{(K)}}(x,y)\mathrm{d}x\mathrm{d}y+\mathrm{O}(\log^{-D}n),
\end{align}
where we used (\ref{eq_b12134567890123}) to obtain
\begin{align*}
    &\int_{I_{c_n}^c}\int_{0<x-y\leq \epsilon}f_{\xi^2_{(K-1)},\xi^2_{(K)}}(x,y)\mathrm{d}x\mathrm{d}y\leq\int_{I_{c_n}^c}\int_{y<x}f_{\xi^2_{(K-1)},\xi^2_{(K)}}(x,y)\mathrm{d}x\mathrm{d}y\\
    &=\mathbb{P}(\xi^2_{(K)}\notin I_{c_n})=\mathrm{O}(\log^{-D}n).
\end{align*}

For the first term in (\ref{eq_important}), using the explicit density function for $(\xi^2_{(K-1)}, \xi^2_{(K)}),$ we have that 
\begin{align*}
    &\int_{I_{c_n}}\int_{0<x-y\leq \epsilon}f_{\xi^2_{(K-1)},\xi^2_{(K)}}(x,y)\mathrm{d}x\mathrm{d}y\\
    &=\int_{I_{c_n}}\int_{0<x-y\leq \epsilon}\frac{n!}{(K-2)!(n-K)!}(1-F_{\xi^2}(x))^{K-2}f_{\xi^2}(x)f_{\xi^2}(y)(F_{\xi^2}(y))^{n-K}\mathrm{d}x\mathrm{d}y.
\end{align*}
Using the change of variables of $u=y$ and $v=x-y$, we can further obtain
\begin{align*}
    &\int_{I_{c_n}}\int_{0<x-y\leq \epsilon}f_{\xi^2_{(K-1)},\xi^2_{(K)}}(x,y)\mathrm{d}x\mathrm{d}y\\
    &=\int_{c_{n,-}}^{c_{n,+}}\int_0^{\epsilon}\frac{n!}{(K-2)!(n-K)!}(1-F_{\xi^2}(u+v))^{K-2}f_{\xi^2}(u+v)f_{\xi^2}(u)(F_{\xi^2}(u))^{n-K}\mathrm{d}v\mathrm{d}u.
\end{align*}
Since for $u\gg 1$, from (\ref{ass3.1}), we have
\begin{align}\label{eq_changchangechange}
    1-F_{\xi^2}(u)=Lu^{-\alpha},\quad f_{\xi^2}(u)=L\alpha u^{-\alpha-1},
\end{align}
which are decreasing functions of $u$. This leads to 
\begin{align*}
    &\int_{I_{c_n}}\int_{0<x-y\leq \epsilon}f_{\xi^2_{(K-1)},\xi^2_{(K)}}(x,y)\mathrm{d}x\mathrm{d}y\\
    &=\int_{c_{n,-}}^{c_{n,+}}\int_0^{\epsilon}\frac{n!}{(K-2)!(n-K)!}(1-F_{\xi^2}(u+v))^{K-2}f_{\xi^2}(u+v)f_{\xi^2}(u)(F_{\xi^2}(u))^{n-K}\mathrm{d}v\mathrm{d}u\\
    &\leq \int_{c_{n,-}}^{c_{n,+}}\int_0^{\epsilon}\frac{n!}{(K-2)!(n-K)!}(1-F_{\xi^2}(u))^{K-2}f^2_{\xi^2}(u)(F_{\xi^2}(u))^{n-K}\mathrm{d}v\mathrm{d}u\\
    &=\int_{c_{n,-}}^{c_{n,+}}\frac{\epsilon n!}{(K-2)!(n-K)!}(1-F_{\xi^2}(u))^{K-2}f^2_{\xi^2}(u)(F_{\xi^2}(u))^{n-K}\mathrm{d}u.
\end{align*}

By further substituting $t=1-F_{\xi^2}(u)$ and $\mathrm{d}t=-f_{\xi^2}(u)\mathrm{d}u,$ we have that 
\begin{align}\label{eq_hahahadueduedue}
    &\int_{I_{c_n}}\int_{0<x-y\leq \epsilon}f_{\xi^2_{(K-1)},\xi^2_{(K)}}(x,y)\mathrm{d}x\mathrm{d}y \nonumber \\
    &\leq\int_{Lc_{n,+}^{-\alpha}}^{Lc_{n,-}^{-\alpha}}\frac{\epsilon n!}{(K-2)!(n-K)!}t^{K-2}(1-t)^{n-K}f_{\xi^2}(F^{-1}_{\xi^2}(1-t))\mathrm{d}t.
\end{align}
Note that by construction and (\ref{eq_changchangechange}), we have 
\begin{equation*}
F_{\xi^2}^{-1}(1-t)=F^{-1}_{\xi^2}(F_{\xi^2}(u))=u=(L/t)^{1/\alpha}.
\end{equation*}
Moreover, we readily see that $Lc_{n,+}^{-\alpha}=LC_k^{-\alpha}n^{-1}\log^{-\alpha}n\ll 1$ and $Lc_{n,-}^{-\alpha}=LC_k^{\alpha}n^{-1}\log^{\alpha}n\ll 1.$ The above yield that for
$t\in[Lc_{n,+}^{-\alpha},Lc_{n,-}^{-\alpha}],$ when $n$ is large enough
\begin{align*}
    f_{\xi^2}(F^{-1}_{\xi^2}(1-t))=f_{\xi^2}((L/t)^{\alpha})=L^{-1/\alpha}\alpha t^{1/\alpha+1}.
\end{align*}
Inserting the above back into (\ref{eq_hahahadueduedue}), we obtain that 
\begin{align*}
    &\int_{I_{c_n}}\int_{0<x-y\leq \epsilon}f_{\xi^2_{(K-1)},\xi^2_{(K)}}(x,y)\mathrm{d}x\mathrm{d}y\\
    &\leq\int_{Lc_{n,+}^{-\alpha}}^{Lc_{n,-}^{-\alpha}}\frac{\epsilon n!}{(K-2)!(n-K)!}L^{-1/\alpha}\alpha t^{K-1+1/\alpha}(1-t)^{n-K}\mathrm{d}t \\
    & \leq\int_{0}^{1}\frac{\epsilon n!}{(K-2)!(n-K)!}L^{-1/\alpha}\alpha t^{K-1+1/\alpha}(1-t)^{n-K}\mathrm{d}t \\
    &=L^{-1/\alpha}\alpha \epsilon \frac{n!}{(K-2)!(n-K)!}\frac{\Gamma(K+1/\alpha)\Gamma(n-K+1)}{\Gamma(n+1+1/\alpha)},
\end{align*}
where we used the fact that in the last but two step, the integral of $t$ is exactly the Beta function $B(x,y)=\int_0^1t^{x-1}(1-t)^{y-1}\mathrm{d}t$ with $x=K+1/\alpha$ and $y=n-K+1.$  

Recall $\Gamma(x+a)/\Gamma(x)\asymp x^{a}$ for $x\rightarrow\infty$. Moreover, we have that
\begin{align*}
    \frac{n!}{\Gamma(n+1+1/\alpha)}&=\frac{\Gamma(n+1)}{\Gamma(n+1+1/\alpha)}\asymp (n+1)^{-1/\alpha}\asymp n^{-1/\alpha},\\
    \frac{\Gamma(K+1/\alpha)}{(K-2)!}&=\frac{\Gamma(K+1/\alpha)}{\Gamma(K-1)}\asymp (K-1)^{1+1/\alpha}\asymp K^{1+1/\alpha},\\
    \frac{\Gamma(n-K+1)}{(n-K)!}&=\frac{(n-K)!}{(n-K)!}=1.
\end{align*}
Combining the above discussions, we obtain that 
\begin{align*}
    &\int_{I_{c_n}}\int_{0<x-y\leq \epsilon}f_{\xi^2_{(K-1)},\xi^2_{(K)}}(x,y)\mathrm{d}x\mathrm{d}y\leq L^{-1/\alpha}\alpha \epsilon n^{-1/\alpha}K^{1+1/\alpha}=\mathrm{O}(n^{\epsilon_{\mathsf{p},1}-1/\alpha}\log^{1/\alpha}n).
\end{align*}
Together with (\ref{eq_important}), we readily complete the proof. For the lower indices $1 \leq i < \lfloor \log n \rfloor$, the proof proceeds similarly by establishing an analogous result to (\ref{eq_b12134567890123}) and using the corresponding explicit joint density function for $(\xi^2_{(i-1)}, \xi^2_{(i)})$ given in (\ref{eq_important}).}
% Recall that $a_n=C^{-1}n^{1/\alpha}\log^{-1}n$, we may choose $\delta=\log^{-c}n$ for some constant $c>0$. Then \eqref{eq_gapstatistics_prob2} reads
% \begin{align*}
%    \mathbb{P}(\xi^2_{(1)}-\xi^2_{(2)}\le\epsilon|\xi^2_{(2)}\ge a_n)\le C_1\log^{-1-c}n,
%\end{align*}
%where we denote $C_1=C^{-1}\alpha L^{-1/\alpha}\Gamma(2+1/\alpha)$.
 
%Combining the above calculation and notice that $\epsilon=a_n\delta=C^{-1}n^{1/\alpha}\log^{-1-c}n$, we finally obtain
%\begin{align*}
%    &\mathbb{P}(\xi^2_{(1)}-\xi^2_{(2)}\ge \epsilon)=
%    \mathbb{P}(\xi^2_{(1)}-\xi^2_{(2)}\ge C^{-1}n^{1/\alpha}\log^{-c_0}n)=1-\mathbb{P}(\xi^2_{(1)}-\xi^2_{(2)}< C^{-1}n^{1/\alpha}\log^{-c_0}n)\\
%    &=1-\mathbb{P}(\xi^2_{(1)}-\xi^2_{(2)}< C^{-1}n^{1/\alpha}\log^{-c_0}n|\xi^2_{(2)}\ge a_n)-C^{\prime}n\exp(-LC^{\alpha}\log^{\alpha}n)\\
%    &\ge 1-C_1\log^{-1-c}n-C^{\prime}n\exp(-LC^{\alpha}\log^{\alpha}n),
%\end{align*}
%for some constant $C^{\prime}>0$ and we denote $c_0=1+c$. This concludes the first statement.

Finally, we justify the fourth statement. For the constant $C_{\mathsf{p},1}>0$ in the second statement, we suppose a positive constant $C_{\mathsf{p},3}$ such that $C_{\mathsf{p},3}>C_{\mathsf{p},1}$ and $1/2<b_{\mathsf{p}}\leq 1$. Then, we have
\begin{align*}
%\begin{split}
   \mathbb{P}( & \xi^2_{(1)}-\xi^2_{(\lceil n^{b_{\mathsf{p}}}\rceil)}<C_{\mathsf{p},3}^{-1}n^{1/\alpha}\log^{-1}n)=\mathbb{P}(\xi^2_{(\lceil n^{b_{\mathsf{p}}}\rceil)}>\xi^2_{(1)}-C_{\mathsf{p},3}^{-1}n^{1/\alpha}\log^{-1}n)\\
   &=\mathbb{P}(\xi^2_{(\lceil n^{b_{\mathsf{p}}}\rceil)}>\xi^2_{(1)}-C_{\mathsf{p},3}^{-1}n^{1/\alpha}\log^{-1}n|\xi^2_{(1)}\geq C_{\mathsf{p},1}^{-1}n^{1/\alpha}\log^{-1}n)\cdot\mathbb{P}(\xi^2_{(1)}\geq C_{\mathsf{p},1}^{-1}n^{1/\alpha}\log^{-1}n)\\
   &+\mathbb{P}(\xi^2_{(\lceil n^{b_{\mathsf{p}}}\rceil)}>\xi^2_{(1)}-C_{\mathsf{p},3}^{-1}n^{1/\alpha}\log^{-1}n|\xi^2_{(1)}< C_{\mathsf{p},1}^{-1}n^{1/\alpha}\log^{-1}n)\cdot\mathbb{P}(\xi^2_{(1)}< C_{\mathsf{p},1}^{-1}n^{1/\alpha}\log^{-1}n)\\
   &=\mathbb{P}(\xi^2_{(\lceil n^{b_{\mathsf{p}}}\rceil)}>\xi^2_{(1)}-C_{\mathsf{p},3}^{-1}n^{1/\alpha}\log^{-1}n|\xi^2_{(1)}\geq C_{\mathsf{p},1}^{-1}n^{1/\alpha}\log^{-1}n)\cdot(1-\exp(-LC_{\mathsf{p},1}^{\alpha}\log^{\alpha}n))\\
   &+\mathrm{O}(\exp(-LC_{\mathsf{p},1}^{\alpha}\log^{\alpha}n))\\
   &=\mathbb{P}(\{\xi^2_{(\lceil n^{b_{\mathsf{p}}}\rceil)}>\xi^2_{(1)}-C_{\mathsf{p},3}^{-1}n^{1/\alpha}\log^{-1}n\}\cap\{\xi^2_{(1)}\geq C_{\mathsf{p},1}^{-1}n^{1/\alpha}\log^{-1}n\})+\mathrm{O}(\exp(-LC_{\mathsf{p},1}^{\alpha}\log^{\alpha}n))\\
   &=\mathbb{P}(\xi^2_{(\lceil n^{b_{\mathsf{p}}}\rceil)}\geq(C_{\mathsf{p},1}^{-1}-C_{\mathsf{p},3}^{-1})n^{1/\alpha}\log^{-1}n)+\mathrm{O}(\exp(-LC_{\mathsf{p},1}^{\alpha}\log^{\alpha}n))\\
   &=\sum_{k=\lceil n^{b_{\mathsf{p}}}\rceil}^{n}\binom{n}{k}\left[\mathbb{P}(\xi^2\geq C^{\prime}n^{1/\alpha}\log^{-1}n)\right]^k \left[\mathbb{P}(\xi^2\leq C^{\prime}n^{1/\alpha}\log^{-1}n)\right]^{n-k}+\mathrm{O}(\exp(-LC_{\mathsf{p},1}^{\alpha}\log^{\alpha}n))\\
   &=\sum_{k=\lceil n^{b_{\mathsf{p}}}\rceil}^{n}\binom{n}{k}\left(\frac{L\log^{\alpha}n}{(C^{\prime})^{\alpha}n} \right)^k \left(1-\frac{L\log^{\alpha}n}{(C^{\prime})^{\alpha}n}\right)^{n-k}+\mathrm{O}(\exp(-LC_{\mathsf{p},1}^{\alpha}\log^{\alpha}n))\\
   &\leq\sum_{k=\lceil n^{b_{\mathsf{p}}}\rceil}^{n}\left(\frac{en}{k} \right)^k \left(\frac{L\log^{\alpha}n}{(C^{\prime})^{\alpha}n}\right)^k \left(1-\frac{L\log^{\alpha}n}{(C^{\prime})^{\alpha}n} \right)^{n-k} +\mathrm{O}(\exp(-LC_{\mathsf{p},1}^{\alpha}\log^{\alpha}n))\\
   &=\sum_{k=\lceil n^{b_{\mathsf{p}}}\rceil }^{n}\left(\frac{eL\log^{\alpha}n}{(C^{\prime})^{\alpha}k}\right)^k \exp(-C^{\prime\prime}\log^{\alpha}n(1-k/n))(1+\mathrm{o}(1))+\mathrm{O}(\exp(-LC_{\mathsf{p},1}^{\alpha}\log^{\alpha}n))\\
   &\leq n\left(\frac{eL\log^{\alpha}n}{(C^{\prime})^{\alpha}\lceil n^{b_{\mathsf{p}}}\rceil}\right)^{\lceil n^{b_{\mathsf{p}}}\rceil}(1+\mathrm{o}(1))+\mathrm{O}(\exp(-LC_{\mathsf{p},1}^{\alpha}\log^{\alpha}n))=\mathrm{O}(1/n^{C_1}), 
%\end{split}
\end{align*}
where in the second step we used the law of total probability, in the third step we used the results in (\ref{eq_bbbbboneoneoneone}), in the third-to-last step we used the Stirling's formula such that $k!\geq (k/e)^k$, in the last step we uniformly bounded $\exp(-C^{\prime\prime}\log^{\alpha}n(1-k/n))\leq 1$ for $\lceil n^{b_{\mathsf{p}}}\rceil \leq k\leq n$. Here we denote $C^{\prime}=C_{\mathsf{p},1}^{-1}-C_{\mathsf{p},3}^{-1}$ and $C^{\prime\prime},C_1>0$ are some constants. This concludes the fourth statement.

\end{proof}

Then we prove Case (b) of Definition \ref{defn_probset}. 
\begin{proof}[\bf Proof of Case (b)] 

{

Recall that we assume
\begin{align}\label{eq_def_exponentialdecaytail}
    \mathbb{P}(\xi^2>x)=Le^{-tx^{\beta}}, \ x \uparrow \infty,
\end{align}
for some constants $\beta,t, L>0.$

We start with the proof of the second statement that
\begin{align}\label{eq_firstsecond}
    \mathbb{P}(C_{\mathsf{e},1}^{-1} \log^{1/\beta} n\leq\xi^2_{(1)}\leq C_{\mathsf{e},1} \log^{1/\beta} n)=1-\mathrm{O}(n^{-c}),
\end{align}
for $C_{\mathsf{e},1}>\max\{(\max(t,t^{-1}))^{1/\beta},1\}$ and some constant $c>0$. For the upper bound, Since $C_{\mathsf{e},1}^{\beta} t>1,$ using the i.i.d. assumption of $\{\xi^2_i\}$, we have from (\ref{eq_def_exponentialdecaytail}) that
\[
\begin{split}
    \mathbb{P}(\xi^2_{(1)}\leq C_{\mathsf{e},1} \log^{1/\beta} n)&=(1-\mathbb{P}(\xi^2> C_{\mathsf{e},1}\log^{1/\beta} n))^n= \left(1-\frac{L}{e^{tC_{\mathsf{e},1}^{\beta}\log n}}\right)^n\\
    &=\left(1-\frac{L}{n^{tC_{\mathsf{e},1}^{\beta}}} \right)^n= \exp(-L/n^{{tC_{\mathsf{e},1}^{\beta}}-1})(1+\mathrm{o}(1))=1-\rO(n^{-({tC_{\mathsf{e},1}^{\beta}}-1)}),
\end{split}
\]
where we used an argument similar to the proof of the second statement of (\ref{def1}).
%for some constant $\mathsf{C}_1>0$, where we used the Taylor expansion of $\ln(1-C/n^{tC^{\beta}_t})^n$ in the fourth step and we used Taylor expansion for $\exp(-C/n^{{tC^{\beta}_t}-1})$ in the last step.

 For the lower bound, based on the above discussions, we have that for any large constant $D>0,$ when $n$ is sufficiently large
\begin{align*}
\mathbb{P}(\xi^2_{(1)} \leq C_{\mathsf{e},1}^{-1} \log^{1/\beta} n)=\left(1-\frac{L}{n^{tC_{\mathsf{e},1}^{-\beta}}}\right)^n\leq\exp(-Ln^{1-tC_{\mathsf{e},1}^{-\beta}})=\rO(n^{-D}),
\end{align*} 
where we used the inequality $1-x \leq e^{-x}$ in the last step. According to our assumption, we have that $tC_{\mathsf{e},1}^{-\beta}<1$. Consequently, we have that  
\begin{align*}
    \mathbb{P}(\xi^2_{(1)}\geq C_{\mathsf{e},1}^{-1} \log^{1/\beta} n)=1-\mathbb{P}(\xi^2_{(1)}< C_{\mathsf{e},1}^{-1} \log^{1/\beta} n)\geq 1-\exp(-L n^{1-tC_{\mathsf{e},1}^{-\beta}})=1-\rO(n^{-D}).
\end{align*}
This completes the proof. 
% of the second statement for the constant $C^{\beta}_t>\max(t,t^{-1})$.

Now, we prove the first statement that
\begin{align*}
    \mathbb{P}(\xi^2_{(1)}-\xi^2_{(2)}\geq C_{\mathsf{e},0}^{-1} \log^{{1/\beta}-1-c_{\mathsf{e},0}}n)=1-\rO(\log^{-c_{\mathsf{e},0}}n).
\end{align*}
Using the first two identities in~\eqref{eq_hererepeating} and an argument similar to that used in the proof of the second statement above, we can show that (\ref{eq_firstsecond}) also holds for $\xi_{(2)}^2.$ For notional simplicity, in what follows, we denote 
 $b_{n,-}:=C_{\mathsf{e},1}^{-1}\log^{1/\beta}n, \ b_{n,+}:=C_{\mathsf{e},1} \log^{1/\beta}n$ and
% , it follows that $\xi^2_{(2)}\in[b_{n,-},b_{n,+}]$ with probability higher than $1-\mathsf{C}_2n^{-(tC_{t_2}^{\beta}-1)}(1+\mathrm{o}(1))$.
$$I_{b_n}:=[b_{n,-},\ b_{n,+}].$$ Using (\ref{eq_jointdensity1and2}), we have that 
% Since $F_{\xi^2}(x)$ is continuous for large $x$, the joint distribution of the ordered statistics, $(\xi^2_{(1)},\xi^2_{(2)})$ has the density function 
%\begin{align*}
%    f_{\xi^2_{(1)},\xi^2_{(2)}}(x,y)=n(n-1)(F_{\xi^2}(y))^{n-2}f_{\xi^2}(x)f_{\xi^2}(y),\quad x>y>0,
%\end{align*}
%where $F_{\xi^2}(x)$ and $f_{\xi^2}(x)$ are CDF and PDF of $\xi^2$, respectively. Then, we have
\begin{align}\label{eq_twotwotwo}
    \mathbb{P}(\xi^2_{(1)}-\xi^2_{(2)}\leq\epsilon)&=\iint_{0<x-y\leq\epsilon}f_{\xi^2_{(1)},\xi^2_{(2)}}(x,y)\mathrm{d}x\mathrm{d}y \nonumber \\
    &=\int_{I_{b_n}}\int_{0<x-y\leq\epsilon}f_{\xi^2_{(1)},\xi^2_{(2)}}(x,y)\mathrm{d}x\mathrm{d}y+\int_{I_{b_n}^c}\int_{0<x-y\leq\epsilon}f_{\xi^2_{(1)},\xi^2_{(2)}}(x,y)\mathrm{d}x\mathrm{d}y\\
    &=\int_{I_{b_n}}\int_{0<x-y\leq\epsilon}f_{\xi^2_{(1)},\xi^2_{(2)}}(x,y)\mathrm{d}x\mathrm{d}y+\mathrm{O}(n^{-c}), \nonumber
\end{align}
where we used the fact that 
\begin{align*}
    &\int_{I_{b_n}^c}\int_{0<x-y\leq\epsilon}f_{\xi^2_{(1)},\xi^2_{(2)}}(x,y)\mathrm{d}x\mathrm{d}y\leq\int_{I_{b_n}^c}\int_{y<x}f_{\xi^2_{(1)},\xi^2_{(2)}}(x,y)\mathrm{d}x\mathrm{d}y\\
    &=\mathbb{P}(\xi^2_{(2)}\notin I_{b_n})=\mathrm{O}(n^{-c}).
\end{align*}
 
 Using the change of variables as in (\ref{eq_joint}), we have that 
\begin{align*}
\int_{I_{b_n}}\int_{0<x-y\leq\epsilon}f_{\xi^2_{(1)},\xi^2_{(2)}}(x,y)\mathrm{d}x\mathrm{d}y=\int_{b_{n,-}}^{b_{n,+}}\big(\int_{0}^{\epsilon}n(n-1)f_{\xi^2}(u+v)f_{\xi^2}(u)\mathrm{d}v\big)(F_{\xi^2}(u))^{n-2}\mathrm{d}u,
\end{align*}
where for $u \gg 1,$ we have from (\ref{eq_def_exponentialdecaytail}) that when $u \gg 1$
\begin{align}\label{eq_densityexponential}
    &f_{\xi^2}(u)=t\beta u^{\beta-1}L\exp(-tu^{\beta}),\quad 1-F_{\xi^2}(u)=L\exp(-tu^{\beta}).  
\end{align}
To prove our result, we now choose $\delta:=C'\log^{-1-c_{\mathsf{e},0}}n$ for some constant $C'>0$ and set
 \begin{equation}\label{eq_epsilondefinition11111}
 \epsilon=b_{n,-}\delta.
 \end{equation}

Similar to the discussions of (\ref{eq_calculatehere}), using (\ref{eq_densityexponential}), we have that for $u \gg \epsilon$ 
\begin{align*}
    &\int_0^{\epsilon}f_{\xi^2}(u+v)\mathrm{d}v=\int_u^{u+\epsilon}f_{\xi^2}(x)\mathrm{d}x=F_{\xi^2}(u+\epsilon)-F_{\xi^2}(u)\\
    &=L\exp(-tu^{\beta})-L\exp(-t(u+\epsilon)^{\beta}) =\epsilon f_{\xi^2}(u)(1+\mathrm{o}(1)),
%    =C\exp(-tu^{\beta})\big(1-\exp(-t(u+\epsilon)^{\beta}+tu^{\beta})\big)\\
%    &=C\exp(-tu^{\beta})\big(1-\exp(-tu^{\beta}((1+\epsilon/u)^{\beta}-1))\big)= C\exp(-tu^{\beta})\big(1-\exp(-t\beta\epsilon u^{\beta-1})(1+\mathrm{o}(1))\big)\\
%    &=\epsilon t\beta u^{\beta-1}C\exp(-tu^{\beta})(1+\mathrm{o}(1))=\epsilon f_{\xi^2}(u)(1+\mathrm{o}(1)). 
\end{align*}
This implies that
%where we applied the Taylor expansion for $(1+\epsilon/u)^{\beta}$ and $\exp(-t\beta\epsilon u^{\beta-1})$ for $u\gg\epsilon\gg1$. It gives that
\begin{align*}
    &\int_{I_{b_n}}\int_{0<x-y\leq\epsilon}f_{\xi^2_{(1)},\xi^2_{(2)}}(x,y)\mathrm{d}x\mathrm{d}y\leq n^2\epsilon\int_{b_{n,-}}^{b_{n,+}}f^2_{\xi^2}(u)(F_{\xi^2}(u))^{n-2}\mathrm{d}u(1+\mathrm{o}(1))\\
    &= \epsilon\int_{b_{n,-}}^{b_{n,+}}(nf_{\xi^2}(u))^2\exp(-nLe^{-tu^{\beta}})\mathrm{d}u(1+\mathrm{o}(1)),
\end{align*}
where in the last step, we used that for $u\geq b_{n,-}$
\begin{align*}
    (F_{\xi^2}(u))^{n-2}=(1-L\exp(-tu^{\beta}))^{n-2}=\exp\big(-nL\exp(-tu^{\beta})\big)(1+\mathrm{o}(1)).
\end{align*}
For the calculation of the above integration, 
using the change of variables that $s=Ln\exp(-tu^{\beta}), u(s)=(-t^{-1}\log(s/Ln))^{1/\beta}$ and $$\mathrm{d}s=-tn\beta u^{\beta-1}L\exp(-tu^{\beta})\mathrm{d}u=-nf_{\xi^2}(u)\mathrm{d}u,$$
we have that
\begin{align}\label{eq_oneoneone}
\int_{I_{b_n}}\int_{0<x-y\leq\epsilon}f_{\xi^2_{(1)},\xi^2_{(2)}}(x,y)\mathrm{d}x\mathrm{d}y \leq \epsilon\int_{Cn\exp(-tb_{n,+}^{\beta})}^{Cn\exp(-tb_{n,-}^{\beta})}ne^{-s}f_{\xi^2}(u(s))\mathrm{d}s(1+\mathrm{O}(1)).
\end{align}

Denote $h(u):=f_{\xi^2}(u)/(1-F_{\xi^2}(u))$. Using the fact that $1-F_{\xi^2}(u)=s/n,$ we have  
\begin{align*}
    f_{\xi^2}(u)=h(u)\frac{s}{n}.
\end{align*}
Moreover, for $h(u(s)), \ s\in[Cn\exp(-tb_{n,+}^{\beta}),Cn\exp(-tb_{n,-}^{\beta})],$ when $\beta\geq 1$, we have
\begin{align*}
    h(u(s))=t\beta (u(s))^{\beta-1} \leq t\beta C^{\beta-1}\log^{1-1/\beta}n, 
\end{align*}
and for $\beta<1$, we have
\begin{align*}
    h(u(s))=t\beta (u(s))^{\beta-1}\leq t\beta C^{1-\beta}\log^{1-1/\beta}n.  
\end{align*}

With the above preparation, we have that 
%we have
%\begin{align*}
%    \mathbb{P}(\xi^2_{(1)}-\xi^2_{(2)}\le\epsilon)\le \epsilon\int_{Cn\exp(-tb_{n,+}^{\beta})}^{Cn\exp(-tb_{n,-}^{\beta})}se^{-s}h(u(s))\mathrm{d}s+\mathrm{O}(n^{-(tC_{t_2}^{\beta}-1)}).
%\end{align*}
%Therefore, we can estimate 
\begin{align*}
& \epsilon\int_{Cn\exp(-tb_{n,+}^{\beta})}^{Cn\exp(-tb_{n,-}^{\beta})}ne^{-s}f_{\xi^2}(u(s))\mathrm{d}s =\epsilon\int_{Cn\exp(-tb_{n,+}^{\beta})}^{Cn\exp(-tb_{n,-}^{\beta})}se^{-s}h(u(s))\mathrm{d}s  \\
& =\mathrm{O} \left( \epsilon \log^{1-1/\beta}n\int_{Cn\exp(-tb_{n,+}^{\beta})}^{Cn\exp(-tb_{n,-}^{\beta})}se^{-s}\mathrm{d}s \right)\\
    &= \mathrm{O}\left( \epsilon \log^{1-1/\beta}n\int_0^{\infty}se^{-s}\mathrm{d}s \right)=\rO\left(\Gamma(2)\epsilon \log^{1-1/\beta}n \right)=\rO\left(\log^{-c_{\mathsf{e},0}}n \right),
\end{align*}
where we used $\int_0^{\infty}se^{-s}\mathrm{d}s=\Gamma(2)$ and the definition in (\ref{eq_epsilondefinition11111}). Together with (\ref{eq_oneoneone}) and (\ref{eq_twotwotwo}), we can therefore conclude the proof.  
}
% As a consequence, we take $\epsilon=C\log^{1/\beta-1-c}n$ for some small constant $c>0$, it follows
%\begin{align*}
%    &\mathbb{P}(\xi^2_{(1)}-\xi^2_{(2)}\ge C\log^{1/\beta-1-\mathrm{c}}n)=1-\mathbb{P}(\xi^2_{(1)}-\xi^2_{(2)}<\epsilon)\\
%    &\ge1-\mathsf{C}_4\log^{-c}n-\mathsf{C}_5n^{-(tC_{t_2}^{\beta}-1)},
%\end{align*}
%for some constants $\mathsf{C}_4,\mathsf{C}_5>0$. This concludes the proof of the first statement.

%Next, we are going to show that it also holds that 
%\begin{align*}
%    \xi^2_{(1)}-\xi^2_{(\lceil n^{1-c_{\mathsf{e},1}}\rceil)}\ge C_{\mathsf{e},2}^{-1}\log^{1/\beta}n,
%\end{align*}
%with probability tending to one, where $0<c_{\mathsf{e},1}<\min\{t(C_{\mathsf{e},1}^{-1}-C_{\mathsf{e},2}^{-1})^{\beta},1\}$. 
Then we prove the third statement. As we assume $C_{\mathsf{e},2}>C_{\mathsf{e},1}$, we have 
\begin{align*}
    &\mathbb{P}(\xi^2_{(1)}-\xi^2_{(\lceil n^{1-c_{\mathsf{e},1}}\rceil)}<C_{\mathsf{e},2}^{-1}\log^{1/\beta}n)=\mathbb{P}(\xi^2_{(\lceil n^{1-c_{\mathsf{e},1}}\rceil)}>\xi^2_{(1)}-C_{\mathsf{e},2}^{-1}\log^{1/\beta}n)\\
    &=\mathbb{P}(\xi^2_{(\lceil n^{1-c_{\mathsf{e},1}}\rceil)}>\xi^2_{(1)}-C_{\mathsf{e},2}^{-1}\log^{1/\beta}n|\xi^2_{(1)}\geq C_{\mathsf{e},1}^{-1}\log^{1/\beta}n)\cdot\mathbb{P}(\xi^2_{(1)}\geq C_{\mathsf{e},1}^{-1}\log^{1/\beta}n)\\
    &+\mathbb{P}(\xi^2_{(\lceil n^{1-c_{\mathsf{e},1}}\rceil)}>\xi^2_{(1)}-C_{\mathsf{e},2}^{-1}\log^{1/\beta}n|\xi^2_{(1)}< C_{\mathsf{e},1}^{-1}\log^{1/\beta}n)\cdot\mathbb{P}(\xi^2_{(1)}<C_{\mathsf{e},1}^{-1}\log^{1/\beta}n)\\
    &=\mathbb{P}(\xi^2_{(\lceil n^{1-c_{\mathsf{e},1}}\rceil)}>\xi^2_{(1)}-C_{\mathsf{e},2}^{-1}\log^{1/\beta}n|\xi^2_{(1)}\geq C_{\mathsf{e},1}^{-1}\log^{1/\beta}n)\cdot(1-\mathsf{C}_6\exp(-Cn^{1-tC_{\mathsf{e},1}^{-\beta}}))\\
    &+\mathsf{C}_7\exp(-Cn^{1-tC_{\mathsf{e},1}^{-\beta}})\\
    &=\mathbb{P}(\{\xi^2_{(\lceil n^{1-c_{\mathsf{e},1}}\rceil)}>\xi^2_{(1)}-C_{\mathsf{e},2}^{-1}\log^{1/\beta}n\}\cap\{\xi^2_{(1)}\geq C_{\mathsf{e},1}^{-1}\log^{1/\beta}n\})+\mathsf{C}_8\exp(-Cn^{1-tC_{\mathsf{e},2}^{-\beta}})\\
    &=\mathbb{P}\big(\xi^2_{(\lceil n^{1-c_{\mathsf{e},1}}\rceil)}>(C_{\mathsf{e},1}^{-1}-C_{\mathsf{e},2}^{-1})\log^{1/\beta}n\big)+\mathsf{C}_8\exp(-Cn^{1-tC_{\mathsf{e},1}^{-\beta}})\\
    &=\sum_{k=\lceil n^{1-c_{\mathsf{e},1}}\rceil}^n\binom{n}{k}\big[\mathbb{P}(\xi^2>(C_{\mathsf{e},1}^{-1}-C_{\mathsf{e},2}^{-1})\log^{1/\beta}n)\big]^k\big[\mathbb{P}(\xi^2\leq (C_{\mathsf{e},1}^{-1}-C_{\mathsf{e},2}^{-1})\log^{1/\beta}n)\big]^{n-k}+\mathsf{C}_8\exp(-Cn^{1-tC_{\mathsf{e},1}^{-\beta}})\\
    &=\sum_{k=\lceil n^{1-c_{\mathsf{e},1}}\rceil}^n\binom{n}{k}\big(C\exp(-t(C^{\prime})^{\beta}\log n)\big)^k\big(1-C\exp(-t(C^{\prime})^{\beta}\log n)\big)^{n-k}+\mathsf{C}_8\exp(-Cn^{1-tC_{\mathsf{e},2}^{-\beta}})\\
    &\leq \sum_{k=\lceil n^{1-c_{\mathsf{e},1}}\rceil}^n\Big(\frac{Cen}{k}\Big)^k\big(\frac{1}{n^{t(C^{\prime})^{\beta}}}\big)^k\big(1-\frac{1}{n^{t(C^{\prime})^{\beta}}}\big)^{n-k}+\mathsf{C}_8\exp(-Cn^{1-tC_{\mathsf{e},2}^{-\beta}})\\
    &=\sum_{k=\lceil n^{1-c_{\mathsf{e},1}}\rceil}^n\Big(\frac{Cen^{1-t(C^{\prime})^{\beta}}}{k}\Big)^k\exp\big(-n^{1-t(C^{\prime})^{\beta}}(1-\frac{k}{n})\big)+\mathsf{C}_8\exp(-Cn^{1-tC_{\mathsf{e},2}^{-\beta}})\\
    &\leq n\Big(\frac{Cen^{1-t(C^{\prime})^{\beta}}}{\lceil n^{1-c_{\mathsf{e},1}}\rceil}\Big)^{\lceil n^{1-c_{\mathsf{e},1}}\rceil}+\mathsf{C}_8\exp(-Cn^{1-tC_{\mathsf{e},2}^{-\beta}})=\mathrm{O}(1/n^{\mathsf{C}_9}),
\end{align*}
for some positive constants $\mathsf{C}_6,\mathsf{C}_7,\mathsf{C}_8,\mathsf{C}_9>0$, where $C^{\prime}>0$ be defined such that $C^{\prime}=C_{\mathsf{e},1}^{-1}-C_{\mathsf{e},2}^{-1}$. In the above procedures, we used the law of total probability in the second step; we used the results of the second statement in the third step; we used Stirling's formula in the third-to-last step and in the last step, we used condition $c_{\mathsf{e},1}<t(C^{\prime})^{\beta}$. This gives our desired result.

Finally,  the last statement follows directly from the strong law of large number. In fact, the result holds almost surely. 
\end{proof}

Finally, we prove Case (c) of Definition \ref{defn_probset}. 

\begin{proof}[\bf Proof of Case (c)] 
Note that the fourth statement holds trivially and surely. 

For the first statement, under the assumption of  (\ref{ass3.4}), we see that the lower bound follows from that  
\[
\begin{split}
   \mathbb{P}(l-\xi^2_{(1)}>n^{-1/(d+1)-\epsilon_{\mathsf{d}}})&=\big(1-\mathbb{P}(l-\xi^2_{(1)}<n^{-1/(d+1)-\epsilon_{\mathsf{d}}})\big)^n\\
&\geq (1-Cn^{-\epsilon_{\mathsf{d}}(d+1)-1})^n\\
&\geq 1-Cn^{-\epsilon_{\mathsf{d}}(d+1)}. 
\end{split}
\]
Similarly, for the upper bound, we find that when $n$ is sufficiently large, for some constant $C'>0$
\[
\begin{split}
    \mathbb{P}(l-\xi^2_{(1)}>n^{-1/(d+1)}\log n)&\leq n\big(1-\mathbb{P}(l-\xi^2\leq n^{-1/(d+1)}\log n)\big)^{n-1}\\
    &\leq n\big(1-C^{-1}n^{-1}\log^{d+1}n\big)^{n-1}\\
    &\leq ne^{-C^{-1}\log^{d+1}n} \leq n^{-C'}.
\end{split}
\]
This completes the proof of the first statement. 

For the third statement, we prove by contradiction, i.e., there exists some sequence $\mathtt{a}_n=\ro(1),$ $l-\xi_{\lfloor b n \rfloor} \leq \mathtt{a}_n$ holds with high probability.  In fact, by a discussion similar to (\ref{eq_hererepeating}) using (\ref{ass3.4}), we have that as long as $c \equiv c_n > n/\mathtt{a}_n,$
\[
\begin{split}
    \mathbb{P}(l-\xi^2_{(c)} \leq \mathtt{a}_n)&=\mathbb{P}(\xi^2_{(c)} \geq l-\mathtt{a}_n)\\
    &=\sum_{k=c+1}^{n}\binom{n}{k}\mathbb{P}(\xi^2>l-\mathtt{a}_n)^k\mathbb{P}(\xi^2\leq l-\mathtt{a}_n)^{n-k}=\rO(n^{-C}),
\end{split}
\]
for some constant $C>0$ when $n$ is sufficiently large. This completes our proof for the third statement.

For the second statement, its discussion is similar to the proof of \cite[Theorem 8.1]{lee2016extremal} . In this case, we will define the partition of the intervals as $I_k=[l-(k+1)n^{-1/(d+1)-\epsilon_{\mathsf{d}}},l-kn^{-1/(d+1)-\epsilon_{\mathsf{d}}}]$ for $k=[\![1,n^{\epsilon_{\mathsf{d}}}\log n]\!].$  We observe that 
\[
\mathsf p_k=\mathbb{P}(\xi^2\in I_k)\leq Cn^{-\epsilon_{\mathsf{d}}}n^{-1/(d+1)}(n^{-1/(d+1)}\log n)^d=Cn^{-1-\epsilon_{\mathsf{d}}}\log^dn. 
\]  
Using the above control together with the fact
\begin{equation}
\begin{split}
    &\mathbb{P}(\xi^2_{(i)}-\xi^2_{(i+1)}\leq n^{-1/(d+1)-\epsilon_{\mathsf{d}}})\\
    &=\mathbb{P}(\xi^2_{(i)},\xi^2_{(i+1)}\in I_k(y))+\mathbb{P}(\xi^2_{(i)}\in I_k(y)\;\text{and}\;\xi^2_{(i+1)}\in I_{k+1}(y)),
\end{split}
\end{equation}
we readily obtain that 
\[
\mathbb{P}(\xi^2_{(1)}-\xi^2_{(2)}\leq\ n^{-1/(d+1)-\epsilon_{\mathsf{d}}})\leq n^2\mathsf p_k^2\leq Cn^{-2\epsilon_{\mathsf{d}}}\log^{2d}n.
\]
This completes the proof of the second statement. 

Next, we proceed to the proof of the third-to-last statement. Denote the random variable $\tau_{\xi_i}$ as follows
\begin{gather*}
    \tau_{\xi^2_i}:=\frac{\xi^2_i}{1+\xi^2_im_{1n,c}(z)}-\int\frac{t}{1+tm_{1n,c}(z)}\mathrm{d}F(t).
\end{gather*}
By definition $\mathbb{E}\tau_{\xi^2_i}=0$. On the one hand, according to the discussion around (\ref{eq_defnmathsfW}), we find that 
\begin{gather*}
    \frac{1}{n}\sum_{i=1}^p\frac{\sigma_i^2\int\frac{t^2}{|1+tm_{1n,c}(z)|^2}\mathrm{d}F(t)}{|z-\sigma_i\int\frac{t}{1+tm_{1n,c}(z)}\mathrm{d}F(t)|^2}<1.
\end{gather*}
Together with Assumption \ref{assum_additional_techinical} and the continuity of $m_{2n,c},$ we can therefore conclude that for some constant $C_0>0,$
\begin{gather*}
    \int\frac{t^2}{|1+tm_{1n,c}(z)|^2}\mathrm{d}F(t)<C_0.
\end{gather*}
As a consequence, by Cauchy-Schwarz inequality, we find that for some constants $C_1, C_2>0$
\begin{gather*}
  \mathbb{E}|\tau_{\xi^2}|^2\leq C_1  \int\frac{t^2}{|1+tm_{1n,c}(z)|^2}\mathrm{d}F(t) < C_2<\infty. 
\end{gather*}
Since $\tau_{\xi_i^2}, 1 \leq i \leq n,$ are independent, we can conclude our proof using Markov inequality.

Finally, for the last two statements, they can be obtained straightly by Chebyshev's inequality and Markov's inequality.
\end{proof}

\subsection{Fluctuation averaging arguments: Proof of Lemma \ref{lem_fa}}\label{sec_FAlemma}
In this section, we prove the fluctuation averaging results in Lemma \ref{lem_fa} following the strategies of Section 6 of \cite{lee2016extremal}. Fluctuation averaging is a common step in the proof of local laws for random matrix models, especially when the LSD has a square root decay behavior near the edge so that the entries of the resolvents can be controlled under some ansatz; see the monograph \cite{erdHos2017dynamical} for a review. However, in our setting, due to the lack of square root decay as in (\ref{eq: concave decay of rho_Q}), many entries of the resovelents, even the off-diagonal ones can be large when $\eta \asymp n^{-1/2}.$ To address this issue, we will follow the strategies of \cite{lee2016extremal} to focus on the resolvent fractions instead of the entries themselves; see the discussion above Sections 6.1 of \cite{Kwak2021,lee2016extremal}. In what follows, due to similarity, we focus on the parts which deviate from \cite[Section 6]{lee2016extremal} the most.

   \begin{proof}[\bf Proof of Lemma \ref{lem_fa}]
In what follows, with loss of generality, we assume that $\xi_1^2 \geq \xi_2^2 \geq \cdots \geq \xi_n^2.$   
   
  We start with part (1). Recall (\ref{eq: decomp m_2}). Using Theorem \ref{thm_boundedcaselocallaw} and Remark \ref{remk_zibound}, we have that 
 \begin{gather*}
\begin{split}
    |m_2-m_2^{(1)}| & \leq \left|\frac{1}{n}\frac{\xi^2_{1}}{z(1+\xi^2_{1}m_{1n}+\rO_{\prec}((n \eta_0)^{-1}))} \right| \\
 &   +\left|\frac{1}{n}\sum_{i=2}^{p}\frac{ \rO_{\prec}((n\eta_0)^{-1})}{z(1+\xi^2_im_{1n}+\rO_{\prec}((n \eta_0)^{-1}))(1+\xi^2_im_{1n}+\rO_{\prec}((n\eta_0)^{-1}))}\right|.
\end{split}
\end{gather*} 
For the first term on the right-hand side of the equation,  it can be trivially bounded by $(n \eta_0)^{-1}$ by a discussion similar to (\ref{eq_trivialcontroleta}) using (\ref{eq_zopointrate11}).  The second term can also be controlled by $(n \eta_0)^{-1}$ using a discussion similar to (\ref{eq_L1bound}). The proves the first equation in (\ref{eq_c5first}).  For the second equation, due to similarity, we focus on $|m_2-m_2^{(i)}|.$ Using  (\ref{eq_decompositionleavoneout}), we have that
\begin{gather}\label{eq_differenceshouldbehere}
    |m_{2}-m_2^{(i)}|\leq\frac{|\mathcal{G}_{ii}|}{n}+\frac{1}{n}\sum_{j \neq i}|\mathcal{G}_{jj}-\mathcal{G}_{jj}^{(i)}|.
\end{gather}
For $\mathcal{G}_{ii},$ by  Lemma \ref{lem: Resolvent}, Theorem \ref{thm_boundedcaselocallaw} and the assumption that $z \in \mathbf{D}_b^\prime$ in (\ref{eq_spectralparameterprime}), we conclude that with high probability, for some constant $C>0$
\begin{gather}\label{eq_giiboundinverse}
    |\mathcal{G}_{ii}|=\frac{1}{|z(1+\xi^2_im^{(i)}_1+Z_i)|}\leq Cn^{1/(d+1)+\epsilon_{\mathsf{d}}}. 
\end{gather} 
For $\mathcal{G}_{jj}-\mathcal{G}_{jj}^{(i)},$ by  Lemma \ref{lem: Resolvent}, (\ref{lem:Wald}) and Lemma \ref{lem:large deviation}, 
\begin{gather*}
    |\mathcal{G}_{jj}-\mathcal{G}_{jj}^{(i)}|=|\frac{\mathcal{G}_{ij}\mathcal{G}_{ji}}{\mathcal{G}_{ii}}| \prec |\mathcal{G}_{ii}||\mathcal{G}_{jj}^{(i)}|^2\frac{\operatorname{Im}m_1^{(ij)}}{n\eta_0} \prec \frac{n^{2 \epsilon_{\mathsf{d}}}}{n }|\mathcal{G}_{ii}||\mathcal{G}_{jj}^{(i)}|^2,
\end{gather*}  
where in the last step we used (\ref{eq_zopointrate11}) and Theorem \ref{thm_boundedcaselocallaw}. Inserting all the above bounds back to (\ref{eq_differenceshouldbehere}) and use the trivial bound that $|\mathcal{G}_{jj}^{(i)}| \leq \eta_0,$ we can conclude the proof. This completes the proof of part (1). 

We now proceed to the proof of parts (2) and (3). Due to similarity, we focus on the details of part (2) and briefly mention how to prove (3) in the end. For simplicity, following the conventions in \cite{Kwak2021,lee2016extremal}, we denote the operator $P_i:=\mathbf{1}-\mathbb{E}_i,$
where $\mathbb{E}_i$ is the conditional expectation with respect to $\mathbf{y}_i$. Using Lemma \ref{lem: Resolvent}, we see that on $\Omega_D$
\begin{gather}\label{eq_pigiiinverse}
    \frac{1}{n}\sum_{i=2}^n P_i(\frac{1}{\mathcal{G}_{ii}})=\frac{1}{n}\sum_{i=2}^n P_i(-z-z\mathbf{y}_i^{*}G^{(i)}(z)\mathbf{y}_i)=-\frac{z}{n}\sum_{i=2}^n Z_i.
\end{gather}
Consequently, it suffices to show that 
\begin{gather*}
    \left|\frac{1}{n}\sum_{i}P_i(\frac{1}{\mathcal{G}_{ii}})\right|\prec n^{-1/2-\frac{1}{2}(\frac{1}{2}-\frac{1}{d+1})+2 \epsilon_{\mathsf{d}}}.
\end{gather*}  
By Chebyshev's inequality, it suffices to prove the following lemma. 
\begin{lemma}\label{lem_fakeycomponents}
Under the assumptions of Lemma \ref{lem_fa}, for any $z\in\mathbf{D}_b^{\prime}$ and fixed even number $M\in\mathbb{N}$, we have
\begin{gather*}
    \mathbb{E}^X\left|\frac{1}{n}\sum_{i=2}^n P_i(\frac{1}{\mathcal{G}_{ii}(z)})\right|^M\prec n^{M(-1/2-\frac{1}{2}(\frac{1}{2}-\frac{1}{d+1})+2 \epsilon_{\mathsf{d}})}.
\end{gather*}
\end{lemma}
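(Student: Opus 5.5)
We will follow the high-moment fluctuation averaging scheme of \cite[Section 6]{lee2016extremal} (see also \cite{Kwak2021}), working on $\Omega_D$ with $\{\xi_i^2\}$ fixed so that $\mathbb{E}^X$ only integrates over $X$. Write $Q_i:=P_i(1/\mathcal{G}_{ii})=-zZ_i$ by (\ref{eq_pigiiinverse}), and expand
\begin{equation*}
\mathbb{E}^X\Big|\frac{1}{n}\sum_{i=2}^n Q_i\Big|^M=\frac{1}{n^M}\sum_{i_1,\dots,i_M\geq 2}\mathbb{E}^X\Big[Q_{i_1}\cdots Q_{i_{M/2}}\overline{Q_{i_{M/2+1}}}\cdots\overline{Q_{i_M}}\Big].
\end{equation*}
For a fixed index tuple, let $\mathcal{J}$ be the set of distinct indices and classify indices as \emph{lone} (appearing once) or repeated. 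The a priori single-term input is Remark \ref{remk_zibound} together with (\ref{eq_zibound}): $|Q_i|\prec \Psi\lesssim (n\eta_0)^{-1}$ up to $n^{2\epsilon_{\mathsf{d}}}$ factors, using $\operatorname{Im}m_1\prec (n\eta_0)^{-1}$ from Lemma \ref{lem: est for Im m_1}. Repeated indices are bounded crudely by this.

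For each lone index $i$ we use $\mathbb{E}_i P_i(\cdot)=0$: we expand every other factor $Q_{i_k}$ in the minor $(i)$ via the resolvent identities of Lemma \ref{lem: Resolvent}, i.e. $1/\mathcal{G}_{jj}=1/\mathcal{G}^{(i)}_{jj}-\mathcal{G}_{ji}\mathcal{G}_{ij}/(\mathcal{G}_{jj}\mathcal{G}^{(i)}_{jj}\mathcal{G}_{ii})$, iterating over all lone indices until the leading term is independent of $\mathbf{y}_i$ and vanishes under $\mathbb{E}_i$. Each surviving correction carries a factor of the form $\mathcal{G}_{ij}\mathcal{G}_{ji}/(\mathcal{G}_{ii}\mathcal{G}_{jj}\mathcal{G}_{jj}^{(i)})$. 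Following the remark before this subsection, we do not bound entries individually but these \emph{resolvent fractions}: by Lemma \ref{lem: Resolvent}, $\mathcal{G}_{ij}/\mathcal{G}_{ii}=z\mathcal{G}^{(i)}_{jj}\mathbf{y}_i^*G^{(ij)}\mathbf{y}_j$, and with Lemma \ref{lem:large deviation}, (\ref{lem:Wald}) and (\ref{eq_zopointrate11}) this gives $|\mathcal{G}_{ij}\mathcal{G}_{ji}/(\mathcal{G}_{ii}\mathcal{G}_{jj}^{(i)})|\prec |\mathcal{G}^{(i)}_{jj}|\operatorname{Im}m_1^{(ij)}/(n\eta_0)\prec n^{2\epsilon_{\mathsf{d}}}n^{-1}|\mathcal{G}_{jj}^{(i)}|$, while $|\mathcal{G}_{jj}|\lesssim n^{1/(d+1)+\epsilon_{\mathsf{d}}}$ on $\mathbf{D}_b'$ by (\ref{eq_giiboundinverse}), as $1\notin\mathcal{J}$ keeps us away from the singular index. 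Part (1) of Lemma \ref{lem_fa} controls the changes $m_2-m_2^{(i)}$ induced by the minor expansions. Thus each lone index contributes an extra gain of roughly $n^{-1/2}\cdot n^{1/(d+1)}\cdot\Psi^{-1}$-type factors relative to the trivial bound, and a counting argument (number of tuples with $s$ lone indices is $\lesssim n^{|\mathcal{J}|}$) yields $n^{M(-1/2-\frac12(\frac12-\frac{1}{d+1})+2\epsilon_{\mathsf{d}})}$ after optimizing the worst case, which is the interpolation between $\Psi\sim n^{-1/2+\epsilon_{\mathsf{d}}}$ and the $n^{1/(d+1)}$ loss from $|\mathcal{G}_{jj}|$; this is exactly where the exponent $\frac12(\frac12-\frac1{d+1})$ arises, as a geometric mean.

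The main obstacle is the bookkeeping of the iterated expansion: since $\rho$ lacks square root decay and $|\mathcal{G}_{jj}|$ may be as large as $n^{1/(d+1)+\epsilon_{\mathsf{d}}}$, naive entrywise bounds on $\mathcal{G}_{ij}$ blow up, so every term must be organized so that each large $\mathcal{G}_{jj}$ is paired with a $1/\mathcal{G}_{jj}$ from a fraction; I would verify this by an induction on the number of minors as in \cite[Lemma 6.3]{lee2016extremal}. Stochastic domination is converted to moments using the deterministic bounds $|\mathcal{G}|\le\eta_0^{-1}$ on the low-probability complement. Part (3) follows identically since the weights $(\xi_i^2+\xi_i^4)/(1+\xi_i^2m_{1n})^2$ are deterministic given $D$ and bounded by $n^{2/(d+1)+2\epsilon_{\mathsf{d}}}$ on $\mathbf{D}_b'$ only for the excluded index.
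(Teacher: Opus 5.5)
Your overall route is the one the paper takes: the high-moment fluctuation-averaging scheme of \cite[Section 6]{lee2016extremal}, with lone labels, $\mathbb{E}_iP_i=0$, expansion of the other factors in minors, and resolvent fractions in place of entries. The gap is in the quantitative core, namely the gain that each lone label produces.

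By Lemma \ref{lem: Resolvent} and the symmetry $\mathcal{G}_{ij}=\mathcal{G}_{ji}$, one has $\mathcal{G}_{ij}=z\mathcal{G}_{ii}\mathcal{G}^{(i)}_{jj}\mathbf{y}_i^*G^{(ij)}\mathbf{y}_j$. Hence
\[
\frac{\mathcal{G}_{ij}\mathcal{G}_{ji}}{\mathcal{G}_{ii}\mathcal{G}^{(i)}_{jj}}=z^2\,\mathcal{G}_{ii}\,\mathcal{G}^{(i)}_{jj}\,\big(\mathbf{y}_i^*G^{(ij)}\mathbf{y}_j\big)^2,
\]
so your estimate drops the factor $|\mathcal{G}_{ii}|$. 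On $\mathbf{D}_b'$ this factor is only bounded by $n^{1/(d+1)+\epsilon_{\mathsf{d}}}$, and it is exactly where the $d$-dependence comes from. Taken at face value, your bound would make every correction to $1/\mathcal{G}_{jj}$ of size $n^{-1+2\epsilon_{\mathsf{d}}}$. That means a gain of order $n^{-1/2}$ per lone label and a $d$-independent rate $n^{-1+O(\epsilon_{\mathsf{d}})}$, which should have signalled that something was lost. The gain you then actually write down, $n^{-1/2}n^{1/(d+1)}\Psi^{-1}$, is of order $n^{1/(d+1)}$ up to $n^{O(\epsilon_{\mathsf{d}})}$, so it is a loss, not a gain. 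The exponent $\frac12(\frac12-\frac1{d+1})$ is then produced by an unexplained ``optimization''/``geometric mean'' rather than by the counting. So the step that is the whole content of the lemma is not established.

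What makes the count work, and what the paper checks in (\ref{eq_keybounds}), is to track $F_{ij}=\mathcal{G}_{ij}/\mathcal{G}_{jj}$. The expansion reads $1/\mathcal{G}_{jj}=(1-F_{ji}F_{ij})/\mathcal{G}^{(i)}_{jj}$, and the correction factorizes as $F_{ij}\cdot F^{(\emptyset,i)}_{ij}/\mathcal{G}_{ii}$. Here:
\begin{itemize}
\item $F^{(\emptyset,i)}_{ij}/\mathcal{G}_{ii}=z\mathbf{y}_i^*G^{(ij)}\mathbf{y}_j\prec(n\eta_0)^{-1}=:\Psi_0$ carries no large factor;
\item $F_{ij}=z\mathcal{G}^{(j)}_{ii}\mathbf{y}_i^*G^{(ij)}\mathbf{y}_j\prec n^{1/(d+1)+\epsilon_{\mathsf{d}}}\Psi_0\le\Lambda_o:=n^{-\frac12(\frac12-\frac1{d+1})+\epsilon_{\mathsf{d}}}$.
\end{itemize}
Together with $|P_i(1/\mathcal{G}_{ii})|\prec\Psi_0$, each lone label gains a factor $\Lambda_o$. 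Since $n^{-1/2}\le\Lambda_o$, the standard count gives $\sum_s n^{-(M-s)/2}\Psi_0^M\Lambda_o^s\lesssim(\Psi_0\Lambda_o)^M$, which is exactly $n^{M(-1/2-\frac12(\frac12-\frac1{d+1})+2\epsilon_{\mathsf{d}})}$. The exponent is simply $\Psi_0$ times the bound on $F_{ij}$.

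Iterated expansions generate $F^{(\mathcal{T},\mathcal{T}')}_{ij}$, $F^{(\mathcal{T}',\mathcal{T}'')}_{ij}/\mathcal{G}^{(\mathcal{T})}_{ii}$ and $P_i(1/\mathcal{G}^{(\mathcal{T})}_{ii})$. These three bounds must therefore be propagated to all minors with $|\mathcal{T}|,|\mathcal{T}'|,|\mathcal{T}''|\le M$ (the paper's (\ref{eq_keybounds2}), via \cite[Corollary 6.4]{lee2016extremal}) before running \cite[Lemma 6.6]{lee2016extremal}. Part (1) of Lemma \ref{lem_fa} on $m_2-m_2^{(i)}$ is not the input this expansion needs.

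Finally, your aside on part (3) is reversed. On $\mathbf{D}_b'$ the weights are $\lesssim n^{2/(d+1)+2\epsilon_{\mathsf{d}}}$ for every $i\ge 2$ and uncontrolled only for the excluded index $i=1$. So ``follows identically'' does not handle these weights; the paper defers this to \cite[Lemma 6.12]{lee2016extremal}.
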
 
\begin{proof}
The proof strategy and technique follows closely from Section 6 of \cite{lee2016extremal}. In what follows, we adopt the way how \cite[Section 6.3]{Kwak2021} generalizes \cite[Section 6.2]{lee2016extremal} and only check the core estimates that have been used in \cite{lee2016extremal}. We first provide some notations following the conventions of \cite[Section 6.1]{lee2016extremal}. For any subset $\mathcal{T},\mathcal{T}^{\prime}\subset\{1,\dots, n\}$ with $i,j\notin\mathcal{T}$ and $j\notin\mathcal{T}^{\prime}$, we set
\begin{gather*}
  F_{ij}^{(\mathcal{T},\mathcal{T}^{\prime})} \equiv   F_{ij}^{(\mathcal{T},\mathcal{T}^{\prime})}(z):=\frac{\mathcal{G}_{ij}^{(\mathcal{T})}(z)}{\mathcal{G}_{jj}^{(\mathcal{T}^{\prime})}(z)}. 
\end{gather*}
In case $\mathcal{T}=\mathcal{T}^\prime=\emptyset$, we simply write $F_{ij}=F_{ij}^{(\mathcal{T},\mathcal{T}^{\prime})}$. With Lemma \ref{lem: Resolvent}, according to \cite[Lemma 6.1]{lee2016extremal}, we have that for any subset $\mathcal{T},\mathcal{T}^{\prime}\subset\{1,\dots, p\}$ with $i,j\notin\mathcal{T}$ and $j \notin\mathcal{T}^{\prime}$, and $\gamma\notin\mathcal{T}\bigcup\mathcal{T}^{\prime}$
\begin{equation*}
F_{ij}^{(\mathcal{T},\mathcal{T}^{\prime})}=F_{ij}^{(\mathcal{T}\gamma,\mathcal{T}^{\prime})}+F_{i\gamma}^{(\mathcal{T},\mathcal{T}^{\prime})}F_{\gamma j}^{(\mathcal{T},\mathcal{T}^{\prime})}, \ 
\end{equation*}
and 
\begin{equation*}
F_{ij}^{(\mathcal{T},\mathcal{T}^{\prime})}=F_{ij}^{(\mathcal{T},\mathcal{T}^{\prime}\gamma)}-F_{ij}^{(\mathcal{T},\mathcal{T}^{\prime}\gamma)}F_{j\gamma}^{(\mathcal{T},\mathcal{T}^{\prime})}F_{\gamma j}^{(\mathcal{T},\mathcal{T}^{\prime})}.
\end{equation*}
Moreover, we have that for $\gamma \notin \mathcal{T}$
\begin{equation*}
 \frac{1}{\mathcal{G}_{ii}^{(\mathcal{T})}}=\frac{1}{\mathcal{G}_{ii}^{(\mathcal{T\gamma})}} \left(1-F_{i\gamma}^{(\mathcal{T},\mathcal{T})}F_{\gamma i}^{(\mathcal{T},\mathcal{T})}\right).
\end{equation*}
In order to apply the techniques of \cite[Section 6.2]{lee2016extremal}, we need to prove the following estimates 
\begin{align}\label{eq_keybounds}
% \begin{split}
    |m_1(z)-m_{1n}(z)|\prec (n \eta_0)^{-1}, & \ \quad \operatorname{Im}m_1(z)\prec (n \eta_0)^{-1}, \ \left|P_i(\frac{1}{\mathcal{G}_{ii}}) \right|\prec (n \eta_0)^{-1}, \ i \neq 1, \nonumber \\
   & \max_{i\neq j}|F_{ij}(z)|\prec n^{-(1/2-1/(d+1))/2+\epsilon_{\mathsf{d}}},\quad i,j \neq 1,\\
   & \max_{i\neq j} \left|\frac{F_{ij}^{(\emptyset,i)}(z)}{\mathcal{G}_{ii}(z)} \right|\prec (n \eta_0)^{-1},\quad i, j\neq 1, \nonumber
   % \end{split}
\end{align}
First, the first part of (\ref{eq_keybounds}) follows from Theorem \ref{thm_boundedcaselocallaw}, Lemma \ref{lem: est for Im m_1} and Remark \ref{remk_zibound} (recall (\ref{eq_pigiiinverse})). Second, for the second part of (\ref{eq_keybounds}), by a discussion similar to (\ref{eq_giiboundinverse}), for $i \neq j$ and $i,j \neq 1,$ we have that for some constant $C>0,$ with high probability
\begin{equation}\label{eq_priorpriorbound}
 |\mathcal{G}_{ii}^{(j)}| \leq Cn^{1/(d+1)+\epsilon_{\mathsf{d}}}.
\end{equation}
Together with Lemma \ref{lem: Resolvent}, we see that for some constant $C>0$
\begin{gather*}
    \begin{split}
        |F_{ij}|&=|z\mathcal{G}_{ii}^{(j)}\mathbf{y}_i^{*}G^{(ij)}\mathbf{y}_j| \prec \left|z\mathcal{G}_{ii}^{(j)}\frac{1}{n}\|G^{(ij)}\Sigma\|_F \right| \leq C \Big|\mathcal{G}_{ii}^{(j)}\Big(\frac{\operatorname{Im}m_1^{(ij)}}{n\eta} \Big)^{1/2} \Big|\\
        &\prec n^{1/(d+1)+\epsilon_{\mathsf{d}}}\frac{1}{n\eta_0}=n^{1/(d+1)-1/2+2\epsilon_{\mathsf{d}}},
    \end{split}
\end{gather*}
where in the second step we used (\ref{lem:Wald}) and in the third step we used (\ref{eq_priorpriorbound}) and the fact $z \in \mathbf{D}_b^\prime.$ Finally, for the third part of (\ref{eq_keybounds}), using Lemma \ref{lem: Resolvent}, Lemma \ref{lem:large deviation} and (\ref{lem:Wald}), we see that 
\begin{equation*}
 \Big| \frac{F_{ij}^{(\emptyset,i)}}{\mathcal{G}_{ii}}\Big|=\Big|\frac{\mathcal{G}_{ij}}{\mathcal{G}_{jj}^{(i)}\mathcal{G}_{ii}} \Big|=\Big| z\mathbf{y}_i^{*}G^{(ij)}\mathbf{y}_j \Big| \prec \sqrt{\frac{\operatorname{Im} m^{(ij)}_1(z)}{n \eta}}.
\end{equation*}
We can therefore conclude our proof using Lemma \ref{lem: est for Im m_1}, Remark \ref{remk_zibound} and (\ref{lem:trace_difference}). 

Using (\ref{eq_keybounds}) and Assumption \ref{assum_model}, we can follow the proof of Corollary 6.4 of \cite{lee2016extremal} verbatim  and conclude that for any $\mathcal{T},\mathcal{T}^{\prime},\mathcal{T}^{\prime\prime}\in\{2,\dots,n\}$ with $|\mathcal{T}|,|\mathcal{T}^{\prime}|,|\mathcal{T}^{\prime\prime}|\leq M,$ where $M$ is some large positive even integer, and for $z\in\mathbf{D}_b^{\prime}$, we have that when $i \neq j, i, j \neq 1,$
\begin{gather}\label{eq_keybounds2}
\begin{split}
   & |F_{ij}^{(\mathcal{T},\mathcal{T}^{\prime})}(z)|\prec n^{-(1/2-1/(d+1))/2+\epsilon_{\mathsf{d}}},\\
   & \Big|\frac{F_{ij}^{(\mathcal{T}^{\prime},\mathcal{T}^{\prime\prime})}(z)}{G^{(\mathcal{T})}_{ii}(z)} \Big|\prec (n \eta_0)^{-1}, \  \ \Big|P_i\Big(\frac{1}{\mathcal{G}_{ii}^{(\mathcal{T})}} \Big)\Big|\prec (n \eta_0)^{-1}.
    \end{split}
\end{gather}
Once  (\ref{eq_keybounds}) and (\ref{eq_keybounds2}) have been proved, we can follow lines of \cite[Lemma 6.6]{lee2016extremal} or \cite[Lemma 6.11]{Kwak2021} to conclude the proof. Due to similarity, we omit the details.
\end{proof}

\quad This completes the proof of part (2). The proof of part (3) is similar except we need to following the proof of Lemma \ref{lem_fakeycomponents} and \cite[Lemma 6.12]{lee2016extremal}  to show 
\begin{equation*}
   \mathbb{E}^X\Big| \frac{1}{n} \sum_{i=2}^n \frac{1}{(1+\xi_i^2 m_{1n}(z))^2} P_i(\frac{1}{\mathcal{G}_{ii}(z)})\Big|^M\prec n^{M(-1/2-\frac{1}{2}(\frac{1}{2}-\frac{1}{d+1})+2 \epsilon_{\mathsf{d}})}.
\end{equation*}
We omit the proof and refer the readers to the proof of \cite[Lemma 6.12]{lee2016extremal} for more details. This completes the proof of Lemma \ref{lem_fa}.
\end{proof}